\DeclareMathOperator{\res}{res}
\begin{document}



\newtheorem{theorem}{Theorem}[section]
\newcommand{\mar}[1]{{\marginpar{\textsf{#1}}}}
\numberwithin{equation}{section}
\newtheorem*{theorem*}{Theorem}
\newtheorem{prop}[theorem]{Proposition}
\newtheorem*{prop*}{Proposition}
\newtheorem{lemma}[theorem]{Lemma}
\newtheorem{corollary}[theorem]{Corollary}
\newtheorem*{conj*}{Conjecture}
\newtheorem*{corollary*}{Corollary}
\newtheorem{definition}[theorem]{Definition}
\newtheorem*{definition*}{Definition}
\newtheorem{remarks}[theorem]{Remarks}
\newtheorem*{remarks*}{remarks}

\newtheorem*{not*}{Notation}
\newcommand\pa{\partial}
\newcommand\cohom{\operatorname{H}}
\newcommand\Td{\operatorname{Td}}
\newcommand\Trig{\operatorname{Trig}}
\newcommand\Hom{\operatorname{Hom}}
\newcommand\End{\operatorname{End}}
\newcommand\Ker{\operatorname{Ker}}
\newcommand\Ind{\operatorname{Ind}}
\newcommand\cker{\operatorname{coker}}
\newcommand\oH{\operatorname{H}}
\newcommand\oK{\operatorname{K}}
\newcommand\codim{\operatorname{codim}}
\newcommand\Exp{\operatorname{Exp}}
\newcommand\CAP{{\mathcal AP}}
\newcommand\T{\mathbb T}
\newcommand{\M}{\mathcal{M}}
\newcommand\ep{\epsilon}
\newcommand\te{\tilde e}
\newcommand\Dd{{\mathcal D}}

\newcommand\what{\widehat}
\newcommand\wtit{\widetilde}
\newcommand\mfS{{\mathfrak S}}
\newcommand\cA{{\mathcal A}}
\newcommand\maA{{\mathcal A}}
\newcommand\F{{\mathcal F}}
\newcommand\maN{{\mathcal N}}
\newcommand\cM{{\mathcal M}}
\newcommand\maE{{\mathcal E}}
\newcommand\cF{{\mathcal F}}
\newcommand\maG{{\mathcal G}}
\newcommand\cG{{\mathcal G}}
\newcommand\cH{{\mathcal H}}
\newcommand\maH{{\mathcal H}}
\renewcommand\H{{\mathcal H}}
\newcommand\cO{{\mathcal O}}
\newcommand\cR{{\mathcal R}}
\newcommand\cS{{\mathcal S}}
\newcommand\cU{{\mathcal U}}
\newcommand\cV{{\mathcal V}}
\newcommand\cX{{\mathcal X}}
\newcommand\cD{{\mathcal D}}
\newcommand\cnn{{\mathcal N}}
\newcommand\wD{\widetilde{D}}
\newcommand\wL{\widetilde{L}}
\newcommand\wM{\widetilde{M}}
\newcommand\wV{\widetilde{V}}
\newcommand\Ee{{\mathcal E}}
\newcommand{\npartial}{\slash\!\!\!\partial}
\newcommand{\Heis}{\operatorname{Heis}}
\newcommand{\Solv}{\operatorname{Solv}}
\newcommand{\Spin}{\operatorname{Spin}}
\newcommand{\SO}{\operatorname{SO}}
\newcommand{\ind}{\operatorname{ind}}
\newcommand{\Index}{\operatorname{Index}}
\newcommand{\ch}{\operatorname{ch}}
\newcommand{\rank}{\operatorname{rank}}
\newcommand{\G}{\Gamma}
\newcommand{\HK}{\operatorname{HK}}
\newcommand{\Dix}{\operatorname{Dix}}

\newcommand{\tM}{\tilde{M}}  
\newcommand{\tS}{\tilde{S}}
\newcommand{\tH}{\tilde{\mathcal H}}
\newcommand{\tg}{\tilde{g}}
\newcommand{\tx}{\tilde{x}}
\newcommand{\ty}{\tilde{y}}
\newcommand{\ox}{\otimes}


\newcommand{\abs}[1]{\lvert#1\rvert}
 \newcommand{\A}{{\mathcal A}}
        \newcommand{\D}{{\mathcal D}}
\newcommand{\HH}{{\mathcal H}}
        \newcommand{\LL}{{\mathcal L}}
        \newcommand{\B}{{\mathcal B}}
        \newcommand{\K}{{\mathcal K}}
\newcommand{\oo}{{\mathcal O}}
         \newcommand{\PP}{{\mathcal P}}
         \newcommand{\Q}{{\mathcal Q}}
        \newcommand{\s}{\sigma}
        \newcommand{\coker}{{\mbox coker}}
        \newcommand{\dd}{|\D|}
        \newcommand{\n}{\parallel}
\newcommand{\bma}{\left(\begin{array}{cc}}
\newcommand{\ema}{\end{array}\right)}

\newcommand{\bca}{\left(\begin{array}{c}}
\newcommand{\eca}{\end{array}\right)}
\newcommand{\sr}{\stackrel}
\newcommand{\da}{\downarrow}
\newcommand{\tD}{\tilde{\D}}
        \newcommand{\R}{\mathbb R}
        \newcommand{\C}{\mathbb C}
        \newcommand{\h}{\mathbb H}
\newcommand{\Z}{\mathcal Z}
\newcommand{\N}{\mathbb N}
\newcommand{\tto}{\longrightarrow}
\newcommand{\ZZ}{{\mathcal Z}}
\newcommand{\ben}{\begin{displaymath}}
        \newcommand{\een}{\end{displaymath}}
\newcommand{\be}{\begin{equation}}
\newcommand{\ee}{\end{equation}}
        \newcommand{\bean}{\begin{eqnarray*}}
        \newcommand{\eean}{\end{eqnarray*}}
\newcommand{\nno}{\nonumber\\}
\newcommand{\bea}{\begin{eqnarray}}
        \newcommand{\eea}{\end{eqnarray}}
\newcommand{\x}{\times}

\newcommand{\Ga}{\Gamma}
\newcommand{\e}{\epsilon}
\renewcommand{\L}{\mathcal{L}}
\newcommand{\supp}[1]{\operatorname{#1}}
\newcommand{\norm}[1]{\parallel\, #1\, \parallel}
\newcommand{\ip}[2]{\langle #1,#2\rangle}
\newcommand{\nc}{\newcommand}
\nc{\gf}[2]{\genfrac{}{}{0pt}{}{#1}{#2}}
\nc{\mb}[1]{{\mbox{$ #1 $}}}
\nc{\real}{{\mathbb R}}
\nc{\comp}{{\mathbb C}}
\nc{\ints}{{\mathbb Z}}
\nc{\Ltoo}{\mb{L^2({\mathbf H})}}
\nc{\rtoo}{\mb{{\mathbf R}^2}}
\nc{\slr}{{\mathbf {SL}}(2,\real)}
\nc{\slz}{{\mathbf {SL}}(2,\ints)}
\nc{\su}{{\mathbf {SU}}(1,1)}
\nc{\so}{{\mathbf {SO}}}
\nc{\hyp}{{\mathbb H}}
\nc{\disc}{{\mathbf D}}
\nc{\torus}{{\mathbb T}}
\newcommand{\tk}{\widetilde{K}}
\newcommand{\boe}{{\bf e}}\newcommand{\bt}{{\bf t}}
\newcommand{\vth}{\vartheta}
\newcommand{\CGh}{\widetilde{\CG}}
\newcommand{\db}{\overline{\partial}}
\newcommand{\tE}{\widetilde{E}}
\newcommand{\tr}{{\rm tr}}
\newcommand{\ta}{\widetilde{\alpha}}
\newcommand{\tb}{\widetilde{\beta}}
\newcommand{\txi}{\widetilde{\xi}}
\newcommand{\hV}{\hat{V}}
\newcommand{\IC}{\mathbf{C}}
\newcommand{\IZ}{\mathbf{Z}}
\newcommand{\IP}{\mathbf{P}}
\newcommand{\IR}{\mathbf{R}}
\newcommand{\IH}{\mathbf{H}}
\newcommand{\IG}{\mathbf{G}}
\newcommand{\IS}{\mathbf{S}}
\newcommand{\CC}{{\mathcal C}}
\newcommand{\CD}{{\mathcal D}}
\newcommand{\CS}{{\mathcal S}}
\newcommand{\CG}{{\mathcal G}}
\newcommand{\CL}{{\mathcal L}}
\newcommand{\CO}{{\mathcal O}}
\nc{\ca}{{\mathcal A}}
\nc{\cag}{{{\mathcal A}^\Gamma}}
\nc{\cg}{{\mathcal G}}
\nc{\chh}{{\mathcal H}}
\nc{\ck}{{\mathcal B}}
\nc{\cd}{{\mathcal D}}
\nc{\cl}{{\mathcal L}}
\nc{\cm}{{\mathcal M}}
\nc{\cn}{{\mathcal N}}
\nc{\cs}{{\mathcal S}}
\nc{\cz}{{\mathcal Z}}
\nc{\sind}{\sigma{\rm -ind}}

\newcommand\clFN{{\mathcal F_\tau(\mathcal N)}}       
\newcommand\clKN{{\mathcal K_\tau(\mathcal N)}}       
\newcommand\clQN{{\mathcal Q_\tau(\mathcal N)}}       %
\newcommand\tF{\tilde F}
\newcommand\clA{\mathcal A}
\newcommand\clH{\mathcal H}
\newcommand\clN{\mathcal N}
\newcommand\Del{\Delta}
\newcommand\g{\gamma}
\newcommand\eps{\varepsilon}
\newcommand\vf{\varphi}
\newcommand\E{\mathcal E}

\newcommand{\CDA}{\mathcal{C_D(A)}} 
\newcommand{\dslash}{{\pa\mkern-10mu/\,}}

\newcommand{\sepword}[1]{\quad\mbox{#1}\quad} 
\newcommand{\comment}[1]{\textsf{#1}}   

\newcommand{\anti}{\blacksquare}

\parindent=0.0in

 \title{Index Theory for Locally Compact Noncommutative Geometries}

\author{A. L. CAREY}
\address{Mathematical Sciences Institute,
 Australian National University\\
 Canberra ACT, 0200 AUSTRALIA\\
 e-mail: alan.carey@anu.edu.au\\}
\author{V. GAYRAL}
\address{Laboratoire de Math\'ematiques\\
Universit\'e Reims Champagne-Ardenne\\
Moulin de la Housse-BP 1039, 51687 Reims FRANCE and   
Laboratoire de Math\'ematiques et Applications de Metz
UMR 7122,
Universit\'e de Metz et CNRS,
Bat. A, Ile du Saulcy 
F-57045 METZ Cedex 1 
FRANCE \\
e-mail: victor.gayral@univ-reims.fr}
\author{A. RENNIE}
\address{School of Mathematics and Applied Statistics\\
University of Wollongong\\
Wollongong NSW, 2522, AUSTRALIA\\
e-mail: renniea@uow.edu.au}
\author{F. A. SUKOCHEV}
\address{School of Mathematics and Statistics,
University of New South Wales\\
Kensington NSW, 2052 AUSTRALIA\\
e-mail: f.sukochev@unsw.edu.au}

\begin{abstract}
Spectral triples for nonunital algebras model locally compact spaces in noncommutative 
geometry. In the present text, we prove the local index formula for  spectral triples over nonunital algebras,
without the assumption of local units in our algebra. 
This formula has been successfully used to calculate index pairings in 
numerous noncommutative examples.  The absence of any other effective method of
investigating index problems in geometries that are genuinely noncommutative,
particularly in the nonunital situation,
was a primary motivation for this study and we illustrate this point with two examples in 
 the text.

In order to understand what is new in our approach in the commutative setting we prove an analogue 
of the Gromov-Lawson relative index formula (for Dirac type operators) 
for even dimensional manifolds with bounded geometry, without
invoking compact supports. For odd dimensional manifolds our index formula appears to be
completely new.
As we prove our local index formula in the framework of semifinite noncommutative 
geometry we are also able to prove, for manifolds of bounded geometry, a version 
of Atiyah's $L^2$-index Theorem for covering spaces.  We also explain how to interpret
the McKean-Singer formula in the nonunital case.

In order to prove the local index formula, we develop an integration theory compatible with
a refinement of the existing pseudodifferential calculus for spectral triples. We also clarify
some aspects of index theory for nonunital algebras.
\end{abstract}

\maketitle


\tableofcontents
\parskip=4pt

\section{Introduction}

Our objective in writing this memoir is to establish
 a unified framework to deal with index theory on 
locally compact spaces, both commutative and noncommutative. 
In the commutative situation this entails index theory on noncompact manifolds 
where  Dirac-type operators, for example, typically have noncompact resolvent, are not Fredholm, 
and so do not have a well-defined index. 
In initiating this study we were 
also interested to understand previous approaches to this problem such as 
those of Gromov-Lawson \cite{GL} and Roe \cite{Ro} from a new viewpoint: 
that of noncommutative geometry. In this latter setting the main
tool, the Connes-Moscovici local index formula, is not adapted to nonunital
examples. Thus our primary objective here is 
to extend that theorem to this broader context.

Index theory provided one of the main motivations for noncommutative geometry.
In \cite{Co1, Co4} it is explained how to express index pairings between the 
between the K-theory and K-homology of noncommutative algebras
using Connes' Chern character formula.
In examples this formula can be difficult to compute.  A more tractable analytic formula was
established by Connes and Moscovici in \cite{CoM} using a representative of the Chern character
that arises from unbounded Kasparov modules or `spectral triples' as they have come to be known.
Their resulting `local index formula' is an analytic cohomological expression for index pairings
that has been exploited by many authors in calculations in  fully noncommutative settings.

In previous work \cite{CPRS2, CPRS3, CPRS4} some of the present authors found a new proof of the formula
that applied for unital spectral triples in semifinite von Neumann algebras. However for some
time the understanding of the Connes-Moscovici formula in nonunital situations has remained unsatisfactory.
The main result of this article is a residue 
formula of Connes-Moscovici type for calculating the index pairing between the 
$K$-homology of nonunital algebras and their $K$-theory. This latter view of index 
theory, as generalised by Kasparov's bivariant $KK$ functor,
is central to our approach and we follow the general philosophy 
enunciated by Higson and Roe, \cite{HR}. 
One of our main advances is to avoid ad hoc 
assumptions on our algebras (such as the existence of local units). 

To illustrate our main result in practice we present two examples in Section 6.
Elsewhere we will explain how a version of the example of nonunital Toeplitz theory in \cite{PR}
can be derived from our local index formula.

To understand what is new about our theorem in the commutative case we apply our residue formula to manifolds of bounded geometry, obtaining a cohomological
formula of Atiyah-Singer type for the index pairing. We also prove an $L^2$-index theorem for
coverings of such manifolds.

We now explain in some detail these and our other results.

\subsubsection*{The noncommutative results}
The index theorems we prove rely on a general nonunital noncommutative integration theory and the
index theory
developed in detail in Sections 2 and 3. 

Section 2 presents an integration theory for weights which is compatible with
Connes and Moscovici's approach to the pseudodifferential calculus for spectral triples. This integration
theory is the key technical innovation, and allows us to treat the unital and nonunital cases
on the same footing.

An important feature of our approach is that we can eliminate the need to assume the existence of 
`local units' which mimic the notion of compact support, \cite{GGISV,Re,Re2}. 
The difficulty with the local unit approach
is that there are no general results guaranteeing their existence. Instead we
identify subalgebras of integrable and square integrable elements of our algebra, without the need to 
control `supports'.

In Section 3 we introduce a  triple $(\A, \HH, \D)$ 
where $\HH$ is a Hilbert space, 
$\A$ is a (nonunital) $*$-algebra of operators represented in a semifinite 
von Neumann subalgebra of $\B(\HH)$, 
and $\D$ is a self-adjoint unbounded operator on $\HH$ whose resolvent need 
not be compact, not even in the sense of semifinite von Neumann algebras. Instead
we ask that the product $a(1+\D^2)^{-1/2}$ is compact, and it is the need to control
this product that produces much of the technical difficulty.

We remark that there are good cohomological reasons for taking 
the effort to prove our results in the setting of semifinite noncommutative 
geometry, and that these arguments are explained in \cite{CCu}. 
In particular, \cite[Th\'eor\`eme 15]{CCu}  identifies a class of cyclic cocycles on a
given algebra  which have a natural representation as Chern characters, provided
one allows semifinite Fredholm modules.

We 
refer to the case when $\D$ does not have compact resolvent as the `nonunital case', 
and justify this terminology in Lemma \ref{lem:nonunital-spec-trip}. Instead of requiring that $\D$ be Fredholm 
we show that a  spectral triple $(\A, \HH, \D)$, in the sense of Section 3, 
defines an associated semifinite Fredholm module and a $KK$-class for $\A$.

This is an important point. 
It is essential in the nonunital version of the theory to have an appropriate definition of the index which we are computing.
Since the operator $\D$ of a general spectral triple need not be Fredholm, this is accomplished by following \cite{KNR}
to produce a $KK$-class. Then the index pairing can be defined via the Kasparov product. 

The role of the additional
smoothness and summability assumptions on the spectral triple is to produce the local index formula for computing the 
index pairing. Our smoothness and summability conditions are defined using the smooth version
of the integration theory in Section 2. This approach is justified by Propositions \ref{one-way} and \ref{lem:necessary}, which compare our definition with a more standard definition of finite summability.

Having identified workable definitions of smoothness and summability, the main technical  
obstacle we have to overcome in Section 3 is to find a suitable Fr\'echet completion 
of $\A$ stable under the holomorphic functional calculus. The integration theory of Section 2 provides
such an algebra, and in the unital case it reduces to previous solutions of this problem, \cite[Lemma 16]{Re}\footnote{Despite being about nonunital spectral triples, \cite[Lemma 16]{Re} produces a Fr\'echet completion which only takes smoothness, not integrability, into account.}.

In Section 4 we establish our local index formula in the sense of Connes-Moscovici. 
The underlying idea here is that Connes' Chern character, which defines an element 
of the cyclic cohomology of $\A$, computes the index pairing defined by a Fredholm 
module. Any cocycle in the same cohomology class as the Chern character will 
therefore also compute the index pairing. In this memoir we define several cocycles 
that represent the Chern character and which are expressed in terms of the 
unbounded operator $\D$. These cocycles generalise those found in 
\cite{CPRS2, CPRS3, CPRS4} (where semifinite versions of the local 
index formula were first proved) to the nonunital case. We have to prove 
that these additional cocycles, including the residue cocycle, are in the 
class of the Chern character in the $(b,B)$-complex.

Our main result  (stated in Theorem \ref{localindex} of Section 4) is then an expression 
for the index pairing using a nonunital version of the semifinite local index 
formula of \cite{CPRS2,CPRS3}, which is in turn a generalisation to the 
setting of semifinite von Neumann algebras of the original Connes-Moscovici 
\cite{CM} formula. Our noncommutative index formula is given by a sum of residues of zeta functions 
and is easily recognisable as a direct generalisation of the unital formulas of 
\cite {CM,CPRS2, CPRS3}. We emphasise that even for the standard $\B(\HH)$ case our local index formula is new. 

One of the main difficulties that we have to overcome is that while there is a well 
understood theory of Fredholm (or Kasparov) modules for nonunital algebras, the 
`right framework' for working with unbounded representatives of these $K$-homology 
classes has proved elusive. We believe that we have found the appropriate 
formalism and the resulting residue index formula  provides 
evidence that the approach to  spectral triples over nonunital algebras initiated in \cite{CGRS1} 
is fundamentally sound and leads to interesting applications. Related ideas on 
the $K$-homology point of view for relative index theorems are to be found in 
\cite{Ro2}, \cite{Bu} and \cite{Ca}, and further references in these texts.

We also discuss some fully noncommutative applications 
in Section 6, including the type I spectral triple of the Moyal plane
constructed in \cite{GGISV} and  semifinite spectral triples arising from 
torus actions on $C^\ast$-algebras, but leave other applications, 
such as those to the results in \cite{Perr}, \cite{PR} and \cite{Wa}, to elsewhere. 

To explain how we arrived at the technical framework described here, 
consider the simplest possible classical case, where $\HH=L^2({\mathbb R})$, 
$\D=\frac{d}{idx}$ and $\A$ is a certain $*$-subalgebra of the algebra of 
smooth functions on $\mathbb R$. Let $P=\chi_{[0,\infty)}(\D)$ be the 
projection defined using the functional calculus and the characteristic 
function of the half-line and let $u$ be a unitary in $\A$ such that $u-1$ 
converges to zero at $\pm\infty$ `sufficiently rapidly'. Then the classical 
Gohberg-Krein theory gives a formula for the index of the Fredholm 
operator $PM_uP$ where $M_u$ is the operator of multiplication by 
$u$ on $L^2({\mathbb R})$. In proving this theorem for general symbols 
$u$, one confronts the classical question (studied in depth in \cite{Simon}) 
of when an operator of the form $(M_u-1)(1+\D^2)^{-s/2}$, $s>0$, is trace 
class. In the general noncommutative setting of this article, this question 
and generalisations must still be confronted and this is done in Section 2.

\subsubsection*{The results for manifolds}
 In the case of closed manifolds, the local index formula in noncommutative geometry (due to Connes-Moscovici \cite{CM}) can serve as a starting point to derive
the Atiyah-Singer index theorem for Dirac type operators. This 
proceeds by a Getzler type argument 
 enunciated in this setting by Ponge, \cite{Ponge}, though similar arguments have been used previously with
 the JLO cocycle as a starting point in \cite{BF,CoM}. 
 While there is already a version of this Connes-Moscovici formula that 
 applies in the noncompact case \cite{Re2}, it
 relies heavily on the use of compact support assumptions.

For the application to noncompact manifolds $M$,  
we find that our noncommutative index theorem dictates that the 
appropriate algebra $\A$ consists of smooth functions which, together 
with all their derivatives, lie in $L^1(M)$. We show how to construct 
$K$-homology classes for this algebra from the Dirac operator on the 
spinor bundle over $M$. This $K$-homology viewpoint is 
related to Roe's approach \cite{Ro2} and to the relative index theory of \cite{GL}.

Then the results, for Dirac operators coupled to 
connections on sections of bundles over noncompact manifolds of 
bounded geometry, essentially follow as corollaries of the work of Ponge \cite{Ponge}. 
The theorems we obtain for even dimensional manifolds are not comparable with those in \cite{Ro}, but are 
closely related to the viewpoint of Gromov-Lawson \cite{GL}. 
For odd dimensional manifolds we obtain an index theorem for 
generalised Toeplitz operators that appears to be new, 
although one can see an analogy with the results of H\"ormander \cite[section 19.3]{Ho}. 

We now digress to give more detail on how, for noncompact even 
dimensional spin manifolds $M$, our local index formula implies a 
result analogous to the Gromov-Lawson relative index theorem \cite{GL}. 
What we compute is an index pairing of $K$-homology classes for the  algebra 
$\A$ of smooth functions which, along with their derivatives, all lie in $L^1(M)$,
with differences of classes $[E]-[E']$ 
in the $K$-theory of $\A$. We verify that the Dirac operator on a spin manifold of 
bounded geometry satisfies the hypotheses needed to use our residue 
cocycle formula so that we obtain a local index formula of the form
\begin{equation} 
\label{AaSs}
\langle [E]-[E'],[\D]\rangle=(const)\int\widehat{A}(M)({\rm Ch}(E)-{\rm Ch}(E')).
\end{equation}
where ${\rm Ch}(E)$ and ${\rm Ch}(E')$ are the Chern classes of vector bundles $E$ and $E'$ over 
$M$. 
We emphasise that in our approach, the connections that lead to the curvature terms in 
${\rm Ch}(E)$ and ${\rm Ch}(E')$, do not have to coincide outside a compact set as in \cite{GL}. 
Instead they satisfy constraints that make the difference of curvature terms integrable over $M$.

We reiterate that, for our notion  of spectral triple, the 
operator $\D$ need not be Fredholm and that the choice of the algebra $\A$ is dictated 
by the noncommutative theory developed in Section 3. In that section we explain  
the minimal assumptions on the pair $(\D, \A)$ such that we can define a Kasparov module and so a
$KK$-class. The further assumptions required 
for the local index formula are specified, almost uniquely, 
by the noncommutative integration theory developed in
Section 2. We 
verify  (in Section 5) what these assumptions mean for the commutative algebra $\A$ of
functions on a manifold and 
Dirac-type operator $\D$, in the case of a  noncompact 
manifold of bounded geometry,  and  prove that in this case we do indeed 
obtain a  spectral triple in the sense of our general definition.

In the odd dimensional case, for manifolds of bounded geometry, we obtain an 
index formula that is apparently new, although it is of APS-type. The residues in the noncommutative 
formula are again calculable by the techniques employed by  \cite{Ponge} in the compact case. 
This results in a formula for the pairing of the Chern character of a unitary $u$ in a 
matrix algebra over $\A$, representing an odd $K$-theory class, with the 
$K$-homology class of  a Dirac-type operator $\D$ of the form
\begin{equation} 
\label{AaPpSs}
\langle [u],[\D]\rangle=(const)\int\hat A(M) {\rm Ch}(u).
\end{equation}
We emphasise that the assumptions on the algebra $\A$ of functions on $M$ are such that this integral 
exists but they do not require compact support conditions.

We were also motivated to consider Atiyah's $L^2$-index Theorem in this setting. 
Because we prove our index formula in the general framework of operators 
affiliated to semifinite von Neumann algebras we are able, with some additional effort, 
to obtain at the same time a version of the $L^2$-index Theorem of Atiyah for Dirac 
type operators on the universal cover of $M$ (whether $M$ is closed or not). 
We are able to reduce our proof in this $L^2$-setting to known results about the 
local asymptotics at small time of heat kernels on covering spaces. 
The key point here
is that our residue cocycle formula gives a uniform approach to all of these `classical' index theorems.

\subsubsection*{Summary of the exposition}
Section 2 begins by introducing the integration theory we employ, which is a refinement of
the ideas introduced in \cite{CGRS1}. Then we examine the interaction of our
integration theory with various notions of smoothness for spectral triples. 
In particular, we follow Higson, \cite{Hi}, and \cite{CPRS2} in extending the 
Connes-Moscovici pseudo-differential calculus to the nonunital setting. 
Finally we prove some trace 
estimates that play a key role in the subsequent technical parts of the discussion.
All these  generalisations are required for the proof of our main result in Section 4. 

Section 3 explains how our definition of   semifinite spectral triple 
results in an index pairing from Kasparov's point of view. In other words, 
while our  spectral triple does not {\it a priori} involve (possibly unbounded) 
Fredholm operators, there is an associated index problem for bounded 
Fredholm operators in the setting of Kasparov's $KK$-theory. We then show that by modifying our original 
spectral triple we may obtain an index problem for unbounded 
Fredholm operators without changing the Kasparov class in the bounded 
picture. This modification of our unbounded spectral triple proves to be 
essential, in two ways,  for us to obtain our residue formula in Section 4. 

The method we use in Section 4 to prove the existence of a formula of 
Connes-Moscovici type for the index pairing of our $K$-homology class 
with the $K$-theory of the nonunital algebra $\A$ is a modification of the 
argument in \cite{CPRS4}. This argument is in turn closely related to 
the approach of Higson \cite{Hi} to the Connes-Moscovici formula. 

The idea is to start with the 
resolvent cocycle of \cite{CPRS2,CPRS3, CPRS4} and show that 
it is well defined in the nonunital setting. We then show that there 
is an extension of the results in \cite{CPRS4} that gives a homotopy 
of the resolvent cocycle to the Chern character for the Fredholm module 
associated to the  spectral triple. The residue cocycle can then be 
derived from the resolvent cocycle in the nonunital case by much the 
same argument as in \cite{CPRS2, CPRS3}. 

In order to avoid cluttering 
our exposition with proofs of nonunital modifications of the estimates 
of these earlier papers, we relegate much detail to the Appendix. Modulo 
these technicalities we are able to show, essentially as in \cite{CPRS4}, 
that the residue cocycle and the resolvent cocycle are index cocycles 
in the class of the Chern character. Then Theorem \ref{localindex} in Section 
4 is the main result of this memoir. It gives a residue formula for the 
numerical index defined in Section 3 for  spectral triples.

We conclude Section 4 with a nonunital McKean-Singer formula and an
example showing that the integrability hypotheses can be weakened still further,
though we do not pursue the issue 
of finding the weakest conditions for our local index formula to hold in this text.

The applications to the index theory for Dirac-type operators on manifolds 
of bounded geometry are contained in Section 5. Also in Section 5 is a 
version of the Atiyah $L^2$-index Theorem that applies to covering spaces 
of noncompact manifolds of bounded geometry. In Section 6 we make a 
start on noncommutative examples, looking at torus actions on $C^*$-algebras
and at the Moyal plane. Any further treatment of noncommutative examples would add 
considerably to the length of this article, and is best left for another place.

{\bf Acknowledgements}. This research was supported by the Australian Research Council, 
the Max Planck Institute for Mathematics (Bonn) and the Banff International 
Research Station. A. Carey also thanks the Alexander von Humboldt Stiftung 
and colleagues in the University of M\"unster and V. Gayral also thanks the 
CNRS and the University of Metz.
We would like to thank our colleagues John Phillips and Magda Georgescu for discussions 
on nonunital spectral flow. Special thanks are given to Dima Zanin and Roger Senior 
for  careful readings  
of this manuscript at various stages.
We 
also thank Emmanuel Pedon for discussions on the Kato inequality, Raimar 
Wulkenhaar for discussions on index computations for the Moyal plane, and Gilles Carron, Thierry Coulhon, 
Batu G\"uneysu and Yuri Korduykov for discussions related to heat-kernels on noncompact manifolds.
Finally, it is a pleasure to thank the referees: their exceptional efforts have greatly improved this work.

\section{Pseudodifferential calculus and summability}\label{sec:psido-and-sum}

In this section we introduce our chief technical innovation on which most of our results rely.
It consists  of an $L^1$-type summability theory for weights adapted to both the 
nonunital and noncommutative settings.

It has become apparent to us while writing, that the integration theory presented here is closely
related to Haagerup's noncommutative $L^p$-spaces for weights, at least for $p=1,\,2$. Despite this, it is sufficiently
different to require a self-contained discussion.

It is an essential and important feature in all that follows that our approach comes essentially
from an $L^2$-theory:  we are forced to employ weights,  and 
a direct  $L^1$-approach is technically unsatisfactory for weights. This is because given
a weight $\varphi$ on a von Neumann algebra, the map $T\mapsto \varphi(|T|)$ is not 
subadditive  in
general.

Throughout this section, $\HH$ denotes a separable Hilbert space, $\cn\subset \B(\HH)$ is
a semifinite von Neumann algebra, $\D:{\rm dom}\,\D\to\HH$ is a self-adjoint operator
affiliated to $\cn$, and $\tau$ is a faithful, normal, semifinite trace on $\cn$. 
Our integration theory will also be parameterised by a
 real number $p\geq 1$, which will play the role of a dimension.

Different parts of the integration and pseudodifferential theory which we
introduce rely on different parts of the above data.
The pseudodifferential calculus can be formulated for any unbounded self-adjoint
operator $\D$ on a Hilbert space $\HH$. This point of view 
is implicit in Higson's abstract differential algebras, \cite{Hi}, and was made more 
explicit in \cite{CPRS2}.

The definition of summability we employ
depends on all the data above, namely $\D$, the pair $(\cn, \tau)$ and the number $p\geq 1$. 
We show in subsection \ref{subsec:sum-weights} how the pseudodifferential 
calculus is compatible with our definition of summability for spectral triples, and this will dictate 
our generalisation of finitely summable spectral triple to the nonunital case in  Section \ref{sec:Chern}.

The proof of the local index formula that we use in the nonunital setting requires 
some estimates on trace norms that are different from those used in the unital case. 
These are found in subsection \ref{subsec:trace-ests}. To prepare for these estimates, we also need some refinements 
of the pseudodifferential calculus introduced by Connes and Moscovici for unital 
spectral triples in \cite{Co5,CM}.

\subsection{Square-summability from  weight domains}

\label{subsec:sum-weights}

In this subsection we  show how an unbounded 
self-adjoint operator affiliated to a semifinite von Neumann algebra provides the foundation 
of an integration theory 
suitable for discussing finite summability for spectral triples.

Throughout this subsection, we let $\D$\index{$\D$, a self-adjoint operator} be a self-adjoint operator affiliated to a semifinite von 
Neumann algebra $\cn$\index{$\cn$, a semifinite von Neumann algebra} with faithful normal semifinite trace \index{$\cn$, a semifinite von Neumann algebra}
$\tau$, and let $p\geq 1$ be a real number. \index{$\tau$, a faithful, semifinite, normal trace on a von Neumann algebra}

\begin{definition}\label{def:d-does-int}
For any positive number $s>0$, we define the weight $\vf_s$ on $\cn$ by 
\index{$\vf_s$, weights defined by $\D$}
$$
T\in\cn_+\mapsto\vf_s(T):=\tau\big((1+\D^2)^{-s/4}T(1+\D^2)^{-s/4}\big)\in[0,+\infty].
$$
As usual, we set
$$
{\rm dom}(\vf_s):={\rm span}\{{\rm dom}(\vf_s)_+\}={\rm span}\big\{\big({\rm dom}(\vf_s)^{1/2}\big)^*{\rm dom}(\vf_s)^{1/2}\}\subset\cn,
$$
where \index{${\rm dom}(\vf_s)$, domain of the weight $\vf_s$}\index{${\rm dom}(\vf_s)^{1/2}$, `$L^2$-domain' of the weight $\vf_s$}
$${\rm dom}(\vf_s)_+
:=\left\{T\in\cn_+:\vf_s(T)<\infty\right\}\quad {\rm and} \quad{\rm dom}(\vf_s)^{1/2}:=\{T\in\cn:T^*T\in {\rm dom}(\vf_s)_+\}.
$$
\end{definition}

In the following, ${\rm dom}(\vf_s)_+$ is called  the positive domain and ${\rm dom}(\vf_s)^{1/2}$
the half domain.

\begin{lemma} 
The weights $\vf_s$, $s>0$, are faithful normal and semifinite,  with modular group given by
$$
\cn\ni T\mapsto (1+\D^2)^{-is/2}T(1+\D^2)^{is/2}.
$$
\end{lemma}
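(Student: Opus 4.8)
The plan is to recognise $\vf_s$ as the weight canonically associated, via the trace $\tau$, to a non-singular positive density affiliated to $\cn$, and then to read off all four assertions from the standard theory of such weights. I would set $\rho:=(1+\D^2)^{-s/2}$. Since $\D$ is self-adjoint and affiliated to $\cn$ and $1+\D^2\geq 1$, the operator $\rho$ is a positive contraction in $\cn$ whose inverse $\rho^{-1}=(1+\D^2)^{s/2}$ is a positive self-adjoint operator affiliated to $\cn$; in particular $\rho$ is non-singular, and $\rho^{1/2}=(1+\D^2)^{-s/4}\in\cn$, so that by definition $\vf_s(T)=\tau(\rho^{1/2}T\rho^{1/2})$ for $T\in\cn_+$.

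Faithfulness and normality are then immediate. If $T\geq 0$ and $\vf_s(T)=0$, then $\rho^{1/2}T\rho^{1/2}=0$ since $\tau$ is faithful, hence $T^{1/2}\rho^{1/2}=0$; as $\rho^{1/2}$ has dense range this forces $T^{1/2}=0$, i.e. $T=0$. Normality holds because $\vf_s=\tau\circ\Phi$ with $\Phi(T)=\rho^{1/2}T\rho^{1/2}$ a normal positive map $\cn_+\to\cn_+$ (recall $\rho^{1/2}$ is bounded) and $\tau$ is normal. For semifiniteness one should not use the spectral projections $e_n=\chi_{[-n,n]}(\D)$ directly, since these need not be $\tau$-finite; instead, putting $A_n:=(1+\D^2)^{s/4}e_n\in\cn$, one checks using $A_n\rho^{1/2}=e_n$ and $A_n^2\leq(1+n^2)^{s/2}$ that $A_nx\in\mathfrak n_{\vf_s}$ for every $x\in\mathfrak n_\tau$. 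Since the elements $e_nT=A_n(\rho^{1/2}T)$ are $\sigma$-weak limits of elements $A_ny$ with $y\in\mathfrak n_\tau$ (because $\mathfrak n_\tau$ is $\sigma$-weakly dense and left multiplication by the fixed bounded operator $A_n$ is $\sigma$-weakly continuous), and since $e_nT\to T$ $\sigma$-strongly, the left ideal $\mathfrak n_{\vf_s}$ is $\sigma$-weakly dense.

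The substantive point is the modular group. Here I would invoke the standard description of the modular automorphism group of a weight of the form $\tau(\rho^{1/2}\,\cdot\,\rho^{1/2})$ built from a trace and a non-singular positive density affiliated to the algebra (Pedersen--Takesaki; see also Takesaki's treatise): since $\sigma_t^\tau=\mathrm{id}$ one obtains $(D\vf_s:D\tau)_t=\rho^{it}$ and hence $\sigma_t^{\vf_s}(T)=\rho^{it}T\rho^{-it}=(1+\D^2)^{-ist/2}\,T\,(1+\D^2)^{ist/2}$, the group displayed in the statement; note the implementing unitaries $(1+\D^2)^{ist/2}$ lie in $\cn$, so $\sigma^{\vf_s}$ is inner. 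As a sanity check, the formal identity $\vf_s(xy)=\tau(\rho xy)=\tau(y\rho x)=\vf_s\!\big(y\,(\rho x\rho^{-1})\big)$, valid by the trace property, is precisely the KMS condition at inverse temperature $1$ for $\sigma_t(T)=\rho^{it}T\rho^{-it}$, and the KMS condition pins down the modular group uniquely; making this rigorous amounts to building the GNS representation of $\vf_s$ and diagonalising Tomita's operator $S$, which is routine once $\rho\in\cn$ is known to be bounded and non-singular. I expect the only places needing care are the semifiniteness argument (owing to the $\tau$-finiteness issue above) and locating the precise reference for the modular-group formula; everything else is routine.
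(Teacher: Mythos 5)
Your proposal is correct and, for faithfulness and normality, runs along exactly the same lines as the paper (both arguments reduce to injectivity of $(1+\D^2)^{-s/4}$ and normality of $\tau$). The two places where you diverge from the paper's proof are semifiniteness and the modular group, and in both the difference is worth noting.

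For semifiniteness, the paper's argument is more direct: given $T\in\cn_+$, use semifiniteness of $\tau$ to find $S\in\cn_+$ of finite trace with $S\leq(1+\D^2)^{-s/4}T(1+\D^2)^{-s/4}$, and observe that $S':=(1+\D^2)^{s/4}S(1+\D^2)^{s/4}$ is bounded (this is the operator-inequality ``division'' lemma: $S^{1/2}(1+\D^2)^{s/4}$, a priori only densely defined, is bounded with norm $\leq\|T\|^{1/2}$, hence $S'$ is its adjoint times itself), satisfies $S'\leq T$, and has $\vf_s(S')=\tau(S)<\infty$. Your cutoff construction with $A_n=(1+\D^2)^{s/4}e_n$ works, but it is roundabout: the step where you pass from $A_ny\in\mathfrak n_{\vf_s}$ ($y\in\mathfrak n_\tau$) to $e_nT$ being a $\sigma$-weak limit and then to density of $\mathfrak n_{\vf_s}$ is really a long way of observing that $\mathfrak n_\tau\subset\mathfrak n_{\vf_s}$ outright, since for $y\in\mathfrak n_\tau$ the trace property gives $\vf_s(y^*y)=\tau\big(y(1+\D^2)^{-s/2}y^*\big)\leq\tau(yy^*)=\tau(y^*y)<\infty$, and $\mathfrak n_\tau$ is already $\sigma$-weakly dense. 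Your instinct to avoid the spectral projections $e_n$ was sound (they need not be $\tau$-finite), but the remedy is simpler than the one you chose.

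For the modular group, the paper simply states that it ``follows from the definition of the modular group of a weight,'' i.e.\ it treats the formula $\sigma_t^{\vf_s}(T)=(1+\D^2)^{-ist/2}T(1+\D^2)^{ist/2}$ as the standard Pedersen--Takesaki fact for a weight of the form $\tau(\rho^{1/2}\cdot\rho^{1/2})$ with $\rho\in\cn_+$ non-singular. You supply the missing justification by computing the cocycle derivative $(D\vf_s:D\tau)_t=\rho^{it}$ and checking the KMS condition; this is more detailed than the paper but buys nothing it doesn't already assume known. Both are sound. The only genuine caveat in your write-up is the phrase ``the KMS condition pins down the modular group uniquely'' --- that uniqueness is itself a theorem (Takesaki) and should be cited rather than asserted as obvious, but this is a presentational point, not a gap.
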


\begin{proof} Normality of $\vf_s$ follows directly from the normality of $\tau$. 
To prove faithfulness of $\vf_s$, using faithfulness of $\tau$, we also need the 
fact that the bounded operator $(1+\D^2)^{-s/4}$ is injective. Indeed, let 
$S\in\mbox{dom}(\vf_s)^{1/2}$ and $T:=S^*S\in\mbox{dom}(\vf_s)_+$ with 
$\vf_s(T)=0$. From the trace property, we obtain $\vf_s(T)=\tau(S(1+\D^2)^{-s/2}S^*)$, 
so by the faithfulness of $\tau$, we obtain $0=S(1+\D^2)^{-s/2}S^*=|(1+\D^2)^{-s/4}S^*|^2$, 
so $(1+\D^2)^{-s/4}S^*=0$, which by injectivity implies $S^*=0$ and thus $T=0$. 
Regarding semifiniteness of $\vf_s$, one uses semifiniteness of $\tau$ to obtain 
that for any $T\in\cn_+$, there exists $S\in\cn_+$  of finite trace, with 
$S\leq (1+\D^2)^{-s/4}T(1+\D^2)^{-s/4}$. Thus $S':=(1+\D^2)^{s/4}S(1+\D^2)^{s/4}\leq T$ 
is non-negative, bounded and belongs to  $\mbox{dom}(\vf_s)_+$, as needed. The 
form of the modular group follows from the definition of the modular group of a weight.
\end{proof}

Domains of weights, and, a fortiori, intersections of domains of weights, are 
$*$-subalgebras of $\cn$. However,  $\mbox{dom}(\vf_s)^{1/2}$ is not a 
$*$-algebra but only a left ideal in $\cn$. To obtain a $*$-algebra structure 
from the latter, we need to force the $*$-invariance.  Since $\vf_s$ is faithful for each $s>0$, 
the inclusion of
$\mbox{dom}(\vf_s)^{1/2}\bigcap (\mbox{dom}(\vf_s)^{1/2})^*$ in its Hilbert space
completion (for the inner product coming from $\vf_s$) is injective. Hence by \cite[Theorem 2.6]{Tak2}, 
$\mbox{dom}(\vf_s)^{1/2}\bigcap (\mbox{dom}(\vf_s)^{1/2})^*$ is a full left
Hilbert algebra. Thus we may define a $*$-subalgebra of $\cn$ for each $p\geq 1$.

\begin{definition} Let  $\D$ be a self-adjoint operator affiliated to a 
semifinite von Neumann algebra $\cn$ with faithful normal semifinite trace $\tau$. 
Then for each $p\geq 1$ we define\index{$\B_2(\D,p)$, algebra of bounded square integrable elements of $\cn$}
$$
\B_2(\D,p):=\bigcap_{s>p} \Big({\rm dom}(\vf_s)^{1/2}\bigcap ({\rm dom}(\vf_s)^{1/2})^*\Big).
$$
The norms 
\begin{equation}\label{pn}
\B_2(\D,p)\ni T\mapsto \Q_n(T)
:=\left(\|T\|^2+\varphi_{p+1/n}(|T|^2)+\varphi_{p+1/n}(|T^*|^2)\right)^{1/2},\quad n\in\N,
\end{equation}
take finite values on $\B_2(\D,p)$ and provide a topology on $\B_2(\D,p)$ \index{$\Q_n$, seminorms on $\B_2(\D,p)$}\index{seminorms}\index{seminorms!$\Q_n$, seminorms on $\B_2(\D,p)$}
stronger than the norm topology. 
Unless mentioned otherwise we will always suppose that $\B_2(\D,p)$ has the topology defined by these norms.
\end{definition}

{\bf Notation.} Given a 
semifinite von Neumann algebra $\cn$ with faithful normal semifinite trace $\tau$,
we let $\tilde{\L}^p(\cn,\tau)$, $1\leq p<\infty$, denote the set of 
$\tau$-measurable operators $T$ affiliated to
$\cn$ with $\tau(|T|^p)<\infty$. We do not often use this notion of $p$-integrable elements, preferring
to use the bounded analogue, $\L^p(\cn,\tau):=\tilde{\L}^p(\cn,\tau)\cap\cn$, normed with 
$T\mapsto\tau(|T|^p)^{1/p}+\|T\|$.
\index{Schatten ideals}\index{$\L^p$, $\tilde{\L}^p$, Schatten ideals}

{\bf Remarks.} (i) If $(1+\D^2)^{-s/2}\in\L^1(\cn,\tau)$ for all $\Re(s)>p\geq 1$, then
$\B_2(\D,p)=\cn$, since then the weights $\vf_s$, $s>p$, are bounded and the norms $\Q_n$ are
all equivalent to the operator norm.

(ii) The triangle inequality for $\Q_n$ follows from the Cauchy-Schwarz 
inequality applied to the  inner product 
$\langle T,S\rangle_n=\vf_{p+1/n}(T^*S)$, and 
$\Q_n(T)^2=\Vert T\Vert^2 +\langle T,T\rangle_n+\langle T^*,T^*\rangle_n$. 
In concrete terms, an element $T\in\cn$ belongs to $\B_2(\D,p)$ if 
and only if for all  $s>p$, both $T(1+\D^2)^{-s/4}$ and $T^* (1+\D^2)^{-s/4}$ belong to 
$\cl^2(\cn,\tau)$, the ideal of $\tau$-Hilbert-Schmidt operators.

(iii) The norms $\Q_n$ are increasing, in the sense that for 
$n\leq m$ we have $\Q_n\leq \Q_m$. We
leave this as an exercise, but observe that
this requires the cyclicity of the trace. The following result of Brown and 
Kosaki gives the strongest statement on this
cyclicity. By the preceding Remark (ii), we do not need the full power of this result here, but record it
for future use.

\begin{prop} \cite[Theorem 17]{BrK}
\label{prop:everybody-knows}
Let $\tau$ be a faithful normal semifinite trace on a von Neumann algebra $\cn$, and
let $A$, $B$ be $\tau$-measurable operators affiliated to $\cn$. If 
$AB,\,BA\in\tilde{\cl}^1(\cn,\tau)$ then
$\tau(AB)=\tau(BA)$.
\end{prop}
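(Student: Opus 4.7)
The plan is to reduce trace cyclicity for $\tau$-measurable operators to the bounded-operator case, where cyclicity is the standard tracial property of $\tau$, and then pass to the limit in $\tilde{\cl}^1(\cn,\tau)$. The key conceptual point is that we only control $AB$ and $BA$ as trace class operators, not $A$ or $B$ separately, so all approximations must be adapted to that constraint.

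First I would take polar decompositions $A=U|A|$ and $B=V|B|$ and introduce the spectral projections $E_n:=\chi_{[0,n]}(|A|)$ and $F_n:=\chi_{[0,n]}(|B|)$, which lie in $\cn$ and increase strongly to $1$ as $n\to\infty$. The truncated operators $A_n:=AE_n$ and $B_n:=F_nB$ are then bounded and belong to $\cn$. For bounded operators with trace class product the equality $\tau(ST)=\tau(TS)$ is the standard tracial property of a semifinite trace, so the heart of the argument will be to upgrade this equality to $A$ and $B$ by approximation.

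Next, one shows that the products $A_nB_n=AE_nF_nB$ and $B_nA_n=F_nBAE_n$ are in $\tilde{\cl}^1(\cn,\tau)$. This is where the hypothesis enters: since $AB$ and $BA$ are trace class, writing $A_nB_n = AB - A(1-E_nF_n)B$ exhibits $A_nB_n$ as a cutoff of the trace class operator $AB$ by a bounded positive contraction, and similarly for $B_nA_n$. Applying the bounded-operator cyclicity to the pair $(A_n,B_n)$ gives $\tau(A_nB_n)=\tau(B_nA_n)$ for every $n$.

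The remaining step is to pass to the limit and show $\tau(A_nB_n)\to\tau(AB)$ and $\tau(B_nA_n)\to\tau(BA)$. This amounts to proving $A(1-E_nF_n)B\to 0$ and $B(1-F_nE_n)A\to 0$ in $\tilde{\cl}^1$-norm. The ingredients are strong convergence $E_nF_n\to 1$ (with uniform norm bound $1$), the trace class property of $AB$, and a noncommutative dominated convergence principle on the Banach space $\tilde{\cl}^1(\cn,\tau)$: if $T\in\tilde{\cl}^1$ and $(P_n)\subset\cn$ is a norm-bounded net converging strongly to $1$, then $TP_n\to T$ in $\tilde{\cl}^1$-norm (which follows from the Fatou property of $\tau$ and standard truncation arguments for noncommutative $L^1$).

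The main obstacle is precisely this limiting step. One cannot estimate $\|A(1-E_nF_n)B\|_1$ by $\|A\|\cdot\|1-E_nF_n\|\cdot\|B\|$ since $\|A\|$ and $\|B\|$ may well be infinite; the convergence must be extracted intrinsically from the trace class nature of $AB$ itself, viewing $A(1-E_nF_n)B$ as the residual of the approximation of $AB$ by its cutoffs. Handling this cleanly for $\tau$-measurable (hence generally unbounded) $A$ and $B$ is the technical heart of the result, and requires care with the joint measurability of the various spectral projections and the interplay between the Banach space structure of $\tilde{\cl}^1(\cn,\tau)$ and the strong operator topology on $\cn$.
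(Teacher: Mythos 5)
First, note that the paper itself offers no proof of this proposition: it is quoted verbatim as \cite[Theorem 17]{BrK} and used as an external input, so there is no internal argument to compare yours against. Your attempt must therefore stand on its own, and it has two genuine gaps.

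The fatal one is the truncation step. Writing $A_nB_n = AB - A(1-E_nF_n)B$ does \emph{not} exhibit the error term as a cutoff of $AB$: the contraction $1-E_nF_n$ sits \emph{between} $A$ and $B$, so $A(1-E_nF_n)B$ is not of the form $(AB)P_n$ or $P_n(AB)$ for any bounded $P_n$. Consequently neither its membership in $\tilde{\cl}^1(\cn,\tau)$ nor its convergence to $0$ in trace norm follows from the hypothesis that $AB\in\tilde{\cl}^1(\cn,\tau)$, and the dominated-convergence principle you invoke (which handles $TP_n\to T$ for $T\in\tilde{\cl}^1$ and $P_n\to 1$ boundedly in the strong topology) simply does not apply. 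The whole difficulty of the theorem is precisely that the hypothesis controls only the products $AB$ and $BA$ and gives no control over $ACB$ for operators $C$ inserted in the middle; you correctly identify this as ``the technical heart'' but then do not resolve it. The second gap is the base case: for bounded $S,T\in\cn$ with $ST,TS\in\L^1(\cn,\tau)$ but neither factor separately in any $\L^p$, the equality $\tau(ST)=\tau(TS)$ is \emph{not} the standard tracial property (which requires one factor in $\L^1$, or an $\L^p$--$\L^q$ pairing); it is essentially the bounded case of the very theorem being proved, and in $\B(\HH)$ it is usually obtained from Lidskii's theorem, which is not available in a general semifinite von Neumann algebra. A minor further slip: with $F_n=\chi_{[0,n]}(|B|)$ the operator $F_nB$ need not be bounded (the spectral projection of $|B|$ does not absorb $B$ from the left); you would need $\chi_{[0,n]}(|B^*|)B$ or $BF_n$. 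As it stands the argument reduces the theorem to an unproved instance of itself and leaves the limit passage unjustified, so it does not constitute a proof.
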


Another important result that we will frequently use comes from Bikchentaev's work.

\begin{prop}\cite[Theorem 3]{Bik}
\label{prop:bikky}
Let $\cn$ be a semifinite von Neumann algebra with faithful normal semfinite trace. If
$A,\,B\in\cn$ satisfy $A\geq0$, $B\geq 0$, and are such that $AB$ is trace class, then
$B^{1/2}AB^{1/2}$ and $A^{1/2}BA^{1/2}$ are also trace class, with
$\tau(AB)=\tau(B^{1/2}AB^{1/2})=\tau(A^{1/2}BA^{1/2})$. 
\end{prop}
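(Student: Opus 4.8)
The plan is to reduce the whole statement to the single claim that $C:=A^{1/2}B^{1/2}$ lies in $\cl^2(\cn,\tau)$. Since $A,B\in\cn_+$ are bounded, $C\in\cn$ and $C^*C=B^{1/2}AB^{1/2}$, $CC^*=A^{1/2}BA^{1/2}$. So once $C$ is known to be $\tau$-Hilbert--Schmidt, $B^{1/2}AB^{1/2}$ and $A^{1/2}BA^{1/2}$ automatically lie in $\L^1(\cn,\tau)$ with $\tau(B^{1/2}AB^{1/2})=\tau(C^*C)=\tau(CC^*)=\tau(A^{1/2}BA^{1/2})$, which is the second equality. For the first, write $AB=(AB^{1/2})B^{1/2}$ and $C^*C=B^{1/2}(AB^{1/2})$; both $(AB^{1/2})B^{1/2}=AB$ and $B^{1/2}(AB^{1/2})=C^*C$ lie in $\L^1(\cn,\tau)$ (the former by hypothesis, the latter by the preceding sentence), so Proposition~\ref{prop:everybody-knows} applied to the pair $AB^{1/2},\,B^{1/2}$ gives $\tau(AB)=\tau(C^*C)=\tau(B^{1/2}AB^{1/2})$. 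Hence everything follows from $C\in\cl^2(\cn,\tau)$.

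To prove $C\in\cl^2(\cn,\tau)$ I would first record the elementary fact that $A^{1/2}B\in\cl^2(\cn,\tau)$, since $(A^{1/2}B)^*(A^{1/2}B)=BAB=B(AB)\in\L^1(\cn,\tau)$, the latter being an ideal. Then truncate by the spectral projections $q_n:=\chi_{[1/n,\infty)}(B)$, which increase strongly to the support projection $s(B)$ of $B$. With $g_n:=g_n(B)$, $g_n(t)=t^{-1/2}\chi_{[1/n,\infty)}(t)$, one has $g_n\in\cn$, $B^{1/2}q_n=Bg_n$ and $g_nB^{1/2}=q_n$, so
$$Cq_n=A^{1/2}(B^{1/2}q_n)=(A^{1/2}B)g_n\in\cl^2(\cn,\tau),\qquad AB^{1/2}q_nB^{1/2}=(AB)(g_nB^{1/2})=(AB)q_n\in\L^1(\cn,\tau).$$
Now all operators occurring are genuinely trace class, so a cyclic permutation licensed by Proposition~\ref{prop:everybody-knows} gives
$$\tau\big((Cq_n)^*(Cq_n)\big)=\tau\big(q_nB^{1/2}AB^{1/2}q_n\big)=\tau\big(AB^{1/2}q_nB^{1/2}\big)=\tau\big((AB)q_n\big),$$
using $B^{1/2}q_nB^{1/2}=Bq_n$ at the last step. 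Since $\|q_n\|\le 1$, this is nonnegative and bounded above by $\tau(|AB|)$, uniformly in $n$. Finally $Cq_n\to Cs(B)=C$ strongly, and $(Cq_n)^*(Cq_n)\to C^*C$ $\sigma$-weakly; by $\sigma$-weak lower semicontinuity of $\tau$ on $\cn_+$ we get $\tau(C^*C)\le\liminf_n\tau\big((Cq_n)^*(Cq_n)\big)\le\tau(|AB|)<\infty$, i.e. $C\in\cl^2(\cn,\tau)$.

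I expect the only genuine obstacle to be the apparent circularity at the heart of the argument: one would like to compute $\tau(B^{1/2}AB^{1/2})=\tau\big(B^{1/2}\cdot(AB^{1/2})\big)=\tau\big((AB^{1/2})\cdot B^{1/2}\big)=\tau(AB)$ by a single cyclic permutation, but Proposition~\ref{prop:everybody-knows} requires both operators to be trace class and the trace-class property of $B^{1/2}AB^{1/2}$ is exactly what is being proved. The truncations $q_n$, together with the auxiliary observation $A^{1/2}B\in\cl^2(\cn,\tau)$, serve precisely to pass to a regime where this rearrangement is legitimate, after which lower semicontinuity of the trace returns the bound to the untruncated operators. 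Everything else is routine bookkeeping with the bounded functional calculus of $B$ and the ideal property of $\L^1(\cn,\tau)$.
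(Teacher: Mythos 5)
The paper does not prove this statement itself: it quotes it from Bikchentaev's Theorem~3 and treats it as a black box, so there is no proof in the paper to compare against. Your argument is a correct, self-contained replacement. Reducing everything to the single claim $C:=A^{1/2}B^{1/2}\in\cl^2(\cn,\tau)$, which then yields both equalities via Brown--Kosaki, and establishing the Hilbert--Schmidt property by truncating $B$ away from its kernel with $q_n=\chi_{[1/n,\infty)}(B)$, is sound and handles the non-invertibility of $B$ exactly where it matters. Two minor remarks, neither affecting correctness: first, in your three-term chain the operator $AB^{1/2}q_n=ABg_n$ is already trace class, so the middle cyclic permutation requires only the elementary identity $\tau(ST)=\tau(TS)$ for $T\in\cl^1(\cn,\tau)$ and $S\in\cn$, rather than the full strength of Proposition~\ref{prop:everybody-knows}; second, instead of the uniform bound $|\tau((AB)q_n)|\le\tau(|AB|)$ you could note directly that $\tau((AB)q_n)\to\tau((AB)s(B))=\tau(AB)$, since $S\mapsto\tau((AB)S)$ is $\sigma$-weakly continuous and the bounded net $q_n$ converges $\sigma$-weakly to $s(B)$, although the cruder bound of course suffices. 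You are also right to appeal to $\sigma$-weak lower semicontinuity of $\tau$ on $\cn_+$ (one of the standard characterizations of normality) rather than to the monotone-convergence form: since $q_n$ need not commute with $C^*C$, the net $q_nC^*Cq_n$ need not be increasing.
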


Next we show that the topological algebra  $\B_2(\D,p)$ is complete  and thus is a Fr\'echet algebra. 
The completeness argument relies on the Fatou property for the trace $\tau$, \cite{FK}.

\begin{prop}\label{Frechet1} 
The $*$-algebra $\B_2(\D,p)\subset \cn$ is a Fr\'echet algebra.
\end{prop}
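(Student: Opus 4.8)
The plan is to show that $\B_2(\D,p)$ is complete with respect to the family of norms $\Q_n$, since we already know from the preceding discussion that it is a $*$-algebra carrying a Fréchet-type topology stronger than the norm topology, and the multiplication is separately (indeed jointly) continuous once completeness is in place. So the whole content is the Cauchy-completeness argument. First I would take a sequence $(T_k)_{k\in\N}$ in $\B_2(\D,p)$ that is Cauchy for every $\Q_n$. In particular it is Cauchy in operator norm, so it converges in norm to some $T\in\cn$; this is the candidate limit. It remains to show $T\in\B_2(\D,p)$ and $\Q_n(T_k-T)\to 0$ for each $n$.

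Next I would fix $n$ and work with the auxiliary bounded operators. By Remark (ii), membership in $\B_2(\D,p)$ and the value of $\Q_n$ are governed by the Hilbert--Schmidt norms of $T(1+\D^2)^{-s/4}$ and $T^*(1+\D^2)^{-s/4}$; concretely, writing $R_s := (1+\D^2)^{-s/4}$, one has $\varphi_{p+1/n}(|T|^2) = \tau(R_{p+1/n} T^* T R_{p+1/n}) = \|T R_{p+1/n}\|_2^2$ and similarly with $T^*$. So the Cauchy condition says $(T_k R_{p+1/n})_k$ and $(T_k^* R_{p+1/n})_k$ are Cauchy in $\cl^2(\cn,\tau)$, which is a Banach space, hence they converge in $\|\cdot\|_2$ to some Hilbert--Schmidt operators $X_n, Y_n$. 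Since multiplication by the bounded operator $R_{p+1/n}$ is $\|\cdot\|$-to-$\|\cdot\|_2$--continuous on bounded sets — more carefully, $T_k \to T$ in norm forces $T_k R_{p+1/n} \to T R_{p+1/n}$ in the strong (or even $\sigma$-strong) topology — and $\|\cdot\|_2$-convergence implies convergence in measure, uniqueness of limits gives $X_n = T R_{p+1/n}$ and $Y_n = T^* R_{p+1/n}$. Here is where I would invoke the Fatou property of $\tau$ from \cite{FK}: $\|T R_{p+1/n}\|_2 \le \liminf_k \|T_k R_{p+1/n}\|_2 < \infty$, so $T R_{p+1/n}\in\cl^2$, and likewise for $T^*$. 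As this holds for every $n$, and these exhaust the condition "for all $s>p$", we conclude $T\in\B_2(\D,p)$.

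Finally, for the convergence $\Q_n(T_k - T)\to 0$: since $\Q_n(T_k-T)^2 = \|T_k-T\|^2 + \|(T_k-T)R_{p+1/n}\|_2^2 + \|(T_k-T)^*R_{p+1/n}\|_2^2$, the first term tends to zero by norm convergence, and for the other two I again apply Fatou, this time to the sequence indexed by the second parameter: for fixed $k$, $\|(T_k - T)R_{p+1/n}\|_2 \le \liminf_{m}\|(T_k - T_m)R_{p+1/n}\|_2$, and the right side is $\le \sup_{m\ge K}\Q_n(T_k - T_m)$ which is small for $k$ large by the Cauchy hypothesis. Hence $\Q_n(T_k - T)\to 0$, completing the proof that $\B_2(\D,p)$ is a Fréchet algebra.

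I expect the main obstacle to be the careful bookkeeping around which topology the products $T_k R_{p+1/n}$ converge in, and ensuring that the norm-limit $T$ really is the object whose Hilbert--Schmidt norm is controlled by Fatou — one must connect norm convergence of $T_k$ to $T$ with convergence-in-measure of $T_k R_{p+1/n}$ to $T R_{p+1/n}$ so that the Fatou inequality for $\|\cdot\|_2$ (which is stated for convergence in measure) applies. The algebraic identity $\varphi_{p+1/n}(|T|^2) = \|T R_{p+1/n}\|_2^2$, which rests on the trace property (and is exactly Remark (ii)), is the bridge that turns the weight-theoretic statement into a statement about the Banach space $\cl^2(\cn,\tau)$, where completeness and Fatou are standard.
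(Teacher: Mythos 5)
Your argument is correct and takes essentially the same route as the paper: norm-limit $T$, pass to convergence in measure, apply the Fatou property of $\tau$ from \cite{FK} to get both $T\in\B_2(\D,p)$ and $\Q_n(T_k-T)\to 0$. Two small remarks. First, there is a harmless redundancy in the middle of your argument: once you have identified the $\L^2(\cn,\tau)$-limit of $(T_kR_{p+1/n})_k$ with $TR_{p+1/n}$ via uniqueness of limits in measure, you already know $TR_{p+1/n}\in\L^2$ and $\|T_kR_{p+1/n}-TR_{p+1/n}\|_2\to 0$; the subsequent Fatou inequality adds nothing (though it is exactly the device the paper uses in place of invoking completeness of $\L^2$). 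Second, you dismiss the algebra structure in your opening sentence, but the claim that the $\Q_n$ are submultiplicative, $\Q_n(TS)\le \Q_n(T)\Q_n(S)$ — which is what makes multiplication jointly continuous and is proved in the paper via the operator inequality $S^*T^*TS\le \|T\|^2 S^*S$ — does require a sentence; it does not follow from completeness alone. With that one-line estimate supplied, your completeness argument finishes the proof as intended.
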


\begin{proof} 
Showing that $\B_2(\D,p)$ is a $*$-algebra is routine with the aid of the following argument.
For $T,\,S\in \B_2(\D,p)$, the operator inequality $S^*T^*TS\leq \Vert T^*T\Vert\,S^*S$ shows that
$$
\varphi_{p+1/n}(|TS|^2)=\varphi_{p+1/n}(S^*T^*TS)\leq\|T\|^2\vf_{p+1/n}(|S|^2).
$$
and, therefore, $\Q_n(TS)\leq \Q_n(T)\,\Q_n(S)$. 

For the completeness, let $(T_k)_{k\geq 1}$ be a Cauchy sequence in 
$\B_2(\D,p).$ Then $(T_k)_{k\geq 1}$ converges in norm, 
and so there exists $T\in\cn$ such that $T_k\to T$ in $\mathcal{N}$. 
For each norm $\Q_n$ we have $|\,\Q_n(T_k)-\Q_n(T_l)\,|\leq \Q_n(T_k-T_l)$, 
so we see that the numerical sequence $(\Q_n(T_k))_{k\geq 1}$ possesses  a limit. 
Now since
$$
(1+\D^2)^{-p/4-1/4n}T_k^*T_k(1+\D^2)^{-p/4-1/4n}\to (1+\D^2)^{-p/4-1/4n}T^*T(1+\D^2)^{-p/4-1/4n},
$$ 
in norm, it also converges in measure, and so  we may apply the 
Fatou Lemma, \cite[Theorem 3.5 (i)]{FK}, to deduce that
$$
\tau\big((1+\D^2)^{-p/4-1/4n}T^*T(1+\D^2)^{-p/4-1/4n}\big)\leq 
\liminf_{k\to\infty}\tau\big((1+\D^2)^{-p/4-1/4n}T_k^*T_k(1+\D^2)^{-p/4-1/4n}\big).
$$
Since the same conclusion holds for $TT^*$ in place of $T^*T$, we see that 
$$
\Q_n(T)\leq\liminf_{k\to\infty}\Q_n(T_k)=\lim_{k\to\infty}\Q_n(T_k)<\infty,
$$
and so $T\in\B_2(\D,p)$. 
Finally, fix $\eps>0$ and $n\geq 1$.  Now choose $N$ large enough so that 
$\Q_n(T_k-T_l)\leq \eps$ for all $k,\,l >N$.  
Applying the Fatou Lemma \index{Fatou Lemma}to the sequence $(T_k)_{k\geq 1}$, we have 
$\Q_n(T-T_l)\leq \liminf _{k\to \infty} \Q_n(T_k-T_l)\leq \eps$. 
Hence $T_k\to T$ in the topology of $\B_2(\D,p)$.
 \end{proof}

We now give  some easy but useful stability properties of the algebras $\B_2(\D,p)$.

\begin{lemma}\label{Bp-grew} 
Let $T\in\mathcal{B}_2(\D,p)$, 
$S\in\mathcal{N}$ and let $f\in L^{\infty}(\mathbb{R}).$
\begin{enumerate}
\item The operators $Tf(\mathcal{D}),\ f(\mathcal{D})T$ are in $\B_2(\D,p)$. 
If $T^*=T,$ then $Tf(T)\in\B_2(\D,p)$. In all these cases, $\Q_n(Tf(\D)),\,\Q_n(f(\D)T),\,\Q_n(Tf(T))\leq\Vert f\Vert_\infty \Q_n(T)$.
\item If $S^*S\leq T^*T$ and $SS^*\leq TT^*$, then $S\in\B_2(\D,p)$ with $\Q_n(S)\leq\Q_n(T)$. 
\item We have $S\in\B_2(\D,p)$ if and only if $|S|,|S^*|\in\B_2(\D,p)$.
\item The real and imaginary parts $\Re(T),\,\Im(T)$ belong to $\B_2(\D,p)$. 
\item If $T=T^*$,
let $T=T_+-T_-$ be  the Jordan decomposition of $T$ into positive and negative parts. \index{Jordan decomposition}
Then $T_+,\,T_-\in \B_2(\D,p)$. Consequently $\B_2(\D,p)={\rm span}\{\B_2(\D,p)_+\}$.
\end{enumerate}
\end{lemma}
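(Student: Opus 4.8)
The plan is to prove the five statements of Lemma~\ref{Bp-grew} in the given order, since each builds on the previous ones, using only the characterisation in Remark~(ii): an element $S\in\cn$ lies in $\B_2(\D,p)$ iff for all $s>p$ both $S(1+\D^2)^{-s/4}$ and $S^*(1+\D^2)^{-s/4}$ lie in $\cl^2(\cn,\tau)$, together with the elementary fact that $\cl^2(\cn,\tau)$ is a two-sided $*$-ideal in $\cn$ satisfying $\|XY\|_2\le\|X\|\,\|Y\|_2$ and $\|XY\|_2\le\|X\|_2\,\|Y\|$, and that $\|X^*\|_2=\|X\|_2$.

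For (1): since $f(\D)$ commutes with $(1+\D^2)^{-s/4}$, we have $(Tf(\D))(1+\D^2)^{-s/4}=T(1+\D^2)^{-s/4}f(\D)\in\cl^2$ with $\cl^2$-norm bounded by $\|f\|_\infty$ times that of $T(1+\D^2)^{-s/4}$; the adjoint $(f(\D)\bar{}\,T^*)(1+\D^2)^{-s/4}$ is handled the same way using $\|\bar f\|_\infty=\|f\|_\infty$, so $Tf(\D)\in\B_2(\D,p)$ and $\Q_n(Tf(\D))\le\|f\|_\infty\Q_n(T)$; the case $f(\D)T$ is symmetric, and the case $Tf(T)$ for $T=T^*$ follows because $f(T)\in\cn$ is bounded by $\|f\|_\infty$ on the spectrum and commutes with $|T|$, or more directly by noting $f(T)\in\cn$ so $Tf(T)(1+\D^2)^{-s/4}=f(T)\big(T(1+\D^2)^{-s/4}\big)\in\cl^2$ with the stated norm bound, and similarly for the adjoint since $(Tf(T))^*=\bar f(T)T$. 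For (2): from $S^*S\le T^*T$ we get, for any positive bounded $R$ (here $R=(1+\D^2)^{-s/4}$), the inequality $RS^*SR\le RT^*TR$, hence $\tau(RS^*SR)\le\tau(RT^*TR)$, i.e. $\|S(1+\D^2)^{-s/4}\|_2\le\|T(1+\D^2)^{-s/4}\|_2<\infty$; the hypothesis $SS^*\le TT^*$ gives the same for $S^*$, so $S\in\B_2(\D,p)$ and inspecting the defining formula \eqref{pn} for $\Q_n$ we also get $\Q_n(S)\le\Q_n(T)$ (note $\|S\|\le\|T\|$ follows too from $S^*S\le T^*T$).

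For (3): the forward direction is the content of Remark~(ii) (or apply (2) with the polar decomposition $S=V|S|$: since $|S|=V^*S$ and $|S^*|=SV^*$ one sees $|S|^*|S|=|S|^2=S^*S$ and $|S||S|^*=|S|^2$, while $SS^*=|S^*|^2=|S^*||S^*|^*$, and conversely $S=V|S|$ with $\|V\|\le1$); more cleanly, $|S|\in\B_2(\D,p)$ iff $|S|(1+\D^2)^{-s/4}\in\cl^2$ for all $s>p$ (note $|S|$ is self-adjoint so the adjoint condition is the same), and $\||S|(1+\D^2)^{-s/4}\|_2^2=\tau((1+\D^2)^{-s/4}S^*S(1+\D^2)^{-s/4})=\|S(1+\D^2)^{-s/4}\|_2^2$; similarly $\||S^*|(1+\D^2)^{-s/4}\|_2=\|S^*(1+\D^2)^{-s/4}\|_2$, giving the equivalence together with the norm identities. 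For (4): $\Re(T)=\tfrac12(T+T^*)$ and $\Im(T)=\tfrac1{2i}(T-T^*)$; since $T\in\B_2(\D,p)$ implies $T^*\in\B_2(\D,p)$ (the defining conditions are symmetric in $T,T^*$), and $\B_2(\D,p)$ is a vector space, both real and imaginary parts lie in it. For (5): when $T=T^*$, write $T=T_+-T_-$ with $T_\pm=\tfrac12(|T|\pm T)\ge0$; by (3) we have $|T|\in\B_2(\D,p)$, and $\B_2(\D,p)$ being a vector space containing $T$ and $|T|$ contains $T_\pm$; finally, since every element decomposes via (4) into self-adjoint parts and each self-adjoint part decomposes into $T_+-T_-$ with $T_\pm\in\B_2(\D,p)_+$, we conclude $\B_2(\D,p)=\mathrm{span}\{\B_2(\D,p)_+\}$.

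I do not expect any genuine obstacle here — the lemma is a sequence of routine verifications — but the step requiring the most care is (2), because one must correctly invoke operator monotonicity to pass from the operator inequalities $S^*S\le T^*T$, $SS^*\le TT^*$ to the inequalities of $\cl^2$-norms and hence of the seminorms $\Q_n$; the subtlety is that one needs the conjugation $X\mapsto RXR$ to preserve the order, which holds since $R=(1+\D^2)^{-s/4}\ge0$, and then monotonicity of $\tau$ on $\cn_+$ finishes it. One should also be slightly careful in (1) with the case $Tf(T)$ to note that $Tf(T)$ is automatically self-adjoint when $T$ is (so its half-domain membership is symmetric), and that the bound uses $|Tf(T)|^2=\bar f(T)|T|^2 f(T)\le\|f\|_\infty^2|T|^2$, which again feeds into (2)-type reasoning.
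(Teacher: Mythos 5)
Your proof is correct and uses essentially the same tools as the paper: the $\L^2$-characterisation of $\B_2(\D,p)$ from Remark~(ii), the trace property, and monotonicity of the weights $\varphi_s$. Parts (1), (2) and (4) match the paper's argument. For part (3) you give a direct Hilbert--Schmidt norm computation, $\||S|(1+\D^2)^{-s/4}\|_2 = \|S(1+\D^2)^{-s/4}\|_2$ and likewise for $|S^*|$; the paper instead invokes the identity $\Q_n(T)=(\Q_n(|T|)+\Q_n(|T^*|))/2$, which as printed is not quite right (from \eqref{pn} one has $\Q_n(T)^2=\tfrac12\bigl(\Q_n(|T|)^2+\Q_n(|T^*|)^2\bigr)$), so your route is cleaner. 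For part (5) you obtain $T_\pm\in\B_2(\D,p)$ from $T_\pm=\tfrac12(|T|\pm T)$, $|T|\in\B_2(\D,p)$ by (3), and linearity, whereas the paper applies (2) with the operator inequalities $T_\pm^2\le T_+^2+T_-^2=|T|^2=T^*T$; both are routine and equivalent in effort. No gaps.
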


\begin{proof} (1) Since $T(1+\D^2)^{-s/4}$, $T^*(1+\D^2)^{-s/4}\in\cl^2(\cn,\tau)$, 
we immediately see that 
$$
Tf(\D)(1+\D^2)^{-s/4}=T(1+\D^2)^{-s/4}f(\D),\ \bar f(\D)T^*(1+\D^2)^{-s/4}\ \in\cl^2(\cn,\tau),
$$ 
and when 
$T$ is self-adjoint, we also have
$$
Tf(T)(1+\D^2)^{-s/4}=f(T)T(1+\D^2)^{-s/4},\ \bar f(T)T(1+\D^2)^{-s/4}\ \in\cl^2(\cn,\tau).
$$ 
To prove the inequality we use the trace property to see that 
\begin{align*}
\tau((1+\D^2)^{-s/4}\bar{f}(\D)T^*Tf(\D)(1+\D^2)^{-s/4})&
=\tau(T(1+\D^2)^{-s/4}|f|^2(\D)(1+\D^2)^{-s/4}T^*)\\
&\leq \Vert f\Vert_\infty^2\tau((1+\D^2)^{-s/4}T^*T(1+\D^2)^{-s/4}),
\end{align*}
and similarly for $Tf(\D)$ and $Tf(T)$ when $T^*=T$.\\
(2) Clearly, $\varphi_s(S^*S)\leq\varphi_s(T^*T)$ and $\varphi_s(SS^*)\leq\varphi_s(TT^*)$. 
The assertion follows immediately.\\
(3) This follows from $\Q_n(T)=(\Q_n(|T|)+\Q_n(|T^*|))/2.$ Item (4) follows since $\B_2(\D,p)$ is
a $*$-algebra, 
and then item (5) follows from (2), since for a self-adjoint element $T\in \B_2(\D,p)$:
$$
T^*T=|T|^2=(T_++T_-)^2=T_+^2+T_-^2\geq\ T_+^2,\,T_-^2.
$$
This completes the proof.
\end{proof}

The algebras $\B_2(\D,p)$ are stable under the holomorphic functional calculus. We remind the reader
that when $\B$ is a  nonunital algebra, this means that for all $T\in \B$ and functions
$f$ holomorphic in a neighbourhood of the spectrum of $T$ {\em with $f(0)=0$} we have $f(T)\in \B$.

\begin{lemma}
\label{Hol-Calc} 
 For any $n\in\mathbb{N}$ the $*$-algebra
 $M_n(\B_2(\D,p))$ is stable under the holomorphic functional calculus
 in its $C^*$-completion.
\end{lemma}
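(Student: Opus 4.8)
The plan is to reduce the matrix case to the scalar case $n=1$ and then invoke a standard criterion for holomorphic stability of a non-unital Fr\'echet algebra. For the $n=1$ case, the key fact is that $\B_2(\D,p)$ is a Fr\'echet $*$-algebra (Proposition \ref{Frechet1}) which is a non-unital subalgebra of the $C^*$-algebra $\overline{\B_2(\D,p)}$. A Fr\'echet subalgebra $\B$ of a $C^*$-algebra $A$ is stable under the holomorphic functional calculus (in the non-unital sense, requiring $f(0)=0$) provided $\B$ is \emph{spectrally invariant}, i.e. for $T\in\B$ and $\lambda$ outside the spectrum of $T$ in the unitization $\tilde{A}$, the quasi-inverse of $T$ at $\lambda$ lies in $\B$; since only $f(0)=0$ is required, it suffices to check that whenever $T\in\B$ and $1+T$ is invertible in $\tilde{A}$, the element $(1+T)^{-1}-1=-T(1+T)^{-1}$ lies in $\B$. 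So the first and main step is: given $T\in\B_2(\D,p)$ with $1+T$ invertible in $\cn$ (equivalently in $\B(\HH)$), show $T(1+T)^{-1}\in\B_2(\D,p)$.

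For this, write $R=T(1+T)^{-1}\in\cn$, which is bounded. By Remark (ii) following the definition of $\B_2(\D,p)$, it suffices to prove that for all $s>p$ both $R(1+\D^2)^{-s/4}$ and $R^*(1+\D^2)^{-s/4}$ are in $\cl^2(\cn,\tau)$. But $R(1+\D^2)^{-s/4}=T(1+T)^{-1}(1+\D^2)^{-s/4}$, and since $(1+T)^{-1}(1+\D^2)^{-s/4}$ differs from $(1+\D^2)^{-s/4}$ only by a bounded operator — more precisely, $R(1+\D^2)^{-s/4}=(1+T)^{-1}\cdot T(1+\D^2)^{-s/4}$ after using $T(1+T)^{-1}=(1+T)^{-1}T$, and $T(1+\D^2)^{-s/4}\in\cl^2(\cn,\tau)$ by hypothesis — the product of a bounded operator with a Hilbert--Schmidt operator is Hilbert--Schmidt, so $R(1+\D^2)^{-s/4}\in\cl^2(\cn,\tau)$. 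The same argument applied to $R^*=( 1+T^*)^{-1}T^*$ gives $R^*(1+\D^2)^{-s/4}\in\cl^2(\cn,\tau)$. Hence $R\in\B_2(\D,p)$, and combined with continuity of multiplication and inversion in the relevant topology (the Fr\'echet structure from Proposition \ref{Frechet1}) this yields stability under holomorphic functional calculus for $n=1$; here one should also note, via the Neumann-series / Cauchy-integral representation of $f(T)$ together with completeness of $\B_2(\D,p)$, that the resulting $f(T)$ genuinely lies in $\B_2(\D,p)$ and not merely in its closure.

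For general $n$, the clean route is the standard fact that if $\B$ is spectrally invariant in a $C^*$-algebra $A$ then $M_n(\B)$ is spectrally invariant in $M_n(A)$ — stability under holomorphic functional calculus passes to matrix algebras automatically. Concretely, one repeats the computation above entrywise: for $T\in M_n(\B_2(\D,p))$ with $1+T$ invertible in $M_n(\cn)$, the identity $T(1+T)^{-1}=(1+T)^{-1}T$ still holds, and each entry of $(1+T)^{-1}T$ is a finite sum of products (bounded entry)$\cdot$(entry of $T$), with each entry of $T$ in $\B_2(\D,p)$; by the module property of $\B_2(\D,p)$ under left and right multiplication by $\cn$ (Lemma \ref{Bp-grew}(1) handles the $f(\D)$ case, but the same $S^*T^*TS\le\|T\|^2 S^*S$ estimate used in Proposition \ref{Frechet1} gives left/right $\cn$-multiplier boundedness), each such product lies in $\B_2(\D,p)$, hence so does each entry of $T(1+T)^{-1}$, hence $T(1+T)^{-1}\in M_n(\B_2(\D,p))$.

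The step I expect to be the main obstacle is not any single estimate — those are all variations on ``bounded times Hilbert--Schmidt is Hilbert--Schmidt'' — but rather being careful about the \emph{non-unital} bookkeeping: one must consistently work with the condition $f(0)=0$, pass to the unitization only for the purpose of taking resolvents, and verify that the contour-integral definition of $f(T)$ produces an element of $\B_2(\D,p)$ itself (using that $\B_2(\D,p)$ is a closed subspace and the integrand is continuous into it). Once that framework is set up correctly, the matrix case adds nothing essential.
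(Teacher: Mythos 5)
Your $n=1$ argument is correct and is essentially the paper's, written a bit more explicitly: you show $T(1+T)^{-1}\in\B_2(\D,p)$ using the commutativity $T(1+T)^{-1}=(1+T)^{-1}T$ to write $R(1+\D^2)^{-s/4}=(1+T)^{-1}\cdot T(1+\D^2)^{-s/4}$ as bounded times Hilbert--Schmidt, and likewise for $R^*$, which is precisely the ``minor extension of Lemma~\ref{Bp-grew}(1)'' the paper alludes to. However, your invocation of a ``standard criterion'' that spectral invariance of a Fr\'echet subalgebra implies holomorphic stability is not quite a theorem as stated: one also needs some form of continuity of the resolvent map into the Fr\'echet topology for the Cauchy integral to converge there. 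The paper does this explicitly by estimating $\Q_n\big((z-T)^{-1}-z^{-1}\big)$ uniformly on the contour $\Gamma$, which is exactly the same ``bounded times Hilbert--Schmidt'' estimate you used for the special value $z=-1$. You gesture at this (``via the Cauchy-integral representation together with completeness''), but to close the gap you need to actually carry out the $\Q_n$-bound on the contour.

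The real error is in your entry-wise matrix argument. You claim that $\B_2(\D,p)$ is a bimodule under left and right multiplication by $\cn$, citing the inequality $S^*T^*TS\le\|T\|^2 S^*S$. But that inequality controls only $\vf_s(|TS|^2)$, not $\vf_s(|(TS)^*|^2)$; the operator inequality $TAT^*\le\|T\|^2 A$ for $A\ge0$ and general bounded $T$ is \emph{false}. Structurally, $\B_2(\D,p)$ is the intersection of a left ideal (the half-domain $\operatorname{dom}(\vf_s)^{1/2}$) with its adjoint, a right ideal; such an intersection is in general neither a left nor a right ideal of $\cn$. Concretely, for $S\in\cn$ and $R\in\B_2(\D,p)$, the product $SR$ has $SR(1+\D^2)^{-s/4}=S\cdot R(1+\D^2)^{-s/4}$ Hilbert--Schmidt, but $(SR)^*(1+\D^2)^{-s/4}=R^*S^*(1+\D^2)^{-s/4}$ cannot be reorganized as (Hilbert--Schmidt)$\cdot$(bounded) unless $S^*$ commutes with $(1+\D^2)^{-s/4}$, which the entries of $(1+T)^{-1}$ generally do not. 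Your scalar argument works precisely because $(1+T)^{-1}$ \emph{does} commute with $T$, a fact that does not pass to individual matrix entries. So the entry-wise reduction fails; the correct route for general $n$ is the one you gave as the ``clean route'' and which the paper uses, namely citing Schweitzer's theorem (\cite{LBS}) that if a Fr\'echet algebra is local then so is $M_n$ of it.
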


\begin{proof} 
We begin with the $n=1$ case.
If $T\in\B_2(\D,p)$ is such that $1+T$ is invertible in $\mathcal{N}$, 
then by (a minor extension of)
Lemma \ref{Bp-grew} (1), we see that
\begin{align}
(1+T)^{-1}-1=-T(1+T)^{-1}\in\B_2(\D,p).
\label{eq:fracs}
\end{align}
Equation \eqref{eq:fracs} and Lemma \ref{Bp-grew} part (1) gives, for $z$  in the
resolvent set  of $T$,
$$
\Q_n\big((z-T)^{-1}-z^{-1}\big)=\Q_n\big(z^{-1}T(z-T)^{-1}\big)\leq 
\Vert (1+T)(z-T)^{-1}\Vert\,\Q_n\big({z}^{-1}T(1+T)^{-1}\big).
$$
Set $C_z=\Vert (1+T)(z-T)^{-1}\Vert$ and
 let $\Gamma$ be a positively oriented 
contour surrounding the spectrum of $T$ with $0\not\in\Gamma$, and
$f$  holomorphic in a neighborhood of 
the spectrum of $T$ containing $\Gamma$. Then 
$$
\Q_n\left(\frac{1}{2\pi i}\int_\Gamma f(z)\left[(z-T)^{-1}-z^{-1}\right]dz\right)\leq 
\frac{C}{2\pi}\,\Q_n(T(1+T)^{-1})\int_\Gamma \left|\frac{f(z)dz}{z}\right|<\infty,
$$
where $C=\sup_{z\in\Gamma}C_z$. Thus
we have (when $\B_2(\D,p)\subset \cn$ is nonunital)
$$
\int_\Gamma{f(z)}{(z-T)^{-1}}\,dz\in\,\B_2(\D,p)\oplus\mathbb{C}\,{\rm Id}_\cn,
$$
with the scalar component equal to $f(0){\rm Id}_\cn$. 

The general case follows from 
the $n=1$ case by 
main theorem of \cite{LBS}.
\end{proof}

\subsection{Summability from  weight domains}
\label{symdom}

As in the last subsection, we let $\D$ be a self-adjoint operator affiliated to a semifinite von 
Neumann algebra $\cn$ with faithful normal semifinite trace 
$\tau$ and $p\geq 1$. 

In the previous subsection, we have seen that 
the algebra $\B_2(\D,p)$ plays the role of a $*$-invariant 
$L^2$-space in the setting of weights.
To construct a $*$-invariant  $L^1$-type space associated 
with the data $(\cn,\tau,\D,p)$, there are two obvious
strategies. 

One strategy is to define seminorms on $\B_2(\D,p)^2$ 
(the finite span of products) and  to then complete this space. The other approach
is to take the projective tensor product completion of 
$\B_2(\D,p)\otimes \B_2(\D,p)$ and then consider its image
in $\cn$ under the multiplication map. In fact both approaches yield the 
same answer, and complementary benefits.
\newcommand{\q}{a}
\newcommand{\qq}{A}

We begin by recalling the projective tensor product\index{projective tensor product} topology in our setting. It is defined to be the
strongest locally convex topology on the algebraic tensor product such that the natural bilinear map 
$$
\B_2(\D,p)\times\B_2(\D,p)\mapsto \B_2(\D,p)\otimes \B_2(\D,p),
$$
is continuous, \cite[Definition 43.2]{Trev}. The projective tensor 
product topology can be described in terms of seminorms
$\tilde{\PP}_{n,m}$ defined by
\begin{equation}
\tilde{\PP}_{n,m}(T)
:=\inf\Big\{\sum_{{\rm finite}}\Q_n(T_{i,1})\,\Q_m(T_{i,2}):\ T=\sum_{{\rm finite}}T_{i,1}\otimes T_{i,2}\Big\},\quad n,m\in\N.
\label{eq:tilde-pp}
\end{equation}
(In fact, since the $\Q_n$ are norms, so too are the $\tilde{\PP}_{n,m}$).
Using the fact that the norms $\Q_n$ are increasing 
and from the arguments 
of Corollary \ref{onecor}, we see  that for $k\leq n$ and $l\leq m$ we
have $\tilde{\PP}_{k,l}\leq \tilde{\PP}_{n,m}$. This allows us to show that the projective tensor product
topology is in fact determined by the subfamily of seminorms $\tilde{\PP}_{n}:=\tilde{\PP}_{n,n}$, and 
accordingly we restrict to this
family for the rest of this discussion.

Then we let $\B_2(\D,p)\otimes_\pi\B_2(\D,p)$ denote the completion of 
$\B_2(\D,p)\otimes \B_2(\D,p)$ in the projective
tensor product topology. The projective tensor product 
topology is the unique topology on $\B_2(\D,p)\otimes\B_2(\D,p)$
such that, \cite[Proposition 43.4]{Trev}, 
for any locally convex topological vector space $G$, the canonical isomorphism 
$$
\big\{{\rm bilinear\ maps}\ \B_2(\D,p)\times\B_2(\D,p)\to G\big\}
\longrightarrow\big\{{\rm linear\ maps}\ \B_2(\D,p)\otimes\B_2(\D,p)\to G\big\},
$$
gives an (algebraic) isomorphism
\begin{align*}
&\big\{{\rm continuous\ bilinear\ maps}\ \B_2(\D,p)\times\B_2(\D,p)\to G\big\}\longrightarrow\\
&\hspace{6,3cm} \big\{{\rm continuous\ linear\ maps}\ \B_2(\D,p)\otimes\B_2(\D,p)\to G\big\}.
\end{align*}
Since the multiplication map is a continuous bilinear map 
$m:\B_2(\D,p)\times\B_2(\D,p)\to \B_2(\D,p)$, we obtain
a continuous (with respect to the projective tensor product topology) 
linear map $\tilde{m}:\B_2(\D,p)\otimes\B_2(\D,p)\to \B_2(\D,p)$. 
We extend $\tilde{m}$ to the completion
$\B_2(\D,p)\otimes_\pi\B_2(\D,p)$ and denote by $\tilde{\B}_1(\D,p)\subset \B_2(\D,p)$ 
the image of $\tilde{m}$. Since $\tilde{m}$ is continuous, 
$\tilde{m}$ has closed kernel, and there is an isomorphism of
topological vector spaces between
$\tilde{\B}_1(\D,p)$ with the quotient topology (defined below)
and $\B_2(\D,p)\otimes_\pi\B_2(\D,p)/\ker\tilde{m}$.

Now by \cite[Theorem 45.1]{Trev}, any $\Theta\in \B_2(\D,p)\otimes_\pi\B_2(\D,p)$ 
admits a representation as an absolutely convergent sum (i.e. convergent for all $\tilde{\PP}_n$)
$$
\Theta=\sum_{i=0}^\infty \lambda_i R_i \otimes S_i,\qquad  R_i,\,S_i\in \B_2(\D,p),
$$
such that
\begin{align}
  \sum_{i=0}^\infty\lambda_i<\infty\ \  \mbox{and}\ \ 
\Q_n(R_i),\, \Q_n(S_i)\to 0,\;i\to\infty\ \ \mbox{ for all }n\in\N.
\label{eq:nice-rep}
\end{align}
By defining $\tilde{R}_i=\lambda_i^{1/2}R_i$ and $\tilde{S}_i=\lambda_i^{1/2}S_i$, we see that we can
represent $\Theta$ as an absolutely convergent sum in each of the norms $\tilde{\PP}_n$
\begin{equation}
\Theta=\sum_{i=0}^\infty \tilde{R}_i\otimes\tilde{S}_i, \quad {\rm such\ that\  for\ all\ }n\geq 1\ \ 
\big(\Q_n(\tilde{R}_i)\big)_{i\geq 0},\,\big(\Q_n(\tilde{S}_i)\big)_{i\geq 0}\in \ell^2(\N_0).
\label{eq:l2-rep-sum}
\end{equation}

Having considered the basic features of the projective tensor product approach, we
now consider the approach based on products of elements of $\B_2(\D,p)$.
So we let $\B_2(\D,p)^2$ be the finite linear span of  products from $\B_2(\D,p)$,
and define a family of  norms, $\{\PP_{n,m}:\,n,m\in\N\}$, on $\B_2(\D,p)^2$, by setting
\begin{equation}
\label{new-norm}
\PP_{n,m}(T)
:=\inf\Big\{\sum_{i=1}^k\Q_n( T_{1,i})\,\Q_m(T_{2,i})\ :\  
T=\sum_{i=1}^kT_{1,i}T_{2,i},\ T_{1,i},\,T_{2,i}\in \B_2(\D,p)\Big\}.
\end{equation}
Here the sums are finite and the infimum runs over all possible such representations of $T$. 
Just as we did for the norms $\tilde{\PP}$ after Equation \eqref{eq:tilde-pp}, 
we may use the fact that the $\Q_n$ are increasing to show that the topology
determined by the norms $\PP_{n,m}$ is the same as that determined by the smaller set of norms
$\PP_n:=\PP_{n,n}$. Thus we may restrict attention to the norms $\PP_n$.\index{$\PP_n$, seminorms on $\B_1(\D,p)$}\index{seminorms!$\PP_n$, seminorms on $\B_1(\D,p)$}

Now $\B_2(\D,p)^2\subset \tilde{\B}_1(\D,p)$ and, regarding $\tilde{\B}_1(\D,p)$
as a quotient as above, we claim that the norms $\PP_n$ are the natural seminorms (restricted to
$\B_2(\D,p)^2$) defining the Fr\'echet topology
on the quotient, \cite[Proposition 7.9]{Trev}. 

To see this, recall that the quotient seminorms $\tilde{\PP}_{n,q}$ on $\tilde{\B}_1(\D,P)$ are defined, for 
$T\in\tilde{\B}_1(\D,p)\cong \B_2(\D,p)\otimes_\pi\B_2(\D,p)/\ker\tilde{m}$, by
$$
\tilde{\PP}_{n,q}(T):=\inf_{T=\tilde{m}(\Theta)}\tilde{\PP}_n(\Theta).
$$
Then for $T\in \B(\D,p)^2$ we have the elementary equalities
\begin{align*}
\PP_n(T)&=\inf\Big\{\sum_{{\rm finite}}\Q_n(T_{i,1})\Q_n(T_{i,2}):\ T=\sum_{{\rm finite}} T_{i,1}T_{i,2}\Big\}\\
&= \inf\Big\{\sum_{{\rm finite}}\Q_n(T_{i,1})\Q_n(T_{i,2}):\ \Theta=\sum_{{\rm finite}} T_{i,1}\otimes T_{i,2}\ \&\ \ \tilde{m}(\Theta)=T\Big\}
=\inf_{\tilde{m}(\Theta)=T}\tilde{\PP}_{n}(\Theta).
\end{align*}

Thus the $\PP_n$ are norms on $\B_2(\D,p)^2$.

\begin{definition}
\label{def:sumpin-like-this}
Let  $\B_1(\D,p)$ be the completion of $\B_2(\D,p)^2$ \index{$\B_1(\D,p)$, algebra of integrable elements of $\cn$}
with respect to the topology determined by the 
family of  
norms $\{\PP_n:\,n\in\N\}$. 
\end{definition}

\begin{theorem} 
We have an equality of Fr\'echet spaces $\B_1(\D,p)=\tilde{\B}_1(\D,p)$.
\end{theorem}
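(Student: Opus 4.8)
The plan is to show that the natural inclusion $\B_2(\D,p)^2 \hookrightarrow \tilde{\B}_1(\D,p)$ is a topological isomorphism onto a dense subspace, so that passing to completions gives $\B_1(\D,p) = \tilde{\B}_1(\D,p)$ as Fréchet spaces. The key observation — essentially already recorded in the displayed computation immediately preceding Definition \ref{def:sumpin-like-this} — is that for $T \in \B_2(\D,p)^2$ one has the exact identity $\PP_n(T) = \tilde{\PP}_{n,q}(T) = \inf_{\tilde m(\Theta)=T}\tilde{\PP}_n(\Theta)$, i.e. the norms $\PP_n$ on $\B_2(\D,p)^2$ coincide with the restrictions of the quotient norms on $\B_2(\D,p)\otimes_\pi\B_2(\D,p)/\ker\tilde m \cong \tilde{\B}_1(\D,p)$. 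Granting this, the completion of $\B_2(\D,p)^2$ in the $\PP_n$-topology is, by definition, a closed subspace of $\tilde{\B}_1(\D,p)$, and the only thing left is to check it is all of $\tilde{\B}_1(\D,p)$ — i.e. density.

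First I would nail down the density statement. The point is that $\tilde{\B}_1(\D,p)$ is the image $\tilde m\big(\B_2(\D,p)\otimes_\pi\B_2(\D,p)\big)$, and the algebraic tensor product $\B_2(\D,p)\otimes\B_2(\D,p)$ is dense in the projective completion by construction; since $\tilde m$ is continuous and surjective onto $\tilde{\B}_1(\D,p)$ (with the quotient topology), the image $\tilde m\big(\B_2(\D,p)\otimes\B_2(\D,p)\big) = \B_2(\D,p)^2$ is dense in $\tilde{\B}_1(\D,p)$. Here one uses the general fact that a continuous open surjection of topological vector spaces sends dense sets to dense sets, together with the identification of $\tilde{\B}_1(\D,p)$ with the quotient $\B_2(\D,p)\otimes_\pi\B_2(\D,p)/\ker\tilde m$ as topological vector spaces, which was established in the text. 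Alternatively, and perhaps more concretely, one can invoke the representation \eqref{eq:l2-rep-sum}: any $\Theta$ in the projective completion is an absolutely convergent sum $\sum_i \tilde R_i \otimes \tilde S_i$, hence $\tilde m(\Theta) = \sum_i \tilde R_i \tilde S_i$ with the partial sums $\sum_{i=0}^N \tilde R_i\tilde S_i$ lying in $\B_2(\D,p)^2$ and converging to $\tilde m(\Theta)$ in every $\PP_n$ (using $\PP_n\le\tilde{\PP}_n\circ(\text{any representation})$ and the $\ell^2$-summability of $(\Q_n(\tilde R_i))$, $(\Q_n(\tilde S_i))$ via Cauchy–Schwarz).

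Then the proof assembles as follows: (i) recall that $\tilde{\B}_1(\D,p)$ is complete for the quotient topology, being the image of the Fréchet space $\B_2(\D,p)\otimes_\pi\B_2(\D,p)$ under a continuous open surjection onto the Hausdorff quotient — hence itself Fréchet; (ii) note $\PP_n = \tilde{\PP}_{n,q}|_{\B_2(\D,p)^2}$ from the displayed equalities; (iii) conclude that $\B_2(\D,p)^2$, with the $\PP_n$-topology, is a dense topological subspace of the Fréchet space $\tilde{\B}_1(\D,p)$; (iv) therefore its completion $\B_1(\D,p)$ equals $\tilde{\B}_1(\D,p)$, both as sets and as Fréchet spaces. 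The main obstacle — really the only subtle point — is step (i), or rather the assertion that the quotient topology on $\tilde{\B}_1(\D,p)$ is itself the topology of a complete metrizable space and that $\tilde m$ is \emph{open}: one needs $\ker\tilde m$ closed (which follows from continuity of $\tilde m$, already noted) and then the general theorem that a quotient of a Fréchet space by a closed subspace is Fréchet, \cite[Proposition 7.9]{Trev} or the open mapping theorem. Everything else is the bookkeeping of matching the two families of seminorms, which the text has essentially already carried out.
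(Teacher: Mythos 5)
Your argument is correct and follows essentially the same route as the paper: both rest on the identity $\PP_n = \tilde{\PP}_{n,q}$ on $\B_2(\D,p)^2$, density of $\B_2(\D,p)^2$ in $\tilde{\B}_1(\D,p)$ via the absolutely convergent representations from \eqref{eq:l2-rep-sum}, and completeness of $\tilde{\B}_1(\D,p)$ as a Fr\'echet quotient; the paper merely phrases the conclusion as ``dense and closed in $\B_1(\D,p)$'' rather than ``completion of a dense topological subspace,'' which is the same fact.
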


\begin{proof}
For $T\in \tilde{\B}_1(\D,p)$, there exists 
$\Theta=\sum_{i=0}^\infty R_i\otimes S_i\in \B_2(\D,p)\otimes_\pi\B_2(\D,p)$
with $\tilde{m}(\Theta)=T$ and such that the sequences 
$(\Q_n(R_i))_{i\geq 0},\,(\Q_n(S_i))_{i\geq 0}$ are in $\ell^2(\N_0)$ for
each $n$. Now
$$
\Theta=\lim_{N\to\infty}\sum_{i=0}^NR_i\otimes S_i\quad{\rm and}\quad 
\tilde{m}\Big(\sum_{i=0}^NR_i\otimes S_i\Big)=\sum_{i=0}^NR_iS_i,
$$
so by the continuity of $\tilde{m}$
$$
T=\tilde{m}(\Theta)=\lim_{N\to\infty}\sum_{i=0}^NR_iS_i.
$$
Here the limit defining $T$ is with respect to the family of norms 
$\tilde{\PP}_{n,q}=\PP_n$ on $\B_2(\D,p)^2$. 
Hence, by definition,  
$T\in \B_1(\D,p)$, and so $\tilde{\B}_1(\D,p)\subset \B_1(\D,p)$.

Now observe that we have the containments
$$
\B_2(\D,p)^2\subset \tilde{\B}_1(\D,p)\subset \B_1(\D,p),
$$
and as $\B_2(\D,p)^2$ is dense in $\B_1(\D,p)$ by definition, 
$\B_2(\D,p)^2$ is dense in $\tilde{\B}_1(\D,p)$.
As $\tilde{\PP}_{n,q}=\PP_n$ on $\B_2(\D,p)^2$, we see that 
$\tilde{\B}_1(\D,p)$ is a dense and closed subset of 
$\B_1(\D,p)$. Hence $\tilde{\B}_1(\D,p)=\B_1(\D,p)$.
\end{proof}
Therefore,  we will employ the single notation $\B_1(\D,p)$ from now on.

{\bf Remark.} For $R,\,S\in\B_2(\D,p)$ we have 
$RS\in\B_1(\D,p)$ with $\PP_n(RS)\leq\Q_n(R)\Q_n(S)$.
By applying $\tilde{m}$ to a representation of $\Theta\in \B_2(\D,p)\otimes_\pi\B_2(\D,p)$ as in
Equation \eqref{eq:l2-rep-sum}, this allows us to see that every $T\in \B_1(\D,p)$ can be
represented as a sum, convergent for every $\PP_n$,
$$
T=\sum_{i=0}^\infty R_iS_i,\quad {\rm such\ that\  for\ all\ }n\geq 1\ \ 
\left(\Q_n({R}_i)\right)_{i\geq 0},\,\left(\Q_n({R}_i)\right)_{i\geq 0}\in \ell^2(\N_0).
$$

We now show that $\B_1(\D,p)$ is  a $*$-algebra,  and that the norms $\PP_n$
are  submultiplicative. The first step is to show that $\B_1(\D,p)$ is naturally included in $\B_2(\D,p)$.

\begin{lemma}
\label{B1-implies-B2}
The algebra $\B_1(\D,p)$ is continuously embedded in $\B_2(\D,p)$. In particular,
for all $T\in \B_1(\D,p)$ and all $n\in\N$, $\Q_n(T)\leq \PP_n(T)$.
\end{lemma}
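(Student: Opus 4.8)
The plan is to show that the identity map $\B_2(\D,p)^2 \to \B_2(\D,p)$ is continuous for the norms $\PP_n$ on the source and $\Q_n$ on the target, and then extend to the completions. Concretely, I would first verify the key inequality: for any $T \in \B_2(\D,p)^2$ and any representation $T = \sum_{i=1}^k T_{1,i}T_{2,i}$ with $T_{1,i}, T_{2,i} \in \B_2(\D,p)$, we have $\Q_n(T) \leq \sum_{i=1}^k \Q_n(T_{1,i}T_{2,i}) \leq \sum_{i=1}^k \Q_n(T_{1,i})\Q_n(T_{2,i})$, where the first step uses the triangle inequality for $\Q_n$ (Remark (ii) after Equation \eqref{pn}) and the second uses submultiplicativity of $\Q_n$ established in the proof of Proposition \ref{Frechet1}. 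Taking the infimum over all such representations gives $\Q_n(T) \leq \PP_n(T)$ for $T \in \B_2(\D,p)^2$.

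Next I would promote this to the completion. Since $\B_2(\D,p)^2$ is dense in $\B_1(\D,p)$ by Definition \ref{def:sumpin-like-this}, given $T \in \B_1(\D,p)$ choose a sequence $T_k \in \B_2(\D,p)^2$ with $T_k \to T$ in the $\PP_n$-topology. In particular $(T_k)$ is $\PP_n$-Cauchy for each $n$, hence $\Q_n$-Cauchy by the inequality just proved, so $(T_k)$ is Cauchy in the Fr\'echet algebra $\B_2(\D,p)$ and converges there to some $T' \in \B_2(\D,p)$. I would then argue $T = T'$: convergence in either $\B_1(\D,p)$ or $\B_2(\D,p)$ implies convergence in operator norm in $\cn$ (the norms $\PP_n$ and $\Q_n$ both dominate, or are compatible with, $\|\cdot\|$ — indeed $\Q_n(S)^2 \geq \|S\|^2$ directly from \eqref{pn}, and $\PP_n(T) \geq \Q_n(T) \geq \|T\|$ once the inequality is in hand), so both limits agree with the operator-norm limit. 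Passing to the limit in $\Q_n(T_k) \leq \PP_n(T_k)$ then yields $\Q_n(T) \leq \PP_n(T)$ for all $T \in \B_1(\D,p)$, which is exactly the asserted continuous embedding.

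The main obstacle, such as it is, is the small bookkeeping point that the limit $T'$ computed in $\B_2(\D,p)$ genuinely coincides with the original element $T \in \B_1(\D,p)$ rather than being an a priori different operator; this is handled cleanly by routing everything through the ambient von Neumann algebra $\cn$ and using that both families of seminorms control the operator norm. Everything else is a direct consequence of the triangle inequality and submultiplicativity for the $\Q_n$ together with the definition of $\PP_n$ as an infimum, so there is no serious analytic difficulty here; the content is really just organizing the universal property of the completion correctly.
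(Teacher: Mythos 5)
Your proof is correct and takes essentially the same route as the paper: establish $\Q_n(T)\leq\sum_i\Q_n(T_{1,i})\Q_n(T_{2,i})$ via the triangle inequality and submultiplicativity of $\Q_n$, take the infimum to get $\Q_n\leq\PP_n$, and extend by completeness of $\B_2(\D,p)$. The paper works directly with an infinite representation $T=\sum_{i\geq 0} R_iS_i$ rather than passing through an approximating sequence in $\B_2(\D,p)^2$ and separately identifying the limits, but the two formulations are equivalent; your more explicit treatment of the limit identification is sound and does not change the argument in any essential way.
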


\begin{proof}
Let $T\in \B_1(\D,p)$. That $T$ belongs to $\B_2(\D,p)$ follows
from the submultiplicativity of the norms $\Q_n$. To see this, fix $n\in\N$. Then,  
for any  representation  $T=\sum_{i=0}^\infty R_i S_i$,
the submultiplicativity of the norms $\Q_n$ gives us
$$
\Q_n(T)=\Q_n\Big(\sum_{i=0}^\infty R_i S_i\Big)\leq \sum_{i=0}^\infty \Q_n(R_i S_i)\leq
\sum_{i=0}^\infty \Q_n(R_i)\, \Q_n(S_i).
$$
Since this is true for any representation $T=\sum_{i=0}^\infty R_i S_i$, this implies that $\Q_n(T)\leq\PP_n(T)$,
proving that $\B_1(\D,p)$ embeds continuously  in $\B_2(\D,p)$. 
\end{proof}

\begin{corollary}
\label{onecor}
The Fr\'echet space $\B_1(\D,p)$ is a $*$-subalgebra of $\cn$. Moreover, the norms
$\PP_n$ are $*$-invariant, submultiplicative, and for $n\leq m$ satisfy $\PP_n\leq \PP_m$.\index{$\PP_n$, seminorms on $\B_1(\D,p)$}\index{seminorms!$\PP_n$, seminorms on $\B_1(\D,p)$}
\end{corollary}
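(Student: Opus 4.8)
The plan is to prove each of the three asserted properties of $\B_1(\D,p)$ in turn, using the ``sum representation'' $T=\sum_{i=0}^\infty R_iS_i$ with $(\Q_n(R_i))_i,(\Q_n(S_i))_i\in\ell^2(\N_0)$ recorded in the Remark preceding the corollary, together with Lemma \ref{B1-implies-B2}.

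\emph{Step 1: $*$-invariance of the norms and closure under $*$.} Given $T=\sum_i R_iS_i\in\B_1(\D,p)$, we have $T^*=\sum_i S_i^*R_i^*$, and since $\B_2(\D,p)$ is a $*$-algebra (Proposition \ref{Frechet1}), each $S_i^*R_i^*\in\B_2(\D,p)^2$. Moreover $\Q_n(S_i^*)=\Q_n(S_i)$ and $\Q_n(R_i^*)=\Q_n(R_i)$ because $\Q_n$ is manifestly $*$-symmetric by its defining formula \eqref{pn} (it involves $|T|^2$ and $|T^*|^2$ symmetrically). Hence $T^*\in\B_1(\D,p)$ and, taking infima over representations, $\PP_n(T^*)\leq\PP_n(T)$; applying this to $T^*$ gives equality $\PP_n(T^*)=\PP_n(T)$.

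\emph{Step 2: submultiplicativity.} For $T=\sum_iR_iS_i$ and $T'=\sum_jR'_jS'_j$ in $\B_1(\D,p)$, write $TT'=\sum_{i,j}(R_iS_i)(R'_jS'_j)=\sum_{i,j}(R_i)(S_iR'_jS'_j)$ where $R_i\in\B_2(\D,p)$ and $S_iR'_jS'_j\in\B_2(\D,p)$ (a triple product in the algebra $\B_2(\D,p)$). This exhibits $TT'\in\B_2(\D,p)^2$-type sum, hence in $\B_1(\D,p)$; it also shows $\B_1(\D,p)$ is an algebra. For the norm bound, use submultiplicativity of $\Q_n$ on $\B_2(\D,p)$ (Proposition \ref{Frechet1}): $\Q_n(S_iR'_jS'_j)\leq\Q_n(S_i)\Q_n(R'_j)\Q_n(S'_j)$. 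A convenient symmetric alternative is to factor $TT'=\sum_{i,j}(R_iS_iR'_j)(S'_j)$; either way, combining with $\Q_n(R_i)\leq\PP_n(\,\cdot\,)$-type estimates and summing over the $\ell^2$-sequences via Cauchy--Schwarz, one gets $\PP_n(TT')\leq\PP_n(T)\,\PP_n(T')$. One should be slightly careful: the cleanest route is to first establish $\PP_n(RS)\leq\Q_n(R)\Q_n(S)$ for $R,S\in\B_2(\D,p)$ (already noted in the Remark), then write $TT'=\sum_{i,j}\lambda_{ij}(\tilde R_i\tilde S_i')$-style with the $\ell^2$ bookkeeping from \eqref{eq:l2-rep-sum} pushed through $\tilde m$, so that $\PP_n(TT')\leq\sum_{i,j}\Q_n(R_i)\Q_n(S_i)\Q_n(R'_j)\Q_n(S'_j)$, and note the right side factors as $\big(\sum_i\Q_n(R_i)\Q_n(S_i)\big)\big(\sum_j\Q_n(R'_j)\Q_n(S'_j)\big)$; taking the infimum over representations of $T$ and $T'$ yields the claim.

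\emph{Step 3: monotonicity $\PP_n\leq\PP_m$ for $n\leq m$.} This is immediate from the corresponding property of the $\Q_n$ (Remark (iii) after the definition of $\B_2(\D,p)$): in the infimum \eqref{new-norm} defining $\PP_n$, every term $\Q_n(T_{1,i})\Q_n(T_{2,i})$ is dominated by $\Q_m(T_{1,i})\Q_m(T_{2,i})$, so the infimum over the same set of representations can only increase. The only subtlety, which was already flagged in the text as needing ``the arguments of Corollary \ref{onecor}'' and hence cannot be invoked circularly here, is the proof of Remark (iii) itself, i.e.\ $\Q_n\leq\Q_m$; but that is an independent fact about the weights $\vf_s$ (monotonicity of $s\mapsto\vf_s(|T|^2)$ on $\B_2(\D,p)$, via the cyclicity of the trace as in Proposition \ref{prop:everybody-knows}) and is not logically downstream of the present corollary, so there is no genuine circularity. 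The main obstacle is thus really Step 2: keeping the $\ell^2$/absolute-convergence bookkeeping honest when multiplying two infinite sum-representations and interchanging the order of summation, which is justified by the absolute convergence in every $\PP_n$ guaranteed by \cite[Theorem 45.1]{Trev} as used in \eqref{eq:nice-rep}--\eqref{eq:l2-rep-sum}. Everything else is formal manipulation inside the already-established Fr\'echet algebra $\B_2(\D,p)$.
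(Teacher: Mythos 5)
Your proposal is correct, but for the two substantive points (closure under products and submultiplicativity) you take a noticeably more laborious route than the paper, and you only hint at the shortcut that the paper actually uses. The crucial simplification is Lemma~\ref{B1-implies-B2}: since $\B_1(\D,p)$ embeds continuously in $\B_2(\D,p)$ with $\Q_n\leq\PP_n$, any $T,S\in\B_1(\D,p)$ already lie in $\B_2(\D,p)$. Hence $TS$ admits the trivial one-term representation $TS=T\cdot S$ with both factors in $\B_2(\D,p)$, so the infimum defining $\PP_n$ immediately gives $\PP_n(TS)\leq\Q_n(T)\Q_n(S)\leq\PP_n(T)\PP_n(S)$, and the algebra property reads simply as $\B_1\cdot\B_1\subset\B_2\cdot\B_2\subset\B_1$. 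This completely dispenses with your double-sum $\sum_{i,j}R_i(S_iR'_jS'_j)$, the factorisation-into-three-factors manoeuvre, and the absolute-convergence/reordering bookkeeping that you yourself flag as ``the main obstacle.'' You acknowledge the cleaner route parenthetically (``the cleanest route is to first establish $\PP_n(RS)\leq\Q_n(R)\Q_n(S)$\ldots''), but you don't commit to it; if you do, the convergence worry evaporates because there are no infinite-product sums left to justify. Steps 1 and 3 of your proposal track the paper's argument closely (the paper phrases the $*$-invariance computation with finite representations on the dense subspace $\B_2(\D,p)^2$ and then passes to the completion, which is marginally more economical than your infinite-sum version but logically equivalent), and your discussion of the apparent circularity around $\Q_n\leq\Q_m$ is accurate.
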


\begin{proof}
We begin by showing that each $\PP_n$ is 
a $*$-invariant norm. Using the $*$-invariance of $\Q_n(\cdot)$, we have
for any $T\in \B_2(\D,p)^2$
\begin{align*}
\PP_n( T^*)&=\inf\Big\{\sum_i\Q_n( S_{1,i})\,\Q_n( S_{2,i})\ :\ T^*=\sum_iS_{1,i}S_{2,i}\Big\}\\
&\leq \inf\Big\{\sum_i\Q_n( T^*_{2,i})\,\Q_n( T^*_{1,i})\ :\ T=\sum_iT_{1,i}T_{2,i}\Big\}\\
&= \inf\Big\{\sum_i\Q_n( T_{2,i})\,\Q_n( T_{1,i})\ :\ T=\sum_iT_{1,i}T_{2,i}\Big\}
=\PP_n( T).
\end{align*}
Hence $\PP_n( T^*)\leq \PP_n( T)$, and by replacing $T^*$ with $T$ we find that
$\PP_n( T^*)= \PP_n( T)$. It now follows that each $\PP_n$ is $*$-invariant on all of $\B_1(\D,p)$.

That   $\B_1(\D,p)$ is an algebra, follows from the embedding $\B_1(\D,p)\subset \B_2(\D,p)$ 
proven in Lemma \ref{B1-implies-B2}:
$$
\B_1(\D,p)\cdot \B_1(\D,p)\subset \B_2(\D,p)\cdot \B_2(\D,p)\subset \B_1(\D,p).
$$
For the submultiplicativity of the norms $\PP_n$, we observe for  $T,\,S\in\B_1(\D,p)$
$$
\PP_n(TS)\leq\Q_n(T)\Q_n(S)\leq\PP_n(T)\PP_n(S),
$$
where the first inequality follows from the definition of $\PP_n$ and the second from the norm
estimate of Lemma \ref{B1-implies-B2}. 

To prove that $\PP_n(\cdot)\leq\PP_m(\cdot)$ for $n\leq m$, take $T\in \B_2(\D,p)^2$ and consider any representation
$T=\sum_{i=1}^k T_{i,1}\,T_{i,2}$. Then, since $\Q_n(\cdot)\leq\Q_m(\cdot)$ for $n\leq m$, we have
$$
\sum_{i=1}^k \Q_n(T_{i,1})\,\Q_n(T_{i,2})\leq \sum_{i=1}^k \Q_m(T_{i,1})\,\Q_m(T_{i,2}),
$$
and thus
\begin{equation}
\PP_n(T)\leq \sum_{i=1}^k \Q_m(T_{i,1})\,\Q_m(T_{i,2}).
\label{eq:pp-est}
\end{equation}
Since the inequality \eqref{eq:pp-est} is true for any such representation, we  have  $\PP_n(T)\leq\PP_m(T)$. 
Now let $T\in \B_1(\D,p)$ be the limit of the sequence $(T_N)_{N\geq 1}\subset \B_2(\D,p)^2$.
Then $\PP_n(T)=\lim_{N\to\infty}\PP_n(T_N)\leq \lim_{N\to\infty}\PP_m(T_N)=\PP_m(T)$.
\end{proof}

Next we show the compatibility of the  norms $\PP_n$ with  positivity.

\begin{lemma}
\label{lem:b1-pos}
Let $0\leq A\in\cn$. Then $A\in\B_1(\D,p)$ if and only if $A^{1/2}\in\B_2(\D,p)$ with 
$$
\PP_n(A)=\Q_n(A^{1/2})^2,\quad\forall n\in\N.
$$
Moreover if  $0\leq A\leq B\in\cn$ and $B\in\B_1(\D,p)$, then $A\in\B_1(\D,p)$, 
with $\PP_n(A)\leq \PP_n(B)$ for all $n\in\N$.
\label{posi}
\end{lemma}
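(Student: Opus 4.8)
The plan is to reduce everything to facts we already have about $\B_2(\D,p)$, using the polar-type decompositions and the positivity results of Bikchentaev (Proposition~\ref{prop:bikky}) and Brown--Kosaki (Proposition~\ref{prop:everybody-knows}). For the first equivalence, suppose $0\le A\in\cn$ and $A^{1/2}\in\B_2(\D,p)$. Then $A=A^{1/2}A^{1/2}$ is a product of two elements of $\B_2(\D,p)$, so by the Remark following the theorem identifying $\B_1(\D,p)=\tilde\B_1(\D,p)$ we have $A\in\B_1(\D,p)$ with $\PP_n(A)\le \Q_n(A^{1/2})^2$. For the reverse inequality, I would use that $A^{1/2}=A^{1/2}$ is self-adjoint so $\Q_n(A^{1/2})^2=\|A^{1/2}\|^2+2\vf_{p+1/n}(A)$ (the two summands $|A^{1/2}|^2=|A^{1/2*}|^2=A$ coincide), and compare this against an arbitrary representation $A=\sum_i T_{1,i}T_{2,i}$. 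The key computational point is that for any such representation, $\vf_s(A)=\tau\big((1+\D^2)^{-s/4}A(1+\D^2)^{-s/4}\big)$ and, writing $B_i=(1+\D^2)^{-s/4}T_{1,i}$, $C_i=T_{2,i}(1+\D^2)^{-s/4}$, one gets $\vf_s(A)=\sum_i\tau(B_iC_i)$, which is dominated by $\sum_i\|B_i\|_2\|C_i\|_2 = \sum_i \vf_s(|T_{1,i}^*|^2)^{1/2}\vf_s(|T_{2,i}|^2)^{1/2}$ via Cauchy--Schwarz in $\cl^2(\cn,\tau)$; bounding each factor by $\Q_n(T_{1,i})\Q_n(T_{2,i})$ and taking the infimum gives $\vf_{p+1/n}(A)\le\PP_n(A)$, hence (together with $\|A^{1/2}\|^2=\|A\|\le\PP_n(A)$ crudely, or more carefully) $\Q_n(A^{1/2})^2\le \PP_n(A)$. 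Combined with Lemma~\ref{B1-implies-B2}, which already gives $\Q_n(A)\le\PP_n(A)$, and the submultiplicativity established in Corollary~\ref{onecor}, the two bounds pinch to the equality $\PP_n(A)=\Q_n(A^{1/2})^2$, and in particular $A\in\B_1(\D,p)$ forces $A^{1/2}\in\B_2(\D,p)$ since the right-hand side is then finite.

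For the monotonicity statement, suppose $0\le A\le B$ with $B\in\B_1(\D,p)$. By the first part, $B^{1/2}\in\B_2(\D,p)$. Now $A\le B$ does \emph{not} directly give $A^{1/2}\le B^{1/2}$, but it does give the operator inequalities needed for Lemma~\ref{Bp-grew}(2): indeed $0\le A\le B$ implies there is a contraction $X\in\cn$ with $A^{1/2}=X B^{1/2}$ (Douglas factorisation / majorisation), whence $(A^{1/2})^*A^{1/2}=B^{1/2}X^*XB^{1/2}\le B^{1/2}B^{1/2}\cdot\|X\|^2$... — actually the cleanest route is: from $A\le B$ we get $\vf_s(A)\le\vf_s(B)$ directly by positivity of the map $T\mapsto\tau\big((1+\D^2)^{-s/4}T(1+\D^2)^{-s/4}\big)$ on $\cn_+$, and $\|A\|\le\|B\|$, so $\Q_n(A^{1/2})^2=\|A\|+2\vf_{p+1/n}(A)\le\|B\|+2\vf_{p+1/n}(B)=\Q_n(B^{1/2})^2<\infty$; hence $A^{1/2}\in\B_2(\D,p)$, and applying the already-proven first half of the lemma to $A$ yields $A\in\B_1(\D,p)$ with $\PP_n(A)=\Q_n(A^{1/2})^2\le\Q_n(B^{1/2})^2=\PP_n(B)$.

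I expect the main obstacle to be the reverse inequality $\Q_n(A^{1/2})^2\le\PP_n(A)$ in the first part: one has to show that summing over an arbitrary factorisation $A=\sum_i T_{1,i}T_{2,i}$ — with the $T_{j,i}$ not necessarily positive or self-adjoint — still controls $\vf_{p+1/n}(A)$. The Cauchy--Schwarz step in $\cl^2(\cn,\tau)$ handles this cleanly provided one is careful that $(1+\D^2)^{-s/4}T_{1,i}$ and $T_{2,i}(1+\D^2)^{-s/4}$ are genuinely Hilbert--Schmidt, which is exactly the content of Remark (ii) after the definition of $\B_2(\D,p)$, and that the sum $\sum_i\tau(B_iC_i)$ really reassembles to $\tau\big((1+\D^2)^{-s/4}A(1+\D^2)^{-s/4}\big)$ — this is where the identity $\tau(B_iC_i)=\tau(C_iB_i)$ of Proposition~\ref{prop:everybody-knows} (or just norm-convergence of the partial sums, using that $A\in\B_1(\D,p)$ already guarantees $A\in\B_2(\D,p)$ so the rearrangement is legitimate) gets used. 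The handling of $\|A^{1/2}\|^2=\|A\|$ against $\PP_n(A)$ is subtler than it looks if one wants the sharp constant, but since we only claim $\PP_n(A)=\Q_n(A^{1/2})^2$ and $\PP_n$ already dominates the operator norm on $\B_1(\D,p)$ via $\Q_n\le\PP_n$ and $\|\cdot\|\le\Q_n(\cdot)$, the norm term causes no real trouble.
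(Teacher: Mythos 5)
Your argument for the reverse inequality $\Q_n(A^{1/2})^2\le\PP_n(A)$ has a genuine gap, precisely at the point you wave away in your final paragraph. You prove $\vf_{p+1/n}(A)\le\PP_n(A)$ and separately note $\|A\|\le\PP_n(A)$, but $\Q_n(A^{1/2})^2=\|A\|+2\vf_{p+1/n}(A)$, so adding those bounds gives only $\Q_n(A^{1/2})^2\le 3\,\PP_n(A)$ — not the claimed equality. The sharp constant is the whole content of the first claim, so "the norm term causes no real trouble" is exactly backwards. The fix is the one the paper uses: do not split $\|A\|$ off from the weight terms. For a fixed representation $A=\sum_i R_iS_i$, bound each $i$-th summand all at once by the numerical Cauchy--Schwarz inequality in $\R^3$,
$$
\|R_i\|\,\|S_i\|+\vf_s(R_iR_i^*)^{1/2}\vf_s(S_i^*S_i)^{1/2}+\vf_s(R_i^*R_i)^{1/2}\vf_s(S_iS_i^*)^{1/2}\;\le\;\Q_n(R_i)\,\Q_n(S_i),
$$
applied to the vectors $\big(\|R_i\|,\vf_s(R_iR_i^*)^{1/2},\vf_s(R_i^*R_i)^{1/2}\big)$ and $\big(\|S_i\|,\vf_s(S_i^*S_i)^{1/2},\vf_s(S_iS_i^*)^{1/2}\big)$, whose Euclidean norms are exactly $\Q_n(R_i)$ and $\Q_n(S_i)$. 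Summing over $i$ dominates $\|A\|+2\vf_{p+1/n}(A)$ by $\sum_i\Q_n(R_i)\Q_n(S_i)$, and the infimum over representations then pinches against the easy direction $\PP_n(A)\le\Q_n(A^{1/2})^2$. Your weight-Cauchy--Schwarz identification $\|B_i\|_2=\vf_s(|T_{1,i}^*|^2)^{1/2}$ is correct and is essentially the same input the paper uses; the only missing idea is packaging the three terms into a single Cauchy--Schwarz step.

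Two smaller remarks. In the monotonicity half, your claim that $0\le A\le B$ does not give $A^{1/2}\le B^{1/2}$ is false — $t\mapsto t^{1/2}$ is operator monotone — but you abandon that tangent anyway. Your "cleanest route", observing that $T\mapsto\vf_s(T)$ is positive on $\cn_+$ so $\vf_s(A)\le\vf_s(B)$ and hence $\Q_n(A^{1/2})^2\le\Q_n(B^{1/2})^2$, is correct and is in fact a slightly more direct argument than the paper's, which routes through $A^{1/2}\le B^{1/2}$ and Lemma~\ref{Bp-grew}(2); both deliver $\PP_n(A)\le\PP_n(B)$ once the first half of the lemma is in place, so this is a legitimate alternative.
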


\begin{proof}
Given $0\leq A\in \cn$ with $A^{1/2}\in\B_2(\D,p)$,  it follows 
from the definitions that $A\in \B_1(\D,p)$ and
$\PP_n(A)\leq\Q_n(A^{1/2})^2$.
So suppose $0\leq A\in\B_1(\D,p)$ and choose any  representation
$$
A=\sum_{i=0}^\infty R_iS_i,\qquad 
\sum_{i=0}^\infty \Q_n(R_i)\Q_n(S_i)<\infty,\ \mbox{for all }n\in\N.
$$
Then using the self-adjointness of $A$, the definitions, and the Cauchy-Schwarz inequality 
yields

\begin{align*}
&\Q_n(A^{1/2})^2
=\Q_n\Big(\big(\sum_{i=0}^\infty R_iS_i\big)^{1/2}\Big)^2=
\big\|\sum_{i=0}^\infty R_iS_i\big\|+\vf_{p+1/n}\big(\sum_{i=0}^\infty R_iS_i\big)
+\vf_{p+1/n}\big(\sum_{i=0}^\infty S_iR_i\big)
\\
&\leq\sum_{i=0}^\infty \big\| R_i\big\|\,\big\|S_i\big\|+\big|\vf_{p+1/n}\big(R_iS_i\big)\big|
+\big|\vf_{p+1/n}\big(S_iR_i\big)\big|\\
&\leq \sum_{i=0}^\infty \| R_i\|\,\|S_i\|+\vf_{p+1/n}\big(R_iR_i^*\big)^{1/2}\vf_{p+1/n}\big(S_i^*S_i\big)^{1/2}
+\vf_{p+1/n}\big(S_iS_i^*\big)^{1/2}\vf_{p+1/n}\big(R_i^*R_i\big)^{1/2}\\
&\leq  \sum_{i=0}^\infty \Q_n(R_i)\Q_n(S_i).
 \end{align*}
The last inequality follows from applying the Cauchy-Schwarz inequality,
$$
(r_1s_1+r_2s_2+r_3s_3)^2\leq(r_1^2+r_2^2+r_3^2)(s_1^2+s_2^2+s_3^2),
$$ 
to each term in
the sum.

 Thus for any representation of $A$ we have
 $\Q_n(A^{1/2})^2\leq \sum_{i=0}^\infty \Q_n(R_i)\Q_n(S_i)$,
 which entails  $\Q_n(A^{1/2})^2\leq \PP_n(A)$ as needed.
 For the last statement, let $0\leq B\in \B_1(\D,p)$ and suppose that $0\leq A\in \cn$ satisfies $B\geq A$. 
 Then  $B^{1/2}\geq A^{1/2}$ and $B^{1/2}\in \B_2(\D,p)$, so  Lemma \ref{Bp-grew} (2) completes the proof. 
 \end{proof}
 
Since $\B_1(\D,p)$ is a $*$-algebra, we have $T\in \B_1(\D,p)$ if and only if $T^*\in \B_1(\D,p)$. 
Thus given $T=T^*\in\B_1(\D,p)$, it is natural to ask whether the 
positive and negative parts $T_+,\,T_-$ of the Jordan decomposition of $T$
are in $\B_1(\D,p)$. We can not answer this question, but can nevertheless prove that $\B_1(\D,p)$ is the 
(finite) span of its
positive cone.

\begin{prop} 
\label{cor:polar-decomp}
For $T\in\B_1(\D,p)$, 
there exist four positive operators $T_0,\dots,T_3\in \B_1(\D,p)$ such that
$$
T=\big(T_0-T_2\big)+i\big(T_1-T_3\big).
$$
Here $\Re(T)=T_0-T_2$ and $\Im(T)=T_1-T_3$, but this need not be the
Jordan decomposition since it may not be that $T_0T_2=T_1T_3=0$.\index{Jordan decomposition}
Nevertheless, the space $\B_1(\D,p)$ is the linear span of its positive cone.
\end{prop}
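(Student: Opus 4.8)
The plan is to reduce the claim to a polar-decomposition argument inside $\B_2(\D,p)$, exploiting the fact (Lemma \ref{B1-implies-B2}) that $\B_1(\D,p)\subset\B_2(\D,p)$ and that, by the Remark following the theorem identifying $\B_1(\D,p)=\tilde{\B}_1(\D,p)$, every $T\in\B_1(\D,p)$ has a representation $T=\sum_{i\ge0}R_iS_i$ with $(\Q_n(R_i))_i,(\Q_n(S_i))_i\in\ell^2(\N_0)$ for all $n$. First I would treat $\Re(T)$ and $\Im(T)$ separately: since $\B_1(\D,p)$ is a $*$-algebra (Corollary \ref{onecor}), both belong to $\B_1(\D,p)$, so it suffices to show that a self-adjoint element of $\B_1(\D,p)$ is a difference of two positive elements of $\B_1(\D,p)$. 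Applying this to $\Re(T)$ and $\Im(T)$ then yields the four positive operators $T_0,\dots,T_3$.

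So fix $H=H^*\in\B_1(\D,p)$. The naive attempt would be to take the Jordan decomposition $H=H_+-H_-$, but — as the statement already warns — I cannot show $H_\pm\in\B_1(\D,p)$, so instead I would use an additive trick. Take a representation $H=\sum_{i\ge0}R_iS_i$ as above. Replacing this by its self-adjoint part, write $H=\tfrac12\sum_i(R_iS_i+S_i^*R_i^*)$. Now for each $i$ use the elementary identity, valid for any $a,b\in\cn$,
$$
ab+b^*a^*=\tfrac14\big[(a+b^*)(a+b^*)^*-(a-b^*)(a-b^*)^*\big]
=\tfrac14\big[(a+b^*)^*(a+b^*)\big]^{\,\mathrm{op}}\text{-style rearrangement},
$$
more precisely $ab+b^*a^* = (a+b^*)(a^*+b) - aa^* - b^*b$ when these are taken in the right order; the cleanest is $ab+(ab)^* = \tfrac14\sum_{k=0}^3 i^{-k}(a+i^kb^*)(a+i^kb^*)^*$ truncated to the self-adjoint part, giving $ab+b^*a^* = \tfrac12\big[(a+b^*)(a+b^*)^* - (a-b^*)(a-b^*)^*\big]$ after expanding. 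Each term $(R_i\pm S_i^*)(R_i\pm S_i^*)^*$ is positive and, being a product of an element of $\B_2(\D,p)$ with its adjoint (using that $\B_2(\D,p)$ is a $*$-algebra, Lemma \ref{Bp-grew}(4)), lies in $\B_1(\D,p)$ by Lemma \ref{lem:b1-pos}, with $\PP_n$ controlled by $\Q_n(R_i\pm S_i^*)^2\le(\Q_n(R_i)+\Q_n(S_i))^2$. Summing over $i$, the $\ell^2$-summability of $(\Q_n(R_i))_i$ and $(\Q_n(S_i))_i$ makes $\sum_i(\Q_n(R_i)+\Q_n(S_i))^2$ finite for every $n$, so the two series $\sum_i(R_i+S_i^*)(R_i+S_i^*)^*$ and $\sum_i(R_i-S_i^*)(R_i-S_i^*)^*$ converge in every $\PP_n$ to positive elements $P,Q\in\B_1(\D,p)$, and $H=\tfrac14(P-Q)$ (absorbing constants). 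Applying this to $H=\Re(T)$ gives $T_0,T_2$ and to $H=\Im(T)$ gives $T_1,T_3$.

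The main obstacle is the convergence bookkeeping: one must check that after replacing the representation of $H$ by its self-adjoint part the $\ell^2$-summability of the $\Q_n$-norms of the summands is preserved (it is, since $\Q_n$ is a norm and $*$-invariant), and that the completeness of $\B_1(\D,p)$ in the $\PP_n$-topology legitimately produces $P$ and $Q$ as elements of $\B_1(\D,p)$ rather than merely of $\cn$ — this is exactly where Proposition \ref{Frechet1}-style completeness of $\B_1(\D,p)$ and the estimate $\PP_n(A)=\Q_n(A^{1/2})^2$ of Lemma \ref{lem:b1-pos} are used. A secondary point to verify is that the algebraic identity expressing $ab+b^*a^*$ as a difference of two positive operators is applied in the correct operator order (so that no non-self-adjoint cross terms survive); with the symmetric form $\tfrac12[(a+b^*)(a+b^*)^*-(a-b^*)(a-b^*)^*]$ this is immediate. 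Everything else is routine given the $*$-algebra structure (Corollary \ref{onecor}) and the positivity compatibility of $\PP_n$ (Lemma \ref{lem:b1-pos}).
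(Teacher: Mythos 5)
Your proof is correct and follows essentially the same route as the paper. The paper applies the four-term polarization identity $4R^*S=\sum_{k=0}^3 i^k(S+i^kR)^*(S+i^kR)$ directly to each summand $R_jS_j$ of an $\ell^2$-type representation, obtaining $T=\sum_{k=0}^3 i^k T_k$ with $T_k\geq 0$ in one step; your two-step version (first split into $\Re(T)$ and $\Im(T)$, then use the two-term identity $ab+b^*a^*=\tfrac12[(a+b^*)(a+b^*)^*-(a-b^*)(a-b^*)^*]$) simply extracts the real and imaginary parts of that same identity and produces, up to constants, the identical positive operators, with the identical $\ell^2$-summability argument establishing convergence in $\B_1(\D,p)$.
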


\begin{proof}
Let $T\in \B_1(\D,p)$ have the  representation $T=\sum_j R_jS_j$. By
Equation \eqref{eq:l2-rep-sum}, this means that for each $n$ the sequences
$(\Q_n(R_j))_{j=0}^\infty$ and $(\Q_n(S_j))_{j=0}^\infty$ belong to $\ell^2(\N_0)$.
Now, from the polarization identity
$$
4R^*S=\sum_{k=0}^3 i^k(S+i^kR)^*(S+i^kR),
$$
we can decompose $T=\sum_{k=0}^3 i^k T_k$, with
$$
T_k=\frac14\sum_{j=0}^\infty(S_j+i^kR_j^*)^*(S_j+i^kR_j^*)\geq 0.
$$
Since both  $(\Q_n(R_j))_{j=0}^\infty$ and $(\Q_n(S_j))_{j=0}^\infty$ 
belong to $\ell^2(\N_0)$ and using the $*$-invariance of the norms $\Q_n$, we see that the four elements
$T_k$, $k=0,1,2,3$, all belong to $\B_1(\D,p)$.
Now it is straightforward to check that $\Re(T)=T_0-T_2$ and $\Im(T)=T_1-T_3$, however, these need 
not give 
the canonical decomposition into positive and 
negative parts since we may not have $T_0T_2=0$ and
$T_1T_3=0$.
\end{proof}

{\bf Remark.} The previous proposition shows that we can represent elements of $\B_1(\D,p)$
as finite sums of products of elements of $\B_2(\D,p)$, and so have a correspondingly
simpler description of the norms. We will not pursue this further here.

The next lemma is analogous to Lemma \ref{Bp-grew} (1). It shows that $\B_1(\D,p)$ is a bimodule
for the natural actions of the commutative von Neumann algebra generated by the spectral family of 
the operator $\D$.

\begin{lemma}
\label{B1-grew}
Let $T\in \B_1(\D,p)$ and $f\in L^\infty(\R)$. Then  $Tf(\D)$ and $f(\D)T$ belong to $\B_1(\D,p)$ with
$\PP_n\big(Tf(\D)\big),\PP_n\big(f(\D)T\big)\leq \|f\|_\infty \PP_n(T)$ for all  $n\in\N$.
\end{lemma}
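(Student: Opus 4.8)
The plan is to reduce the statement about $\B_1(\D,p)$ to the corresponding statement for $\B_2(\D,p)$, which is exactly Lemma \ref{Bp-grew} (1). First I would take $T \in \B_1(\D,p)$ and use the Remark following the theorem $\B_1(\D,p) = \tilde{\B}_1(\D,p)$ to write $T = \sum_{i=0}^\infty R_i S_i$ with $R_i, S_i \in \B_2(\D,p)$ and $\big(\Q_n(R_i)\big)_{i\geq 0}, \big(\Q_n(S_i)\big)_{i\geq 0} \in \ell^2(\N_0)$ for every $n$; in particular $\sum_i \Q_n(R_i)\Q_n(S_i) < \infty$ by Cauchy-Schwarz. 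Then, for $f \in L^\infty(\R)$, I would write $Tf(\D) = \sum_{i=0}^\infty R_i \big(S_i f(\D)\big)$. By Lemma \ref{Bp-grew} (1), each $S_i f(\D) \in \B_2(\D,p)$ with $\Q_n\big(S_i f(\D)\big) \leq \|f\|_\infty \Q_n(S_i)$, so this is a representation of $Tf(\D)$ as a product series from $\B_2(\D,p)$ that converges absolutely in every $\PP_n$; hence $Tf(\D) \in \B_1(\D,p)$.

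For the norm estimate, from the definition of $\PP_n$ as an infimum over product representations, the representation $Tf(\D) = \sum_i R_i (S_i f(\D))$ gives
$$
\PP_n\big(Tf(\D)\big) \leq \sum_{i=0}^\infty \Q_n(R_i)\,\Q_n\big(S_i f(\D)\big) \leq \|f\|_\infty \sum_{i=0}^\infty \Q_n(R_i)\,\Q_n(S_i).
$$
Taking the infimum over all representations $T = \sum_i R_i S_i$ on the right-hand side yields $\PP_n\big(Tf(\D)\big) \leq \|f\|_\infty \PP_n(T)$. The argument for $f(\D)T$ is symmetric: write $f(\D)T = \sum_i \big(f(\D)R_i\big) S_i$ and use the $f(\D)T \in \B_2(\D,p)$ part of Lemma \ref{Bp-grew} (1) with the bound $\Q_n\big(f(\D)R_i\big) \leq \|f\|_\infty \Q_n(R_i)$, then take the infimum over representations as before.

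One technical point I would need to address carefully is that $\PP_n$ was defined on $\B_2(\D,p)^2$ as an infimum over \emph{finite} representations, whereas the representation above is a countable series. This is handled by the Remark after the theorem identifying $\B_1(\D,p) = \tilde{\B}_1(\D,p)$: passing to the completion, $\PP_n$ extends to $\B_1(\D,p)$ and the countable-sum estimate $\PP_n\big(\sum_i R_i S_i\big) \leq \sum_i \Q_n(R_i)\Q_n(S_i)$ is valid for absolutely convergent representations (this is the same continuity/density argument used in Lemma \ref{B1-implies-B2} and Corollary \ref{onecor}). Alternatively, one can argue on $\B_2(\D,p)^2$ first with finite sums, get the estimate there, and then pass to the limit in $\B_1(\D,p)$ exactly as at the end of the proof of Corollary \ref{onecor}. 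I do not expect any serious obstacle here; the only care needed is bookkeeping of which seminorm estimates hold on the dense subalgebra versus the completion, and invoking Lemma \ref{Bp-grew} (1) with the correct one-sided multiplication ($f(\D)$ on the left or right).
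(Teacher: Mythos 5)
Your proposal is correct and follows the same approach as the paper's proof: take an arbitrary representation $T=\sum_i R_iS_i$, apply Lemma \ref{Bp-grew}(1) to each factor $S_if(\D)$ (resp.\ $f(\D)R_i$) to obtain a representation of $Tf(\D)$ (resp.\ $f(\D)T$), and take the infimum over representations. The extra care you take about finite versus countable representations is sensible but already absorbed into the paper's definition of $\PP_n$ on $\B_1(\D,p)$ via the identification with $\tilde\B_1(\D,p)$, so the argument is complete.
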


\begin{proof}
Fix $T\in \B_1(\D,p)$,  $f\in L^\infty(\R)$ and $n\in\N$. Consider an arbitrary 
representation $T=\sum_{i=0}^\infty R_i\,S_i$. Then we claim that $\sum_{i=0}^\infty R_i\,\big(S_if(\D)\big)$
is a representation of $Tf(D)$. Indeed, it follows by Lemma \ref{Bp-grew} (1) that
$$
\sum_{i=0}^\infty \Q_n(R_i)\,\Q_n\big(S_if(\D)\big)\leq\|f\|_\infty \sum_{i=0}^\infty \Q_n(R_i)\,\Q_n(S_i)<\infty,
$$
showing that $Tf(\D)\in \B_1(\D,p)$. Moreover, the preceding inequality entails that
\begin{align*}
\PP_n\big(Tf(\D)\big)&\leq \inf\Big\{ \sum_{i=0}^\infty \Q_n(R_i)\,\Q_n\big(S_if(\D)\big)\ :\ T=\sum_{i=0}^\infty R_i\,S_i\Big\}\\
&\leq \|f\|_\infty \inf\Big\{ \sum_{i=0}^\infty \Q_n(R_i)\,\Q_n(S_i)\ :\ T=\sum_{i=0}^\infty R_i\,S_i\Big\}=\|f\|_\infty\,\PP_n(T).
\end{align*}
The case of $f(\D)T$ is  similar.
\end{proof}

Our next aim is to prove 
that $\B_1(\D,p)$ is stable under the holomorphic functional calculus in its $C^*$-completion. 
This will be a corollary of the following two
lemmas. 

\begin{lemma} 
\label{R-side}
Let $T,\,R$ be elements of $\B_2(\D,p)$ with $1+R$ invertible in $\cn$. Then
$T(1+R)^{-1}\in \B_2(\D,p)$, and for all $n\in\N$ we have
$$
\Q_n\big(T{(1+R)}^{-1}\big)\leq C_n(R)\,\Q_n(T),
$$
where the constant $C_n(R)$ is given by
$$
C_n(R):=4\sqrt{2}\,{\rm max}\{1,\Vert (1+R)^{-1}\Vert\}\,{\rm max}\{1,\Q_n(R)\}.
$$
\end{lemma}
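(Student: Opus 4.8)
The plan is to reduce the estimate on $T(1+R)^{-1}$ to the already-established algebraic manipulations in $\B_2(\D,p)$, using the identity $(1+R)^{-1} = 1 - R(1+R)^{-1}$ and the bimodule/submultiplicativity properties of $\Q_n$ proved in Lemma \ref{Bp-grew}. First I would write
$$
T(1+R)^{-1} = T - TR(1+R)^{-1} = T - TR + TR\big(1-(1+R)^{-1}\big) = T - TR + TR\cdot R(1+R)^{-1},
$$
or more efficiently just use $T(1+R)^{-1} = T - (TR)(1+R)^{-1}$ and then handle the term $(TR)(1+R)^{-1}$. Note $TR \in \B_2(\D,p)$ with $\Q_n(TR)\le \Q_n(T)\Q_n(R)$ by submultiplicativity (Proposition \ref{Frechet1}), and $R(1+R)^{-1} = 1-(1+R)^{-1} \in \cn$ has operator norm at most $1 + \Vert(1+R)^{-1}\Vert \le 2\max\{1,\Vert(1+R)^{-1}\Vert\}$. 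The key observation is that for $S\in\B_2(\D,p)$ and a bounded operator $B\in\cn$ acting on the right, $SB\in\B_2(\D,p)$: this is not literally Lemma \ref{Bp-grew}(1), which handles $f(\D)$, but it follows from the same argument since $SB(1+\D^2)^{-s/4}$ need not factor — so instead I would use part (2) of Lemma \ref{Bp-grew} via the operator inequalities $(SB)^*(SB)=B^*S^*SB\le \Vert B\Vert^2 S^*S$ and $(SB)(SB)^* = SBB^*S^* \le \Vert B\Vert^2 SS^*$, which give $SB\in\B_2(\D,p)$ with $\Q_n(SB)\le \Vert B\Vert\,\Q_n(S)$.

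Putting these together: with $B:=R(1+R)^{-1}$ and noting $\Vert B\Vert\le 2\max\{1,\Vert(1+R)^{-1}\Vert\}$, the triangle inequality for $\Q_n$ gives
$$
\Q_n\big(T(1+R)^{-1}\big)\le \Q_n(T) + \Q_n\big((TR)B\big)\le \Q_n(T) + \Vert B\Vert\,\Q_n(TR)\le \Q_n(T)\big(1 + 2\max\{1,\Vert(1+R)^{-1}\Vert\}\,\Q_n(R)\big).
$$
The right-hand side is bounded by $\Q_n(T)\cdot 4\sqrt{2}\,\max\{1,\Vert(1+R)^{-1}\Vert\}\,\max\{1,\Q_n(R)\}$, since $1+2xy\le 4\sqrt2\,xy$ whenever $x,y\ge 1$ (indeed $1+2xy\le 3xy\le 4\sqrt2\,xy$). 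This yields the claimed constant $C_n(R)$, and in fact with room to spare — the $4\sqrt2$ is presumably chosen for uniformity with a parallel estimate for $\B_1(\D,p)$ later.

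The only mild subtlety, and the step I would be most careful about, is justifying the right-multiplication stability $SB\in\B_2(\D,p)$ with the correct norm bound: one must check that $B^*S^*SB\le\Vert B\Vert^2 S^*S$ and $SBB^*S^*\le \Vert B\Vert^2 SS^*$ as operator inequalities in $\cn_+$ (the first is immediate from $B^*B\le\Vert B\Vert^2$; the second from $BB^*\le\Vert B\Vert^2$), and then apply Lemma \ref{Bp-grew}(2) to conclude $\Q_n(SB)\le\Q_n(S)$ after rescaling $B$ by $\Vert B\Vert$. Once that is in hand the rest is the elementary arithmetic above. No serious obstacle is expected; this lemma is a technical stepping-stone toward the holomorphic stability of $\B_1(\D,p)$.
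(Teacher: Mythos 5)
Your plan hinges on the claim that for $S\in\B_2(\D,p)$ and bounded $B\in\cn$ one has $SB\in\B_2(\D,p)$ with $\Q_n(SB)\le\|B\|\,\Q_n(S)$, which you propose to get from Lemma~\ref{Bp-grew}\,(2) via the two operator inequalities $B^*S^*SB\le\|B\|^2\,S^*S$ and $SBB^*S^*\le\|B\|^2\,SS^*$. The second of these is correct: conjugating $BB^*\le\|B\|^2\cdot 1$ by the positive map $X\mapsto SXS^*$ gives it. The first, however, is false. From $B^*B\le\|B\|^2$ you obtain $SB^*BS^*\le\|B\|^2\,SS^*$, not $B^*S^*SB\le\|B\|^2\,S^*S$; there is no scalar $\lambda$ making $B^*AB\le\lambda A$ hold for all $A\ge 0$ and a fixed non-scalar $B$ (take $A$ a rank-one projection and $B$ a partial isometry moving its range off the range of $A$). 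This is not a cosmetic slip but exactly the obstruction the lemma must overcome. Because the weights $\vf_s$ are not traces, ${\rm dom}(\vf_s)^{1/2}$ is a left ideal and its adjoint a right ideal, so the intersection defining $\B_2(\D,p)$ is a $*$-algebra but \emph{not} a one-sided ideal in $\cn$: right multiplication by an arbitrary bounded operator is precisely what you cannot do for free.

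The paper's proof confronts this directly. Expanding $\Q_n(T(1+R)^{-1})^2$, the term $\vf_{p+1/n}(T|1+R|^{-2}T^*)$ is controlled by the legitimate inequality $T|1+R|^{-2}T^*\le\|(1+R)^{-1}\|^2\,TT^*$ (this is the $SBB^*S^*$ direction, the one that works). The hard term is $\vf_{p+1/n}\big((1+R^*)^{-1}|T|^2(1+R)^{-1}\big)$, for which no such operator inequality exists; the paper expands it via $(1+R)^{-1}=1-R(1+R)^{-1}$ into four pieces and estimates the cross terms by the Cauchy--Schwarz inequality for the weight $\vf_{p+1/n}$, producing a bound in terms of $\vf_{p+1/n}(|R|^2|1+R|^{-2})^{1/2}$ and $\vf_{p+1/n}(|T|^4)^{1/2}$. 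That Cauchy--Schwarz step is the genuine content, and it is where the constant $4\sqrt{2}$ actually comes from; it is not slack. Your plan has no analogue of this step. Separately, your displayed estimate drops a term: with $B=R(1+R)^{-1}$ the identity is $T(1+R)^{-1}=T-TR+(TR)B$, not $T-(TR)B$, so even granting the right-multiplication stability your constant would be off --- but this is minor next to the failed inequality.
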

\begin{proof}
For any $n\in\N$ we have
\begin{align}
\Q_n(T{(1+R)^{-1}})^2
&=\|T{(1+R)^{-1}}\|^2+\vf_{p+1/n}({(1+R^*)^{-1}}|T|^2{(1+R)^{-1}})+
\vf_{p+1/n}(T{|1+R|^{-2}}T^*)\nonumber\\
&\leq \|(1+R)^{-1}\|^2\,\big(\|T\|^2+\vf_{p+1/n}(TT^*)\big)+\vf_{p+1/n}({(1+R^*)^{-1}}|T|^2{(1+R)^{-1}})\nonumber\\
&\leq \|(1+R)^{-1}\|^2\,\Q_n(T)^2+\vf_{p+1/n}({(1+R^*)^{-1}}|T|^2{(1+R)^{-1}}),
\label{eq:3-lines}
\end{align}

where the first inequality follows
by an application of the operator inequality $A^*B^*BA\leq\|B\|^2A^*A$, while the second follows 
from the definition of the norm $\Q_n$.
Writing
\begin{align*}
&{(1+R^*)^{-1}}|T|^2{(1+R)^{-1}}\\
&\qquad\qquad=|T|^2- {R^*}{(1+R^*)^{-1}}|T|^2-|T|^2R{(1+R)^{-1}}+ {R^*}{(1+R^*)^{-1}}|T|^2 R{(1+R)^{-1}},
\end{align*}
the Cauchy-Schwarz inequality for the weight $\vf_{p+1/n}$ gives 
\begin{align*}
&\vf_{p+1/n}({(1+R^*)^{-1}}|T|^2{(1+R)^{-1}})
\leq \vf_{p+1/n}(|T|^2)+\vf_{p+1/n}( {R^*}{(1+R^*)^{-1}}|T|^2 R{(1+R)^{-1}})\\
&\hspace{3cm}+
 \vf_{p+1/n}(|T|^4)^{1/2}\left(\vf_{p+1/n}( {|R|^2}{|1+R|^{-2}})^{1/2}
 +\vf_{p+1/n}({|R^*|^2}{|1+R^*|^{-2}})^{1/2}\right).
 \end{align*}
 Using the operator inequality  $A^*B^*BA\leq\|B\|^2A^*A$ as above, we deduce that
 \begin{align*}
&\vf_{p+1/n}({(1+R^*)^{-1}}|T|^2{(1+R)^{-1}})\leq \vf_{p+1/n}(|T|^2)+\|T\|^2\,\|(1+R)^{-1}\|^2\,\vf_{p+1/n}(|R|^2)\\
&\hspace{3cm}+
\|T\|\,\|(1+R)^{-1}\|\,\vf_{p+1/n}(|T|^2)^{1/2}\left(\vf_{p+1/n}(|R|^2)^{1/2}+\vf_{p+1/n}(|R^*|^2)^{1/2}\right),
\end{align*}

Simplifying this last expression, using $\Vert T\Vert,\,\vf(|T|^2)^{1/2}\leq \Q_n(T)$ and similarly for $R$, we find
$$
\vf_{p+1/n}({(1+R^*)^{-1}}|T|^2{(1+R)^{-1}})\leq \Q_n(T)^2\,\left(1+\Vert (1+R)^{-1}\Vert\,\Q_n(R)\right)^2.
$$
This yields
$$
\Q_n(T(1+R)^{-1})\leq \sqrt{\Vert (1+R)^{-1}\Vert^2+(1+\Vert (1+R)^{-1}\Vert\,\Q_n(R))^2}\,\Q_n(T).
$$
Finally we employ, for $a,b>0$, the numerical inequalities 
\begin{align*}
\sqrt{a^2+(1+ab)^2}&\leq \sqrt{(ac)^2+(1+ac)^2},\quad c:={\rm max}\{1,b\}\\
&\leq \sqrt{2}(1+ac)\leq \sqrt{2}(1+a)(1+c)\\
&\leq 4\sqrt{2}\,{\rm max}\{1,a\}\,{\rm max}\{1,c\}\leq 4\sqrt{2}\,{\rm max}\{1,a\}\,{\rm max}\{1,b\},
\end{align*}
to arrive at the inequality of the statement of the Lemma.
\end{proof}

\begin{lemma} 
\label{R-siside}
Let $T\in \B_1(\D,p)$ and $R\in \B_2(\D,p)$, with $1+R$ invertible in $\cn$. Then
the operator $T(1+R)^{-1}$ belongs to  $\B_1(\D,p)$, with
$$
\PP_n\big(T{(1+R)^{-1}}\big)\leq C_n(R)\PP_n(T),\quad\mbox{for all }n\in\N,
$$
for the finite constant $C_n(R)$ of Lemma \ref{R-side}.
\end{lemma}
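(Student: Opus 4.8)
The plan is to mimic the proof of the bimodule property, Lemma \ref{B1-grew}, replacing the right multiplier $f(\D)$ by $(1+R)^{-1}$ and invoking Lemma \ref{R-side} in place of Lemma \ref{Bp-grew}~(1). The key observation is that if $T=\sum_{i=0}^\infty R_iS_i$ is any representation of $T$ as a sum of products from $\B_2(\D,p)$, absolutely convergent in every $\PP_n$ (such representations exist by the Remark following the identification $\B_1(\D,p)=\tilde{\B}_1(\D,p)$), then formally $T(1+R)^{-1}=\sum_{i=0}^\infty R_i\big(S_i(1+R)^{-1}\big)$, and by Lemma \ref{R-side} each factor $S_i(1+R)^{-1}$ again lies in $\B_2(\D,p)$ with $\Q_n\big(S_i(1+R)^{-1}\big)\leq C_n(R)\,\Q_n(S_i)$. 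Consequently $\sum_i\Q_n(R_i)\,\Q_n\big(S_i(1+R)^{-1}\big)\leq C_n(R)\sum_i\Q_n(R_i)\,\Q_n(S_i)<\infty$, which will give both membership of $T(1+R)^{-1}$ in $\B_1(\D,p)$ and, after passing to the infimum over representations, the estimate $\PP_n\big(T(1+R)^{-1}\big)\leq C_n(R)\,\PP_n(T)$ with the constant $C_n(R)$ of Lemma \ref{R-side}.

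Concretely, I would fix $T$, $R$, $n$, choose such a representation $T=\sum_iR_iS_i$, apply Lemma \ref{R-side} to each $S_i$, and then, using $\PP_n(RS)\leq\Q_n(R)\Q_n(S)$ from the Remark after the Theorem together with completeness of $\B_1(\D,p)$ (Proposition-level fact established via Lemma \ref{B1-implies-B2} and the Fr\'echet structure), conclude that the series $\sum_iR_i\big(S_i(1+R)^{-1}\big)$ converges in $\B_1(\D,p)$. The one genuinely non-formal point — which I expect to be the main, though minor, obstacle — is to check that this $\B_1(\D,p)$-limit really is the operator $T(1+R)^{-1}$ and not some other element. For this I would use Lemma \ref{B1-implies-B2}: convergence in $\B_1(\D,p)$ forces convergence in $\cn$, so the $\B_1$-limit agrees with the norm limit of the partial sums $\sum_{i=0}^NR_i\big(S_i(1+R)^{-1}\big)=\big(\sum_{i=0}^NR_iS_i\big)(1+R)^{-1}$; since $\sum_{i=0}^NR_iS_i\to T$ in operator norm and right multiplication by the bounded operator $(1+R)^{-1}$ is norm continuous, this norm limit is exactly $T(1+R)^{-1}$. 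Hence $T(1+R)^{-1}=\sum_iR_i\big(S_i(1+R)^{-1}\big)$ is a bona fide representation of $T(1+R)^{-1}$ in $\B_1(\D,p)$.

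Finally I would take the infimum over all representations $T=\sum_iR_iS_i$ on both sides of the inequality $\sum_i\Q_n(R_i)\,\Q_n\big(S_i(1+R)^{-1}\big)\leq C_n(R)\sum_i\Q_n(R_i)\,\Q_n(S_i)$, using the description of $\PP_n$ on $\B_1(\D,p)$ as $\inf\big\{\sum_i\Q_n(R_i)\Q_n(S_i):T=\sum_iR_iS_i\big\}$ (exactly as in the proof of Lemma \ref{B1-grew}), to obtain $\PP_n\big(T(1+R)^{-1}\big)\leq C_n(R)\,\PP_n(T)$ for every $n\in\N$. This completes the argument; the finiteness of $C_n(R)$ is immediate from its explicit form in Lemma \ref{R-side} once one notes $R\in\B_2(\D,p)$ and $(1+R)^{-1}\in\cn$.
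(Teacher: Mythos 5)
Your proof is correct and follows essentially the same route as the paper's: pass to an absolutely convergent representation $T=\sum_i R_iS_i$, absorb $(1+R)^{-1}$ into the right factor via Lemma \ref{R-side}, and take the infimum over representations. The one extra thing you do — explicitly verifying, via Lemma \ref{B1-implies-B2} and norm continuity of right multiplication by $(1+R)^{-1}$, that the rearranged series actually converges to $T(1+R)^{-1}$ rather than some other element — is a point the paper leaves implicit, so your write-up is if anything slightly more complete.
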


\begin{proof}
To see this, fix $n\in\N$ and consider any  representation of $T$
$$
T=\sum_{i=0}^\infty T_{1,i}T_{2,i}\quad\mbox{with}
\quad T_{1,i},T_{2,i}\in\B_2(\D,p)\quad\mbox{and}\quad
\sum_{i=0}^\infty \Q_n(T_{1,i})\Q_n(T_{2,i})<\infty.
$$
Then
\begin{align*}
&\PP_n(T{(1+R)^{-1}})\leq \sum_{i=0}^\infty \Q_n(T_{1,i})\,\Q_n(T_{2,i} {(1+R)^{-1}})
\leq C_n(R) \sum_{i=0}^\infty \Q_n(T_{1,i})\,\Q_n(T_{1,i}),
\end{align*}
where we used   Lemma \ref{R-side} to obtain the second estimate. 
Since the constant does not depend on the 
representation chosen, we have the inequality
$$
\PP_n\big(T{(1+R)^{-1}}\big)\leq C_n(R) \,\PP_n(T),
 $$
 which completes the proof.
\end{proof}

\begin{prop}
\label{HolB1}
For any $n\in\mathbb{N}$ and $p\geq 1$, the $*$-algebra
 $M_n(\B_1(\D,p))$ is stable under the holomorphic functional calculus.
\end{prop}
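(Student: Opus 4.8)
The plan is to mimic the proof of Lemma \ref{Hol-Calc}: first establish the case $n=1$, namely that $\B_1(\D,p)$ is itself stable under the holomorphic functional calculus in its $C^*$-completion in the nonunital sense (that $f(T)\in\B_1(\D,p)$ whenever $T\in\B_1(\D,p)$, $f$ is holomorphic on a neighbourhood of the spectrum of $T$, and $f(0)=0$), and then deduce the matrix case from the main theorem of \cite{LBS} exactly as there. The only change needed for the $n=1$ step is to replace the use of Lemma \ref{Bp-grew}(1) in Lemma \ref{Hol-Calc} by the $\B_1$-estimates of Lemmas \ref{R-side} and \ref{R-siside}.

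Concretely, fixing $T\in\B_1(\D,p)$, I would first note that $T\in\B_2(\D,p)$ by Lemma \ref{B1-implies-B2}, so for every $z$ in the resolvent set of $T$ the element $R_z:=-z^{-1}T$ lies in $\B_2(\D,p)$ and $1+R_z=z^{-1}(z-T)$ is invertible in $\cn$. The identity $(z-T)^{-1}-z^{-1}=z^{-2}T(1+R_z)^{-1}$ together with Lemma \ref{R-siside} then shows that the resolvent difference lies in $\B_1(\D,p)$ with
$$
\PP_n\big((z-T)^{-1}-z^{-1}\big)\leq |z|^{-2}\,C_n(R_z)\,\PP_n(T),
$$
$C_n(R_z)$ being the constant of Lemma \ref{R-side}. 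Choosing a positively oriented contour $\Gamma$ surrounding the spectrum of $T$, on which $f$ is holomorphic, and with $0\notin\Gamma$, the quantities $\|(1+R_z)^{-1}\|=|z|\,\|(z-T)^{-1}\|$ and $\Q_n(R_z)=|z|^{-1}\Q_n(T)$ are bounded on $\Gamma$, so $\sup_{z\in\Gamma}C_n(R_z)<\infty$ and $z\mapsto (z-T)^{-1}-z^{-1}$ is a continuous, $\PP_n$-bounded $\B_1(\D,p)$-valued function on $\Gamma$ for each $n$. Since $\B_1(\D,p)$ is a Fr\'echet space, the integral $\frac{1}{2\pi i}\int_\Gamma f(z)\big[(z-T)^{-1}-z^{-1}\big]\,dz$ converges in $\B_1(\D,p)$, and adding back the scalar part $\frac{1}{2\pi i}\int_\Gamma f(z)z^{-1}\,dz\cdot{\rm Id}_\cn=f(0)\,{\rm Id}_\cn=0$ exhibits this integral as $f(T)$; hence $f(T)\in\B_1(\D,p)$. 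The general $n$ then follows from \cite{LBS} as in Lemma \ref{Hol-Calc}.

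I expect the only genuinely new point to be the uniform bound $\sup_{z\in\Gamma}C_n(R_z)<\infty$, i.e.\ checking that the constants supplied by Lemma \ref{R-side} stay controlled along the contour (which is the reason for insisting $0\notin\Gamma$); this is precisely what upgrades convergence of the Cauchy integral from the $\B_2(\D,p)$-topology, where it is already guaranteed by Lemma \ref{Hol-Calc}, to the finer topology of $\B_1(\D,p)$. Everything else is the bookkeeping already carried out in Lemmas \ref{R-side}--\ref{R-siside}.
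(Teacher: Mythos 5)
Your proof is correct and follows essentially the same route as the paper: writing $(z-T)^{-1}-z^{-1}=z^{-2}T(1-T/z)^{-1}$, invoking Lemma \ref{R-siside} with $R=-T/z$, and observing that the constant $C_n(-T/z)$ stays bounded along the contour because $\Gamma$ is compact, avoids $0$, and lies in the resolvent set. The paper's proof is just a slightly more compressed version of the same estimate, followed by the same appeal to \cite{LBS} for the matrix case.
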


\begin{proof}
We begin with the case $n=1$.
Let $T\in\B_1(\D,p)$ and let $f$ be a function holomorphic in a neighborhood of 
the spectrum of $T$. Let $\Gamma$ be a positively oriented 
contour surrounding the spectrum of $T$, taking care that $0$ 
does not lie on $\Gamma$. We want to show that
(when $\B_1(\D,p)$ is a   nonunital subalgebra of $\cn$) 
$$
\int_\Gamma{f(z)}{(z-T)^{-1}}\,dz\in\,\B_1(\D,p)\oplus\mathbb{C}\,{\rm Id}_\cn,
$$
with the scalar component equal to $f(0){\rm Id}_\cn$.
Since
\begin{align*}
\int_\Gamma{f(z)}{(z-T)^{-1}}\,dz-f(0)\,{\rm Id}_\cn= \int_\Gamma f(z){T}{z^{-1}(z-T)^{-1}}\,dz,
\end{align*}
we get for all $n\in\N$
$$
\PP_n\left(\int_\Gamma{f(z)}{(z-T)^{-1}}\,dz-f(0)\,{\rm Id}_\cn\right)\leq \int_\Gamma \left|\frac{f(z)}{z^2}\right|\PP_n(T)C_n(-T/z)\,dz,
$$
where $C_n$ is the constant from Lemmas \ref{R-side} and \ref{R-siside}, 
and we have used Lemma \ref{B1-implies-B2} 
to see that $T/z\in \B_2(\D,p)$. 
Then the inequality 
$$
C_n(-T/z)\leq 4\sqrt{2}\,{\rm max}\{1,\Vert(1-T/z)^{-1}\Vert\}\,{\rm max}\{1,\Q_n(T)/|z|\},
$$ 
allows us to conclude. Again, the general case follows from  \cite{LBS}.
\end{proof}

We conclude this section by showing that  when the weights $\vf_s$, $s>0$,  are tracial, 
then our space
of integrable element $\B_1(\D,p)$, coincides with an intersection of trace-ideals. 
This fact will be of relevance
in two of our applications (Section \ref{Mfd} and subsection \ref{MP}), where the restriction 
of the faithful normal semifinite weights  $\vf_s$ to an appropriate   sub-von Neumann algebra
are faithful normal semifinite traces. 

\begin{prop} 
\label{tracial-case}
Assume that there exists a von Neumann
subalgebra $\cm\subset\cn$ such that for all $n\in\N$, the restriction of the
faithful normal semifinite weight $\tau_n:=\vf_{p+1/n}|_{\cm}$ is a faithful normal semifinite trace.
Then 
$$
\B_1(\cn,\tau)\bigcap\cm=\bigcap_{n\geq 1} \cl^1(\cm,\tau_n).
$$ 
Here  $\cl^1(\cm,\tau_n)$ denotes the trace ideal of $\cm$ associated 
with the faithful normal semifinite trace $\tau_n$.
Moreover, for any $n\in\N$,  $\PP_n(\cdot)=\Vert\cdot\Vert+2\Vert\cdot\Vert_{\tau_n}$, 
where $\Vert\cdot\Vert_{\tau_n}$ is  the 
trace-norm on $\cl^1(\cm,\tau_n)$.
\end{prop}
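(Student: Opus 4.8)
The plan is to establish the set-theoretic identity first, then the norm identity, exploiting the fact that on the tracial subalgebra $\cm$ the weights $\vf_s$ behave exactly like ordinary traces, so all the ``half-domain'' subtleties that forced us to build $\B_2$ and $\B_1$ via left Hilbert algebras disappear. The two key simplifications available on $\cm$ are: (a) for $0\le A\in\cm$, $\tau_n(A)=\vf_{p+1/n}(A)=\tau\big((1+\D^2)^{-(p+1/n)/4}A(1+\D^2)^{-(p+1/n)/4}\big)$, and by the trace property (Proposition \ref{prop:everybody-knows} / Proposition \ref{prop:bikky}) this equals $\tau\big(A^{1/2}(1+\D^2)^{-(p+1/n)/2}A^{1/2}\big)$, so $\vf_s$ restricted to $\cm$ is genuinely tracial; and (b) consequently, for $T\in\cm$, $\vf_{p+1/n}(|T|^2)=\vf_{p+1/n}(|T^*|^2)=\|T\|_{\tau_n}^2$, which collapses the definition \eqref{pn} of $\Q_n$ on $\cm$ to $\Q_n(T)^2=\|T\|^2+2\|T\|_{\tau_n}^2$, i.e.\ $\B_2(\D,p)\cap\cm=\bigcap_n\cl^2(\cm,\tau_n)$.

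\textbf{Step 1: the inclusion $\bigcap_n\cl^1(\cm,\tau_n)\subseteq\B_1(\cn,\tau)\cap\cm$.} Given $0\le A\in\cm$ with $\tau_n(A)<\infty$ for all $n$, write $A=A^{1/2}\cdot A^{1/2}$ with $A^{1/2}\in\cm$; by the tracial identity (a), $\Q_n(A^{1/2})^2=\|A^{1/2}\|^2+2\,\tau_n(A)<\infty$, so $A^{1/2}\in\B_2(\D,p)$ and hence $A=(A^{1/2})(A^{1/2})\in\B_2(\D,p)^2\subseteq\B_1(\D,p)$ by Lemma \ref{lem:b1-pos}. A general $T\in\bigcap_n\cl^1(\cm,\tau_n)$ is a linear combination of four such positive elements (polar/Jordan decomposition inside $\cm$, using that each $T_\pm\le |T|$ and $\tau_n(|T|)<\infty$ by the tracial case of $\L^1$-theory), so $T\in\B_1(\D,p)\cap\cm$.

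\textbf{Step 2: the reverse inclusion together with the norm estimate $\PP_n(T)\ge\|T\|+2\|T\|_{\tau_n}$ for $T\in\B_1(\D,p)\cap\cm$.} Take $T\in\B_1(\D,p)\cap\cm$ and any representation $T=\sum_{i}R_iS_i$ with $R_i,S_i\in\B_2(\D,p)$ and $\sum_i\Q_n(R_i)\Q_n(S_i)<\infty$. Since $T\in\cm$, apply the $\tau_n$-trace (which is well-defined and additive on $\cl^1(\cm,\tau_n)$ once we check membership): each $R_iS_i$ need not lie in $\cm$, so the argument must be run through $\vf_{p+1/n}$ on $\cn$ rather than $\tau_n$ on $\cm$ directly. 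The clean route is: $\|T\|_{\tau_n}=\|\,|T|^{1/2}\|_{\cl^2(\cm,\tau_n)}^2=\Q_n(|T|^{1/2})^2-\|\,|T|^{1/2}\|^2$, and by Lemma \ref{lem:b1-pos} applied to $|T|\in\B_1(\D,p)$ (note $|T|\in\cm$ and $|T|\le$ a suitable positive element, or directly $T\in\B_1$ so $|T|\in\B_1$ via the $*$-algebra structure and positivity), one gets $\Q_n(|T|^{1/2})^2=\PP_n(|T|)$, hence $|T|\in\cl^1(\cm,\tau_n)$ and $\|T\|_{\tau_n}=\PP_n(|T|)-\||T|\|\le\PP_n(T)-\|T\|$ up to the factor bookkeeping; running the same with $\Re,\Im$ and keeping track of the factor $2$ in \eqref{pn} gives $\|T\|+2\|T\|_{\tau_n}\le\PP_n(T)$. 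This shows $\B_1(\D,p)\cap\cm\subseteq\bigcap_n\cl^1(\cm,\tau_n)$.

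\textbf{Step 3: reverse norm estimate and conclusion.} For $T\in\bigcap_n\cl^1(\cm,\tau_n)$, use the four-positive-element decomposition from Step 1 more carefully: writing $T=\sum_{k=0}^3 i^k A_k$ does not directly give $\PP_n(T)\le\|T\|+2\|T\|_{\tau_n}$ because of cross terms. Instead, for $0\le A\in\cm\cap\cl^1(\cm,\tau_n)$, Lemma \ref{lem:b1-pos} gives the \emph{exact} equality $\PP_n(A)=\Q_n(A^{1/2})^2=\|A\|+2\tau_n(A)=\|A\|+2\|A\|_{\tau_n}$, so the formula $\PP_n(\cdot)=\|\cdot\|+2\|\cdot\|_{\tau_n}$ holds on the positive cone on the nose. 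The polar decomposition $T=U|T|$ inside $\cm$ then gives $\PP_n(T)\le\PP_n(|T|)\cdot(\text{something})$? — this is where one must be cautious. \textbf{I expect the main obstacle} to be exactly this last point: proving the \emph{equality} $\PP_n(T)=\|T\|+2\|T\|_{\tau_n}$ for non-positive $T$, since $\PP_n$ is defined by an infimum over factorizations while the trace norm is intrinsic. The resolution should be: on the one hand $\PP_n(T)\ge\|T\|+2\|T\|_{\tau_n}$ from Step 2; on the other hand, using the polar decomposition $T=U|T|^{1/2}\cdot|T|^{1/2}$ with $U|T|^{1/2},|T|^{1/2}\in\B_2(\D,p)$ (checking $\Q_n(U|T|^{1/2})\le\Q_n(|T|^{1/2})$ via Lemma \ref{Bp-grew}(2) since $(U|T|^{1/2})^*(U|T|^{1/2})=|T|\le$ stuff and $(U|T|^{1/2})(U|T|^{1/2})^*=U|T|U^*\le\|U\|^2|T|$... needs $U\in\cm$ so the polar part is available) gives the single-term factorization bound $\PP_n(T)\le\Q_n(U|T|^{1/2})\Q_n(|T|^{1/2})\le\Q_n(|T|^{1/2})^2=\|T\|+2\|T\|_{\tau_n}$, closing the loop. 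Finally, having the norm identity on $\B_2(\D,p)^2\cap\cm$ and completeness, one extends by continuity to all of $\B_1(\D,p)\cap\cm$ and concludes both the set equality and $\PP_n=\|\cdot\|+2\|\cdot\|_{\tau_n}$.
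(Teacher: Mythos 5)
Your overall structure and Steps~1 and~3 agree with the paper: establish the tracial collapse $\Q_n(T)^2=\|T\|^2+2\tau_n(|T|^2)$ on $\cm$, prove $\bigcap_n\cl^1(\cm,\tau_n)\subset\B_1(\D,p)\cap\cm$ by handling positives via $A=\sqrt A\cdot\sqrt A$ and then a four-positive decomposition, and get $\PP_n(T)\le\|T\|+2\|T\|_{\tau_n}$ from the single factorisation $T=(U|T|^{1/2})\cdot|T|^{1/2}$ coming from the polar decomposition inside $\cm$. Step~2, however, has a genuine gap: you assert ``$T\in\B_1$ so $|T|\in\B_1$ via the $*$-algebra structure and positivity,'' and then apply Lemma~\ref{lem:b1-pos} to $|T|$. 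Being a $*$-algebra only gives $T^*\in\B_1$, not $|T|=(T^*T)^{1/2}\in\B_1$; in fact the paper explicitly flags (in the remarks after Lemma~\ref{lem:b1-pos} and Proposition~\ref{cor:polar-decomp}) that it \emph{cannot} show $|T|\in\B_1(\D,p)$ for a general $T\in\B_1(\D,p)$, and the alternative ``$|T|\le$ a suitable positive element'' is never produced. Your worry that $R_iS_i\notin\cm$ is reasonable, but the fix you propose is circular: knowing $|T|\in\B_1$ (equivalently $|T|^{1/2}\in\B_2\cap\cm=\bigcap_n\cl^2(\cm,\tau_n)$, i.e.\ $\tau_n(|T|)<\infty$) is essentially what Step~2 is trying to prove.

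The paper's Step~2 instead works directly from an arbitrary representation $T=\sum_i R_iS_i$, $R_i,S_i\in\B_2(\D,p)$, estimating
$$
\|T\|+2\|T\|_{\tau_n}\;\le\;\sum_i\big(\|R_iS_i\|+2\|R_iS_i\|_{\tau_n}\big)\;\le\;\sum_i\Q_n(R_i)\Q_n(S_i),
$$
using the triangle inequality for the $\cl^1$-norm and H\"older (in $\cl^2$) for the individual terms, then taking the infimum over representations. That route never needs $|T|\in\B_1(\D,p)$. If you want to pursue a variant of your route, you would need an argument establishing $\tau_n(|T|)<\infty$ \emph{before} invoking Lemma~\ref{lem:b1-pos} — for example a duality/approximation argument against $\cl^\infty(\cm)$ using partial sums of the representation — rather than asserting $|T|\in\B_1$ outright.
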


\begin{proof}
Note first that the tracial property of the faithful normal semifinite trace $\tau_n:=\vf_{p+1/n}|_{\cm}$, immediately implies 
that 
$$
\B_2(\cn,\tau)\bigcap\cm=\bigcap_{n\geq 1}\L^2(\cm,\tau_n),
$$
that is, the half-domain of $\tau_n$ on $\cm$ is already $*$-invariant and moreover
$$
\Q_n(T)=\big(\|T\|^2+2\||T|^2\|_{\tau_n}\big)^{1/2}.
$$
Now, take $T\in \B_1(\D,p)\bigcap\cm$, and any representation $T=\sum_{i=1}^\infty R_iS_i$. 
Observe then that the H\"{o}lder inequality
gives
$$
\Vert T\Vert+2\Vert T\Vert_{\tau_n}\leq 
\sum_{i=1}^\infty
 \Vert R_iS_i\Vert+2\Vert R_iS_i\Vert_{\tau_n}\leq
\sum_{i=1}^\infty\Q_n(R_i)\,\Q_n(S_i).
$$
Since this inequality is valid for any such  representation, 
it gives $\Vert T\Vert+2\Vert T\Vert_{\tau_n}\leq \PP_n(T)$ and hence
 $\B_1(\cn,\tau)\bigcap \cm\subset \bigcap_{n\geq 1}\L^1(\cm,\tau_n)$. 

Conversely, let $T\in\bigcap_n\L^1(\cm,\tau_n)$. If $T\geq 0$ then 
$T=\sqrt{T}\,\sqrt{T}$ and $\sqrt{T}\in \B_2(\D,p)\cap\cm$, by the first part of the proof and the fact that 
$\sqrt{T}\in \bigcap_n\L^2(\cm,\tau_n)$. Thus $T\in \B_1(\D,p)\cap\cm$ and, by Lemma 
\ref{lem:b1-pos},
$\PP_n(T)=\Q_n(\sqrt{T})^2= \Vert T\Vert+2\Vert T\Vert_{\tau_n}$. 
If  $T$ is now arbitrary in $\bigcap_n\L^1(\cm,\tau_n)$, we may write it as a linear combination of four
positive elements, $T= c_1T_1+c_2T_2+c_3T_3+c_4T_4$, with: $|c_j|=1$ for each $j=1,2,3,4$;   
$0\leq T_j\in\L^1(\cm,\tau_n)$ for each $n$; and $\|T_j\|+2\|T_j\|_{\tau_n}\leq \|T\|+2\|T\|_{\tau_n}$.
Hence
 $ \bigcap_{n\geq 1}\L^1(\cm,\tau_n) \subset\B_1(\cn,\tau)\bigcap \cm$. 
 
 Regarding the equality of norms, for $T\in  \bigcap_{n\geq 1}\L^1(\cm,\tau_n) =
 \B_1(\cn,\tau)\bigcap \cm$, write $T=S|T|$ for the  polar decomposition.
 Then by construction of the norms $\PP_n$ and the value of the norms $\Q_n$ we see that
 $$
 \PP_n(T)\leq \Q_n(S|T|^{1/2})\Q_n(|T|^{1/2})\leq \||T|^{1/2}\|^2+2\||T|\|_{\tau_n}
=\|T\|+2\|T\|_{\tau_n},
$$
and we conclude using the converse inequality already proven.
\end{proof}

\subsection{Smoothness and summability}

Anticipating the pseudodifferential calculus, we introduce subalgebras 
of $\B_1(\D,p)$ which `see' smoothness as well as summability. There
are several operators naturally associated to our notions of smoothness.

We recall that  $\D$ is a self-adjoint operator affiliated to a semifinite von Neumann algebra 
$\cn$ with
faithful normal semifinite trace $\tau$, and $p\geq 1$. For a few definitions, like the next, we do not require all
of this information.

\begin{definition}
\label{parup}
Let $\mathcal{D}$ be a self-adjoint 
operator affiliated to a semifinite von Neumann algebra $\mathcal{N}\subset \B(\H)$,
where $\H$ is a Hilbert space.\index{$\H$, a Hilbert space}
Set $\HH_\infty=\bigcap_{k\geq 0} {\rm dom}\,\D^k$. For an 
operator $T\in\cn$ such that $T:\HH_\infty\to\HH_\infty$
we set
\begin{equation}
\delta(T):=[|\D|,T],\quad \delta'(T):=[(1+\D^2)^{1/2},T],\quad T\in \cn.
\label{eq:deltas}
\end{equation}
In addition, we recursively set   \index{$\delta$, $\delta'$, derivations associated with $\D$}
\begin{equation}
T^{(n)}:=[\D^2,T^{(n-1)}],\; n\in\N\quad \mbox{and}\quad T^{(0)}:=T.
\label{eq:iterated-comms}
\end{equation} 
Finally, let \index{$T^{(n)}$} \index{$L$, $R$, operators associated with $\D^2$}
\begin{align}
\label{LR}
L(T):=(1+\mathcal{D}^2)^{-1/2}[\mathcal{D}^2,T],\quad R(T):=[\mathcal{D}^2,T](1+\mathcal{D}^2)^{-1/2}.
\end{align}
\end{definition}

We have defined $\delta,\,\delta',\,L,\,R$ for operators in $\cn$ preserving $\H_\infty$, and
so consider the domains of $\delta,\,\delta',\,L,\,R$ to be subsets of $\cn$. If $T\in{\rm dom}\,\delta$, say, so that
$\delta(T)$ is bounded, then it is straightforward to check that $\delta(T)$ commutes with
every operator in the commutant of $\cn$, and hence $\delta(T)\in\cn$. Similar comments
apply to $\delta',\,L,\,R$.

It follows from the proof of \cite[Proposition 6.5]{CPRS2} and $R(T)^*=-L(T^*)$ that
\begin{equation}
\bigcap_{n\geq 0}{\rm dom}\,{L}^n
=\bigcap_{n\geq 0}{\rm dom}\,{R}^n\,=\bigcap_{k,\,l\geq 0}{\rm dom}\,L^k\circ R^l.
\label{eq:LR}
\end{equation}
Similarly, using the fact that $|x|-(1+x^2)^{1/2}$ is a bounded function, it is proved after
the Definition 2.2 of 
\cite{CPRS2} that
\begin{equation}
\bigcap_{n\in\N}{\rm dom}\,\delta^n
=\bigcap_{n\in\N}{\rm dom}\,{\delta'}^n.
\label{eq:delta}
\end{equation}
Finally, it is proven in 
\cite{CM,Co5} and \cite[Proposition 6.5]{CPRS2} that we have equalities of all the smooth domains
in Equations \eqref{eq:LR}, \eqref{eq:delta}.

\begin{definition} 
\label{def-Bp}
 Let  $\D$ be a self-adjoint operator affiliated to a semifinite von Neumann algebra 
$\cn$ with
faithful normal semifinite trace $\tau$, and $p\geq 1$.
 Then
 define for $k\in\N_0=\N\cup\{0\}$
\begin{align*}
\B_1^k(\D,p)
&:=\big\{T\in\cn\,:\,\mbox{for all } l=0,\dots,k,
\, \delta^l(T)\in \B_1(\D,p)\big\},
\end{align*}
where $\delta=[|\D|,\cdot]$ as in Equation \eqref{eq:deltas}.
Also set
$$
\B_1^\infty(\D,p):=\bigcap_{k=0}^\infty \B_1^k(\D,p).
$$

We equip $\B_1^k(\D,p)$, $k\in\N_0\cup\{\infty\}$, 
with the topology determined by the seminorms 
\begin{equation}
\cn\ni T\mapsto \PP_{n,l}(T)
:=\sum_{j=0}^l\PP_n(\delta^j(T)),
\quad n\in\N,\quad l\in\N_0.
\label{eq:pp-n-l}
\end{equation}
\end{definition}
\index{$\B_2^k(\D,p)$, smooth version of $\B_2(\D,p)$}\index{$\B_1^k(\D,p)$, smooth version of $\B_1(\D,p)$}
\index{$\PP_{n,l}$, seminorms on $\B_1^k(\D,p)$ and $\B_1^\infty(\D,p)$}\index{seminorms!$\PP_{n,l}$, seminorms on $\B_1^k(\D,p)$, $\B_1^\infty(\D,p)$}

The triangle inequality for the seminorms $\PP_{n,l}$ follows from the linearity of $\delta^l$ 
and the triangle inequality for the norm $\PP_n$. Submultiplicativity then follows from the Leibniz rule
as well as the triangle inequality and submultiplicativity for $\PP_n$.
For $k$ finite, it is sufficient to 
consider the subfamily of norms $\{\PP_{n,k}\}_{n\in\N}$.

{\bf Remarks.}  (i) Defining
 $\B_2^k(\D,p)
:=\big\{T\in\cn\,:\,\mbox{for all } l=0,\dots,k,
\, \delta^l(T)\in \B_2(\D,p)\big\}$, an application of the Leibniz rule shows that
 $\B_2^k(\D,p)^2\subset\B_1^k(\D,p)$. 
 
It is important to observe that $\B_2^\infty(\D,p)$ is non-empty, 
and so $\B_1^\infty(\D,p)$ is non-empty. Note first that $\B_2(\D,p)$ is non empty
as it contains $\L^2(\cn,\tau)$. Then, for 
$T\in \B_2(\D,p)$, and $f\in C_c(\R)$ and $k,l\in\N_0$ 
arbitrary, $|\D|^kf(\D)Tf(\D)|\D|^l$ is well defined and is in 
$\B_2(\D,p)$ by Lemma \ref{Bp-grew}. This
implies that $\delta^k\big(f(\D)Tf(\D)\big)\in \B_2(\D,p)$ 
for any $k\in\N_0$ and thus $f(\D)Tf(\D)$ is in $\B_2^\infty(\D,p)$.\\

(ii) Using Lemma \ref{B1-grew}, we see that
the topology on the algebras $\B_1^k(\D,p)$ could have been equivalently defined with 
$\delta'=[(1+\D^2)^{1/2},\cdot]$ instead of
$\delta$. This follows since $f(\D)=|\D|-(1+\D^2)^{1/2}$ is bounded.
 Indeed,
Lemma \ref{B1-grew} shows that
$$
\PP_n(\delta(T))=\PP_n(\delta'(T)+[f(\D),T])\leq \PP_n(\delta'(T))+2\Vert f\Vert_\infty\,\PP_n(T),
$$
and similarly that $\PP_n(\delta'(T))\leq \PP_n(\delta(T))+2\Vert f\Vert_\infty\,\PP_n(T)$. Hence
convergence in the topology defined using $\delta$ implies convergence in the topology defined
by $\delta'$, and conversely. Similar comments apply for $\B_2^k(\D,p)$.

(iii) In Lemma \ref{delta-LR}, we will show that we could also use the seminorms $\PP_n(L^k(\cdot))$
(and similarly for $R^k$ and $L^k\circ R^j$) to define the topologies of $\B_1^\infty(\D,p)$
and $\B_2^\infty(\D,p)$.

We begin by proving that the algebra $\B_1^k(\D,p)$ is a Fr\'echet $*$-subalgebra of $\mathcal{N}$.

\begin{prop}
\label{Frechet2} 
For any $n\in\N$, $l=\N_0\cup\{\infty\}$ and 
$p\geq 1$, the $*$-algebra $M_n(\B_1^l(\D,p))$
is  Fr\'echet and stable under the holomorphic functional calculus.
\end{prop}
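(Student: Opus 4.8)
The plan is to establish the two claims---Fréchet completeness and stability under holomorphic functional calculus---separately, and then note that the $n\times n$ matrix case follows automatically. We treat the case $l=k$ finite first, and then pass to $l=\infty$ by an intersection argument. For completeness, we essentially repeat the argument of Proposition \ref{Frechet1}, but now bootstrapped through the derivation $\delta$. Given a Cauchy sequence $(T_m)_{m\geq 1}$ in $\B_1^k(\D,p)$ with respect to the seminorms $\PP_{n,k}$, the sequences $(\delta^j(T_m))_{m\geq 1}$ are Cauchy in $\B_1(\D,p)$ for each $j=0,\dots,k$, hence converge to some $S_j\in\B_1(\D,p)$ by Proposition \ref{Frechet1} (with $S_0=:T$). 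Since $\B_1(\D,p)\hookrightarrow\B_2(\D,p)\hookrightarrow\cn$ continuously by Lemma \ref{B1-implies-B2}, convergence also holds in operator norm, and since $\delta=[|\D|,\cdot]$ is a closed operator on $\cn$ (its graph being closed, as $\delta(T_m)\to S_j$ and $\delta^{j-1}(T_m)\to S_{j-1}$ in norm forces $\delta(S_{j-1})=S_j$ on $\HH_\infty$), we obtain $\delta^j(T)=S_j\in\B_1(\D,p)$. Thus $T\in\B_1^k(\D,p)$ and $T_m\to T$ in the $\PP_{n,k}$-topology. For $l=\infty$, $\B_1^\infty(\D,p)=\bigcap_{k}\B_1^k(\D,p)$ is a countable intersection of Fréchet spaces with compatible topologies, hence Fréchet.

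For stability under the holomorphic functional calculus, the idea is to reduce to Proposition \ref{HolB1} by using that $\delta$ (equivalently $\delta'=[(1+\D^2)^{1/2},\cdot]$, cf.\ Remark (ii) after Definition \ref{def-Bp}) is a derivation. Given $T\in\B_1^k(\D,p)$ with $1+T$ invertible in $\cn$, one must show $(1+T)^{-1}-1=-T(1+T)^{-1}\in\B_1^k(\D,p)$; the general holomorphic case then follows by the contour-integral argument exactly as in the proof of Proposition \ref{HolB1}, integrating the estimates. The key algebraic identity is the standard one for derivations: $\delta'\big((1+T)^{-1}\big)=-(1+T)^{-1}\,\delta'(T)\,(1+T)^{-1}$, and more generally $\delta'^{\,j}\big((1+T)^{-1}\big)$ is a finite sum of terms of the form $(1+T)^{-1}\,\delta'^{\,a_1}(T)\,(1+T)^{-1}\cdots(1+T)^{-1}\,\delta'^{\,a_r}(T)\,(1+T)^{-1}$ with $a_1+\cdots+a_r=j$ and each $a_i\geq 1$. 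Each such term lies in $\B_1(\D,p)$: the factors $\delta'^{\,a_i}(T)$ lie in $\B_1(\D,p)$ by hypothesis (for $a_i\le k$), $(1+T)^{-1}=1-T(1+T)^{-1}$ decomposes into a scalar plus an element of $\B_1(\D,p)$ by Proposition \ref{HolB1} applied at level $\B_1(\D,p)$, and $\B_1(\D,p)$ is an ideal-like bimodule under multiplication by such resolvents---more precisely, one absorbs the $(1+T)^{-1}$ factors using that $\B_1(\D,p)\cdot\B_1(\D,p)\subset\B_1(\D,p)$ together with boundedness. To get the quantitative continuity needed for the contour integral to converge in each $\PP_{n,k}$, one invokes Lemma \ref{R-siside} (and Lemma \ref{R-side}) repeatedly to bound $\PP_n$ of each term by a product of $\PP_n(\delta'^{\,a_i}(T))$'s times powers of the constant $C_n(\cdot)$, which is polynomial in $\Q_n(T)\le\PP_n(T)$ and $\|(1+T)^{-1}\|$; summing the (finitely many) terms at each derivation order $j\le k$ gives the bound on $\PP_{n,k}\big((1+T)^{-1}-1\big)$.

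The $M_n$ case is handled by the same device used in Lemma \ref{Hol-Calc} and Proposition \ref{HolB1}: $M_n(\B_1^k(\D,p))=\B_1^k(\D\otimes 1_n, p)$ for the amplified data (with $\delta$ acting entrywise), so the scalar case applies verbatim; alternatively one cites the main theorem of \cite{LBS} to deduce stability of $M_n(\B)$ from stability of $\B$. The main obstacle I anticipate is bookkeeping in the second part: verifying that the Leibniz-expanded terms $\delta'^{\,j}\big((1+T)^{-1}\big)$ genuinely lie in $\B_1(\D,p)$ requires care because $\B_1(\D,p)$ is only a $*$-algebra, not obviously an ideal in $\cn$---one must route every multiplication by $(1+T)^{-1}$ through Lemma \ref{R-siside} (right multiplication by resolvents) and its left-handed analogue (available via the $*$-operation, using $*$-invariance of $\PP_n$ from Corollary \ref{onecor}), rather than naively invoking an ideal property. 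Ensuring the constants assemble into a convergent contour integral in every seminorm $\PP_{n,k}$ simultaneously is the delicate quantitative point, but it is a direct iteration of the estimates already established in Lemmas \ref{R-side} and \ref{R-siside}.
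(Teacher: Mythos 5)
Your proof is correct and follows essentially the same strategy as the paper's: the closed-graph argument for completeness of $\B_1^k(\D,p)$, and iterated application of the derivation identity for the resolvent together with Lemma~\ref{R-siside} for stability under the holomorphic functional calculus. One small remark on the bookkeeping point you flag: the paper's organization $\delta\big(T(1+T)^{-1}\big)=\delta(T)(1+T)^{-1}-T(1+T)^{-1}\delta(T)(1+T)^{-1}$ places every resolvent factor to the \emph{right} of an element of $\B_1(\D,p)$, so only Lemma~\ref{R-siside} is needed and your worry about a ``left-handed analogue'' does not arise; and even in your formulation no $*$-trick is required, since $(1+T)^{-1}A=A-T(1+T)^{-1}A$ reduces left multiplication by $(1+T)^{-1}$ to the algebra property of $\B_1(\D,p)$ plus Lemma~\ref{R-siside}.
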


\begin{proof}
We first regard the question of completeness and  treat the case $l=1$ 
 and $n=1$ only, since
the general case is similar.

Let $(T_k)_{k\geq 0}$ be a Cauchy sequence in  $\B_1^1(\D,p)$. Since
$$
\PP_{n,1}(T_k-T_l)
=\PP_{n}(T_k-T_l)+\PP_n\big(\delta(T_k)-\delta(T_l)\big)
\geq \PP_n\big(\delta(T_k)-\delta(T_l)\big) \ ,\ \PP_{n}(T_k-T_l),
$$
we see that both $(S_k)_{k\geq 0}:=(\delta(T_k))_{k\geq 0}$ and $(T_k)_{k\geq 0}$ 
are Cauchy sequences in  
$\B_1(\D,p)$. Since $\B_1(\D,p)$ is complete, 
both $(S_k)_{k\geq 0}$ and $(T_k)_{k\geq 0}$ 
converge, say  to $S\in \B_1(\D,p)$ and $T\in \B_1(\D,p)$ respectively. 

Next observe that $\delta:{\rm dom}\,\delta\subset \cn\to \cn$ is bounded, 
where we give on ${\rm dom}\,\delta$ the topology determined by the norm 
$\Vert\cdot\Vert+\Vert\delta(\cdot)\Vert$. Hence $\delta$ has closed graph, and since 
$T_k\to T$ in norm and $\delta(T_k)$ converges in norm also, we have $S=\delta(T)$.
Finally, since $(\delta(T_k))_{k\geq 0}$ is Cauchy in $\B_1(\D,p)$, we have 
$S=\delta(T)\in \B_1(\D,p)$.

Next we pass to the question of stability under holomorphic functional calculus.
As before, the proof for $M_n(\B_1^k(\D,p))$, will follow from the proof for
$\B_1^k(\D,p)$.
By completeness of $\B_1^k(\D,p)$, it is enough to show that for $T\in \B_1^k(\D,p)$,
$T(1+T)^{-1}\in\B_1^k(\D,p)$ (see the proof of Proposition \ref{HolB1}).
But this follows from an iterative use of the relation
$$
\delta\Big( T{(1+T)^{-1}}\Big)=\delta(T){(1+T)^{-1}}-{T}{(1+T)^{-1}}\delta(T){(1+T)^{-1}},
$$
together with Lemma \ref{R-siside} and the fact that $\B_1(\D,p)$ is an algebra.
\end{proof}




\subsection{The pseudodifferential calculus} 
The pseudodifferential calculus of Connes-Moscovici, 
\cite{Co5,CM},
depends only on an unbounded self-adjoint operator $\D$. In its original form, this 
calculus characterises
those operators which are smooth `as far as $\D$ is concerned'. In subsection \ref{symdom} 
we saw that we 
could also talk about operators which are `integrable as far as $\D$ is concerned'. 
This latter notion also requires the trace $\tau$ and the dimension $p$.
We combine all these 
ideas in the following definition, to obtain a notion of pseudodifferential
operator adapted to the nonunital setting. 

Once again, throughout this subsection
we let  $\D$ be a self-adjoint operator affiliated to a semifinite von Neumann algebra 
$\cn$ with
faithful normal semifinite trace $\tau$ and $p\geq 1$.

 \begin{definition}
 \label{op0}
The set of
{\bf  order-$r$ tame pseudodifferential operators} associated with \index{pseudodifferential operators}\index{pseudodifferential operators!tame}\index{${\rm OP}^r_0$, tame pseudodifferential operators of order $r$}
$(\HH,\D)$, $(\cn,\tau)$ and $p\geq 1$ is given by
$$
{\rm OP}^r_0:=(1+\D^2)^{r/2}\B_1^\infty(\D,p) ,\quad r\in\mathbb R,\qquad {\rm OP}^*_0
:=\bigcup_{r\in\R}{\rm OP}^r_0.
$$

We topologise  ${\rm OP}^r_0$  with the family of norms
\begin{equation}
\PP_{n,l}^r(T):=\PP_{n,l}\big((1+\D^2)^{-r/2}T\big), \quad  n\in\N,\quad
l\in \N_0.
\label{eq:pp-n-l-r}
\end{equation}\index{seminorms!$\PP_{n,l}^r$, seminorms on ${\rm OP}^r_0$}
\end{definition}

{\bf Remark.} To lighten the notation, we do not make explicit the 
important dependence on the real number $p\geq 1$ and the operator $\D$
in the definition of the tame pseudodifferential operators.

With this definition, 
${\rm OP}^r_0$ is a Fr\'echet space and ${\rm OP}^0_0$ is a Fr\'echet $*$-algebra. 
In Corollary \ref{consistency} we will see that 
$\bigcup_{r<-p}{\rm OP}^r_0\subset \L^1(\cn,\tau)$, which is the basic justification
for the introduction  of tame pseudodifferential operators.

However, since
$\B_1^\infty(\D,p)$ is {\it a priori} a nonunital algebra, functions of $\D$ alone do not
belong to ${\rm OP}^*_0$. In particular, not all `differential operators',
such as powers of $\D$, are  tame pseudodifferential operators.

\begin{definition}
The set of {\bf regular order-$r$ pseudodifferential operators} is\index{pseudodifferential operators!regular}\index{${\rm OP}^r$, regular pseudodifferential operators of order $r$}
$$
{{\rm OP}^r}:=
(1+\D^2)^{r/2}\Big(\bigcap_{n\in\mathbb N}{\rm dom}\,\delta^n\Big),\quad r\in\mathbb R, 
\qquad {\rm OP}^*:=\bigcup_{r\in\R}{\rm OP}^r.
$$

The natural topology of ${\rm OP}^r$  is associated with the family of norms
$$
\sum_{k=0}^l\Vert \delta^k((1+\D^2)^{-r/2}T)\Vert, \quad  l\in\N_0.
$$
\end{definition}

By a slight adaptation of Lemma \ref{B1-implies-B2},  we see 
that $ \B_1^\infty(\D,p)\subset \B_2^\infty(\D,p)$ with $\Q_{n,k}(\cdot)\leq \PP_{n,k}(\cdot)$ for all $n\geq 1$ and $k\geq 0$. 
Moreover, we have from the definition that  $ \B_2^\infty(\D,p)\subset \bigcap_{n\in\mathbb N}{\rm dom}\,\delta^n$, 
with $\Vert\delta^k(\cdot)\Vert\leq \Q_{n,k}(\cdot)$. Thus  $ \B_1^\infty(\D,p)\subset  \bigcap_{n\in\mathbb N}{\rm dom}\,\delta^n$, 
with $\Vert\delta^k(\cdot)\Vert\leq \PP_{n,k}(\cdot)$. Hence,  we  
have a continuous inclusion  
${\rm OP}^r_0\subset{{\rm OP}^r}$. For $r>0$, ${{\rm OP}^r}$  contains all polynomials 
in $\D$ of order smaller than $r$. In particular, ${\rm Id}_\cn\in {\rm OP^0}$.

To prove that our definition of tame pseudodifferential operators is symmetric, namely that 
\begin{equation}
\label{symmetric}
{\rm OP}^r_0=(1+\D^2)^{r/2-\theta}\B_1^\infty(\D,p) (1+\D^2)^{\theta},
\quad \mbox{for all }\theta\in[0,r/2],
\end{equation}

we introduce the  complex one-parameter group $\sigma$ of automorphisms of ${\rm OP}^*$
 defined by
\begin{equation}
\label{sigma}
\sigma^z(T):=(1+\D^2)^{z/2}\,T\,(1+\D^2)^{-z/2},\quad z\in\C,\ T\in {\rm OP}^*.\index{$\sigma^z$, the one parameter group associated to $\D^2$}
\end{equation}
It is then clear that if we know that $\sigma$ preserves each ${\rm OP}^r_0$, then
Equation \eqref{symmetric} will follow immediately. The next few results show that
$\sigma$ restricts to a group of automorphisms of each ${\rm OP}^r$ and each ${\rm OP}^r_0$, $r\in\R$.

\begin{lemma}\label{root commutator estimate} 
There exists  $C>0$  such that for every   $T\in\mathcal{B}_1^{\infty}(\D,p)$ and
$\varepsilon\in[0,1/3],$ we have 
$\PP_n\big([(1+\mathcal{D}^2)^{\varepsilon/2},T]\big)\leq C\,\PP_n\big(\delta(T)\big)$.
\end{lemma}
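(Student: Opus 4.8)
The plan is to use an integral representation for the fractional power $(1+\D^2)^{\varepsilon/2}$ and to trade a single commutator with $(1+\D^2)^{\varepsilon/2}$ against a commutator with $\D^2$, which is essentially $\delta'$ up to a bounded resolvent factor. Concretely, for $\varepsilon\in[0,1/3]$ write
\begin{equation*}
(1+\D^2)^{\varepsilon/2}=c_\varepsilon\int_0^\infty \lambda^{\varepsilon/2-1}\,\frac{(1+\D^2)}{\lambda+1+\D^2}\,d\lambda,
\end{equation*}
with $c_\varepsilon=(\sin(\pi\varepsilon/2)/\pi)$ finite and uniformly bounded for $\varepsilon$ in a compact set, so that commuting past $T\in\mathcal B_1^\infty(\D,p)$ gives
\begin{equation*}
[(1+\D^2)^{\varepsilon/2},T]=-c_\varepsilon\int_0^\infty \lambda^{\varepsilon/2}\,(\lambda+1+\D^2)^{-1}\,[\D^2,T]\,(\lambda+1+\D^2)^{-1}\,d\lambda.
\end{equation*}
Each integrand is, up to the bounded factors $(\lambda+1+\D^2)^{-1/2}(1+\D^2)^{1/2}$ and friends, of the form $(\text{bounded function of }\D)\cdot\delta'(T)\cdot(\text{bounded function of }\D)$, and $\delta'(T)\in\mathcal B_1(\D,p)$ by hypothesis since $T\in\mathcal B_1^\infty(\D,p)$. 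Lemma \ref{B1-grew} then bounds $\PP_n$ of such a product by (operator-norm of the bounded factors) times $\PP_n(\delta'(T))$, which by Remark (ii) after Definition \ref{def-Bp} is comparable to $\PP_n(\delta(T))$.

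\textbf{Key steps.} First I would establish the resolvent identity $[\D^2,(\lambda+1+\D^2)^{-1}]=-(\lambda+1+\D^2)^{-1}[\D^2,T](\lambda+1+\D^2)^{-1}$ type manipulation on the common core $\HH_\infty$, justifying that all operators in sight preserve $\HH_\infty$ (this is where one uses that $T$ and its iterated commutators map $\HH_\infty\to\HH_\infty$, which is built into the definition of $\mathcal B_1^\infty(\D,p)$). Second, I would record the elementary norm bounds
\begin{equation*}
\|(\lambda+1+\D^2)^{-1}\|\le (\lambda+1)^{-1},\qquad \|(1+\D^2)(\lambda+1+\D^2)^{-1}\|\le 1,
\end{equation*}
so the bounded prefactors multiplying $\delta'(T)=(1+\D^2)^{-1/2}[\D^2,T]$ on either side have operator norm $\le \min\{1,(\lambda+1)^{-1/2}\}$ or similar. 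Third, I would apply Lemma \ref{B1-grew} inside the integral: since $\delta'(T)\in\mathcal B_1(\D,p)$ and the surrounding factors are bounded Borel functions of $\D$, the integrand lies in $\mathcal B_1(\D,p)$ with $\PP_n(\text{integrand})\le (\text{const})\,\lambda^{\varepsilon/2}(\lambda+1)^{-1-\varepsilon/2}\,\PP_n(\delta'(T))$ (one keeps one full resolvent power to produce integrability at $\lambda\to\infty$ and the $\lambda^{\varepsilon/2}$ weight is harmless at $\lambda\to 0$; here $\varepsilon\le 1/3<1$ guarantees convergence). Fourth, I would bound the $\PP_n$-norm of the integral by the integral of the $\PP_n$-norms (the $\PP_n$ are norms and $\mathcal B_1(\D,p)$ is complete, so a Bochner-type argument applies), yielding $\PP_n([(1+\D^2)^{\varepsilon/2},T])\le C_\varepsilon\,\PP_n(\delta'(T))$ with $C_\varepsilon=c_\varepsilon\int_0^\infty \lambda^{\varepsilon/2-1}(\lambda+1)^{-1}\,d\lambda=c_\varepsilon B(\varepsilon/2,1-\varepsilon/2)$, which is a bounded function of $\varepsilon$ on $[0,1/3]$, hence $\le C$ uniformly. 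Finally I would convert $\PP_n(\delta'(T))$ to $\PP_n(\delta(T))$ using the estimate recorded in Remark (ii) after Definition \ref{def-Bp}, absorbing the extra $2\|f\|_\infty\,\PP_n(T)$ term — actually one should be a little careful here, since the statement has only $\PP_n(\delta(T))$ on the right and no $\PP_n(T)$ term; this suggests instead keeping $\delta'$ throughout and noting that $\PP_n(\delta'(T))\le \PP_n(\delta(T))+2\|f\|_\infty\PP_n(T)$ is not quite what is wanted, so the cleaner route is to rewrite $[\D^2,T]=(1+\D^2)^{1/2}\delta'(T)$ and absorb, i.e. bound the integrand directly by $\PP_n(\delta(T))$ after commuting the bounded factor $|\D|(1+\D^2)^{-1/2}$ past things.

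\textbf{Main obstacle.} The main technical point is the justification of the integral formula and the interchange of $[\,\cdot\,,T]$ with $\int_0^\infty$, all while staying inside the algebra $\mathcal B_1(\D,p)$ rather than merely inside bounded operators: one must check that the vector-valued integral converges in the Fréchet topology of $\mathcal B_1(\D,p)$ (not just weakly or in norm), which requires the estimate $\PP_n(\text{integrand})\le (\text{integrable in }\lambda)\cdot\PP_n(\delta(T))$ to be in hand \emph{before} one can assert the integral defines an element of $\mathcal B_1(\D,p)$. A secondary subtlety is the endpoint $\varepsilon=0$, where the formula degenerates (the left side is $0$); this is handled separately or by noting the constant $C_\varepsilon$ stays bounded and the bound reads $0\le 0$ trivially. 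The restriction $\varepsilon\le 1/3$ is more generous than needed for this argument (any $\varepsilon<1$ works for convergence of the Beta integral); it is presumably chosen to match later lemmas where iterated commutators are composed, so I would simply carry it along.
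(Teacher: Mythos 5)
Your approach is correct in outline but genuinely different from the paper's. The paper writes $g_\varepsilon(t)=(1+t^2)^{\varepsilon/2}$ and uses the Fourier representation
\begin{equation*}
[g(|\D|),T]=-2i\pi\int_{\R}\widehat g(\xi)\,\xi\int_0^1 e^{-2i\pi\xi s|\D|}\,\delta(T)\,e^{-2i\pi\xi(1-s)|\D|}\,ds\,d\xi,
\end{equation*}
which together with Lemma~\ref{B1-grew} gives $\PP_n([g(|\D|),T])\leq\|\widehat{g'}\|_1\,\PP_n(\delta(T))$ immediately, and then controls $\|\widehat{g'_\varepsilon}\|_1$ via $\|\widehat h\|_1\leq\sqrt 2(\|h\|_2+\|h''\|_2)$ and an explicit computation. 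You instead invoke the Balakrishnan resolvent integral for $(1+\D^2)^{\varepsilon/2}$, commute with $T$ to bring out $[\D^2,T]$ sandwiched between resolvents, and estimate termwise. Both routes work and both hinge on the $\B_1(\D,p)$-bimodule property of Lemma~\ref{B1-grew}; the Fourier route is cleaner because $\delta(T)$ appears \emph{directly} in the integrand with no algebraic conversion, whereas your route forces you to trade $[\D^2,T]$ for $\delta(T)$ plus bounded functions of $\D$, which is exactly where your writeup slips: $\delta'(T)$ is $[(1+\D^2)^{1/2},T]$, not $(1+\D^2)^{-1/2}[\D^2,T]$ — the latter is $L(T)$, and the paper shows $L=(1+\sigma^{-1})\circ\delta'$, so the two differ by a one-sided factor — and likewise $[\D^2,T]\neq(1+\D^2)^{1/2}\delta'(T)$.

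The clean fix is to use $\D^2=|\D|^2$ exactly, so $[\D^2,T]=\delta(T)\,|\D|+|\D|\,\delta(T)$, and then each term of your integrand reads (bounded function of $\D$)$\cdot\delta(T)\cdot$(bounded function of $\D$), with $\|(\lambda+1+\D^2)^{-1}\|\leq(\lambda+1)^{-1}$ and $\||\D|(\lambda+1+\D^2)^{-1}\|\leq\tfrac12(\lambda+1)^{-1/2}$. Lemma~\ref{B1-grew} then gives $\PP_n(\text{integrand})\leq\lambda^{\varepsilon/2}(\lambda+1)^{-3/2}\PP_n(\delta(T))$, the $\lambda$-integral is a bounded Beta function on $\varepsilon\in[0,1/3]$, and the prefactor $\sin(\pi\varepsilon/2)/\pi$ is bounded (indeed vanishes at $\varepsilon=0$, matching the trivial case). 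Your worry about Bochner convergence is legitimate but handled in the standard way: once the $\lambda$-integrable bound on $\PP_n(\text{integrand})$ is in hand, the partial integrals form a Cauchy net in the Fr\'echet space $\B_1(\D,p)$, whose completeness (Proposition~\ref{Frechet1} and its corollary for $\B_1$) gives the limit; this is exactly the same implicit step the paper takes with its $(\xi,s)$-integral. With the $\delta'$/$L$ slip corrected, your argument is a sound alternative to the paper's.
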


\begin{proof} Let $g$ be a function on $\R$ such that the Fourier transform of 
$g'$ is integrable. The elementary equality
$$
[g(|\D|),T]
=-2i\pi\int_{\mathbb{R}}\widehat{g}(\xi)\xi\int_0^1\,
e^{-2i\pi\xi s|\D|}\,[|\D|,T]\,e^{-2i\pi\xi (1-s)|\D|}\,ds\,d\xi,
$$
implies by Lemma \ref{B1-grew}  that
$$
\PP_n\big([g(|\mathcal{D}|),T]\big)\leq \,\|\widehat{g'}\|_1\,\PP_n\big(\delta(T)\big).
$$

The estimate
$\|\widehat{g'}\|_1\leq\sqrt{2}(\|g'\|_2+\|g''\|_2)$ is well known. Setting $g_\eps(t)=(1+t^2)^{\varepsilon/2}$, 
an explicit computation of the associated $2$-norms proves that for $\eps\in[0,\tfrac12)$ we have
\begin{equation}
\label{RTT}
\|\widehat{g'_\eps}\|_1\leq \eps\,\pi^{1/4}\Big(\frac{\Gamma(\tfrac12-\eps)^{1/2}}{\Gamma(2-\eps)^{1/2}}
+\frac{\sqrt6 (2-\eps)\Gamma(\tfrac32-\eps)^{1/2}}{2\Gamma(4-\eps)^{1/2}}\Big).
\end{equation}
Since this estimate is uniform in $\eps$ on compact subintervals of 
$[0,\tfrac12)$, in particular on $[0,\tfrac13]$ and is independent of $T\in\mathcal{B}_1^{\infty}(\D,p)$, the assertion follows
immediately.
\end{proof}

\begin{lemma}\label{cont-grp} 
 Then there is a constant $C\geq 1$ such that
for all $T\in\B_1^\infty(\D,p)$ and $z\in\C$ 
$$
\PP_{n,l}\big(\sigma^z(T)\big)\leq \sum_{k=l}^{\lfloor 3\Re(z)\rfloor+l+1}C^{k}\PP_{n,k}(T).
$$
Thus $\sigma_z$ preserves $\B_1^\infty(\D,p)$.
\end{lemma}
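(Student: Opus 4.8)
The plan is to reduce the estimate on $\sigma^z$ to iterated applications of the commutator bound in Lemma \ref{root commutator estimate}, by writing $\sigma^z$ as a product of a ``small'' piece (with exponent in $[0,1/3]$) times an integer power of $\sigma^{1/3}$ (or, more simply, by an induction on $\lfloor 3\Re(z)\rfloor$). First I would record the basic algebraic identity: for $T\in{\rm OP}^r_0$ (here $r=0$),
$$
\sigma^z(T)=(1+\D^2)^{z/2}T(1+\D^2)^{-z/2}=T+\big[(1+\D^2)^{z/2},T\big](1+\D^2)^{-z/2},
$$
and, more usefully for the $\B_1$-seminorms, the decomposition that peels off one factor of size at most $1/3$: writing $z = w + \zeta$ with $|\Re\zeta|\le 1/3$ and $\Re w \ge 0$, one has $\sigma^z = \sigma^{w}\circ\sigma^{\zeta}$, so it suffices to (a) control $\sigma^{\zeta}$ for $|\Re\zeta|\le 1/3$, and (b) iterate. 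For step (a), note $\sigma^\zeta(T) - T = [(1+\D^2)^{\zeta/2},T](1+\D^2)^{-\zeta/2}$; by Lemma \ref{B1-grew} the right multiplication by the bounded operator $(1+\D^2)^{-\zeta/2}$ costs only a factor $\|(1+\D^2)^{-\zeta/2}\|_\infty\le 1$ (for $\Re\zeta\ge 0$; for $\Re\zeta<0$ one gets a uniformly bounded constant on the relevant strip, or one simply applies the same argument to $\sigma^{-\zeta}$), and then Lemma \ref{root commutator estimate} gives $\PP_n\big([(1+\D^2)^{\zeta/2},T]\big)\le C\,\PP_n(\delta(T))\le C\,\PP_{n,1}(T)$. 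Hence $\PP_n(\sigma^\zeta(T))\le \PP_n(T)+C\,\PP_{n,1}(T)\le (1+C)\PP_{n,1}(T)$, which is the $l=0$ instance of the claimed bound with one extra derivative.

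The main work is then to propagate this through the derivatives $\delta^j$ and through the iteration. The key structural fact is that $\sigma^z$ and $\delta$ (equivalently $\delta'=[(1+\D^2)^{1/2},\cdot]$) have a controlled commutation relation: $\delta'\circ\sigma^z = \sigma^z\circ\delta'$ up to lower-order terms, because $(1+\D^2)^{1/2}$ and $(1+\D^2)^{z/2}$ are functions of the same operator and commute, so in fact $\delta'(\sigma^z(T)) = \sigma^z(\delta'(T))$ exactly at the level of formal operators; one then passes from $\delta'$ to $\delta$ using Remark (ii) after Definition \ref{def-Bp} (the difference $[f(\D),\cdot]$ with $f=|\cdot|-(1+\cdot^2)^{1/2}$ bounded costs $2\|f\|_\infty\PP_n(\cdot)$, via Lemma \ref{B1-grew}). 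Thus $\PP_n(\delta^j(\sigma^z(T)))$ is, up to a constant power $C^j$ absorbing the $\delta'$-to-$\delta$ conversions, bounded by $\PP_n(\sigma^z(\delta'^{\,j}(T)))$, reducing everything to the seminorm $\PP_n$ (no derivatives) applied to $\sigma^z$ of a known element of $\B_1^\infty(\D,p)$.

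Finally I would run the induction on $m:=\lfloor 3\Re(z)\rfloor$. For $m=0$ the estimate is step (a) above. For the inductive step, write $\sigma^z = \sigma^{z-1/3}\circ\sigma^{1/3}$ (say, choosing the split so that $\Re(z-1/3)\ge 0$ and $\lfloor 3\Re(z-1/3)\rfloor = m-1$); by step (a), $\PP_n(\sigma^{1/3}(S))\le (1+C)\PP_{n,1}(S)$ for any $S\in\B_1^\infty(\D,p)$, and by the inductive hypothesis applied to $S=\sigma^{1/3}(T)$ together with the $\delta$-commutation reduction of the previous paragraph, $\PP_{n,1}(\sigma^{1/3}(T))\le \sum_{k=0}^{2}C'^{\,k}\,\PP_n(\sigma^{1/3}(\delta'^{\,k}(T)))\le \sum_{k=0}^{2}C'^{\,k}(1+C)\PP_{n,k+1}(T)$, and iterating gives a bound of the form $\sum_{k=l}^{m+l+1}C^{k}\PP_{n,k}(T)$ after enlarging $C$ to absorb all the fixed multiplicative constants. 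Tracking the shift in the number of derivatives carefully — each application of step (a) or of Lemma \ref{root commutator estimate} costs exactly one derivative, and there are $\lfloor 3\Re(z)\rfloor+1$ such applications plus the $l$ derivatives one starts with — yields precisely the summation range $k=l$ to $\lfloor 3\Re(z)\rfloor+l+1$ in the statement. Since the resulting bound is finite for every $n,l$ and every $z\in\C$, and $\sigma^z(T)\in\cn$ maps $\HH_\infty$ to itself, we conclude $\sigma^z(T)\in\B_1^\infty(\D,p)$, i.e. $\sigma^z$ preserves $\B_1^\infty(\D,p)$.

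I expect the main obstacle to be bookkeeping: making the induction produce \emph{exactly} the claimed range of indices (rather than a crude over-count), which forces one to be disciplined about (i) which half-integer step size one peels off, (ii) exactly how many derivatives each commutator estimate consumes, and (iii) ensuring the $\delta'$-versus-$\delta$ swaps and the right-multiplication-by-bounded-operator steps never cost a derivative, only a constant. The analytic content is entirely contained in Lemmas \ref{root commutator estimate}, \ref{B1-grew} and \ref{R-siside}; the rest is careful combinatorics on the seminorm indices.
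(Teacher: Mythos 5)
Your strategy — reduce to the small step $|\Re\zeta|\le 1/3$ via Lemma \ref{root commutator estimate}, then iterate through the group property, using commutation of $\sigma^z$ with the derivation to extend to $\PP_{n,l}$ — is the paper's strategy, and most of the argument works. Two comments, one minor and one that is a genuine gap.

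The minor one: you propose to argue that $\sigma^z$ commutes exactly with $\delta'$ and then convert $\delta'$ to $\delta$ at the cost of a constant, invoking the remark after Definition \ref{def-Bp}. This is an unnecessary detour. Since both $|\D|$ and $(1+\D^2)^{z/2}$ are functions of the single self-adjoint operator $\D$, they commute, and hence $\delta\circ\sigma^z=\sigma^z\circ\delta$ \emph{exactly}:
$[\,|\D|,(1+\D^2)^{z/2}T(1+\D^2)^{-z/2}\,]=(1+\D^2)^{z/2}[\,|\D|,T\,](1+\D^2)^{-z/2}$.
So no $\delta'\leftrightarrow\delta$ conversion is needed, and the extra multiplicative constants you were worried about tracking never enter. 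The paper uses this directly.

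The substantive gap: your decomposition peels off pieces $\zeta$ with $|\Re\zeta|\le 1/3$ but does not isolate the imaginary part of $z$. Each $\zeta$ in your induction thus retains an arbitrarily large imaginary part, and you apply Lemma \ref{root commutator estimate} to such $\zeta$. But that lemma is stated only for \emph{real} $\eps\in[0,1/3]$, and its natural extension to complex exponents (this is exactly what is computed in the proof of Proposition \ref{pr:cts}) yields a bound of the form $|z|\,C(z)\,\PP_{n,l+1}(T)$ — the prefactor grows linearly in $|\Im z|$. So the single constant $C$ in your iterated estimate is \emph{not} independent of $z$, and the final bound $\sum_{k=l}^{\lfloor 3\Re(z)\rfloor+l+1}C^k\PP_{n,k}(T)$, whose constant and index range must depend only on $\Re(z)$, does not follow. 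Your phrases ``uniformly bounded constant on the relevant strip'' and ``one simply applies the same argument to $\sigma^{-\zeta}$'' paper over this: the relevant strip has unbounded imaginary width, and $\sigma^{-\zeta}$ has the same problem. The fix, which is what the paper does, is to first prove the estimate for $z\in\R$ (where Lemma \ref{root commutator estimate} applies with the advertised uniform $C$), and then dispose of the imaginary part at the very end by factoring $\sigma^z=\sigma^{i\Im(z)}\circ\sigma^{\Re(z)}$ and observing that $\sigma^{i\Im(z)}$ is an \emph{isometry} for every $\PP_{n,l}$: indeed $(1+\D^2)^{\pm i\Im(z)/2}$ is a unitary function of $\D$, so Lemma \ref{B1-grew} gives $\PP_n(\sigma^{i\Im(z)}(T))\le\PP_n(T)$, and equality holds by applying the inverse. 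This step costs nothing — no derivatives, no constants — and is what makes the final estimate independent of $\Im(z)$. Your write-up needs it.

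(A final small note: the citation of Lemma \ref{R-siside} in your last paragraph seems out of place — that lemma concerns $T(1+R)^{-1}$ and plays no role here; the relevant analytic inputs are Lemmas \ref{root commutator estimate} and \ref{B1-grew} only.)
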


\begin{proof} It is clear that
\begin{equation}
\label{trucus}
\sigma^z(T)
=T+[(1+\mathcal{D}^2)^{z/2},T](1+\mathcal{D}^2)^{-z/2}
=T+(1+\mathcal{D}^2)^{z/2}[(1+\mathcal{D}^2)^{-z/2},T].
\end{equation}
It follows from Lemma \ref{B1-grew} and  Lemma \ref{root commutator estimate} that 
for
$z\in[-1/3,1/3]$ we have
$$
\PP_n\big(\sigma^z(T)\big)\leq \PP_n(T)+C\,\PP_n\big(\delta(T)\big)\leq C\,\PP_{n,1}(T),
$$ 
with the same constant as in Lemma \ref{root commutator estimate} 
(which is thus independent of $T \in\B_1^\infty(\D,p)$ and $z\in\C$). By the group 
property, we have
$$
\PP_n\big(\sigma^z(T)\big)\leq \sum_{k=0}^{\lfloor 3\Re(z)\rfloor+1}C^{k}\PP_{n,k}(T),
$$ 
for $z\in \R$, and
as $\sigma^z$ commutes with  $\delta,$ we have
$\PP_{n,l}\big(\sigma^z(T)\big)\leq \sum_{k=l}^{\lfloor 3\Re(z)\rfloor+l+1}C^{k}\PP_{n,k}(T)$ for every
$z\in\mathbb{R}$. Finally, as $\sigma^z=\sigma^{i\Im (z)}\sigma^{\Re (z)}$ and 
$\sigma^{i\Im (z)}$ is  isometric for each $\PP_{n,l}$ (by Lemma \ref{B1-grew} again), the assertion follows.
\end{proof}

\begin{prop} 
\label{pr:cts}
The maps $\sigma^z:\mathcal{B}_1^{\infty}(\D,p)\to\mathcal{B}_1^{\infty}(\D,p)$, $z\in\C$, 
form a strongly continuous group of automorphisms which is uniformly continuous on 
vertical strips.
\end{prop}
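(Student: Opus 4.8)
\emph{Algebra and reduction.} By Lemma \ref{cont-grp} each $\sigma^z$ restricts to a continuous endomorphism of $\B_1^\infty(\D,p)$, and the relations $\sigma^{z_1}\circ\sigma^{z_2}=\sigma^{z_1+z_2}$ and $\sigma^0=\mathrm{id}$ are immediate from the definition \eqref{sigma}; hence each $\sigma^z$ is an algebra automorphism of $\B_1^\infty(\D,p)$, topological since $\sigma^{-z}$ is a continuous inverse. So only the two continuity statements remain. Using $\sigma^z(T)-\sigma^{z_0}(T)=\sigma^{z_0}\big(\sigma^{z-z_0}(T)-T\big)$ together with the bound of Lemma \ref{cont-grp} — which estimates $\PP_{n,l}(\sigma^{z_0}(S))$ by a finite sum of the seminorms $\PP_{n,k}(S)$ in which the length of the sum and the coefficients $C^{k}$ stay bounded as $z_0$ runs over a bounded vertical strip — everything reduces to showing that, for each fixed $n,k\in\N$ and each $T\in\B_1^\infty(\D,p)$,
\[
\PP_{n,k}\big(\sigma^w(T)-T\big)\longrightarrow 0\qquad\text{as }w\to 0 .
\]
Granting this, strong continuity holds at every $z_0$, and the uniformity over strips of the bounds from Lemma \ref{cont-grp} upgrades it to uniform continuity on vertical strips.

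\emph{Separating real and imaginary directions.} Write $w=u+iv$ with $u,v\in\R$ and factor $\sigma^w=\sigma^{iv}\circ\sigma^u$. As noted in the proof of Lemma \ref{cont-grp}, each $\sigma^{iv}$ is isometric for every seminorm $\PP_{n,l}$ (an application of Lemma \ref{B1-grew}), so
\[
\PP_{n,k}\big(\sigma^w(T)-T\big)\le\PP_{n,k}\big(\sigma^{u}(T)-T\big)+\PP_{n,k}\big(\sigma^{iv}(T)-T\big),
\]
and it suffices to let each term on the right tend to $0$.

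\emph{The real direction.} For small $u$ use the two forms of $\sigma^u(T)-T$ in \eqref{trucus}: if $u\ge 0$ write $\sigma^u(T)-T=[(1+\D^2)^{u/2},T](1+\D^2)^{-u/2}$, and if $u\le 0$ write $\sigma^u(T)-T=(1+\D^2)^{u/2}[(1+\D^2)^{-u/2},T]$, so that in either case the outer factor $(1+\D^2)^{\pm u/2}$ has norm $\le 1$. Since $\delta=[|\D|,\cdot\,]$ annihilates every bounded function of $\D$, one checks inductively that $\delta^{j}$ applied to the first expression is $[(1+\D^2)^{u/2},\delta^{j}(T)](1+\D^2)^{-u/2}$, and similarly for the second; as $\delta^{j}(T)\in\B_1^\infty(\D,p)$, Lemma \ref{B1-grew} and the commutator estimate $\PP_n([g(|\D|),S])\le\|\widehat{g'}\|_1\,\PP_n(\delta(S))$ from the proof of Lemma \ref{root commutator estimate} give
\[
\PP_{n,k}\big(\sigma^u(T)-T\big)\le\sum_{j=0}^{k}\PP_n\big([(1+\D^2)^{|u|/2},\delta^{j}(T)]\big)\le\|\widehat{g'_{|u|}}\|_1\,\PP_{n,k+1}(T),
\]
where $g_\eps(t):=(1+t^2)^{\eps/2}$. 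Since $\PP_{n,k+1}(T)<\infty$ and $\|\widehat{g'_\eps}\|_1\to 0$ as $\eps\to 0^+$ by the explicit bound \eqref{RTT}, the real direction is finished.

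\emph{The imaginary direction, and the main point.} The scheme is identical, with $(1+\D^2)^{-iv/2}$ now unitary:
\[
\PP_{n,k}\big(\sigma^{iv}(T)-T\big)\le\sum_{j=0}^{k}\PP_n\big([(1+\D^2)^{iv/2},\delta^{j}(T)]\big),
\]
and the one new ingredient is to run the estimate from the proof of Lemma \ref{root commutator estimate} with the \emph{imaginary} power $g_v(t):=(1+t^2)^{iv/2}$ in place of a real one. Here $|g_v|\equiv 1$, and a direct computation of the relevant $L^2$-norms shows that $\|g_v'\|_2$ and $\|g_v''\|_2$ are each bounded by $|v|$ times a fixed $L^2$-function of $t$, hence tend to $0$ with $v$; therefore $\|\widehat{g'_v}\|_1\le\sqrt 2\big(\|g_v'\|_2+\|g_v''\|_2\big)\to 0$, and $\PP_{n,k}\big(\sigma^{iv}(T)-T\big)\le\|\widehat{g'_v}\|_1\,\PP_{n,k+1}(T)\to 0$. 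This proves the displayed claim and with it the proposition. The only step here that goes beyond bookkeeping with the earlier results is this last $L^2$-estimate for the imaginary power $g_v$; everything else is the group law for $\sigma$ combined with Lemmas \ref{cont-grp}, \ref{B1-grew} and \ref{root commutator estimate}.
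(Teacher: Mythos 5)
Your proof is correct and follows the same underlying strategy as the paper's (reduce to continuity at $z=0$ via the group law, use \eqref{trucus} together with Lemma~\ref{B1-grew} to strip off the harmless factor $(1+\D^2)^{\mp z/2}$, and control $\PP_{n,l}\big([(1+\D^2)^{z/2},\cdot\,]\big)$ by a Fourier-side $L^1$-estimate as in the proof of Lemma~\ref{root commutator estimate}). The one organizational difference is that you split $z=u+iv$ and treat the real and purely imaginary powers $g_{u}(t)=(1+t^2)^{u/2}$ and $g_v(t)=(1+t^2)^{iv/2}$ separately, proving a new $O(|v|)$ estimate for $\|g_v'\|_2+\|g_v''\|_2$. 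The paper instead applies the same $L^2$-computation directly to the genuinely complex power $g_z(t)=(1+t^2)^{z/2}$ with $0\le\Re(z)\le\tfrac13$, getting a single bound $\PP_{n,l}\big([(1+\D^2)^{z/2},T]\big)\le |z|\,C(z)\,\PP_{n,l+1}(T)$ with $C(z)$ uniformly bounded on that strip; this is slightly shorter but establishes exactly the same estimate. Both routes give strong continuity at $0$, and both upgrade to uniform continuity on vertical strips by the same use of Lemma~\ref{cont-grp}, so your argument is sound — it is simply a mild refactoring of the paper's proof.
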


\begin{proof}
Fix $T\in\B_1^\infty(\D,p)$. We need to prove that  the map 
$ z\mapsto \s^z(T)$ is continuous  from $\mathbb C$ to $\B_1^\infty(\D,p)$, for the 
topology determined by the norms $\PP_{n,l}$. By Lemma \ref{cont-grp} 
we know that $\sigma^z$ preserves $\mathcal{B}_1^{\infty}(\D,p)$ and  
since $\{\sigma^z\}_{z\in\C}$  is a group of automorphisms, continuity 
everywhere will follow from continuity at $z=0$. So, let $z\in\mathbb C$ 
with $|z|\leq\tfrac13$. From Equation \eqref{trucus}, 
it is enough to treat the case $\Re(z)\geq 0$. Moreover, 
Lemma \ref{B1-grew}  gives us
$$
\PP_{n,l}\big(\s^z(T)-T\big)\leq \PP_{n,l}\big([(1+\D^2)^{z/2},T]\big),
$$
and from the same  reasoning as that leading to the estimate \eqref{RTT}, we obtain
\begin{align*}
&\PP_{n,l}\big([(1+\D^2)^{z/2},T]\big)\\
&\qquad\leq|z|\,\pi^{1/4}\Big(\frac{\Gamma(\tfrac12-|\Re(z)|)^{1/2}}{\Gamma(2-|\Re(z)|)^{1/2}}
+\frac{\sqrt6 (2-|\Re(z)|)\Gamma(\tfrac32-|\Re(z)|)^{1/2}}{2\Gamma(4-|\Re(z)|)^{1/2}}\Big)\
\,\PP_{n,l+1}(T)=:|z|\,C(z).
\end{align*}
Since $C(z)$ is uniformly bounded on the vertical strip  $0\leq \Re(z)\leq \tfrac13$, we obtain the result. 
\end{proof}

{\bf Remark.} Using Lemma \ref{Bp-grew} in place of Lemma  \ref{B1-grew}, we see that Lemmas
\ref{root commutator estimate}, \ref{cont-grp} and Proposition \ref{pr:cts} hold also with 
$\B_2^\infty(\D,p)$ 
instead of $\B_1^\infty(\D,p)$.

We now deduce that these continuity results also hold for both tame and regular pseudodifferential operators.

\begin{prop}
\label{cont-op}
The group $\s$ is  strongly continuous  on
${\rm OP}^r_0$ for its natural topology, and similarly for ${\rm OP}^r$.
\end{prop}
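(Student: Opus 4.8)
The plan is to reduce the statement for pseudodifferential operators to the corresponding statement for $\B_1^\infty(\D,p)$ (Proposition \ref{pr:cts}) and for $\bigcap_n{\rm dom}\,\delta^n$ (which is implicit in the same circle of ideas), using the very definitions of the topologies on ${\rm OP}^r_0$ and ${\rm OP}^r$. Recall that by Definition \ref{op0} an element of ${\rm OP}^r_0$ is of the form $(1+\D^2)^{r/2}S$ with $S\in\B_1^\infty(\D,p)$, and the seminorms are $\PP^r_{n,l}(T)=\PP_{n,l}\big((1+\D^2)^{-r/2}T\big)$. The key algebraic observation is that $\sigma^z$ commutes with multiplication by $(1+\D^2)^{r/2}$ on both sides in the sense that
$$
(1+\D^2)^{-r/2}\,\sigma^z\big((1+\D^2)^{r/2}S\big)=(1+\D^2)^{-r/2}(1+\D^2)^{z/2}(1+\D^2)^{r/2}S(1+\D^2)^{-z/2}=\sigma^z(S),
$$
since the powers of $(1+\D^2)$ all commute with each other. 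Hence $\PP^r_{n,l}\big(\sigma^z(T)-T\big)=\PP_{n,l}\big(\sigma^z(S)-S\big)$ where $T=(1+\D^2)^{r/2}S$. So strong continuity of $\sigma$ on ${\rm OP}^r_0$ is literally equivalent to strong continuity of $\sigma$ on $\B_1^\infty(\D,p)$, which is Proposition \ref{pr:cts}. One should also note, to make ``$\sigma$ acts on ${\rm OP}^r_0$'' meaningful, that $\sigma^z$ does map ${\rm OP}^r_0$ into itself: this follows from Lemma \ref{cont-grp}, which gives $\sigma^z(S)\in\B_1^\infty(\D,p)$, so $\sigma^z(T)=(1+\D^2)^{r/2}\sigma^z(S)\in{\rm OP}^r_0$.

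The argument for ${\rm OP}^r$ is identical in structure, with the Remark after Proposition \ref{pr:cts} doing the work in place of Proposition \ref{pr:cts} itself: that Remark states that Lemmas \ref{root commutator estimate}, \ref{cont-grp} and Proposition \ref{pr:cts} hold with $\B_2^\infty(\D,p)$ in place of $\B_1^\infty(\D,p)$, and essentially the same reasoning — using only $\Vert\cdot\Vert$ and $\Vert\delta^k(\cdot)\Vert$ in place of the $\PP_{n,l}$, with the bound $\Vert\widehat{g'_\eps}\Vert_1\leq\sqrt2(\Vert g'_\eps\Vert_2+\Vert g''_\eps\Vert_2)$ from the proof of Lemma \ref{root commutator estimate} — shows that $\sigma$ is strongly continuous on $\bigcap_n{\rm dom}\,\delta^n$ for the topology given by the norms $\sum_{k=0}^l\Vert\delta^k(\cdot)\Vert$. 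Then, exactly as above, for $T=(1+\D^2)^{r/2}S$ with $S\in\bigcap_n{\rm dom}\,\delta^n$ one has $(1+\D^2)^{-r/2}\sigma^z(T)=\sigma^z(S)$, so the defining seminorms of ${\rm OP}^r$ satisfy $\sum_k\Vert\delta^k((1+\D^2)^{-r/2}(\sigma^z(T)-T))\Vert=\sum_k\Vert\delta^k(\sigma^z(S)-S)\Vert$, and strong continuity transfers.

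I do not expect a serious obstacle here; the content has already been extracted in Lemmas \ref{root commutator estimate}, \ref{cont-grp} and Proposition \ref{pr:cts}, and what remains is the bookkeeping of conjugating by $(1+\D^2)^{r/2}$. The one point requiring a little care is making sure that everything is well defined: one must check that $\sigma^z$ genuinely preserves ${\rm OP}^r_0$ and ${\rm OP}^r$ (handled by Lemma \ref{cont-grp} and its $\B_2$-analogue), and one should record that the conjugation identity $(1+\D^2)^{-r/2}\sigma^z\big((1+\D^2)^{r/2}S\big)=\sigma^z(S)$ holds on the relevant domains, which is immediate since all the operators involved are bounded Borel functions of the single self-adjoint operator $\D$ and hence mutually commute. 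With these observations in place the proposition follows at once, and I would simply write: ``This follows from Proposition \ref{pr:cts} (respectively its $\B_2^\infty(\D,p)$-analogue recorded in the Remark above), together with the identity $(1+\D^2)^{-r/2}\sigma^z\big((1+\D^2)^{r/2}S\big)=\sigma^z(S)$ and the defining seminorms \eqref{eq:pp-n-l-r}.''
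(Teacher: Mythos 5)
Your argument is correct and coincides with the paper's own reasoning for ${\rm OP}^r_0$: the paper likewise observes that $T\in{\rm OP}^r_0$ iff $(1+\D^2)^{-r/2}T\in\B_1^\infty(\D,p)$, that $\sigma^z$ commutes with multiplication by $(1+\D^2)^{-r/2}$, and then invokes Proposition \ref{pr:cts}. The one small difference is in the ${\rm OP}^r$ case: the paper simply notes the argument ``is simpler since it uses only the operator norm'' and refers to \cite{Co5,CM,CPRS2}, whereas you make the reduction explicit by transplanting the estimates of Lemma \ref{root commutator estimate} and Lemma \ref{cont-grp} to the plain operator-norm setting on $\bigcap_n{\rm dom}\,\delta^n$. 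One caveat in that part: the Remark after Proposition \ref{pr:cts} concerns $\B_2^\infty(\D,p)$, which is strictly contained in $\bigcap_n{\rm dom}\,\delta^n={\rm OP}^0$, so it cannot be cited verbatim; but you correctly go on to say that what one really needs is the operator-norm analogue of those estimates (replacing $\PP_{n,l}$ by $\sum_k\Vert\delta^k(\cdot)\Vert$), which does hold by the same Fourier-analytic bound $\Vert\widehat{g_\eps'}\Vert_1\leq\sqrt2(\Vert g_\eps'\Vert_2+\Vert g_\eps''\Vert_2)$. So the argument is sound; if you phrase the ${\rm OP}^r$ case this way you should avoid pointing at the $\B_2^\infty$-Remark and instead state directly that the estimate of Lemma \ref{root commutator estimate} for $\Vert[g(|\D|),T]\Vert$ gives the bound needed.
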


\begin{proof}
Since $T\in {\rm OP}^r_0$ if and only if $(1+\D^2)^{-r/2}T\in \B_1^\infty(\D,p)$ and
since $\s^z$ commutes with the left multiplication by $(1+\D^2)^{-r/2}$, the proof is a
direct corollary of Proposition \ref{pr:cts}. The proof for ${\rm OP}^r$ is simpler since it uses
only the operator norm and not the norms $\PP_n^r$; we refer to \cite{Co5,CM, CPRS2} for a proof.
\end{proof}

We can now show that $\B_1^\infty(\D,p)$ has an equivalent definition in terms of 
the $L$ and/or $R$ operators, defined in Equation \eqref{LR}.
Unlike the equivalent definition in terms of $\delta'$ mentioned in the remark after
Definition \ref{def-Bp}, this does not work for $\B_1^k(\D,p)$, $k\neq \infty$.

\begin{lemma}
\label{delta-LR}
We have the equality
\begin{align*}
\B_1^\infty(\D,p)
&=\big\{T\in\cn\,:\,\forall  l\in\N_0,
\,\,\, L^l(T)\in\B_1(\D,p)\big\},
\end{align*}
where $L(\cdot)=(1+\D^2)^{-1/2}[\D^2,\cdot]$ is as in Definition \ref{parup}. The analogous
statement  with $R$ replacing $L$ is also  true.
\end{lemma}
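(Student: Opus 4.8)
The plan is to reduce everything to the already-established equivalence of the smooth domains for the operators $\delta$, $\delta'$, $L$ and $R$ recorded in Equations \eqref{eq:LR} and \eqref{eq:delta}, combined with the quantitative norm estimates from Lemmas \ref{B1-grew}, \ref{root commutator estimate} and \ref{cont-grp}. The key observation is that the single operators $L$ and $\delta$ are related by bounded ``zeroth-order'' factors: writing $L(T) = (1+\D^2)^{-1/2}[\D^2,T]$ and $\delta'(T) = [(1+\D^2)^{1/2},T]$, one has on $\H_\infty$ the identity
\[
[\D^2,T] = (1+\D^2)^{1/2}\delta'(T) + \delta'(T)(1+\D^2)^{1/2} - \delta'(T)^2,
\]
so that $L(T) = \delta'(T) + (1+\D^2)^{-1/2}\delta'(T)(1+\D^2)^{1/2} - (1+\D^2)^{-1/2}\delta'(T)^2$. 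The middle term is $\sigma^{-1}(\delta'(T))$ plus lower order, and the point is that each summand, when iterated, stays in $\B_1(\D,p)$ precisely when all the $\delta^j(T)$ do, with $\PP_n$-estimates controlled by finitely many $\PP_{n,k}(T)$.

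The steps I would carry out, in order: (1) First show the inclusion $\B_1^\infty(\D,p) \subseteq \{T : L^l(T)\in\B_1(\D,p)\ \forall l\}$. For this, express $L^l(T)$ as a finite sum of terms of the form $(1+\D^2)^{a}\,\delta'^{j_1}\!\cdots\delta'^{j_m}(T)\,(1+\D^2)^{b}$ with $a+b\le 0$ and $j_1+\cdots+j_m\le l$, using the Leibniz-type expansion above together with the fact that $\delta'$ commutes with $\sigma^z$ (hence with multiplication by $(1+\D^2)^{a}$ up to reordering). Since $T\in\B_1^\infty(\D,p)$ means every $\delta'^j(T)\in\B_1(\D,p)$ (using the remark after Definition \ref{def-Bp} that $\delta$ and $\delta'$ give the same algebra), Lemma \ref{B1-grew} shows each such term lies in $\B_1(\D,p)$ with the desired seminorm bound; summing gives $L^l(T)\in\B_1(\D,p)$. (2) For the reverse inclusion, suppose $L^l(T)\in\B_1(\D,p)$ for all $l$. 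Invert the relation: from $L(T)=\delta'(T)+(\text{bounded-factor terms in }\delta'(T))$ one wants to recover $\delta'(T)$, but it is cleaner to argue as in \cite{CPRS2, CM, Co5}, namely that on the common smooth domain one has an identity expressing $\delta'^l(T)$ as a finite sum of terms $(1+\D^2)^{c}\,L^{k_1}\!\cdots L^{k_m}(T)\,(1+\D^2)^{d}$ with $c+d\le 0$ and $\sum k_i\le l$; Lemma \ref{B1-grew} again puts each term in $\B_1(\D,p)$, so $\delta'^l(T)\in\B_1(\D,p)$ for all $l$, i.e. $T\in\B_1^\infty(\D,p)$. (3) The statement for $R$ in place of $L$ follows from $R(T)^* = -L(T^*)$ together with the $*$-invariance of $\B_1(\D,p)$ and of the norms $\PP_n$ (Corollary \ref{onecor}), exactly as \eqref{eq:LR} was deduced in \cite{CPRS2}.

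The main obstacle is the bookkeeping in step (1)/(2): one must check that the reordering of unbounded powers $(1+\D^2)^{\pm 1/2}$ past the derivations $\delta'$ only produces finitely many terms, each of net nonpositive order in $(1+\D^2)$, so that Lemma \ref{B1-grew} (which handles bounded functions of $\D$ acting on $\B_1(\D,p)$) actually applies — the subtlety being that an intermediate term could a priori have a genuine positive power of $(1+\D^2)$ on one side, which would not obviously preserve $\B_1(\D,p)$. The resolution is that every commutator $[\D^2,\,\cdot\,]$ absorbs one factor of $(1+\D^2)^{1/2}$ worth of order, so orders stay balanced; this is the content of the equalities of smooth domains already established in \cite{CM, Co5, CPRS2}, and the only new ingredient here is upgrading those operator-norm arguments to $\PP_n$-seminorm arguments, which is immediate from Lemma \ref{B1-grew}. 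I would simply cite the operator-level domain identities and note that the same algebraic manipulations, combined with Lemma \ref{B1-grew}, give the $\B_1$-level statement, rather than rederiving the combinatorics in full.
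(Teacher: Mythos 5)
Your step (1) identity
\[
[\D^2,T] = (1+\D^2)^{1/2}\delta'(T) + \delta'(T)(1+\D^2)^{1/2} - \delta'(T)^2
\]
is incorrect: setting $S=(1+\D^2)^{1/2}$, one has $S\delta'(T)+\delta'(T)S=S^2T-TS^2=[\D^2,T]$ exactly, with no quadratic remainder. The correct relation is simply $L=(1+\sigma^{-1})\circ\delta'$, and that is what the paper uses for this inclusion; but the paper then invokes Proposition \ref{pr:cts} (strong continuity of $\sigma^z$ on $\B_1^\infty(\D,p)$, via Lemma \ref{cont-grp}) rather than Lemma \ref{B1-grew}, since the conjugation $\sigma^{-k}$ involves \emph{unbounded} powers of $(1+\D^2)$ and does not fall under the bounded-multiplier statement of Lemma \ref{B1-grew}.

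The more serious gap is step (2). You assert that $\delta'^l(T)$ admits a \emph{finite} decomposition into terms $(1+\D^2)^c\,L^{k_1}\!\cdots L^{k_m}(T)\,(1+\D^2)^d$ with $c+d\le 0$, citing \cite{CM,Co5,CPRS2}. No such finite-sum identity exists. Since $\sigma$ and $\delta'$ commute, $L^k=(1+\sigma^{-1})^k\circ\delta'^k$, so recovering $\delta'$ from $L$ requires applying $(1+\sigma^{-1})^{-1}$, which is not a polynomial (nor a finite rational combination) in $\sigma^{-1}$. The references establish equality of the underlying \emph{domains} via integral formulas, not finite algebraic identities, and the paper's proof reflects this. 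It represents $\delta'(T)$ as $\pi^{-1}\int_0^\infty\lambda^{-1/2}\big[(1+\D^2)(1+\lambda+\D^2)^{-1},T\big]\,d\lambda$, rewrites the commutator in terms of $L(T)$ and $L^2(T)$ sandwiched by operator functions of $\D$ and $\lambda$ that are uniformly bounded by explicit scalar functions of $\lambda$, integrates the constant part to produce the leading term $\tfrac12 L(T)$, and then iterates to get a $k$-fold integral with integrand involving $L^{n+k}(T)$. Only at that point does Lemma \ref{B1-grew} enter, applied \emph{inside} the $\lambda$-integrals, and the quantitative estimate $\PP_m(\delta'^n(T))\le\max_{n\le k\le 2n}\PP_m(L^k(T))$ comes out of explicit convergence of the scalar integrals $\int_0^\infty\lambda^{-1/2}(1+\lambda)^{-1}\,d\lambda$. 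This is not an ``immediate upgrade'' of a previously known operator-norm computation; it is a genuine reworking at the $\B_1$-level, and the decomposition you propose to cite simply does not exist in the form you state.

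Your step (3), treating $R$ via $R(T)^*=-L(T^*)$ and the $*$-invariance of $\B_1(\D,p)$ and the $\PP_n$ (Corollary \ref{onecor}), is fine and matches how the paper disposes of the $R$ case.
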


\begin{proof}
We have the simple identity $L=(1+\sigma^{-1})\circ \delta'$, which 
with Proposition \ref{pr:cts} yields one of the inclusions.

For the other direction, it suffices to show that for every $m,n\in\N$ we have
$$
\PP_m(\delta'^n(A))\leq \max_{n\leq k\leq 2n}\PP_m(L^k(A)).
$$
Using the integral formula for fractional powers we have
$$
\delta'(T)=[(1+\D^2)(1+\D^2)^{-1/2},T]
=\frac1{\pi}\int_0^{\infty}\lambda^{-1/2}[({1+\mathcal{D}^2}){(1+\lambda+\mathcal{D}^2)^{-1}},T]
d\lambda.
$$

However, a little algebra gives
$$
\Big[\frac{1+\mathcal{D}^2}{1+\lambda+\mathcal{D}^2},T\Big]
=\Big(\frac{(1+\mathcal{D}^2)^{1/2}}{1+\lambda+\mathcal{D}^2}
-\frac{(1+\mathcal{D}^2)^{3/2}}{(1+\lambda+\mathcal{D}^2)^2}\Big)L(T)
+\lambda\frac{1+\mathcal{D}^2}{(1+\lambda+\mathcal{D}^2)^2}L^2(T)\frac{1}{1+\lambda+\mathcal{D}^2}.
$$
The following formula can be proved in the scalar case, and by an appeal to the spectral representation
proved in general:
$$
\int_0^{\infty}\lambda^{-1/2}
\Big(\frac{(1+\mathcal{D}^2)^{1/2}}{1+\lambda+\mathcal{D}^2}
-\frac{(1+\mathcal{D}^2)^{3/2}}{(1+\lambda+\mathcal{D}^2)^2}\Big)d\lambda=\frac{\pi}{2}.
$$
Therefore,
$$
\delta'(T)=
\tfrac12L(T)+\frac1{\pi}\int_0^{\infty}\lambda^{1/2}
\frac{1+\mathcal{D}^2}{(1+\lambda+\mathcal{D}^2)^2}L^2(T)\frac{1}{1+\lambda+\mathcal{D}^2}d\lambda.
$$
An induction now shows that
$$
\delta'^n(T)
=2^{-n}\sum_{k=0}^n{n\choose k}
\Big(\frac{2}{\pi}\Big)^k\int_{\mathbb{R}^k_+}
\prod_{l=1}^k\frac{\lambda_l^{1/2}(1+\mathcal{D}^2)}{(1+\lambda_l+\mathcal{D}^2)^2}
L^{n+k}(T)\prod_{l=1}^k\frac{d\lambda_l}{1+\lambda_l+\mathcal{D}^2}.
$$
The functional calculus then gives
$$
({1+\lambda+\mathcal{D}^2})^{-1}\leq({1+\lambda})^{-1},
\quad {\lambda^{1/2}(1+\mathcal{D}^2)}{(1+\lambda+\mathcal{D}^2)^{-2}}\leq{\lambda^{-1/2}/4},
$$
and so by Lemma \ref{B1-grew}  we have 
$$
\PP_m\big(\delta'^n(T)\big)
\leq 2^{-n}\left(1+\sum_{k=1}^n{n\choose k}\Big(\frac{2}{\pi}\Big)^k
\prod_{l=1}^k\int_0^{\infty}\frac{d\lambda_l}{4\lambda_l^{1/2}(1+\lambda_l)}\right) 
\max_{n\leq k\leq 2n}\PP_m\big(L^k(T)\big).
$$
The assertion now follows by the second remark following Definition \ref{def-Bp} that
we may equivalently use $\delta'$ to define $\B_1^k(\D,p)$ for $k\in\N\cup\{\infty\}$.
\end{proof}

We now begin to prove the important properties of this pseudodifferential calculus,
such as trace-class properties and the pseudodifferential expansion.
First, by combining Proposition \ref{cont-op} with the Definition \ref{op0}, 
we obtain our first trace class property.

\begin{corollary}
\label{consistency}
For $r>p$, we have ${\rm OP}^{-r}_0\subset \cl^1(\cn,\tau)$.
\end{corollary}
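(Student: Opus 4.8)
The plan is to unwind the definition of $\mathrm{OP}^{-r}_0$ and reduce the claim to a statement purely about elements of $\B_1(\D,p)$. By definition, $T\in\mathrm{OP}^{-r}_0$ means $(1+\D^2)^{r/2}T\in\B_1^\infty(\D,p)\subset\B_1(\D,p)$, equivalently $T=(1+\D^2)^{-r/2}A$ for some $A\in\B_1(\D,p)$. So it suffices to show: if $A\in\B_1(\D,p)$ and $r>p$, then $(1+\D^2)^{-r/2}A\in\cl^1(\cn,\tau)$.

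First I would use the product structure of $\B_1(\D,p)$: by the Remark following the theorem identifying $\B_1(\D,p)=\tilde{\B}_1(\D,p)$, every $A\in\B_1(\D,p)$ can be written as a sum $A=\sum_{i=0}^\infty R_iS_i$, convergent in every $\PP_n$, with $(\Q_n(R_i))_i,(\Q_n(S_i))_i\in\ell^2(\N_0)$ for each $n$. Fix $n$ large enough that $s:=p+1/n<r$; this is possible since $r>p$. For each summand, write
$$
(1+\D^2)^{-r/2}R_iS_i=\big((1+\D^2)^{-r/2}R_i\big)\,S_i,
$$
and the idea is to factor $(1+\D^2)^{-r/2}=(1+\D^2)^{-(r-s)/2}(1+\D^2)^{-s/4}(1+\D^2)^{-s/4}$. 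Now $(1+\D^2)^{-s/4}R_i$ is $\tau$-Hilbert–Schmidt by Remark (ii) after the definition of $\B_2(\D,p)$ (since $R_i\in\B_2(\D,p)$ and $s>p$), with $\cl^2$-norm controlled by $\varphi_s(|R_i|^2)^{1/2}\leq\Q_n(R_i)$; wait—more carefully, $R_i^*(1+\D^2)^{-s/4}\in\cl^2$, equivalently $(1+\D^2)^{-s/4}R_i\in\cl^2$ after taking adjoints, and similarly $(1+\D^2)^{-s/4}S_i^*\in\cl^2$, i.e. $S_i(1+\D^2)^{-s/4}\in\cl^2$. Hence each
$$
(1+\D^2)^{-s/4}R_i\cdot S_i(1+\D^2)^{-s/4}\in\cl^1(\cn,\tau),
$$
with $\cl^1$-norm at most $\Q_n(R_i)\Q_n(S_i)$ by Hölder for the trace.

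Next I would absorb the leftover factors. We have
$$
(1+\D^2)^{-r/2}R_iS_i=(1+\D^2)^{-(r-s)/2}\Big[(1+\D^2)^{-s/4}R_i\Big]\Big[S_i(1+\D^2)^{-s/4}\Big](1+\D^2)^{s/4}.
$$
The operator $(1+\D^2)^{-(r-s)/2}$ is bounded (as $r>s$), so left-multiplication by it preserves $\cl^1$ and does not increase the $\cl^1$-norm. The trailing factor $(1+\D^2)^{s/4}$ is unbounded, so this naive factorization does not immediately work — and this is the main obstacle. The fix is to keep the unbounded positive power adjacent to a Hilbert–Schmidt piece that can still absorb it: instead factor as $(1+\D^2)^{-r/2}R_i\in\cl^2$ directly, using that $-r/2\le -s/4$ so $(1+\D^2)^{-r/2}R_i=(1+\D^2)^{-(r/2-s/4)}\big((1+\D^2)^{-s/4}R_i\big)$ is a bounded operator times a Hilbert–Schmidt operator, hence Hilbert–Schmidt, with $\|(1+\D^2)^{-r/2}R_i\|_2\le\|(1+\D^2)^{-s/4}R_i\|_2\le\Q_n(R_i)$. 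Then $(1+\D^2)^{-r/2}R_iS_i$ is $\cl^2\cdot\cl^2\subset\cl^1$, with
$$
\big\|(1+\D^2)^{-r/2}R_iS_i\big\|_1\le\big\|(1+\D^2)^{-r/2}R_i\big\|_2\,\|S_i\|_2.
$$
But $\|S_i\|_2$ (the operator $\cl^2$-norm) need not be finite — $S_i$ is merely bounded — so instead I use $\|(1+\D^2)^{-r/2}R_iS_i\|_1\le\|(1+\D^2)^{-r/2}R_i\|_1\,\|S_i\|$, which fails since $(1+\D^2)^{-r/2}R_i$ is only $\cl^2$. The clean route, and the one I would actually write, is the balanced one: $\|(1+\D^2)^{-r/2}R_iS_i\|_1\le\big\|(1+\D^2)^{-s/4}R_i\big\|_2\cdot\big\|(1+\D^2)^{-(r-s)/2}\big\|\cdot\big\|(1+\D^2)^{-s/4}S_i^*\big\|_2^*$—using $(1+\D^2)^{-r/2}R_iS_i=(1+\D^2)^{-s/4}\cdot(1+\D^2)^{-(r-s)/2}\cdot\big[(1+\D^2)^{-s/4}R_i\big]^{\!*\!*}$—hmm. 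Concretely: write $(1+\D^2)^{-r/2}R_iS_i = \big[(1+\D^2)^{-r/2}R_i(1+\D^2)^{s/4}\big]\big[(1+\D^2)^{-s/4}S_i\big]$; the second bracket is in $\cl^2$ with norm $\le\Q_n(S_i)$ by Remark (ii), and the first bracket equals $(1+\D^2)^{-(r-s)/2}\cdot\big[(1+\D^2)^{-s/4}R_i(1+\D^2)^{s/4}\big]$—but conjugating $R_i$ by $(1+\D^2)^{s/4}$ is exactly $\sigma^{s/2}(R_i)$, which lies in $\B_2(\D,p)$ by the Remark after Proposition \ref{pr:cts}, and $(1+\D^2)^{-(r-s)/2}\sigma^{s/2}(R_i)$ is bounded times $\cl^2$... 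I am going in circles; the honest statement is that one chooses the split point carefully. Let me commit: pick $s,s'>p$ with $s+s'<2r$ (possible since $2r>2p$), factor $(1+\D^2)^{-r/2}=(1+\D^2)^{-s/4}\cdot(1+\D^2)^{(s+s')/4-r/2}\cdot(1+\D^2)^{-s'/4}$ with the middle exponent $\le0$ hence that factor bounded, and write
$$
(1+\D^2)^{-r/2}R_iS_i=(1+\D^2)^{-s/4}R_i\cdot R_i^{-1}\!\cdots
$$
— no. The correct and final plan: $(1+\D^2)^{-r/2}R_iS_i$; move $R_i$ past using $R_i=R_i$, and simply observe $(1+\D^2)^{-r/2}R_i(1+\D^2)^{r/2}$ need not be bounded. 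I will instead invoke Lemma \ref{B1-grew}: $(1+\D^2)^{-r/2}A\in\B_1(\D,p)$ is false in general.

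So the genuinely clean approach, which I would carry out, is: express $A=\sum R_iS_i$ as above; then
$$
(1+\D^2)^{-r/2}A=\sum_i\big((1+\D^2)^{-r/2}R_i\big)S_i,\qquad (1+\D^2)^{-r/2}R_i=(1+\D^2)^{-(2r-s)/4}\big((1+\D^2)^{-s/4}R_i\big),
$$
valid and bounded-times-$\cl^2$ since $2r-s>0$, giving $(1+\D^2)^{-r/2}R_i\in\cl^2$ with norm $\le\Q_n(R_i)$; and $S_i=\big((1+\D^2)^{-s/4}S_i^*\big)^*(1+\D^2)^{s/4}$ — unbounded again. The resolution I will use: pair the powers \emph{symmetrically around the product}, i.e. do not isolate $R_i$ and $S_i$ but keep $(1+\D^2)^{-s/4}$ on the left and $(1+\D^2)^{-s'/4}$ implicitly on the right by noting $S_i\in\cn=\B$ is arbitrary bounded but we may also write $A = \sum R_i S_i$ with the roles such that $S_i(1+\D^2)^{-s/4}\in\cl^2$; then
$$
(1+\D^2)^{-r/2}R_iS_i = \underbrace{(1+\D^2)^{-s/4}R_i}_{\cl^2}\;\underbrace{(1+\D^2)^{s/4-r/2+s/4}}_{\text{bounded, exponent }=s/2-r/2<0}\;\cdot\;(1+\D^2)^{-s/4}\!\cdot\! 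R_i^{-1}R_iS_i,
$$
which is illegitimate. \textbf{Final decision:} the main obstacle is precisely this bookkeeping, and the paper surely handles it by the symmetry relation \eqref{symmetric} for $\mathrm{OP}$: $\mathrm{OP}^{-r}_0=(1+\D^2)^{-r/2}\B_1^\infty(\D,p)=(1+\D^2)^{-r/2+\theta}\B_1^\infty(\D,p)(1+\D^2)^{-\theta}$, so choosing $\theta=r/2-s/4$ and using $s/4<r/2$ we get $T=(1+\D^2)^{-s/4}B(1+\D^2)^{-(r/2-s/4)}$ with $B\in\B_1(\D,p)$, and now \emph{both} outer factors have negative exponent: the right one with exponent $-(r/2-s/4)$; further factor $(1+\D^2)^{-s/4}B=(1+\D^2)^{-s/4}\sum R_iS_i$ and $B(1+\D^2)^{-(r/2-s/4)}=\sum R_i(S_i(1+\D^2)^{-(r/2-s/4)})$... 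The point I am confident of: \textbf{using the symmetry} \eqref{symmetric} one puts $(1+\D^2)^{-s/4}$ on each side, each $R_i(1+\D^2)^{-s/4}$ and $(1+\D^2)^{-s/4}S_i$ is $\cl^2$, their product is $\cl^1$ with summable norms $\sum\Q_n(R_i)\Q_n(S_i)<\infty$, so $T\in\cl^1(\cn,\tau)$ with $\|T\|_1\le\PP_n\big((1+\D^2)^{r/2}T\big)$ (taking the infimum over representations). Completeness of $\cl^1$ and the triangle inequality close the argument. \textbf{The hard part} is exactly arranging the fractional powers so that no unbounded positive power is ever left dangling; the symmetry property \eqref{symmetric} of the tame pseudodifferential operators is the tool that makes this possible, and I would lean on it rather than on ad hoc factorizations.
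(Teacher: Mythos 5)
Your final committed strategy is correct and is essentially the paper's: both hinge on the symmetry relation \eqref{symmetric}, which rewrites $T\in{\rm OP}^{-r}_0$ as $T=(1+\D^2)^{-r/4}A(1+\D^2)^{-r/4}$ with $A\in\B_1^\infty(\D,p)\subset\B_1(\D,p)$, so that a half-power $(1+\D^2)^{-r/4}$ (with $r/4>p/4$) sits on each side of $A$ and can pair with each half of a product decomposition of $A$ to give a product of two Hilbert--Schmidt operators. The one real divergence is in how $A$ is decomposed. You use the $\PP_n$-convergent representation $A=\sum_i R_iS_i$ from the remark after the theorem $\B_1=\tilde\B_1$, obtaining $T=\sum_i \big[(1+\D^2)^{-r/4}R_i\big]\big[S_i(1+\D^2)^{-r/4}\big]$ with $\sum_i\Q_n(R_i)\Q_n(S_i)<\infty$ for $n$ large enough that $p+1/n\le r$; that works, but note it is $(1+\D^2)^{-r/4}R_i$ and $S_i(1+\D^2)^{-r/4}$ (not $R_i(1+\D^2)^{-r/4}$, as your last display has it) that the $*$-invariance of $\B_2(\D,p)$ and the definition of $\Q_n$ give you. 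The paper instead invokes Proposition \ref{cor:polar-decomp} to write $A=\sum_{k=0}^3 i^k A_k$ with $A_k\ge0$ in $\B_1(\D,p)$, then uses Lemma \ref{lem:b1-pos} to write $A_k=\sqrt{A_k}\sqrt{A_k}$ with $\sqrt{A_k}\in\B_2(\D,p)$ and $\Q_n(\sqrt{A_k})^2=\PP_n(A_k)$; this makes the estimate a finite (four-term) sum of squares of Hilbert--Schmidt norms rather than an infinite series, which is slightly tidier bookkeeping, but the two routes are of comparable weight and both land on the same inequality $\|T\|_1\le\PP_n(A)$ (modulo the continuity constant of $\sigma$).

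On the write-up itself: the bulk of the proposal consists of factorization attempts that you yourself flag as wrong or circular before abandoning them. Once you invoke \eqref{symmetric} the argument is essentially over, and all the earlier asymmetric splits of $(1+\D^2)^{-r/2}A$ are precisely what the symmetric form is designed to make unnecessary; in a final version they should be cut entirely, and the choice of $s\in(p,r]$ and of $n$ with $p+1/n\le s$ should be stated explicitly rather than left implicit.
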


\begin{proof}
 Let $T_r\in {\rm OP}^{-r}_0$. By Definition \ref{op0} and Proposition \ref{cont-op},
we see that the symmetric definition of ${\rm OP}^r_0$ in Equation \eqref{symmetric} is
equivalent to the original definition. Thus, there exists $A\in \B_1^\infty(\D,p)\subset
\B_1(\D,p)$ such that 
$$
T_r=(1+\D^2)^{-r/4}A(1+\D^2)^{-r/4}.
$$
Define $n:=\lfloor(r-p)^{-1}\rfloor$ and write $A=\sum_{k=0}^3i^kA_k$ with 
$A_k\in\B_1(\D,p)$ positive, as in
Proposition \ref{cor:polar-decomp}.
The H\"older inequality then entails that
\begin{align*}
\|T_r\|_1&=\|(1+\D^2)^{-r/4}A(1+\D^2)^{-r/4}\|_1
\leq
\|(1+\D^2)^{-p/4-1/4n}A(1+\D^2)^{-p/4-1/4n}\|_1\\
&\leq \sum_{k=0}^3\big\|(1+\D^2)^{-p/4-1/4n}\sqrt{A_k}\big\|_2^2
\leq  \sum_{k=0}^3 \Q_n\big(\sqrt{A_k}\big)\Q_n\big(\sqrt{A_k}\big)
=\sum_{k=0}^3\PP_n(A_k)<\infty,
\end{align*}
which is enough to conclude.
\end{proof}

As expected, the product of a tame pseudodifferential operator by a regular
pseudodifferential operator is a tame pseudodifferential operator.

\begin{lemma}
\label{usefull}
For all $r,\,t\in\mathbb R$ we have 
$\big({\rm OP}^r_0\,{\rm OP}^t\cup {\rm OP}^t\,{\rm OP}^r_0\big)\subset{\rm OP}^{r+t}_0$.
\end{lemma}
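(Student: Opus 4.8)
The plan is to reduce everything to the basic definition ${\rm OP}^r_0 = (1+\D^2)^{r/2}\B_1^\infty(\D,p)$ and exploit the symmetry property \eqref{symmetric}, which says that the power of $(1+\D^2)^{r/2}$ may be split and moved to either side of the $\B_1^\infty(\D,p)$ factor, together with the fact that $\sigma^z$ preserves $\B_1^\infty(\D,p)$ (Lemma \ref{cont-grp}) and that $\bigcap_n {\rm dom}\,\delta^n$ is preserved by $\sigma^z$ as well. First I would treat the case ${\rm OP}^r_0\,{\rm OP}^t\subset {\rm OP}^{r+t}_0$. Take $S\in{\rm OP}^r_0$ and $P\in{\rm OP}^t$, so $S=(1+\D^2)^{r/2}A$ with $A\in\B_1^\infty(\D,p)$ and $P=(1+\D^2)^{t/2}B$ with $B\in\bigcap_{n}{\rm dom}\,\delta^n$. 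Then
\[
SP=(1+\D^2)^{r/2}A(1+\D^2)^{t/2}B=(1+\D^2)^{(r+t)/2}\,\big((1+\D^2)^{-t/2}A(1+\D^2)^{t/2}\big)\,B
=(1+\D^2)^{(r+t)/2}\,\sigma^{-t}(A)\,B,
\]
so it suffices to show $\sigma^{-t}(A)\,B\in\B_1^\infty(\D,p)$. By Lemma \ref{cont-grp}, $\sigma^{-t}(A)\in\B_1^\infty(\D,p)$, and then $\sigma^{-t}(A)\,B\in\B_1^\infty(\D,p)$ because $\B_1^\infty(\D,p)$ is a two-sided module over $\bigcap_n{\rm dom}\,\delta^n$ — indeed $\B_1^\infty(\D,p)$ is an ideal in the algebra generated by $\bigcap_n{\rm dom}\,\delta^n$, which follows from the Leibniz rule: $\delta^l(CB)$ is a finite sum of terms $\delta^j(C)\delta^{l-j}(B)$ with $\delta^j(C)\in\B_1(\D,p)$ (so in particular in $\B_2(\D,p)$) and $\delta^{l-j}(B)$ bounded, and Lemma \ref{Bp-grew}-type estimates or the module property built into $\B_1(\D,p)$ (Lemma \ref{B1-grew} gives the version for $f(\D)$; the general bounded-operator version is analogous) show each such product is in $\B_1(\D,p)$. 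Hence $SP\in{\rm OP}^{r+t}_0$.

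For the other inclusion ${\rm OP}^t\,{\rm OP}^r_0\subset{\rm OP}^{r+t}_0$, I would use the symmetric form \eqref{symmetric}: write $S\in{\rm OP}^r_0$ as $S=A\,(1+\D^2)^{r/2}$ with $A\in\B_1^\infty(\D,p)$ (legitimate by \eqref{symmetric} with $\theta=r/2$ when $r\geq 0$; for $r<0$ one splits symmetrically, or equivalently uses $S=\sigma^{?}$ conjugates — in any case the upshot of \eqref{symmetric} and Proposition \ref{cont-op} is that one may freely move the weight from one side to the other at the cost of an application of $\sigma^z$, which preserves $\B_1^\infty(\D,p)$). Then with $P=(1+\D^2)^{t/2}B$,
\[
PS=(1+\D^2)^{t/2}B\,A\,(1+\D^2)^{r/2}=(1+\D^2)^{t/2}\big(BA\big)(1+\D^2)^{r/2},
\]
and $BA\in\B_1^\infty(\D,p)$ again by the module property, so $PS\in(1+\D^2)^{t/2}\B_1^\infty(\D,p)(1+\D^2)^{r/2}={\rm OP}^{r+t}_0$ by \eqref{symmetric}. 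One should also remark that the module statement requires $B$ to map $\HH_\infty$ to $\HH_\infty$, which holds since $B\in\bigcap_n{\rm dom}\,\delta^n$ already implies (by the standard Connes-Moscovici theory recalled before Definition \ref{def-Bp}) that $B$ preserves $\HH_\infty$, and similarly all the products involved are well-defined operators on $\HH_\infty$.

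I expect the main obstacle to be the verification that $\B_1^\infty(\D,p)$ is genuinely a two-sided module over $\bigcap_n{\rm dom}\,\delta^n$ — that is, closed under left and right multiplication by operators $B$ with all $\delta^n(B)$ bounded. The smoothness part (that $\delta^l(CB)$ stays in $\B_1(\D,p)$) is a Leibniz-rule bookkeeping argument, but one needs the underlying fact that $\B_1(\D,p)$ itself is a module over all bounded operators that are smooth for $\delta$, or at least over sufficiently many of them; Lemma \ref{B1-grew} only records the special case of functions of $\D$. The cleanest route is: $\B_1(\D,p)\subset\B_2(\D,p)$ by Lemma \ref{B1-implies-B2}, and for $T=\sum_i R_iS_i$ a representation with $\Q_n$-controlled summands, $TB=\sum_i R_i(S_iB)$; since $S_i\in\B_2(\D,p)$ and $B$ is bounded, $S_iB\in\B_2(\D,p)$ with $\Q_n(S_iB)\leq\|B\|\,\Q_n(S_i)$ (same proof as Lemma \ref{Bp-grew}(1), the key point being $S_iB(1+\D^2)^{-s/4}$ has the right form — this needs $B$ to commute appropriately or one uses that $(1+\D^2)^{-s/4}$ times bounded stays Hilbert–Schmidt, which is immediate), giving $\PP_n(TB)\leq\|B\|\PP_n(T)$; the left version is symmetric using $T^*$. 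Iterating with the Leibniz rule handles all the $\delta^l$, and I would present this as a short self-contained lemma or simply cite it as routine, the way the paper does for similar facts.
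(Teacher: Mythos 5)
Your overall strategy coincides with the paper's: reduce to $r=t=0$ via the $\sigma$-invariance of ${\rm OP}^r_0$ and ${\rm OP}^r$, write $T=\sum_i R_iS_i$, and show that $\sum_i R_i(S_iB)$ is a valid representation of $TB$, i.e.\ that $\B_1^\infty(\D,p)$ is a two-sided ideal in $\bigcap_n{\rm dom}\,\delta^n$. That is exactly what Lemma~\ref{usefull} asserts and exactly how the paper proves it.

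The one place you go astray is in the justification of the crucial estimate. You assert that ``since $S_i\in\B_2(\D,p)$ and $B$ is bounded, $S_iB\in\B_2(\D,p)$ with $\Q_n(S_iB)\leq\|B\|\,\Q_n(S_i)$.'' This is false for bounded $B$ and is not the ``same proof as Lemma~\ref{Bp-grew}(1)''. In Lemma~\ref{Bp-grew}(1) the key identity is $T f(\D)(1+\D^2)^{-s/4}=T(1+\D^2)^{-s/4}f(\D)$, which relies on $f(\D)$ commuting with $(1+\D^2)^{-s/4}$; a general bounded $B$ does not commute with $(1+\D^2)^{-s/4}$. For $T=S_iB$ the condition $T^*(1+\D^2)^{-s/4}=B^*S_i^*(1+\D^2)^{-s/4}\in\cl^2$ is indeed immediate from boundedness of $B$, but the condition $T(1+\D^2)^{-s/4}=S_iB(1+\D^2)^{-s/4}\in\cl^2$ requires moving $B$ across $(1+\D^2)^{-s/4}$, i.e.\ $S_iB(1+\D^2)^{-s/4}=S_i(1+\D^2)^{-s/4}\sigma^{s/4}(B)$, and this needs $\sigma^{s/4}(B)$ to be \emph{bounded} — which is precisely where the hypothesis $B\in{\rm OP}^0=\bigcap_n{\rm dom}\,\delta^n$ (rather than mere boundedness) enters, via Proposition~\ref{cont-op}. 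The resulting bound is therefore $\Q_n(S_iB)\leq\big(\|B\|+\|\sigma^{p/4+1/4n}(B)\|\big)\Q_n(S_i)$, not $\|B\|\,\Q_n(S_i)$, and the paper's proof records exactly this constant. Your parenthetical alternative — ``one uses that $(1+\D^2)^{-s/4}$ times bounded stays Hilbert--Schmidt, which is immediate'' — does not apply: in the nonunital setting $(1+\D^2)^{-s/4}$ is not Hilbert--Schmidt, and the whole difficulty is that only $S_i(1+\D^2)^{-s/4}$ is, with the operator $B$ on the wrong side. You do flag ``this needs $B$ to commute appropriately'', which is the right instinct, but the argument as written leaves the decisive step unjustified (or mis-justified), and this is the only nontrivial content of the lemma.
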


\begin{proof}
Since $\s$ preserves both ${\rm OP}^r_0$ and ${\rm OP}^r$,  it suffices to prove the claim for
$r=t=0$. Indeed,  for $T_r\in {\rm OP}^r_0$ and $T_s\in{\rm OP}^s$, there exist
$A\in {\rm OP}^0_0$ and $B\in{\rm OP}^0$ such that $T_r=(1+\D^2)^{r/2}A$ and $ T_s=(1+\D^2)^{s/2}B$.
Thus, the general case will follow from the case $t=s=0$  by writing
$$
T_rT_s=(1+\D^2)^{(r+s)/2}\sigma^{-s}(A)B.
$$
So let $T\in {\rm OP}^0_0$ and $S\in{\rm OP}^0$. We need to show that
$TS\in {\rm OP}^0_0=\B_1^\infty(\D,p)$. For this, let $T=\sum_{i=10}^\infty T_{1,i}T_{2,i}$ any 
representation. We will prove that 
$$
\sum_{i=0}^\infty T_{1,i}\,(T_{2,i}S),
$$
is a representation of the product $TS$. Indeed, we have
\begin{align*}
&\Q_n(T_{2,i}S)^2=\|T_{2,i}S\|^2+\|T_{2,i}S(1+\D^2)^{-p/4-1/4n}\|_2^2+\|S^*T_{2,i}^*(1+\D^2)^{-p/4-1/4n}\|_2^2\\
&\leq \|S\|^2\|T_{2,i}\|^2+\|\sigma^{p/4+1/4n}(S)\|^2\|T_{2,i}(1+\D^2)^{-p/4-1/4n}\|_2^2+\|S\|^2\|T_{2,i}^*(1+\D^2)^{-p/4-1/4n}\|_2^2\\
&\leq \big( \|S\|+\|\sigma^{p/4+1/4n}(S)\|\big)^2\Q_n(T_{2,i})^2,
\end{align*}
which is finite because ${\rm OP}^0=\bigcap_{n\in\N}{\rm dom}\,\delta^n$ 
is invariant under $\sigma$ by Proposition  \ref{cont-op}. This immediately shows that $TS\in \B_1(\D,p)$
since
$$
\PP_n(TS)\leq \sum_{i=0}^\infty \Q_n(T_{1,i})\,\Q_n(T_{2,i}S)\leq \big( \|S\|+\|\sigma^{p/4+1/4n}(S)\|\big)
\sum_{i=0}^\infty \Q_n(T_{1,i})\,\Q_n(T_{2,i})<\infty.
$$
In particular, one finds $\PP_n(TS)\leq \big( \|S\|+\|\sigma^{p/4+1/4n}(S)\|\big)\PP_n(T)$.
Now the formula $\delta^k(TS)=\sum_{j=0}^k{{k}\choose{j}}\delta^{j}(T)\delta^{k-j}(S)$ and the last estimate
shows that $\PP_{n,k}(TS)=\PP_{n}(\delta^{k}(TS))$ is finite and so $TS\in \B_1^\infty(\D,p)$.
That ${\rm OP}^t\,{\rm OP}^r_0\subset{\rm OP}^{r+t}_0$ can be proven in the same way.
\end{proof}

{\bf Remark.} Lemma \ref{usefull} shows that $\B_1^\infty(\D,p)$ is a two-sided ideal in 
$\bigcap{\rm dom}\,\delta^k$.

The following is a Taylor-expansion type theorem for ${\rm OP}^r_0$ 
just as in \cite{Co5,CM}, and adapted to our setting.

\begin{prop}
\label{TE}
Let $T\in{\rm OP}^r_0$ and $z=n+1-\alpha$ with $n\in\N_0$ and $\Re(\alpha)\in(0,1)$. Then we have
$$
\s^{2z}(T)-\sum_{k=0}^nC_k(z)\,(\s^2-{\rm Id})^k(T)\in {\rm OP}^{r-n-1}_0\quad{\rm with}
\quad C_k(z):=\frac{z(z-1)\cdots(z-k+1)}{k!}.
$$
\end{prop}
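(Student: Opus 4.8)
The plan is to mimic the proof of the analogous Taylor expansion in the Connes--Moscovici pseudodifferential calculus (see \cite{Co5,CM} and \cite[Section 6]{CPRS2}), but carried out with the topology of ${\rm OP}^r_0$, i.e.\ with the seminorms $\PP^r_{n,l}$, rather than with operator norms. First I would reduce to the case $r=0$: since $\sigma^z$ commutes with left multiplication by $(1+\D^2)^{-r/2}$, writing $T=(1+\D^2)^{r/2}A$ with $A\in\B_1^\infty(\D,p)$ shows that the statement for $T$ is equivalent to the statement for $A$, because $(\s^2-{\rm Id})^k$ and $\s^{2z}$ both intertwine multiplication by $(1+\D^2)^{r/2}$ up to the shift already built into the definition of ${\rm OP}^{r-n-1}_0$. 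So it suffices to prove: for $A\in\B_1^\infty(\D,p)={\rm OP}^0_0$ and $z=n+1-\alpha$ with $\Re(\alpha)\in(0,1)$,
$$
\s^{2z}(A)-\sum_{k=0}^nC_k(z)\,(\s^2-{\rm Id})^k(A)\in{\rm OP}^{-n-1}_0=(1+\D^2)^{-(n+1)/2}\B_1^\infty(\D,p).
$$

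The key algebraic input is the binomial/Taylor identity for the scalar function $t\mapsto t^z$ around $t=1$: for $|t-1|$ controlled one has $t^z=\sum_{k=0}^n C_k(z)(t-1)^k+(\text{remainder})$, and the remainder is given by an integral (Cauchy/Schl\"afli form) that, when $z=n+1-\alpha$, gains a factor forcing a drop of $n+1$ in the power of $t$. Applying this with $t=(1+\D^2)$ via the spectral theorem, and tracking how $\s^2-{\rm Id}$ acts on $A$ as $A\mapsto (1+\D^2)A(1+\D^2)^{-1}-A$, one writes
$$
\s^{2z}(A)-\sum_{k=0}^nC_k(z)(\s^2-{\rm Id})^k(A)
=c_n(z)\int_0^\infty \lambda^{\alpha-1}\,(1+\D^2)^{-n-1}\,\big[\cdots\big]\,d\lambda,
$$
where the bracket is a finite linear combination of terms of the form $(1+\lambda+\D^2)^{-j}(\s^2-{\rm Id})^{m}(A)(1+\lambda+\D^2)^{-j'}$ (or, equivalently, iterated commutators $A^{(m)}=[\D^2,\cdot]^m(A)$ sandwiched between resolvents), with $m\le n+1$, exactly as in the proof of \cite[Proposition 6.5]{CPRS2} or the Taylor expansion in \cite[Appendix B]{CM}. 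The point is that all the ``bad'' powers $(1+\D^2)^{z}$ reorganise so that $(1+\D^2)^{-(n+1)/2}$ can be pulled out on each side (using the already-established symmetry \eqref{symmetric}) and what remains is manifestly in $\B_1^\infty(\D,p)$.

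To make this rigorous in our setting I would verify two things. First, convergence of the integral in the $\PP_{n,l}$-topology: here I invoke Lemma \ref{B1-grew} (so that $f(\D)T, Tf(\D)\in\B_1(\D,p)$ with $\PP_n$-norm controlled by $\|f\|_\infty\PP_n(T)$) together with the functional-calculus estimates $(1+\lambda+\D^2)^{-1}\le(1+\lambda)^{-1}$ and $\lambda^{1/2}(1+\D^2)(1+\lambda+\D^2)^{-2}\le\tfrac14\lambda^{-1/2}$ already used in the proof of Lemma \ref{delta-LR}; these give an integrable bound $\lambda^{\Re(\alpha)-1}(1+\lambda)^{-\text{(something positive)}}$ near $0$ and $\infty$. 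Second, that each term $(\s^2-{\rm Id})^m(A)\in\B_1^\infty(\D,p)$ and $\delta^l$ of the whole expression is again of the same type — this follows from Lemma \ref{cont-grp}/Proposition \ref{pr:cts} ($\s^z$ preserves $\B_1^\infty(\D,p)$ with controlled seminorms, so $(\s^2-{\rm Id})^m$ does too), the Leibniz rule for $\delta$, and Lemma \ref{usefull} (that $\B_1^\infty(\D,p)$ is an ideal in $\bigcap{\rm dom}\,\delta^k$), so that commuting $\delta^l$ through the resolvents and the powers of $(1+\D^2)$ stays inside the calculus.

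The main obstacle I anticipate is purely bookkeeping: writing down the remainder term in the operator Taylor expansion in a form where the factor $(1+\D^2)^{-n-1}$ is cleanly extracted on one (or symmetrically on both) sides, and simultaneously keeping every intermediate expression inside $\B_1^\infty(\D,p)$ rather than merely bounded — this is where one must be careful to use the $\PP_n$-estimates of Lemma \ref{B1-grew} and the symmetry \eqref{symmetric} at every step rather than the operator-norm estimates that suffice in the unital Connes--Moscovici case. Once the correct integral representation of the remainder is in hand, the estimates are routine given the lemmas above; I would relegate the detailed $\lambda$-integral bounds to the Appendix, as the authors do elsewhere.
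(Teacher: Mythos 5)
The paper's proof is just the single observation that the Connes--Moscovici Taylor expansion argument carries over verbatim once one verifies that $(\s^2-{\rm Id})^k(T)\in{\rm OP}^{r-k}_0$ whenever $T\in{\rm OP}^r_0$, which in turn follows from the identity $(\s^2-{\rm Id})^k(T)=(1+\D^2)^{-k/2}\s^k\big(\delta'^k(T)\big)$ combined with the invariance of ${\rm OP}^r_0$ under $\delta'$ and $\s$.

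Your overall strategy (transplant the CM argument into the $\PP_{n,l}$-topology, control everything with Lemmas~\ref{B1-grew}, \ref{cont-grp}, Proposition~\ref{pr:cts}, Lemma~\ref{usefull}) is the right one, but the mechanism you propose for the order drop is not correct, and the one observation the paper actually needs is missing. You claim the remainder can be written with $(1+\D^2)^{-n-1}$ cleanly factored out in front of a $\lambda$-integral whose integrand lives in $\B_1^\infty(\D,p)$. The Newton finite-difference remainder for $\s^{2z}(T)$ does not have this form: what appears is (a parametrised family of) $\s$-conjugates of $(\s^2-{\rm Id})^{n+1}(T)$, with the $(1+\D^2)^{-1}$ factors intertwined inside the iterated difference $(\s^2-{\rm Id})^{n+1}$ rather than extractable as a prefactor. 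Consequently the $n+1$ orders must be gained by showing $(\s^2-{\rm Id})^{n+1}(T)\in{\rm OP}^{r-n-1}_0$, and then using that $\s$ preserves each ${\rm OP}^r_0$ (Proposition~\ref{cont-op}). You only assert $(\s^2-{\rm Id})^m(A)\in\B_1^\infty(\D,p)$, justified by the fact that $\s^z$ preserves $\B_1^\infty(\D,p)$; that is a weaker statement and does not produce any gain in order. The identity $(\s^2-{\rm Id})^k(T)=(1+\D^2)^{-k/2}\s^k(\delta'^k(T))$ — equivalently, that each application of $\s^2-{\rm Id}$ lowers the tame order by one — is precisely the nonunital-specific fact the proposition rests on, and without it your reduction ``to the CM case'' is not actually carried through.
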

\index{pseudodifferential operators!Taylor expansion}

\begin{proof}
The proof is exactly the same as that in \cite{CM,Co5} once we realise that
if $T\in {\rm OP}^r_0$ then $(\s^2-{\rm Id})^k(T)\in{\rm OP}^{r-k}_0$. This follows from
$$
(\s^2-{\rm Id})^k(T)=(1+\D^2)^{-k/2}\s^k\big(\delta'^k(T)),
$$
and the invariance of each ${\rm OP}^r_0$ under $\delta'=[(1+\D^2)^{1/2},\cdot]$ and $\s$. For
$\delta'$ this follows from the second remark following Definition \ref{def-Bp}.
\end{proof}

\begin{lemma} 
\label{lem:derivs}
If $A\in {\rm OP}^r_0$ and $n\in\N_0$, then $A^{(n)}\in {\rm OP}^{r+n}_0$, 
where $A^{(n)}$ is as in Definition \ref{parup}.
\end{lemma}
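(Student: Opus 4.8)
The plan is a straightforward induction on $n$, where essentially all the content is isolating the correct gain in order produced by the commutator $[\D^2,\cdot]$ via the operator $L$ of Equation~\eqref{LR}. The base case $n=0$ is just the hypothesis $A\in{\rm OP}^r_0$. For the inductive step I would write $A^{(n-1)}=(1+\D^2)^{(r+n-1)/2}B_{n-1}$ with $B_{n-1}\in\B_1^\infty(\D,p)$, as permitted by Definition~\ref{op0}, and then compute, on $\HH_\infty$,
$$
A^{(n)}=[\D^2,A^{(n-1)}]=(1+\D^2)^{(r+n-1)/2}[\D^2,B_{n-1}]=(1+\D^2)^{(r+n)/2}L(B_{n-1}),
$$
using that $\D^2$ commutes with the bounded function $(1+\D^2)^{(r+n-1)/2}$ of $\D$ and that $L(\cdot)=(1+\D^2)^{-1/2}[\D^2,\cdot]$. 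Iterating this identity gives $A^{(n)}=(1+\D^2)^{(r+n)/2}L^n(B_0)$, where $A=(1+\D^2)^{r/2}B_0$, so it suffices to show $L^n(B_0)\in\B_1^\infty(\D,p)$; Definition~\ref{op0} then yields $A^{(n)}\in{\rm OP}^{r+n}_0$ and the induction closes.

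For that remaining point I would invoke the equivalent description of $\B_1^\infty(\D,p)$ from Lemma~\ref{delta-LR}, namely that $T\in\B_1^\infty(\D,p)$ if and only if $L^l(T)\in\B_1(\D,p)$ for every $l\in\N_0$. Given $B_0\in\B_1^\infty(\D,p)$, this says $L^l(B_0)\in\B_1(\D,p)$ for all $l$; applying the characterization again, $L^l\big(L^n(B_0)\big)=L^{l+n}(B_0)\in\B_1(\D,p)$ for all $l\in\N_0$, whence $L^n(B_0)\in\B_1^\infty(\D,p)$. In other words, $L$ (and hence each iterate $L^n$) maps $\B_1^\infty(\D,p)$ into itself, which is exactly what is needed.

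The only place where any care is required is the bookkeeping of orders: the naive argument $A^{(n)}=\D^2A^{(n-1)}-A^{(n-1)}\D^2\in{\rm OP}^2\,{\rm OP}^{r+n-1}_0+{\rm OP}^{r+n-1}_0\,{\rm OP}^2$ via Lemma~\ref{usefull} only places $A^{(n)}$ in ${\rm OP}^{r+n+1}_0$, one order too high, since it ignores the cancellation in the commutator. Capturing the sharp order $r+n$ is precisely what forces the use of the $L$-characterization of $\B_1^\infty(\D,p)$ from Lemma~\ref{delta-LR} rather than Lemma~\ref{usefull}; apart from that, the argument is routine, with the standing convention that all the operators involved preserve $\HH_\infty$ so that the iterated commutators $A^{(n)}$ are well defined.
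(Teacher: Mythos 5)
Your proof is correct and takes essentially the same approach as the paper: the key decomposition $A=(1+\D^2)^{r/2}B_0$ and the identity $A^{(n)}=(1+\D^2)^{(r+n)/2}L^n(B_0)$ appear in both, and all that remains in either account is to check that $L$ maps $\B_1^\infty(\D,p)$ to itself. The paper does this by citing the factorization $L=(1+\sigma^{-1})\circ\delta'$ directly (together with Lemma~\ref{cont-grp} and the $\delta'$-version of Definition~\ref{def-Bp}), while you reach the same conclusion by appealing to the $L$-characterization of $\B_1^\infty(\D,p)$ in Lemma~\ref{delta-LR} --- whose proof rests on that very factorization --- so the two routes are only superficially different.
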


\begin{proof} For $n=1$, by assumption there is an operator $T\in {\rm OP}^0_0$ such that
$A=(1+\D^2)^{r/2}T$. Then $A^{(1)}=(1+\D^2)^{r/2}T^{(1)}=(1+\D^2)^{(r+1)/2}L(T)$.
So the proof follows from the relation $L=(1+\sigma^{-1})\circ\delta'$ and the fact that 
both $\sigma^{-1}$ and $\delta'$ preserve ${\rm OP}^0_0$, by Lemma \ref{cont-grp}. 
The general case follows by
induction.
\end{proof}

\begin{prop} 
The derivation $LD$ defined by $LD(T):=[\log(1+\D^2),T]$, 
preserves ${\rm OP}^r_0$, for all $r\in\mathbb{R}$.
\end{prop}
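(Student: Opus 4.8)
The plan is to express $LD(T)$ as a resolvent integral involving the first commutator with $\D^2$, and then to read off membership in ${\rm OP}^r_0$ from the bimodule property of $\B_1(\D,p)$ over bounded functions of $\D$ (Lemma \ref{B1-grew}).

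\textbf{Step 1 (reduction to $r=0$).} Since $\log(1+\D^2)$ commutes with $(1+\D^2)^{r/2}$, for $T\in{\rm OP}^r_0$ I would write $T=(1+\D^2)^{r/2}B$ with $B:=(1+\D^2)^{-r/2}T\in\B_1^\infty(\D,p)={\rm OP}^0_0$, whence
\[
LD(T)=\big[\log(1+\D^2),(1+\D^2)^{r/2}B\big]=(1+\D^2)^{r/2}\big[\log(1+\D^2),B\big]=(1+\D^2)^{r/2}LD(B).
\]
Thus it is enough to prove that $LD$ preserves ${\rm OP}^0_0=\B_1^\infty(\D,p)$.

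\textbf{Step 2 (the case $r=0$).} Fix $T\in\B_1^\infty(\D,p)$. Using the scalar identity $\int_0^\infty\big((1+\lambda)^{-1}-(1+\lambda+x^2)^{-1}\big)\,d\lambda=\log(1+x^2)$ and the spectral theorem, $\log(1+\D^2)\eta=\int_0^\infty\big((1+\lambda)^{-1}-(1+\lambda+\D^2)^{-1}\big)\eta\,d\lambda$ for $\eta\in\HH_\infty$; since the scalars $(1+\lambda)^{-1}$ commute with $T$, this gives, on $\HH_\infty$,
\[
LD(T)=\int_0^\infty(1+\lambda+\D^2)^{-1}\,[\D^2,T]\,(1+\lambda+\D^2)^{-1}\,d\lambda .
\]
By the definition of $L$ in \eqref{LR}, $[\D^2,T]=(1+\D^2)^{1/2}L(T)$, and $L(T)\in\B_1^\infty(\D,p)$ by Lemma \ref{delta-LR} (since $L^l(L(T))=L^{l+1}(T)\in\B_1(\D,p)$ for every $l$). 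Combining the factor $(1+\D^2)^{1/2}$ with the left resolvent and writing $f_\lambda(x)=(1+x^2)^{1/2}(1+\lambda+x^2)^{-1}$, $g_\lambda(x)=(1+\lambda+x^2)^{-1}$, I would arrive at
\[
LD(T)=\int_0^\infty f_\lambda(\D)\,L(T)\,g_\lambda(\D)\,d\lambda ,
\]
with $f_\lambda,g_\lambda\in L^\infty(\R)$, $\|f_\lambda\|_\infty\leq C(1+\lambda)^{-1/2}$ and $\|g_\lambda\|_\infty=(1+\lambda)^{-1}$ (an elementary maximisation on $[1,\infty)$). Two applications of Lemma \ref{B1-grew} give $f_\lambda(\D)L(T)g_\lambda(\D)\in\B_1(\D,p)$ with $\PP_n\big(f_\lambda(\D)L(T)g_\lambda(\D)\big)\leq\|f_\lambda\|_\infty\|g_\lambda\|_\infty\PP_n(L(T))\leq C(1+\lambda)^{-3/2}\PP_n(L(T))$; since $\int_0^\infty(1+\lambda)^{-3/2}\,d\lambda<\infty$ and $\B_1(\D,p)$ is Fréchet, the integral converges in $\B_1(\D,p)$, so $LD(T)\in\B_1(\D,p)$. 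For the smoothness I would use that $\delta'=[(1+\D^2)^{1/2},\cdot]$ commutes with $f_\lambda(\D)$ and $g_\lambda(\D)$, hence $\delta'^l\big(f_\lambda(\D)L(T)g_\lambda(\D)\big)=f_\lambda(\D)\,\delta'^l(L(T))\,g_\lambda(\D)$; pushing $\delta'^l$ through the integral and using $\delta'^l(L(T))\in\B_1(\D,p)$ for all $l$ (Lemma \ref{delta-LR} together with the remark following Definition \ref{def-Bp}), the same estimate yields $\delta'^l(LD(T))\in\B_1(\D,p)$. Therefore $LD(T)\in\B_1^\infty(\D,p)$, completing the proof.

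\textbf{Main obstacle.} The only point needing real care is the justification of the two interchanges of an unbounded commutator with the $\lambda$-integral (deriving the resolvent formula for $LD(T)$, and commuting $\delta'^l$ past $\int d\lambda$). After the reduction of Step 1 these become routine: every operator under the integral sign is bounded and maps $\HH_\infty$ into itself, the integrals converge in operator norm and, after multiplication by $(1+\D^2)^{-p/4-1/(4n)}$, in the Hilbert--Schmidt norm underlying $\Q_n$, so Fubini and a dominated-convergence argument on the core $\HH_\infty$ apply, and the resulting operator identities extend by closedness.
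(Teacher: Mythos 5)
Your proof is correct, and it takes a genuinely different route from the paper's. The paper proves this proposition via the Fourier-transform trick it set up for Lemma \ref{root commutator estimate}: it writes $g(t)=\log(1+t^2)$, checks $\|\widehat{g'}\|_1<\infty$, expands $[g(|\D|),T]=\int_\R\widehat{g'}(\xi)\int_0^1 e^{-2\pi i\xi s|\D|}\,\delta(T)\,e^{-2\pi i\xi(1-s)|\D|}\,ds\,d\xi$, and concludes $\PP_{n,l}(LD(T))\leq\|\widehat{g'}\|_1\PP_{n,l}(\delta(T))$ by Lemma \ref{B1-grew}. In contrast, you use the resolvent-integral representation $\log(1+\D^2)=\int_0^\infty\big((1+\lambda)^{-1}-(1+\lambda+\D^2)^{-1}\big)\,d\lambda$, pass the commutator through, and bound $LD(T)$ in terms of $L(T)$ rather than $\delta(T)$. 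Both are natural: the paper's route gets the sharper constant $\|\widehat{g'}\|_1$ essentially for free given the earlier Fourier machinery (and reuses it verbatim), while yours is more elementary spectral theory and fits seamlessly with Lemma \ref{delta-LR}'s equivalence between $\delta$ and $L$ for defining the smooth domain. Interestingly, your argument is much closer in spirit to the paper's proof of the \emph{next} result, Proposition \ref{RTY}, which also uses an integral representation of $\log(1+\D^2)$ over the resolvents $(1+w\D^2)^{-1}$ and bounds things in terms of $R(T)$. One minor clean-up: the integrability $\int_0^\infty(1+\lambda)^{-3/2}\,d\lambda<\infty$ and the convergence of the regularised integral to $\log(1+\D^2)$ on $\HH_\infty$ are both straightforward as you indicate, so your reduction and convergence arguments are sound.
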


\begin{proof} Set $g(t)=\log(1+t^2).$ We have $\|\widehat{g'}\|_1<\infty$ and
$$
LD(T)=[g(|\D|),T]=
-2i\pi\int_\R \widehat {g}(\xi)\xi\int_0^1\,e^{-2i\pi\xi s|\D|}\,\delta(T)\,e^{-2i\pi\xi (1-s)|\D|}\,ds\,d\xi.
$$
The assertion follows as in Lemma \ref{root commutator estimate}.
\end{proof}
We next improve Proposition \ref{cont-op}.
\begin{prop}
\label{LD}
The map $\sigma : \C\times{\rm OP}^r_0\to{\rm OP}^{r}_0$, is strongly holomorphic (entire),  with
$$
\frac d{dz} \sigma^z=\tfrac12\s^z\circ LD.
$$
\end{prop}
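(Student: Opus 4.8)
The plan is to establish the stated differential formula by exhibiting $\sigma^z$ as the solution of a differential equation in the Fr\'echet space ${\rm OP}^r_0$, using $LD$ as the generator. First I would observe that the formal identity
$$
\frac{d}{dz}(1+\D^2)^{z/2}=\tfrac12\log(1+\D^2)\,(1+\D^2)^{z/2}
$$
holds on $\HH_\infty$ by the spectral theorem, so that, at least formally,
$$
\frac{d}{dz}\sigma^z(T)
=\tfrac12\,[\log(1+\D^2),\cdot]\circ\sigma^z(T)+\tfrac12\,\sigma^z(T)\cdot(-[\log(1+\D^2),\cdot])
$$
which rearranges to $\tfrac12\sigma^z\circ LD(T)$ since $\sigma^z$ intertwines $LD$ with itself (because $LD$ commutes with $\sigma$, $\log(1+\D^2)$ being a function of $\D^2$). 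The content of the proposition is that this formal computation is valid in the topology of ${\rm OP}^r_0$, i.e. that the difference quotient $h^{-1}(\sigma^{z+h}(T)-\sigma^z(T))$ converges to $\tfrac12\sigma^z(LD(T))$ in every seminorm $\PP^r_{n,l}$.

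Since $\sigma^z$ commutes with left multiplication by $(1+\D^2)^{-r/2}$, it suffices (as in the proof of Proposition \ref{cont-op}) to treat $r=0$, so $T\in\B_1^\infty(\D,p)$ and we work with the seminorms $\PP_{n,l}$. By the group property $\sigma^{z+h}=\sigma^h\circ\sigma^z$ and the fact (Proposition \ref{pr:cts}) that $\sigma^z$ preserves $\B_1^\infty(\D,p)$, it is enough to prove differentiability at $z=0$: that $h^{-1}(\sigma^h(T')-T')\to\tfrac12 LD(T')$ for every $T'\in\B_1^\infty(\D,p)$, with the convergence locally uniform so that the chain rule gives the general statement. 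From Equation \eqref{trucus},
$$
\sigma^h(T')-T'=[(1+\D^2)^{h/2},T'](1+\D^2)^{-h/2},
$$
and I would write $[(1+\D^2)^{h/2},T']$ using the integral representation of $[g(|\D|),T']$ from Lemma \ref{root commutator estimate} with $g=g_{h}$, $g_h(t)=(1+t^2)^{h/2}$. Dividing by $h$ and letting $h\to0$, the factor $(1+\D^2)^{-h/2}\to 1$ in operator norm (hence harmlessly by Lemma \ref{B1-grew}), while $h^{-1}g_h'\to \tfrac12 g'$ where $g(t)=\log(1+t^2)$ — precisely the function appearing in the definition of $LD$ — with control of $\|\widehat{h^{-1}g_h'-\tfrac12 g'}\|_1\to0$. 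Feeding this into the estimate $\PP_n([g(|\D|),T'])\leq\|\widehat{g'}\|_1\PP_n(\delta(T'))$ from Lemma \ref{root commutator estimate} (and its iterates with $\delta^j$, using that $\delta$ commutes with $\sigma$ and with $LD$) yields $\PP_{n,l}\big(h^{-1}(\sigma^h(T')-T')-\tfrac12 LD(T')\big)\le \|\widehat{h^{-1}g_h'-\tfrac12 g'}\|_1\,\PP_{n,l+1}(T')+o(1)\to0$.

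The main obstacle I anticipate is the analytic control of the Fourier-side norms: one needs a uniform (on a neighbourhood of $h=0$, including complex $h$ since the claim is that $\sigma$ is \emph{entire}) bound of the form $\|\widehat{g_h'}\|_1\le C|h|$ together with the convergence $\|\widehat{h^{-1}g_h'-\tfrac12 g'}\|_1\to 0$, both obtained from the estimate $\|\widehat{g'}\|_1\le\sqrt2(\|g'\|_2+\|g''\|_2)$ and explicit computation of the $L^2$-norms of $g_h'$ and $g_h''$ (as in the derivation of \eqref{RTT}), plus a dominated-convergence argument to pass to the limit. Once differentiability at $0$ is in hand, holomorphy on all of $\C$ follows from the group law and Lemma \ref{cont-grp} (which bounds $\PP_{n,l}(\sigma^z(T))$ locally uniformly in $z$, giving local boundedness hence, with complex-differentiability, holomorphy by the standard vector-valued Morera/Cauchy argument), and the formula $\frac{d}{dz}\sigma^z=\tfrac12\sigma^z\circ LD$ extends by the chain rule and the commutation of $\sigma^z$ with $LD$. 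I would also note that strong holomorphy in the Fr\'echet space ${\rm OP}^r_0$ is equivalent to weak holomorphy plus local boundedness, so verifying the scalar-valued statements suffices.
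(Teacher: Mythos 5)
Your overall strategy — reduce to $z_0=0$ by the group law, use equation \eqref{trucus} to write the difference quotient, and control the first piece $[g_z(\D),T]$ via the Fourier-side estimate of Lemma~\ref{root commutator estimate} with $g_z\to 0$ in the $\|\widehat{g_z'}\|_1$-sense — matches the paper's first half. However, there is a genuine gap in how you dispose of the other piece.

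You write $\sigma^h(T')-T'=[(1+\D^2)^{h/2},T'](1+\D^2)^{-h/2}$ and assert that ``the factor $(1+\D^2)^{-h/2}\to 1$ in operator norm (hence harmlessly by Lemma~\ref{B1-grew}).'' This is false when $\D$ is unbounded: for $\Re(h)>0$, the function $\lambda\mapsto(1+\lambda)^{-h/2}$ on $[1,\infty)$ satisfies $|(1+\lambda)^{-h/2}-1|\to 1$ as $\lambda\to\infty$, so $\|(1+\D^2)^{-h/2}-1\|$ is bounded below by a constant near $1$ and does \emph{not} go to zero. Consequently, after you subtract off the limit $\tfrac12 LD(T')$, the term
$$
\frac{1}{z}\big[(1+\D^2)^{z/2},T'\big]\big((1+\D^2)^{-z/2}-1\big)
$$
does not disappear simply by appealing to boundedness of the second factor and convergence of the first; your argument never actually estimates it. This is precisely the nontrivial part of the proof. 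The paper handles it by rewriting $(1+\D^2)^{-z/2}$ via the integral representation \eqref{RER} for complex powers of a positive operator, inserting a small extra power $(1+\D^2)^{-\eps/2}$ to make the operator $\frac{1}{z}(1+\D^2)^{-\eps/2}\big((1+\D^2)^{-z/2}-1\big)$ \emph{norm-bounded} near $z=0$ (not small), and then observing that the explicit prefactor $|\sin(\pi z)|$ coming from the integral formula vanishes as $z\to 0$. That $\sin(\pi z)$ factor is the actual source of the vanishing; you need something playing its role, and your proposal does not supply it. You should either incorporate that integral-formula computation, or find an alternative way of showing the second commutator term goes to zero in the $\PP^r_{n,l}$-topology that does not rely on norm-convergence of $(1+\D^2)^{-z/2}$ to the identity.

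A minor remark: the reduction to scalar-valued holomorphy via ``weak holomorphy plus local boundedness implies strong holomorphy'' is correct but unnecessary here once the difference quotient is controlled directly; the local-uniformity of the estimates in $z$, as you note, already gives holomorphy.
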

\begin{proof}
If $z-z_0=u,$ then we have
$$
\Big(\frac{\sigma^z-\sigma^{z_0}}{z-z_0}-\tfrac12\sigma^{z_0}\circ LD\Big)
=\sigma^{z_0}\circ\Big(\frac{\sigma^u-1}{u}-\tfrac12 LD\Big).
$$
Since $\sigma^{z_0}$ is strongly continuous, it is sufficient to prove  holomorphy at $z_0=0.$
Then for $T\in {\rm OP}_0^r$ we see that
\begin{align}
\label{vache}
\frac{\sigma^z(T)-T}{z}-\tfrac12LD(T)
=[g_z(\mathcal{D}),T]+z^{-1}[(1+\mathcal{D}^2)^{z/2},T]\big((1+\mathcal{D}^2)^{-z/2}-1\big),
\end{align}
with $g_z(s)=z^{-1}\big((1+s^2)^{z/2}-1\big)-\frac12\log(1+s^2).$ 
An explicit computation shows that $\|g_z'\|_2+\|g_z''\|_2=O(|z|)$. 
Since $\sqrt{2}(\Vert g_z'\Vert_2+\Vert g_z''\Vert_2)\geq \Vert \widehat{g_z'}\Vert_1$, we see
that  $\|\widehat{g_z'}\|_1\to 0$ as $z\to 0$.  It follows, as in Lemma \ref{root commutator estimate}, 
that the first term tends 
to $0$ in the $\PP_{n,l}^{r}$-norms, as $z\to0$.

It remains to treat the second commutator in 
Equation \eqref{vache}. We let $z\in\C$ with $0<\Re(z)< 1$. Employing 
the integral formula for complex  powers of a positive operator $A\in\cn$
\begin{equation}
\label{RER}
A^z=\pi^{-1}{\sin(\pi z)}\int_0^\infty \lambda^{-z}A({1+\lambda A})^{-1} d\lambda,\quad 0<\Re(z)<1,
\end{equation} 
gives
\begin{align*}
(1+\D^2)^{-z/2}=\left((1+\D^2)^{-1/2}\right)^z&=\pi^{-1}{\sin(\pi z)}\int_0^\infty \lambda^{-z}
(1+\D^2)^{-1/2}(1+\lambda(1+\D^2)^{-1/2})^{-1}d\lambda\\
&=\pi^{-1}{\sin(\pi z)}\int_0^\infty \lambda^{-z}
((1+\D^2)^{1/2}+\lambda)^{-1}d\lambda.
\end{align*}
We apply this formula by choosing $0<\eps<(1-\Re(z))$ and writing
\begin{align*}
&\frac{1}{z}[(1+\D^2)^{z/2},T]\big((1+\mathcal{D}^2)^{-z/2}-1\big)\\
&=-\frac{1}{z}(1+\D^2)^{z/2}[(1+\D^2)^{-z/2},T](1+\D^2)^{z/2}\big((1+\mathcal{D}^2)^{-z/2}-1\big)\\
&=\frac{\sin(\pi z)}{\pi z}\int_0^\infty \lambda^{-z}
(1+\D^2)^{z/2}((1+\D^2)^{1/2}+\lambda)^{-1}\delta'(T)((1+\D^2)^{1/2}+\lambda)^{-1}
(1+\D^2)^{(z+\eps)/2}\\
&\qquad\qquad\qquad\qquad\qquad
\times(1+\D^2)^{-\eps/2}\big((1+\mathcal{D}^2)^{-z/2}-1\big)d\lambda.
\end{align*}
Using the elementary estimate 
$$
\Vert ((1+\D^2)^{1/2}+\lambda)^{-1}(1+\D^2)^{z/2}\Vert_\infty\leq (1+\lambda)^{\Re(z)-1},
$$
we have 
\begin{align*}
&\PP^r_{n,l}\left(\frac{1}{z}[(1+\D^2)^{z/2},T]\big((1+\mathcal{D}^2)^{-z/2}-1\big)\right)\\
&\leq \frac{|\sin(\pi z)|}{\pi }\,\PP^r_{n,l}(\delta'(T))\,
\Big\Vert \frac{1}{z}(1+\D^2)^{-\eps/2}\big((1+\mathcal{D}^2)^{-z/2}-1\big)\Big\Vert_\infty \,
\int_0^\infty 
\lambda^{-\Re(z)}(1+\lambda)^{2\Re(z)-2+\eps}d\lambda.
\end{align*}

This concludes the proof since, as $0<\Re(z)< 1-\eps$, the last norm is 
bounded in a neighborhood of $z=0$, while the integral over $\lambda$ is 
bounded (provided $\eps$
is small enough) and $|\sin(\pi z)|$ goes to zero with $z$.
\end{proof}

Last, we prove that the derivation $LD(\cdot)=[\log(1+\D^2),\cdot]$ `almost' lowers the order of a tame pseudodifferential operator by one.

\begin{prop}
\label{RTY}
For all $r\in \R$ and for any $\eps\in(0,1)$, $LD$ continuously maps  
${\rm OP}^r_0$ to ${\rm OP}^{r-1+\eps}_0$.
\end{prop}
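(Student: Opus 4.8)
The plan is to reduce to the case $r=0$, represent $LD$ as an absolutely convergent operator integral via the integral formula for $\log(1+\D^2)$, and control the $\lambda$-decay by one elementary scalar estimate.

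\emph{Reduction to $r=0$.} Let $T\in{\rm OP}^r_0$. By the symmetric description \eqref{symmetric} of ${\rm OP}^r_0$ write $T=(1+\D^2)^{r/2}A$ with $A\in{\rm OP}^0_0=\B_1^\infty(\D,p)$. Since $\log(1+\D^2)$ commutes with $(1+\D^2)^{r/2}$, we have $LD(T)=(1+\D^2)^{r/2}LD(A)$, so with $r':=r-1+\varepsilon$,
$$
(1+\D^2)^{-r'/2}LD(T)=(1+\D^2)^{(1-\varepsilon)/2}LD(A).
$$
By the definition of the norms $\PP^{r'}_{n,l}$ on ${\rm OP}^{r-1+\varepsilon}_0$, it therefore suffices to show that $(1+\D^2)^{(1-\varepsilon)/2}LD(A)\in\B_1^\infty(\D,p)$ together with an estimate of the form $\PP_{n,l}\big((1+\D^2)^{(1-\varepsilon)/2}LD(A)\big)\le C_{l,\varepsilon}\,\PP_{n,l+m}(A)$ with $C_{l,\varepsilon}$ and $m$ independent of $A$.

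\emph{The integral formula and the key estimate.} Starting from $\log(1+x)=\int_0^\infty\big(\tfrac1{1+\lambda}-\tfrac1{1+\lambda+x}\big)d\lambda$, the resolvent identity $[(1+\lambda+\D^2)^{-1},A]=-(1+\lambda+\D^2)^{-1}[\D^2,A](1+\lambda+\D^2)^{-1}$, and the relation $[\D^2,A]=(1+\D^2)^{1/2}L(A)$ from Definition \ref{parup}, one obtains (the interchange of $[\cdot,A]$ with the integral being routine, e.g. tested on vectors of $\HH_\infty$, once the right-hand side is seen to converge absolutely below)
$$
(1+\D^2)^{(1-\varepsilon)/2}LD(A)=\int_0^\infty\frac{(1+\D^2)^{1-\varepsilon/2}}{1+\lambda+\D^2}\,L(A)\,\frac{1}{1+\lambda+\D^2}\,d\lambda .
$$
Both operators $\tfrac{(1+\D^2)^{1-\varepsilon/2}}{1+\lambda+\D^2}$ and $(1+\lambda+\D^2)^{-1}$ are bounded functions of $\D$, and the elementary scalar inequality (obtained by treating the cases $x\le\lambda$ and $x\ge\lambda$ in the function $x\mapsto\tfrac{(1+x)^{1-\varepsilon/2}}{1+\lambda+x}$) gives, for $\varepsilon\in(0,1)$,
$$
\Big\|\tfrac{(1+\D^2)^{1-\varepsilon/2}}{1+\lambda+\D^2}\Big\|_\infty\le(1+\lambda)^{-\varepsilon/2},\qquad\big\|(1+\lambda+\D^2)^{-1}\big\|_\infty\le(1+\lambda)^{-1}.
$$
Because $\delta$ annihilates bounded functions of $\D$, the Leibniz rule yields $\delta^k$ of the integrand equal to $\tfrac{(1+\D^2)^{1-\varepsilon/2}}{1+\lambda+\D^2}\,\delta^k(L(A))\,\tfrac1{1+\lambda+\D^2}$, and two applications of Lemma \ref{B1-grew} bound its $\PP_n$-norm by $(1+\lambda)^{-1-\varepsilon/2}\PP_n(\delta^k L(A))$. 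By Lemma \ref{delta-LR}, $L(A)\in\B_1^\infty(\D,p)$, so these quantities are finite; hence the integral converges absolutely in the Fr\'echet topology of $\B_1^\infty(\D,p)$ (Proposition \ref{Frechet2}). Interchanging $\delta^k$ with the integral (legitimate since $\delta$ is closed) and using $\int_0^\infty(1+\lambda)^{-1-\varepsilon/2}d\lambda=2/\varepsilon$ gives
$$
\PP_{n,l}\big((1+\D^2)^{(1-\varepsilon)/2}LD(A)\big)\le\tfrac2\varepsilon\,\PP_{n,l}(L(A)).
$$
Finally, $L$ commutes with $\delta$, and from $L=(1+\sigma^{-1})\circ\delta'$ together with Lemma \ref{cont-grp} and the equivalence of the $\delta$- and $\delta'$-seminorms (the remark following Definition \ref{def-Bp}) one gets $\PP_{n,l}(L(A))\le C_l\,\PP_{n,l+m}(A)$ for fixed $C_l,m$. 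This establishes the required continuous mapping ${\rm OP}^r_0\to{\rm OP}^{r-1+\varepsilon}_0$.

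The main obstacle is isolating the estimate $\big\|(1+\D^2)^{1-\varepsilon/2}(1+\lambda+\D^2)^{-1}\big\|_\infty\le(1+\lambda)^{-\varepsilon/2}$: this is exactly what forces $\varepsilon>0$ and simultaneously produces the order gain $1-\varepsilon$, since with $\varepsilon=0$ the $\lambda$-integral would only be logarithmically divergent. Everything else is bookkeeping with the bimodule property of Lemma \ref{B1-grew} and the already established stability of $\B_1^\infty(\D,p)$ under $\delta,\delta',\sigma$ and $L$; some care is also needed to justify the operator-integral identity and the exchange of $\delta^k$ with the integral, but this is routine given completeness of $\B_1^\infty(\D,p)$ and closedness of $\delta$.
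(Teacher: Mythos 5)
Your proof is correct and is essentially the same argument the paper gives: the integral representation you use for $\log(1+x)$ is the paper's $\log(1+\D^2)=\D^2\int_0^1(1+w\D^2)^{-1}dw$ after the change of variables $w=(1+\lambda)^{-1}$, and your scalar bound $\|(1+\D^2)^{1-\eps/2}(1+\lambda+\D^2)^{-1}\|_\infty\le(1+\lambda)^{-\eps/2}$ (which follows from weighted AM--GM) plays exactly the role of the paper's elementary estimate on $\frac{(1+x)^\alpha}{1+xw}$. Writing the output in terms of $L(A)$ rather than $R(T)$ is a cosmetic difference, since $L$ and $R$ differ by conjugation with $\sigma$ which preserves the relevant Fr\'echet topologies.
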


\begin{proof}
Since the proof for a generic $r\in\R$ will follows from those of a fixed 
$r_0\in\R$, we may assume that $r=0$. Let $T\in {\rm OP}^0_0$. 
We need to show that $LD(T)\in{\rm OP}^{-1+\eps}_0$ for any $\eps>0$, 
or equivalently, that $LD(T)(1+\D^2)^{1/2-\eps/2}\in{\rm OP}^{0}_0$ for any $\eps>0$. 

We use the integral representation
$$
\log(1+\D^2)=\D^2\,\int_0^1({1+w\D^2})^{-1}\,dw,
$$
which follows from $\log(1+x)=\int_0^x \frac{1}{1+\lambda}\,d\lambda$ via the change of variables
$\lambda=xw$. Then
\begin{align*}
[\log(1+\D^2),T](1+\D^2)^{1/2-\eps/2}
&=[\D^2,T](1+\D^2)^{-1/2}\,\int_0^1\frac{(1+\D^2)^{1-\eps/2}}{1+w\D^2}\,dw\\
&-\D^2\,\int_0^1\frac{w}{1+w\D^2}[\D^2,T](1+\D^2)^{-1/2}\frac{(1+\D^2)^{1-\eps/2}}{1+w\D^2}\,dw.
\end{align*}
Now elementary calculus shows that for $1>\alpha>0$ and $1\geq x\geq 0$ we have
$$
\frac{(1+x)^\alpha}{(1+xw)}\leq \left(\frac{\alpha}{w}\right)^\alpha \left(\frac{1-\alpha}{1-w}\right)^{1-\alpha}
\quad\mbox{and}\quad
\int_0^1 w^{-\alpha}(1-w)^{\alpha-1}dw=\Gamma(1-\alpha)\,\Gamma(\alpha),
$$
and so we obtain the integral estimate
$$
\int_0^1 \frac{(1+x)^\alpha}{(1+xw)}dw\leq\alpha^\alpha\,(1-\alpha)^{1-\alpha}\,\Gamma(1-\alpha)\,\Gamma(\alpha).
$$
Then using $R(T)=[\D^2,T](1+\D^2)^{-1/2}$ and elementary spectral theory gives
$$
\PP_{n,k}\left([\log(1+\D^2),T](1+\D^2)^{1/2-\eps/2}\right)\leq 2\PP_{n,k}(R(T))
\,(1-\eps/2)^{1-\eps/2}\,(\eps/2)^{\eps/2}\,\Gamma(\eps/2)\,\Gamma((1-\eps)/2),
$$
which gives the bound for all $0<\eps<1$.
\end{proof}




\subsection{Schatten norm estimates for tame pseudodifferential operators}
\label{subsec:trace-ests}

In this subsection we prove the  Schatten norm estimates \index{Schatten norm estimates}
we will require in our proof
of the local index formula. As before, we 
let  $\D$ be a self-adjoint operator affiliated to a semifinite von Neumann algebra 
$\cn$ with
faithful normal semifinite trace $\tau$ and $p\geq 1$.

\begin{lemma}
\label{interpolation}
Let $A\in {\rm OP}^0_0$ and $\alpha,\beta\geq 0$ with $\alpha+\beta>0$. 
Then $(1+\D^2)^{-\beta/2}A(1+\D^2)^{-\alpha/2}$ belongs to $\cl^q(\cn,\tau)$ 
for all $q>p/(\alpha+\beta)$, provided  $q\geq 1$. 
\end{lemma}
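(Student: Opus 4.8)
The plan is to deduce the estimate by complex interpolation, using that for large real exponents the operator is already trace class by Corollary \ref{consistency}. Introduce the bounded holomorphic family
\[
H(z):=(1+\D^2)^{-z\beta/2}\,A\,(1+\D^2)^{-z\alpha/2},\qquad \Re z\ge 0,
\]
so that $H(1)$ is the operator in the statement. For every real $q>p/(\alpha+\beta)$ one has $H(q)=(1+\D^2)^{-q(\alpha+\beta)/2}\,\sigma^{q\alpha}(A)$, and since $\sigma^{q\alpha}$ preserves $\B_1^\infty(\D,p)={\rm OP}^0_0$ (Lemma \ref{cont-grp}), this exhibits $H(q)$ as an element of ${\rm OP}^{-q(\alpha+\beta)}_0$, hence of $\cl^1(\cn,\tau)$ by Corollary \ref{consistency} since $q(\alpha+\beta)>p$. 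In particular, when $p/(\alpha+\beta)<1$ the choice $q=1$ here already gives $H(1)\in\cl^1(\cn,\tau)$, so from now on I may assume $q>\max\{1,\,p/(\alpha+\beta)\}$ and set $R:=q$.

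Next I would apply the (noncommutative) Stein interpolation theorem for the scale $\cl^{r}(\cn,\tau)$ to $H$ on the vertical strip $0\le\Re z\le R$. The hypotheses are elementary to check: $H$ is holomorphic in the interior and $\|H(z)\|_\infty\le\|A\|$ throughout, because $(1+\D^2)^{-x\beta/2}$ and $(1+\D^2)^{-x\alpha/2}$ have norm at most $1$ for $x\ge 0$; on the edge $\Re z=0$ the factors $(1+\D^2)^{-iy\beta/2}$ and $(1+\D^2)^{-iy\alpha/2}$ are unitary, so $\sup_y\|H(iy)\|_\infty\le\|A\|$; and on the edge $\Re z=R$ one has $H(R+iy)=(1+\D^2)^{-iy\beta/2}H(R)(1+\D^2)^{-iy\alpha/2}$, so by unitary invariance of $\|\cdot\|_1$, $\sup_y\|H(R+iy)\|_1=\|H(R)\|_1<\infty$. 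Interpolating at $z=1$, which lies in the strip since $R=q\ge 1$, between the datum $\cl^\infty$ on the edge $\Re z=0$ and $\cl^1$ on the edge $\Re z=R$, yields $H(1)\in\cl^{r}(\cn,\tau)$ with $1/r=1/R$, i.e.\ $H(1)\in\cl^q(\cn,\tau)$, with the quantitative bound $\|H(1)\|_q\le\|A\|^{1-1/q}\|H(q)\|_1^{1/q}$. As $q$ was an arbitrary exponent with $q\ge 1$ and $q>p/(\alpha+\beta)$, this is the assertion.

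The only genuinely delicate ingredient is the interpolation theorem itself in the semifinite von Neumann algebra setting; this is the standard Stein--Hirschman theorem for noncommutative $L^p$-spaces and may simply be quoted. Everything else is bookkeeping: recognising $H(q)$ as a tame pseudodifferential operator of order $-q(\alpha+\beta)<-p$ (using the continuity of $\sigma^z$ on $\B_1^\infty(\D,p)$ from Lemma \ref{cont-grp}, Definition \ref{op0}, and Corollary \ref{consistency}), and the two boundary norm estimates. Note that the method produces Schatten exponents in the Banach range only, which is precisely why the hypothesis $q\ge 1$ appears in the statement.
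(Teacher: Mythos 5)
Your proof is correct and follows essentially the same strategy as the paper: complex (Stein) interpolation on a vertical strip between the $\cl^\infty$ endpoint (using $\|(1+\D^2)^{-x}\|\le 1$) and the $\cl^1$ endpoint supplied by Corollary~\ref{consistency}, with $\sigma$-invariance of ${\rm OP}^0_0$ used to rearrange powers of $(1+\D^2)$. The only cosmetic differences are that the paper first reduces to $\beta=0$ via $\sigma^{-\beta}$ and interpolates at $z=1/q$ on the unit strip, whereas you keep both exponents and interpolate at $z=1$ on a strip of width $q$; the two parametrizations are equivalent.
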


\begin{proof}
Since $(1+\D^2)^{-\beta/2}A(1+\D^2)^{-\alpha/2}=\sigma^{-\beta}(A)(1+\D^2)^{-\alpha/2-\beta/2}$ 
and because $\sigma$ is continuous, Proposition \ref{pr:cts}, on ${\rm OP}^0_0=\B_1^\infty(\D,p)$  we 
can assume $\beta=0$.

So let  $A\in {\rm OP}^0_0$.
Note first that for $y\in\mathbb R$ we have $A(1+\D^2)^{iy/2}\in\cn$ and by  Corollary \ref {consistency}
$A(1+\D^2)^{-\alpha q/2+iy/2}\in\cl^1(\cn,\tau)$, since $\alpha q>p$. 
Consider then, on the strip $0\leq\Re(z)\leq1$ the 
holomorphic operator-valued function given by
$F(z):=A(1+\D^2)^{-\alpha qz/2}$. The previous observation gives $F(iy)\in\cn$ 
and $F(1+iy)\in\cl^1(\cn,\tau)$. Then, a standard complex interpolation 
argument gives $F(1/q+iy)\in\cl^q(\cn,\tau)$, for $q\geq 1$, which was all we  needed.
\end{proof}

\begin{lemma}
\label{truc}
For $\alpha\in[0,1]$, $\beta,\gamma\in\R$ with $\alpha+\beta+\gamma>0$ and 
$A\in{\rm OP}^0_0$ we let
\begin{align*}
B_{\alpha,\beta,\gamma}&:=(1+\D^2)^{-\beta/2}\big[(1+\D^2)^{(1-\alpha)/2},A\big](1+\D^2)^{-\gamma/2},\\
C_{\alpha,\beta,\gamma}&:=
(1+\D^2)^{-\beta/2}\big[(1+\D^2)^{(1-\alpha)/2},A\big](1+\D^2)^{-\gamma/2}\log(1+\D^2),\\
D_{\alpha,\beta,\gamma}&:=
(1+\D^2)^{-\beta/2}\big[(1+\D^2)^{(1-\alpha)/2}\log(1+\D^2),A\big](1+\D^2)^{-\gamma/2}.
\end{align*}
Then $B_{\alpha,\beta,\gamma},C_{\alpha,\beta,\gamma},D_{\alpha,\beta,\gamma}\in\LL^{q}(\cn,\tau)$ 
for all $q>p/(\alpha+\beta+\gamma)$, provided  $q\geq 1$.  Moreover, the same conclusion holds with 
$|\D|$ instead of $(1+\D^2)^{1/2}$ in the commutator.
\end{lemma}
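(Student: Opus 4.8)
The plan is to reduce everything to a single fact: the commutator $[(1+\D^2)^{(1-\alpha)/2},A]$ is, up to an arbitrarily small loss in order, a tame pseudodifferential operator of order $-\alpha$. Granting this, $B_{\alpha,\beta,\gamma}$ becomes a tame operator of order $-(\alpha+\beta+\gamma)+\eps$ with $\eps>0$ arbitrary, and its membership in $\LL^q(\cn,\tau)$ for $q\geq1$ with $q>p/(\alpha+\beta+\gamma)$ follows at once from Lemma \ref{interpolation}.

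To prove the key fact, fix $\eps\in(0,1)$. By Equation \eqref{trucus}, $[(1+\D^2)^{(1-\alpha)/2},A]=\big(\sigma^{1-\alpha}(A)-A\big)(1+\D^2)^{(1-\alpha)/2}$, and by Proposition \ref{LD} the curve $w\mapsto\sigma^w(A)$ in ${\rm OP}^0_0$ is (entire) differentiable with derivative $\tfrac12\sigma^w\circ LD$, so $\sigma^{1-\alpha}(A)-A=\tfrac12\int_0^{1-\alpha}\sigma^w(LD(A))\,dw$. By Proposition \ref{RTY}, $LD(A)\in{\rm OP}^{-1+\eps}_0$, and since $\sigma$ is strongly continuous on ${\rm OP}^{-1+\eps}_0$ (Proposition \ref{cont-op}) the integrand is a continuous ${\rm OP}^{-1+\eps}_0$-valued function on the compact interval $[0,1-\alpha]$; hence the integral lies in ${\rm OP}^{-1+\eps}_0$. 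As $(1+\D^2)^{(1-\alpha)/2}$ is a regular pseudodifferential operator of order $1-\alpha$, Lemma \ref{usefull} gives $[(1+\D^2)^{(1-\alpha)/2},A]\in{\rm OP}^{-1+\eps}_0\,{\rm OP}^{1-\alpha}\subset{\rm OP}^{-\alpha+\eps}_0$. Writing this operator as $Y(1+\D^2)^{(-\alpha+\eps)/2}$ with $Y\in{\rm OP}^0_0$ and collecting the powers of $(1+\D^2)$ with the help of the automorphism group $\sigma$, one finds $B_{\alpha,\beta,\gamma}=(1+\D^2)^{-(\alpha+\beta+\gamma-\eps)/2}Z$ with $Z\in{\rm OP}^0_0$; taking $\eps$ small so that $r:=\alpha+\beta+\gamma-\eps>0$ and applying Lemma \ref{interpolation} with exponents $0$ and $r$ gives $B_{\alpha,\beta,\gamma}\in\LL^q(\cn,\tau)$ for $q\geq1$, $q>p/r$, and then $\eps\downarrow0$.

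For $C_{\alpha,\beta,\gamma}=B_{\alpha,\beta,\gamma}\log(1+\D^2)$ I would run the same computation, first writing $(1+\D^2)^{-\delta/2}\log(1+\D^2)$ as a bounded function of $\D$ for an arbitrary $\delta>0$ so as to absorb the logarithm at the cost of a further order $\delta$, and then letting $\eps,\delta\downarrow0$. For $D_{\alpha,\beta,\gamma}$ I would use the Leibniz rule $[(1+\D^2)^{(1-\alpha)/2}\log(1+\D^2),A]=[(1+\D^2)^{(1-\alpha)/2},A]\log(1+\D^2)+(1+\D^2)^{(1-\alpha)/2}LD(A)$, whence $D_{\alpha,\beta,\gamma}=C_{\alpha,\beta,\gamma}+(1+\D^2)^{(1-\alpha-\beta)/2}LD(A)(1+\D^2)^{-\gamma/2}$; since $LD(A)\in{\rm OP}^{-1+\eps}_0$ by Proposition \ref{RTY}, the second summand is a tame operator of order $-(\alpha+\beta+\gamma)+\eps$ and lies in the required $\LL^q$ by the argument of the previous paragraph. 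Finally, for the statement with $|\D|$ in place of $(1+\D^2)^{1/2}$, note that $g_\alpha(x):=|x|^{1-\alpha}-(1+x^2)^{(1-\alpha)/2}$ is bounded and factors as $g_\alpha(x)=(1+x^2)^{-\alpha/2}h_\alpha(x)$ with $h_\alpha$ bounded, smooth away from $0$, and $h_\alpha(x)=O(|x|^{-1})$ with matching bounds on its first two derivatives; splitting $h_\alpha$ into a piece supported in $\{|x|\leq1\}$ (whose commutator with $A$ has arbitrarily negative order, using Lemma \ref{usefull}) and a smooth piece with square-integrable first two derivatives (whose commutator with $A$ lies in ${\rm OP}^0_0$ by the Fourier-transform estimate behind Lemma \ref{root commutator estimate}), and combining with $[(1+\D^2)^{-\alpha/2},A]\in{\rm OP}^{-1-\alpha+\eps}_0$ (the key fact applied with exponent $-\alpha$ instead of $1-\alpha$), one gets $[g_\alpha(\D),A]\in{\rm OP}^{-\alpha}_0$, hence $[|\D|^{1-\alpha},A]\in{\rm OP}^{-\alpha+\eps}_0$; the arguments for $B$, $C$, $D$ then apply verbatim.

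The main obstacle is the key fact in the second paragraph: one cannot simply expand the commutator $[(1+\D^2)^{(1-\alpha)/2},A]$ into its two monomial terms and estimate them separately, because those terms need not even be bounded operators — the point is that the commutator has strictly lower order than either term, and this order-lowering cancellation is exactly what the infinitesimal generator $LD$ of $\sigma$, together with Proposition \ref{RTY}, make visible.
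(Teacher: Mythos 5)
Your proof is correct and takes a genuinely different route from the paper's for the commutator estimate, although the reductions for $C_{\alpha,\beta,\gamma}$ and $D_{\alpha,\beta,\gamma}$ to the $B_{\alpha,\beta,\gamma}$-case are essentially identical to the paper's (boundedness of $(1+\D^2)^{-\eps/2}\log(1+\D^2)$, Leibniz plus Proposition \ref{RTY} and Lemma \ref{interpolation}). For $B_{\alpha,\beta,\gamma}$ the paper works directly with the integral formula for fractional powers \eqref{RER}: writing out the commutator as $\pi^{-1}\sin\tfrac{\pi(1-\alpha)}{2}\int_0^\infty\lambda^{(1-\alpha)/2}(\cdots)L(A)(\cdots)\,d\lambda$ and estimating the $q$-norm of the integrand uniformly in $\lambda$, which requires a case split on $\alpha=0$, $\alpha\in(0,1)$, $\alpha=1$. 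You instead prove the structural fact that $[(1+\D^2)^{(1-\alpha)/2},A]\in{\rm OP}^{-\alpha+\eps}_0$ for all $\eps>0$, obtained by integrating the derivative $\tfrac12\sigma^w\circ LD$ of the one-parameter group (Proposition \ref{LD}) along $[0,1-\alpha]$, using Proposition \ref{RTY} and strong continuity of $\sigma$ on ${\rm OP}^{-1+\eps}_0$ (Proposition \ref{cont-op}) to keep the Riemann integral in that Fr\'echet space, then multiplying by $(1+\D^2)^{(1-\alpha)/2}$ via Lemma \ref{usefull}. This makes the cancellation in the commutator conceptually transparent and handles all $\alpha\in[0,1]$ at once; the paper's computation is more explicit, does not introduce the arbitrary loss $\eps$, and gives a closed-form $\lambda$-integral bound. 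For the $|\D|$ variant your decomposition of $h_\alpha$ into a spectrally compactly supported piece and a smooth decaying tail is considerably more detailed than the paper's one-line remark that $|\D|^{1-\alpha}-(1+\D^2)^{(1-\alpha)/2}$ is bounded; your argument is actually the more careful one, since mere boundedness of the difference does not obviously give the extra order $-\alpha$ of decay one needs when $\beta+\gamma\leq 0$.
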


\begin{proof}
There exists $\eps>0$ such $\alpha+\beta+\gamma-\eps>0$. Since moreover  
$(1+\D^2)^{-\eps/2}\log(1+\D^2)$ is bounded for all $\eps>0$, 
we see that the assertion for $B_{\alpha,\beta,\gamma-\eps/2}$ implies the assertion for
$C_{\alpha,\beta,\gamma}$. Note also that the Leibniz rule implies
$$
D_{\alpha,\beta,\gamma}= 
C_{\alpha,\beta,\gamma}+(1+\D^2)^{1/2-(\alpha+\beta)/2}LD(A)(1+\D^2)^{-\gamma/2},
$$
so the third case follows from the second case  using Proposition 
\ref{RTY} and Lemma \ref{interpolation}.

Thus it suffices to treat the case of $B_{\alpha,\beta,\gamma}$. Moreover, we can 
further assume that $\alpha\in(0,1)$ (for $\alpha=1$ there is nothing to prove and 
for $\alpha=0$, the statement  follows from Lemma \ref{interpolation}) and, as in the 
proof of the preceding lemma,  we can assume $\beta=0$. 
Using the integral formula for fractional powers, Equation \eqref{RER}, 
for $0<\alpha<1$, we see that
\begin{align*}
B_{\alpha,0,\gamma}
&=-(1+\D^2)^{(1-\alpha)/2}[(1+\D^2)^{(\alpha-1)/2},A](1+\D^2)^{(1-\alpha)/2}(1+\D^2)^{-\gamma/2}\\
&=\pi^{-1}{\sin\pi(1-\alpha)/2}\int_0^\infty 
\lambda^{(1-\alpha)/2}(1+\D^2)^{(1-\alpha)/2}(1+\D^2+\lambda)^{-1}\\
&\qquad\qquad\qquad\qquad\qquad\qquad
\times[\D^2,A](1+\D^2+\lambda)^{-1}(1+\D^2)^{(1-\alpha-\gamma)/2}d\lambda\\
&=\pi^{-1}{\sin\pi(1-\alpha)/2}\int_0^\infty 
\lambda^{(1-\alpha)/2}(1+\D^2)^{1-\alpha/2}(1+\D^2+\lambda)^{-1}\\
&\qquad\qquad\qquad\qquad\qquad\qquad
\times L(A)(1+\D^2)^{(\eps-\alpha-\gamma)/2}(1+\D^2+\lambda)^{-1}(1+\D^2)^{(1-\eps)/2}d\lambda.
\end{align*}
By Lemma \ref{interpolation} we see that for  $\eps>0$ sufficiently small,
$L(A)(1+\D^2)^{(\eps-\alpha-\gamma)/2}\in \L^{q}(\cn,\tau)$ for all $q>p/(\alpha+\gamma-\eps)$ provided $q\geq 1$. 
So estimating in
the $q$ norm with $q:=p/(\alpha+\gamma-2\eps)>p/(\alpha+\gamma-\eps)$ gives
\begin{align*}
\Vert B_{\alpha,0,\gamma}\Vert_{q}
&\leq \Vert L(A)(1+\D^2)^{(\eps-\alpha-\gamma)/2} \Vert_{q}\,
\int_0^\infty \lambda^{-(1-\alpha)/2}(1+\lambda)^{-\alpha/2}(1+\lambda)^{-1/2-\eps/2}\,d\lambda,
\end{align*}
which is finite.
Finally, the same conclusion holds with 
$|\D|$ instead of $(1+\D^2)^{1/2}$ in the commutator,
and this follows from the same estimates and the fact that
$|\D|^{1-\alpha}-(1+\D^2)^{(1-\alpha)/2}$ extends to a bounded operator for $\alpha\in[0,1]$.
\end{proof}

In the course of our proof of the local index formula, we will require additional parameters.
In the following lemma we use the same notation as later in the paper for ease of reference.

\begin{lemma}\label{lem:was-5.3}
Assume that there exists $\mu>0$
such that $\D^2\geq \mu^2$.
Let $A\in {\rm OP}^0_0$,  $\lambda=a+iv$, $0<a<\mu^2/2$, $v\in\mathbb R$,  
$s\in\mathbb R$ and $t\in[0,1]$, and set 
$$
R_{s,t}(\lambda)=(\lambda-(t+s^2+\D^2))^{-1}.
$$   
Let also $q\in[1,\infty)$ and
$N_1,N_2\in\tfrac12\mathbb{N}\cup\{0\}$, with $N_1+N_2>p/2q$. 
Then  for each $\eps>0$, there exists a finite constant $C$ such that
\begin{align*}
&\Vert R_{s,t}(\lambda)^{N_1} AR_{s,t}(\lambda)^{N_2}\Vert_q\leq C
((t+\mu^2/2+s^2-a)^2+v^2)^{-(N_1+N_2)/2+p/4q+\eps}.
\end{align*}
(For half integers, we use the principal branch of the square root function).\index{resolvent function}
\end{lemma}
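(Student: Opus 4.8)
The plan is to factor $R_{s,t}(\lambda)^{N_1}AR_{s,t}(\lambda)^{N_2}$ as a product of three kinds of pieces: pure powers of $R_{s,t}(\lambda)$, which carry all the decay in $(t+\mu^2/2+s^2-a)^2+v^2$; uniformly bounded ``gauge'' factors of the form $R_{s,t}(\lambda)^{M}(1+\D^2)^{M}$; and a single Schatten factor $(1+\D^2)^{-M_1}A(1+\D^2)^{-M_2}$, whose $\cl^q$-membership is supplied by Lemma \ref{interpolation}. The exponents $M_1,M_2$ will be chosen just above the critical value $p/2q$, so that the leftover resolvent powers produce exactly the exponent $-(N_1+N_2)/2+p/4q+\eps$.

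First some preliminaries. Since $\D^2\geq\mu^2$ and $0<a<\mu^2/2$, the spectrum of $\lambda-(t+s^2+\D^2)$ lies in the open left half-plane $\{\Re w<-\mu^2/2\}$; hence $R_{s,t}(\lambda)$ is a bounded normal element of $\cn$, a bounded Borel function of $\D^2$, and for every $r\geq0$ the power $R_{s,t}(\lambda)^{r}$ is unambiguously defined via the Borel functional calculus of $\D^2$ and the branch of $w\mapsto w^{-r}$ holomorphic on that half-plane (this restricts to the principal-square-root convention of the statement, and in any case the operator norms below are independent of the branch chosen). In particular $R_{s,t}(\lambda)^{r_1}R_{s,t}(\lambda)^{r_2}=R_{s,t}(\lambda)^{r_1+r_2}$, and, writing $\rho:=t+\mu^2/2+s^2-a>0$ for the quantity appearing on the right-hand side, the spectral theorem gives
\[
\|R_{s,t}(\lambda)^{r}\|=\sup_{x\geq\mu^2}\big((t+s^2+x-a)^2+v^2\big)^{-r/2}=\big((\rho+\mu^2/2)^2+v^2\big)^{-r/2}\leq(\rho^2+v^2)^{-r/2}.
\]
Moreover, for $x\geq\mu^2$ we have $t+s^2+x-a\geq x-a\geq x/2$ (using $a<\mu^2/2\leq x/2$), so
\[
\Big|\frac{1+x}{\lambda-(t+s^2+x)}\Big|=\frac{1+x}{\sqrt{(t+s^2+x-a)^2+v^2}}\leq\frac{2(1+x)}{x}=2+\frac{2}{x}\leq 2+\frac{2}{\mu^2},
\]
and therefore $\|R_{s,t}(\lambda)^{M}(1+\D^2)^{M}\|=\|(1+\D^2)^{M}R_{s,t}(\lambda)^{M}\|\leq(2+2/\mu^2)^{M}$ for every $M\geq0$, with a bound independent of $\lambda$, $s$ and $t$; although $(1+\D^2)^{M}$ is unbounded, this product is the bounded operator $h(\D^2)^{M}$ with $h(x)=(1+x)/(\lambda-t-s^2-x)$.

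Now fix $\eps>0$, small enough that $m:=p/2q+2\eps\leq N_1+N_2$, which is possible by the hypothesis $N_1+N_2>p/2q$; choose $M_1\in[0,N_1]$ and $M_2\in[0,N_2]$ with $M_1+M_2=m$ (for instance $M_1=\min\{m,N_1\}$, $M_2=m-M_1$). Using $R_{s,t}(\lambda)^{N_i}=R_{s,t}(\lambda)^{N_i-M_i}R_{s,t}(\lambda)^{M_i}$ and that every operator involved except $A$ is a function of $\D^2$, we get the operator identity
\begin{align*}
R_{s,t}(\lambda)^{N_1}AR_{s,t}(\lambda)^{N_2}
=\,&R_{s,t}(\lambda)^{N_1-M_1}\,\big(R_{s,t}(\lambda)^{M_1}(1+\D^2)^{M_1}\big)\\
&\times\big((1+\D^2)^{-M_1}A(1+\D^2)^{-M_2}\big)\,\big((1+\D^2)^{M_2}R_{s,t}(\lambda)^{M_2}\big)\,R_{s,t}(\lambda)^{N_2-M_2}.
\end{align*}
By Lemma \ref{interpolation} with $\beta=2M_1$ and $\alpha=2M_2$ (so $\alpha+\beta=2m>0$ and $q>p/(2m)$), the middle factor lies in $\cl^q(\cn,\tau)$, with $\cl^q$-norm a finite constant depending only on $A$, $p$, $q$, $\eps$, $N_1$, $N_2$. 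Multiplying the five factors, using the ideal property of $\cl^q(\cn,\tau)$ together with the two displayed bounds, yields
\[
\|R_{s,t}(\lambda)^{N_1}AR_{s,t}(\lambda)^{N_2}\|_q\leq C\,(\rho^2+v^2)^{-(N_1-M_1)/2}(\rho^2+v^2)^{-(N_2-M_2)/2}=C\,(\rho^2+v^2)^{-(N_1+N_2)/2+m/2},
\]
and $m/2=p/4q+\eps$, which is the asserted estimate. The only step requiring genuine care is the uniform boundedness of the gauge factors $R_{s,t}(\lambda)^{M}(1+\D^2)^{M}$: this is precisely where the hypotheses $\D^2\geq\mu^2$ and $\Re\lambda<\mu^2/2$ enter, keeping $1+x$ comparable to $|\lambda-(t+s^2+x)|$ uniformly in $x\geq\mu^2$ and in all the parameters; everything else is bookkeeping, and the hypothesis $N_1+N_2>p/2q$ is exactly what makes the decisive choice $m\in(p/2q,\,N_1+N_2]$ possible.
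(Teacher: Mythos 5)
Your proof is correct and takes essentially the same route as the paper: both proofs peel off weight $p/2q+\eps$ worth of the resolvent powers, hand that weight to a negative-order factor applied on either side of $A$ so that Lemma \ref{interpolation} puts the middle piece in $\cl^q$, and bound everything else in operator norm using the spectral gap $\D^2\geq\mu^2$, $a<\mu^2/2$. The only cosmetic difference is that the paper phrases the key step as an operator inequality $|R_{s,t}(\lambda)^{N}|\leq(\D^2-\mu^2/2)^{-Q}\,((t+\mu^2/2+s^2-a)^2+v^2)^{-N/2+Q/2}$ and then applies Lemma \ref{interpolation} with $(\D^2-\mu^2/2)$-weights (implicitly using that $(\D^2-\mu^2/2)^{-Q}(1+\D^2)^{Q}$ is bounded), whereas you factor the resolvent explicitly through the uniformly bounded gauge factors $R_{s,t}(\lambda)^{M}(1+\D^2)^{M}$ so that Lemma \ref{interpolation} applies verbatim; this is a minor reorganisation rather than a genuinely different argument.
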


{\bf Remark.} Here is the point where we require $0<a<\mu^2/2$ in the definition of our 
contour of integration $\ell$. It is clear from the proof below, where this condition is
used, that there is some flexibility to reformulate this condition.

\begin{proof}
By the functional calculus  (see the proof of \cite[Lemmas 5.2 \& 5.3]{CPRS2} for more details) 
and the fact that $a<\mu^2/2$, we have the operator inequalities for any 
$N\in\tfrac12\N\cup\{0\}$ and $Q<N$
\begin{align*}
|R_{s,t}(\lambda)^N|&\leq (\D^2-\mu^2/2)^{-Q}\,((t+\mu^2/2+s^2-a)^2+v^2)^{-N/2+Q/2},
\end{align*}
which gives the following estimate
\begin{align*}
&\Vert R_{s,t}(\lambda)^{N_1} AR_{s,t}(\lambda)^{N_2}\Vert_q\\
&\leq \|R_{s,t}(\lambda)^{N_1} (\D^2-\mu^2/2)^{Q_1}\| \|R_{s,t}(\lambda)^{N_2} 
(\D^2-\mu/2)^{Q_2}\| \|(\D^2-\mu^2/2)^{-Q_1}A(\D^2-\mu^2/2)^{-Q_2}\|_q\\
&\leq ((t+\mu^2/2+s^2-a)^2+v^2)^{-(N_1+N_2)/2+(Q_1+Q_2)/2} \|(\D^2-\mu^2/2)^{-Q_1}A(\D^2-\mu^2/2)^{-Q_2}\|_q.
\end{align*}
One concludes the proof using Lemma \ref{interpolation} by choosing $Q_1\leq N_1,\,Q_2\leq N_2$ 
such that 
$Q_1+Q_2=p/2q+\eps$.
\end{proof}

{\bf Remark.}
For $\lambda=0$ and with the same constraints on $q$ and $N$ as above, 
the same operator inequalities as those of  \cite[Lemma 5.10]{CPRS4}, gives
\begin{align}
\Vert A(t+s^2+\D^2)^{-N}\Vert_q\leq\Vert A(\D^2-\mu^2/2)^{-(p/q+\eps)/2}\Vert_q
(\mu^2/2+s^2)^{-N+(p/2q+\eps)}.
\label{eq:q-trace-est}
\end{align}




\section{Index pairings for  semifinite spectral triples}
\label{sec:Chern}
\vspace{-4pt}
In this section we define the notion of a
{\it smoothly summable semifinite spectral triple} 
$(\A,\H,\D)$ relative to  a semifinite von Neumann algebra
with faithful normal semifinite trace $(\cn,\tau)$, and 
show that such a spectral triple produces, via Kasparov theory, a well-defined numerical index pairing
with $K_*(\A)$, the $K$-theory of $\A$.

The `standard case' of spectral triples with $(\cn,\tau)=(\B(\HH),\mbox{Tr})$ for 
some separable Hilbert space $\HH$, is presented in \cite{Co1}. In this case
there is an associated
Fredholm module, and hence $K$-homology class. Then there is a 
pairing between $K$-theory and $K$-homology, integer valued in this case,
that is well-defined and explained in detail in \cite[Chapter 8]{HR}. The discussion in \cite{HR} 
applies to both the unital and nonunital situations.
The extension of \cite[Chapter 8]{HR}
to deal with both the semifinite situation and nonunitality
require some refinements that are not difficult, 
but are worth making explicit to the reader for the purpose of
explaining the  basis of our approach.

Recall also that when the spectral triple is semifinite {\em and} has $(1+\D^2)^{-s/2}\in\L^1(\cn,\tau)$
for all $s>p\geq 1$, for some $p$, then there is an analytic formula for the index
pairing, given in terms of the $\R$-valued index of 
suitable $\tau$-Fredholm operators, \cite{BeF, CP1,CP2,CPRS3}.

However, for a  semifinite spectral triple with $(1+\D^2)^{-1/2}$ not 
$\tau$-compact, we need a different approach, and so
we follow the route indicated in \cite{KNR}. There it is shown that we can associate a
Kasparov module, and so a $KK$-class, to a semifinite spectral triple. This gives us a
well-defined pairing with $K_*(\A)$ via the Kasparov product,  with
and modulo some technicalities, this pairing takes values in $K_0(\K_\cn)$,
the $K$-theory of the $\tau$-compact operators $\K_\cn$ in $\cn$.
Composing this pairing with the map on $K_0(\K_\cn)$ induced by the trace 
$\tau$ gives us a
numerical index  which computes the usual index when the triple is `unital'. 
When we specialise to particular representatives of our Kasparov class, we will see 
that we  are
also computing the $\R$-valued  indices of suitable $\tau$-Fredholm operators.

\subsection{Basic definitions for spectral triples}
\label{subsec:spec-trips}
\vspace{-4pt}
In this subsection, we give the minimal definition for a semifinite spectral triple, in order to have 
a Kasparov (and also Fredholm) module.
Recall that we denote by  $\K(\cn,\tau)$, or $\K_\cn$ when $\tau$ is understood,  
the ideal of $\tau$-compact operators in $\cn$. This is the norm closed ideal in $\cn$ 
generated by projections with finite $\tau$-trace.\index{$\K(\cn,\tau)$, $\K_\cn$, the 
$\tau$-compact operators in $\cn$}

\begin{definition} 
\label{def:ST}
A  semifinite
spectral triple $(\A,\HH,\D)$, relative to $(\cn,\tau)$, is given by a Hilbert space $\HH$, a
$*$-subalgebra  $\A\subset\, \cn$\index{$\A$, a $*$-algebra}
acting on
$\HH$, and a densely defined unbounded self-adjoint operator $\D$ affiliated
to $\cn$ such that:

1. $a\cdot{\rm dom}\,\D\subset {\rm dom}\,\D$ for all $a\in\A$, so that
$da:=[\D,a]$ is densely defined.  Moreover, $da$ extends to a bounded operator in
$\cn$ for all $a\in\A$;

2. $a(1+\D^2)^{-1/2}\in\K(\cn,\tau)$ for all $a\in\A$.

We say that $(\A,\HH,\D)$ is even if in addition there is a $\mathbb{Z}_2$-grading
such that $\A$ is even and $\D$ is odd. This means there is an operator $\gamma$ such that
$\gamma=\gamma^*$, $\gamma^2={\rm Id}_\cn  
$, $\gamma a=a\gamma$ for all $a\in\A$ and
$\D\gamma+\gamma\D=0$. Otherwise we say that $(\A,\HH,\D)$ is odd.
\end{definition}
\index{semifinite spectral triple}\index{$\mathbb{Z}_2$-grading}

{\bf Remark.} 1) We will write $\gamma$ in all our formulae, with the
understanding
that, if $(\A,\HH,\D)$ is odd, $\gamma= {\rm Id}_\cn  $ and of course, 
we drop the assumption that
$\D\gamma+\gamma\D=0$.\\
2) By density, we immediately see that the second condition in the definition
of a semifinite spectral triple, also holds for all  elements in  the $C^*$-completion 
of $\A$.\\
3) The condition $a(1+\D^2)^{-1/2}\in\K(\cn,\tau)$ is equivalent to $a(i+\D)^{-1}\in\K(\cn,\tau)$.
This follows since $(1+\D^2)^{1/2}(i+\D)^{-1}$ is unitary.

Our first task is to justify the terminology `nonunital' for the situation where $\D$ does not have 
$\tau$-compact resolvent. What we show is that if $\A$ is unital, then we obtain a spectral triple
on the Hilbert space $\overline{1_\A\H}$ for which $1_\A\D\,1_\A$ has compact resolvent. On the other hand, 
one can have a spectral triple with nonunital algebra whose `Dirac' operator has compact resolvent, as in
\cite{GL,GW,W}.

\begin{lemma} 
\label{lem:nonunital-spec-trip}
Let $(\A,\HH,\D)$ be a  semifinite spectral triple relative to $(\cn,\tau)$,
and suppose that $\A$ possesses  a unit $P\neq {\rm Id}_\cn$. Then  
$(P+(P\D P)^2)^{-1/2}\in \K(P\cn P,\tau|_{P\cn P})$. Hence, $(\A,P\HH,P\D P)$ is a unital spectral triple 
relative to $(P\cn P,\tau|_{P\cn P})$.
\end{lemma}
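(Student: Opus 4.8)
The plan is to realise $P\D P$ as a block of a bounded self-adjoint perturbation of $\D$; this makes its self-adjointness on $P\HH$ automatic and reduces the $\tau$-compactness assertion to condition~2 of Definition~\ref{def:ST}. Since $P$ is the unit of the $*$-algebra $\A$ it is a projection in $\cn$, and by condition~1 it (hence also ${\rm Id}_\cn-P$) preserves ${\rm dom}\,\D$. Because $P\in\A$, the commutator $[\D,P]$ is bounded, so the off-diagonal pieces
$$
P\D({\rm Id}_\cn-P)=-P[\D,P]({\rm Id}_\cn-P),\qquad ({\rm Id}_\cn-P)\D P=({\rm Id}_\cn-P)[\D,P]P
$$
are bounded operators in $\cn$, and $B:=P\D({\rm Id}_\cn-P)+({\rm Id}_\cn-P)\D P$ is bounded and self-adjoint. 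I would then set $\D_0:=\D-B$, which by Kato--Rellich is self-adjoint on ${\rm dom}\,\D$ and affiliated to $\cn$; a direct computation gives $\D_0P=P\D_0=P\D P$, so $P\HH$ reduces $\D_0$ and its restriction $\D_P$ to $P\HH$ is a self-adjoint operator affiliated to $P\cn P$ with $\D_P=P\D_0P=P\D P$.

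Next I would use the block structure. Since $P$ commutes with $\D_0$, we have ${\rm Id}_\cn+\D_0^2=(P+\D_P^2)\oplus Q$, where both summands are bounded below by the identity of the corresponding subspace, so all inverse powers exist and
$$
(P+(P\D P)^2)^{-1/2}=P({\rm Id}_\cn+\D_0^2)^{-1/2}=({\rm Id}_\cn+\D_0^2)^{-1/2}P .
$$
Thus it suffices to show $P({\rm Id}_\cn+\D_0^2)^{-1/2}\in\K(\cn,\tau)$. Here I use that $B$ is bounded: from $\langle(\D_0+B)\eta,(\D_0+B)\eta\rangle\le 2\langle\D_0^2\eta,\eta\rangle+2\|B\|^2\|\eta\|^2$ for $\eta\in{\rm dom}\,\D$ (and the symmetric inequality with the roles of $\D,\D_0$ swapped) one obtains a constant $c\ge1$ with
$$
c^{-1}({\rm Id}_\cn+\D^2)\le{\rm Id}_\cn+\D_0^2\le c({\rm Id}_\cn+\D^2)
$$
as quadratic forms on ${\rm dom}\,\D$. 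Hence $W:=({\rm Id}_\cn+\D^2)^{1/2}({\rm Id}_\cn+\D_0^2)^{-1/2}$ extends to a bounded operator; being bounded and commuting with $\cn'$ it lies in $\cn$, and $({\rm Id}_\cn+\D_0^2)^{-1/2}=({\rm Id}_\cn+\D^2)^{-1/2}W$. Therefore
$$
P({\rm Id}_\cn+\D_0^2)^{-1/2}=\big(P({\rm Id}_\cn+\D^2)^{-1/2}\big)W\in\K(\cn,\tau)\cdot\cn\subset\K(\cn,\tau),
$$
using condition~2 for $a=P$ and the fact that $\K(\cn,\tau)$ is an ideal.

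Finally, $(P+(P\D P)^2)^{-1/2}$ is a positive element of $P\cn P$ that is $\tau$-compact in $\cn$; its spectral projections $\chi_{[\eps,\infty)}$ are $\tau$-finite in $\cn$ and dominated by $P$, hence $\tau|_{P\cn P}$-finite in $P\cn P$, so approximating by the associated cut-offs yields $(P+(P\D P)^2)^{-1/2}\in\K(P\cn P,\tau|_{P\cn P})$, i.e. $\K(\cn,\tau)\cap P\cn P=\K(P\cn P,\tau|_{P\cn P})$ on this element. For the last assertion, note that for $a\in\A$ one has $a=PaP$ (as $P=1_\A$), so $a$ acts on $P\HH$ and $a\in P\cn P$; condition~1 for $(\A,P\HH,P\D P)$ follows from $[P\D P,a]=P[\D,a]P$ (computed on ${\rm dom}(P\D P)=P\,{\rm dom}\,\D$ using $Pa=aP=a$), and condition~2 from $a(P+(P\D P)^2)^{-1/2}=a\cdot(P+(P\D P)^2)^{-1/2}\in\K(P\cn P,\tau|_{P\cn P})$. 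The main obstacle is the bookkeeping in the middle step: verifying the form comparison cleanly and checking that $W$ is genuinely an element of $\cn$ and not merely bounded and affiliated — everything else is formal once $P\D P$ has been identified with the block $\D_0|_{P\HH}$ of the bounded perturbation $\D_0=\D-B$.
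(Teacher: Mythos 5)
Your proof is correct, but it takes a genuinely different route from the paper's. The paper works directly with the resolvent $(Pi+P\D P)^{-1}$ and a parametrix argument: it shows $(Pi+P\D P)\cdot P(i+\D)^{-1}P = P + K$ with $K$ a $\tau$-compact corrector built from $[\D,P]$ and $(i+\D)^{-1}P$, so that $(Pi+P\D P)^{-1}$ differs from the manifestly $\tau$-compact operator $P(i+\D)^{-1}P$ by something $\tau$-compact. Your approach instead renormalises $\D$ by subtracting the bounded self-adjoint off-diagonal block $B=P\D({\rm Id}_\cn-P)+({\rm Id}_\cn-P)\D P$, so that $\D_0:=\D-B$ commutes with $P$ and restricts to $P\D P$ on $P\HH$; you then transfer $\tau$-compactness from $(1+\D^2)^{-1/2}$ to $(1+\D_0^2)^{-1/2}$ through a Loewner/quadratic-form comparison and the resulting bounded intertwiner $W=(1+\D^2)^{1/2}(1+\D_0^2)^{-1/2}\in\cn$. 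Both arguments hinge on the same inputs (boundedness of $[\D,P]$ and condition 2 applied to $a=P$), but the paper's parametrix route is shorter and avoids the form-comparison bookkeeping, while your perturbation-to-a-reducing-operator approach is more structural — it produces an operator $\D_0$ that genuinely commutes with $P$, which would also be convenient if one wanted to propagate smoothness or summability through the reduction as the paper hints after the lemma. Your closing observation, that $\K(\cn,\tau)\cap P\cn P\subset\K(P\cn P,\tau|_{P\cn P})$, is a point the paper leaves implicit but which your write-up correctly supplies.
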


\begin{proof}
It is a short exercise to show that $\tau|_{P\cn P}$ is a faithful normal semifinite trace on $P\cn P$.

We just need to show that $(Pi+P\D P)^{-1}$ is compact in $P\cn P$. 
To do this we show that we can approximate $(Pi+P\D P)^{-1}$ by $P(i+\D)^{-1}P$
up to compacts. This follows from
$$
(Pi+P\D P)P(i+\D)^{-1}P=P(i+\D)P(i+\D)^{-1}P=P[\D,P](i+\D)^{-1}P+P,
$$
 the compactness of  $(i+\D)^{-1}P$ and the boundness of $P[\D,P]$
 and of $(Pi+P\D P)^{-1}$. 
 \end{proof}

Thus, we may without loss of generality assume that a  spectral triple $(\A,\HH,\D )$ whose operator
 $\D$ does not have compact resolvent, must have  a nonunital algebra
$\A$. Adapting this proof shows that similar results hold
for  spectral triples with additional hypotheses such as summability or smoothness, introduced
below.

\subsection{The Kasparov class and Fredholm module of a  spectral triple}
\label{subsec:Kas-mod}

In this subsection, we use Kasparov modules for trivially graded $C^*$-algebras, 
\cite{KasparovTech}. {\em Nonunital algebras are assumed to be separable},
with the exception of $\K(\cn,\tau)$ which typically is not separable nor even $\sigma$-unital.
By separable, we always mean separable for the norm topology and not necessarily 
for other topologies like the $\delta$-$\vf$-topology introduced in Definition \ref{delta-phi}.
Information about $C^*$-modules and their endomorphisms can be found in \cite{RW}.
Given a $C^*$-algebra $B$ and a 
right $B$-$C^*$-module $X$, we let ${\rm End}_B(X)$ denote the $C^*$-algebra
of $B$-linear adjointable endomorphisms of $X$, and let 
${\rm End}^0_B(X)$ be the ideal of $B$-compact 
adjointable endomorphisms.

We briefly recall the definition of Kasparov modules, and the equivalence relation on 
them used to construct the $KK$-groups. 

\begin{definition}\label{Kasparov} 
Let $A$ and $B$ be $C^*$-algebras, with $A$ separable.
An  odd Kasparov $A$-$B$-module consists of a
countably generated ungraded right $B$-$C^*$-module $X$, with 
$\pi:A\to {\rm End}_B(X)$ a $*$-homomorphism, together with $F\in {\rm End}_B(X)$  such that
$\pi(a)(F-F^*),\ \pi(a)(F^2-1),\ [F,\pi(a)]$ are  compact adjointable
endomorphisms of $X$, for each $a\in A$. 

An  even Kasparov $A$-$B$-module is an odd Kasparov $A$-$B$-module,
together with a grading by a self-adjoint adjointable
endomorphism
$\gamma$ with $\gamma^2=1$ and
$\pi(a)\gamma=\gamma\pi(a)$, $F\gamma+\gamma F=0$.
\end{definition}
\index{Kasparov module}

We will use the notation $({}_AX_B,F)$ or $({}_AX_B,F,\gamma)$ for Kasparov modules,
generally omitting the representation $\pi$. A Kasparov module $({}_AX_B,F)$ 
with $\pi(a)(F-F^*)=\pi(a)(F^2-1)=[F,\pi(a)]=0$, for all $a\in A$, is called
{\em degenerate}.

We now describe the equivalence relation on 
Kasparov $A$-$B$-modules which defines classes in the 
abelian group $KK(A,B)=KK^0(A,B)$ (even case) or $KK^1(A,B)$ (odd case).
The relation consists of three separate equivalence relations: unitary equivalence, stable equivalence
and operator homotopy. More details can be found in \cite{KasparovTech}.

Two Kasparov $A$-$B$-modules $({}_A(X_1)_B,F_1)$ and $({}_A(X_2)_B,F_2)$
are {\em unitarily equivalent} if there is an adjointable unitary $B$-module map $U:X_1\to X_2$ such 
that $\pi_2(a)=  U\pi_1(a)U^*$, $\mbox{for all } a\in A$ and $F_2=U\,F_1\,U^*$.

Two Kasparov $A$-$B$-modules $({}_A(X_1)_B,F_1)$ and $({}_A(X_2)_B,F_2)$
are {\em stably equivalent} if there is a degenerate 
Kasparov $A$-$B$-module $({}_A(X_3)_B,F_3)$ with 
$({}_A(X_1)_B,F_1)=({}_A(X_2\oplus X_3)_B,F_2\oplus F_3)$ and $\pi_1=\pi_2\oplus\pi_3$.

Two Kasparov $A$-$B$-modules $({}_A(X)_B,G)$ and $({}_A(X)_B,H)$
(with the same representation $\pi$ of $A$) are called {\em operator homotopic} if there is
a norm continuous family $(F_t)_{t\in[0,1]}\subset {\rm End}_B(X)$ such that for each $t\in[0,1]$
$({}_A(X_1)_B,F_t)$ is a Kasparov module and $F_0=G$, $F_1=H$.

Two Kasparov modules $({}_A(X)_B,G)$ and $({}_A(X)_B,G)$ are equivalent if after the addition 
of degenerate modules, they are operator homotopic to unitarily equivalent Kasparov modules. 
The equivalence classes of even (resp. odd) Kasparov $A$-$B$ modules
form an abelian group denoted $KK^0(A,B)$ (resp. $KK^1(A,B)$). The zero element 
is represented by any degenerate
Kasparov module, and the inverse of a class $[({}_A(X)_B,F)]$ is the class of  $({}_A(X)_B,-F)$, with
grading $-\gamma$ in the even case.

This equivalence relation, in conjunction with the Kasparov product, 
implies further equivalences between Kasparov modules, such as Morita equivalence.
This is discussed in \cite{Bl,KasparovTech}, where more information on the Kasparov product can
also be found.
With these definitions in hand, we can state our first result linking   semifinite spectral triples
and Kasparov theory.

\begin{lemma}[see \cite{KNR}]
\label{lem:spec-trip-to-Kas-mod}
Let $(\A,\HH,\D)$  be a  semifinite  spectral triple relative to $(\cn,\tau)$  with $\A$ separable.
For $\eps>0$ (resp $\eps\geq0$ when $\D$ is invertible), set
$F_\eps:=\D(\eps+\D^2)^{-1/2}$ and let $A$ be the $C^*$-completion of $\A$. 
Then, $[F_\eps,a]\in\K_\cn$ for
all $a\in A$.
In particular,  provided that $\K_\cn$ is $\sigma$-unital, and letting  $X:=\K_\cn$ as a right $\K_\cn$-$C^*$-module,
 the data $({}_AX_{\K_\cn},F_\eps)$ defines a Kasparov module with class
$[({}_AX_{\K_\cn},F_\eps)]\in KK^\bullet(A,\K_\cn)$,  where
$\bullet=0$ if the spectral triple $(\A,\HH,\D)$ is $\mathbb{Z}_2$-graded and 
$\bullet=1$ otherwise. The class
$[({}_AX_{\K_\cn},F_\eps)]$ is independent of $\eps>0$ 
(or even $\eps\geq 0$ if $\D$ is invertible).
\end{lemma}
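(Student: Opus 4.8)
The plan is to establish the three defining properties of a Kasparov module for $({}_AX_{\K_\cn},F_\eps)$ and then handle the $\eps$-independence via an operator homotopy. The module $X=\K_\cn$ is countably generated over $\K_\cn$ precisely because $\K_\cn$ is $\sigma$-unital, and the left action of $A$ is by left multiplication (which lands in $\mathrm{End}_{\K_\cn}(X)\cong\M(\K_\cn)=\cn$), while $F_\eps=\D(\eps+\D^2)^{-1/2}\in\cn$ also acts by left multiplication. Note $F_\eps=F_\eps^*$ for $\eps>0$, so the condition $\pi(a)(F_\eps-F_\eps^*)\in\K_\cn$ is automatic. For $\pi(a)(F_\eps^2-1)$, compute $F_\eps^2-1=-\eps(\eps+\D^2)^{-1}$, so $a(F_\eps^2-1)=-\eps\,a(\eps+\D^2)^{-1}$; since $a(1+\D^2)^{-1/2}\in\K_\cn$ by axiom 2 of a semifinite spectral triple and $(1+\D^2)^{1/2}(\eps+\D^2)^{-1}\in\cn$, this product is $\tau$-compact. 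This extends to all $a\in A$ by the density remark following Definition \ref{def:ST} and norm-closedness of $\K_\cn$.

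The substantive point is $[F_\eps,a]\in\K_\cn$ for $a\in A$. First I would prove it for $a\in\A$, where $[\D,a]$ is bounded. The cleanest route is the integral formula
$$
F_\eps=\D(\eps+\D^2)^{-1/2}=\frac1\pi\int_0^\infty \lambda^{-1/2}\,\D(\eps+\lambda+\D^2)^{-1}\,d\lambda,
$$
whence
$$
[F_\eps,a]=\frac1\pi\int_0^\infty \lambda^{-1/2}\Big([\D,a](\eps+\lambda+\D^2)^{-1}-\D(\eps+\lambda+\D^2)^{-1}[\D^2,a](\eps+\lambda+\D^2)^{-1}\Big)\,d\lambda.
$$
Writing $[\D^2,a]=[\D,a]\D+\D[\D,a]$, each integrand term is a norm-convergent product in which one factor of the form $a(1+\D^2)^{-1/2}$ or $(1+\D^2)^{-1/2}a$ appears (using $a^*\in\A$ so that $[\D,a^*]$ is also bounded, hence $(1+\D^2)^{-1/2}[\D,a]=([\D,a^*](1+\D^2)^{-1/2})^*\cdot(\text{bounded})$ is $\tau$-compact). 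Thus each integrand is $\tau$-compact with a norm bound $O(\lambda^{-1/2}(\eps+\lambda)^{-1-\delta})$ for some $\delta>0$, making the integral norm-convergent in $\K_\cn$. Extending to $a\in A$: choose $a_n\in\A$ with $a_n\to a$ in norm; then $[F_\eps,a_n]\to[F_\eps,a]$ in norm (since $\|[F_\eps,\cdot]\|\le 2\|F_\eps\|\le 2$), so $[F_\eps,a]\in\K_\cn$.

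Finally, for the $\eps$-independence, I would exhibit an explicit operator homotopy: for $t\in[0,1]$ set $\eps_t:=(1-t)\eps_0+t\eps_1$ with $\eps_0,\eps_1>0$ (or $\ge 0$ when $\D$ is invertible) and put $F_t:=\D(\eps_t+\D^2)^{-1/2}$. One checks that $t\mapsto F_t$ is norm-continuous in $\cn$ (e.g.\ from the integral formula, or directly by functional calculus since $x\mapsto x(\eps+x^2)^{-1/2}$ varies continuously in sup-norm on $\mathrm{spec}(\D)$ when $\eps$ stays in a compact subset of $(0,\infty)$, or of $[0,\infty)$ under invertibility), and the estimates above hold uniformly for $\eps_t$ in a compact parameter range, so each $({}_AX_{\K_\cn},F_t)$ is a Kasparov module. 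Hence $[({}_AX_{\K_\cn},F_{\eps_0})]=[({}_AX_{\K_\cn},F_{\eps_1})]$ in $KK^\bullet(A,\K_\cn)$. The grading $\gamma$ (in the even case) commutes with $a\in\A$ and anticommutes with $\D$, hence anticommutes with each $F_t$, so the homotopy respects the grading. I expect the main obstacle to be the careful bookkeeping in the integral-formula argument for $[F_\eps,a]$—specifically, arranging each term so that an honest factor $a(1+\D^2)^{-1/2}$ (or its adjoint) is isolated and the remaining factors are genuinely bounded with an integrable $\lambda$-tail—rather than anything conceptually deep; this is exactly the kind of estimate for which the excerpt defers details to \cite{KNR}.
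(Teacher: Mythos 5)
There is a genuine gap at the crucial nonunital step. You assert that $(1+\D^2)^{-1/2}[\D,a]$ is $\tau$-compact, justified by taking adjoints and arriving at $[\D,a^*](1+\D^2)^{-1/2}$. But the spectral triple axioms only guarantee $b(1+\D^2)^{-1/2}\in\K_\cn$ for $b\in\A$; the commutator $[\D,a^*]$ is merely a bounded element of $\cn$, not an element of $\A$, so axiom~2 does not apply to it. In the unital case one could fall back on $(1+\D^2)^{-1/2}$ itself being $\tau$-compact, but that is precisely what fails in the nonunital setting considered here (indeed the paper gives a counterexample after Proposition~\ref{pr:can-do} showing that $a(1+\D^2)^{-s/2}$ trace class does not imply $[\D,a](1+\D^2)^{-s/2}$ trace class). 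Tracing through your own decomposition, the first integrand term $[\D,a](\eps+\lambda+\D^2)^{-1}$ contains no factor of the form $a(1+\D^2)^{-1/2}$ or its adjoint with $a\in\A$, so compactness does not follow.

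The paper sidesteps this by inserting an extra algebra element: it computes $\D\bigl[(\eps+\D^2)^{-1/2},a\bigr]b$ for $a,b\in\A$, so each integrand term ends in $(\eps+\lambda+\D^2)^{-1}b$, which is $\tau$-compact because $(1+\D^2)^{-1/2}b=(b^*(1+\D^2)^{-1/2})^*\in\K_\cn$ and $(\eps+\lambda+\D^2)^{-1}(1+\D^2)^{1/2}$ is bounded. This yields $[F_\eps,a]b\in\K_\cn$ and, symmetrically, $a[F_\eps,b]\in\K_\cn$, hence $[F_\eps,ab]=a[F_\eps,b]+[F_\eps,a]b\in\K_\cn$ for $a,b\in\A$; one then extends to all of $A$ by norm density of $A^2$ in $A$ (since $\A$ is a $*$-algebra and norm estimates pass to limits). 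Your $\eps$-independence argument via norm-continuous operator homotopy is fine and matches the paper's (the paper writes the difference $F_{\eps_1}-F_{\eps_2}$ via the same integral formula and appeals to norm continuity). To repair your proof you should replace the claimed compactness of $(1+\D^2)^{-1/2}[\D,a]$ with the two-sided $[F_\eps,a]b$ / $a[F_\eps,b]$ argument and the density-of-products step.
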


\begin{proof}
Regarding $X=\K_\cn$ as a right $\K_\cn$-$C^*$-module via $(T_1|T_2):=T_1^*T_2$,
we see immediately
that left multiplication by $F_\eps$ on $\K_\cn$ gives $F_\eps\in {\rm End}_{\K_\cn}(\K_\cn)$, the
adjointable endomorphisms, see \cite{RW}, and
left multiplication by $a\in A$, the $C^*$-completion of $\A$, gives a representation of
$A$ as adjointable endomorphisms of $X$ also.

Since the algebra of compact endomorphisms of $X$ is just $\K_\cn$, and we have assumed $\K_\cn$
is $\sigma$-unital, we see that $X$ is countably generated, by \cite[Proposition 5.50]{RW}.

That $F_\eps^*=F_\eps$  as an 
endomorphism follows from the functional calculus. 
Now let $a,b\in\mathcal{A}$. The integral formula for fractional powers gives
$$
(\eps+\mathcal{D}^2)^{-1/2}
={\pi}^{-1}\int_0^{\infty}\lambda^{-1/2}(\eps+\lambda+\mathcal{D}^2)^{-1}d\lambda,
$$
and with a nod to \cite[Lemma 3.3]{CP1} we obtain 
\begin{align*}
\mathcal{D}\big[(\eps+\mathcal{D}^2)^{-1/2},a\big]b
&={\pi}^{-1}\int_0^\infty \lambda^{-1/2}
\Big(\mathcal{D}^2(\eps+\lambda+\mathcal{D}^2)^{-1}[\mathcal{D},a](\eps+\lambda+\mathcal{D}^2)^{-1}b\\
&\qquad\qquad\qquad\qquad
+\mathcal{D}(\eps+\lambda+\mathcal{D}^2)^{-1}[\mathcal{D},a]\mathcal{D}
(\eps+\lambda+\mathcal{D}^2)^{-1}b\Big)d\lambda.
\end{align*}
By the definition of a spectral triple, the integrand is $\tau$-compact, and so is in the compact
endomorphisms of our module. The functional calculus yields the norm estimates
$$
\|\mathcal{D}^2(\eps+\lambda+\mathcal{D}^2)^{-1}[\mathcal{D},a](\eps+\lambda+\mathcal{D}^2)^{-1}b\|
\leq
\|[\D,a]\|\|b\|(\eps+\lambda)^{-1},
$$
and
$$
\|\mathcal{D}(\eps+\lambda+\mathcal{D}^2)^{-1}[\mathcal{D},a]\mathcal{D}
(\eps+\lambda+\mathcal{D}^2)^{-1}b\|\leq\|[\D,a]\|\|b\|(\eps+\lambda)^{-1}.
$$
Therefore, the integral above is norm-convergent. 
Thus, $\mathcal{D}[(\eps+\mathcal{D}^2)^{-1/2},a]b$ is $\tau$-compact and
$$
[F_\eps,a]b
=\mathcal{D}[(\eps+\mathcal{D}^2)^{-1/2},a]b+[\mathcal{D},a](\eps+\mathcal{D}^2)^{-1/2}b,
$$
is $\tau$-compact too. Similarly, $a[F_\eps,b]$ is $\tau$-compact. Finally,
$[F_\eps,ab]=a[F_\eps,b]+[F_\eps,a]b$
is $\tau$-compact, and so a compact endomorphism.
Taking norm limits now shows that $[F_\eps,ab]$ is $\tau$-compact 
for all $a,b\in A.$ By the norm density of products in $A$, one concludes that 
$[F_\eps,a]$ is compact for all $a\in A$. 
Finally for $a\in A$ we have
$
a(1-F_\eps^2)=a\eps(\eps+\D^2)^{-1},
$
and this is $\tau$-compact since $(\A,\H,\D)$ is a  spectral triple. Thus $({}_AX_{\K_\cn},F_\eps)$
is a Kasparov module.

To show that the associated $KK$-class is independent of $\eps$, it suffices to show that
$\eps\mapsto F_\eps$ is continuous in operator norm, \cite{KasparovTech}. This follows from
the integral formula for fractional powers which shows that
$$
F_{\eps_1}-F_{\eps_2}=\frac{\eps_2-\eps_1}{\pi}\int_0^\infty \lambda^{-1/2}
\D(\eps_1+\lambda+\D^2)^{-1}(\eps_2+\lambda+\D^2)^{-1}\,d\lambda,
$$
since the integral converges in norm independent of $\eps_1,\,\eps_2>0$. If $\D$ is invertible we
can also take $\eps_i=0$.
This completes the proof.
\end{proof}

The assumption that $\K_\cn$ is $\sigma$-unital is never satisfied in the type II setting, and so we do not obtain a countably
generated $C^*$-module. 
In order to go beyond this assumption, we adopt the method of \cite{KNR}. 

\begin{definition} 
\label{defn:not-kn}
Given $(\A,\H,\D)$ relative to $(\cn,\tau)$,
we let ${\mathcal C}\subset \K_\cn$ be the algebra generated by the operators
$$
F_\eps[F_\eps,a],\ \  b[F_\eps,a],\ \ [F_\eps,a],\ \ 
F_\eps b[F_\eps,a],\ \  a\varphi(\D),\quad a,\,b\in\A, \ \ \varphi\in C_0(\R).
$$
\end{definition}
If $\A$ is separable, so too is ${\mathcal C}$. This allows us to repeat the construction of 
Lemma \ref{lem:spec-trip-to-Kas-mod} using ${\mathcal C}$ instead of $\K_\cn$. The result is a Kasparov module
$({}_AX_{C},F_\eps)$ with class in $KK^\bullet(A,C)$, where $C$ is the norm closure of ${\mathcal C}$.

\begin{corollary}
\label{cor:spec-trip-to-Kas-mod}
Let $(\A,\HH,\D)$  be a  semifinite  spectral triple relative to $(\cn,\tau)$
 with $\A$ separable.
For $\eps>0$ (resp $\eps\geq0$ when $\D$ is invertible), set
$F_\eps:=\D(\eps+\D^2)^{-1/2}$ and let $A$ be the $C^*$-completion of $\A$. 
Then, $[F_\eps,a]\in C\subset\K_\cn$ for
all $a\in A$.
In particular, letting  $X:=C$ as a right $C$-$C^*$-module,
 the data $({}_AX_{C},F_\eps)$ defines a Kasparov module with class
$[({}_AX_{C},F_\eps)]\in KK^\bullet(A,C)$,  where
$\bullet=0$ if the spectral triple $(\A,\HH,\D)$ is $\mathbb{Z}_2$-graded and 
$\bullet=1$ otherwise. The class
$[({}_AX_{C},F_\eps)]$ is independent of $\eps>0$ 
(or even $\eps\geq 0$ if $\D$ is invertible).
\end{corollary}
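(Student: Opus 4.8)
The plan is to rerun the proof of Lemma \ref{lem:spec-trip-to-Kas-mod} verbatim, with $C:=\overline{\mathcal C}$ in place of $\K_\cn$; the only new ingredient needed is that $C$ is now separable, which supplies countable generation of the module without any $\sigma$-unitality assumption on $\K_\cn$.

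First I would record the module bookkeeping. As $\A$ is separable, fix a countable norm-dense subset of $\A$ and a countable dense subset of $C_0(\R)$; the operators in Definition \ref{defn:not-kn} depend norm-continuously on their parameters, so the $*$-algebra generated from these countable subsets is a countably generated, hence separable, $*$-subalgebra whose closure is still $C$; thus $C$ is separable and therefore $\sigma$-unital. Viewing $X:=C$ as a right Hilbert $C$-module with $\langle S,T\rangle:=S^*T$, a sequential approximate unit of $C$ generates $X$, so $X$ is countably generated, and one has the standard identifications ${\rm End}_C(X)=M(C)$ and ${\rm End}^0_C(X)=C$. Hence it suffices to show that left multiplication sends $A$ and $F_\eps$ (and $\gamma$ in the even case) into $M(C)$, and that $[F_\eps,a]$ and $a(1-F_\eps^2)$ lie in $C$ for each $a\in A$.

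The action of $A$ and the Kasparov data need only elementary manipulations. For $a\in\A$ and a generator $g$ of $\mathcal C$, the identity $aF_\eps=F_\eps a-[F_\eps,a]$ together with the fact that $F_\eps\varphi(\D)=\psi(\D)$ with $\psi\in C_0(\R)$ rewrites $ag$ as a finite sum of products of generators, e.g. $a\cdot F_\eps b[F_\eps,a']=F_\eps(ab)[F_\eps,a']-[F_\eps,a]\,b[F_\eps,a']$ and $a\cdot a'\varphi(\D)=(aa')\varphi(\D)$; thus $\A\mathcal C\subset\mathcal C$, and since each $a\in A$ acts boundedly and is a norm-limit from $\A$ we get $AC\subset C$, hence $A\subset M(C)$. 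The operator $[F_\eps,a]$ is itself a generator for $a\in\A$, and $a(1-F_\eps^2)=\eps\,a(\eps+\D^2)^{-1}$ is a scalar multiple of the generator $a\varphi(\D)$ with $\varphi(t)=(\eps+t^2)^{-1}\in C_0(\R)$; by norm density and boundedness of $F_\eps$ the same membership persists for all $a\in A$, while $a(F_\eps^*-F_\eps)=0$ since $F_\eps=F_\eps^*$, and in the even case the grading $\gamma$ commutes with $\A$ and anticommutes with $F_\eps$, so it passes to $M(C)$ exactly as in Lemma \ref{lem:spec-trip-to-Kas-mod}. Independence of the $KK$-class on $\eps$ is unchanged: $F_{\eps_1}-F_{\eps_2}=\tfrac{\eps_2-\eps_1}{\pi}\int_0^\infty\lambda^{-1/2}\D(\eps_1+\lambda+\D^2)^{-1}(\eps_2+\lambda+\D^2)^{-1}\,d\lambda$ converges in operator norm uniformly for $\eps_i$ in compact sets, giving an operator homotopy through Kasparov modules with a fixed representation, and $\eps=0$ is admissible when $\D$ is invertible.

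The main obstacle is showing that $C$ is invariant under left multiplication by $F_\eps$, i.e. that the $\tau$-compactness estimates of Lemma \ref{lem:spec-trip-to-Kas-mod} actually produce elements of the small separable algebra $\mathcal C$ and its closure, not just of $\K_\cn$. Using $F_\eps^2=1-\eps(\eps+\D^2)^{-1}$ and the fact that $F_\eps\varphi(\D)$ is again of the form $\psi(\D)$ with $\psi\in C_0(\R)$, the product of $F_\eps$ with a generator reduces, modulo elements already known to lie in $\mathcal C$, to operators $\chi(\D)[F_\eps,a]$ and $[\chi(\D),a]$ with $\chi\in C_0(\R)$; and $[\chi(\D),a]\in C$ follows by uniformly approximating $\chi$ by resolvent functions $(\mu\pm i t)^{-1}$ and applying the integral-formula argument of Lemma \ref{lem:spec-trip-to-Kas-mod} with $(\eps+\lambda+\D^2)^{-1}$ playing the role of the absent right multiplier, which expresses the result as a norm-convergent sum of products of generators of $\mathcal C$. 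This is precisely the verification performed in \cite{KNR}; granting it, $({}_AX_C,F_\eps)$ (with grading in the even case) is a Kasparov module whose class lies in $KK^0(A,C)$ or $KK^1(A,C)$ according to the parity of the spectral triple.
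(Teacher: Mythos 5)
Your proposal is correct and follows essentially the same route as the paper, which in fact gives almost no proof at all for this Corollary: it simply observes that separability of $\A$ forces separability of $\mathcal{C}$, and then says one can "repeat the construction of Lemma \ref{lem:spec-trip-to-Kas-mod} using $\mathcal{C}$ instead of $\K_\cn$," deferring the verification that $\mathcal{C}$ is closed under the relevant operations to \cite{KNR}. What you have done is expand that one sentence into an honest account: you explain why separability of $\mathcal{C}$ follows from norm-continuity of the generating maps in their parameters (rather than merely asserting it), you record the module identifications ${\rm End}_C(C)=M(C)$ and ${\rm End}^0_C(C)=C$ that replace the implicit use of ${\rm End}^0_{\K_\cn}(\K_\cn)=\K_\cn$, and you correctly isolate the non-trivial point --- that $F_\eps$ and $A$ act by left multiplication as multipliers of $C$, so that the estimates in the proof of Lemma \ref{lem:spec-trip-to-Kas-mod} produce elements of the small algebra $C$ and not merely of $\K_\cn$. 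The $\eps$-homotopy argument carries over unchanged, as you say.

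One place where you (and the paper) pass quickly over a detail worth flagging: in the even case you assert that $\gamma$ "passes to $M(C)$ exactly as in Lemma \ref{lem:spec-trip-to-Kas-mod}," but the generators of $\mathcal C$ in Definition \ref{defn:not-kn} are not visibly stable under left multiplication by $\gamma$; one has e.g. $\gamma\, a\varphi(\D)=a\varphi(-\D)\gamma$, which is a generator multiplied on the \emph{right} by $\gamma$ --- a statement that is circular unless one already knows $\gamma\in M(C)$. The remedy (which is what \cite{KNR} implicitly does) is either to enlarge the generating set by the $\gamma$-twists, or to work on the graded module $X=X_+\oplus X_-$ with $X_\pm=\tfrac{1\pm\gamma}{2}C\tfrac{1\pm\gamma}{2}$. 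This is a cosmetic fix, and since the paper's own treatment is equally terse here, it is not a gap in your argument relative to the paper; I mention it only so you see where the citation to \cite{KNR} is genuinely doing work rather than merely packaging the estimates you already reproduced.
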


Using the Kasparov product we now have a well-defined map
\begin{equation}
\label{K-index-paring}
\cdot\otimes_A[(\K_\cn,F_\eps)]\,:\,K_\bullet(A)=KK^\bullet(\C,A)\to \ K_0(C).
\end{equation}
For this pairing to make sense it is required that $A$ be separable, \cite[Theorem 18.4.4]{Bl},
and we remind the reader that we always suppose this to be the case. We refer to the map 
given in Equation \eqref{K-index-paring} as the {\em $K$-theoretical index pairing}.
\index{index pairing}
\index{index pairing!$K$-theoretical }

Let $\F_\cn$ denote  the ideal of `finite rank' operators in $\K_\cn$; that is, $\F_\cn$ is the ideal of $\cn$ generated
by projections of finite trace, without taking the norm completion.
In \cite[Section 6]{KNR}, it shown that for all $n\geq 1$, $M_n(\F_\cn)$ is stable under the holomorphic functional
calculus inside $M_n(\K_\cn)$, and so $K_0(\F_\cn)\cong K_0(\K_\cn)$. 

One may now deduce that $M_n(C\cap \F_\cn)$ is stable under the holomorphic functional calculus inside 
$M_n(C\cap\K_\cn)=M_n(C)$. Thus every class in $K_0(C)$ may be represented as $[e]-[f]$ where $e,\,f$ are
projections in a matrix algebra over the unitisation of $C\cap \F_\cn$. As in \cite{KNR}, the map $\tau_*:K_0(C)\to\R$
is then well-defined.

\begin{definition}
Let $\A$ be a $*$-algebra (continuously) represented in $\cn$, 
 a semifinite von Neumann
algebra  with faithful semifinite normal trace $\tau$.
A {semifinite  pre-Fredholm module} for $\A$ relative to $(\cn,\tau)$,  
is a pair $(\HH, F)$, where $ \H$ is
a separable Hilbert space carrying a faithful representation of 
 $\mathcal N$ 
and $F$ is an 
operator in $\mathcal N$ satisfying:

$1.\: a(1-F^2),\ a(F-F^*) \in {\mathcal {K_N}},\:and$

$2.\: [F,a] \in {\mathcal {K_N}}\: for\: a \in { \A}.$

\noindent If $1-F^2=0=F-F^*$ we drop the prefix ``pre-".
 If our (pre-)Fredholm module satisfies
 $[F,a]\in {\mathcal L}^{p+1}(\cn,\tau)$ and $a(1-F^2)\in\L^{(p+1)/2}(\cn,\tau)$
for all $a\in \mathcal A$,
we say that $({\HH, F})$ is $(p+1)$-summable.

We say that $(\HH,F)$ is even if in addition there is a $\mathbb{Z}_2$-grading
such that $\A$ is even and $F$ is odd. This means there is an operator $\gamma$ such that
$\gamma=\gamma^*$, $\gamma^2={\rm Id}_\cn  
$, $\gamma a=a\gamma$ for all $a\in\A$ and
$F\gamma+\gamma F=0$. Otherwise we say that $(\HH,F)$ is odd.
\end{definition}
\index{semifinite Fredholm module}\index{semifinite Fredholm module!pre-Fredholm module}\index{semifinite Fredholm module!finitely summable}
A semifinite pre-Fredholm module for a $*$-algebra $\A$ extends to a
semifinite pre-Fredholm module for the norm completion of $\A$ in $\cn$, by essentially the
same proof as Lemma \ref{lem:spec-trip-to-Kas-mod}. For completeness we state
this as a  lemma.

\begin{lemma}\label{z simplification lemma1} 
Let $(\mathcal{A},\mathcal{H},\mathcal{D})$ be a 
semifinite spectral triple relative to $(\mathcal{N},\tau)$. 
Let $A$ be the  $C^*$-completion of $\mathcal{A}$. 
If $F_\eps=\mathcal{D}(\eps+\mathcal{D}^2)^{-1/2}$, $\eps>0$, 
then the operators $[F_\eps,a]$ and $a(1-F_\eps^2)$ are $\tau$-compact for every $a\in A$.
Hence $(\H,F_\eps)$ is a pre-Fredholm module for $A$.
\end{lemma}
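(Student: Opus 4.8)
The plan is to reduce the statement to the already-established Corollary~\ref{cor:spec-trip-to-Kas-mod} (equivalently Lemma~\ref{lem:spec-trip-to-Kas-mod}), since essentially all the analytic work has been done there. The key observation is that the hypotheses of a semifinite pre-Fredholm module for $A$ are exactly conditions (1) and (2) from Definition~\ref{def:ST} applied to the bounded operator $F_\eps$ rather than to $\D$ itself, so once we know these hold on the dense subalgebra $\A$, they extend to $A$ by norm continuity.

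First I would recall from the proof of Lemma~\ref{lem:spec-trip-to-Kas-mod} that, for each $a,b\in\A$ and $\eps>0$, the commutator $[F_\eps,a]$ is $\tau$-compact; this was shown there via the decomposition
\begin{align*}
[F_\eps,a]b=\D\big[(\eps+\D^2)^{-1/2},a\big]b+[\D,a](\eps+\D^2)^{-1/2}b,
\end{align*}
the integral formula for fractional powers, and the norm estimates on the resulting integrand. Next I would observe that $a(1-F_\eps^2)=\eps\,a(\eps+\D^2)^{-1}$, which is $\tau$-compact because $a(1+\D^2)^{-1/2}\in\K(\cn,\tau)$ (condition 2 of Definition~\ref{def:ST}) and $(1+\D^2)^{1/2}(\eps+\D^2)^{-1}$ is bounded; alternatively one uses Remark~3 after Definition~\ref{def:ST}. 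Since $F_\eps$ is self-adjoint by the functional calculus, the term $a(F_\eps-F_\eps^*)$ vanishes identically, so both items of the pre-Fredholm module definition hold for every $a\in\A$.

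It then remains to pass from $\A$ to its norm completion $A$. For the commutator condition I would argue exactly as in Lemma~\ref{lem:spec-trip-to-Kas-mod}: using the Leibniz rule $[F_\eps,ab]=a[F_\eps,b]+[F_\eps,a]b$ together with the boundedness of left multiplication by elements of $A$ and the fact that $\K(\cn,\tau)$ is a closed ideal in $\cn$, one gets $[F_\eps,a]\in\K(\cn,\tau)$ first for all $a$ in the algebra generated by $\A$, and then for all $a\in A$ by taking norm limits. For the condition $a(1-F_\eps^2)\in\K(\cn,\tau)$, since $1-F_\eps^2\in\cn$ is fixed and $\K(\cn,\tau)$ is norm-closed, the map $a\mapsto a(1-F_\eps^2)$ is norm-continuous from $\cn$ into $\cn$ and sends the dense subset $\A$ into $\K(\cn,\tau)$, hence sends all of $A$ into $\K(\cn,\tau)$; one may also invoke Remark~2 after Definition~\ref{def:ST} directly. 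I do not expect any genuine obstacle here — the whole content is bookkeeping that repackages the already-proved Lemma~\ref{lem:spec-trip-to-Kas-mod}; the only point requiring a word of care is making explicit that $F_\eps\in\cn$ (so that $(\H,F_\eps)$ is a \emph{pre}-Fredholm module in the precise sense of the definition, with $F_\eps$ not necessarily a partial isometry) and that self-adjointness of $F_\eps$ disposes of the $F-F^*$ term.
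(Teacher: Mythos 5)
Your proposal is correct and takes essentially the same route as the paper: the paper gives no separate proof of this lemma, instead noting that it holds ``by essentially the same proof as Lemma~\ref{lem:spec-trip-to-Kas-mod},'' and your write-up simply makes that reduction explicit, including the same decomposition of $[F_\eps,a]b$, the same identity $a(1-F_\eps^2)=\eps\,a(\eps+\D^2)^{-1}$, the self-adjointness of $F_\eps$, and the same norm-density argument to pass from $\A$ to $A$.
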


\subsection{The numerical index pairing}
\label{num-ind-pair}

We will now  make particular Kasparov products explicit by choosing 
specific representatives of the classes.
We will focus on the 
condition $F^2=1$ for Kasparov modules.
Imposing  this condition  simplifies the description
of the Kasparov product with $K$-theory. In the context of Lemma \ref{lem:spec-trip-to-Kas-mod},
this will be the case if and only if $\eps=0$, that is, if and only if $\D$ is invertible. 
We will shortly show how to
modify the pair $(\H,\D)$ in the data given by a  semifinite spectral triple 
$(\A,\H,\D)$, in order that $\D$ is always 
invertible. Before doing that, we need some more Kasparov theory for nonunital $C^*$-algebras.

Suppose that we have two $C^*$-algebras $A,\,B$ and a 
graded Kasparov module $(X= _A\!\!X_B,F,\gamma)$. Assume also that $A$ is nonunital. 
Let $e$ and $f$ be projections in a (matrix algebra over a)
unitization of $A$, which we can take to be the minimal 
unitization $A^\sim=A\oplus\C$ (see \cite{RW}), by excision in $K$-theory, and 
suppose also that we have a class
$[e]-[f]\in K_0(A)$. That is, $[e]-[f]\in \ker(\pi_*:K_0(A^\sim)\to K_0(\C))$ where $\pi:A^\sim\to\C$
is the quotient map.
Then 
the Kasparov product over $A$ of $[e]-[f]$
with $[(X,F,\gamma)]$ gives us a class in $K_0(B)$. We now show that if $F^2={\rm Id}_X$,  
we  can represent this
Kasparov product as a difference of projections over $B$ (in the unital case) or $B^\sim$ (in
the nonunital case).

Here and in the following, we always represent elements $a+\lambda\, {\rm Id}_{A^\sim}\in A^\sim$ on 
$X$ as $a+\lambda\, {\rm Id}_X$, $\lambda\in\C$. 
Set $X_\pm:=\frac{1\pm\gamma}{2}X$ and, ignoring the  matrices to simplify the discussion, let $e\in A^\sim$. 
To show that
$eF_\pm e: eX_\pm\to eX_\mp$ 
is Fredholm  (which in this context means invertible modulo ${\rm End}_B^0(X_\pm,X_\mp)$),
we must display a parametrix. 
Taking $eF_\mp e$ yields
$$
eF_\mp eF_\pm e=eF_\mp[e,F_\pm]e+e(F_\mp F_\pm-{\rm Id}_{X_\pm})e+{\rm Id}_{eX_\pm}.
$$
We are left with showing that $e(F_\mp F_\pm-{\rm Id}_{X_+})e$ and 
$eF_\mp[e,F_\pm]e$ are   ($B$-linear) compact endomorphisms of the $C^*$-module $X_\pm$.
The compactness of $eF_\mp[e,F_\pm]e$ follows since $e$ is represented as
$a+\lambda{\rm  Id}_X$ for some $a\in A$ and $\lambda\in\C$, and thus  $[e,F_\pm]=[a,F_\pm]$ which is compact
by definition of a Kasparov module. 

However $e(F_\mp F_\pm-{\rm Id}_{X_\pm})e$ is generally
not compact, because we are only guaranteed that 
$a(F_\mp F_\pm-{\rm Id}_{X_\pm})$ is compact for
$a\in A$, not $a\in A^\sim$! Nevertheless, if the Kasparov 
module is normalized, i.e.  if $F^2={\rm Id}_X$, we have $F_\mp F_\pm-{\rm Id}_{X_\pm}=0$,
and so we have a parametrix, showing that
$eF_\pm e$ is Fredholm. In this case, the Kasparov product
$([e]-[f])\otimes_A[(X,F)]$ is given by  
$$
\big[\Index(eF_\pm e)\big]-\big[\Index(fF_\pm f)\big]\in K_0(B).
$$
Here the index is defined as the difference 
$[\ker \widetilde{eF_\pm e}]-[\mbox{coker}\,\widetilde{eF_\pm e}]$,
where $ \widetilde{eF_\pm e}$ is any regular amplification of of $eF_\pm e$, 
see \cite[Lemma 4.10]{GVF}. This index is
independent of the amplification chosen, the kernel and cokernel projections
can be chosen finite rank over $B$, or $B^\sim$ if $B$ is nonunital, 
and the index lies in $K_0(B)$ by \cite[Proposition 4.11]{GVF}.

Similarly, in the odd case we would like to have (see \cite[Appendix]{KNR} and \cite[Appendix]{PRe}),
$$
[u]\otimes_A\big[(X,F)\big]=\big[\Index\big(\tfrac{1}{4}(1+F)u(1+F) - \tfrac{1}{2}(1-F)\big)\big]
\in K_0(B),
$$
where $[u]\in K_1(A)$.
As in the even case above, to show that the operator
$\frac{1}{4}(1+F)u(1+F) - \frac{1}{2}(1-F)$ is Fredholm in the nonunital case, it is easier to
assume that  $F^2=1$,
and in this case, writing $(1+F)/2=P$ for the positive
spectral projection of $F$, we have
$$
[u]\otimes_A\big[(X,F)\big]=\big[\Index(PuP)\big]=[\ker \widetilde{PuP}]-[\mbox{coker}\,\widetilde{PuP}]
\in K_0(B),
$$
there being no contribution to the index from $P^\perp=(1-F)/2$.
As in the even case above, $\widetilde{PuP}$ is a regular amplification of $PuP$, and
the projections onto $\ker \widetilde{PuP}$ and ${\rm coker} \,\widetilde{PuP}$ 
are finite rank over $B$ or $B^\sim$. We show in subsection \ref{subsec:odd-index} 
an alternative method to avoid 
the simplifying assumption $F^2=1$ in the odd case.

Given a pre-Fredholm module $(\H,F)$ relative to $(\cn,\tau)$  for a separable 
$*$-algebra $\A$, we 
obtain a Kasparov module
$({}_AC_C,F)$, just as we did for a  spectral
triple in Corollary \ref{cor:spec-trip-to-Kas-mod}. Here  $A$ is the norm completion of $\A$ and 
$C\subset \K_\cn$ is given by the norm closure of the
algebra defined in Definition \ref{defn:not-kn}, using the operator $F$ for the commutators, and polynomials in 
$1-F^2$ in place of $\varphi(\D)$, $\varphi\in C_0(\R)$. Also,
given $(\A,\H,\D)$ relative to $(\cn,\tau)$, the following diagram commutes
$$
\xymatrix{(\A,\H,\D)  \ar[r] \ar[d] & ({}_AC_C,F_\eps)\\
(\H,F_\eps)  \ar[ur] & }\ .
$$
Thus we have a single well-defined Kasparov class arising from either the spectral triple
or the associated pre-Fredholm module. Now we show how to obtain a representative of
this class with $F^2=1$, so simplifying the index pairing.
This reduces to showing that if our spectral triple $(\A,\HH,\D)$
is such that $\D$ is not invertible, we can replace it by a new spectral
triple  for which the unbounded operator is invertible and has the
same $KK$-class. We learned this trick from \cite[page 68]{Co1}.

\begin{definition} 
\label{def:double}
Let $(\A,\HH,\D)$ be a  semifinite spectral triple relative to $(\cn,\tau)$. For any
$\mu>0$,  define the `double' of
$(\A,\HH,\D)$ to be the  semifinite
spectral triple $(\A,\HH^2,\D_\mu)$ relative to $(M_2(\cn),\tau\otimes\tr_2)$, 
with $\HH^2:=\HH\oplus\HH$ and
the action of $\A$ and $\D_\mu$ given by
\ben
\D_\mu:=\bma \D & \mu\\ \mu & -\D\ema,\ \ \ \ a\mapsto \hat a:=\bma a & 0\\ 0 & 0\ema,
\ \ {\rm for\ all}\ a\in\A.
\een
If $(\A,\H,\D)$ is even and graded by $\gamma$ then the double is even
and graded by $\hat\gamma:=\gamma\oplus-\gamma$.
\end{definition}
\index{double construction}

{\bf Remark.} Whether $\D$ is invertible or not, $\D_\mu$ always is invertible,
and $F_\mu=\D_\mu|\D_\mu|^{-1}$
has square 1. This is the chief reason for introducing this construction.

We also need to extend the action of $M_n(\A^\sim)$ on $(\H\oplus\H)\otimes\C^n$,  
in a compatible way with the extended action of $\A$ on $\H\oplus\H$. So, for a generic 
element $b\in M_n(\A^\sim)$,  we let
\begin{equation}
\label{ext-A-sim}
\hat b:= \bma b & 0\\ 0 & {\bf 1}_b\ema\in M_{2n}(\cn),
\end{equation}
with ${\bf 1}_b:=\pi^n(b)\otimes{\rm Id}_{ \cn}$, where $\pi^n:M_n(\A^\sim)\to M_n(\C)$ is the quotient map.

It is known (see for instance \cite[Proposition 12, p. 443]{Co1}), that up to an addition of a 
degenerate module, any Kasparov module is operator homotopic to a normalised 
Kasparov module, i.e. one with $F^2=1$. The following
makes it explicit.

\begin{lemma}
\label{lem:noninv}
When $\A$ is separable,
the $KK$-classes associated with $(\A,\HH,\D)$ and
$(\A,\HH^2,\D_\mu)$ coincide. A representative of this class is
$({}_A(C\oplus C)_C,F_\mu)$
with $F_\mu=\D_\mu|\D_\mu|^{-1}$  and $C$ the norm closure
of the $*$-subalgebra of $\K(\cn,\tau)$ given in Definition \ref{defn:not-kn}.
\end{lemma}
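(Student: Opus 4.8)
The plan is to produce, from the double $(\A,\HH^2,\D_\mu)$, a Kasparov module over $C$ with $F^2=1$ and to exhibit an operator homotopy (plus degenerate summands) connecting it to the Kasparov module $({}_AX_C,F_\eps)$ coming from the original spectral triple via Corollary \ref{cor:spec-trip-to-Kas-mod}. First I would check that the double really is a semifinite spectral triple relative to $(M_2(\cn),\tau\otimes\tr_2)$: boundedness of $[\D_\mu,\hat a]$ is immediate from boundedness of $[\D,a]$ together with $[\mu,a]=0$, and $\tau$-compactness of $\hat a(1+\D_\mu^2)^{-1/2}$ follows because $\D_\mu^2=\bma \D^2+\mu^2 & 0\\ 0 & \D^2+\mu^2\ema$, so $(1+\D_\mu^2)^{-1/2}=(1+\mu^2+\D^2)^{-1/2}\oplus(1+\mu^2+\D^2)^{-1/2}$, and $a(1+\mu^2+\D^2)^{-1/2}\in\K_\cn$ by the spectral triple axiom and the resolvent identity. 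Since $\D_\mu^2\geq \mu^2>0$, $\D_\mu$ is invertible, $|\D_\mu|^{-1}=(\D^2+\mu^2)^{-1/2}\oplus(\D^2+\mu^2)^{-1/2}$ is bounded, and $F_\mu:=\D_\mu|\D_\mu|^{-1}$ satisfies $F_\mu^*=F_\mu$ and $F_\mu^2=1$.

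Next I would run the construction of Corollary \ref{cor:spec-trip-to-Kas-mod} for the double: form the $C^*$-algebra $\mathcal C_\mu\subset\K(M_2(\cn),\tau\otimes\tr_2)$ generated by the operators $F_\mu[F_\mu,\hat a]$, $\hat b[F_\mu,\hat a]$, $[F_\mu,\hat a]$, $F_\mu\hat b[F_\mu,\hat a]$, $\hat a\varphi(\D_\mu)$ for $a,b\in\A$, $\varphi\in C_0(\R)$, and note that because $\hat a$, $\hat b$ and the commutators all live in the top-left corner $M_2(\cn)e_{11}\cong\cn$ — indeed $[F_\mu,\hat a]=\bma [F_0,a] & * \\ * & 0\ema$ where the entries are built from $[\D,a]$, $(\D^2+\mu^2)^{-1/2}$ and bounded functions of $\D$ — one identifies $\mathcal C_\mu$ with (a subalgebra naturally isomorphic to) the algebra $C$ from Definition \ref{defn:not-kn} built with $F_\eps$. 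More precisely, since $F_0=\D(\D^2)^{-1/2}$ is only the partial isometry part, I would instead argue that $\mathcal C_\mu$ and $C$ generate the same $C^*$-algebra up to the stabilisation by the extra copy of $\HH$, giving the identification $X=C\oplus C$ as a right $C$-module with $A$ acting diagonally through $\hat a$ and $F_\mu\in\End_C(C\oplus C)$ adjointable with $F_\mu^2=1$ and $[\hat a,F_\mu]$, $\hat a(1-F_\mu^2)=0$ all compact. This produces the stated representative $({}_A(C\oplus C)_C,F_\mu)$.

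Then I would prove equality of $KK$-classes. The cleanest route is the standard one (the "Connes trick" of \cite[page 68]{Co1}): exhibit a norm-continuous path of self-adjoint contractions joining $F_\mu$ to $F_\eps\oplus F'$ where $F'$ is the operator of a \emph{degenerate} Kasparov module on the second copy, so that along the path each $(X,F_t)$ is a Kasparov module. Concretely, $F_t:=\bma \D & t\mu\\ t\mu & -\D\ema|\,\cdot\,|^{-1}$ (with the resolvent regularisation $\eps$ folded in on the diagonal) interpolates at $t=1$ the normalised operator $F_\mu$ and at $t=0$ the operator $F_\eps\oplus(-F_\eps)$; the second summand $({}_A C_C,-F_\eps)$ together with the grading flip is, after the homotopy $\eps\to 0$ handled by Corollary \ref{cor:spec-trip-to-Kas-mod}, stably equivalent to a degenerate module, and adding it to $({}_A C_C,F_\eps)$ does not change the class. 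One checks that $\hat a(F_t^2-1)$, $[\hat a,F_t]$, $\hat a(F_t-F_t^*)$ are compact adjointable endomorphisms for every $t$ and depend norm-continuously on $t$, using exactly the integral-formula-for-fractional-powers estimates already carried out in the proof of Lemma \ref{lem:spec-trip-to-Kas-mod}. In the even case one tracks the grading $\hat\gamma=\gamma\oplus(-\gamma)$ through the homotopy. The conclusion is that $[({}_A(C\oplus C)_C,F_\mu)]=[({}_A C_C,F_\eps)]$ in $KK^\bullet(A,C)$, which is the assertion.

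The main obstacle I expect is the bookkeeping in the second paragraph: verifying carefully that the algebra $\mathcal C_\mu$ generated by the compact-endomorphism data of the double is genuinely the same (not merely Morita equivalent, which would already suffice for $KK$) as the algebra $C$ built from the undoubled triple, so that the two Kasparov modules live over literally the same $C^*$-algebra $C$ and the homotopy argument applies without an intervening change of coefficients. Handling the off-diagonal entries of $F_\mu$ — which involve $\D(\D^2+\mu^2)^{-1/2}$ rather than $\D(\D^2)^{-1/2}$ — and confirming they contribute nothing new to $C$ beyond what $a\varphi(\D)$ and the commutators already give, is where the care is needed; everything else reduces to the resolvent estimates already established.
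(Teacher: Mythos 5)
Your strategy matches the paper's: interpolate $F_\mu$ to $F_\eps\oplus(-F_\eps)$ by scaling the off-diagonal $\mu$, observe that $\A$ acts by zero on the second summand so it is degenerate, and remove the $\eps$-regularisation with a final operator homotopy. Two details need repair, though. Your claim that the generators of $\mathcal{C}_\mu$ ``all live in the top-left corner'' is false: $[F_\mu,\hat a]$ genuinely occupies all four matrix entries (for instance its $(2,1)$ entry is $\mu(\eps+\D^2+\mu^2)^{-1/2}a$), so $\mathcal{C}_\mu$ is not a subalgebra of the corner $e_{11}M_2(\cn)e_{11}\cong\cn$ and cannot be identified with $C$ on the nose. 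The paper instead takes $M_2(C)$ as coefficient algebra for the doubled module and passes to $C$ via the Morita equivalence bimodule ${}_{M_2(C)}(C\oplus C)_C$; this is precisely the option you raise and dismiss (``not merely Morita equivalent, which would already suffice''), and it in fact delivers the stated representative $({}_A(C\oplus C)_C,F_{\mu,\eps})$ over $C$ itself, not merely an isomorphism class, because the Morita bimodule is explicit. Secondly, the degenerate summand $({}_AC_C,-F_\eps)$ with zero $\A$-action does not need any $\eps\to 0$ limit or appeal to Corollary \ref{cor:spec-trip-to-Kas-mod}: the straight-line operator homotopy from $-F_\eps$ to ${\rm Id}_\cn$ (to the off-diagonal flip in the graded case) is a homotopy of Kasparov modules for trivial reasons, since the compactness conditions are vacuous when the representation is zero, and it lands on a genuinely degenerate module; invoking $\eps\to 0$ is unnecessary and in fact problematic when $\D$ is not invertible. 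The paper concludes with the two-step homotopy you also sketch: $F_{m,\eps}$ in $m\in[0,\mu]$ at fixed $\eps$, then $\D_\mu(t\eps+\D_\mu^2)^{-1/2}$ in $t\in[0,1]$ to pass from $F_{\mu,\eps}$ to $F_\mu$.
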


\begin{proof} The $KK$-class of $(\A,\HH,\D)$ is represented 
(via Corollary \ref{cor:spec-trip-to-Kas-mod}) by
$({}_AC_C,F_\eps)$ with $F_\eps=\D(\eps+\D^2)^{-1/2}$, $\eps>0$,  while the
class of $(\A,\HH^2,\D_\mu)$ is
represented
by the
Kasparov module $({}_AM_2(C)_{M_2(C)},F_{\mu,\eps})$ 
with operator defined
by $F_{\mu,\eps}=\D_\mu(\eps+\D_\mu^2)^{-1/2}$. By Morita equivalence,
this module has the same class as the module $({}_A(C\oplus C)_{C},F_{\mu,\eps})$, since
${}_{M_2(C)}(C\oplus C)_C$ is a Morita equivalence bimodule.
The one-parameter
family $({}_A(C\oplus C)_C,F_{m,\eps})_{0\leq m\leq \mu}$ is a continuous operator homotopy,
\cite{KasparovTech},  from
$({}_A(C\oplus C)_C,F_{\mu,\eps})$ to the
direct sum of two Kasparov modules
\ben
({}_AC_C,F_{\eps})\oplus({}_AC_C,-F_\eps).
\een
In the odd case the second Kasparov module is operator homotopic to
$({}_AC_C,{\rm Id}_\cn)$ by the
straight line path
since
$\A$ is represented by zero on this module. In the even case
we
find the second Kasparov module is homotopic to
\ben
\left({}_AC_C,\bma 0 & 1\\ 1 & 0\ema\right),
\een
the matrix decomposition being with respect to the $\mathbb{Z}_2$-grading of $\HH$
which provides a $\mathbb{Z}_2$-grading of $C\subset \K_\cn$.
Thus in both the even and odd cases the second module is degenerate, i.e.
$F^2=1$, $F=F^*$ and $[F,a]=0$ for all $a\in\A$, and so
the $KK$-class of
$({}_A(C\oplus C)_C,F_{\mu,\eps})$, written $[({}_A(C\oplus C)_C,F_{\mu,\eps})]$,
is the $KK$-class of
$({}_AC_C,F_{\eps})$.
In addition, the Kasparov module $({}_A(C\oplus C)_C,F_\mu)$ with $F_\mu=\D_\mu|\D_\mu|^{-1}$ is
operator
homotopic to $({}_A(C\oplus C)_C,F_{\mu,\eps})$ via
\ben t\mapsto \D_\mu(t\eps +\D_\mu^2)^{-1/2},\ \ \ 0\leq t\leq 1.\een
This provides the desired representative.
\end{proof}

The next result records what is effectively a tautology, given our definitions. Namely
we define the $K_0(C)$-valued  index pairing  of $(\A,\H,\D)$ with $K_*(\A)$
in terms of the associated Kasparov module. Similarly, the associated pre-Fredholm module
has  an index pairing defined in terms of the associated Kasparov module. 

\begin{corollary} 
\label{cor:double-index}
Let $(\A,\HH,\D)$ be a   spectral triple relative to $(\cn,\tau)$ with $\A$ separable.
Let
$
(\A, \HH^2, \D_\mu)$ relative to
$(M_2(\cn),\tau\otimes \tr_2)
$ be the double and
$({}_A(C\oplus C)_C,F_\mu)$ the associated
Fredholm module. Then the $K_0(C)$-valued
index pairings defined by the two spectral triples
and the
semifinite Fredholm module all agree: for $x\in K_*(\A)$ of the appropriate parity
and $\mu>0$
\begin{align*}
x\otimes_A[(\A,\HH,\D)]&=x\otimes_A[({}_AC_C,F_\eps)]=
x\otimes_A\left[\left(\A, \HH^2,
\D_\mu\right)\right]=x\otimes_A[({}_A(C\oplus C)_C,F_\mu)]\in K_0(C).
\end{align*}
\end{corollary}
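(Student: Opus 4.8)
The plan is to observe that Corollary \ref{cor:double-index} is essentially a bookkeeping statement: every one of the four pairings has by definition been reduced, through the constructions of Subsections \ref{subsec:Kas-mod}--\ref{num-ind-pair}, to the Kasparov product of $x\in K_*(\A)=KK^*(\C,A)$ with a single $KK$-class, so the whole content is that all four Kasparov classes involved agree in $KK^\bullet(A,C)$. First I would invoke Corollary \ref{cor:spec-trip-to-Kas-mod}, which says that the spectral triple $(\A,\HH,\D)$ has associated Kasparov module $({}_AC_C,F_\eps)$ with a class independent of $\eps>0$; by definition $x\otimes_A[(\A,\HH,\D)]$ \emph{means} $x\otimes_A[({}_AC_C,F_\eps)]$, giving the first equality for free.

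Next I would handle the double. Applying Corollary \ref{cor:spec-trip-to-Kas-mod} to the semifinite spectral triple $(\A,\HH^2,\D_\mu)$ relative to $(M_2(\cn),\tau\otimes\tr_2)$ produces a Kasparov module over the algebra $\mathcal C_\mu\subset\K(M_2(\cn))$ generated by the relevant operators built from $F_{\mu,\eps}=\D_\mu(\eps+\D_\mu^2)^{-1/2}$; but Lemma \ref{lem:noninv} already does the key work, showing via Morita equivalence (${}_{M_2(C)}(C\oplus C)_C$ is a Morita bimodule) and an explicit operator homotopy through $(\A,(C\oplus C)_C,F_{m,\eps})_{0\le m\le\mu}$ that $[({}_A(C\oplus C)_C,F_{\mu,\eps})]=[({}_AC_C,F_\eps)]$ in $KK^\bullet(A,C)$, and further that $[({}_A(C\oplus C)_C,F_\mu)]$ with $F_\mu=\D_\mu|\D_\mu|^{-1}$ equals this same class via the homotopy $t\mapsto\D_\mu(t\eps+\D_\mu^2)^{-1/2}$. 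I would then remark that, because Morita-equivalent Kasparov modules have equal images under $\cdot\otimes_A(-)$ (functoriality and associativity of the Kasparov product, \cite{KasparovTech,Bl}), $x\otimes_A[(\A,\HH^2,\D_\mu)]$ (defined using the $M_2(C)$-module) equals $x\otimes_A[({}_A(C\oplus C)_C,F_\mu)]$, identifying the third and fourth pairings, and by Lemma \ref{lem:noninv} both equal $x\otimes_A[({}_AC_C,F_\eps)]$, hence the second pairing.

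For the semifinite Fredholm module factor, I would point to the commuting triangle displayed just before Definition \ref{def:double}: the spectral triple, its associated pre-Fredholm module $(\HH,F_\eps)$ (Lemma \ref{z simplification lemma1}), and the Kasparov module $({}_AC_C,F_\eps)$ all fit together, so the index pairing attached to the pre-Fredholm module is by construction $x\otimes_A[({}_AC_C,F_\eps)]$ as well; the double's Fredholm module $({}_A(C\oplus C)_C,F_\mu)$ is exactly the normalized ($F^2=1$) representative produced in Lemma \ref{lem:noninv}, so its pairing is the same class paired with $x$. Assembling these identifications gives the chain of equalities in the statement. The only real subtlety — and the step I would be most careful about — is checking that the Kasparov products are taken over the correct coefficient algebras and that the Morita equivalence ${}_{M_2(C)}(C\oplus C)_C$ really does intertwine the products as claimed, i.e.\ that $x\otimes_A[({}_AM_2(C)_{M_2(C)},F_{\mu,\eps})]$ and $x\otimes_A[({}_A(C\oplus C)_C,F_{\mu,\eps})]$ land in $K_0(C)$ compatibly under the Morita identification $K_0(M_2(C))\cong K_0(C)$; this is standard (associativity of the Kasparov product with the Morita element $[{}_{M_2(C)}(C\oplus C)_C]\in KK^0(M_2(C),C)$), but it is the one place where one must be explicit rather than appeal to a tautology.
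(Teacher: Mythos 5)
Your proposal is correct and follows essentially the same reasoning the paper relies on (the paper itself gives no proof, prefacing the statement with the remark that it is ``effectively a tautology''): you reduce all four pairings via Corollary~\ref{cor:spec-trip-to-Kas-mod} and Lemma~\ref{lem:noninv} to the single Kasparov class $[({}_AC_C,F_\eps)]\in KK^\bullet(A,C)$, which is exactly what the paper intends. Your closing caution about checking that the Morita element ${}_{M_2(C)}(C\oplus C)_C$ intertwines the products compatibly under $K_0(M_2(C))\cong K_0(C)$ is well placed, though the paper treats that point implicitly.
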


As noted after Corollary \ref{cor:spec-trip-to-Kas-mod},
the trace $\tau$ induces a homomorphism $\tau_*:K_0(C)\to\R$.

An important feature of the double construction 
is that  it allows us to make pairings in the nonunital
case explicit. To be precise, if $e\in M_n(\A^\sim)$ 
is a projection and $\pi^n:M_n(\A^\sim)\to M_n(\C)$
is the quotient map (by $M_n(\A)$), we set as in \eqref{ext-A-sim}
\begin{equation}
{\bf 1}_e:=\pi^n(e)\in M_n(\C).
\label{eq:wots-1-e} 
\end{equation}
Then in the double $e$
is represented on $\HH\otimes\C^n\oplus \HH\otimes \C^n$ (this is the spectral triple picture, but
similar comments hold for Kasparov modules) via
$$
e\mapsto\hat e:=\begin{pmatrix} e & 0\\ 0 & {\bf 1}_e \end{pmatrix}.
$$
Thus $\hat e(\D_\mu\otimes {\rm Id}_n)\hat e$ is $\tau\otimes{\rm tr}_{2n}$-Fredholm 
in $M_{2n}(\cn)$, with the understanding that 
the matrix units $e_{ij}\in M_{2n}(\C)$ sit in $M_{2n}(\cn)$ as $e_{ij}\,{\rm Id}_{\cn}$.

{\bf Example.} Let $p_B\in M_2(C_0(\C)^\sim)$ be the Bott projector, given explicitly by \cite[pp 76-77]{GVF}
\begin{equation}
\label{eq:bott}
p_B(z)=\frac{1}{1+|z|^2}
\begin{pmatrix} 1 & \bar{z}\\ z & |z|^2\end{pmatrix},\quad\mbox{then}\quad {\bf 1}_{p_B}
=\begin{pmatrix} 0 & 0 \\ 0 & 1\end{pmatrix}.
\end{equation}
\index{Bott projector}

We are now ready to define  the numerical index paring for  semifinite
spectral triples.

\begin{definition}
\label{index-paring}
Let $(\A,\H,\D)$ be a  semifinite spectral triple
relative to $(\cn,\tau)$ of parity $\bullet\in \{0,1\}$, $\bullet=0$ for an even triple,
$\bullet=1$ for an odd triple  and with $\A$ separable. We define the numerical index pairing of  $(\A,\HH,\D)$
with $K_\bullet(\A)$ as follows:

1. Take the Kasparov product with the $KK$-class defined by the doubled up  spectral triple
$$
\cdot\otimes_A[({}_A(C\oplus C)_C,F_\mu)]\,:\,K_\bullet(A)\to K_0(C),
$$

2. Apply the homomorphism $\tau_*:K_0(C)\to\R$ to the resulting class.

We will denote this pairing by
$$
\langle [e]-[{\bf 1}_e],(\A,\H,\D)\rangle\in \R,\ \mbox{even case},\ \ \
\langle [u],(\A,\H,\D)\rangle\in \R,\ \mbox{odd case}.
$$
If, in the even case, $[e]-[f]\in K_0(\A)$ then $[{\bf 1}_e]=[{\bf 1}_f]\in K_0(\C)$ and we may define 
$$
\langle [e]-[f],(\A,\H,\D)\rangle:=\langle [e]-[{\bf 1}_e],(\A,\H,\D)\rangle-\langle [f]-[{\bf 1}_f],(\A,\H,\D)\rangle\in \R.
$$
\end{definition}
\index{index pairing!numerical }
From Corollary \ref{cor:double-index} we may deduce the following important result, which justifies the name
`numerical index pairing' for the map given in the previous Definition, as well as our notations.

\begin{prop}
\label{index-explicit}
Let $(\A,\HH,\D)$ be a  semifinite spectral triple relative to $(\cn,\tau)$, of parity $\bullet\in\{0,1\}$
 and with $\A$ separable. 
Let $e$ be a projector in $ M_n(\A^\sim)$ which represents 
$[e]\in K_0(\A)$, for $\bullet=0$ (resp.  $u$  a unitary in $ M_n(\A^\sim)$ which represents 
$[u]\in K_1(\A)$, for $\bullet=1$). Then with $F_\mu:=\D_\mu/|\D_\mu|$ and $P_\mu:=(1+F_\mu)/2$,
we have
\begin{align*}
\langle [e]-[{\bf 1}_e],(\A,\H,\D)\rangle
&=\Index_{\tau\otimes{\tr_{2n}}}\big(\hat e({F_\mu}_+\otimes{\rm Id}_n)\hat e\big),\quad
\mbox{even case},\\
\langle [u],(\A,\H,\D)\rangle
&=\Index_{\tau\otimes{\tr_{2n}}}
\big((P_\mu\otimes{\rm Id}_n)\hat u(P_\mu\otimes{\rm Id}_n)\big),\quad \mbox{odd case}.
\end{align*}
\end{prop}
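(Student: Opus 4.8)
The plan is to unwind the definitions and recognize the claim as essentially a restatement of how the Kasparov product with a normalized Kasparov module is computed, combined with the specific form of the doubled spectral triple. First I would recall from Definition \ref{index-paring} that $\langle [e]-[{\bf 1}_e],(\A,\H,\D)\rangle$ is defined to be $\tau_*$ applied to the class $([e]-[{\bf 1}_e])\otimes_A[({}_A(C\oplus C)_C,F_\mu)]\in K_0(C)$. So the task is to show that this $K$-theoretic class, after applying $\tau_*$, equals the semifinite Fredholm index $\Index_{\tau\otimes\tr_{2n}}(\hat e({F_\mu}_+\otimes{\rm Id}_n)\hat e)$ in the even case, and the analogous statement in the odd case.

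The key observation, already assembled in subsection \ref{num-ind-pair}, is that because $F_\mu=\D_\mu|\D_\mu|^{-1}$ satisfies $F_\mu^2=1$ (the \textbf{Remark} after Definition \ref{def:double}), the Kasparov product with $[({}_A(C\oplus C)_C,F_\mu)]$ has the explicit representative described there: in the even case $([e]-[f])\otimes_A[(X,F)]=[\Index(eF_+e)]-[\Index(fF_+f)]\in K_0(C)$, and in the odd case $[u]\otimes_A[(X,F)]=[\Index(PuP)]$ with $P=(1+F)/2$. I would apply this with $f={\bf 1}_e$ (resp. with the doubled action $\hat u$), using the extended action \eqref{ext-A-sim} of $M_n(\A^\sim)$ on $(\H\oplus\H)\otimes\C^n$; here one must be careful that $e$ and ${\bf 1}_e$ are represented in the double as $\hat e$ and that $F_+$ becomes ${F_\mu}_+\otimes{\rm Id}_n$ on the amplified module. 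The index appearing in the Kasparov-product formula is the $C$-linear (or $C^\sim$-linear) index, i.e. the formal difference of the finite-rank-over-$C$ kernel and cokernel projections of a regular amplification, as in \cite[Lemma 4.10, Proposition 4.11]{GVF}. Finally, since $C\subset\K_\cn$ and every class in $K_0(C)$ is represented by projections over the unitization of $C\cap\F_\cn$ (the remark after Corollary \ref{cor:double-index}), applying $\tau_*$ to $[\Index(\hat e({F_\mu}_+\otimes{\rm Id}_n)\hat e)]$ literally returns the number $\tau\otimes\tr_{2n}$ evaluated on the kernel projection minus that on the cokernel projection, which is by definition $\Index_{\tau\otimes\tr_{2n}}(\hat e({F_\mu}_+\otimes{\rm Id}_n)\hat e)$. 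The odd case is identical with $PuP$ in place of $\hat eF_+\hat e$, using $P_\mu=(1+F_\mu)/2$ and noting there is no contribution from $P_\mu^\perp=(1-F_\mu)/2$ since $F_\mu^2=1$.

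I expect the main obstacle to be bookkeeping rather than anything deep: one has to check carefully that the compatibility diagram after Corollary \ref{cor:spec-trip-to-Kas-mod} and Corollary \ref{cor:double-index} genuinely let us replace the abstract Kasparov product by the explicit Fredholm-operator formula of subsection \ref{num-ind-pair}, in particular that $\hat e({F_\mu}_\pm\otimes{\rm Id}_n)\hat e$ is indeed $\tau\otimes\tr_{2n}$-Fredholm in $M_{2n}(\cn)$ — which is exactly the point made just before the \textbf{Example} — and that the $K_0(C)$ class of its index, pushed forward by $\tau_*$, is computed by the semifinite (real-valued) Fredholm index of \cite{CP1,CP2,BeF}. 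The one genuine subtlety is matching the two notions of index (the $C$-module-valued $\Index$ used in Kasparov theory versus the $\tau$-valued $\Index_{\tau\otimes\tr_{2n}}$): this is handled by the holomorphic-functional-calculus stability of $M_n(C\cap\F_\cn)$ inside $M_n(C)$ quoted from \cite{KNR}, which guarantees the kernel and cokernel projections can be chosen in a matrix algebra over $(C\cap\F_\cn)^\sim$ and hence have finite $\tau\otimes\tr_{2n}$-trace, so that $\tau_*$ evaluated on the formal difference is literally the real-valued index. With these identifications in place the proposition follows, essentially by citing Corollary \ref{cor:double-index} together with the explicit Kasparov-product computations recalled in subsection \ref{num-ind-pair}.
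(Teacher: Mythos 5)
Your proposal follows essentially the same route the paper takes — the paper gives no proof at all for this proposition, remarking only that it is deduced from Corollary \ref{cor:double-index} together with the explicit Kasparov-product formulas (valid when $F^2=1$) set out at the start of subsection \ref{num-ind-pair}, and you have unpacked that deduction faithfully: Definition \ref{index-paring}, the fact that $F_\mu^2=1$ permits the explicit Fredholm-operator representative of the Kasparov product, and the holomorphic-stability fact from \cite{KNR} ensuring $\tau_*$ of the $C$-module-valued index is the real-valued $\tau\otimes\tr_{2n}$-Fredholm index.

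There is, however, one step you have silently skipped in the even case. Applying $([e]-[f])\otimes_A[(X,F)]=[\Index(eF_+e)]-[\Index(fF_+f)]$ with $f={\bf 1}_e$, and representing $e$ and ${\bf 1}_e$ in the double as $\hat e$ and $\widehat{{\bf 1}_e}$ \emph{respectively} (your phrase ``$e$ and ${\bf 1}_e$ are represented in the double as $\hat e$'' conflates the two), yields a \emph{difference} of two index classes in $K_0(C)$. Your final paragraph then applies $\tau_*$ only to the $\hat e$-term, which tacitly presumes
\[
\Index_{\tau\otimes\tr_{2n}}\big(\widehat{{\bf 1}_e}\,(F_{\mu,+}\otimes{\rm Id}_n)\,\widehat{{\bf 1}_e}\big)=0.
\]
This is true and easy, but must be stated, since otherwise your formula is a difference of two indices rather than the single index asserted in the proposition: $\widehat{{\bf 1}_e}$ is a scalar projection (built from $\pi^n({\bf 1}_e)\otimes{\rm Id}_\cn$) which commutes with $F_\mu\otimes{\rm Id}_n$, and since $F_\mu^2=1$ the operator $F_{\mu,+}\otimes{\rm Id}_n$ is invertible with inverse $F_{\mu,-}\otimes{\rm Id}_n$, hence so is its restriction to the range of $\widehat{{\bf 1}_e}$, and its index vanishes. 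With this observation supplied your argument is complete and coincides with what the paper intends.
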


\subsection{Smoothness and summability for  spectral triples}
\label{subsec:SS}

In this subsection we discuss the notions of  finitely summable spectral triple, 
$QC^\infty$ spectral triple  and most importantly {\em smoothly summable spectral triples}
for nonunital $*$-algebras. We then examine how these notions fit with our discussion of
summability and the pseudodifferential calculus introduced in the previous section.
One of the main technical difficulties that we have to overcome in the nonunital case
is the issue of finding the appropriate definition of a smooth algebra stable under the holomorphic
functional calculus. 

We begin by considering possible notions of summability for 
spectral triples. There are two basic tasks that we need some summability for:

1) To obtain a well-defined Chern character for the associated Fredholm module, and

2) To obtain a local index formula.

Even in the case where $\A$ is unital, point 2) requires extra smoothness assumptions, discussed below, in
addition to the necessary summability. Thus we expect point 2) to require more assumptions on the
spectral triple than point 1). For point 1) we have the following answer.

\begin{prop}
\label{pr:can-do}
Let $(\A,\HH,\D)$ be a  semifinite spectral triple relative to $(\cn,\tau)$.
Suppose further that there exists $p\geq 1$ such that $a(1+\D^2)^{-s/2}\in\L^{1}(\cn,\tau)$ 
for all $s>p$ and all
$a\in\A$. Then $(\H,F_\eps=\D(\eps+\D^2)^{-1/2})$ 
defines a $\lfloor p\rfloor+1$-summable pre-Fredholm module for $\A^2$
whose $KK$-class is independent of $\eps>0$ (or even $\eps\geq 0$ if $\D$ is invertible). 
If in addition
we have $[\D,a](1+\D^2)^{-s/2}\in \L^1(\cn,\tau)$ for all $s>p$ and all
$a\in\A$, then $(\H,F_\eps=\D(\eps+\D^2)^{-1/2})$ defines a 
$\lfloor p\rfloor+1$-summable pre-Fredholm module for $\A$ 
whose $KK$-class is independent of $\eps>0$ (or even $\eps\geq 0$ if $\D$ is invertible).
\end{prop}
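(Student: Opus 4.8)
The plan is to show that the operator $F_\eps = \D(\eps+\D^2)^{-1/2}$, $\eps > 0$, together with the Hilbert space $\H$, meets the three requirements in the definition of a $(\lfloor p\rfloor+1)$-summable pre-Fredholm module: namely $a(F_\eps - F_\eps^*) \in \L^{(p+1)/2}(\cn,\tau)$, $a(1-F_\eps^2) \in \L^{(p+1)/2}(\cn,\tau)$, and $[F_\eps,a] \in \L^{p+1}(\cn,\tau)$ for all $a \in \A$ (and in the first part of the proposition, for $a \in \A^2$). That the resulting pair is a pre-Fredholm module at all, with $\eps$-independent $KK$-class, is already Lemma \ref{z simplification lemma1} (and Lemma \ref{lem:spec-trip-to-Kas-mod}); so the only new content is the summability estimates, and the $KK$-independence statement is then free.

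First I would dispose of the self-adjointness term: $F_\eps = F_\eps^*$ by the functional calculus, so $a(F_\eps - F_\eps^*) = 0$ and there is nothing to check. Next, the resolvent-type term: $a(1-F_\eps^2) = \eps\, a(\eps+\D^2)^{-1}$. Since $\lfloor p\rfloor + 1 > p$, pick $s$ with $p < s \le \lfloor p\rfloor + 1$; then by hypothesis $a(1+\D^2)^{-s/2} \in \L^1(\cn,\tau)$, and since $(1+\D^2)^{-(s - s)/2}$-type corrections and the comparison of $(\eps+\D^2)^{-1}$ with $(1+\D^2)^{-1}$ are bounded, one gets $a(\eps+\D^2)^{-1} \in \L^1(\cn,\tau)$, hence $\in \L^{(p+1)/2}(\cn,\tau)$ after noting $(p+1)/2 \le p$ when $p \ge 1$, so $\L^1 \subset \L^{(p+1)/2}$ fails in general — instead one should argue directly that $a(\eps+\D^2)^{-1} = \big(a(1+\D^2)^{-s/2}\big)(1+\D^2)^{s/2}(\eps+\D^2)^{-1}$ with the second factor bounded and $s/2 \le (\lfloor p\rfloor+1)/2 = (p+1)/2$ roughly, so that by the generalized Hölder inequality $a(\eps+\D^2)^{-1} \in \L^{(p+1)/2}$ once one has the finer statement $a(1+\D^2)^{-(p+1)/4}(1+\D^2)^{-(p+1)/4} \in \L^{(p+1)/2}$, i.e. $a(1+\D^2)^{-(p+1)/4} \in \L^{p+1}$; this last is exactly the $s = (p+1)/2 > p$ case of the hypothesis combined with the interpolation Lemma \ref{interpolation}-style complex-interpolation argument applied to $a(1+\D^2)^{-z}$. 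So the clean route is: establish the single estimate $a(1+\D^2)^{-(p+1)/4} \in \L^{p+1}(\cn,\tau)$ by interpolating between $a(1+\D^2)^{-s/2} \in \L^1$ for $s$ slightly bigger than $p$ and $a \in \cn = \L^\infty$, exactly as in the proof of Lemma \ref{interpolation}, and then deduce both the $1-F_\eps^2$ term (by Hölder, writing $a(\eps+\D^2)^{-1}$ as a product of two $\L^{p+1}$ factors up to bounded corrections) and the commutator term.

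For the commutator $[F_\eps, a]$ I would use the decomposition from the proof of Lemma \ref{lem:spec-trip-to-Kas-mod},
\[
[F_\eps,a] = \D\big[(\eps+\D^2)^{-1/2},a\big] + [\D,a](\eps+\D^2)^{-1/2},
\]
and treat the two summands separately, now in $\L^{p+1}$ rather than merely in $\K_\cn$. The second summand is $[\D,a](1+\D^2)^{-1/2}$ times a bounded operator, and the hypothesis $[\D,a](1+\D^2)^{-s/2} \in \L^1$ for all $s > p$ plus interpolation against $[\D,a] \in \cn$ gives $[\D,a](1+\D^2)^{-1/2} \in \L^{p+1}$; for the first part of the proposition, dealing with $\A^2$, one instead writes $a = a_1 a_2$ and moves $a_2$ across, using only the hypothesis on $\A$. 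The first summand is handled by the same integral formula for fractional powers used in that earlier proof,
\[
\D\big[(\eps+\D^2)^{-1/2},a\big] = \pi^{-1}\int_0^\infty \lambda^{-1/2}\Big(\D^2(\eps+\lambda+\D^2)^{-1}[\D,a](\eps+\lambda+\D^2)^{-1} + \D(\eps+\lambda+\D^2)^{-1}[\D,a]\D(\eps+\lambda+\D^2)^{-1}\Big)d\lambda,
\]
and estimating the $\L^{p+1}$-norm of the integrand using Hölder together with the same operator inequalities as in Lemma \ref{lem:was-5.3} / equation \eqref{eq:q-trace-est}, absorbing the $\D$-factors into resolvent powers and leaving a term of the form $[\D,a](\D^2-\cdot)^{-\eta}$ with $\eta$ small enough that it lies in $\L^{p+1}$ while the remaining $\lambda$-integral converges.

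I expect the main obstacle to be purely bookkeeping: getting the exponents to line up so that $\L^1$-integrability for all $s > p$ really does yield $\L^{p+1}$-membership of the critical combinations (since $p+1$ need not be comparable to $1$ in the useful direction when $p \ge 1$), which forces one to run the complex-interpolation argument of Lemma \ref{interpolation} rather than a crude inclusion of Schatten ideals, and then to check the $\lambda$-integrals in the commutator estimate converge with the slightly-subcritical powers that interpolation provides. Once the single interpolation estimate $a(1+\D^2)^{-(p+1)/4} \in \L^{p+1}$ (and its analogue with $[\D,a]$) is in hand, everything else is Hölder and the fractional-power integral formula, and the $\eps$-independence of the $KK$-class is inherited verbatim from Corollary \ref{cor:spec-trip-to-Kas-mod}.
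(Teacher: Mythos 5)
Your overall plan---reduce everything to interpolation estimates, then feed them into the fractional-power integral formula---is the same as the paper's, but two of the concrete steps you propose would actually fail, and both for related reasons.

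\textbf{Wrong target Schatten ideal, wrong exponent.} The proposition asks for a $(\lfloor p\rfloor+1)$-summable pre-Fredholm module, which means $[F_\eps,a]\in\L^{\lfloor p\rfloor+1}(\cn,\tau)$ and $a(1-F_\eps^2)\in\L^{(\lfloor p\rfloor+1)/2}(\cn,\tau)$. Your key estimate is $a(1+\D^2)^{-(p+1)/4}\in\L^{p+1}$; but $\lfloor p\rfloor+1\leq p+1$ with strict inequality whenever $p\notin\N$, so $\L^{\lfloor p\rfloor+1}\subsetneq\L^{p+1}$ and your estimate lands in the \emph{wrong} (larger) ideal. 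Worse, the exponent $(p+1)/4$ is $\geq 1/2$ for every $p\geq 1$, which wrecks the absorption step in the commutator bound. In the integral formula one has to write $(\eps+\lambda+\D^2)^{-1}=(\eps+\lambda+\D^2)^{-\alpha}(\eps+\lambda+\D^2)^{-(1-\alpha)}$, keep the first factor for the Schatten estimate and absorb $\D$ into the second; $\|\D(\eps+\lambda+\D^2)^{-(1-\alpha)}\|\leq(\eps+\lambda)^{1/2-(1-\alpha)}$ only helps if $\alpha\leq 1/2$, and the resulting $\lambda$-integral $\int_0^\infty\lambda^{-1/2}(\eps+\lambda)^{-1+\alpha}d\lambda$ converges only if $\alpha<1/2$ \emph{strictly}. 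At the same time, interpolation gives $b(\eps+\D^2)^{-\alpha}\in\L^{\lfloor p\rfloor+1}$ precisely when $\alpha>p/(2(\lfloor p\rfloor+1))$. The window $\bigl(p/(2(\lfloor p\rfloor+1)),\,1/2\bigr)$ is nonempty exactly because $p<\lfloor p\rfloor+1$---this is why the statement is about $\lfloor p\rfloor+1$ and not $p+1$---and that is the single degree of freedom the proof actually uses. The paper makes this explicit with $\alpha=\frac{\lfloor p\rfloor+\delta}{2(\lfloor p\rfloor+1)}$ for $\delta\in(p-\lfloor p\rfloor,1)$. Your $(p+1)/4$ lies entirely outside this window except in degenerate cases, so the estimate as you set it up cannot close.

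\textbf{The H\"older factorization for $a(1-F_\eps^2)$ does not exist.} You propose to write $a(\eps+\D^2)^{-1}$ as a product of two $\L^{p+1}$ factors, but there is only one $a$ in that expression: whichever way you split it, one factor is a pure function of $\D$, and in the nonunital setting $(1+\D^2)^{-r}$ is not $\tau$-compact, let alone Schatten, so H\"older cannot be applied. (Incidentally, your aside that ``$\L^1\subset\L^{(p+1)/2}$ fails'' is itself mistaken: for bounded operators $\L^1\subset\L^q$ for all $q\geq 1$, hence $\L^1\subset\L^{(\lfloor p\rfloor+1)/2}$ whenever $p\geq 1$. But this is moot since one cannot get $a(\eps+\D^2)^{-1}\in\L^1$ for $p\geq 2$ anyway.) The correct and much shorter route is the one the paper takes: apply the interpolation Lemma with $\alpha=2$ directly, which gives $a(\eps+\D^2)^{-1}\in\L^q$ for all $q>p/2$ with $q\geq 1$, and $q=(\lfloor p\rfloor+1)/2$ satisfies both constraints since $\lfloor p\rfloor+1>p\geq 1$. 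No H\"older is needed for this term at all.
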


{\bf Remark.} Here $\A^2$ means the algebra given by the finite linear span of products $ab$, $a,\,b\in\A$.

\begin{proof}
First we employ Lemma \ref{interpolation} to deduce that for all $\delta>0$ we have
$$
a(1-F_\eps^2)=\eps\,a(\eps+\D^2)^{-1}\in\L^{p/2+\delta}(\cn,\tau).
$$
The same lemma tells us that for all $a\in\A$ and $\delta>0$
$$
a(\eps+\D^2)^{-\frac{\lfloor p\rfloor+\delta}{2(\lfloor p\rfloor+1)}}\in \L^{\lfloor p\rfloor+1}(\cn,\tau).
$$

We again use the integral formula for fractional powers and \cite[Lemma 3.3]{CP1} to obtain
\begin{align*}
[F_\eps,a]&=\frac{-1}{\pi}\int_0^\infty 
\lambda^{-1/2}\D(\eps+\lambda+\D^2)^{-1}[\D,a]\D(\eps+\lambda+\D^2)^{-1}d\lambda\\
&\quad-\frac{1}{\pi}\int_0^\infty 
\lambda^{-1/2}(\eps+\lambda+\D^2)^{-1}[\D,a]\D^2(\eps+\lambda+\D^2)^{-1}d\lambda
+(\eps+\D^2)^{-1/2}[\D,a].
\end{align*}
Now we multiply on the left by $b\in\A$, and estimate the $\lfloor p\rfloor+1$-norm. Since
$$
(\eps+\D^2+\lambda)^{-1}=(\eps+\D^2+\lambda)^{-\frac{\lfloor p\rfloor+\delta}{2(\lfloor p\rfloor
+1)}}(\eps+\D^2+\lambda)^{-\frac{1}{2}-\frac{(1-\delta)}{2(\lfloor p\rfloor+1)}},
$$
and
$$
\Vert \D(\eps+\D^2+\lambda)^{-\frac{1}{2}-\frac{(1-\delta)}{2(\lfloor p\rfloor+1)}}
\Vert_\infty\leq (\eps+\lambda)^{-\frac{(1-\delta)}{2(\lfloor p\rfloor+1)}},
$$
by spectral theory, we find that for $1>\delta>0$
\begin{align*}
&\Vert b[F_\eps,a]\Vert_{\lfloor p\rfloor+1}
\leq 2\Vert [\D,a]\Vert\,\Vert b(\eps+\D^2)^{-\frac{\lfloor p\rfloor+\delta}{2(\lfloor p\rfloor+1)}}
\Vert_{\lfloor p\rfloor+1}\int_0^\infty
\lambda^{-1/2} (\eps+\lambda)^{-\frac{1}{2}-\frac{(1-\delta)}{2(\lfloor p\rfloor+1)}}\,d\lambda<\infty.
\end{align*}
Hence  $b[F_\eps,a]\in\L^{\lfloor p\rfloor+1}(\cn,\tau)$, and taking adjoints 
shows that $[F_\eps,a]b\in\L^{\lfloor p\rfloor+1}(\cn,\tau)$ 
for all $a,\,b\in\A$ also. Now we observe that 
$[F_\eps,ab]=a[F_\eps,b]+[F_\eps,a]b$, and so $[F_\eps,ab]\in \L^{\lfloor p\rfloor+1}(\cn,\tau)$ for
all $ab\in\A^2$. 
This completes the proof of the first part. The second claim follows from a similar estimate without 
the need to multiply by
$b\in\A$.
The independence of the class on $\eps>0$ is as in Lemma \ref{lem:spec-trip-to-Kas-mod}.
\end{proof}

The previous proposition shows that we have  sufficient conditions on a spectral triple in order to
obtain a  finitely summable pre-Fredholm module for $\A^2$ or $\A$. These two conditions are not equivalent.
Here is a counterexample for $p=1$.

Consider
the function $f:\,x\mapsto\sin(x^3)/(1+x^2)$ on
the real line, and the operator $\D=-i(d/dx)$ on $L^2(\R)$. 
Then the operator $f(1+\D^2)^{-s/2}$ is trace class for $\Re(s)>1$, by \cite[Theorem 4.5]{Simon},
while $[\D,f](1+\D^2)^{-s/2}$ is not trace class for any $\Re(s)>1$, by \cite[Proposition 4.7]{Simon}. 
To see the latter, it
suffices to show that with $g(x)=x^2/(1+x^2)$, 
we  have $g(1+\D^2)^{-s/2}$ not trace class. 
However this follows
from $g(1+\D^2)^{-s/2}=(1+\D^2)^{-s/2}-h(1+\D^2)^{-s/2}$ with 
$h=\frac{1}{1+x^2}$. The second operator is trace class,
however $(1+\D^2)^{-s/2}$ is well-known to be non-compact, and so not trace class.

We investigate the weaker of these two summability conditions first,
relating it to our integration theory from Section \ref{sec:psido-and-sum}. 
Indeed the  following two propositions show  that finite summability, in the sense of the next definition,  almost 
uniquely determines where $\A$ must sit inside $\cn$, and justifies
the 
introduction of the  Fr\'echet algebras 
$\B_1^k(\D,p)$.

\begin{definition}\label{def:spec-dim}
A  semifinite spectral triple  $(\A,\cH,\D)$, is said to be finitely summable if
there exists $s>0$ such that for all $a\in\A$, $a(1+\D^2)^{-s/2}\in\cl^1(\cn,\tau)$.
In such a case, we  let
$$
p:=\inf\big\{s>0\,:\,\mbox{for all } a\in\A, \,\, \tau\big(|a|(1+\D^2)^{-s/2}\big)<\infty
\big\},
$$
and call $p$ the spectral dimension of $(\A,\HH,\D)$.
\end{definition}
\index{semifinite spectral triple!finitely summable}\index{semifinite spectral triple!spectral dimension}
\index{spectral dimension}

{\bf Remark.} For the definition of the spectral dimension above to be  meaningful, one needs  two facts.
First, if $\A$ is the algebra of a finitely summable spectral triple,
we have $|a|(1+\D^2)^{-s/2}\in\L^1(\cn,\tau)$ for all $a\in\A$, which follows by 
using the polar decomposition $a=v|a|$ and writing 
$$
|a|(1+\D^2)^{-s/2}=v^*a(1+\D^2)^{-s/2}.
$$
Observe that we are {\em not} asserting that $|a|\in\A$, which is typically {\em not} true in examples.

The second fact we require is that
$ \tau\big(a(1+\D^2)^{-s/2}\big)\geq0$ for $a\geq 0$, which follows from
\cite[Theorem 3]{Bik}, quoted here as Proposition \ref{prop:bikky}.

In contrast to the unital case, checking the finite summability
condition for a nonunital spectral triple can be difficult. This is because our definition
relies on control of the trace norm 
of the non-self-adjoint
operators  $a(1+\D^2)^{-s/2}$, $a\in\A$. The next two results show that for a spectral triple $(\A,\H,\D)$
to be finitely summable with spectral dimension $p$, it is necessary that $\A\subset \B_1(\D,p)$
and this condition is {\em almost} sufficient as well.

\begin{prop}\label{one-way} Let $(\mathcal{A},\mathcal{H},\mathcal{D})$ 
be a  semifinite spectral triple.
If for some $p\geq 1$ we have 
$\mathcal{A}\subset\B_1^\infty(\D,p)$, 
then $(\mathcal{A},\mathcal{H},\mathcal{D})$
is finitely summable with spectral dimension given by the infimum of such $p$'s.
More generally, if for some $p\geq 1$ we have 
$\mathcal{A}\subset\B_2(\D,p)\mathcal{B}_2^{\lfloor p\rfloor+1}(\D,p)\subset \B_1(\D,p)$, 
then $(\mathcal{A},\mathcal{H},\mathcal{D})$
is finitely summable with spectral dimension given by the infimum of such $p$'s.
\end{prop}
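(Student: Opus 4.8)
The plan is to reduce everything to the integration theory of Section~\ref{sec:psido-and-sum}, in particular to Corollary~\ref{consistency} and the structure of $\B_1(\D,p)$. Recall that Corollary~\ref{consistency} tells us $\bigcup_{r>p}{\rm OP}^{-r}_0\subset\cl^1(\cn,\tau)$; more precisely, by the symmetric description \eqref{symmetric} of tame pseudodifferential operators, if $A\in\B_1^\infty(\D,p)$ then $(1+\D^2)^{-r/4}A(1+\D^2)^{-r/4}\in\cl^1(\cn,\tau)$ for every $r>p$. This is almost exactly what we want, except that in Definition~\ref{def:spec-dim} the operator $(1+\D^2)^{-s/2}$ sits entirely on one side of $a$, not split symmetrically.

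First I would handle the symmetry issue. Given $a\in\B_1^\infty(\D,p)$ and $s>p$, write
\[
a(1+\D^2)^{-s/2}=\big((1+\D^2)^{-s/4}\,\sigma^{-s/2}(a)\,(1+\D^2)^{-s/4}\big)(1+\D^2)^{s/4}\cdots
\]
is awkward, so instead I would argue directly: by Proposition~\ref{cor:polar-decomp} decompose $a=\sum_{k=0}^3 i^k a_k$ with $0\le a_k\in\B_1(\D,p)$, and for each positive $a_k$ use $a_k=a_k^{1/2}a_k^{1/2}$ with $a_k^{1/2}\in\B_2(\D,p)$ (Lemma~\ref{lem:b1-pos}). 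Then, choosing any $\delta$ with $0<\delta<s-p$ and splitting $(1+\D^2)^{-s/2}=(1+\D^2)^{-(p+\delta)/4}\cdot(1+\D^2)^{-(2s-p-\delta)/4}$ (note $2s-p-\delta>s>p+\delta$ since $s>p+\delta$... one arranges the exponents so each factor exceeds $(p+\delta)/2$ in the appropriate sense), a H\"older estimate in the semifinite trace ideals together with the fact that $a_k^{1/2}(1+\D^2)^{-(p+\delta)/4}\in\cl^2(\cn,\tau)$ (Remark (ii) after the definition of $\B_2(\D,p)$, since $p+\delta>p$) and $(1+\D^2)^{-(p+\delta)/4}(1+\D^2)^{-\text{(rest)}}$ is bounded, yields $a_k(1+\D^2)^{-s/2}\in\cl^1(\cn,\tau)$. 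Summing over $k$ gives finite summability of $(\A,\HH,\D)$ for every $s>p$. For the more general hypothesis $\A\subset\B_2(\D,p)\B_2^{\lfloor p\rfloor+1}(\D,p)$, the same argument applies since such an $a$ is a finite sum $a=\sum R_iS_i$ with $R_i\in\B_2(\D,p)$, $S_i\in\B_2^{\lfloor p\rfloor+1}(\D,p)\subset\B_2(\D,p)$, so $a\in\B_1(\D,p)$ and one repeats the polar/H\"older argument; the extra smoothness $\lfloor p\rfloor+1$ is exactly what is needed so that iterated commutators with $\D$ stay controlled, but for finite summability alone one only needs $a\in\B_1(\D,p)$, hence the chain of inclusions quoted in the statement. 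It then follows that the spectral dimension $p_0$ of Definition~\ref{def:spec-dim} satisfies $p_0\le\inf\{p\}$.

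Next I would prove the reverse inequality, i.e. that the spectral dimension is \emph{exactly} the infimum of the admissible $p$, not merely bounded by it. Here I would show that if $\A\subset\B_1^\infty(\D,p)$ (or $\subset\B_1(\D,p)$) and the spectral triple happens to be finitely summable with some $s>0$, then necessarily we may take $p$ to be that spectral dimension: for any $p'$ strictly between the spectral dimension $p_0$ and the given $p$, one checks $\A\subset\B_1(\D,p')$ using that for $a\geq 0$ in $\A$, $\tau(a(1+\D^2)^{-s/2})<\infty$ for all $s>p_0$ forces $a^{1/2}(1+\D^2)^{-s/4}\in\cl^2(\cn,\tau)$ (via Proposition~\ref{prop:bikky}, since $a^{1/2}(1+\D^2)^{-s/2}a^{1/2}$ and $a(1+\D^2)^{-s/2}$ have the same trace), hence $a^{1/2}\in\B_2(\D,p')$ for every $p'>p_0$, hence $a\in\B_1(\D,p')$; and a general $a\in\A$ is reduced to positive ones by Proposition~\ref{cor:polar-decomp} applied in reverse, or more simply by $|a|(1+\D^2)^{-s/2}=v^*a(1+\D^2)^{-s/2}$ as in the Remark after Definition~\ref{def:spec-dim}. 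Combining the two inequalities identifies the spectral dimension with $\inf\{p\ge 1:\A\subset\B_1^\infty(\D,p)\}$ (resp. with the chain condition).

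The main obstacle I anticipate is purely bookkeeping with the trace-ideal H\"older inequalities and the exact exponents: one must split $(1+\D^2)^{-s/2}$ and route the operators through $\cl^2$ (for the square-root trick) in a way that keeps every individual factor either Hilbert--Schmidt or bounded, while only ever assuming $s>p$ and exploiting the slack $s-p>0$. There is no conceptual difficulty once one notices that $\B_1(\D,p)$ positive elements have square roots in $\B_2(\D,p)$ (Lemma~\ref{lem:b1-pos}) and that membership in $\B_2(\D,p)$ is precisely the two-sided Hilbert--Schmidt condition (Remark (ii) after its definition); the rest is choosing $\delta\in(0,s-p)$ appropriately and invoking Propositions~\ref{prop:everybody-knows} and~\ref{prop:bikky} to move factors past the trace. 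The non-self-adjointness of $a$ is handled uniformly by the four-positive-element decomposition of Proposition~\ref{cor:polar-decomp}, which is why the statement is phrased with the $\B_1$-algebras rather than requiring $|a|\in\A$.
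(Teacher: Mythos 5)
The forward direction of your argument has a gap that cannot be repaired without reinstating the smoothness you discarded. After you decompose $a=\sum_{k=0}^3 i^k a_k$ with $0\leq a_k\in\B_1(\D,p)$ and obtain $a_k^{1/2}\in\B_2(\D,p)$, the H\"older step as you describe it produces only \emph{one} Hilbert--Schmidt factor, namely $a_k^{1/2}(1+\D^2)^{-(p+\delta)/4}$; the remaining factor $(1+\D^2)^{-(2s-p-\delta)/4}$ is merely bounded (in the nonunital setting $(1+\D^2)^{-r}$ lies in no Schatten ideal), and the outer $a_k^{1/2}$ is also merely bounded, so H\"older yields $a_k(1+\D^2)^{-s/2}\in\L^2(\cn,\tau)$, not $\L^1$. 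What $a_k^{1/2}\in\B_2(\D,p)$ actually controls is the \emph{symmetrically} weighted quantity $(1+\D^2)^{-s/4}a_k(1+\D^2)^{-s/4}\in\L^1$ (a product of two $\L^2$ factors); passing from this to the one-sided $a_k(1+\D^2)^{-s/2}\in\L^1$ requires commuting $(1+\D^2)^{-s/4}$ across $a_k$, which is exactly where smoothness must do work. In fact for bounded $A,B\geq 0$ one has $\|B^{1/2}AB^{1/2}\|_1\leq\|AB\|_1$ whenever the right side is finite, so the implication you need runs the wrong way and fails in general.

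This is a conceptual error, not just bookkeeping: your assertion that ``for finite summability alone one only needs $a\in\B_1(\D,p)$'' mistakes the necessary condition of Proposition~\ref{lem:necessary} for a sufficient one, and is precisely the reason the hypothesis of the statement is $\A\subset\B_1^\infty(\D,p)$, or the chain through $\B_2^{\lfloor p\rfloor+1}(\D,p)$, rather than $\A\subset\B_1(\D,p)$. The paper's first statement really is immediate from Corollary~\ref{consistency}: $a(1+\D^2)^{-s/2}\in{\rm OP}^{-s}_0$ by Lemma~\ref{usefull}, and ${\rm OP}^{-s}_0\subset\L^1$ for $s>p$, but the proof of that corollary rewrites $a(1+\D^2)^{-s/2}$ in the \emph{symmetric} form $(1+\D^2)^{-s/4}A(1+\D^2)^{-s/4}$ with $A\in\B_1^\infty(\D,p)$ using the $\sigma$-group invariance of ${\rm OP}^{-s}_0$ (Propositions~\ref{pr:cts} and~\ref{cont-op}), which is available only on the smooth algebra --- the very route you started and then abandoned as awkward. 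For the second statement the paper moves $(1+|\D|)^{-s}$ past $c$ by a Cauchy-integral commutator expansion whose $\lfloor p\rfloor+1$ terms are controlled exactly by $c\in\B_2^{\lfloor p\rfloor+1}(\D,p)$; the extra derivatives are what make the remainder integrable, and they cannot be dispensed with.
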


\begin{proof}
The first statement is an immediate consequence of Corollary \ref{consistency}.
For the second statement, let $a\in\mathcal{A}$. We need to prove that $a(1+\D^2)^{-s/2}$ is trace class
for $a=bc$ with 
$b\in\mathcal{B}_2(\D,p)$ and $c\in\mathcal{B}_2^{\lfloor p\rfloor+1}(\D,p)$.
Thus, for all $k\leq \lfloor p\rfloor+1$ and all $s>p$ we have 
$$
b(1+\mathcal{D}^2)^{-s/4},\,\,(1+\mathcal{D}^2)^{-s/4}\delta^k(c)\in\mathcal{L}^2(\mathcal{N},\tau).
$$

We start from the  identity
$$
(-1)^k\frac{\Gamma(s+k)}{\Gamma(s)\Gamma(k+1)}(1+|\mathcal{D}|)^{-s-k}
=\frac{1}{2\pi i}\int_{\Re(\lambda)=1/2}\lambda^{-s}(\lambda-1-|\mathcal{D}|)^{-k-1}d\lambda,
$$
and then by induction we have
\begin{align*}
[(\lambda-1-|\mathcal{D}|)^{-1},c]=&
\sum_{k=1}^{\lfloor p\rfloor}(-1)^{k+1}(\lambda-1-|\mathcal{D}|)^{-k-1}\delta^k(c)\\
&+(-1)^{\lfloor p\rfloor}(\lambda-1-|\mathcal{D}|)^{-\lfloor p\rfloor-1}
\delta^{\lfloor p\rfloor+1}(c)(\lambda-1-|\mathcal{D}|)^{-1}.
\end{align*}

It follows that
\begin{align*}
[(1+|\mathcal{D}|)^{-s},c]
&=\frac1{2\pi i}\int_{\Re\lambda=1/2}\lambda^{-s}[(\lambda-1-|\mathcal{D}|)^{-1},c]\,d\lambda\\
&=
-\sum_{k=1}^{\lfloor p\rfloor}\frac{\Gamma(s+k)}{\Gamma(s)\Gamma(k+1)}
(1+|\mathcal{D}|)^{-s-k}\delta^k(c)\\
&\qquad+\frac{(-1)^{\lfloor p\rfloor}}{2\pi i}\int_{\Re(\lambda)=1/2}\lambda^{-s}
(\lambda-1-|\mathcal{D}|)^{-\lfloor p\rfloor-1}\delta^{\lfloor p\rfloor+1}(c)(\lambda-1-|\mathcal{D}|)^{-1}d\lambda.
\end{align*} 
Since $\big|\lambda-1-|\mathcal{D}|\big|\geq |\lambda|$ and since the 
$\|\cdot\|_2-$norms of the operators
$$
b(\lambda-1-|\mathcal{D}|)^{-(\lfloor p\rfloor+1)/2},
\quad (\lambda-1-|\mathcal{D}|)^{-(\lfloor p\rfloor+1)/2}\delta^{\lfloor p\rfloor+1}(c),
$$
are bounded uniformly over $\lambda$, we obtain
$$
\left\|b\frac{(-1)^{\lfloor p\rfloor}}{2\pi i}
\int_{\Re\lambda=1/2}\lambda^{-s}(\lambda-1-|\mathcal{D}|)^{-\lfloor p\rfloor-1}
\delta^{\lfloor p\rfloor+1}(c)(\lambda-1-|\mathcal{D}|)^{-1}d\lambda\right\|_1\leq  C(b,c)
\int_{\Re\lambda=1/2}\frac{|d\lambda|}{|\lambda|^{1+s}},
$$
which is finite. 
Hence we have
$b[(1+|\mathcal{D}|)^{-s},c]\in\mathcal{L}^1(\mathcal{N},\tau)$
and since
$$
b(1+|\mathcal{D}|)^{-s}c=(b(1+|\mathcal{D}|)^{-s/2})\cdot((1+|\mathcal{D}|)^{-s/2}c)
\in\mathcal{L}^1(\mathcal{N},\tau),
$$
we conclude that $a(1+|\mathcal{D}|)^{-s}\in\mathcal{L}^1(\mathcal{N},\tau)$, and so 
$a(1+\mathcal{D}^2)^{-s/2}\in\mathcal{L}^1(\mathcal{N},\tau)$. The claim about the spectral dimension follows immediately.
\end{proof}

\begin{prop}
\label{lem:necessary}
Let $(\A,\H,\D)$ be a finitely summable  semifinite spectral triple of spectral
dimension $p$. Then  $\A$ is a subalgebra of $\B_1(\D,p)$.
\end{prop}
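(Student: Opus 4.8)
The plan is to show directly that every $a \in \A$ lies in $\B_1(\D,p) = \B_2(\D,p)^2$ by exhibiting a concrete factorization $a = RS$ with $R,S \in \B_2(\D,p)$, and then checking the defining condition of $\B_2(\D,p)$ — namely that $R(1+\D^2)^{-s/4}$, $R^*(1+\D^2)^{-s/4}$, $S(1+\D^2)^{-s/4}$, $S^*(1+\D^2)^{-s/4}$ are all $\tau$-Hilbert–Schmidt for every $s > p$ (Remark (ii) after the definition of $\B_2(\D,p)$).

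First I would use the polar decomposition $a = v|a|$ with $v \in \cn$ a partial isometry, together with the Remark after Definition \ref{def:spec-dim} which guarantees that finite summability with spectral dimension $p$ gives $|a|(1+\D^2)^{-s/2} \in \L^1(\cn,\tau)$, hence $\tau$-trace class, for all $s > p$. The natural candidate is then $R := a\,(1+\D^2)^{-s/8}$-type splitting, but cleaner is to set $R := v|a|^{1/2}$ and $S := |a|^{1/2}$, so that $a = RS$. We then need $|a|^{1/2}(1+\D^2)^{-s/4} \in \L^2(\cn,\tau)$ for all $s > p$: by Proposition \ref{prop:bikky} (Bikchentaev), since $|a|(1+\D^2)^{-s/2} = |a|^{1/2}\cdot\big(|a|^{1/2}(1+\D^2)^{-s/2}\big)$ is a product of a positive operator with another operator whose product is trace class, we can extract $\tau\big((1+\D^2)^{-s/4}|a|(1+\D^2)^{-s/4}\big) < \infty$, i.e. $\big\||a|^{1/2}(1+\D^2)^{-s/4}\big\|_2^2 < \infty$. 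This handles $S = |a|^{1/2}$ and $S^* = |a|^{1/2}$ simultaneously since $S$ is self-adjoint; for $R = v|a|^{1/2}$ we get $R(1+\D^2)^{-s/4} = v\cdot|a|^{1/2}(1+\D^2)^{-s/4} \in \L^2$ because $v$ is bounded and $\L^2(\cn,\tau)$ is an ideal, while $R^*(1+\D^2)^{-s/4} = |a|^{1/2}v^*(1+\D^2)^{-s/4}$ — and here I must be a little careful since $v^*$ need not commute with $(1+\D^2)^{-s/4}$.

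The main obstacle is precisely that last point: controlling $R^* = |a|^{1/2}v^*$, equivalently $|a^*|^{1/2}$ up to a partial isometry. The clean way around it is to symmetrize the whole argument: note $aa^* = v|a|^2 v^* = |a^*|^2$, so $|a^*| = (aa^*)^{1/2}$, and apply the same Bikchentaev argument to $aa^*(1+\D^2)^{-s/2}$ — which is trace class because $aa^*(1+\D^2)^{-s/2} = a\cdot a^*(1+\D^2)^{-s/2}$ and $a^*(1+\D^2)^{-s/2} = \overline{(1+\D^2)^{-s/2}a}^{\,*}$ is trace class by the finite summability applied to $a^* \in \A$ (since $\A$ is a $*$-algebra) combined with the trace property / Proposition \ref{prop:everybody-knows}. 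This gives $|a^*|^{1/2}(1+\D^2)^{-s/4} \in \L^2(\cn,\tau)$ for all $s>p$. Then the factorization $a = |a^*|^{1/2}\cdot\big(|a^*|^{1/2}\big)^{-1}a$ is awkward; better to take $R := |a^*|^{1/2}$, $S := |a^*|^{-1/2}a$ only when $a$ is invertible, which fails in general. So the robust choice is: write $a = |a^*|^{1/2}\cdot u\cdot|a|^{1/2}$ where $u$ is a partial isometry (this is a standard refinement of polar decomposition, $a = |a^*|^{1/2}\,u\,|a|^{1/2}$ with $u$ the unitary part), set $R := |a^*|^{1/2}$ and $S := u|a|^{1/2}$. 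Then $R = R^*$ is handled by the $aa^*$ computation, and $S(1+\D^2)^{-s/4} = u\,|a|^{1/2}(1+\D^2)^{-s/4} \in \L^2$ while $S^*(1+\D^2)^{-s/4} = |a|^{1/2}u^*(1+\D^2)^{-s/4}$ — still has the same commutation issue!

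Given this, the genuinely clean resolution is to avoid factoring $a$ asymmetrically at all and instead observe: since $\A$ is a $*$-algebra, to prove $\A \subset \B_1(\D,p)$ it suffices by Proposition \ref{cor:polar-decomp}'s philosophy (and the fact that $\B_1$ is a linear span of its positive cone, once established) to treat positive elements — but we do not yet know $\B_1$ structure. So instead: for $a \in \A$, both $a$ and $a^*$ are in $\A$, hence $a^*a \geq 0$ satisfies $a^*a(1+\D^2)^{-s/2} \in \L^1(\cn,\tau)$ for $s>p$ (write $a^*a(1+\D^2)^{-s/2} = a^*\cdot a(1+\D^2)^{-s/2}$, trace class by finite summability of $a$ and boundedness of $a^*$, plus Proposition \ref{prop:everybody-knows} to cyclically rearrange if needed). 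By Bikchentaev (Proposition \ref{prop:bikky}) applied to the positive operators $a^*a$ and $(1+\D^2)^{-s/2}$, we get $\tau\big((1+\D^2)^{-s/4}a^*a(1+\D^2)^{-s/4}\big) < \infty$, i.e. $(a^*a)^{1/2}(1+\D^2)^{-s/4}$, hence $a(1+\D^2)^{-s/4}$ (since $\||a|(1+\D^2)^{-s/4}\|_2 = \|a(1+\D^2)^{-s/4}\|_2$ via $x \mapsto x^*x$), lies in $\L^2(\cn,\tau)$ for all $s > p$. Symmetrically with $a^*$ in place of $a$: $a^*(1+\D^2)^{-s/4} \in \L^2(\cn,\tau)$ for all $s>p$. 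This says exactly $a \in \B_2(\D,p)$! Then $a = a\cdot e$ is not a product of two $\B_2$ elements unless $\A$ has units — so the final step is: factor via $a = |a|^{1/2}(v^*\,|a|^{1/2})^{\;\;}$... Actually the cleanest: since $a, a^* \in \B_2(\D,p)$, and $\B_2(\D,p)^2 = \B_1(\D,p)$, I need $a$ itself (not $a^*a$) to be a product. Write $a = v|a| = (v|a|^{1/2})(|a|^{1/2})$; I have shown $|a|^{1/2}(1+\D^2)^{-s/4} \in \L^2$, and $v|a|^{1/2}(1+\D^2)^{-s/4} \in \L^2$ by the ideal property; the only remaining check is $(v|a|^{1/2})^*(1+\D^2)^{-s/4} = |a|^{1/2}v^*(1+\D^2)^{-s/4} \in \L^2$. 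Now $|a|^{1/2}v^* = (v|a|^{1/2})^*$ and $v|a|^{1/2} = a|a|^{-1/2}$ on $\overline{\operatorname{ran}|a|}$; better: $(v|a|^{1/2})(v|a|^{1/2})^* = v|a|v^* = |a^*|$, so $|a|^{1/2}v^* = w|a^*|^{1/2}$ for a partial isometry $w$, and $|a^*|^{1/2}(1+\D^2)^{-s/4} \in \L^2$ by the $aa^*$ version of the argument above. Hence $(v|a|^{1/2})^*(1+\D^2)^{-s/4} = w\,|a^*|^{1/2}(1+\D^2)^{-s/4} \in \L^2$ by the ideal property. Therefore both $R := v|a|^{1/2}$ and $S := |a|^{1/2}$ lie in $\B_2(\D,p)$, $a = RS \in \B_2(\D,p)^2 = \B_1(\D,p)$, completing the proof. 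I expect the write-up obstacle to be bookkeeping the partial-isometry juggling and invoking Propositions \ref{prop:everybody-knows} and \ref{prop:bikky} at exactly the right spots; the mathematical content is entirely in those two cyclicity/positivity lemmas plus the Remark after Definition \ref{def:spec-dim}.
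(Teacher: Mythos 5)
Your proof is correct, but the paper takes a slicker route by first reducing to the self-adjoint case, and this reduction dissolves exactly the ``partial-isometry juggling'' that your write-up grapples with. Since $\A$ is a $*$-algebra and $\B_1(\D,p)$ is a vector space, it suffices to treat $a=a^*\in\A$. For self-adjoint $a$ the partial isometry $v$ in the polar decomposition $a=v|a|$ is itself self-adjoint and commutes with $|a|$, so $v|a|^{1/2}=|a|^{1/2}v=|a|^{1/2}v^*=(v|a|^{1/2})^*$; once one knows $|a|^{1/2}(1+\D^2)^{-s/4}\in\L^2(\cn,\tau)$ (via the Remark after Definition \ref{def:spec-dim} plus Proposition \ref{prop:bikky}, exactly as you did), the $\B_2(\D,p)$-membership of both factors in $a=(v|a|^{1/2})\cdot|a|^{1/2}$ is immediate with a single application of Bikchentaev and no auxiliary isometry $w$. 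Your approach handles general $a$ directly --- the cost is that you must control both $|a|$ and $|a^*|$, establish the identity $|a|^{1/2}v^*=w|a^*|^{1/2}$ via $v|a|v^*=|a^*|$, and apply Bikchentaev twice. Both routes land on the same factorization $a=(v|a|^{1/2})\cdot|a|^{1/2}$; your intermediate observation that $a\in\B_2(\D,p)$, obtained by applying Bikchentaev to $a^*a$, is correct but, as you recognized, is not what the final conclusion rests on.
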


\begin{proof}
Since $\A$ is a $*$-algebra, it suffices to consider self-adjoint elements.
For $a=a^*\in\A$, we have by assumption that
$a (1+\D^2)^{-s/2}\in\cl^1(\cn,\tau)$, for all $s>p$. Now let $a=v|a|=|a|v^*$ be the polar decomposition.
Observe that neither $v$ nor $|a|$ need be in $\A$. However
$$
|a| (1+\D^2)^{-s/2}=v^*a (1+\D^2)^{-s/2}\in\cl^1(\cn,\tau)\ \ \mbox{for all}\ \ s>p.
$$
Now \cite[Theorem 3]{Bik}, quoted here as Proposition \ref{prop:bikky},
implies that  $|a|^{1/2} (1+\D^2)^{-s/4}\in\cl^2(\cn,\tau)$, for all $s>p$,
and so $|a|^{1/2}\in\B_2(\D,p)$.
In addition $v|a|^{1/2}\in \B_2(\D,p)$, since $v|a|^{1/2}=|a|^{1/2}v^*$ by the functional calculus, and
$$
v|a|v^*=|a|^{1/2}v^*v|a|^{1/2}
=|a|,
$$
and $(1+\D^2)^{-s/4}|a|^{1/2}v^*v|a|^{1/2}(1+\D^2)^{-s/4}=(1+\D^2)^{-s/4}|a|(1+\D^2)^{-s/4}$.
From this we can conclude that $a=v|a|^{1/2}\cdot |a|^{1/2}\in (\B_2(\D,p))^2\subset \B_1(\D,p)$.
\end{proof}

{\bf Remark.} The previous two results tell us that a finitely summable spectral triple must have 
$\A\subset \B_1(\D,p)$. However the last result does {\em not} imply that for a finitely summable
spectral triple $(\A,\HH,\D)$ and $a=a^*\in \A$ we have $a_+,\,a_-,\,|a|$ in $\A$. On the other hand, the previous
proof shows that $|a|$ does  belong 
to $\B_1(\D,p)$, and so for a finitely summable spectral triple, we can improve
on the result of  Proposition \ref{cor:polar-decomp}, at least for elements of $\A$.

In addition to the summability of a  spectral triple $(\A,\H,\D)$ relative to $(\cn,\tau)$, 
we need to consider smoothness, and the two notions are
much more tightly related in the nonunital case. One reason for smoothness is that we need 
to be able to control commutators with $\D^2$ to obtain the local index formula. Another reason is that
we need to be able to show that we have a spectral triple for a 
(possibly) larger algebra $\B\supset \A$ where
$\B$ is  Fr\'{e}chet and stable under the holomorphic functional calculus, 
and has the same norm closure as $\A$: $A=\overline{\A}=\overline{\B}$.

The next definition recalls how the problem of finding suitable $\B\supset\A$ is solved in the
unital case.

\begin{definition}
Let $(\A,\cH,\D)$ be a  semifinite spectral triple,
relative to $(\cn,\tau)$. With  $\delta=[|\D|,\cdot]$ as before,
we say that $(\A,\HH,\D)$
is $QC^k$ if for all $b\in\A\cup[\D,\A]$ we have $\delta^j(b)\in\cn$ for all $0\leq j\leq k$. 
We say that $(\A,\H,\D)$ is $QC^\infty$ if it is
$QC^k$ for all  $k\in\N_0$.
\end{definition}
\index{semifinite spectral triple!$QC^k$}
\index{semifinite spectral triple!$QC^\infty$}

{\bf Remark.}
For a 
$QC^\infty$ spectral triple $(\A,\cH,\D)$ with $T_0,\dots,T_m\in\A\cup[\D,\A]$,
we see by iteration of the relation $T^{(1)}=\delta^2(T)+2\delta(T)|\D|$, 
that $T_0^{(k_0)}\cdots T_m^{(k_m)}(1+\D^2)^{-|k|/2}\in \cn$, 
where $|k|:=k_0+\cdots+k_m$ and $T^{(n)}$ is given in Definition \ref{parup}.

For $(\A,\cH,\D)$ a  $QC^\infty$  spectral triple, unital or not,
we may endow the algebra $\A$ with the
topology determined by the family of norms
\begin{align}
\A\ni a\mapsto \|\delta^k(a)\|+\|\delta^k([\D,a])\|,\quad  k\in\N_0.
\label{delta-top}
\end{align}

We call this topology the {\it  $\delta$-topology} and observe that by  \cite[Lemma 16]{Re},
$(\A_\delta,\H,\D)$ is also a $QC^\infty$ \index{$\delta$-topology}\index{semifinite spectral triple!$\delta$-topology}
spectral triple, where $\A_\delta$ is the completion of $\A$ in the
$\delta$-topology. Thus
we may,
without loss of generality, suppose that $\A$ is complete in the  $\delta$-topology by
completing
if necessary. This completion is Fr\'echet and stable under the holomorphic functional
calculus. So,  with $A$ the $C^*$-completion of $\A$, $K_*(\A)\simeq K_*(A)$ via inclusion.

However, and this is crucial in the remaining text, 
in the nonunital case the completion $\A_\delta$ may not satisfy the same summability
conditions as $\A$ (as classical examples show). Thus
we will define and use a finer 
topology which takes into account the summability of the spectral triple, to which we now return.

Keeping in mind Propositions \ref{pr:can-do}, \ref{one-way}, \ref{lem:necessary}, and 
incorporating smoothness in the picture, we see that the natural condition 
for a smooth and finitely summable spectral triple is to require
that $\A\cup[\D,\A]\subset \B_1^\infty(\D,p)$. The extra benefit   is that our algebra $\A$ sits inside a Fr\'echet
algebra  which is stable under the holomorphic functional calculus.

\begin{definition}
\label{delta-phi}
Let $(\A,\HH,\D)$ be a  semifinite spectral triple relative to $(\cn,\tau)$. Then we say that $(\A,\HH,\D)$ is
$QC^k$ summable if $(\A,\HH,\D)$ is finitely summable with spectral dimension $p$ and 
$$
\A\cup[\D,\A]\subset \B_1^k(\D,p).
$$
We say that $(\A,\H,\D)$ is smoothly summable if it is $QC^k$ summable for all $ k\in\N_0$ or, equivalently, 
if 
$$
\A\cup[\D,\A]\subset \B_1^\infty(\D,p).
$$
If $(\A,\HH,\D)$ is smoothly summable with spectral dimension $p$,
the $\delta$-$\varphi$-topology on $\A$ is determined by the family of norms 
$$
\A\ni a\mapsto \PP_{n,k}(a)+\PP_{n,k}([\D,a]), \,\,n\in\N,\,\,k\in\N_0,
$$ 
where the norms $\PP_{n,k}$ are those of Definition \ref{def-Bp},
$$
\cn\ni T\mapsto \PP_{n,k}(T)
:=\sum_{j=0}^k\PP_n(\delta^j(T)).
$$
\end{definition}
\index{semifinite spectral triple!$QC^k$ summable}
\index{semifinite spectral triple!smoothly summable}
\index{$\delta$-$\varphi$-topology}
\index{semifinite spectral triple!$\delta$-$\varphi$-topology}

{\bf Remark.} 
The $\delta$-$\vf$-topology  generalises the $\delta$-topology. 
Indeed, if $(1+\D^2)^{-s/2}$ belongs to $\mathcal{L}^1(\mathcal{N},\tau)$ for $s>p,$ then the 
norm $\PP_{n,k}$ is equivalent to the norm  defined in Equation \eqref{delta-top}. \\

The following result shows that given a smoothly summable  spectral triple $(\A,\HH,\D)$, 
we may without loss of generality assume that  the algebra $\A$ is complete with respect to 
the $\delta$-$\vf$-topology, by completing if necessary. Moreover the completion
of $\A$ in the $\delta$-$\vf$-topology is  stable under the holomorphic
functional calculus.      

\begin{prop}
\label{prop:delta-phi}
Let $(\mathcal{A},\mathcal{H},\mathcal{D})$ 
be a smoothly summable
 semifinite spectral triple with spectral dimension $p$, 
and let $\A_{\delta,\vf}$ denote the completion of $\A$ for the $\delta$-$\vf$ topology.
Then $(\A_{\delta,\vf},\HH,\D)$ is also a smoothly summable 
 semifinite spectral triple with spectral dimension $p$, and moreover $\A_{\delta,\vf}$
is stable under the holomorphic functional calculus.
\end{prop}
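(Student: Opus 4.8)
The plan is to verify the two assertions of Proposition \ref{prop:delta-phi} in turn: first that $(\A_{\delta,\vf},\HH,\D)$ is again a smoothly summable semifinite spectral triple with the same spectral dimension $p$, and second that $\A_{\delta,\vf}$ is stable under the holomorphic functional calculus. The starting point is that the $\delta$-$\vf$-topology on $\A$ is by definition the topology induced by the family of seminorms $a\mapsto \PP_{n,k}(a)+\PP_{n,k}([\D,a])$, so that $\A_{\delta,\vf}$ is the completion of $\A$ inside the Fr\'echet space $\B_1^\infty(\D,p)\oplus\B_1^\infty(\D,p)$ under the embedding $a\mapsto (a,[\D,a])$. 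Since $\B_1^\infty(\D,p)$ is a Fr\'echet algebra (Proposition \ref{Frechet2}), this completion makes sense and $\A_{\delta,\vf}$ embeds continuously into $\B_1^\infty(\D,p)$; in particular $\A_{\delta,\vf}\subset\B_1^\infty(\D,p)\subset\cn$, and the closure of $\A_{\delta,\vf}$ in $\cn$ equals the closure of $\A$, namely $A$. The derivation map $[\D,\cdot]$, being the second coordinate of the defining embedding, extends by continuity to a map $\A_{\delta,\vf}\to\B_1^\infty(\D,p)\subset\cn$, which shows that $a\cdot\mathrm{dom}\,\D\subset\mathrm{dom}\,\D$ and $[\D,a]\in\cn$ for all $a\in\A_{\delta,\vf}$ (the first by a standard closed-graph/commutator argument as in Definition \ref{def:ST}). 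That $\A_{\delta,\vf}$ is a $*$-algebra follows since $\PP_{n,k}$ is $*$-invariant (Corollary \ref{onecor}, together with the Leibniz compatibility in Definition \ref{def-Bp}) and $[\D,a^*]=-[\D,a]^*$, so $*$ is $\delta$-$\vf$-continuous.

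The compactness condition (condition 2 of Definition \ref{def:ST}) and the finite-summability condition both pass to the completion: for $a\in\A_{\delta,\vf}$ and $s>p$ we have $a(1+\D^2)^{-s/2}\in\L^1(\cn,\tau)$ by Corollary \ref{consistency} (since $a\in\B_1^\infty(\D,p)$ implies $(1+\D^2)^{-s/4}a(1+\D^2)^{-s/4}\in{\rm OP}_0^{-s}\subset\L^1$, and one moves the powers using Proposition \ref{pr:cts}/Equation \eqref{symmetric}), hence $a(1+\D^2)^{-1/2}$ is $\tau$-compact and $(\A_{\delta,\vf},\HH,\D)$ is finitely summable. The spectral dimension is unchanged: since $\A\subset\A_{\delta,\vf}$ the spectral dimension of the larger algebra is at least $p$, and since $\A_{\delta,\vf}\subset\B_1^\infty(\D,p)$ Proposition \ref{one-way} gives that it is finitely summable with spectral dimension at most $p$ (the infimum of the admissible parameters being $p$ because already $\A$ forces $p$ from below). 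Finally $\A_{\delta,\vf}\cup[\D,\A_{\delta,\vf}]\subset\B_1^\infty(\D,p)$ holds by construction, so $(\A_{\delta,\vf},\HH,\D)$ is smoothly summable with spectral dimension $p$. If the triple is even, the grading $\gamma$ commutes with everything in $\A$ and hence with the $\delta$-$\vf$-limits, so evenness is preserved as well.

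For holomorphic functional calculus stability, the key observation is that $\A_{\delta,\vf}$ is a Fr\'echet $*$-algebra continuously embedded in $\B_1^\infty(\D,p)$, and I would follow the now-standard pattern (as in the proofs of Proposition \ref{HolB1} and Proposition \ref{Frechet2}): by completeness of $\A_{\delta,\vf}$ in the $\delta$-$\vf$-topology, it suffices to show that if $T\in\A_{\delta,\vf}$ and $1+T$ is invertible in the unitisation of $\cn$, then $(1+T)^{-1}-1=-T(1+T)^{-1}\in\A_{\delta,\vf}$. Approximating $T$ by a $\delta$-$\vf$-Cauchy sequence from $\A$, it is enough to bound $\PP_{n,k}\big(T(1+T)^{-1}\big)$ and $\PP_{n,k}\big([\D,T(1+T)^{-1}]\big)$ in terms of the $\delta$-$\vf$-norms of $T$; the first follows from Lemma \ref{R-siside} (applied with $R$ built from $T$, noting $T\in\B_2(\D,p)$ via Lemma \ref{B1-implies-B2}) together with the iterative Leibniz identity $\delta\big(T(1+T)^{-1}\big)=\delta(T)(1+T)^{-1}-T(1+T)^{-1}\delta(T)(1+T)^{-1}$ used in Proposition \ref{Frechet2}, and the second from the analogous identity $[\D,T(1+T)^{-1}]=[\D,T](1+T)^{-1}-T(1+T)^{-1}[\D,T](1+T)^{-1}$ combined with the fact, just established, that $[\D,T]\in\B_1^\infty(\D,p)$ and the product estimates of Lemma \ref{usefull}/Lemma \ref{R-siside}. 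Thus $T(1+T)^{-1}$ lies in $\A_{\delta,\vf}$ together with its image under $[\D,\cdot]$, and the matrix-algebra case $M_n(\A_{\delta,\vf})$ follows from the $n=1$ case by the main theorem of \cite{LBS}, exactly as in Proposition \ref{Frechet2}.

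The main obstacle I anticipate is purely bookkeeping rather than conceptual: one must be careful that the two-variable seminorm family $\PP_{n,k}(\cdot)+\PP_{n,k}([\D,\cdot])$ behaves submultiplicatively and is closed under the resolvent-type manipulations — in particular that $[\D,\cdot]$ really does extend continuously and that its extension agrees with the genuine commutator with $\D$ on the dense domain $\HH_\infty$ (a closed-graph argument, using that $\D$ is self-adjoint and that the extension takes values in $\cn$). Everything else is an application of the estimates already assembled in Section \ref{sec:psido-and-sum}.
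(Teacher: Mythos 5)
Your treatment of the first part (that $(\A_{\delta,\vf},\HH,\D)$ is again a smoothly summable spectral triple with spectral dimension $p$) is essentially the same as the paper's: you use the embedding of $\A_{\delta,\vf}$ and $[\D,\A_{\delta,\vf}]$ into $\B_1^\infty(\D,p)$, invoke Corollary \ref{consistency} for the compactness/summability, and the spectral dimension persists because $\A\subset\A_{\delta,\vf}\subset\B_1^\infty(\D,p)$. That part is fine.

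Your argument for stability under the holomorphic functional calculus departs from the paper and has a gap. The crucial point is that $\A_{\delta,\vf}$ is \emph{defined as a completion of $\A$}, not as the set of elements of $\cn$ with all the $\delta$-$\vf$ seminorms finite. So the step ``it is enough to bound $\PP_{n,k}\big(T(1+T)^{-1}\big)$ and $\PP_{n,k}\big([\D,T(1+T)^{-1}]\big)$'' is not a valid reduction: finiteness (or even control by the corresponding seminorms of $T$) puts $T(1+T)^{-1}$ and its commutator into $\B_1^\infty(\D,p)$, but does not produce a sequence \emph{from $\A$} converging to $T(1+T)^{-1}$ in the $\delta$-$\vf$-topology, which is what membership in $\A_{\delta,\vf}$ means. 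Your natural candidate approximants $-T_i(1+T_i)^{-1}$, with $T_i\in\A$, do not lie in $\A$ because $\A$ is nonunital and certainly need not contain $(1+T_i)^{-1}$. Notice that this difficulty does not arise for $\B_1(\D,p)$ or $\B_1^\infty(\D,p)$ (Propositions \ref{HolB1}, \ref{Frechet2}): those spaces are defined by intrinsic conditions on elements of $\cn$, so a seminorm bound \emph{is} membership.

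The paper handles this by first establishing the structural identity $\A_{\delta,\vf}=\bigcap_{N\geq 1, k\geq 0}\A_{N,k}$, where each $\A_{N,k}$ is the Banach algebra obtained by completing $\A$ in the norm $\|\cdot\|_{N,k}=\sum_{n=1}^N\sum_{j=0}^k\big(\PP_{n,j}(\cdot)+\PP_{n,j}([\D,\cdot])\big)$. This converts the Fr\'echet completion into an intersection of Banach-algebra completions, so membership in $\A_{\delta,\vf}$ becomes the finitely many-at-a-time question of membership in $\A_{N,k}$. With that reduction in place, the paper leans on the \emph{already proven} stability of $\B_1^\infty(\D,p)$ to get $b=(\lambda+a)^{-1}-\lambda^{-1}\in\B_1^\infty(\D,p)$, and then uses the explicit formula $[\D,b]=-(\lambda+a)^{-1}[\D,a](\lambda+a)^{-1}$ (which avoids your iterative Leibniz bookkeeping altogether) to conclude $[\D,b]\in\B_1^\infty(\D,p)$, whence $b\in\A_{N,k}$ for all $N,k$. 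You should incorporate the intersection identity (or an equivalent device such as a Neumann-series/Cauchy-integral argument producing $\A$-valued approximants); as written, your proof does not establish membership of $T(1+T)^{-1}$ in the completion.
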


\begin{proof}
First observe that a sequence $(a_i)_{i\geq 1}\subset\A$ converges in the $\delta$-$\vf$ topology
if and only if both $(a_i)_{i\geq 1}$ and $([\D,a_i])_{i\geq 1}$ converge in $\B_1^\infty(\D,p)$.
As $\B^\infty_1(\D,p)$ is a Fr\'echet space, both $\A_{\delta,\vf}$ and $[\D,\A_{\delta,\vf}]$
are contained in $\B_1^\infty(\D,p)$.

Next, let us show that 
$(\A_{\delta,\vf},\HH,\D)$ is finitely summable with spectral dimension still given by $p$.
Let $a\in \A_{\delta,\vf}$ and $s>p$. By definition of tame pseudodifferential operators and Corollary 
\ref{consistency}, we have
$$
a(1+\D^2)^{-s/2}\in {\rm OP}_0^{-s}\subset \cl^1(\cn,\tau),
$$
as needed. Since
$\A\subset \A_{\delta,\vf}$, $p$ is the smallest number for which this property holds.

Last, it remains to show that $\A_{\delta,\vf}$ is stable under the holomorphic functional calculus 
inside its (operator) norm completion. 
We complete $\A$ in the norm 
$\Vert\cdot\Vert_{N,k}:=\sum_{n=1}^N\sum_{j=0}^k\PP_{n,j}(\cdot)+\PP_{n,j}([\D,\cdot])$ 
to obtain
a Banach algebra $\A_{N,k}$. 

Then we claim that
$\A_{\delta,\vf}=\bigcap_{N\geq 1,k\geq 0}\A_{N,k}$. 
The inclusion $\A_{\delta,\vf}\subset \bigcap_{N\geq 1,k\geq 0}\A_{N,k}$ is straightforward.
For the inclusion $\A_{\delta,\vf}\supset \bigcap_{N\geq 1,k\geq 0}\A_{N,k}$, suppose that $a$ is
an element of the intersection. Then for each $N,\,k$ there is a sequence $(a^{N,k}_i)_{i\geq 1}$
contained in $\A$ which converges to $a$ in the norm $\Vert\cdot\Vert_{N,k}$.

Now we make the observation that if $N'\leq N$ and $k'\leq k$ then $(a^{N,k}_i)_{i\geq 1}$ converges
in $\A_{N',k'}$ to the same limit. Thus, in this situation, for all $\eps>0$ there is $l\in \N$ such that
$i>l$ implies that $\Vert a^{N,k}_i-a\Vert_{N',k'}<\eps$. Thus for such an $\eps>0$ and $l$ we have
$\Vert a^{N,N}_N-a\Vert_{N',k'}<\eps$ whenever $N>{\rm max}\{N',k',l\}$. Hence the sequence
$(a^{N,N}_N)_{N\geq 1}$ converges in all of the norms $\Vert\cdot\Vert_{N',k'}$ and hence the limit 
$a$ lies in $\A_{\delta,\vf}$. Hence an element of $\A_{\delta,\vf}$ is an 
element of $A$ which lies in each $\A_{N,k}$.

Moreover the norm
completions of $\A$, $\A_{\delta,\vf}$ and $\A_{N,k}$, 
for each $N,\,k$, are all the same since the $\delta$-$\vf$ and 
$\Vert\cdot\Vert_{N,k}$ topologies are finer than the norm topology. We denote the latter by $A$.

Now let $a\in \A_{\delta,\vf}$ and $\lambda\in\C$ be such that $a+\lambda$ is 
invertible in $A^\sim$. Then with 
$b=(a+\lambda)^{-1}-\lambda^{-1}$ we have
\begin{equation}
(a+\lambda)(b+\lambda^{-1})=1=1+ab+\lambda b+\lambda^{-1}a\ \ \Rightarrow\ \ b=-\lambda^{-1}ab-\lambda^{-2}a.
\label{eq:pseudo-inverse}
\end{equation}
Rearranging Equation \eqref{eq:pseudo-inverse} shows that $b=-\lambda^{-1}(\lambda+a)^{-1}a$. 
Now as $\B_1^\infty(\D,p)$ is stable under the holomorphic functional calculus, 
$b\in\B_1^\infty(\D,p)\oplus\C$, but
this formula shows that in fact $b\in \B_1^\infty(\D,p)$.

Now we would like to apply $[\D,\cdot]$ to Equation \eqref{eq:pseudo-inverse}.  Since $b\in \B_1^\infty(\D,p)$, 
$b$ preserves ${\rm dom}\,\D={\rm dom}\,|\D|\subset\H$, and so it makes sense to apply $[\D,\cdot]$ to $b$. Then
$$
[\D,b]=-\lambda^{-1}[\D,a]b-\lambda^{-1}a[\D,b]-\lambda^{-2}[\D,a]\ \ \Rightarrow \ \ [\D,b]=-(\lambda+a)^{-1}[\D,a](\lambda+a)^{-1}.
$$
Thus we see that $[\D,b]\in \B_1^\infty(\D,p)$ since $(\lambda+a)^{-1}\in \B_1^\infty(\D,p)\oplus\C$ and
$[\D,a]\in \B_1^\infty(\D,p)$. Hence $b\in \A_{N,k}$ for all $N\geq 1$ and $k\geq 0$ and so $b\in \A_{\delta,\vf}$. 
\end{proof}

We close this section by giving a sufficient condition for a finitely summable spectral triple to be smoothly summable.
We stress that this  condition is easy to check, as shown  in all of our examples.

\begin{prop}
\label{smooth-sum-sufficient}
Let $(\A,\HH,\D)$ be a finitely summable spectral triple of spectral dimension $p$ relative to $(\cn,\tau)$.
If for all $T\in \A\cup [\D,\A]$, $k\in\N_0$ and all $s>p$ we have 
\begin{equation}
(1+\D^2)^{-s/4}L^k(T)(1+\D^2)^{-s/4}\in\L^1(\cn,\tau),
\label{eq:displayed}
\end{equation}
then $(\A,\HH,\D)$ is smoothly summable. Here $L(T)=(1+\D^2)^{-1/2}[\D^2,T]$.
\end{prop}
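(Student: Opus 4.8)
The plan is to show that the hypothesis \eqref{eq:displayed} gives exactly what is needed to verify $\A\cup[\D,\A]\subset\B_1^\infty(\D,p)$, using the characterisation of $\B_1^\infty(\D,p)$ via the operator $L$ provided by Lemma \ref{delta-LR}. Fix $T\in\A\cup[\D,\A]$ and recall that $L^k(T)=(1+\D^2)^{-1/2}[\D^2,L^{k-1}(T)]$ is, a priori, only a bounded operator on $\H$; the $QC^\infty$ hypothesis implicit in a smoothly summable triple is not assumed here, so the first order of business is to check that each $L^k(T)$ genuinely defines a bounded operator in $\cn$ preserving $\H_\infty$. This is routine: since $T\in\A\cup[\D,\A]$ and $(\A,\H,\D)$ is a spectral triple, $T$ preserves ${\rm dom}\,\D$, and hypothesis \eqref{eq:displayed} in particular (taking the trace norm finite forces the operator to be $\tau$-measurable, hence affiliated, and bounded) tells us $(1+\D^2)^{-s/4}L^k(T)(1+\D^2)^{-s/4}\in\cn$ for all large $s$; combined with the symmetry of the order calculus this propagates to boundedness of $L^k(T)$ itself. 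Actually the cleanest route is to observe inductively that $L^k(T)$ is a finite sum of terms of the form $(1+\D^2)^{-1/2}$ times iterated commutators of $T$ with $\D^2$, and the hypothesis gives us trace-class control; I would simply phrase the induction so that at each stage I verify $L^k(T)\in\cn$ before invoking the weight estimates.

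The core step is then: \eqref{eq:displayed} says precisely that $\varphi_s(|L^k(T)|^{?})$ — more accurately, that $(1+\D^2)^{-s/4}L^k(T)(1+\D^2)^{-s/4}\in\L^1(\cn,\tau)$ for all $s>p$ and all $k\in\N_0$. I want to conclude $L^k(T)\in\B_1(\D,p)$ for every $k$. To do this I use the polar decomposition $L^k(T)=v|L^k(T)|$ and the device from the proof of Proposition \ref{lem:necessary}: since $(1+\D^2)^{-s/4}L^k(T)(1+\D^2)^{-s/4}$ is trace class and $L^k(T)$ also has $(1+\D^2)^{-s/4}L^k(T)^*(1+\D^2)^{-s/4}$ of the same form (apply the hypothesis to $T^*\in\A\cup[\D,\A]$, using $L(T)^*=-R(T^*)$ together with \eqref{eq:LR} and that $\A$ is a $*$-algebra while $[\D,a]^*=-[\D,a^*]$), Bikchentaev's theorem (Proposition \ref{prop:bikky}) applied to the positive operators $A=(1+\D^2)^{-s/2}$ and $B=L^k(T)^*L^k(T)$ — or rather the argument used there — shows $(1+\D^2)^{-s/4}|L^k(T)|^{1/2}$ and $(1+\D^2)^{-s/4}|L^k(T)^*|^{1/2}$ lie in $\L^2(\cn,\tau)$ for all $s>p$. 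Hence $|L^k(T)|^{1/2}$ and $v|L^k(T)|^{1/2}=|L^k(T)^*|^{1/2}v$ both lie in $\B_2(\D,p)$, and therefore $L^k(T)=v|L^k(T)|^{1/2}\cdot|L^k(T)|^{1/2}\in\B_2(\D,p)^2\subset\B_1(\D,p)$ by Remark (i) following Definition \ref{def-Bp} and the containment $\B_2(\D,p)^2\subset\B_1(\D,p)$.

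Having shown $L^k(T)\in\B_1(\D,p)$ for every $k\in\N_0$ and every $T\in\A\cup[\D,\A]$, Lemma \ref{delta-LR} immediately gives $T\in\B_1^\infty(\D,p)$, so $\A\cup[\D,\A]\subset\B_1^\infty(\D,p)$, which by Definition \ref{delta-phi} is exactly the statement that $(\A,\H,\D)$ is smoothly summable (its spectral dimension is still $p$ because finite summability with that dimension was assumed at the outset). I expect the main obstacle to be the bookkeeping in the first paragraph: establishing rigorously that each $L^k(T)$ is a bona fide element of $\cn$ preserving $\H_\infty$ before one is allowed to speak of $\varphi_s(|L^k(T)|^2)$ and apply the Hilbert-Schmidt/Bikchentaev machinery — this is the point where one must be careful not to circularly assume the $QC^\infty$ regularity that is being derived. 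The resolution is that \eqref{eq:displayed}, holding for the \emph{bounded} operator obtained by sandwiching, together with the self-adjointness of $\D$ and the density of $\H_\infty$, forces $L^k(T)$ to extend boundedly; once that technical point is dispatched the rest is a direct application of Proposition \ref{prop:bikky}, Lemma \ref{delta-LR}, and the definitions.
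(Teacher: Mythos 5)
Your overall plan---reduce to showing $L^k(T)\in\B_1(\D,p)$ for all $k\in\N_0$, extract Hilbert--Schmidt control via Bikchentaev, and conclude by Lemma \ref{delta-LR}---is close in spirit to the paper's proof, which also invokes Proposition \ref{prop:bikky} and also ends by invoking $L=(1+\sigma^{-1})\circ\delta$. But two steps you pass over are genuine gaps, not bookkeeping.

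First, Proposition \ref{prop:bikky} converts the \emph{one-sided} condition $AB\in\L^1(\cn,\tau)$ into the \emph{symmetric} condition $A^{1/2}BA^{1/2}\in\L^1(\cn,\tau)$, not the other way around. The hypothesis \eqref{eq:displayed} is the symmetric (sandwiched) condition, which is strictly weaker than the one-sided $L^k(T)(1+\D^2)^{-s/2}\in\L^1(\cn,\tau)$ that Bikchentaev would need as input. The paper bridges this gap with the identity $\delta^k(a)(1+\D^2)^{-s/2}=(1+\D^2)^{-s/4}\delta^k(\sigma^{s/4}(a))(1+\D^2)^{-s/4}$ together with the $\sigma$-group continuity estimates of Lemmas \ref{root commutator estimate} and \ref{cont-grp}; that is a real piece of work you have omitted.

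Second, you claim to control $(1+\D^2)^{-s/4}L^k(T)^*(1+\D^2)^{-s/4}$ by applying the hypothesis to $T^*$. But $L^k(T)^*=(-1)^k R^k(T^*)$, whereas the hypothesis applied to $T^*$ controls $L^k(T^*)$. These differ by a $\sigma$-group conjugation ($R^k(T^*)=\sigma^{k}(L^k(T^*))$), and closing this again requires the continuity of $\sigma$ on $\B_1^\infty(\D,p)$. Your citation of \eqref{eq:LR} is about equality of smooth \emph{domains}, not about stability of the integrability conditions. The paper avoids both issues by first reducing to $a=a^*$ (using the $*$-invariance of the $\PP_n$ and real/imaginary parts), working with $\delta^k(a)$---which is self- or skew-adjoint for $a=a^*$ so the polar isometry $u_k$ \emph{commutes} with $|\delta^k(a)|$ and the one-sided trace-class condition is available via $|\delta^k(a)|(1+\D^2)^{-s/2}=u_k^{*}\delta^k(a)(1+\D^2)^{-s/2}$---and only translating $\delta^k$ to $L^k$ at the very end. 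Working directly with $L^k(T)$ throws away the adjointness structure that makes the polar-decomposition trick painless.
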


\begin{proof}
We need to prove  that the condition \eqref{eq:displayed} guarantees that $\A\cup[\D,\A]\subset \B_1^\infty(\D,p)$, 
that is, for all $a\in\A$, the operators $\delta^k(a)$ and $\delta^k([\D,a])$, $k\in\N_0$, 
all belong to $\B_1(\D,p)$. From $\delta^k(a)^*=(-1)^k\delta^k(a^*)$
(resp. $\delta^k([\D,a])^*=(-1)^{k+1}\delta^k([\D,a^*])$) and since the norms 
$\PP_m$, $m\in\N$, are $*$-invariant, we see that $\delta^k(a)\in \B_1(\D,p)$ 
(resp. $\delta^k([\D,a])\in \B_1(\D,p)$) if and only if
$\delta^k(\Re(a))$ and $\delta^k(\Im(a))$ (resp. $\delta^k([\D,\Re(a)])$ and 
$\delta^k([\D,\Im(a)]$) belong to $ \B_1(\D,p)$. Thus, we may assume that $a=a^*$. 

Let us treat first the case of $\delta^k(a)$ and for $a=a^*$.
Consider the polar decomposition $\delta^k(a)=u_k|\delta^k(a)|$. Depending on the parity of $k$, the partial isometry $u_k$ 
is self-adjoint or skew-adjoint, and in both cases it commutes with $|\delta^k(a)|$. This implies that
$$
\delta^k(a)=|\delta^k(a)|^{1/2}u_k|\delta^k(a)|^{1/2}.
$$
Thus, the condition
$$
 \delta^k(a)\in\B_1(\D,p),\,\,{\rm for\ all}\ k\in\N_0,
 $$
will follow if 
\begin{equation}
| \delta^k(a)|^{1/2},\,\, u_k|\delta^k(a)|^{1/2}\in\B_2(\D,p),\,\,{\rm for\ all}\ k\in\N_0.
\label{eq:no1}
\end{equation}
Since $u_k$ commutes with $| \delta^k(a)|^{1/2}$, and using the  definition of the space $\B_2(\D,p)$, 
the  condition \eqref{eq:no1} is equivalent to
\begin{equation}
| \delta^k(a)|^{1/2}(1+\D^2)^{-s/4},\,\, u_k|\delta^k(a)|^{1/2}(1+\D^2)^{-s/4}\in\L^2(\cn,\tau),\,\,{\rm for\ all}\ k\in\N_0,\,\,{\rm for\ all}\ s>p.
\label{eq:no2}
\end{equation}
 The conditions in \eqref{eq:no2} are equivalent to a single condition
$$
| \delta^k(a)|^{1/2}(1+\D^2)^{-s/4}\in\L^2(\cn,\tau),\,\,{\rm for\ all}\ k\in\N_0,\,\,{\rm for\ all}\ s>p,
$$
which is equivalent to
\begin{equation}
(1+\D^2)^{-s/4}|\delta^k(a)|(1+\D^2)^{-s/4}\in\L^1(\cn,\tau),\,\,{\rm for\ all}\ k\in\N_0,\,\,{\rm for\ all}\ s>p.
\label{eq:no3}
\end{equation}
Now, by  \cite[Theorem 3]{Bik}, see Proposition \ref{prop:bikky}, the condition \eqref{eq:no3} is satisfied if
$$
|\delta^k(a)|(1+\D^2)^{-s/2}\in\L^1(\cn,\tau),\,\,{\rm for\ all}\ k\in\N_0,\,\,{\rm for\ all}\ s>p,
$$
which in turn  is equivalent to 
\begin{equation}
\delta^k(a)(1+\D^2)^{-s/2}\in\L^1(\cn,\tau),\,\,{\rm for\ all}\ k\in\N_0,\,\,{\rm for\ all}\ s>p.
\label{eq:no4}
\end{equation}
Next, since 
$$
\delta^k(a)(1+\D^2)^{-s/2}=(1+\D^2)^{-s/4}\delta^k(\sigma^{s/4}(a))(1+\D^2)^{-s/4},
$$
 by an application of the same ideas leading to Lemmas \ref{root commutator estimate} and \ref{cont-grp}, 
 we see then that  condition \eqref{eq:no4} is equivalent to
\begin{equation}
(1+\D^2)^{-s/4}\delta^k(a)(1+\D^2)^{-s/4}\in\L^1(\cn,\tau),\,\,{\rm for\ all}\ k\in\N_0,\,\,{\rm for\ all}\ s>p.
\label{eq:no5}
\end{equation}
Finally, using  $L=(1+\sigma^{-1})\circ\delta$,  given in Lemma \ref{delta-LR}, we see that condition \eqref{eq:no5} is equivalent to
$$
(1+\D^2)^{-s/4}L^k(a)(1+\D^2)^{-s/4}\in\L^1(\cn,\tau),\,\,{\rm for\ all}\ k\in\N_0,\,\,{\rm for\ all}\ s>p.
$$
In an entirely similar way, we see that $\delta^k([\D,a])\in\B_1(\D,p)$ if
$$
 (1+\D^2)^{-s/4}L^k([\D,a])(1+\D^2)^{-s/4}\in\L^1(\cn,\tau),\,\,{\rm for\ all}\ k\in\N_0,\,\,{\rm for\ all}\ s>p.
$$
This completes the proof.
\end{proof}

\subsection{Some cyclic theory}
\label{subsec:cyclic}
\index{cyclic cohomology}

In the following discussion we recall sufficient cyclic theory for the purposes of this memoir. 
More information  about the complexes and bicomplexes underlying our definitions
is 
contained in \cite{CPRS2,CPRS4}, and much more can be found in \cite{Co4,Lo}.
When we discuss tensor products of algebras we always use the projective tensor product.

Let $\A$ be a {\em unital} Fr\'{e}chet algebra.
A  cyclic $m$-cochain on $\A$ is a multilinear functional $\psi$
such that
\ben 
\psi(a_0,\dots,a_m)=(-1)^{m}\psi(a_m,a_0,\dots,a_{m-1}).
\een
The set of all  cyclic cochains  is denoted $C^m_{\lambda}$.
We say that $\psi$ is a  cyclic cocycle if for all $a_0,\dots,a_{m+1}\in\A$
we have
$(b\psi)(a_0,\dots,a_{m+1})=0$ where $b$ is the Hochschild coboundary in Equation \eqref{eq:hochs-b} below.
The cyclic cochain is  normalised if $\psi(a_0,a_1,\dots,a_m)=0$ whenever any of 
$a_1,\dots,a_m$ is the unit of $\A$.
\index{cyclic cohomology!cyclic cocycle}
\index{cyclic cohomology!normalised cocycle}

A  $(b,B)$-cochain $\phi$ for $\A$ is a finite collection of
multilinear functionals,
$$
\phi=(\phi_m)_{m=0,1,\dots,M},\quad \phi_m:\A^{\otimes m+1}\to\C.
$$
An  odd cochain has $\phi_m=0$ for even $m$, while an
even cochain has $\phi_m=0$ for odd $m$.
Thought of as functionals on the projective tensor product $\A^{\otimes m+1}$,
a  normalised cochain will satisfy
$\phi(a_0,a_1,\ldots,a_n)=0$ whenever for $k\geq 1$, any $a_k=1_\A$.
A normalised cochain is a  $(b,B)$-cocycle if, for all $m$,
$b\phi_m+B\phi_{m+2}=0$ where
$b$
is the  Hochschild coboundary  operator  given by
\begin{align}
(b\phi_m)(a_0,a_1,\ldots,a_{m+1})&=
\sum_{k=0}^{m}(-1)^k\phi_m(a_0,a_1,\ldots,a_ka_{k+1},\ldots,a_{m+1})\nonumber\\
&\qquad\qquad\qquad\qquad+(-1)^{m+1}\phi_m(a_{m+1}a_0,a_1,\ldots,a_{m}),
\label{eq:hochs-b}
\end{align}
and $B$ is Connes' coboundary operator
\begin{align}
(B\phi_m)(a_0,a_1,\ldots,a_{m-1})
&=\sum_{k=0}^{m-1}
(-1)^{(m-1)j}\phi_m(1_{\A},a_k,a_{k+1},\ldots,a_{m-1},a_0,\ldots,a_{k-1}).
\label{eq:connes-B}
\end{align}
 \index{cyclic cohomology!$(b,B)$-cocycle}

We write $(b+B)\phi=0$ for brevity, and observe that this formula for $B$ is 
only valid on the normalised
complex, \cite{Lo}. As we will only consider normalised cochains, this will be sufficient for
our purposes.

 For a {\em nonunital} Fr\'echet algebra $\A$,
a  reduced $(b,B)$-cochain
$(\phi_n)_{n=\bullet,\bullet+2,\dots,M}$ for $\A^\sim$ and of parity $\bullet\in\{0,1\}$,
is a normalised $(b,B)$-cochain  such that 
if $\bullet=0$ we have $\phi_0(1_{\A^\sim})=0$. The formulae for the operators $b,\,B$ are the same.
By \cite[Proposition 2.2.16]{Lo}, the reduced cochains come from a suitable bicomplex called the
reduced $(b,B)$-bicomplex, and gives a cohomology theory for $\A$.
\index{cyclic cohomology!reduced $(b,B)$-cocycle}

Thus far, our discussion has been algebraic. We now remind the reader that when working
with a Fr\'{e}chet algebra, we complete the algebraic tensor product in the projective tensor
product topology.
Given a  spectral triple
$(\A,\H,\D)$, we may without loss of generality 
complete $\A$ in the $\delta$-$\vf$-topology using Proposition \ref{prop:delta-phi}.
Then the
algebraic discussion above carries through. This follows because the operators
$b$ and $B$ are defined using multiplication, which is continuous, and insertion of $1_{\A^\sim}$ in the first
slot. This latter is also continuous, and one just needs to check that $B:C^1(\A)\to C^0(\A)$ maps
{\em normalised} cochains to cochains vanishing on the unit 
$1_{\A^\sim}\in \A^\sim$. This follows  from
the definitions.

Finally, an $(n+1)$-linear functional on an algebra $\A$ is cyclic if and only if 
it is the character of a cycle,
\cite[Chapter III]{Co4}, \cite[Proposition 8.12]{GVF}, and so the 
Chern character of a
Fredholm module over $\A$,
defined in the next section,
will always define a reduced cyclic cocyle for $\A^\sim$.

\subsection{Compatibility of the Kasparov product, numerical index and Chern character}
\label{subsec:put-it-together}

First we discuss the Chern character of semifinite Fredholm modules
and then relate the Chern character
to our analytic index
pairing  and the Kasparov product.

\begin{definition} \label{conditional} Let $(\H,F)$ be a Fredholm module relative to $(\cn,\tau)$.
We define the `conditional trace'  $\tau'$ by
\ben \tau'(T)=\tfrac{1}{2}\tau\big(F(FT+TF)\big),\een
provided $FT+TF\in\LL^1(\cn)$ (as it will be in our case, see
\cite[p. 293]{Co4} and \eqref{Lavender-hoo-hoo} below).
Note that if $T\in\LL^1(\cn)$, using the trace property and $F^2=1$, we find
$ \tau'(T)=\tau(T)$.
\end{definition}
\index{conditional trace}

The {\bf Chern character}, $[{\rm Ch}_F]$, of a $(p+1)$-summable ($p\geq 1$) semifinite Fredholm module
$(\HH,F)$ relative to $(\cn,\tau)$ is
the class in periodic cyclic cohomology of
the single  normalized and reduced cyclic cocycle
\ben 
\lambda_m\tau'\big(\gamma a_0[F,a_1]\cdots[F,a_m]\big), \ \ \ a_0,\dots,a_m
\in\A,\ \  m\geq\lfloor p\rfloor,
\een
where $m$ is even if and only if $(\HH,F)$ is even.
Here $\lambda_m$ are constants ensuring that this collection of cocycles
yields a well-defined periodic class, and they are given by
\ben 
\lambda_m=\left\{\begin{array}{ll} (-1)^{m(m-1)/2}\Gamma(\frac{m}{2}+1)
& \ \ m\ \ \ {\rm even}\\ \sqrt{2i}(-1)^{m(m-1)/2}\Gamma(\frac{m}{2}+1) &
\ \ m\ \ \ {\rm odd}\end{array}\right..
\een
For $p=n\in\N$,
the Chern character of an $(n+1)$-summable Fredholm module 
of the same parity than $n$,
is
represented by
the cyclic cocycle in dimension $n,$ ${\rm Ch}_F\in C^{n}_\lambda(\A)$, given by
\ben 
{\rm Ch}_F(a_0,\dots,a_{n})=\lambda_{n}\tau'(\gamma a_0[F,a_1]
\cdots[F,a_n]), \ \ \ \ \ a_0,\dots,a_n\in\A.
\een
The latter makes good sense since
\begin{equation}
\label{Lavender-hoo-hoo}
F\gamma a_0[F,a_1]\cdots[F,a_n]+\gamma a_0[F,a_1]
\cdots[F,a_n]F= (-1)^n\gamma[F ,a_0][F,a_1]\cdots[F,a_n],
\end{equation}
belongs to $\cl^1(\cn,\tau)$ by the $(p+1)$-summability assumption. 
We will always take the cyclic cochain ${\rm Ch}_F$ (or its $(b,B)$ analogue; see
below) as representative of $[{\rm Ch}_F]$, and will often refer to ${\rm Ch}_F$ as the
Chern character.

Since the Chern character is a cyclic cochain, it lies in the image of the
operator $B$, \cite[Corollary 20, III.1.$\beta$]{Co4}, and as $B^2=0$ we have $B\,{\rm Ch}_F=0$. 
Since $b\,{\rm Ch}_F=0$, we may regard the Chern character as a one term
element of the $(b,B)$-bicomplex. However,
the correct normalisation is (taking the Chern character to be in degree $n$)
\ben 
C^n_\lambda\ni {\rm Ch}_F\mapsto \frac{(-1)^{\lfloor n/2\rfloor}}{n!}{\rm Ch}_F\in C^n.
\een
Thus instead of $\lambda_n$ defined above, we use $\mu_n:=\frac{(-1)^{\lfloor n/2\rfloor}}{n!}\lambda_n$.
The difference in normalisation between periodic and $(b,B)$ is due  to
the way the index pairing is defined in the two cases, \cite{Co4}, and
compatibility with the periodicity operator. From now on we will use the $(b,B)$-normalisation,
and so make the following definition.

\begin{definition}
\index{Chern character}
\index{Chern character!Fredholm module}
\index{Chern character!projection}
\index{Chern character!unitary}
Let $(\H,F)$ be a semifinite $(n+1)$-summable, $n\in\N$,
Fredholm module for a nonunital algebra $\A$,  relative to $(\cn,\tau)$, and suppose the parity
of the Fredholm module is the same as the parity of $n$. Then we define the
Chern character $[{\rm Ch}_F]$ to be the cyclic cohomology class of the single term $(b,B)$-cocycle
defined by
$$
{\rm Ch}_F^n(a_0,a_1,\dots,a_n):=\left\{\begin{array}{ll}
\frac{\Gamma(\frac{n}{2}+1)}{n!} \tau'(\gamma a_0[F,a_1]\cdots[F,a_n]), & \ \ n\ \ \ {\rm even}\\
& \\ \sqrt{2i}\frac{\Gamma(\frac{n}{2}+1)}{n!}\tau'(a_0[F,a_1]\cdots[F,a_n]),  &
\ \ n\ \ \ {\rm odd}\end{array}\right.,\ \ \ a_0,\dots,a_n
\in\A.
$$
If $e\in \A^\sim$ is a  projection we define ${\rm Ch}_0(e)=e\in\A^\sim$ and for $k\geq 1$
$$
{\rm Ch}_{2k}(e)
=(-1)^k\frac{(2k)!}{k!}
(e-1/2)\otimes e\otimes\cdots\otimes e\in (\A^\sim)^{\otimes 2k+1}.
$$
If $u\in \A^\sim$ is a unitary then we define for $k\geq 0$
$$
{\rm Ch}_{2k+1}(u)
=(-1)^k\,k!\,
u^*\otimes u\otimes\cdots
\otimes u^*\otimes u\in (\A^\sim)^{\otimes 2k+2}.
$$
\end{definition}

In order to prove  the equality of our numerical index with the Chern character pairing, we need the 
cyclicity of the trace on a semifinite von Neumann algebra
from  \cite[Theorem 17]{BrK}, quoted here as Proposition \ref{prop:everybody-knows}.

\begin{prop} Let $(\A,\HH,\D)$ be a  semifinite spectral triple,   with $\A$ separable, 
which is smoothly summable with spectral dimension $p\geq1$, and such that $\lfloor p\rfloor$ has
the same parity as the spectral triple. 
Then for a class $[e]\in K_0(\A)$, with $e$ a projection in $M_n(\A^\sim)$ (resp.
 for a class $[u]\in K_1(\A)$, with $u$ a unitary in $M_n(\A^\sim)$) we have for any $\mu>0$
\begin{align*}
&\langle [e]-[{\bf 1}_e], (\A,\H,\D)\rangle
={\rm Ch}_{F_\mu\otimes{\rm Id}_n}^{\lfloor p\rfloor}\big({\rm Ch}_{\lfloor p\rfloor}(\hat e)\big),\quad \mbox{even case},\\
&\langle [u], (\A,\H,\D)\rangle
=-(2i\pi)^{-1/2} \,{\rm Ch}_{F_\mu\otimes{\rm Id}_n}^{\lfloor p\rfloor}\big({\rm Ch}_{\lfloor p\rfloor}(\hat u)\big),\quad
\mbox{odd case}.
\end{align*}
\end{prop}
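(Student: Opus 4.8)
The plan is to combine two earlier results: the identification of the numerical index pairing with an operator-index of a $\tau$-Fredholm operator built from $F_\mu$ (Proposition \ref{index-explicit}), and the classical fact that for a finitely summable Fredholm module the operator index of $eF_\pm e$ (resp. $PuP$) equals the pairing of the Chern character cocycle $\mathrm{Ch}_F$ with the Chern character $\mathrm{Ch}_*(e)$ (resp. $\mathrm{Ch}_*(u)$) of the $K$-theory class. The first step is to verify that the hypotheses here --- $(\A,\H,\D)$ smoothly summable with spectral dimension $p$, so that $\A\cup[\D,\A]\subset\B_1^\infty(\D,p)$ --- imply, via Proposition \ref{pr:can-do} together with Lemma \ref{lem:noninv} and the double construction, that the doubled Fredholm module $({}_A(C\oplus C)_C,F_\mu)$ with $F_\mu=\D_\mu|\D_\mu|^{-1}$ is $(\lfloor p\rfloor+1)$-summable for $\A$ (note $F_\mu^2=1$, which is exactly what makes the index formulae of subsection \ref{num-ind-pair} and the Chern character well-defined). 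Here one must check that the summability estimates survive passage to the double, which is routine since $\D_\mu^2=\D^2+\mu^2$ and $\hat a$ involves only $a\in\A$.

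Next I would invoke the standard Connes pairing formula. Having a $(\lfloor p\rfloor+1)$-summable Fredholm module $(\H^2\otimes\C^n, F_\mu\otimes\mathrm{Id}_n)$ for $M_n(\A^\sim)$ with square one, the operator index $\Index_{\tau\otimes\tr_{2n}}(\hat e(F_{\mu,+}\otimes\mathrm{Id}_n)\hat e)$ in the even case is computed by pairing the cyclic cocycle $\mathrm{Ch}^{\lfloor p\rfloor}_{F_\mu\otimes\mathrm{Id}_n}$ with the cyclic cycle $\mathrm{Ch}_{\lfloor p\rfloor}(\hat e)$; this is \cite[III.3]{Co4} adapted to the semifinite setting, where the conditional trace $\tau'$ of Definition \ref{conditional} replaces the ordinary trace and where Proposition \ref{prop:everybody-knows} (Brown--Kosaki cyclicity of $\tau$ on $\tilde\cl^1$) is needed to manipulate the resulting trace-class products. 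The point to make carefully is that $\hat e$ is represented on the doubled space so that $\mathrm{Ch}_{\lfloor p\rfloor}(\hat e)$ unwinds, using $[\hat e,F_\mu]=[\widehat{e-\mathbf{1}_e},F_\mu]$ (the scalar part $\mathbf{1}_e$ drops out of commutators), into the cocycle evaluated on the matrix entries of $e$ and $\mathbf{1}_e$ --- which is precisely why the left-hand side is $\langle[e]-[\mathbf{1}_e],(\A,\H,\D)\rangle$ rather than something involving $\A^\sim$. The odd case is identical with $P_\mu=(1+F_\mu)/2$ in place of $F_{\mu,+}$ and the extra normalisation constant $\sqrt{2i}$ hidden in the definition of $\mathrm{Ch}^{\lfloor p\rfloor}_{F_\mu}$ in the odd case, which accounts for the prefactor $-(2i\pi)^{-1/2}$ in the statement once one also tracks the $(-1)^k k!$ in $\mathrm{Ch}_{2k+1}(u)$ against the $\mu_n$ normalisation.

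The remaining bookkeeping is to match normalisation constants: the $(b,B)$-normalised Chern character $\mathrm{Ch}^n_F$ differs from the periodic/cyclic representative by $\mu_n=\frac{(-1)^{\lfloor n/2\rfloor}}{n!}\lambda_n$, and the cycles $\mathrm{Ch}_{2k}(e)$, $\mathrm{Ch}_{2k+1}(u)$ carry their own combinatorial factors; these are designed precisely so that the pairing returns $\Index$ on the nose in the even case and $\Index$ up to $-(2i\pi)^{-1/2}$ in the odd case, matching the constants already appearing in \cite{Co4, GVF}. I would present this as a direct computation: expand $\mathrm{Ch}^{\lfloor p\rfloor}_{F_\mu\otimes\mathrm{Id}_n}(\mathrm{Ch}_{\lfloor p\rfloor}(\hat e))$, use $F_\mu^2=1$ and Equation \eqref{Lavender-hoo-hoo} to rewrite it as $\tau\otimes\tr_{2n}$ of a trace-class operator, recognise this via the McKean--Singer-type argument as the operator index, and then cite Proposition \ref{index-explicit}.

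\textbf{Main obstacle.} The genuinely delicate point is the \emph{semifinite, nonunital} version of the equality "operator index $=$ Chern character pairing." In the unital $\B(\H)$ case this is classical, but here one must (i) work with the conditional trace $\tau'$ and justify that $FT+TF\in\cl^1(\cn,\tau)$ for the relevant $T$, which is exactly the $(\lfloor p\rfloor+1)$-summability input; (ii) handle the fact that $e\in M_n(\A^\sim)$ rather than $M_n(\A)$, tracking how the scalar/unital part contributes $\mathbf{1}_e$ on the doubled-up space and no more; and (iii) be sure the $\tau\otimes\tr_{2n}$-trace manipulations (cyclic permutations under the trace) are licensed by Proposition \ref{prop:everybody-knows} rather than naive cyclicity. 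Everything else is normalisation arithmetic. I expect (iii) combined with the passage through the double construction to be where the real care is needed, and I would lean on \cite{KNR} and \cite{GVF} for the structural parts so as not to reprove the Fredholm-index/Chern-character identity from scratch.
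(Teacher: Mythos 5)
Your proposal is correct and follows essentially the same route the paper takes: check that smooth summability (via Proposition \ref{pr:can-do}) gives $[F_\mu,\hat a]\in\L^{\lfloor p\rfloor+1}$, then reduce the numerical pairing (Proposition \ref{index-explicit}) to the classical ``operator index $=$ Chern character pairing'' formula adapted to the semifinite conditional trace, citing Connes \cite[IV.1.$\gamma$, Proposition 4]{Co4} in the even case and tracking normalisations. The one thing you undersell is the odd case: the paper does \emph{not} treat it as ``identical'' to the even case, but instead proves it directly by expanding $\Index_{\tau\otimes\tr_2}(P_\mu\hat u P_\mu)$ using Phillips' identities $|(1-Q_\mu)P_\mu|^{2n}=(P_\mu-P_\mu\hat uP_\mu\hat u^*P_\mu)^n$ and $(Q_\mu-P_\mu)^{2n+1}=|(1-P_\mu)Q_\mu|^{2n}-|(1-Q_\mu)P_\mu|^{2n}$ together with the Carey--Phillips spectral-flow formula $\Index_{\tau\otimes\tr_2}(P_\mu Q_\mu)=f(1)^{-1}\tau\otimes\tr_2(f(Q_\mu-P_\mu))$, after which cyclicity via Brown--Kosaki and the Gamma-function duplication formula produce the $-(2\pi i)^{-1/2}$ prefactor; this odd-case computation is where the real work in the published proof sits, and your proposal's ``identical with $P_\mu$ in place of $F_{\mu,+}$'' skips it.
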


\begin{proof}
The first thing to prove is that $[F_\mu,\hat a]\in\L^{\lfloor p\rfloor+1}(\cn,\tau)$ for all $a\in\A$. This  
will follow if we have  $[F_\eps,a]\in\L^{\lfloor p\rfloor+1}(\cn,\tau)$ for all $a\in\A$.
By the smooth summability assumption, we have 
$a,\,[\D,a]\in\B^\infty_1(\D,p)={\rm Op}_0^0$ for all $a\in\A$. Thus the
Schatten class property we need follows from Proposition \ref{pr:can-do}.

For the even case the remainder of the proof is just as in
\cite[Proposition 4, IV.1.$\gamma$]{Co4}.
The
strategy in the odd case is the same. However,
we present the proof in the odd case in order to clarify some sign conventions.
To simplify the notation, we let $u$ be a unitary in $\A^\sim$ and suppress the matrices $M_n(\A^\sim)$.

In this case  the
operator $P_\mu \hat uP_\mu:P_\mu(\HH\oplus \HH)\to P_\mu(\HH\oplus \HH)$, is 
$\tau\otimes \tr_2$-Fredholm with parametrix
$P_\mu \hat u^*P_\mu$, where $u\in\A^\sim$ unitary and $P_\mu=(F_\mu+1)/2\in M_2(\cn)$.
To
obtain our result, we need \cite[Lemma 3.5]{Ph1}
which    shows that with $Q_\mu:=\hat uP_\mu{\hat u}^*$ we have
$$
|(1-Q_\mu)P_\mu|^{2n} =[P_\mu(1-Q_\mu)(1-Q_\mu)P_\mu]^n
=[P_\mu-P_\mu Q_\mu P_\mu]^n= (P_\mu-P_\mu\hat uP_\mu\hat u^*P_\mu)^n .
$$
One ingredient in the proof that connects this to odd summability is the
identity 
$$
(Q_\mu-P_\mu)^{2n+1} = |(1-P_\mu)Q_\mu|^{2n} - |(1-Q_\mu)P_\mu|^{2n},
$$
proved by induction in \cite[Lemma 3.4]{Ph1}.
It is then shown in \cite[Theorem 3.1]{CP2} that if $f$ is any odd function with
$f(1)\neq 0$ and $f(Q_\mu-P_\mu)$ trace-class, we have 
$$
\mbox{Index}_{\tau\otimes\tr_2}(P_\mu Q_\mu)=\frac{1}{f(1)}\tau\otimes\tr_2\big(f(Q_\mu-P_\mu)\big).
$$ 
Putting these ingredients together we have
\begin{align*}
\Index_{\tau\otimes\tr_2}(P_\mu\hat uP_\mu)&=\Index_{\tau\otimes\tr_2}(P_\mu\hat uP_\mu \hat u^*)=\Index_{\tau\otimes\tr_2}(P_\mu Q_\mu)\\
&=\tau\otimes\tr_2((P_\mu-P_\mu \hat u^*P_\mu \hat uP_\mu )^n)-\tau\otimes\tr_2((P_\mu-P_\mu \hat uP_\mu \hat u^*P_\mu)^n),
\end{align*}
where $n=(\lfloor p\rfloor+1)/2$ is an  integer, since $\lfloor p\rfloor$ is assumed 
odd. First we observe that
$P_\mu -P_\mu\hat u^*P_\mu \hat uP_\mu=-P_\mu[\hat u^*,P_\mu]\hat uP_\mu$, 
and by replacing $P_\mu$ by $(1+F_\mu)/2$ we have
$$
P_\mu[\hat u^*,P_\mu]\hat uP_\mu=[F_\mu,\hat u^*]\,[F_\mu,\hat u]({1+F_\mu})/{8}.
$$
Since $F_\mu[F_\mu,\hat a]=-[F_\mu,\hat a]F_\mu$ for all $a\in\A$, 
cycling a single $[F_\mu,\hat u^*]$ around using 
Proposition \ref{prop:everybody-knows} yields
\begin{align*}
&\Index_{\tau\otimes\tr_2}(P_\mu \hat uP_\mu)
=\tau\otimes\tr_2\big((P_\mu-P_\mu \hat u^*P_\mu\hat uP_\mu)^n\big)
-\tau\otimes\tr_2\big((P_\mu-P_\mu\hat uP_\mu \hat u^*P_\mu)^n\big)\\
&\quad=\tau\otimes\tr_2\Big(\Big(-\frac{1}{4}[F_\mu,\hat u^*]\,[F_\mu,\hat u]\,\frac{1+F_\mu}{2}\Big)^n\Big)-
\tau\otimes\tr_2\Big(\Big(-\frac{1}{4}[F_\mu,\hat u]\,[F_\mu,\hat u^*]\,\frac{1+F_\mu}{2}\Big)^n\Big)\\
&\quad=(-1)^n\frac{1}{4^n}\tau\otimes\tr_2\Big(\frac{1+F_\mu}{2}([F_\mu,\hat u^*][F_\mu,\hat u])^n\\
&\qquad\qquad\qquad\qquad-
[F_\mu,\hat u^*][F_\mu,\hat u][F_\mu,\hat u^*]\frac{1+F_\mu}{2}[F_\mu,\hat u][F_\mu,\hat u^*]
\cdots \frac{1+F_\mu}{2}[F_\mu,\hat u]\frac{1-F_\mu}{2}\Big).
\end{align*}
Thus
\begin{align*}
\Index_{\tau\otimes\tr_2}(P_\mu\hat uP_\mu)
&=(-1)^n\frac{1}{4^n}\tau\otimes\tr_2\Big(\Big(\frac{1+F_\mu}{2}-\frac{1-F_\mu}{2}\Big)\big([F_\mu,\hat u^*][F_\mu,\hat u]\big)^n\Big)\\
&=(-1)^n\frac{1}{4^n}\tau\otimes\tr_2\big(F_\mu([F_\mu,\hat u^*][F_\mu,\hat u]\big)^n)\\
&=(-1)^n\frac{1}{2^{2n-1}}(\tau\otimes\tr_2)'\big(\hat u^*[F_\mu,\hat u]\cdots[F_\mu,\hat u^*][F_\mu,\hat u]\big),
\end{align*}
where in the last line there are $2n-1=\lfloor p\rfloor$ commutators. Comparing the normalisation
of the  formulae above with the Chern characters using the duplication formula for
the Gamma function, we find
$$
\Index_{\tau\otimes\tr_2}(P_\mu \hat uP_\mu )
=\frac{-1}{\sqrt{2\pi i}}  {\rm Ch}_{F_\mu}^{\lfloor p\rfloor}({\rm Ch}_{\lfloor p\rfloor}(\hat u)),
$$
as needed.
\end{proof}

{\bf Remark.} When the parity of $\lfloor p\rfloor$ does not agree with the parity of the spectral triple, 
we apply the same proof to $\lfloor p\rfloor+1$, and so use 
${\rm Ch}_{F_\mu\otimes{\rm Id}_n}^{\lfloor p\rfloor+1}$ to represent the class of the Chern character.

{\bf Remark.}
An independent check of the sign can be made on the circle, using the unitary $u=e^{i\theta}$ and
the Dirac operator $\frac{1}{i}\frac{d}{d\theta}$. In this case $\Index(PuP)=-1$.
To arrive at this sign we have retained the usual definition of the Chern character
and introduced an additional minus sign in the normalisation.
In \cite{CPRS2} the signs used are all correct, however in \cite{CPRS4} we
introduced an additional minus sign (in error) in the formula for spectral flow. This
disguised the fact that we were not taking a homotopy to the Chern character
(as defined above) but rather to minus the Chern character.
This is of some relevance, as our strategy for proving the local index formula in
the nonunital case is based on the homotopy arguments of \cite{CPRS4}.

\subsection{Digression on the odd index pairing for nonunital algebras}
\label{subsec:odd-index}
To emphasise that the introduction of the double is only a technical device to 
enable us to work with invertible operators,
we explain a different approach to 
handling the problem of constructing an involutive Fredholm module in the odd case.

Assume that we have an odd Fredholm module $(\HH,F)$ over a nonunital 
$C^*$-algebra $A$, with $F^2=1$. 
Then, as mentioned previously,  it is straightforward 
to check that with $P=(1+F)/2$ and $u\in A^\sim$ a unitary,
the operator $PuP$ is Fredholm with parametrix $Pu^*P$ (as operators on $P\HH$).

Now we have constructed a doubled up version of a  spectral triple $(\A,\HH^2,\D_\mu)$, and so
obtained a Fredholm module $(\HH^2,F_\mu)$ with $F_\mu^2=1$. By Lemma \ref{lem:noninv}, this
Fredholm module represents the class of our spectral triple. In this brief digression we show that
the odd index pairing can be defined in terms of the original data with no doubling.

So assume that we have a  spectral triple $(\A,\HH,\D)$.
First we can decompose $P:=\chi_{[0,\infty)}(\D)$ as the kernel projection $P_0$ plus the positive spectral projection $P_+$.
We will use $P_-$ for the negative spectral projection so that $P_-+P_0+P_+$ is the identity of $\cn$.
We let $F=2P-1$ and we want to prove that $F$ can be used to construct
a Fredholm module for $\A$ that is in the same Kasparov class as that given by 
$F_\eps:=\D(\eps+\D^2)^{-1/2}$.

If we can show that $[F,a]$ is compact for all $a\in \A$ then we are done because
the straight-line path $F_t=tF+(1-t)F_\eps$ provides a homotopy of Kasparov modules.
To prove compactness of the commutators  we use the method of \cite{CH}.

\begin{prop} Let $(\A,\HH,\D)$ be a  semifinite spectral triple relative to $(\cn,\tau)$
 with $\A$ separable.
With $F=2\chi_{[0,\infty)}(\D)-1$, the pair $(\HH, F)$ is a Fredholm module for 
$\A$ and $(F,C_C)$ (with $C$ the $C^*$-completion of the subalgebra of $\K(\cn,\tau)$ 
given in Definition \ref{defn:not-kn}) provides a bounded representative 
for the Kasparov class of the  spectral triple $(\A, \HH, \D)$.
\end{prop}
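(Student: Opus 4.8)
The plan is to show that $F=2\chi_{[0,\infty)}(\D)-1$ differs from any $F_\eps=\D(\eps+\D^2)^{-1/2}$ by a $\tau$-compact operator, at least after multiplication by elements of $A$; once commutators with $F$ are seen to be $\tau$-compact the remaining claims are immediate, since $F^2=\mathrm{Id}_\cn$, $F=F^*$, and the straight-line path $F_t=tF+(1-t)F_\eps$, $t\in[0,1]$, is a norm-continuous operator homotopy of Kasparov modules (each $F_t$ satisfies $a(1-F_t^2),[F_t,a]\in\K_\cn$ because these conditions are preserved under convex combinations and norm limits). So the real content is: for all $a\in A$, $[F,a]\in\K_\cn$, and in fact $[F,a]$ lies in the algebra $C$ of Definition \ref{defn:not-kn}, so that the data defines a Kasparov $A$-$C$-module in the same class as the one from Corollary \ref{cor:spec-trip-to-Kas-mod}.

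The approach I would take to the compactness of $[F,a]$ is the one of \cite{CH}, as the statement itself indicates. First I would note that it suffices to treat $a\in\A$, by norm density and the fact that $\K_\cn$ (and $C$) are norm-closed. Next, write $\mathrm{sgn}(x)=\frac{2}{\pi}\int_0^\infty \frac{x}{x^2+t^2}\,dt$ for the signum function (understood via the functional calculus, with the kernel projection handled separately), so that formally
$$
F = P_+ - P_- = \mathrm{sgn}(\D) + (\text{correction on }\ker\D),
$$
and the commutator $[\mathrm{sgn}(\D),a]$ can be written, exactly as in the proof of Lemma \ref{lem:spec-trip-to-Kas-mod}, as a norm-convergent integral
$$
[\mathrm{sgn}(\D),a]
=\frac{2}{\pi}\int_0^\infty \Big(\D(\D^2+t^2)^{-1}[\D,a](\D^2+t^2)^{-1}t^2
-(\D^2+t^2)^{-1}[\D,a]\D(\D^2+t^2)^{-1}t^2\Big)\frac{dt}{t^2}+\cdots,
$$
whose integrand is a product of a bounded operator with $a(1+\D^2)^{-1/2}$-type factors (after inserting and pulling through resolvents), hence $\tau$-compact by Definition \ref{def:ST}, with the integral converging in operator norm by the elementary estimates $\|\D(\D^2+t^2)^{-1}\|\le (2t)^{-1}$ and $\|(\D^2+t^2)^{-1}\|\le t^{-2}$. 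The correction terms involving $\ker\D$ are controlled by noting that $a\chi_{\{0\}}(\D)$ and $\chi_{\{0\}}(\D)a$ are $\tau$-compact, being dominated in the appropriate sense by $a(1+\D^2)^{-1/2}$. Comparing with the analogous integral representation of $[F_\eps,a]=[\D(\eps+\D^2)^{-1/2},a]$, the difference $[F-F_\eps,a]$ is again an operator-norm-convergent integral of $\tau$-compact operators, so $[F,a]\in\K_\cn$; a closer bookkeeping of the integrands, exactly as in Definition \ref{defn:not-kn} and the discussion following Corollary \ref{cor:spec-trip-to-Kas-mod}, shows the resulting operators are built from the generators $F[F,a]$, $b[F,a]$, $[F,a]$, $Fb[F,a]$, $a\varphi(\D)$ and their limits, hence lie in $C$.

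The main obstacle, and the point where care is needed, is the presence of a possibly large kernel for $\D$: unlike $F_\eps$, the operator $F=2\chi_{[0,\infty)}(\D)-1$ depends on a choice (namely putting $\ker\D$ on the positive side), and one must verify both that this choice does not affect the Kasparov class and that the associated commutators remain $\tau$-compact. I would handle this by observing that changing the convention on $\ker\D$ alters $F$ by $2\chi_{\{0\}}(\D)$, and $a\chi_{\{0\}}(\D)$, $\chi_{\{0\}}(\D)a$ are $\tau$-compact since $\chi_{\{0\}}(\D)\le 2(1+\D^2)^{-1/2}$ as positive operators (indeed $\chi_{\{0\}}(\D)=\chi_{\{0\}}(\D)(1+\D^2)^{-1/2}\cdot 1$), so all such modifications are stably/operator-homotopically trivial and the class is well defined. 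Once this is in place, the straight-line homotopy $F_t=tF+(1-t)F_\eps$ finishes the proof: each $F_t\in\cn$, $F_t^*=F_t$, and $a(1-F_t^2)$, $[F_t,a]$ are $\tau$-compact for every $a\in A$ (convex combinations of operators with these properties, modulo $\K_\cn$), so $(\HH,F_t)$ is a Fredholm module for each $t$ and $({}_A C_C, F_t)$ a Kasparov module, giving $[({}_A C_C, F)]=[({}_A C_C, F_\eps)]$ in $KK^1(A,C)$, which is the class of the spectral triple $(\A,\HH,\D)$ by Corollary \ref{cor:spec-trip-to-Kas-mod}.
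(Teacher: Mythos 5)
Your overall strategy---compare $F$ with $F_\eps$, then run the straight-line homotopy---is sound, and your handling of the kernel projection $\chi_{\{0\}}(\D)$ is essentially correct. But the central step fails: the integral you use to represent $[\mathrm{sgn}(\D),a]$ does not converge in operator norm unless $\D$ is bounded away from $0$. The estimates you cite, $\|\D(\D^2+t^2)^{-1}\|\le(2t)^{-1}$ and $\|(\D^2+t^2)^{-1}\|\le t^{-2}$, bound the integrand by a constant times $t^{-2}$ near $t=0$, which is not integrable; and this is not a loose bound, since $(\D^2+t^2)^{-1}$ genuinely has norm $t^{-2}$ on any spectral subspace near $0$. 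This is precisely the divergence that the factor $\eps>0$ removes in the proof of Lemma \ref{lem:spec-trip-to-Kas-mod}, where $(\eps+\lambda)^{-1}$ makes $\int_0^\infty\lambda^{-1/2}(\eps+\lambda)^{-1}\,d\lambda$ finite. Your argument is, in effect, setting $\eps=0$, and the entire content of the Proposition is that $\D$ need not be invertible.

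The paper's proof avoids any singular integral altogether: it works with the doubled operator $\D_\mu$, which is invertible with $\D_\mu^2\ge\mu^2$, sets $P_\mu=(1+F_\mu)/2$, and compares it to the strong limit $Q=\lim_{\mu\to0}P_\mu$, showing that each matrix entry of $(P_\mu-Q)\hat a$ is a bounded function of $\D$ composed with a $\tau$-compact operator. The decisive algebraic fact is that $x\mapsto x-\mathrm{sgn}(x)(\mu^2+x^2)^{1/2}$ is bounded by $\mu$. That same observation in fact repairs your non-doubled approach directly and renders the integral unnecessary: the function $g_\eps(x):=(2\chi_{[0,\infty)}(x)-1)-x(\eps+x^2)^{-1/2}$ satisfies $|g_\eps(x)(\eps+x^2)^{1/2}|\le\sqrt{\eps}$ for every real $x$, so $(F-F_\eps)(\eps+\D^2)^{1/2}$ is a bounded operator, whence $(F-F_\eps)a=(F-F_\eps)(\eps+\D^2)^{1/2}\cdot(\eps+\D^2)^{-1/2}a\in\K_\cn$ and similarly $a(F-F_\eps)\in\K_\cn$; combining with Lemma \ref{z simplification lemma1} gives $[F,a]=[F_\eps,a]+[F-F_\eps,a]\in\K_\cn$. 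Finally, your claim that $a(1-F_t^2)$ is compact ``because these conditions are preserved under convex combinations'' is not a proof: $F_t^2=(tF+(1-t)F_\eps)^2$ contains the cross term $2t(1-t)FF_\eps$ and is not a convex combination of $F^2$ and $F_\eps^2$. Since $F$ and $F_\eps$ commute, one should compute $1-F_t^2=1-t^2-2t(1-t)|\D|(\eps+\D^2)^{-1/2}-(1-t)^2\D^2(\eps+\D^2)^{-1}$, a nonnegative $C_0$ function of $\D$ decaying like $\D^{-2}$ at infinity, and only then does $a(1-F_t^2)\in\K_\cn$ follow from $a(1+\D^2)^{-1/2}\in\K_\cn$.
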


\begin{proof}
Our proof uses the doubled spectral triple $(\A,\HH^2,\D_\mu)$. 
Let $P_\mu = (1+F_\mu)/2$  and use the notation $Q$ for
the operator obtained by taking the strong limit $\lim_{\mu\to 0}P_\mu$ as $\mu\to 0$. 
We note that
$$
Q=\bma P_+ +\frac12P_0 & \frac12P_0 \\ \frac12P_0 & P_- +\frac12P_0\ema
\quad
\mbox{and}\quad
P_\mu=\bma A & A^{1/2}(1-A)^{1/2}\\ A^{1/2}(1-A)^{1/2} & 1-A\ema,
$$
where $A=\frac12\big((\mu^2 +\D^2)^{1/2}+\D\big)(\mu^2+\D^2)^{-1/2}$.
Next a short calculation shows that
$$
2Q-1=\bma F & 0\\ 0 & -F\ema+\bma -P_0 & P_0\\ P_0 & -P_0\ema.
$$
Recall that in the double spectral triple
\ben
 a\mapsto \hat a=\bma a & 0\\ 0 & 0\ema,
\ \ \mbox{for all } a\in\A.
\een
Thus to show that $[F,a]$ is compact for all $a\in\A$, it suffices to show that $[Q,\hat a]$ is compact,
since for any $s>0$ we have  $P_0a=P_0(1+\D^2)^{-s}a$ and so both 
$P_0a$ and $aP_0$ are compact for all $a\in \A$. This  follows since $a(1+\D^2)^{-1/2}$
is compact.
Consider 
$$
[P_\mu,\hat a] - [Q ,\hat a]= [P_\mu-Q,\hat  a],
$$
and the individual matrix elements in $(P_\mu-Q)\hat a$
for example.
We have two terms to deal with: 
the diagonal one
$$
\tfrac12\big((\mu^2 +\D^2)^{1/2}+\D- 2(P_++\tfrac12P_0)(\mu^2 +\D^2)^{1/2}\big)(\mu^2+\D^2)^{-1/2}a,
$$ 
and the off-diagonal one
$$
\tfrac12\mu(\mu^2+\D^2)^{-1/2}a-\tfrac12P_0a.
$$
We have already observed that since we have a  spectral triple, the off-diagonal terms
are compact.
For the diagonal terms, we first observe that
$$
(\mu^2 +\D^2)^{1/2}+\D- 2(P_++\tfrac12P_0)(\mu^2 +\D^2)^{1/2}=\D-(2P-1)(\mu^2+\D^2)^{1/2}-P_0\mu,
$$ 
is a bounded operator. This follows from the functional calculus applied to the function
$f(x)=x-\mbox{sign}(x)(\mu^2 + x^2)^{1/2}$, where $\mbox{sign}(0)$ is defined to be $1$. 
This can be checked for all
 $\mu$ in $[0,1]$.
This boundedness, together with the compactness of $(\mu^2+\D^2)^{-1/2}a$, shows that
$$
\tfrac12\big((\mu^2 +\D^2)^{1/2}+\D- 2(P_++\tfrac12P_0)(\mu^2 +\D^2)^{1/2}\big)(\mu^2+\D^2)^{-1/2}a,
$$  
is compact for all $\mu\in[0,1]$.  This establishes
that 
$[Q, \hat a]$ is compact for all $a\in\A$.  

The second statement now follows immediately.
\end{proof}

Combining this with Proposition \ref{index-explicit} proves the following result.

\begin{corollary}
Let $(\A,\HH,\D)$ be an odd  semifinite smoothly summable spectral triple relative to $(\cn,\tau)$
with spectral dimension $p\geq 1$  and with $\A$ separable. Let $u$ be a unitary in $ M_n(\A^\sim)$ representing a class 
$[u]$ in $ K_1(\A)$ and $P=\chi_{[0,\infty)}(\D)$. Then
\begin{align*}
&\langle [u],(\A,\H,\D)\rangle
=\Index_{\tau\otimes{\tr_n}}\big((P\otimes{\rm Id}_n) u(P\otimes{\rm Id}_n)\big).
\end{align*}
\end{corollary}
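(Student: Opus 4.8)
The corollary is an immediate consequence of two facts already established: (i) the bounded transform $F=2\chi_{[0,\infty)}(\D)-1$ gives a Fredholm module $(\HH,F)$ for $\A$ lying in the same Kasparov class as the one defined by $(\A,\HH,\D)$ via $F_\eps$, which is the content of the previous proposition; and (ii) Proposition \ref{index-explicit}, which computes the numerical index pairing $\langle[u],(\A,\H,\D)\rangle$ as the $\tau\otimes\tr_{2n}$-index of $(P_\mu\otimes{\rm Id}_n)\hat u(P_\mu\otimes{\rm Id}_n)$, where $P_\mu=(1+F_\mu)/2$ and $F_\mu=\D_\mu/|\D_\mu|$ comes from the double. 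So the task is simply to trace through how these two computations match up, and to see that the doubling contributes nothing to the index in the odd case.

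First I would recall that, by Definition \ref{index-paring}, the numerical index pairing is defined by taking the Kasparov product with $[({}_A(C\oplus C)_C,F_\mu)]$ and then applying $\tau_*$; by the previous proposition together with Lemma \ref{lem:noninv}, the Kasparov class $[({}_A(C\oplus C)_C,F_\mu)]$ is the same as $[(F,C_C)]$, the class of the bounded module built directly from $P=\chi_{[0,\infty)}(\D)$ with no doubling. Since $\tau_*$ applied to the Kasparov product depends only on the $KK$-class, we may compute $\langle[u],(\A,\H,\D)\rangle$ using the representative $(\HH,F)$ in place of $(\HH^2,F_\mu)$. Now I would invoke the standard description of the odd index pairing recalled in subsection \ref{num-ind-pair}: for a normalised odd Fredholm module with positive spectral projection $P$ and a unitary $u\in M_n(\A^\sim)$, one has $[u]\otimes_A[(X,F)]=[\Index(PuP)]\in K_0(B)$, there being no contribution from $P^\perp$; here $B=C$ and applying $\tau_*=\Index_{\tau\otimes\tr_n}$ yields exactly $\Index_{\tau\otimes{\tr_n}}\big((P\otimes{\rm Id}_n)u(P\otimes{\rm Id}_n)\big)$, with only the $n\times n$ matrix amplification surviving rather than the $2n\times 2n$ one needed for the doubled picture.

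The one point requiring a little care — and the only candidate for a genuine obstacle — is the bookkeeping around the kernel projection $P_0=\chi_{\{0\}}(\D)$: in the proof of the previous proposition, $Q=\lim_{\mu\to 0}P_\mu$ is not quite the block $F\oplus(-F)$ but differs by a compact perturbation involving $P_0$, and one must be sure this does not shift the index. Since $P_0a$ and $aP_0$ are $\tau$-compact for all $a\in\A$ (because $P_0a=P_0(1+\D^2)^{-s}a$ and $a(1+\D^2)^{-1/2}$ is $\tau$-compact), the operator $PuP$ with $P=\chi_{[0,\infty)}(\D)$ differs from any version built from the strictly positive projection $P_+$ only by a $\tau$-compact perturbation, hence has the same $\tau$-index. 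I would also remark that the independence of the pairing on the parameter $\mu>0$ (Corollary \ref{cor:double-index}) together with the operator homotopy $F_t=tF+(1-t)F_\eps$ from the previous proposition closes the circle: all of $(\A,\HH,\D)$, $({}_AC_C,F_\eps)$, the doubled module, and $(F,C_C)$ represent one and the same class, so the displayed equality follows by combining Proposition \ref{index-explicit} with the identification of representatives. This is genuinely routine once the previous proposition is in hand; no new estimates are needed.
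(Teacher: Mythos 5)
Your argument is correct and follows exactly the route the paper intends: the paper's own proof is the single sentence ``Combining this with Proposition \ref{index-explicit} proves the following result,'' and your expanded version simply traces out that combination — using the preceding proposition (together with Lemma \ref{lem:noninv}) to identify the Kasparov class of $(\HH,F)$ with that of the doubled module, then applying the odd-case index formula for normalised Fredholm modules from subsection \ref{num-ind-pair} and Proposition \ref{index-explicit}. Your extra remarks about the kernel projection $P_0$ are harmless but redundant: the compactness of $P_0 a$ is already absorbed in the proof of the preceding proposition, which establishes the KK-class equality you then invoke.
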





\section{The local index formula for semifinite   spectral
triples}\label{sec:LIT}

We have now come to the proof of the local index formula in noncommutative geometry
for  semifinite  smoothly summable spectral triples. This proof is modelled on that in \cite{CPRS4} in
the unital case, which in turn was inspired by Higson's proof in \cite{Hi}.

We have opted to present the proof `almost in full', 
though sometimes just sketching the
algebraic parts of the argument, referring to \cite{CPRS4} for more details. This means 
we have some repetition of material from \cite{CPRS4} in order that the proof be comprehensible.
Due to the nonunital subtleties, we include detailed proofs of the 
analytic statements, deferring the 
lengthier proofs to the Appendix so as not to distract from the main argument.

In the unital case we constructed two $(b,B)$-cocycles, the resolvent and residue cocycles. The proof
in \cite{CPRS4} shows that the residue cocycle is cohomologous to the Chern character, while the 
resolvent cocycle is `almost' cohomologous to the Chern character, in a sense we make precise later.
The aim now is to show that for    smoothly  summable semifinite spectral triples:

1) the resolvent and residue cocycles are still defined as elements of the reduced $(b,B)$-complex
in the nonunital setting;

2) the homotopies from the Chern character to the resolvent and residue
cocycles are still well-defined and continuous in the nonunital setting. In
particular, various intermediate cocycles must be shown to be well-defined and
continuous.

\subsection{The resolvent and residue cocycles and other cochains}
\label{subsec:cocycles_resi}

In order to deal with the even and odd cases simultaneously, we need to introduce some 
further notation to handle the differences in the formulae between the two cases.

In the following, we fix $(\A,\HH,\D)$, a  semifinite, smoothly summable,
spectral triple, with spectral dimension $p\geq 1$ and  parity $\bullet\in\{0,1\}$
($\bullet=0$ for an even spectral triple and $\bullet=1$ for odd triples).
We will use the notation
$da:=[\D,a]$ for commutators in order to save space.\index{$\bullet$, parity notation}
We further require that 
$A$, the norm closure of $\A$, be separable in
order that we can apply the Kasparov product to define the numerical index pairings given in 
Definition \ref{index-paring}. 
Finally, we have seen in   Proposition 
\ref{prop:delta-phi} that we may assume,
without loss of generality, that $\A$ is complete in the $\delta$-$\vf$-topology.

We  define a (partial) $\mathbb Z_2$-grading on ${\rm OP}^*$, 
by declaring that $|\D|$ and the elements of $\A$ have degree zero, 
while $\D$ has degree one. When the triple is even, this  
coincides with the degree defined by the grading  $\gamma$. 
When defined, we denote 
the grading degree of an element $T\in{\rm OP}^*$ by ${\rm deg}(T)$. \index{${\rm deg}$, grading degree for operators}
We also let $M:=2\lfloor( p+\bullet+1)/2\rfloor-\bullet$, the greatest \index{$M$, summation limits in local index formula}
integer of parity $\bullet$ in $[0,p+1]$. In particular, $M=p$ when $p$ is an 
integer of parity $\bullet$  and $M=p+1$ if $p$ is an integer of parity $1-\bullet$. The grading degree allows us to define the graded commutator 
of $S,\,T\in {\rm OP}^*$ of definite grading degree, by
$$
[S,T]_\pm:=ST-(-1)^{{\rm deg}(S)\,{\rm deg}(T)}TS.
$$

\index{$[\cdot,\cdot]_\pm$, graded commutator}
We will begin by defining the various cocycles and cochains we need on $\A^{\otimes (m+1)}$
for appropriate $m$. In order to work in the reduced $(b,B)$-bicomplex for $\A^\sim$, we will
need to extend the definitions of all these cochains to $\A^\sim\otimes\A^{\otimes m}$. We will
carry out this extension in the next subsection.

\subsubsection{The residue cocycle}

In order to define the residue cocycle, 
we need a condition on the singularities of certain zeta functions 
constructed from $\D$ and $\A$.

\begin{definition}
\label{dim-spec}
Let  $(\A,\cH,\D)$ be a  smoothly summable  
spectral triple   of spectral dimension $p$. 
We say that the spectral dimension is isolated, if for any element \index{isolated spectral dimension}
\index{semifinite spectral triple!isolated spectral dimension}
$b\in\cn$, of the form\footnote{Recall $T^{(n)}=[\D^2,T^{(n-1)}]$; see equation \eqref{eq:iterated-comms}.}
$$
b=a_0 \,da_1^{(k_1)}\cdots da_m^{(k_m)}(1+\D^2)^{-|k|-m/2},\quad a_0,\dots,a_m\in\A,
$$
with $k\in\N_0^m$ a multi-index and $|k|=k_1+\cdots+k_m$, the zeta function\index{zeta function}
\index{$\zeta_b(z)$, zeta functions in index formula}
$
\zeta_b(z):=\tau\big(b(1+\D^2)^{-z}),
$
has an analytic continuation to a deleted neighbourhood of $z=0$.
In this case, we define the numbers
\begin{align}
\label{numbers}
\tau_l(b):=\res_{z=0}\,\,z^l\,\zeta_b(z),\quad  l=-1,0,1,2,\dots.
\end{align}
\end{definition}
\index{$\tau_l$, residue functionals}

{\bf Remark.}  The isolated spectral dimension condition is implied by the much
stronger notion of {\it discrete dimension spectrum}, \cite{CM}. We say that a
smoothly summable spectral triple  $(\A,\cH,\D)$, has \index{discrete dimension spectrum}
\index{semifinite spectral triple!discrete dimension spectrum}
discrete dimension spectrum ${\rm Sd} \subset \mathbb C$, if 
${\rm Sd}$ is a discrete set and for all $b$ in the polynomial algebra
 generated by $\delta^k(a)$ and $\delta^k(da)$, with $a\in\A$ and $k\in\N_0$,
the function $ \zeta_b(z)$ is
defined and holomorphic for $\Re(z)$ large, and analytically
continues to $\mathbb C\setminus{\rm Sd}$.

For  a multi-index $k\in\N_0^m$, we define 
\begin{align}
\label{alpha(k)}
\alpha(k)^{-1}:=k_1!\cdots k_m!(k_1+1)(k_1+k_2+2)\cdots(|k|+m),
\end{align}
\index{$\alpha(k)$, combinatorial factors in index formula}and we let $\sigma_{n,l}$ be the non-negative rational numbers defined by the identities
\begin{align}
\label{sigmanl}
\prod_{l=0}^{n-1}(z+l+\tfrac12)=\sum_{l=0}^n z^l\,\sigma_{n,l},\ \ \mbox{when}\ \  \bullet=1,\quad
\prod_{l=0}^{n-1}(z+l)=\sum_{l=1}^{n}z^l\s_{n,l},\ \ \mbox{when}\ \ \bullet=0.
\end{align}
\index{$\s_{n,l}$, combinatorial factors in index formula}

\begin{definition}
\label{def:res-cocycle}
Assume that $(\A,\HH,\D)$ is a  semifinite smoothly summable spectral triple
with isolated spectral dimension $p\geq 1$.
For $m=\bullet,\,\bullet+2,\dots,M$,
with $\tau_l$ defined in Definition \ref{dim-spec},  and for a multi-index $k$ setting
$h=|k|+(m-\bullet)/2$, the $m$-th component of the
{\bf residue cocycle} $\phi_m:\A\otimes {\A}^{\otimes m}\to{\C}$ is defined by
\begin{align*}
\phi_0(a_0)&=\tau_{-1}(a_0),\\
\phi_m(a_0,\dots,a_m)&=(\sqrt{2i\pi})^\bullet\!\sum_{|k|=0}^{M-m}\!
(-1)^{|k|}\alpha(k)\!\!\!\sum_{l=1-\bullet}^{h}\!\!\sigma_{h,l}\,\tau_{l-1+\bullet}\Big(\gamma
a_0\,da_1^{(k_1)}\cdots da_m^{(k_m)}(1+\D^2)^{-|k|-m/2}\Big),
\end{align*}
for $m=1,\dots, M$.
\end{definition}
\index{residue cocycle}
\index{$\phi_m$, residue cocycle}

\subsubsection{The resolvent cocycle and variations}
In this subsection, we do not assume that our spectral triple $(\A,\H,\D)$ 
has isolated spectral dimension, however several of the cochains defined
here require invertibility of $\D$. The issue of invertibility will be discussed in the
next subsection, and we will show in subsection \ref{invert} how this assumption is removed.

For the invertibility we assume that there exists $\mu>0$ such that $\D^2\geq \mu^2$.
For such an invertible $\D$, we may define   
$$
\D_u:=\D\dd^{-u}\ \ \mbox{for}\ \  u\in[0,1],\ \ \mbox{and for}\ a\in\A,\ \ d_u(a):=[\D_u,a].\index{$\D_u=\D\dd^{-u}$, the deformation from $\D$ to $F=\D\dd^{-1}$}
\index{$d_u=[\D_u,\cdot]$}
$$

Thus $\D_0=\D$ and $\D_1=F$.
Note that  $d_u$ maps $\A$ to $\rm OP^0_0$. 
This follows from   the estimates given in the proof of
Lemma \ref{truc} with $|\D|$ instead of $(1+\D^2)^{1/2}$ when $\D$
is invertible. Note also that the family of derivations $\{d_u,\,u\in[0,1]\}$,
interpolates between the two natural notions of differential  in  
quantised calculus, that is  $d_0a=da=[\D,a]$ and $d_1a=[F,a]$.
We also set 
$$
\dot\D_u:=-\D_u\log\dd,
$$ 
the formal derivative of $\D_u$ 
with respect to the parameter $u\in[0,1]$.
We  define  the shorthand notations
\begin{align}
\label{resol}
& \qquad\qquad\qquad\quad R_{s,t,u}(\lambda)
:=(\lambda-(t+s^2+\D^2_u))^{-1},\\  &R_{s,t}(\lambda):= R_{s,t,0}(\lambda),
\qquad  R_{s,u}(\lambda):= R_{s,0,u}(\lambda),\qquad R_{s}(\lambda)
:=R_{s,1,0}(\lambda).\nonumber\index{resolvent function}\index{$R_{s,t,u}(\lambda)$, see resolvent functions}\index{resolvent function!$R_s(\lambda)=(\lambda-(1+s^2+\D^2))^{-1}$}\index{resolvent function!$R_{s,t}(\lambda)=(\lambda-(t+s^2+\D^2))^{-1}$}\index{resolvent function!$R_{s,t,u}(\lambda)=(\lambda-(t+s^2+\D^2_u))^{-1}$}
\end{align}
The range of the parameters is
$\lambda\in\mathbb C$, with
$0<\Re(\lambda)<\mu^2/2$, $s\in[0,\infty)$, and  $t,u\in[0,1]$.
Recall that for a multi-index $k\in\mathbb N^{m}$, we set 
$|k|:=k_1+\cdots+k_m$.
 
{\bf The parameters $s,\,\lambda$  constitute an essential part of the definition of our cocycles, 
while the parameters $t,\,u$ will be the parameters of homotopies which will eventually
take us from the resolvent cocycle to the Chern character.}

Next we have the analogue of \cite[Lemma 7.2]{CPRS2}. This is the lemma which will 
permit us to demonstrate that the resolvent cococyle introduced below is well defined. We 
refer to the Appendix, subsection \ref{lem:crucial-app},  for the proof of this important but technical result.

\begin{lemma}\label{lem:crucial}
Let $\ell$ be the vertical line $\{a+iv:v\in\R\}$ for some $a\in(0,\mu^2/2)$. Also let
$A_l\in {\rm OP}^{k_l}$, $l=1,\dots,m$ and 
$A_0\in{\rm OP}^{k_0}_0$. For $s>0$, $r\in\C$ and $t\in[0,1]$, 
the operator-valued function\footnote{we define $\lambda^{-r}$ using the principal branch of $\log$.}
$$
B_{r,t}(s)=
\frac{1}{2\pi i}\int_\ell  \lambda^{-p/2-r}A_0\,R_{s,t}(\lambda)\,A_1\,
R_{s,t}(\lambda)\cdots R_{s,t}(\lambda)\,A_m\,R_{s,t}(\lambda)\, d\lambda,
$$
is trace class valued 
for $\Re(r)>-m+|k|/2>0$. Moreover, the function
$[s\mapsto s^\alpha\,\Vert B_{r,t}(s)\Vert_1]$, $\alpha>0$, is integrable on $[0,\infty)$ when 
$\Re(r)>-m+(|k|+\alpha+1)/2$.
\end{lemma}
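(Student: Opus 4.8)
\textbf{Proof plan for Lemma \ref{lem:crucial}.}

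The plan is to reduce everything to the Schatten estimate of Lemma \ref{lem:was-5.3} together with the symmetry of the tame pseudodifferential calculus, and then perform the $\lambda$-integral and the $s$-integral as elementary (scalar) integrals whose convergence is read off from the exponents. First I would write each $A_l$, $l=1,\dots,m$, as $A_l=(1+\D^2)^{k_l/2}T_l$ with $T_l\in{\rm OP}^0$, and $A_0=(1+\D^2)^{k_0/2}T_0$ with $T_0\in{\rm OP}^0_0$; since $\sigma$ preserves ${\rm OP}^r$ and ${\rm OP}^r_0$ (Proposition \ref{cont-op}), I can absorb the positive powers $(1+\D^2)^{k_l/2}$ past the resolvents $R_{s,t}(\lambda)$ at the cost of applying $\sigma$-automorphisms, which keeps everything in the appropriate ${\rm OP}$-classes. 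This way the integrand becomes a product of the form $(1+\D^2)^{|k|/2}$ times $T_0'\,R_{s,t}(\lambda)\,T_1'\,R_{s,t}(\lambda)\cdots T_m'\,R_{s,t}(\lambda)$ with $T_0'\in{\rm OP}^0_0$ and $T_i'\in{\rm OP}^0$, and then I would redistribute the $m+1$ resolvent factors and the extra $(1+\D^2)^{|k|/2}$ among the slots, treating consecutive pairs $R_{s,t}(\lambda)^{N_1}A R_{s,t}(\lambda)^{N_2}$ by Lemma \ref{lem:was-5.3}.

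The key trace-norm bound: apply the Hölder inequality for the Schatten norms across the $m+1$ resolvent blocks, and to each block apply Lemma \ref{lem:was-5.3} with the $A$ there being one of the (shifted) $T_i'$, choosing the exponents $N_1,N_2$ at each slot so that they sum (over all slots) to $p/2+\Re(r)$ in the $\lambda$-power and are large enough that each local $q_i$ satisfies $1/q_1+\cdots+1/q_{m+1}=1$. Since $\Re(r)>-m+|k|/2$ one has $p/2+\Re(r) > p/2 - m + |k|/2$, and because each ${\rm OP}^0_0$ element contributes, via Lemma \ref{lem:was-5.3}, a gain of $p/4q_i+\eps$ with room to spare (there being $m$ resolvent gaps available around $A_1,\dots,A_m$ plus the slot containing $A_0$), the total exponent of $((t+\mu^2/2+s^2-a)^2+v^2)$ that one obtains is $-(m+1+\text{(compensation for }|k|)) /\text{something} + p/4 + \eps$, arranged to be strictly less than $-1$ in $v$ for each fixed $s$. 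Concretely I expect the estimate
$$
\|A_0\,R_{s,t}(\lambda)\,A_1\cdots A_m\,R_{s,t}(\lambda)\|_1 \le C\,(\mu^2/2+s^2-a+|v|)^{-m-\Re(r)+|k|/2+\eps}
$$
after a trivial bound $(t+\mu^2/2+s^2-a)^2+v^2\ge \tfrac14(\mu^2/2+s^2-a+|v|)^2$ (valid since $t\ge0$ and $a<\mu^2/2$ so the real part is positive). Multiplying by $|\lambda|^{-p/2-\Re(r)}\le (a+|v|)^{-p/2-\Re(r)}$ and integrating in $v$ over $\ell$ converges precisely when $p/2+\Re(r)+m+\Re(r)-|k|/2-\eps>1$, and since $\Re(r)>-m+|k|/2$ makes $m+\Re(r)-|k|/2>0$ and $p\ge1$, one can pick $\eps$ small enough; this gives $B_{r,t}(s)\in\cl^1(\cn,\tau)$ with $\|B_{r,t}(s)\|_1\le C'(\mu^2/2+s^2-a)^{-m-\Re(r)+|k|/2+\eps'}$ for a suitable $\eps'>0$.

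For the second assertion I would simply observe that the last displayed bound gives $s^\alpha\|B_{r,t}(s)\|_1 \le C's^\alpha(\mu^2/2+s^2-a)^{-m-\Re(r)+|k|/2+\eps'}$, which behaves like $s^{\alpha-2m-2\Re(r)+|k|+2\eps'}$ as $s\to\infty$ and is bounded near $s=0$ (using $a<\mu^2/2$); the integral over $[0,\infty)$ converges when $\alpha-2m-2\Re(r)+|k| < -1$, i.e. $\Re(r)>-m+(|k|+\alpha+1)/2$, which is the stated range, again with $\eps'$ chosen small enough. The main obstacle I anticipate is the bookkeeping in the Hölder/Schatten splitting: one must verify that with $m$ factors from ${\rm OP}^0$ and one from ${\rm OP}^0_0$, and with the positive power $(1+\D^2)^{|k|/2}$ distributed among the slots, there is always a legal choice of the half-integer exponents $N_i$ and the conjugate indices $q_i$ realising $N_1+\cdots+N_{m+1}$ equal to the required total while keeping each $N_i$ large enough for Lemma \ref{lem:was-5.3} to apply and $\sum 1/q_i=1$; this is exactly the type of argument carried out in \cite[Lemma 7.2]{CPRS2} and \cite[Lemma 5.10]{CPRS4}, and the adaptation here is the content of the Appendix subsection \ref{lem:crucial-app}, so in the body I would only sketch this distribution and defer the detailed exponent-chase there.
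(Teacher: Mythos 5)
Your plan tries to estimate the trace norm of the interleaved product
$A_0\,R_{s,t}(\lambda)\,A_1\cdots A_m\,R_{s,t}(\lambda)$
directly via H\"older across the $m+1$ resolvent slots, using Lemma~\ref{lem:was-5.3}, without first moving the $A_l$'s together. This cannot work for all parameter values: the total resolvent power you have to distribute is fixed at $m+1$, and Lemma~\ref{lem:was-5.3} requires each slot's resolvent exponent $N_1+N_2$ to exceed $p/2q_i$, while $\sum 1/q_i=1$. Summing those constraints gives $m+1>p/2$, which fails for small $m$ and large $p$ (e.g.\ $m=1$, $p>4$). You cannot ``choose the exponents $N_1,N_2$'' to sum to $p/2+\Re(r)$ as you write — the total number of resolvent factors in the integrand is $m+1$ and is not adjustable at this stage. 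In the paper's Proposition~\ref{hii} a H\"older argument of exactly the sort you describe does appear, but only \emph{after} applying the $s$- and $\lambda$-tricks to inflate the resolvent powers; those tricks are themselves consequences of Lemma~\ref{lem:crucial} and are not available in its proof. Relatedly, your intermediate bound
$\|A_0\,R\cdots R\|_1\le C(\mu^2/2+s^2-a+|v|)^{-m-\Re(r)+|k|/2+\eps}$
cannot be right: the left side does not involve $r$, so its exponent cannot contain $\Re(r)$; the $\Re(r)$-dependence should enter only through the scalar factor $|\lambda|^{-p/2-\Re(r)}$.

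The missing idea is the pseudodifferential expansion. The paper first normalizes so that $A_0\in{\rm OP}^0_0$ (by shifting $(1+\D^2)^{-k_0/2}$ onto $A_1$), then invokes Lemma~\ref{firstexpan} to write the product as a finite sum of terms of the form $A_0A_1^{(n_1)}\cdots A_m^{(n_m)}R^{m+|n|+1}$ (with all resolvents consolidated on the right, in arbitrarily high power $m+|n|+1$), plus a remainder $A_0 P_{L,m}$. The high resolvent power in the main terms lets one perform the Cauchy integral in $\lambda$ explicitly and then use the one-sided Schatten estimate \eqref{eq:q-trace-est}, and the remainder is controlled via the uniform operator-norm bound of Lemma~\ref{uni} together with the trace-norm estimate for $A_0 R^{m+L/2+3/2-|k|/2}$ from Lemma~\ref{lem:was-5.3}, taking $L$ large. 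It is this expansion — not H\"older across single-resolvent slots — that produces, without circularity, the resolvent powers needed for the Schatten estimates to close. Your final exponent conditions happen to land on the stated inequalities, but they are derived from an incorrect intermediate estimate, so the argument as written does not establish the lemma.
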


{\bf Remark.} In Corollary \ref{permut}, 
we will generalize this result to the case where any one of the $A_l$'s belongs to 
${\rm OP}^{k_l}_0$.
From  Lemma \ref{lem:crucial} and Corollary \ref{permut}, 
it follows that the expectations and cochains introduced below
are well-defined, for $\Re(r)$ sufficiently large, 
whenever one of its entries  belongs to   
${\rm OP}^{k_l}_0$.

\begin{definition}\label{expectation} For $a\in(0,\mu^2/2)$, let $\ell$ be the vertical line
$\ell=\{a+iv:v\in\R\}$.
Given $m\in\mathbb N$, $s\in\mathbb R^+$, $r\in\mathbb C$ 
and operators $A_0,\dots,A_m\in {\rm OP}^{k_i}$ with
$A_0\in{\rm OP}^{k_0}_0$, such that   $|k|-2m<2\Re(r)$, we  define
\begin{align}
\label{expect1}
 \langle A_0,\dots,A_m\rangle_{m,r,s,t}&:=\frac{1}{2\pi i}\,\tau\Big(\gamma\int_\ell
\lambda^{-p/2-r}
A_0\,R_{s,t}(\lambda)\cdots A_m\,R_{s,t}(\lambda)\,d\lambda\Big),
\end{align}
\index{$\langle\cdots\rangle_{m,r,s,t}$, expectation in the resolvent cocycle}

Here $\gamma$ is the ${\mathbb Z}_2$-grading in the even case and the identity operator
in the odd case.
When $|k|-2m-1<2\Re(r)$  and when the operators $A_l$ have definite grading degree,
 we use the fact that $\D\in {\rm OP}^1$ to allow us to define
\begin{align}
\label{expect2}
\langle\langle A_0,\dots,A_m\rangle\rangle_{m,r,s,t}
&:=\sum_{l=0}^m(-1)^{{\rm deg}(A_l)}\langle A_0,\dots,A_l,\D,A_{l+1},\dots,A_m\rangle_{m+1,r,s,t}.
\end{align}
\end{definition}
\index{$\langle\langle\cdots\rangle\rangle_{m,r,s,t}$, expectation in the transgression cochain}

We now state the definition of the resolvent cocycle in terms of the
expectations  $\langle\cdot,\dots,\cdot\rangle_{m,r,s,t}$. 

\begin{definition}
\label{resolvent}
For   $m= \bullet,\,\bullet+2,\dots,M$, we introduce the constants  $\eta_m$ by
\index{$\eta_m$, constant in the definition of the resolvent cocycle}
\begin{align*}
 \eta_m&=\left(-\sqrt{2i}\right)^\bullet2^{m+1}\frac{\Gamma(m/2+1)}{\Gamma(m+1)}.
  \end{align*}
Then for $t\in[0,1]$ and $\Re(r)>(1-m)/2$, the $m$-th component of the
{\bf resolvent cocycles} $\phi_m^r,\phi_{m,t}^r:\A\otimes{\A}^{\otimes m}\to{\C}$ are defined by
$ \phi_m^r:=\phi_{m,1}^r$ and
\begin{align}
\phi_{m,t}^r(a_0,\dots,a_m)&:=\eta_m\int_0^\infty s^m\langle
a_0,da_1,\dots,da_m\rangle_{m,r,s,t}\,ds,
\label{eq:resolvent-cocycle}
\end{align}
\end{definition}
\index{$\phi^r_m$, $\phi^r_{m,t}$, see resolvent cocycle}
\index{resolvent cocycle}\index{resolvent cocycle!$\phi^r_m$ defined using resolvent function $R_s(\lambda)$}\index{resolvent cocycle!$\phi^r_{m,t}$ defined using resolvent function $R_{s,t}(\lambda)$}

{\bf Remark}. It is important to note that the resolvent cocycle $\phi^r_m$ is well 
defined even when $\D$ is not invertible. 

Our proof of the local index formula involves constructing cohomologies and homotopies
in the reduced $(b,B)$-bicomplex. This involves the use of `transgression' cochains, as well
as some other auxiliary cochains.

The transgression cochains $\Phi^r_{m,t}$ and auxiliary cochains 
$B\Phi^r_{M+1,0,u},\,\Psi^r_{M,u}$ (see below) are defined  similarly to the resolvent cochains. 
However, the cochains
$\Phi^r_{m,t}$ are of the {\em opposite parity} to $\phi^r_m$. Thus, if we have an even
spectral triple, we will only have $\Phi^r_{m,t}$ with $m$ odd.

\begin{definition}
\label{transgression}
For $t\in[0,1]$, $r\in\C$ with $\Re(r)>(1-m)/2$ and with $\D$ invertible, 
the $m$-th component, $m=1-\bullet,1-\bullet+2,\dots,M+1$, of the
{\bf transgression cochains} 
$\Phi_{m,t}^r:\A\otimes{\A}^{\otimes m}\to{\C}$ are defined by
 \begin{align}
\Phi_{m,t}^r(a_0,\dots,a_m)&:=\eta_{m+1}\int_0^\infty s^{m+1}\langle\langle
a_0,da_1,\dots,da_m\rangle\rangle_{m,r,s,t}\,ds.
\label{eq:transgression-cochain}
\end{align}
By specialising the parameter $t$ to $t=1$, we define
$\Phi_m^r:=\Phi_{m,1}^r$.
\end{definition}
\index{transgression cochain}\index{$\Phi_{m,t}^r$, see transgression cochain}\index{transgression cochain!$\Phi^r_{m}$ defined using resolvent function $R_s(\lambda)$}\index{transgression cochain!$\Phi^r_{m,t}$ defined using resolvent function $R_{s,t}(\lambda)$}\index{transgression cochain!$B\Phi^r_{M+1,0,u}$ defined using resolvent function $R_{s,0,u}(\lambda)$}

Finally we need to consider $B\Phi^r_{M+1,0,u}$ and another auxiliary cochain $\Psi^r_{M,u}$
for $u\neq 0$.
We define $\Psi^r_{M,u}$ below, and the definition of $B\Phi^r_{M+1,0,u}$ is the same 
as $B\Phi^r_{M+1,0}$ with every appearance of $\D$ replaced by $\D_u:=\D|\D|^{-u}$, 
including in the resolvents.

To show that  these cochains are well-defined
when $u\neq 0$
requires additional argument beyond  
power counting and Lemma \ref{lem:crucial}.

We outline the argument briefly, beginning with the case $p\geq 2$. We start from the identity,
$$
d_u(a)=[\D_u,a]=[F|\D|^{1-u},a]=F[|\D|^{1-u},a]+\big(da-F\delta(a)\big)|\D|^{-u},
$$
and we note  that $da-F\delta(a)\in{\rm OP}^0_0$. Applying  the second part of
Lemma \ref{truc} and Lemma \ref{interpolation} 
now shows that $d_u(a)\in\cl^q(\cn,\tau)$ for all $q>p/u$. Next,  we find that
$$
R_{s,u}(\lambda)
=({\lambda-s^2-\D_u^2})^{-1}=|\D|^{-2(1-u)}{\D_u^2}{(\lambda-s^2-\D_u^2)^{-1}}=:
|\D|^{-2(1-u)}B(u),
$$
where $B(u)$ is uniformly bounded.  Then Lemma \ref{interpolation}  
and the H\"older inequality show that 
$d_u(a_i)\,R_{s,u}(\lambda)\in\cl^q(\cn,\tau)$ for all $q$ with $(2-u)q>p\geq2$ and
$i=0,\dots,l,l+2,\dots,M$, while  
$R_{s,u}(\lambda)^{1/2}\,d_u(a_{l+1})\,R_{s,u}(\lambda)\in\cl^q(\cn,\tau)$ for all $q$ with
$(3-2u)q>p\geq2$. An application of  the H\"{o}lder inequality now shows that 
$B\Phi^r_{M+1,0,u}$ is well-defined. 
To see that $\Psi_{M,u}^r$ is well-defined requires the arguments above, 
as well as Lemma \ref{truc} to deal with the extra $\log(|\D|)$ factor appearing in $\dot{\D}_u$. 
More details can be found in the proof of
Lemma \ref{diff1} in subsection \ref{lem:diff1-app}. For $2>p\geq 1$ the algebra is a little
more complicated, and we again refer to the proof of
Lemma \ref{diff1} in subsection \ref{lem:diff1-app} for more details.

\begin{definition}
\label{tauxiliaries}
For $t\in[0,1]$, $r\in\C$ with $\Re(r)>(1-M)/2$ and  with $\D$ invertible, the
{\bf auxiliary cochain} 
$\Psi_{M,u}^r:\A\otimes{\A}^{\otimes M}\to{\C}$ 
is defined by  
\begin{align}
\Psi^r_{M,u}(a_0,\dots,a_M)&:=-\frac{\eta_M}{2}\int_0^\infty s^M\langle\langle
a_0\dot{\D}_u,d_u(a_1),\dots,d_u(a_M)\rangle\rangle_{M,r,s,0} \,ds,
\label{eq:auxiliary-cochain}
\end{align}
where the expectation uses the resolvent $R_{s,t,u}(\lambda)$ for $\D_u$.
\end{definition}
\index{auxiliary cochain}
\index{$\Psi_{M,u}^r$, auxiliary cochain}

These are all the cochains that will appear in our homotopy arguments
connecting the resolvent and residue cocycles to the Chern character.
However, we still need to ensure that we can extend all these 
cochains to $\A^\sim\otimes\A^{\otimes m}$, in such a way that we obtain reduced cochains. 
This extension must also allow us
to remove the invertibility assumption on $\D$ when we reach the end of the argument.
We deal with these two related issues next.




\subsection{The double construction, invertibility and reduced cochains}
\label{subsec:reduced}

The  cochains  $\phi_{m,t}^r$, $B\Phi^r_{m,t,u}$ and $\Psi^r_{M,u}$
require the invertibility of $\D$ for $u\neq 0$  and $t=0$. 
Thus we will need to assume the invertibility of $\D$ for
the main part of our proof, and show how to remove the assumption at the end. 

More importantly, we need to know that all our cochains and cocycles lie in the reduced
$(b,B)$-bicomplex. The good news is that the same mechanism we employ to deal with 
invertibility also ensures that our homotopy to the Chern character takes place within the
reduced bicomplex.

The mechanism we employ is the double spectral triple $(\A,\H^2,\D_\mu,\hat\gamma)$
(see Definition \ref{def:double}), 
with invertible operator $\D_\mu$.  We know that this spectral triple defines the same
index pairing with $K_*(\A)$ as $(\A,\HH,\D,\gamma)$. 
Now we show how the various cochains associated to the double spectral triple extend 
naturally to $\A^\sim\otimes\A^{\otimes m}$. Recall that this is really only an issue when $m=0$,
and in particular does not affect any odd cochains.

To distinguish
the residue and resolvent cocycles associated with the double spectral triple $(\A,\H^2,\D_\mu,\hat\gamma)$, we use for them the notations $\phi_{\mu,m}$, $\phi_{\mu,m}^r$,
and similarly for the other cochains.

Let $\overline{\rm OP^0_0}$ be the $C^*$-closure of $\rm OP^0_0$ 
({\em defined using the 
operator $\D_\mu$}!), and let 
$\{\psi_\lambda\}_{\lambda\in \Lambda}\subset \rm OP^0_0$ 
be a  net forming an approximate unit for $\overline{\rm OP^0_0}$.
Such an approximate unit always  exists by the density of $\rm OP^0_0$. 
In terms of the two-by-two matrix picture of our doubled spectral triple, 
we can suppose that there is an approximate unit $\{\widetilde{\psi}_\lambda\}_{\lambda\in\Lambda}$ 
for the $\rm OP^0_0$
algebra defined by $\D$ (rather than $\D_\mu$) such that
$\psi_\lambda=\tilde\psi_\lambda\otimes{\rm Id}_2$.
Then we define  for $m>0$ and $c_0,c_1,\dots,c_m\in\C$
\begin{equation}
\label{eq:extends}
\phi_{\mu,m}(a_0+c_0{\rm Id}_{\A^\sim},a_1+c_1{\rm Id}_{\A^\sim},\dots,a_m+c_m{\rm Id}_{\A^\sim})
:=\phi_{\mu,m}(a_0+c_0,a_1,\dots,a_m).
\end{equation}
This makes sense as the residue cocycle is already normalised.

For $m>0$ this is well-defined since 
$[\D_\mu,\hat a_1]^{(k_1)}\cdots[\D_\mu,\hat a_m]^{(k_m)}(1+\D_\mu^2)^{-|k|/2}\in 
{\rm OP}^0_0$, by Lemma \ref{lem:derivs}.
Then by definition of isolated spectral dimension, we see that for $m>0$ the 
components of the residue cocycle take finite values on 
$\A^\sim\otimes{\A}^{\otimes m}$. 

For $m=\bullet=0$, we define 
\begin{align*}
&\phi_{\mu,0}(1_{\A^\sim}):=
\lim_{\lambda\to\infty} \mbox{res}_{z=0}\,\frac{1}{z}\,\tau\otimes \tr_2
\begin{pmatrix}\gamma \tilde\psi_\lambda(1+\mu^2+\D^2)^{-z} & 0 \\
0 & -\gamma \tilde\psi_\lambda(1+\mu^2+\D^2)^{-z}\end{pmatrix}=0.
\end{align*}
Thus this extension of the residue cocycle for $\D_\mu$ defines a reduced cochain for $\A$.

The resolvent cochains $\phi^r_{\mu,m}$, $m=\bullet,\bullet+2,\dots$, are normalised cochains
by definition. We extend all of these cochains to $\A^\sim\otimes\A^{\otimes m}$ just as 
we did for the residue cocycle in Equation \eqref{eq:extends}. The resulting cochains are then
reduced cochains. For 
$\Psi^r_{\mu,M,u}$ and $B\Phi^r_{\mu,M+1,0,u}$ there is no issue since
$M\geq 1$ in all cases. 

For $\Phi^r_{\mu,m,t}$ the
situation is different as we will employ an even version of 
$\Phi$ when $\bullet=1$, and so there is no grading.
However, when $m=0$ we can perform the Cauchy 
integral in the definition of $\Phi_{\mu,0,t}^r$, and
so we obtain for $\Re(r)>1/2$ a constant $C$ such that
\begin{align*}
&\Phi_{\mu,0,t}^r(1_{\A^\sim}):=
\lim_{\lambda\to\infty}C\int_0^\infty  s\ 
\tau\otimes \tr_2
\left(\begin{pmatrix} \tilde{\psi}_\lambda & 0\\ 0 & \tilde{\psi}_\lambda\end{pmatrix}
\begin{pmatrix} \D & \mu\, \\
\mu\, & - \D\end{pmatrix}\right)(t+\mu^2+s^2+\D^2)^{-p/2-r}\,ds=0.
\end{align*}

These arguments prove the following:

\begin{lemma}
\label{chien}
Let $t\in[0,1]$ and $r\in\C$. Provided $\Re(r)>(1-m)/2$, the components of the residue $(\phi_{\mu,m})_{m=\bullet,\bullet+2,\dots,M}$,
the resolvent cochain $(\phi_{\mu,m,t}^r)_{m=\bullet,\bullet+2,\dots,M}$, the  transgression cochain
$(\Phi_{\mu,m,t}^r)_{m=1-\bullet,1-\bullet+2,\dots,M+1}$ and the auxiliary cochains
 $\Psi^r_{\mu,M,u}$ and
$B\Phi^r_{\mu,M+1,0,u}$ 
 are finite on  $\A^\sim\otimes\A^{\otimes m}$, and moreover define cochains
in the reduced $(b,B)$-bicomplex for $\A^\sim$.
\end{lemma}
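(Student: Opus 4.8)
The plan is to verify Lemma \ref{chien} by reducing every claim to the finiteness statements and power-counting estimates already established, together with the explicit extension formula \eqref{eq:extends}. The Lemma has two parts: (a) each cochain takes finite values on $\A^\sim\otimes\A^{\otimes m}$, and (b) each is a \emph{reduced} cochain, meaning it is normalised and, in the even case, vanishes on $1_{\A^\sim}$ in degree $0$.

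First I would dispose of part (a). For $m>0$ the extension formula \eqref{eq:extends} only prepends a scalar to the $a_0$-slot, so finiteness on $\A^\sim\otimes\A^{\otimes m}$ is \emph{the same statement} as finiteness on $\A^{\otimes m+1}$ after noting that the extended operator $\widehat{a_0+c_0}$ still has the form required: for the residue cocycle one uses that $d\hat a_1^{(k_1)}\cdots d\hat a_m^{(k_m)}(1+\D_\mu^2)^{-|k|-m/2}\in{\rm OP}^0_0$ by Lemma \ref{lem:derivs}, hence by the isolated-spectral-dimension hypothesis the zeta function $\zeta_b(z)$ continues and the residues $\tau_l(b)$ are finite; and since ${\rm OP}^{-r}_0\subset\cl^1(\cn,\tau)$ for $r>p$ (Corollary \ref{consistency}), the value $\tau_{-1}$ of such a $b$ with an extra $(1+\D_\mu^2)^{-z}$ is a genuine residue of a holomorphic-in-a-punctured-neighbourhood function. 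For the resolvent cochains $\phi^r_{\mu,m,t}$, the transgression cochains $\Phi^r_{\mu,m,t}$, and the auxiliary cochains $\Psi^r_{\mu,M,u}$, $B\Phi^r_{\mu,M+1,0,u}$, finiteness for $\Re(r)$ in the stated half-plane is exactly Lemma \ref{lem:crucial} (and the $u\neq0$ discussion preceding Definition \ref{tauxiliaries}, which handles $\Psi$ and $B\Phi$ via Lemma \ref{truc} and Lemma \ref{interpolation}) applied to the doubled triple $(\A,\H^2,\D_\mu,\hat\gamma)$ — here one must simply observe $\D_\mu$ is invertible with $\D_\mu^2\geq\mu^2$, so all hypotheses of those lemmas are met with $\mu$ in place of the abstract lower bound. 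The only case requiring genuine computation is $m=0$: for $\phi_{\mu,0}(1_{\A^\sim})$ and $\Phi^r_{\mu,0,t}(1_{\A^\sim})$ one performs the Cauchy ($\lambda$-)integral explicitly, obtains a trace of a diagonal $2\times2$ operator whose two diagonal entries are $\pm\gamma\tilde\psi_\lambda(\cdots)$, and this difference vanishes identically; the limit over the approximate unit $\{\tilde\psi_\lambda\}$ is then $0$. I would present these two scalar computations in full, as they are short, and cite the surrounding text for the rest.

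For part (b), normalisation is built into the definitions (the residue cocycle components are manifestly normalised since each $da_j$ kills a unit in slot $j\geq1$; the resolvent/transgression/auxiliary cochains are normalised because $d(1_{\A^\sim})=0$ makes any such term vanish), and the extension \eqref{eq:extends} preserves this. The one substantive point is that in the even case ($\bullet=0$) we need $\phi_{\mu,0}(1_{\A^\sim})=0$, and in the odd-triple situation where we use an \emph{even} transgression cochain we need $\Phi^r_{\mu,0,t}(1_{\A^\sim})=0$ — both of which are precisely the two $m=0$ computations just described. So (b) follows from (a). Finally, to conclude that these are cochains in the reduced $(b,B)$-bicomplex (not merely reduced cochains), I would invoke \cite[Proposition 2.2.16]{Lo} as cited in subsection \ref{subsec:cyclic}: once one knows the family consists of normalised cochains vanishing appropriately on the unit, membership in the reduced bicomplex is automatic.

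The main obstacle, such as it is, is bookkeeping rather than mathematics: one must be careful that the operators appearing inside traces for the doubled triple genuinely lie in the ideals claimed — i.e. that replacing $\D$ by $\D_\mu$ and $a$ by $\hat a$ does not spoil the ${\rm OP}^r_0$ / Schatten membership — and that the approximate unit $\{\psi_\lambda=\tilde\psi_\lambda\otimes{\rm Id}_2\}$ interacts correctly with the grading $\hat\gamma=\gamma\oplus(-\gamma)$ so that the two diagonal contributions really do cancel. I expect no new estimates are needed; every inequality required is already available as Lemma \ref{lem:crucial}, Lemma \ref{truc}, Lemma \ref{interpolation}, Lemma \ref{lem:derivs}, or Corollary \ref{consistency}.
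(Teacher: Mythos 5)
Your proposal is correct and follows the same route as the paper: extend via the formula \eqref{eq:extends}, use Lemma \ref{lem:derivs} together with the isolated spectral dimension hypothesis (implicit once the residue cocycle is defined) for finiteness of the residue components when $m>0$, Lemma \ref{lem:crucial} and the $u\neq0$ discussion preceding Definition \ref{tauxiliaries} for the resolvent, transgression and auxiliary cochains, and the explicit $m=0$ cancellation to get the reduced property. One small imprecision worth flagging: in the $\Phi^r_{\mu,0,t}$ computation the two diagonal entries that cancel are $\pm\tilde\psi_\lambda\D(\cdots)$ (there is no $\gamma$ since this is the even transgression used with odd $\bullet$), the cancellation coming from the $\pm\D$ block structure of $\D_\mu$ rather than from a grading, but the mechanism you describe is otherwise the one the paper uses.
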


Thus all the relevant cochains defined using the double live in the reduced
bicomplex for $\A^\sim$, and $\D_\mu$ is invertible. For the central part of 
our proof, from subsection \ref{subsec:properties} until the beginning of subsection \ref{invert},
we shall simply assume that our smoothly summable spectral triple $(\A,\HH,\D)$ has $\D$ invertible
with $\D^2\geq \mu^2>0$. In subsection \ref{invert} we will complete the proof by 
relating cocycles for the double, for which our arguments are valid, to cocycles
for our original spectral triple.

\subsection{Algebraic properties of the expectations}
\label{subsec:properties}

Here we develop some of the properties of the expectations given in
Definition \ref{expectation}. These properties are the same as those stated in \cite{CPRS4}, 
but some of the proofs require extra care in the nonunital setting. 

We refer to the following two lemmas as the 
$s$-trick and the $\lambda$-trick, respectively.  Their proofs are given in the 
Appendix, subsections \ref{lem:s-trick-app} and \ref{lem:lambda-trick-app} respectively. 
Both the $s$-trick and the $\lambda$-trick provide a way of integrating by parts. 
Unfortunately, justifying these tricks is somewhat technical.

Formally, the $s$-trick follows by integrating $\frac{d}{ds}\left(s^\alpha\langle\cdot,\dots,\cdot\rangle_{m,r,s,t}\right)$
and using the fundamental Theorem of calculus.
\begin{lemma}
\label{s-trick}
Let $m\in\mathbb N$, $\alpha>0$, $t\in[0,1]$ and $r\in\mathbb C$ 
such that $2\Re(r)>1+\alpha+|k|-2m$.
Also let
$A_l\in {\rm OP}^{k_l}$, $l=1,\dots,m$ and  $A_0\in{\rm OP}^{k_0}_0$.
 Then
$$
\alpha\int_0^\infty s^{\alpha-1}\langle A_0,\dots,A_m\rangle_{m,r,s,t}\,ds=-2\sum_{l=0}^m
\int_0^\infty s^{\alpha+1}\langle A_0,\dots,A_l,{\rm Id}_\cn,A_{l+1},\dots,A_m\rangle_{m+1,r,s,t}\,ds,
$$
and if $2\Re(r)>\alpha+|k|-2m$ then
$$
\alpha\!\!\int_0^\infty \!\!s^{\alpha-1}\langle\langle A_0,\dots,A_m\rangle\rangle_{m,r,s,t}\,ds\!
=\!-2\sum_{l=0}^m
\int_0^\infty \!\!s^{\alpha+1}
\langle\langle A_0,\dots,A_l,{\rm Id}_\cn,A_{l+1},\dots,A_m\rangle\rangle_{m+1,r,s,t}\,ds.
$$
\end{lemma}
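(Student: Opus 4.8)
The plan is to prove the $s$-trick by differentiating under the integral sign and applying the fundamental theorem of calculus, then transferring the derivative in $s$ onto the resolvents via the resolvent identity. First I would fix $m$, $\alpha>0$, $t\in[0,1]$ and $r\in\C$ satisfying $2\Re(r)>1+\alpha+|k|-2m$, and observe that Lemma \ref{lem:crucial} (together with Corollary \ref{permut}, promised in the excerpt, to allow the ${\rm OP}_0$-entry to sit in any slot) guarantees that the integrand $s^{\alpha-1}\langle A_0,\dots,A_m\rangle_{m,r,s,t}$ is integrable on $[0,\infty)$, and likewise that $s^{\alpha+1}\langle A_0,\dots,A_l,{\rm Id}_\cn,A_{l+1},\dots,A_m\rangle_{m+1,r,s,t}$ is integrable for each $l$ under the stronger-than-needed bound on $\Re(r)$ (the weight $\lambda^{-p/2-r}$ with one extra resolvent improves the power counting by one unit). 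Then I would compute
$$
\frac{d}{ds}\Big(s^\alpha \langle A_0,\dots,A_m\rangle_{m,r,s,t}\Big)
=\alpha s^{\alpha-1}\langle A_0,\dots,A_m\rangle_{m,r,s,t}
+s^\alpha \frac{d}{ds}\langle A_0,\dots,A_m\rangle_{m,r,s,t},
$$
using the key fact that $\frac{d}{ds}R_{s,t}(\lambda)=\frac{d}{ds}(\lambda-(t+s^2+\D_t^2))^{-1}=2s\,R_{s,t}(\lambda)^2$, so that differentiating the product of resolvents inside $\langle\cdot\rangle_{m,r,s,t}$ and using $R_{s,t}(\lambda)^2=R_{s,t}(\lambda)\,{\rm Id}_\cn\,R_{s,t}(\lambda)$ yields
$$
\frac{d}{ds}\langle A_0,\dots,A_m\rangle_{m,r,s,t}
=2s\sum_{l=0}^m \langle A_0,\dots,A_l,{\rm Id}_\cn,A_{l+1},\dots,A_m\rangle_{m+1,r,s,t}.
$$

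Next I would integrate this identity over $s\in[0,\infty)$. The fundamental theorem of calculus gives $\int_0^\infty \frac{d}{ds}\big(s^\alpha\langle A_0,\dots,A_m\rangle_{m,r,s,t}\big)\,ds = \lim_{s\to\infty} s^\alpha\langle\cdots\rangle - \lim_{s\to 0^+}s^\alpha\langle\cdots\rangle$, and both boundary terms vanish: the lower limit because $\alpha>0$ and $\|B_{r,t}(s)\|_1$ stays bounded as $s\to 0$ by Lemma \ref{lem:crucial}, the upper limit because the same lemma shows $s^{\alpha}\|B_{r,t}(s)\|_1\to 0$ as $s\to\infty$ under our hypothesis on $\Re(r)$ (indeed $s\mapsto s^{\alpha'}\|B_{r,t}(s)\|_1$ is integrable for $\alpha'$ slightly larger than $\alpha$, forcing the pointwise vanishing). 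Rearranging the resulting identity
$$
0=\alpha\int_0^\infty s^{\alpha-1}\langle A_0,\dots,A_m\rangle_{m,r,s,t}\,ds
+2\sum_{l=0}^m\int_0^\infty s^{\alpha+1}\langle A_0,\dots,A_l,{\rm Id}_\cn,A_{l+1},\dots,A_m\rangle_{m+1,r,s,t}\,ds
$$
is exactly the first claimed formula. The second formula, for the doubled expectation $\langle\langle\cdots\rangle\rangle_{m,r,s,t}$, follows by the same computation applied termwise to the defining sum in Equation \eqref{expect2}: inserting ${\rm Id}_\cn$ in each slot commutes with inserting the extra $\D$ (one must track the sign $(-1)^{{\rm deg}(A_l)}$, but ${\rm Id}_\cn$ has grading degree $0$ so no new signs appear), and the power-counting threshold shifts from $2\Re(r)>1+\alpha+|k|-2m$ to $2\Re(r)>\alpha+|k|-2m$ precisely because the extra factor of $\D\in{\rm OP}^1$ raises the order by one.

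The main obstacle is the justification of differentiating under the integral and of the vanishing of the boundary terms — i.e. the passage from the formal computation to a rigorous one. This is where the nonunital subtleties bite: one cannot simply quote boundedness of $(1+\D^2)^{-s/2}$, and instead must use the trace-norm estimates of Lemma \ref{lem:crucial} uniformly in $s$ on compact subintervals, plus dominated convergence to move $d/ds$ inside $\tau(\gamma\int_\ell \cdots)$. Since the excerpt explicitly defers the proof of Lemma \ref{s-trick} to the Appendix (subsection \ref{lem:s-trick-app}), I anticipate that the honest argument replaces the naive ``differentiate and integrate'' with a telescoping/Abel-summation argument at the level of difference quotients, or with an explicit contour-shifting manipulation, to sidestep the need to commute $d/ds$ with the non-absolutely-convergent $\lambda$-integral; the clean statement above is then recovered once all the trace-class bounds from Lemma \ref{lem:crucial} and Corollary \ref{permut} are in hand.
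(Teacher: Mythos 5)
Your proposal follows the same core route as the paper's proof: differentiate under the $s$-integral using $\frac{d}{ds}R_{s,t}(\lambda)=2s\,R_{s,t}(\lambda)^2$, read $R_{s,t}(\lambda)^2$ as $R_{s,t}(\lambda)\,{\rm Id}_\cn\,R_{s,t}(\lambda)$ so that each slot contributes a term of the extended expectation, integrate the total derivative $\frac{d}{ds}\big(s^\alpha\langle A_0,\dots,A_m\rangle_{m,r,s,t}\big)$ over $[0,\infty)$, and invoke Lemma \ref{lem:crucial} (with Corollary \ref{permut}) both for absolute convergence of all the $s$-integrals and for the vanishing of the boundary terms. You also correctly identify the real work as justifying the interchange of $d/ds$ with $\tau\big(\gamma\int_\ell\cdots\,d\lambda\big)$. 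The one substantive mismatch is your guess at how that justification is carried out: you suggest the honest version bypasses differentiation via a telescoping/Abel-summation or contour-shift argument, whereas the Appendix proof keeps the differentiate-and-integrate strategy intact and instead proves the claimed trace-norm differentiability directly. Concretely, it writes the difference quotient, expands each $A_0R\cdots A_{m+j}R$ via the pseudodifferential expansion of Lemma \ref{firstexpan}, expands $R_{s+\eps,t}(\lambda)$ in a finite resolvent series in powers of $\eps(2s+\eps)$, and then re-runs the trace-norm estimates of Lemma \ref{lem:crucial} (uniformly in $s$, $\lambda$, $\eps$) to show the difference quotient minus the asserted derivative tends to zero. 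One small slip in your account of the doubled case: the threshold drops from $2\Re(r)>1+\alpha+|k|-2m$ to $2\Re(r)>\alpha+|k|-2m$ not because the extra $\D\in{\rm OP}^1$ raises the order by one (that alone would raise the threshold), but because the accompanying extra resolvent lowers the order by two, for a net improvement of one.
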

\index{$s$-trick}

Differentiating the $\lambda$-parameter under the Cauchy integral, we obtain in a similar manner:

\begin{lemma}
\label{lambda-trick}
Let  $m\in\mathbb N$, $\alpha>0$, $t\in[0,1]$, $s>0$ and $r\in\mathbb C$ such that $2\Re(r)>|k|-2m$. 
Let also
$A_l\in {\rm OP}^{k_l}$, $l=1,\dots,m$ and  $A_0\in{\rm OP}^{k_0}_0$.
Then
$$
-(p/2+r)\langle A_0,\dots,A_m\rangle_{m,r+1,s,t}=\sum_{l=0}^m
\langle A_0,\dots,A_l,{\rm Id}_\cn,A_{l+1},\dots,A_m\rangle_{m+1,r,s,t},
$$
and if $2\Re(r)>|k|-2m-1$ then
$$
-(p/2+r)\langle\langle A_0,\dots,A_m\rangle\rangle_{m,r+1,s,t}=\sum_{l=0}^m
\langle\langle A_0,\dots,A_l,{\rm Id}_\cn,A_{l+1},\dots,A_m\rangle\rangle_{m+1,r,s,t}.
$$
\end{lemma}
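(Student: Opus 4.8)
\textbf{Plan of proof for Lemma \ref{lambda-trick} (the $\lambda$-trick).}

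The plan is to differentiate the Cauchy integral defining the expectation with respect to the spectral parameter $\lambda$, and to recognise the derivative of a product of resolvents as a sum over insertions of ${\rm Id}_\cn$. First I would observe the elementary identity for the resolvent $R_{s,t}(\lambda)=(\lambda-(t+s^2+\D^2))^{-1}$, namely
$$
\frac{d}{d\lambda}R_{s,t}(\lambda)=-R_{s,t}(\lambda)^2,
$$
so that differentiating the product $A_0 R_{s,t}(\lambda)A_1 R_{s,t}(\lambda)\cdots A_m R_{s,t}(\lambda)$ in $\lambda$ and applying the Leibniz rule gives exactly $-\sum_{l=0}^m A_0 R_{s,t}(\lambda)\cdots A_l R_{s,t}(\lambda)\,{\rm Id}_\cn\, R_{s,t}(\lambda)\, A_{l+1}\cdots A_m R_{s,t}(\lambda)$. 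On the other hand $\frac{d}{d\lambda}\lambda^{-p/2-r}=-(p/2+r)\lambda^{-p/2-r-1}$, which is where the factor $-(p/2+r)$ and the shift $r\mapsto r+1$ on the left-hand side appear. So the identity is, \emph{formally}, just integration of a total $\lambda$-derivative, i.e. $\frac{1}{2\pi i}\int_\ell \frac{d}{d\lambda}\big(\lambda^{-p/2-r}A_0R_{s,t}(\lambda)\cdots A_mR_{s,t}(\lambda)\big)\,d\lambda$, together with the claim that the boundary terms at $v\to\pm\infty$ along the vertical line $\ell$ vanish and that one may move $\frac{d}{d\lambda}$ and $\tau(\gamma\cdot)$ through each other.

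The substance of the argument is therefore the justification of these formal manipulations, which is entirely analytic. The key steps, in order, would be: (i) fix $s>0$, $t\in[0,1]$ and $r$ with $2\Re(r)>|k|-2m$, and use Lemma \ref{lem:crucial} (together with Corollary \ref{permut}, allowing the special entry $A_0\in{\rm OP}^{k_0}_0$ to sit in any slot) to guarantee that the integrand on both sides, after taking trace norms, is integrable over $\ell$ — in particular the RHS expectations with an extra ${\rm Id}_\cn$ inserted converge absolutely since inserting ${\rm Id}_\cn$ raises the effective number of resolvent factors by one, improving convergence in $\lambda$; (ii) justify differentiating under the integral and under the trace by dominated convergence, using the resolvent bound $\|R_{s,t}(\lambda)\|\le ((t+s^2+\mu^2-a)^2+v^2)^{-1/2}$ along $\ell$ (exactly the kind of estimate recorded in Lemma \ref{lem:was-5.3} and used throughout subsection \ref{subsec:trace-ests}) to produce an integrable dominating function uniform for $\lambda$ in a neighbourhood of $\ell$; (iii) check that the boundary terms $\lambda^{-p/2-r}A_0R_{s,t}(\lambda)\cdots A_mR_{s,t}(\lambda)$ evaluated at $\lambda=a\pm iV$ tend to $0$ in trace norm as $V\to\infty$, which again follows from the power counting in Lemma \ref{lem:crucial}: the trace norm decays like $|\lambda|^{-p/2-\Re(r)+(\text{stuff})}$ which is summable precisely in the stated range of $r$. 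The second assertion, involving $\langle\langle\cdots\rangle\rangle$, follows by applying the first assertion to each of the $m+1$ expanded terms $\langle A_0,\dots,A_l,\D,A_{l+1},\dots,A_m\rangle_{m+1,r,s,t}$ (note $\D\in{\rm OP}^1$ contributes $1$ to $|k|$, hence the shifted hypothesis $2\Re(r)>|k|-2m-1$), collecting the signs $(-1)^{{\rm deg}(A_l)}$, and reorganising the double sum.

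The main obstacle I expect is purely bookkeeping-analytic rather than conceptual: verifying uniformly in $v\in\R$ that the differentiated integrand is dominated by an $L^1(\ell)$ function, since differentiating in $\lambda$ squares one resolvent and one must confirm this does not destroy the marginal integrability at the edge of the allowed range of $\Re(r)$. This is handled by the observation that an inserted ${\rm Id}_\cn$ — equivalently, a squared resolvent — behaves like lowering an order, so Lemma \ref{lem:crucial} applied with $m+1$ in place of $m$ and the same $r$ gives exactly the convergence needed; the only place care is required is checking that the hypothesis $2\Re(r)>|k|-2m$ on the left translates correctly to the hypothesis needed for the $(m+1)$-fold expectation on the right, and that the Cauchy-integral boundary terms genuinely vanish rather than merely being bounded. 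Since all of these estimates are of exactly the type already established in subsection \ref{subsec:trace-ests} and in the (appendix) proof of Lemma \ref{lem:crucial}, I would carry out the details by citing those results and would relegate the more tedious verifications to the Appendix, in parallel with the treatment of the $s$-trick in subsection \ref{lem:s-trick-app}.
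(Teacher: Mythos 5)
Your proof is correct and follows essentially the same route as the paper's: differentiate the resolvent to get $-R_{s,t}(\lambda)^2$, apply the Leibniz rule so the total $\lambda$-derivative of $\lambda^{-p/2-r}A_0R\cdots A_mR$ produces the factor $-(p/2+r)\lambda^{-p/2-r-1}$ together with the sum of ${\rm Id}_\cn$-insertions, integrate over $\ell$ using the trace-norm convergence furnished by Lemma \ref{lem:crucial} to check that the boundary contributions vanish, and deduce the double-bracket version by expanding $\langle\langle\cdots\rangle\rangle$ as a sum of $\langle\cdots\rangle_{m+1}$ terms with $\D\in{\rm OP}^1$ accounting for the shifted hypothesis. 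The only cosmetic difference is that the paper phrases the differentiability statements in the topology of ${\rm OP}^{|k|-2m}_0$ rather than invoking dominated convergence directly, but the content is the same.
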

\index{$\lambda$-trick}

\begin{corollary}
\label{permut}
Let $A_l\in{\rm OP}^{k_l}$ have definite grading degree, and suppose that there exists 
$l_0\in\{0,\dots,m\}$ with $A_{l_0}\in{\rm OP}^{k_{l_0}}_0$. 
Then, for $\Re(r)$ sufficiently large and with $1-\bullet$ the anti-parity,
 the signed  expectations
$$
(-1)^{(1-\bullet)\,\sum_{k=l}^m{\rm deg}(A_k)}
\langle A_l,A_{l+1},\dots,A_0,\dots,A_m,\dots ,A_{l-1}\rangle_{m,r,s,t},
\quad l=0,\dots,m,
$$
are all finite and coincide, and similarly for the expectations \eqref{expect2}.
In particular, Lemmas \ref{lem:crucial}, \ref{s-trick} and \ref{lambda-trick} remain 
valid if one assumes instead that $A_{l}\in{\rm OP}^{k_{l}}_0$, 
for any $l\in\{0,\dots,m\}$.
\end{corollary}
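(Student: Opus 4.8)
\textbf{Proof plan for Corollary \ref{permut}.}

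The plan is to reduce the cyclicity statement to a genuine trace-class cyclicity, using the fact that the presence of at least one factor from ${\rm OP}^{k_{l_0}}_0$ makes every operator appearing under the trace in \eqref{expect1} (resp. \eqref{expect2}) trace class once $\Re(r)$ is large, via Lemma \ref{lem:crucial}. First I would fix $\Re(r)$ large enough that Lemma \ref{lem:crucial} applies with the given multi-index, so that
$$
\gamma\int_\ell \lambda^{-p/2-r}A_0R_{s,t}(\lambda)A_1R_{s,t}(\lambda)\cdots A_mR_{s,t}(\lambda)\,d\lambda
$$
is trace class for each $s>0$, and moreover $s$-integrable after multiplication by the appropriate power of $s$. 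The key algebraic input is that all the resolvents $R_{s,t}(\lambda)$ commute with $\gamma$ (in the even case) and with each other, so that moving the first factor $A_0$ cyclically past $A_1R_{s,t}(\lambda)\cdots A_mR_{s,t}(\lambda)$ and one spare $R_{s,t}(\lambda)$ around the trace is, formally, just the trace property. The graded signs $(-1)^{(1-\bullet)\sum_{k=l}^m{\rm deg}(A_k)}$ arise exactly because in the even case ($\bullet=0$) one must commute the factors past $\gamma$, and $\gamma$ anticommutes with each odd-degree $A_k$ (and with $\D$); in the odd case ($\bullet=1$) $\gamma={\rm Id}$ and there are no signs, consistent with the anti-parity $1-\bullet=0$.

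The main obstacle — and the reason this needs a proof rather than a one-line appeal to the trace property — is the following: to cycle $A_0$ to position $l$ one rewrites the integrand as a product of the form $XY$ with $X=A_l R_{s,t}(\lambda)\cdots A_m R_{s,t}(\lambda)$ (times $\gamma$ and $\lambda^{-p/2-r}$) and $Y=A_0R_{s,t}(\lambda)\cdots A_{l-1}R_{s,t}(\lambda)$, and one wants $\tau(XY)=\pm\tau(YX)$. But individually $X$ and $Y$ need \emph{not} be trace class — only the full product $XY$ is, by Lemma \ref{lem:crucial}. So I would invoke Proposition \ref{prop:everybody-knows} (Brown--Kosaki), which gives $\tau(XY)=\tau(YX)$ under the hypothesis that \emph{both} $XY$ and $YX$ are trace class; the trace-class property of $YX$ is itself another instance of Lemma \ref{lem:crucial} (the operators $A_l,\dots,A_m,A_0,\dots,A_{l-1}$ and the associated multi-index satisfy the same bound on $\Re(r)$, and the distinguished factor $A_{l_0}$ from ${\rm OP}^0$ is still present among them). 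One subtlety to check is the placement of the `spare' resolvent: in \eqref{expect1} there are $m+1$ resolvents and $m+1$ operators $A_0,\dots,A_m$, so after cycling one genuinely lands on the expression with $A_l$ first, each $A_j$ followed by one $R_{s,t}(\lambda)$, matching the definition of $\langle A_l,\dots,A_{l-1}\rangle_{m,r,s,t}$. For the double-bracket expectation \eqref{expect2}, which is a signed sum of single-bracket expectations with an inserted $\D\in{\rm OP}^1$, the result follows termwise from the single-bracket case, with the signs $(-1)^{{\rm deg}(A_l)}$ in the definition combining correctly with the cyclic signs; here I would just need to track that moving $\D$ (which has grading degree $1$) contributes consistently, and that the anti-parity sign in the statement accounts for the degrees of $A_k$ for $k\geq l$ including any inserted $\D$.

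Finally, once finiteness and coincidence of the cyclically permuted expectations is established for $\Re(r)$ large, the remark about Lemmas \ref{lem:crucial}, \ref{s-trick}, \ref{lambda-trick} follows immediately: each of those lemmas was stated under the hypothesis $A_0\in{\rm OP}^{k_0}_0$ with $A_1,\dots,A_m\in{\rm OP}^{k_l}$, but by the cyclicity just proved we may always rotate a distinguished ${\rm OP}^0$-factor into the zeroth slot, so the hypothesis `$A_{l_0}\in{\rm OP}^{k_{l_0}}_0$ for some $l_0$' suffices. The $s$-trick and $\lambda$-trick proofs themselves (integration by parts in $s$, differentiation under the Cauchy integral in $\lambda$) are unaffected by which slot carries the smoothing factor, so no further work is needed there. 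I expect the write-up to be short: set up $\Re(r)$, cite Lemma \ref{lem:crucial} twice for $XY$ and $YX$, apply Proposition \ref{prop:everybody-knows}, and bookkeep the grading signs.
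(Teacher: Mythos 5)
There is a genuine gap: you apply Brown--Kosaki (Proposition \ref{prop:everybody-knows}) at the level of the integrand $\lambda^{-p/2-r}A_0 R_{s,t}(\lambda)A_1\cdots A_m R_{s,t}(\lambda)$, factored as $XY$, claiming that ``the full product $XY$ is [trace class], by Lemma \ref{lem:crucial}, for $\Re(r)$ sufficiently large.'' But Lemma \ref{lem:crucial} asserts that the \emph{$\lambda$-integral} $B_{r,t}(s)$ is trace class, not the integrand at a fixed $\lambda$. Whether the integrand $A_0 R A_1 R \cdots A_m R$ is trace class for a fixed $\lambda$ is a statement about its order as an element of ${\rm OP}^*_0$: it lies in ${\rm OP}^{|k|-2(m+1)}_0$, which is trace class precisely when $|k|<2m+2-p$ (Corollary \ref{consistency}). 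This condition is entirely independent of $r$, since $r$ only enters through the scalar weight $\lambda^{-p/2-r}$ (which governs $\lambda$-integrability but not the operator trace ideal). So ``taking $\Re(r)$ large'' cannot make the integrand trace class, and Brown--Kosaki cannot be applied at fixed $\lambda$ as you propose; both the trace-class hypothesis on $XY$ and the symmetric one on $YX$ fail in general.

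The paper's proof repairs precisely this difficulty, and it is worth seeing why your invocation of Lemma \ref{lambda-trick} needs to come \emph{before}, not after, the cyclicity step. The $\lambda$-trick trades the parameter $r$ for extra resolvents: iterating it $N$ times produces expectations $\langle A_0,1,\dots,1,A_m,1,\dots,1\rangle_{m+N,\,r-N,\,s,t}$, whose integrands carry $m+N+1$ resolvents and so live in ${\rm OP}^{|k|-2(m+N+1)}_0$. For $N$ large this is of order $<-p$, hence genuinely trace class at each fixed $\lambda$. At that point one can cycle --- but note the further wrinkle the paper handles: it first normalises so $A_m\in{\rm OP}^0$ (absorbing $(1+\D^2)^{k_m/2}$ into $A_{m-1}$), and then cycles the \emph{bounded} operator $R^{-k}A_m R^{k}$ (bounded by Lemma \ref{normtrick}), not $A_m$ itself, using ordinary trace cyclicity of a bounded operator against a trace-class one. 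Finally one undoes the $\lambda$-trick. Your plan therefore misses both the preparatory integration by parts and the conjugation that produces a bounded cycler; without them the formal ``trace property'' step has no rigorous foundation. (Your observations about the sign $(-1)^{(1-\bullet)\,{\rm deg}(A_m)}$ coming from $\gamma$, and about the double-bracket case reducing to the single-bracket one, are both correct and align with the paper.)
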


\begin{proof}
Formally, the proof is to integrate by parts until the integrand is trace-class, and then
apply cyclicity of the trace. To make such a formal argument rigorous, we employ the
$\lambda$-trick.

We assume first $A_{0}\in{\rm OP}^{k_{0}}_0$. From the 
same reasoning as at the beginning of the proof of  Lemma \ref{lem:crucial}, one can 
further assume that $A_m\in{\rm OP}^0$, at the price that 
$A_{m-1}$ will be in ${\rm OP}^{k_{m-1}+k_m}$.
Then, we repeat the $\lambda$-trick (Lemma \ref{lambda-trick}) until the integrand of
$$
\langle A_0,1,\dots,1,A_2,1,\dots,1,A_m,1,\dots,1\rangle_{M+1,r,s,t},
$$
is trace class. We then move the bounded (by \cite[Lemma 6.10]{CPRS2}, see the Appendix 
Lemma \ref{normtrick})
operator $R^{-k}A_mR^{k}$ 
($k$ is the number of resolvents on the right of $A_m$) to the front, using the trace property.
This gives after recombination
$$
\langle A_0,\dots,A_m\rangle_{m,r,s,t}
=(-1)^{(1-\bullet)\,{\rm deg}(A_m)}\langle A_m, A_0,\dots,A_{m-1}\rangle_{m,r,s,t}.
$$
The sign comes from the relation $A_m\gamma=(-1)^{(1-\bullet)\,{\rm deg}(A_m)}\gamma A_m$. 
One concludes iteratively. The proof for the expectations \eqref{expect2} is entirely similar.
\end{proof}

We quote several results from \cite{CPRS4} which carry over to our setting with no
substantial change in their proofs. 

\begin{lemma}\label{basicidentities} 
Let $m\geq 0$, $A_0,\dots,A_m$, 
$A_i\in {\rm OP}^{k_i}$, with definite grading degree and  with $|k|-2m-1<2\Re(r)$, 
and suppose there exists $l\in\{0,\dots,m\}$ with $A_{l}\in{\rm OP}^{k_{l}}_0$. 
Then for $1\leq j<m$ we have
\begin{align*} 
&-\langle A_0,\dots,[\D^2,A_j],\dots,A_m\rangle_{m,r,s,t}\\
&\qquad\qquad=\langle A_0,\dots,A_{j-1}A_j,\dots,A_m\rangle_{m-1,r,s,t}-\langle 
A_0,\dots,A_jA_{j+1},\dots,A_m\rangle_{m-1,r,s,t},
\end{align*}
while for $j=m$ we have
\begin{align*}
&-\langle A_0,\dots,A_{m-1},[\D^2,A_m]\rangle_{m,r,s,t}\\
&\qquad\qquad=\langle A_0,\dots,A_{m-1}A_m\rangle_{m-1,r,s,t}
-(-1)^{(1-\bullet)\,{\rm deg}(A_m)}\langle A_mA_0,\dots,A_{m-1}\rangle_{m-1,r,s,t}.
\end{align*}
For $k\geq 1$ we have
\begin{equation}
 \int_0^{\infty}s^k\langle\D A_0,A_1,\dots,A_m\rangle_{m,r,s,t}ds=
(-1)^{1-\bullet} \int_0^{\infty}s^k\langle A_0,A_1,\dots,A_m\D\rangle_{m,r,s,t}ds.
\label{traceofcommutator}
\end{equation}
If furthermore $\sum_{i=0}^m {\rm deg}(A_i)\equiv 1-\bullet\,(\bmod\ 2)$, we define 
$$
{\rm deg}_{-1}=0\;\;and\;\;{\rm deg}_k={\rm deg}(A_0)+{\rm deg}(A_1)+\cdots +{\rm deg}(A_k),
$$ 
then
\begin{equation} 
\sum_{j=0}^m(-1)^{{\rm deg}_{j-1}}\int_0^{\infty}s^k\langle A_0,\dots,[\D,A_j]_\pm,\dots,
A_m\rangle_{m,r,s,t}ds=0.
\label{addstozero}
\end{equation}
\end{lemma}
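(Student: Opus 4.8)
\textbf{Proof plan for Lemma \ref{basicidentities}.}
The plan is to prove the four displayed identities in turn, each by the same mechanism: write the relevant commutator or $\D$-insertion as a telescoping sum of terms built from the resolvents $R_{s,t}(\lambda)$, then integrate by parts (using the $s$-trick, Lemma \ref{s-trick}, or cyclicity of the trace via Corollary \ref{permut}) to bring everything under a single trace that is legitimately trace-class by Lemma \ref{lem:crucial} and Corollary \ref{permut}. Throughout, the hypothesis that some $A_l \in {\rm OP}^{k_l}_0$ is what makes the integrands trace-class in the first place, and it is kept in force by the permutation invariance of Corollary \ref{permut}; the parameter constraint $|k|-2m-1 < 2\Re(r)$ is exactly what is needed to apply that corollary and the $s$/$\lambda$-tricks.

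First I would treat the two ``$[\D^2,A_j]$-identities''. The key algebraic fact is the resolvent commutator formula $[\D^2, R_{s,t}(\lambda)] = 0$ together with $[A_j, R_{s,t}(\lambda)] = R_{s,t}(\lambda)[\D^2,A_j]R_{s,t}(\lambda)$, equivalently $[\D^2,A_j]R_{s,t}(\lambda) = -R_{s,t}(\lambda)^{-1}A_j R_{s,t}(\lambda) + A_j$; substituting this into the definition \eqref{expect1} of $\langle A_0,\dots,[\D^2,A_j],\dots,A_m\rangle_{m,r,s,t}$ makes two adjacent resolvents collapse in two ways, producing exactly the difference $\langle\dots A_{j-1}A_j\dots\rangle_{m-1} - \langle\dots A_jA_{j+1}\dots\rangle_{m-1}$. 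This is the standard ``resolvent pseudodifferential expansion'' trick from \cite{CPRS2,CPRS4}; the only new point is checking that the intermediate expressions are well defined in the nonunital setting, which follows because each $A_i$ (except the privileged $A_{l_0}\in {\rm OP}^0$) has a fixed order, the resulting orders still satisfy the hypotheses of Lemma \ref{lem:crucial}/Corollary \ref{permut}, and at least one ${\rm OP}_0$ factor survives the collapse. The $j=m$ case is identical except that the last resolvent wraps around, so one uses cyclicity (Corollary \ref{permut}) to bring $A_m R_{s,t}(\lambda)$ to the front, which is where the sign $(-1)^{(1-\bullet)\,{\rm deg}(A_m)}$ coming from $A_m\gamma = (-1)^{(1-\bullet)\deg(A_m)}\gamma A_m$ enters.

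Next, identity \eqref{traceofcommutator}: here one wants to move a $\D$ from the far left (multiplying $A_0$) to the far right (multiplying $A_m$) inside $\langle\cdot\rangle_{m,r,s,t}$. Since $\D$ commutes with every $R_{s,t}(\lambda)$, this is pure cyclicity of the trace once the integrand is trace-class; the factor $s^k$ with $k\geq 1$ guarantees (via Lemma \ref{lem:crucial}, last clause) that $s\mapsto \|\cdot\|_1$ is integrable, so the $s$-integral and trace may be interchanged, and the sign $(-1)^{1-\bullet}$ records moving $\D$ (which is odd, i.e. anticommutes with $\gamma$ when $\bullet=0$) past $\gamma$. Finally, identity \eqref{addstozero} is the Leibniz/``$\D$ is a derivation'' identity: writing each graded commutator $[\D,A_j]_\pm$ as $\D A_j - (-1)^{\deg A_j}A_j\D$ and inserting into the sum, the terms telescope, and the two boundary terms (a $\D$ on the far left of $A_0$ and a $\D$ on the far right of $A_m$) cancel precisely by \eqref{traceofcommutator}, using the parity hypothesis $\sum_i \deg(A_i)\equiv 1-\bullet$ to get the signs to match. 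I would organise \eqref{addstozero} as a short induction on $m$, or simply as a direct telescoping computation citing \eqref{traceofcommutator} for the endpoint cancellation.

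The main obstacle is not the algebra — which is formally identical to \cite{CPRS4} — but the analytic bookkeeping: at every step one must verify that the operator inside the trace, after the commutator has been expanded and before integrating by parts, genuinely lies in $\cl^1(\cn,\tau)$ for $\Re(r)$ in the stated range, and that it retains an ${\rm OP}_0$ factor so that Lemma \ref{lem:crucial} and Corollary \ref{permut} apply. Concretely, one tracks the total order $|k|$ and the number of resolvents $m$ through each manipulation and checks the inequality $|k| - 2m - 1 < 2\Re(r)$ (or the stronger one needed for the relevant trick) is preserved; since collapsing two resolvents into one lowers $m$ by one but also lowers the effective order, the inequalities stay satisfied. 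I expect this verification to be routine given the estimates already assembled in subsection \ref{subsec:trace-ests} and the Appendix, so the bulk of the write-up is the three telescoping computations above with a remark that the well-definedness is inherited from Lemma \ref{lem:crucial} and Corollary \ref{permut}.
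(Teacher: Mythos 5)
Your sketch reproduces precisely the proof deferred to in the paper, which gives no argument for this lemma beyond remarking that the proof from \cite{CPRS4} ``carries over with no substantial change'': resolvent collapse for the two $[\D^2,A_j]$ identities, cyclicity of the trace (via Corollary \ref{permut}) for \eqref{traceofcommutator}, and a Leibniz/telescoping computation for \eqref{addstozero} whose boundary terms cancel by \eqref{traceofcommutator}, with the genuinely new nonunital content being exactly what you flag — that at least one $\mathrm{OP}_0$ factor survives every manipulation so Lemma \ref{lem:crucial} and Corollary \ref{permut} keep the integrands trace-class. One small slip worth correcting: the displayed formula $[A_j,R_{s,t}(\lambda)]=R_{s,t}(\lambda)[\D^2,A_j]R_{s,t}(\lambda)$ has the wrong sign (it should read $-R_{s,t}(\lambda)[\D^2,A_j]R_{s,t}(\lambda)$), so it is not in fact equivalent to your next formula; but the identity you actually substitute, $[\D^2,A_j]R_{s,t}(\lambda)=A_j-R_{s,t}(\lambda)^{-1}A_jR_{s,t}(\lambda)$, is the correct one, so the computation and the resulting signs go through as you describe.
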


\begin{lemma} 
\label{lala-la-identity}
Let $m\geq 0$, $A_0,\dots,A_m$, $A_i\in {\rm OP}^{k_i}$, 
with definite grading degree and
with $|k|-2m-2<2\Re(r)$, and suppose there exists 
$l\in\{0,\dots,m\}$ with $A_{l}\in{\rm OP}^{k_{l}}_0$. Then 
for $1\leq j < m$ we have the identity
\begin{align} &-\langle\langle A_0,\dots,[\D^2,A_j],\dots,A_m\rangle\rangle_{m,r,s,t}
-(-1)^{{\rm deg}_{j-1}}\langle 
A_0,\dots,[\D,A_{j}]_\pm,\dots,A_m\rangle_{m,r,s,t}\nonumber\\
 &=\langle\langle A_0,\dots,A_{j-1}A_j,\dots,A_m\rangle\rangle_{m-1,r,s,t}-\langle\langle 
A_0,\dots,A_jA_{j+1},\dots,A_m\rangle\rangle_{m-1,r,s,t}.
\label{useforb}
\end{align}
For $j=m$ we also have 
\begin{align*} 
&-\langle\langle A_0,\dots,A_{m-1},[\D^2,A_m]\rangle\rangle_{m,r,s,t}
-(-1)^{{\rm deg}_{m-1}}\langle 
A_0,\dots,[\D,A_{m}]_\pm\rangle_{m,r,s,t}\\
 &=\langle\langle A_0,\dots,A_{m-1}A_m\rangle\rangle_{m-1,r,s,t}-(-1)^{\bullet {\rm deg}(A_m)}
 \langle\langle 
A_mA_0,\dots,A_{m-1}\rangle\rangle_{m-1,r,s,t}.\nonumber\\
\end{align*}
If $\sum_{i=0}^m{\rm deg}(A_i)\equiv \bullet\,(\bmod\ 2)$ and $\alpha\geq 1$, then 
we also have 
\begin{align} 
&\sum_{k=0}^m(-1)^{{\rm deg}_{k-1}}\!\int_0^{\infty}\!\!s^\alpha\langle\langle 
A_0,\dots,[\D,A_k]_\pm,\dots,A_m\rangle\rangle_{m,r,s,t}ds\nonumber\\
&\qquad\qquad\qquad=\sum_{i=0}^m2\!\int_0^{\infty}\!\!s^\alpha\langle 
A_0,\dots,A_i,\D^2,\dots,A_m\rangle_{m+1,r,s,t}ds.
\label{nothertrick}
\end{align}
On the other hand, if $\sum_{i=0}^m{\rm deg}(A_i)\equiv 1-\bullet\,(\bmod\ 2)$ and $\alpha\geq 1$
then $\langle\langle\cdots\rangle\rangle$ satisfies the cyclic property
$$
\int_0^\infty s^\alpha\langle\langle A_0,\dots,A_m\rangle\rangle_{m,r,s,t}ds
=(-1)^{\bullet\, {\rm deg}(A_m)}
\int_0^\infty s^\alpha\langle\langle A_m,A_0,\dots,A_{m-1}\rangle\rangle_{m,r,s,t}ds.
$$ 
\end{lemma}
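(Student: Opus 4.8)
\textbf{Proof plan for Lemma \ref{lala-la-identity}.}

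The plan is to reduce every assertion to the corresponding statement in \cite{CPRS4}, checking only that the nonunital issues do not disrupt the algebra. The crucial prerequisite is that all the expectations appearing in the statement are well-defined and that the trace-cyclicity manipulations are legitimate; this is exactly what Lemma \ref{lem:crucial} (together with Corollary \ref{permut}) guarantees once one entry lies in some ${\rm OP}^{k_l}_0$, which is built into the hypotheses. So throughout, whenever a formal identity is written, it is understood to hold after choosing $\Re(r)$ large enough that all the relevant integrands are trace-class, and then to persist under analytic continuation; the bound $|k|-2m-2<2\Re(r)$ in the hypothesis is precisely what is needed for the $\langle\langle\cdots\rangle\rangle$ expectations with one extra $\D$ inserted (raising $|k|$ by $1$ and, in the $s$-integral, after the $s$-trick, effectively by more) to be absolutely convergent.

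For the first two displayed identities (the $[\D^2,A_j]$ expansions, $1\le j<m$ and $j=m$), I would proceed exactly as in \cite{CPRS4}: use the resolvent identity $R_{s,t}(\lambda)[\D^2,A_j]R_{s,t}(\lambda)=A_jR_{s,t}(\lambda)-R_{s,t}(\lambda)A_j$ (valid since $[\D^2,A_j]=A_j^{(1)}\in{\rm OP}^{k_j+1}$ by Lemma \ref{lem:derivs}), sum over the position of the inserted $\D$ that defines $\langle\langle\cdots\rangle\rangle$ via \eqref{expect2}, and collect telescoping terms. The extra term $(-1)^{{\rm deg}_{j-1}}\langle A_0,\dots,[\D,A_j]_\pm,\dots\rangle_{m,r,s,t}$ arises from the positions where the extra $\D$ of the double-bracket sits adjacent to $A_j$, producing a graded commutator $[\D,A_j]_\pm$ rather than the plain commutator; the sign $(-1)^{{\rm deg}_{j-1}}$ is bookkeeping for moving the inserted $\D$ past $A_0,\dots,A_{j-1}$. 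For $j=m$ the cyclic boundary term picks up the sign $(-1)^{\bullet\,{\rm deg}(A_m)}$ from $A_m\gamma=(-1)^{(1-\bullet){\rm deg}(A_m)}\gamma A_m$ combined with the extra sign coming from the extra $\D$ in $\langle\langle\cdots\rangle\rangle$; this sign-chase is the fiddly but purely mechanical part, and I would present it by reference to \cite{CPRS4} after noting Corollary \ref{permut} licenses the cyclic move in our setting.

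Identity \eqref{nothertrick}, under $\sum{\rm deg}(A_i)\equiv\bullet$, follows by writing out the $\langle\langle\cdots\rangle\rangle$ on the left via \eqref{expect2}, applying the previous $[\D^2,\cdot]$ identities together with the graded-Leibniz telescoping \eqref{addstozero} from Lemma \ref{basicidentities} (the hypothesis $\alpha\ge 1$ is used so that the $s$-trick from Lemma \ref{s-trick} applies and the relevant $s$-integrals converge), so that all the graded-commutator terms cancel pairwise and only the terms with a literal $\D^2=\D\cdot\D$ inserted survive, giving the right-hand side; the factor $2$ records that each inserted $\D^2$ is reached from two adjacent positions of the extra $\D$. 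Finally, the cyclic property in the case $\sum{\rm deg}(A_i)\equiv 1-\bullet$ is the $\langle\langle\cdots\rangle\rangle$-analogue of \eqref{traceofcommutator}: expand in the $m+1$ terms of \eqref{expect2}, and in each apply Corollary \ref{permut} to cycle $A_m$ (or the extra $\D$) to the front; the terms reorganize into $\langle\langle A_m,A_0,\dots,A_{m-1}\rangle\rangle$ with the stated sign $(-1)^{\bullet\,{\rm deg}(A_m)}$, using $\D\in{\rm OP}^1$ has degree $1$ and anti-commutes with $\gamma$. The main obstacle is not any single deep estimate — those are all quarantined in Lemma \ref{lem:crucial} and Corollary \ref{permut} — but rather ensuring that every cyclic-permutation and integration-by-parts step is performed only in the region of $\Re(r)$ where the manipulated expression is genuinely trace-class and $s$-integrable, and then invoking analyticity; I would state this once at the outset and thereafter suppress it, exactly as the existing text does, referring the reader to \cite{CPRS4} for the detailed combinatorics of the signs.
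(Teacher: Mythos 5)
Your proposal takes the same route the paper does: since the paper explicitly says this lemma (together with Lemma \ref{basicidentities}) is quoted from \cite{CPRS4} "with no substantial change in their proofs", its own "proof" is precisely the reduction you describe — namely, that the well-definedness of every expectation that appears, and the legitimacy of the cyclic moves, is quarantined in Lemma \ref{lem:crucial} and Corollary \ref{permut} once one factor lies in some ${\rm OP}^{k_l}_0$, and the rest is the algebra of \cite{CPRS4}. You identify that quarantine correctly, and the sketch of \eqref{nothertrick} (expand $\langle\langle\cdots\rangle\rangle$ via \eqref{expect2}, telescope with the graded Leibniz rule, and collect the surviving $\D^2$ insertions) and of the cyclic property via Corollary \ref{permut} is the right outline.

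One small slip: you write the resolvent identity as $R_{s,t}(\lambda)[\D^2,A_j]R_{s,t}(\lambda)=A_jR_{s,t}(\lambda)-R_{s,t}(\lambda)A_j$, but since $\D^2=(\lambda-t-s^2)-R_{s,t}(\lambda)^{-1}$ one has $[\D^2,A_j]=-[R_{s,t}(\lambda)^{-1},A_j]$, whence $R_{s,t}(\lambda)[\D^2,A_j]R_{s,t}(\lambda)=R_{s,t}(\lambda)A_j-A_jR_{s,t}(\lambda)$; the overall sign you wrote matches the left-hand side of \eqref{useforb} only because of the explicit minus sign there, so be careful to keep those signs consistent when telescoping. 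Also, your claim that the hypothesis $|k|-2m-2<2\Re(r)$ is "precisely what is needed" for convergence is slightly loose — the various expectations actually demand conditions of the form $2\Re(r)>|k|-2m$ or stronger (after inserting $[\D^2,A_j]$ the total order goes up by one, and the $s$-integrated statements need the additional $\alpha+1$ from Lemma \ref{lem:crucial}) — but this is immaterial since, as you note, one proves the identities for $\Re(r)$ large and extends by analytic continuation, exactly as the paper does.
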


From these various algebraic identities and $\D^2R_{s,t}(\lambda)=-1+(\lambda-(t+s^2))R_{s,t}(\lambda)$
we deduce the following important relationship between powers of $\D$ and 
the values of our parameters.

\begin{lemma}
\label{differentfort} 
Let $m,\alpha\geq 0$, $A_i\in 
{\rm OP}^{k_i}$, with definite grading degree, $r\in\C$ be such that $2\Re(r)>1+\alpha-2m+|k|$, and suppose there exists 
$l\in\{0,\dots,m\}$ with $A_{l}\in{\rm OP}^{k_{l}}_0$. Then
\begin{align*} 
&\sum_{j=0}^m\int_0^\infty s^\alpha\langle 
A_0,\dots,A_j,\D^2,A_{j+1},\dots,A_m\rangle_{m+1,r,s,t}ds\\
&=-(m+1)\int_0^\infty s^\alpha\langle 
A_0,\dots,A_m\rangle_{m,r,s,t}ds+(1-p/2-r)\int_0^\infty s^\alpha\langle 
A_0,\dots,A_m\rangle_{m,r,s,t}ds\\
&+\frac{(\alpha+1)}{2}\int_0^\infty s^\alpha\langle A_0,\dots,A_m\rangle_{m,r,s,t}ds
-t\,\sum_{j=0}^m\int_0^\infty s^\alpha\langle 
A_0,\dots,A_j,1,A_{j+1},\dots,A_m\rangle_{m+1,r,s,t}ds.
\end{align*}
\end{lemma}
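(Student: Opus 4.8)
\textbf{Proof proposal for Lemma \ref{differentfort}.}

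The plan is to reduce everything to the algebraic identity
$\D^2 R_{s,t}(\lambda) = -1 + (\lambda - (t+s^2))R_{s,t}(\lambda)$,
which is the only place $\D^2$ interacts with the resolvents. The left-hand side of the claimed formula is a sum over $j$ of expectations in which $\D^2$ has been inserted in the $j$-th slot. Fixing $j$, the operator $\D^2$ sits to the left of a resolvent $R_{s,t}(\lambda)$, so I would substitute $\D^2 R_{s,t}(\lambda) = -R_{s,t}(\lambda)^{-1}R_{s,t}(\lambda)\cdots$ more precisely write $\D^2 R_{s,t}(\lambda) = -1 + (\lambda - t - s^2)R_{s,t}(\lambda)$. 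This splits the $j$-th term into two pieces: one in which the $j$-th resolvent disappears (the "$-1$" piece), collapsing an $(m+1)$-fold expectation with $m+2$ resolvents into something expressible via an $m$-fold expectation; and one in which a scalar factor $(\lambda - t - s^2)$ appears under the Cauchy integral. The power-counting hypothesis $2\Re(r) > 1 + \alpha - 2m + |k|$ is exactly what guarantees (via Lemma \ref{lem:crucial} and Corollary \ref{permut}, using that some $A_l \in {\rm OP}^{k_l}_0$) that all the expectations occurring — including the one with an extra resolvent before the substitution — are trace-class and that the $s$-integrals converge, so all manipulations are legitimate.

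The scalar factor $\lambda - t - s^2$ must then be distributed. Writing $\lambda - t - s^2 = \lambda - (p/2 + r)\cdot 0 \dots$ is not quite right; instead I would handle the three summands $\lambda$, $-t$, $-s^2$ separately. The term with $\lambda$ combines with $\lambda^{-p/2-r}$ to give $\lambda^{-p/2-r+1} = \lambda^{-p/2-(r-1)}$, which after relabelling and comparing with the definition of the expectation at parameter $r-1$, together with the $\lambda$-trick (Lemma \ref{lambda-trick}) read backwards, produces the $(1 - p/2 - r)$ multiple of $\int_0^\infty s^\alpha \langle A_0,\dots,A_m\rangle_{m,r,s,t}\,ds$. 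The term with $-s^2$ raises the power of $s$ by two: $s^\alpha \cdot s^2 = s^{\alpha+2}$, and applying the $s$-trick (Lemma \ref{s-trick}) — which trades $\int s^{\alpha+1}\langle\cdots,1,\cdots\rangle$ for $\frac{\alpha}{2}\int s^{\alpha-1}\langle\cdots\rangle$ — in the appropriate direction converts this into the $\frac{\alpha+1}{2}$ multiple of the basic integral. The term with $-t$ directly yields $-t \sum_j \int_0^\infty s^\alpha \langle A_0,\dots,A_j,1,A_{j+1},\dots,A_m\rangle_{m+1,r,s,t}\,ds$, matching the last term of the statement. Finally the "$-1$" pieces, summed over $j=0,\dots,m$, give $-(m+1)$ copies of $\int_0^\infty s^\alpha \langle A_0,\dots,A_m\rangle_{m,r,s,t}\,ds$ once one checks that deleting the $j$-th resolvent and recombining indices reproduces the $m$-fold expectation independently of $j$.

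The main obstacle I expect is \emph{bookkeeping of the constants and the shift in $r$}: matching $\lambda^{-p/2-r+1}$ against the expectation at $r-1$ requires care with the $p/2$ offset built into Definition \ref{expectation}, and the $s$-trick involves a factor $\alpha$ versus $\alpha+1$ versus $\alpha+2$ that must be tracked precisely to land on $\frac{\alpha+1}{2}$ rather than $\frac{\alpha}{2}$ or $\frac{\alpha+2}{2}$. A secondary subtlety is ensuring the interchange of the $s$-integral, the $\lambda$-Cauchy integral, and the trace is justified at each intermediate step — this is where one invokes the uniform trace-norm bounds from Lemma \ref{lem:crucial} (extended by Corollary \ref{permut} to allow the ${\rm OP}_0$ entry in any slot), exactly as in the corresponding unital argument of \cite{CPRS4}, so the nonunital case adds no genuinely new analytic difficulty here beyond citing the already-established Lemma \ref{lem:crucial}.
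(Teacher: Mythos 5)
Your proof is correct and follows exactly the route the paper indicates in the one-sentence preamble to the lemma: insert the resolvent identity $\D^2 R_{s,t}(\lambda) = -1 + (\lambda-(t+s^2))R_{s,t}(\lambda)$ into each summand, split the scalar factor into $\lambda$, $-t$, $-s^2$, and re-express the $\lambda$ and $-s^2$ contributions through the $\lambda$-trick (Lemma \ref{lambda-trick}, applied at $r-1$) and the $s$-trick (Lemma \ref{s-trick}, applied at exponent $\alpha+1$) respectively. You have also tracked the delicate constants correctly — the $\lambda$-trick at $r-1$ yields $-(p/2+r-1)=(1-p/2-r)$, and the $s$-trick at $\alpha+1$ produces the factor $\tfrac{\alpha+1}{2}$ rather than $\tfrac{\alpha}{2}$ or $\tfrac{\alpha+2}{2}$ — so the bookkeeping concern you flag is resolved correctly in your own write-up.
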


\subsection{Continuity of the resolvent, transgression and auxiliary cochains} 
\label{subsec:reg}
In this subsection, we demonstrate  the continuity, differentiability and holomorphy 
properties, allowing us to prove that the resolvent cocycle represents the Chern character.

\begin{definition}
We let $\mathcal O_m$ be the set of holomorphic functions on the open
half-plane $\{z\in\mathbb C:\Re(z)>(1-m)/2\}$. We endow $\mathcal O_m$
with the topology of uniform convergence on compacta.
\end{definition}
\index{$\mathcal{O}_m$, functions holomorphic for $\Re(z)>(1-m)/2$}

\begin{lemma}
\label{hoo}
Let $m=\bullet,\bullet+2,\dots,M$ and $t\in[0,1]$.
For $A_0,\dots,A_m\in{\rm OP}^0$ such that there exists $l\in\{0,\dots,m\}$ with 
$A_{l}\in{\rm OP}_0^0$, we have
$$
\left[r\mapsto \int_0^\infty s^m\langle A_0,\dots,A_m\rangle_{m,r,s,t}\,ds\right],
\left[r\mapsto \int_0^\infty s^{m+1}\langle\langle A_0,\dots,A_m\rangle\rangle_{m,r,s,t}\,ds\right]
\in {\mathcal O}_m.
$$
\end{lemma}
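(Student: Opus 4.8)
\textbf{Proof plan for Lemma \ref{hoo}.}

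The plan is to reduce the statement to the estimates already packaged in Lemma \ref{lem:crucial} (and its extension Corollary \ref{permut}), together with a standard Morera-plus-dominated-convergence argument. First I would observe that for each fixed $s>0$ and $r$ in the half-plane $\Re(z)>(1-m)/2$, the integrand $\tau\big(\gamma\int_\ell \lambda^{-p/2-r}A_0R_{s,t}(\lambda)\cdots A_mR_{s,t}(\lambda)\,d\lambda\big)$ is well-defined: since all $A_i\in{\rm OP}^0$ with $k_i=0$, the condition $|k|-2m<2\Re(r)$ reads $-2m<2\Re(r)$, which holds throughout the half-plane $\Re(r)>(1-m)/2$. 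Here one of the $A_l$ lies in ${\rm OP}^0_0$, so Corollary \ref{permut} applies and guarantees the expectation $\langle A_0,\dots,A_m\rangle_{m,r,s,t}$ is finite. The same remark applies to $\langle\langle\cdots\rangle\rangle_{m,r,s,t}$ once $\D\in{\rm OP}^1$ is inserted, since then the relevant bound is $|k|-2m-1<2\Re(r)$, i.e. $-2m-1<2\Re(r)$, again automatic on the half-plane.

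Next I would prove holomorphy of $s\mapsto s^m\langle A_0,\dots,A_m\rangle_{m,r,s,t}$ in $r$ for fixed $s$: the only $r$-dependence is through $\lambda^{-p/2-r}=e^{-(p/2+r)\log\lambda}$ inside the Cauchy integral, which is entire in $r$, and one may differentiate under the integral sign over $\ell$ because the resulting integrand still has the same $\lambda^{-p/2-\Re(r)}$ decay (up to a harmless factor of $|\log\lambda|$) and $\ell$-integrability is preserved. So for each $s$ the function $r\mapsto s^m\langle\cdots\rangle_{m,r,s,t}$ is holomorphic on the half-plane. To pass the holomorphy through the $\int_0^\infty ds$ I would use Morera's theorem combined with Fubini: integrate the $s$-integral of the expectation around an arbitrary triangle $\Delta$ contained in the half-plane, swap the order of the $s$-integral and the contour integral over $\partial\Delta$ (justified by the integrability statement in Lemma \ref{lem:crucial}, which gives $\int_0^\infty s^m\|B_{r,t}(s)\|_1\,ds<\infty$ uniformly for $r$ in the compact set $\overline{\Delta}$ — here one applies the lemma with $A_i\in{\rm OP}^{0}$ so $|k|=0$ and $\alpha=m$, needing $\Re(r)>-m+(m+1)/2=(1-m)/2$, exactly our half-plane), and then the inner contour integral vanishes by Cauchy's theorem in $r$. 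Thus $r\mapsto\int_0^\infty s^m\langle\cdots\rangle\,ds$ is holomorphic, i.e. lies in $\mathcal{O}_m$; the $\langle\langle\cdots\rangle\rangle$ case is identical, using the $s^{m+1}$ weight and the corresponding integrability clause of Lemma \ref{lem:crucial} (with $\alpha=m+1$, requiring $\Re(r)>(2-m)/2>(1-m)/2$, or more carefully noting $\D\in{\rm OP}^1$ contributes to $|k|$ and the bookkeeping still lands inside the half-plane $\mathcal{O}_m$).

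The only remaining point is the topology on $\mathcal{O}_m$: I must check that the limit function is not merely holomorphic but that the convergence used is uniform on compacta — but this is automatic, since holomorphy on the open half-plane together with local boundedness (again from the uniform-in-$r$ trace-norm bound of Lemma \ref{lem:crucial} on compact subsets) gives membership in $\mathcal{O}_m$ as defined. I expect the main obstacle to be purely bookkeeping: verifying that the exponents in Lemma \ref{lem:crucial} line up so that the required trace-class and integrability estimates genuinely hold for \emph{all} $r$ with $\Re(r)>(1-m)/2$ and not just for $\Re(r)$ large, and in particular handling the extra $\D$ in the $\langle\langle\cdots\rangle\rangle$ case and the extra $|\log\lambda|$ factor produced by $\partial_r\lambda^{-p/2-r}$ — both of which are absorbed by choosing $\varepsilon$-room in the exponents exactly as in the proof of Lemma \ref{lem:crucial}. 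No genuinely new analytic input is needed beyond what Lemma \ref{lem:crucial} and Corollary \ref{permut} already provide.
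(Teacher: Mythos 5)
Your proposal is correct but takes a genuinely different route from the paper's proof. The paper establishes holomorphy by directly verifying complex differentiability of the $s$-integrated expectation: it shows
$$
\lim_{\eps\to 0}\int_0^\infty s^m\Big\|\frac{1}{2\pi i}\int_\ell \lambda^{-p/2-r}\big(\eps^{-1}(\lambda^{-\eps}-1)+\log\lambda\big)A_0R_{s,t}(\lambda)\cdots A_mR_{s,t}(\lambda)\,d\lambda\Big\|_1\,ds=0
$$
whenever $\Re(r)>(1-m)/2$, i.e.\ the difference quotient in $r$ converges in $s$-integrated trace norm to the candidate derivative coming from $\partial_r\lambda^{-p/2-r}=-\lambda^{-p/2-r}\log\lambda$. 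This is proved by the pseudodifferential expansion of Lemma \ref{firstexpan}, Cauchy's integral formula, and the operator bound $\|(t+s^2+\D^2)^{-\rho}(\eps^{-1}((t+s^2+\D^2)^{-\eps}-1)+\log(t+s^2+\D^2))\|\leq C\eps$. Your Morera-plus-Fubini argument sidesteps the explicit $\eps$-computation entirely, which is more economical; in exchange you need two points you leave implicit: (i) continuity of $r\mapsto\int_0^\infty s^m\langle\cdots\rangle\,ds$ (required by Morera), and (ii) that the Lemma \ref{lem:crucial} bound is integrable in $r$ over the compact contour $\partial\Delta$ (required for the Fubini swap). Both do follow from the fact that the dominating integral in the proof of Lemma \ref{lem:crucial} depends continuously on $\Re(r)$, but they deserve an explicit sentence. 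Your exponent bookkeeping is correct: $\alpha=m$, $|k|=0$ gives $(1-m)/2$ for $\langle\cdots\rangle$, and for $\langle\langle\cdots\rangle\rangle$, with $m+1$ slots, $\alpha=m+1$, and $|k|=1$ from the inserted $\D\in{\rm OP}^1$, one again lands on $(1-m)/2$ — your first trial value $(2-m)/2$ forgot the shift in $m$ and $|k|$, but you correct yourself. In short: same conclusion, different mechanism; the paper's $\eps$-argument is more constructive (it identifies the $r$-derivative of the expectation), while Morera is leaner when one only wants holomorphy.
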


\begin{proof}
We prove a stronger result, namely that the operator-valued function
$$
B_{r,t}(s,\eps)=
\frac{1}{2\pi i}\int_\ell \lambda^{-p/2-r}
\Big(\eps^{-1}({\lambda^{-\eps}-1})+\log\lambda\Big)
A_0\,R_{s,t}(\lambda)\,A_1\,R_{s,t}(\lambda)\cdots R_{s,t}(\lambda)\,A_m\,R_{s,t}(\lambda)\, 
d\lambda,
$$
satisfies $\lim_{\eps\to 0}\int_0^\infty s^m\|B_{r,t}(s,\eps)\|_1ds=0$, whenever $\Re(r)>(1-m)/2$. 
(Here $\ell$ is the vertical line $\ell=\{a+iv:v\in\R\}$ with $0<a<\mu^2/2$.) 

By Corollary \ref{permut}, we can assume that $A_{0}\in{\rm OP}_0^0$. 
The proof then follows by a minor modification of the arguments of the proof of 
Lemma \ref{lem:crucial}  (see the Appendix Section  \ref{lem:crucial-app}), 
so that we only outline it. (We use the shorthand 
notation $R:=R_{s,t}(\lambda)$.)

We start by writing for any $L\in\N$, using Lemma \ref{firstexpan} (see \cite[Lemma 6.11]{CPRS2})
$$
A_0\,R\,A_1\,R\cdots R\,A_m\,R
=\sum_{|n|=0}^LC(n) A_0\,A_1^{(n_1)}\cdots A_m^{(n_m)}\,R^{m+|n|+1}+A_0\,P_{L,m},
$$
with $P_{L,m}\in{\rm OP}^{-2m-L-3}$.
The conclusion for the remainder term follows then from the estimate
$$
\left| \lambda^{-p/2-r}\Big(\eps^{-1}({\lambda^{-\eps}-1})
+\log(\lambda)\Big)\right| \leq C\,|\eps|\,|\lambda|^{-p/2-\Re(r)},
$$
together with the same techniques as those used in  the proof of 
Lemma \ref{lem:crucial}. A more detailed
account can be found in \cite[Lemma 7.4]{CPRS2}.

For the non-remainder terms, we perform the Cauchy integrals
\begin{align*}
&\hspace{-0,3cm}\frac{1}{2\pi i}\int_\ell \lambda^{-p/2-r} 
\Big(\eps^{-1}({\lambda^{-\eps}-1})+\log\lambda\Big)A_0A_1^{(n_1)}\cdots A_m^{(n_m)}R^{m+1+|n|}
d\lambda\\
&=
(-1)^{m+|n|}\frac{\Gamma(p/2+r+m+|n|)}{\Gamma(p/2+r)}
A_0A_1^{(n_1)}\cdots A_m^{(n_m)}(t+s^2+\D^2)^{-p/2-r-m-|n|}\\
&\qquad\qquad\qquad\qquad\qquad\qquad
\times\left(\eps^{-1}({(t+s^2+\D^2)^{-\eps}-1})+\log\big(t+s^2+\D^2)\right)\\
&\quad+\sum_{k=0}^{m+|n| -1}\binom{m+|n|}{k}(-1)^{m+|n|}\frac{\Gamma(p/2+r+k)}{\Gamma(p/2+r)}
A_0A_1^{(n_1)}\cdots A_m^{(n_m)}(t+s^2+\D^2)^{-p/2-r-m-|n|}\\
&\qquad\qquad\qquad\qquad\qquad\qquad\times
\Big(\frac{\Gamma(\eps+m+|n|-k)}{\Gamma(\eps+1)}(t+s^2+\D^2)^{-\eps}
-\Gamma(m+|n|-k)\Big).
\end{align*}
Let $\rho>0$ such that $\Re(z)>(1-m)/2+\rho$. Call  $T_k(s)$ the terms with no logarithm.
Using the estimates of Lemma \ref{lem:crucial} and  
$$
(t+s^2+\D^2)^{-\rho}\big((t+s^2+\D^2)^{-\eps}- 1\big)\to 0\quad \mbox{as}\quad \eps\to 0,
$$ 
in norm,
we see that 
$\lim_{\eps\to 0}\int_0^\infty s^m \,\Vert T_k(s)\Vert_1\,ds=0$.
For the  first term (with a logarithm), one concludes using the fact that for any $\rho>0$
$$
\left\|(t+s^2+\D^2)^{-\rho}\left(\frac{{(t+s^2+\D^2)^{-\eps}-1}}{\eps}+\log\big(t+s^2+\D^2)\right)\right\|\leq C \,\eps,
$$
where the constant $C$ is independent of $s$ (and of $t$).
\end{proof}

We finally arrive at the main result of this subsection.

\begin{prop}
\label{hii}
Let $m=\bullet,\bullet+2,\dots,M$ for the resolvent cocycle,  
$m=1-\bullet,1-\bullet+2,\dots,M+1$ for the transgression cochain, and $t\in[0,1]$.
The maps
 $$
 a_0\otimes\dots\otimes a_m\mapsto\big[r\mapsto\phi_{m,t}^r(a_0,\dots,a_m)\big],\quad
 a_0\otimes\dots\otimes a_m\mapsto\big[r\mapsto\Phi_{m,t}^r(a_0,\dots,a_m)\big],
 $$
 are continuous multilinear maps from $ \A\otimes\A^{\otimes m}$ to $\mathcal O_m$.
\end{prop}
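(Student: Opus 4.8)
\textbf{Proof strategy for Proposition \ref{hii}.}

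The plan is to reduce the claim about $\phi_{m,t}^r$ and $\Phi_{m,t}^r$ to the holomorphy/continuity result already established in Lemma \ref{hoo}, and to supply the missing ingredient, namely that the map sending the tuple $(a_0,\dots,a_m)$ to the family of entries fed into the expectations is continuous for the $\delta$-$\vf$-topology on $\A$. Recall that, by definition,
$$
\phi_{m,t}^r(a_0,\dots,a_m)=\eta_m\int_0^\infty s^m\langle a_0,da_1,\dots,da_m\rangle_{m,r,s,t}\,ds,
$$
and similarly for $\Phi_{m,t}^r$ with the doubled expectation $\langle\langle\cdots\rangle\rangle$ and an extra power of $s$. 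Lemma \ref{hoo} already tells us that for \emph{fixed} operators $A_0,\dots,A_m\in{\rm OP}^0$ with at least one in ${\rm OP}^0_0$, the functions $r\mapsto\int_0^\infty s^m\langle A_0,\dots,A_m\rangle_{m,r,s,t}\,ds$ and $r\mapsto\int_0^\infty s^{m+1}\langle\langle A_0,\dots,A_m\rangle\rangle_{m,r,s,t}\,ds$ lie in $\mathcal O_m$. Here $a_0\in\A\subset\B_1^\infty(\D,p)={\rm OP}^0_0$ and $da_i=[\D,a_i]\in\B_1^\infty(\D,p)={\rm OP}^0_0$ by the smooth summability hypothesis, so each slot is a tame operator and Lemma \ref{hoo} applies pointwise. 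Thus the only thing to prove is \emph{multilinearity and joint continuity} in $(a_0,\dots,a_m)$ with values in the Fr\'echet space $\mathcal O_m$ (uniform convergence on compacta).

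The key steps I would carry out, in order, are as follows. First, fix a compact set $\mathcal K\subset\{z:\Re(z)>(1-m)/2\}$ and a $\rho>0$ with $\Re(z)>(1-m)/2+\rho$ on $\mathcal K$; it suffices to bound $\sup_{r\in\mathcal K}|\phi_{m,t}^r(a_0,\dots,a_m)|$ by a continuous seminorm in the $a_i$. Second, rerun the estimates behind Lemma \ref{lem:crucial} (and Corollary \ref{permut}) keeping track of the dependence on the operators: the resolvent expansion of Lemma \ref{firstexpan} writes $A_0R\cdots A_mR$ as a finite sum of terms $C(n)\,A_0A_1^{(n_1)}\cdots A_m^{(n_m)}R^{m+|n|+1}$ plus a controllable remainder, the Cauchy integral in $\lambda$ converts each into $A_0A_1^{(n_1)}\cdots A_m^{(n_m)}(t+s^2+\D^2)^{-p/2-r-m-|n|}$ times $\Gamma$-factors, and the trace norm of each such term is estimated — via the H\"older inequality and Lemma \ref{lem:was-5.3}/Equation \eqref{eq:q-trace-est} applied to the double spectral triple, whose $\D_\mu$ is invertible — by a product of Schatten norms of $A_0=a_0$ and of the $A_i^{(n_i)}=(da_i)^{(n_i)}$, and these in turn are dominated by $\PP_{n,l}$-seminorms using Lemma \ref{lem:derivs} and the continuous inclusion ${\rm OP}^r_0\subset{\rm OP}^r$. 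Third, since $A_i^{(n_i)}$ depends linearly on $a_i$ and the bound is a product of seminorms of the individual $a_i$'s (with uniform constants over $r\in\mathcal K$, $s\in[0,\infty)$ after the $s$-integration, and $t\in[0,1]$), multilinearity is immediate and joint continuity follows by the standard polarization/multilinear argument: $\phi_{m,t}^{r}(a_0,\dots,a_m)-\phi_{m,t}^{r}(a_0',\dots,a_m')$ telescopes into a sum of terms each with one difference $a_i-a_i'$ and the other arguments bounded, so it is controlled by $\sum_i \PP_{n,l}(a_i-a_i')\prod_{j\ne i}(1+\PP_{n,l}(a_j)+\PP_{n,l}(a_j'))$ uniformly over $\mathcal K$. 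Fourth, the identical argument with one more resolvent and one extra factor of $s$ handles $\Phi_{m,t}^r$, the only new point being that $\D\in{\rm OP}^1$ occupies one slot, which is already accommodated by the power-counting range $2\Re(r)>|k|-2m-1$ built into Definition \ref{transgression} and by the fact that $\D$ contributes no $a_i$-dependence.

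The main obstacle I expect is the \emph{uniformity} of the constants: one must check that the bounds coming out of the Lemma \ref{lem:crucial}-type estimates — the integrals over $\lambda\in\ell$ of $|\lambda|^{-p/2-\Re(r)-(\text{powers})}$, the $\Gamma$-function ratios $\Gamma(p/2+r+k)/\Gamma(p/2+r)$, and the $s$-integrals of $s^m(\mu^2/2+s^2)^{-m-|n|+p/2q+\eps}$ — are all bounded \emph{uniformly} as $r$ ranges over the compact set $\mathcal K$ and $t$ over $[0,1]$, and are \emph{independent} of the operators $a_i$ once one has factored out their Schatten/seminorm contributions. This is where one genuinely needs $\rho>0$ (the strict inequality $\Re(z)>(1-m)/2$) to make the $s$-integral converge with room to spare, and where the invertibility $\D_\mu^2\ge\mu^2$ supplied by the double construction is used, exactly as flagged in the remark after Lemma \ref{lem:was-5.3}. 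Modulo carefully bookkeeping these uniform bounds — which is routine given the machinery already set up in Section \ref{sec:psido-and-sum} — the proposition follows.
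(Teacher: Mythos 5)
Your high-level reduction — use Lemma \ref{hoo} for holomorphy, then establish a bound of the form $\sup_{r\in\mathcal K}|\phi^r_{m,t}(a_0,\dots,a_m)|\leq C\prod_i(\text{seminorm of }a_i)$ to get joint continuity — is exactly the right goal. But the route you propose to reach that bound has a genuine gap.

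You propose to start from the pseudodifferential expansion of Lemma~\ref{firstexpan}, writing $a_0\,R\,da_1\,R\cdots da_m\,R$ as $\sum_{|n|\leq L}C(n)\,a_0\,(da_1)^{(n_1)}\cdots(da_m)^{(n_m)}R^{m+|n|+1}$ plus a remainder, and then to perform the Cauchy integral. The difficulty is that this expansion pushes \emph{all} the resolvents to the right-hand end, so after the Cauchy integral you are left with trace norms of the form
$$
\bigl\|\,a_0\,(da_1)^{(n_1)}\cdots(da_m)^{(n_m)}\,(t+s^2+\D^2)^{-p/2-r-m-|n|}\,\bigr\|_1,
$$
where the operators $a_0,(da_i)^{(n_i)}$ are bunched together with nothing between them. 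H\"older's inequality and Equation~\eqref{eq:q-trace-est} do \emph{not} factor this into a product of Schatten norms of the individual factors — there are no interleaved negative powers of $(t+s^2+\D^2)$ to anchor the splitting. To extract a product one would have to redistribute the power $(t+s^2+\D^2)^{-N}$ into factors $(t+s^2+\D^2)^{-\alpha_l}$ sitting between the operators, which requires commuting these past each $(da_i)^{(n_i)}$ using the modular group $\sigma^z$ (and then controlling the seminorms of $\sigma^z(\cdot)$ via Lemma~\ref{cont-grp} uniformly in all the parameters). None of this is done or acknowledged; as written, the claimed ``product of Schatten norms'' does not follow. A secondary cost of this route is that it introduces the higher iterated commutators $(da_i)^{(n_i)}$, so even if the splitting were carried out, one would have to bound their seminorms back by $\PP_{n,l}$-seminorms of $a_i$ and $da_i$ — another layer of bookkeeping.

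The paper avoids this entirely: instead of the pseudodifferential expansion, it uses the $s$- and $\lambda$-tricks (Lemmas~\ref{s-trick} and~\ref{lambda-trick}) to pass from $\phi_{m,t}^r$, with $s^m$ and $m+1$ resolvents at exponent $r$, to a sum over multi-indices of expectations $\langle a_0,1^{k_0},da_1,1^{k_1},\dots,da_m,1^{k_m}\rangle_{M,\,r-(M-m)/2,\,s,\,t}$ with $s^M$, exponent $r-(M-m)/2$, and now $M+1$ resolvents \emph{already interleaved} as $a_0 R^{n_0}da_1 R^{n_1}\cdots da_m R^{n_m}$ with $|n|=M+1$. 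H\"older with the exponents $p_l=(M+1)/n_l$ then applies immediately, Lemma~\ref{lem:was-5.3} controls each factor $\|A_l R^{n_l}\|_{p_l}$, and the resulting constant is visibly a product
$$
C(a_0,\dots,a_m)=\prod_{l}\bigl\|A_l(\D^2-\mu^2/2)^{-(p/p_l+\eps/(m+1))/2}\bigr\|_{p_l},
$$
which the interpolation inequality of Lemma~\ref{interpolation} together with Lemma~\ref{cont-grp} and Corollary~\ref{consistency} converts into a product of $\PP_{n,k}$-seminorms of $a_0$ and $da_1,\dots,da_m$ — no iterated commutators, no resolvent redistribution, and joint continuity (and multilinearity) follow directly from this single product bound, with no telescoping needed. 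The prefactor poles from the $\lambda$-trick sit outside the half-plane $\Re(r)>(1-m)/2$ precisely because $M\leq p+1$, which is the uniformity point you correctly flagged but located in the wrong place in your argument. So: your overall target estimate is right, but your chosen expansion leads to a form that is not directly H\"older-factorable, and the $s$/$\lambda$-trick reformulation in Equation~\eqref{haa} is the step that makes the whole argument work.
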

\begin{proof}
We only give the proof for the resolvent cocycle, 
the case of the transgression cochain being similar. So
let us first fix  $r\in \C$ with $\Re(r)>(1-m)/2$. 
Since Lemma \ref{chien} ensures that our functionals are
finite for these values of $r$, all that we need to do 
is to improve the estimates of Lemma \ref{lem:crucial} 
to prove continuity. We do this using the $s$- and $\lambda$-tricks. 
We recall that we have defined $M=2\lfloor(p+\bullet+1)/2\rfloor-\bullet$ 
(which is the biggest integer of parity $\bullet$ less than or equal to  $p+1$). 
By applying successively the $s$- and $\lambda$-tricks (which commute) $(M-m)/2$ times, 
we obtain 
\begin{align}
\phi_{m,t}^r(a_0,\dots,a_m)&=
2^{(M-m)/2}(M-n)!\prod_{l_1=1}^{(M-m)/2}\frac1{p/2+r-l_1}\prod_{l_2=1}^{(M-m)/2}\frac1{m+l_2}\nonumber\\
&\hspace{1.8cm}\times\sum_{|k|=M-m}
\int_0^\infty s^M
\langle a_0,1^{k_0},da_1,1^{k_1},\dots,da_m,1^{k_m}\rangle_{M,r-(M-m)/2,s,t}ds,
\label{haa}
\end{align}
where $1^{k_i}=1,1,\dots,1$ with $k_i$ entries. Since $M\leq p+1$, the poles 
associated to the prefactors are outside the region $\{z\in\mathbb C:\Re(z)>(1-m)/2\}$. 
Ignoring the prefactors, setting $n_i=k_i+1$ and $R:=R_{s,t}(\lambda)$, we need to deal 
with the integrals
$$
\int_0^\infty s^M\tau\Big( \gamma\int_\ell 
\lambda^{-p/2-r-(M-m)/2}a_0R^{n_0}da_1R^{n_1}\cdots da_mR^{n_m}d\lambda\Big)ds,
\qquad |n|=M+1,
$$
where $\ell$ is the vertical line $\ell=\{a+iv:v\in\R\}$ with $a\in(0,\mu^2/2)$.
Let $p_{l}:=(M+1)/n_l$, so that $\sum_{l=0}^m p_l^{-1}=1$. The H\"older inequality gives
$$
\|a_0R^{n_0}da_1R^{n_1}\cdots da_mR^{n_m}\|_1\leq
\|a_0R^{n_0}\|_{p_0}\|da_1R^{n_1}\|_{p_1}\cdots\|da_mR^{n_m}\|_{p_m}.
$$
By Lemma \ref{lem:was-5.3}, we obtain for $\eps>0$, and 
with $A_0=a_0$, $A_l=da_l$, $l=1,\dots,m$,
$$
\|A_lR^{n_l}\|_{p_l}\leq 
\Vert A_l(\D^2-\mu^2/2)^{-(p/p_l+\eps/(m+1))/2}\Vert_{p_l}
((s^2+a)^2+v^2)^{-n_l/2+(p/p_l+\eps/(m+1))/4}.
$$
Since $\sum_{l=0}^m n_l=M+1$, this gives
\begin{equation}
\|a_0R^{n_0}da_1R^{n_1}\cdots da_mR^{n_m}\|_1
\leq C(a_0,\dots,a_m)\,\,((s^2+a)^2+v^2)^{-(M+1)/2+(p+\eps)/4},
\label{eq:cts-est}
\end{equation}
which is enough to show the absolute convergence of the iterated integrals 
(see \cite[Lemma 5.4]{CPRS2}). 
Now observe that the constant in Equation \eqref{eq:cts-est} is equal to 
$$
C(a_0,\dots,a_m)=\Vert a_0(\D^2-\mu^2/2)^{-(p/p_0+\eps/(m+1))/2}\Vert_{p_0}\cdots
\Vert da_m(\D^2-\mu^2/2)^{-(p/p_m+\eps/(m+1))/2}\Vert_{p_m}.
$$
Note also that the explicit interpolation inequality of Lemma \ref{interpolation} reads
$$
\|A(\D^2-\mu^2/2)^{-\alpha/2}\|_q\leq
\|A(\D^2-\mu^2/2)^{-\alpha q/2}\|_1^{1/q}\,\|A\|^{1-1/q},\quad 
A\in{\rm OP}_0^0,\quad q>p/\alpha,
$$
and the latter is bounded by $\PP_{n,k}(A)$ for $n=\lfloor(\alpha q-p)^{-1}\rfloor$ and 
$k=3\lfloor \alpha q/4\rfloor+1$, by a simultaneous application of Lemma \ref{cont-grp}  and Corollary 
\ref{consistency}.
Thus, with the same notations as above, we find for $l\neq 0$ and some constant $C>0$
\begin{align*}
\Vert da_l(\D^2-\mu^2/2)^{-(p/p_l+\eps/(m+1))/4}\Vert_{p_l}&\leq
\Vert da_l(\D^2-\mu^2/2)^{-(p+p_l\eps/(m+1))/4}\Vert_2^{1/p_l}\|da_l\|^{1-1/p_l}\\
&\leq C\,\PP_{n,k}(da_l),
\end{align*}
for suitable $n,k\in\mathbb N$.
For $l=0$ we have a similar but easier calculation.
This proves the joint continuity of the resolvent cocycle for  the $\delta$-$\vf$-topology.

The proof that the map $r\mapsto \phi_{m,t}^r(a_0,\dots,a_m)$ is holomorphic in the region 
$\Re(r)>(1-m)/2$ follows from Lemma \ref{hoo}.
\end{proof}

\begin{prop}\label{generalt} For each $m=\bullet,\bullet+2,\dots,M$, the map
$$[0,1]\ni t\mapsto\big[r\mapsto \phi^r_{m,t}\big]\in {\rm Hom}(\A^{\otimes m+1},\mathcal O_m),$$
is continuously differentiable and
$$\frac{d}{dt}\Big[t\mapsto\big[r\mapsto\phi^r_{m,t}\big]\Big]
=\Big[t\mapsto\big[r\mapsto-(q/2+r)\,\phi^{r+1}_{m,t}\big]\Big].$$
\end{prop}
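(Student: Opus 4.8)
\textbf{Plan for the proof of Proposition \ref{generalt}.}

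The strategy is to differentiate the expectation $\langle a_0,da_1,\dots,da_m\rangle_{m,r,s,t}$ with respect to $t$ under the integral signs (both the Cauchy integral in $\lambda$ and the integral in $s$), to observe that $\partial_t R_{s,t}(\lambda)=R_{s,t}(\lambda)^2$ (since $R_{s,t}(\lambda)=(\lambda-(t+s^2+\D^2))^{-1}$), and then to recognise the resulting sum of terms with one extra squared resolvent as being (up to the scalar $-(p/2+r)$) exactly the expectation defining $\phi_{m,t}^{r+1}$. Concretely, differentiating gives
\begin{align*}
\frac{d}{dt}\langle a_0,da_1,\dots,da_m\rangle_{m,r,s,t}
&=\sum_{l=0}^m\langle a_0,da_1,\dots,da_l,{\rm Id}_\cn,da_{l+1},\dots,da_m\rangle_{m+1,r,s,t},
\end{align*}
and then the $\lambda$-trick (Lemma \ref{lambda-trick}, applicable since for $m\geq\bullet$ and $\Re(r)>(1-m)/2$ we have $2\Re(r)>|k|-2m=-2m$, because all the $A_i=da_i$ or $a_0$ lie in ${\rm OP}^0$) rewrites this sum as $-(p/2+r)\langle a_0,da_1,\dots,da_m\rangle_{m,r+1,s,t}$. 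Multiplying by $\eta_m s^m$ and integrating over $s$ yields $\frac{d}{dt}\phi^r_{m,t}=-(p/2+r)\phi^{r+1}_{m,t}$ exactly. (Note the statement writes $q$ where the rest of the section writes $p$; I will use $p$, matching Definition \ref{dim-spec} and Lemma \ref{lambda-trick}, or silently read $q=p$.)

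The steps, in order, are: (1) Fix $r$ with $\Re(r)>(1-m)/2$ and $a_0,\dots,a_m\in\A$; reduce to showing $t$-differentiability of $s\mapsto s^m\langle a_0,da_1,\dots,da_m\rangle_{m,r,s,t}$ with the claimed derivative, uniformly enough in $s$ to differentiate under $\int_0^\infty ds$. (2) Justify differentiation under the Cauchy integral $\frac1{2\pi i}\int_\ell$: the integrand is $\lambda^{-p/2-r}\gamma a_0 R\,da_1 R\cdots da_m R$ with $R=R_{s,t}(\lambda)$, and $\partial_t R = R^2$, so $\partial_t$ of the product is the sum over $l$ of the same product with one $R$ replaced by $R^2$; the norm estimates of Lemma \ref{lem:crucial} (with $A_0=a_0\in{\rm OP}^0_0$ after using Corollary \ref{permut}, the others in ${\rm OP}^0$) applied to the differentiated integrand with $m+1$ resolvent factors give trace-norm bounds integrable in $s$ and uniform on compact $t$-intervals and on a neighbourhood of $\lambda$ along $\ell$, which legitimises interchanging $\partial_t$ with both $\int_\ell$ and $\int_0^\infty ds$. (3) Identify the differentiated expression as $\sum_{l=0}^m\langle a_0,da_1,\dots,da_l,{\rm Id}_\cn,da_{l+1},\dots,da_m\rangle_{m+1,r,s,t}$, noting that since all $da_i$ and $a_0$ have grading degree zero there are no signs. (4) Apply Lemma \ref{lambda-trick} to collapse this sum to $-(p/2+r)\langle a_0,da_1,\dots,da_m\rangle_{m,r+1,s,t}$. (5) Multiply by $\eta_m s^m$, integrate in $s$ (using Lemma \ref{hoo}/Proposition \ref{hii} to know both sides are finite and holomorphic in $r$, so the identity of holomorphic functions holds), and conclude; then observe that $t\mapsto \phi^{r+1}_{m,t}$ is itself continuous in $t$ (again by the estimates of Lemma \ref{lem:crucial}, which are locally uniform in $t$), so the derivative is continuous in $t$, giving the $C^1$ statement. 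The extension to $\A^\sim\otimes\A^{\otimes m}$ is handled exactly as in Lemma \ref{chien}, and for $m>0$ is automatic while for $m=0$ the constant-entry component vanishes on both sides.

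The main obstacle is Step (2): the rigorous interchange of $\frac{d}{dt}$ with the iterated (conditionally convergent) integrals $\frac1{2\pi i}\int_\ell$ and $\int_0^\infty ds$. This is the same kind of delicate estimate that underlies Lemmas \ref{lem:crucial}, \ref{s-trick} and \ref{lambda-trick} (whose proofs are deferred to the Appendix), and the cleanest route is to piggyback on those: expand each product $a_0 R\,da_1 R\cdots da_m R$ via the pseudodifferential expansion (Lemma \ref{firstexpan}/\cite[Lemma 6.11]{CPRS2}) into a finite sum of terms $C(n)\,a_0\,da_1^{(n_1)}\cdots da_m^{(n_m)}R^{m+|n|+1}$ plus a remainder in ${\rm OP}^{-2m-L-3}$, observe that $\partial_t$ of such a term is a manifestly norm-continuous (in $t$) and $s$-integrable expression by the functional calculus, and similarly for the remainder once $L$ is taken large enough. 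This reduces everything to dominated convergence with explicit $s$- and $\lambda$-integrable majorants, exactly as in the cited lemmas, and poses no genuinely new difficulty — only bookkeeping — which is why I would state it briefly and refer to the Appendix techniques rather than reproduce them.
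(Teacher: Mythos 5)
Your proposal reaches the correct result, but it takes a genuinely different route from the paper's proof. You differentiate the original degree-$m$ expectation directly under the Cauchy and $s$-integrals (justifying the interchange by re-running the pseudodifferential expansion of Lemma \ref{firstexpan} on the differentiated integrand), identify the result as $\sum_l\langle a_0,\dots,da_l,{\rm Id}_\cn,\dots,da_m\rangle_{m+1,r,s,t}$, and only then invoke the $\lambda$-trick to collapse the sum. The paper instead applies the $s$- and $\lambda$-tricks \emph{before} differentiating, exactly as in Proposition \ref{hii}, to push the expectation into a degree-$M$ form $\int s^M\tau(\gamma\int_\ell\lambda^{-p/2-r-(M-m)/2}a_0R_{s,t}^{n_0}da_1R_{s,t}^{n_1}\cdots da_mR_{s,t}^{n_m})$ with $\sum n_i=M+1$; in that form each factor $A_iR_{s,t}^{n_i}$ lies in a definite Schatten ideal $\L^{p_i}$ with $\sum p_i^{-1}=1$, so trace-norm $t$-differentiability of the product follows from the product rule, H\"older's inequality and a short resolvent-identity computation showing Schatten differentiability of each factor (no pseudodifferential expansion needed). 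The paper then \emph{undoes} the $s$- and $\lambda$-tricks and applies the $\lambda$-trick once more to reach $-(p/2+r)\phi^{r+1}_{m,t}$. What your route buys is conceptual directness (differentiate, identify, collapse); what the paper's route buys is that after the degree-$M$ reduction the interchange of derivative and integrals reduces to elementary Schatten estimates on individual factors, with no need to revisit the technicalities of the pseudodifferential remainder estimate. Both are sound, and you are right that $q$ in the statement is a typo for the spectral dimension $p$.
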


\begin{proof} We do the case $m<M$ where we must use some initial trickery
to reduce to a computable situation. For $m= M$ such tricks are not needed.
We proceed as in the proof of Proposition \ref{hii}, applying the $s$- and $\lambda$- tricks
to obtain \eqref{haa}. 
Keeping the same notations as in the cited proposition, in particular $p_i=(M+1)/n_i$, 
and ignoring the prefactors, 
we are left with the integrals
$$
\int_0^\infty s^M\tau\Big( \gamma\int_\ell \lambda^{-p/2-r-(M-m)/2}
a_0\,R_{s,t}^{n_0}\,da_1\,R_{s,t}^{n_1}\cdots da_m\,R_{s,t}^{n_m}\,d\lambda\Big)ds.
$$
(Here $\ell$ is the vertical line $\ell=\{a+iv:v\in\R\}$ with $0<a<\mu^2/2$.)
Now each integrand is not only
trace class, but  also $t$-differentiable in trace norm. This is a consequence of the product rule, 
H\"{o}lder's inequality
and the following argument showing the Schatten norm differentiability of 
$AR^{n}_{s,t}$ for $A\in {\rm OP}^0_0$.
By adding and substracting suitable terms, the resolvent identity gives
\begin{align*}
A\Big(\varepsilon^{-1}(R_{s,t+\varepsilon}^n-R_{s,t}^n)+nR_{s,t}^{n+1}\Big)
&=nAR_{s,t}^n\Big(R_{s,t}-\frac{1}{n}\sum_{k=1}^nR_{s,t}^{-k+1}R_{s,t+\varepsilon}^k\Big).
\end{align*}
The term in brackets on the right hand side converges to zero in operator norm since 
$R_{s,t}^{-k+1}R_{s,t+\varepsilon}^{k-1}$ is uniformly bounded. Thus 
$$
\Vert A\left(\varepsilon^{-1}(R_{s,t+\varepsilon}^n-R_{s,t}^n)+nR_{s,t}^{n+1}\right)\Vert_{p}
\leq \Vert nAR_{s,t}^n\Vert_p\ 
\Big\Vert R_{s,t}-\frac{1}{n}\sum_{k=1}^nR_{s,t}^{-k+1}R_{s,t+\varepsilon}^k\Big\Vert\to 0,\quad\eps\to0.
$$
Choosing $A=a_0$ or $A=da_i$ and $p=p_0$ or $p=p_i$ respectively proves the differentiability
of each term $AR_{s,t}^n$ in the integrand in the appropriate $p$-norm, and so an 
application of H\"{o}lder's inequality completes the proof of trace norm differentiability.

The existence of the integrals can now be deduced from the formula for the derivative of
the integrand and Lemma \ref{lem:crucial}.

This proves differentiability, and so
the $t$-derivative of  $\phi_{m,t}^r(a_0,\dots,a_m)$ exists and (reinstating the prefactors) equals
\begin{align*}
&\eta_m 2^{\frac{M-m}{2}}(M-m)!\prod_{b=1}^{\frac{(M-m)}{2}}\frac{1}{p/2+r-b}
\prod_{j=1}^{\frac{(M-m)}{2}}\frac{1}{m+j}\\
&\qquad\times \sum_{|k|=M-m}\sum_{i=0}^m\int_0^\infty s^M (k_i+1)\langle a_0,1^{k_0},\dots,
da_i,1^{k_i+1},\dots,da_m,1^{k_m}\rangle_{M+1,r-(M-m)/2,s,t} ds.
\end{align*}
Now undoing our applications of the $s$-trick and the $\lambda$-trick gives
$$ 
\frac{d}{dt}\phi^r_{m,t}(a_0,\dots,a_m)=
\eta_m\sum_{j=0}^m\int_0^\infty s^m\langle a_0,\dots,da_j,1
,da_{j+1},\dots,da_m \rangle_{m+1,r,s,t}ds,
$$
and a final application of the $\lambda$-trick yields our final formula,
$$ 
\frac{d}{dt}\phi^r_{m,t}(a_0,\dots,a_m)=
-(p/2+r)\phi^{r+1}_{m,t}(a_0,\dots,a_m).
$$ 
We note that by our estimates the convergence is uniform in $r$, for $r$ in
a compact subset of a suitable right half-plane.
\end{proof}

\subsection{Cocyclicity and relationships between the resolvent and residue cocycles}
\label{subsec:cocycles_reso}
We start by explaining why the resolvent cochain is termed the resolvent cocycle.

\begin{prop}
\label{hjj}
Provided $\Re(r)>1/2$, there exists $\delta\in(0,1)$ such that 
the resolvent cochain $(\phi_{m,t}^r)_{m=\bullet}^M$ is a reduced
$(b,B)$-cocycle of parity  $\bullet\in\{0,1\}$ for $\A$, modulo functions  holomorphic 
in the half plane $\Re(r)>(1-p)/2-\delta$.
\end{prop}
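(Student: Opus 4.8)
The goal is to show that, modulo functions holomorphic on a slightly larger half-plane than $\{\Re(r)>1/2\}$, the collection $(\phi^r_{m,t})_{m=\bullet}^{M}$ satisfies $(b+B)\phi^r_{\cdot,t}=0$ in the reduced $(b,B)$-bicomplex for $\A^\sim$. By Lemma \ref{chien} we already know each $\phi^r_{m,t}$ is a well-defined reduced cochain for $\Re(r)>(1-m)/2$, and by Proposition \ref{hii} it depends holomorphically on $r$ there; so the statement is really an algebraic identity among the expectations $\langle\cdots\rangle_{m,r,s,t}$, valid wherever everything converges, which then propagates by analytic continuation. I would first reduce to the double spectral triple $(\A,\HH^2,\D_\mu,\hat\gamma)$, so that $\D_\mu$ is invertible and all cochains live in the reduced complex (subsection \ref{subsec:reduced}); the passage back to $(\A,\HH,\D)$ affects only $m=0$ terms and is handled as in Lemma \ref{chien}.

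\textbf{The two ingredients.} The heart of the matter is computing $b\phi^r_{m-1,t}$ and $B\phi^r_{m+1,t}$ separately in terms of the expectations. For $B$: applying $B$ as in Equation \eqref{eq:connes-B} inserts a $1_{\A^\sim}$ in the zeroth slot and cyclically permutes; since $[\D,1]=0$ this produces $\langle 1, da_{k},\dots\rangle$-type terms, and using Corollary \ref{permut} (cyclicity of the expectation up to signs) together with the combinatorial identity $\sum_{k}(-1)^{(m-1)k}(\cdots)$ one collapses the sum. For $b$: applying $b$ as in \eqref{eq:hochs-b} one meets terms $\langle a_0,\dots,d(a_ka_{k+1}),\dots\rangle$; the Leibniz rule $d(a_k a_{k+1})=da_k\,a_{k+1}+a_k\,da_{k+1}$ splits each such term, and the $a_{k+1}$ (resp. $a_k$) must be commuted past a resolvent using the basic identity $[\D^2,\cdot]$-expansion of Lemma \ref{basicidentities} (the first two displayed identities there), which is exactly $R_{s,t}(\lambda)A - AR_{s,t}(\lambda) = R_{s,t}(\lambda)[\D^2,A]R_{s,t}(\lambda)$ unwound. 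This generates telescoping cancellations among consecutive $b$-terms, together with a leftover $[\D^2,a]$-term. One then invokes the relation $\D^2R_{s,t}(\lambda)=-1+(\lambda-(t+s^2))R_{s,t}(\lambda)$ and Lemma \ref{differentfort} to rewrite insertions of $\D^2$ in terms of insertions of $1_\cn$ and shifts in $r$; the $1_\cn$-insertions are precisely what the $s$-trick (Lemma \ref{s-trick}) and $\lambda$-trick (Lemma \ref{lambda-trick}) control, and the shift $r\mapsto r+1$ is what forces the ``modulo holomorphic functions'' clause: the correction terms are of the form $(\text{const})\,\phi^{r+1}_{\cdot,t}$ or integrals with extra powers of $s$, which by Lemma \ref{chien} / Lemma \ref{lem:crucial} are holomorphic on a half-plane of the form $\Re(r)>(1-p)/2-\delta$ for a suitable $\delta\in(0,1)$ coming from the $\eps$-room in the Hölder estimates of Lemma \ref{lem:was-5.3}. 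Assembling, $(b\phi^r_{m-1,t}+B\phi^r_{m+1,t})$ equals such a holomorphic remainder. This computation is essentially \cite[proof of the analogous statement]{CPRS4}, transcribed with $\rm OP^0_0=\B_1^\infty(\D,p)$ in place of the unital algebra, so I would present the combinatorics tersely and point to \cite{CPRS2,CPRS4}.

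\textbf{Normalisation and reducedness.} One must check $B\phi^r_{\bullet,t}$ vanishes on $1_{\A^\sim}$ in the first slot and, when $\bullet=0$, that $\phi^r_{0,t}(1_{\A^\sim})$ has been set to $0$ (done in subsection \ref{subsec:reduced}); this guarantees we stay in the reduced bicomplex. The parity bookkeeping — that $b$ lowers and $B$ raises the cochain degree by the right amount and that $M$ is the correct top degree — is exactly as in the unital case because $M=2\lfloor(p+\bullet+1)/2\rfloor-\bullet$ was chosen for this.

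\textbf{Main obstacle.} The genuinely new difficulty compared to \cite{CPRS4} is \emph{analytic}, not algebraic: one must verify that every manipulation (splitting via Leibniz, commuting operators past resolvents, applying the $s$- and $\lambda$-tricks, reindexing sums) is legitimate at the level of \emph{trace-norm convergent} integrals when the algebra is nonunital and $a_0\in\rm OP^0_0=\B_1^\infty(\D,p)$ only. This is precisely where Corollary \ref{permut} (finiteness and cyclicity of expectations with one entry in $\rm OP^0_0$), Lemma \ref{lem:crucial}, and the Schatten estimates of subsection \ref{subsec:trace-ests} (especially Lemma \ref{lem:was-5.3} and Lemma \ref{interpolation}) do all the work, and where the bulk of the Appendix is needed. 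So the plan is: (i) set up on the double; (ii) compute $b$ and $B$ termwise, using Lemmas \ref{basicidentities}, \ref{differentfort} and the $s$/$\lambda$-tricks, to get $(b+B)\phi^r=$ explicit remainder; (iii) show the remainder extends holomorphically to $\Re(r)>(1-p)/2-\delta$ via Lemma \ref{lem:crucial}; (iv) confirm normalisation/reducedness; (v) transport back to $(\A,\HH,\D)$. I expect step (iii), together with the justification that step (ii)'s rearrangements are valid inside the trace, to be where essentially all the effort lies.
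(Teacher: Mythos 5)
Your scaffolding — compute $b$ and $B$ separately in terms of the expectations, exploit cyclicity via Corollary \ref{permut}, worry carefully about analytic justification — is the right shape, but the key mechanism is wrong, and I think you have conflated this proposition with Lemma \ref{hyy}. You claim that one must invoke Lemma \ref{differentfort} to trade $\D^2$-insertions for $1_\cn$-insertions and $r$-shifts, and that consequently the remainder is of the form $(\text{const})\,\phi^{r+1}_{\cdot,t}$. That is the computation for $B\Phi^r_{m+1,t}+b\Phi^r_{m-1,t}$ (the transgression cochain), where the identity $\D^2 R_{s,t}(\lambda)=-1+(\lambda-(t+s^2))R_{s,t}(\lambda)$ genuinely produces an extra $t\,\phi^{r+1}_{m,t}$ term. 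It is \emph{not} what happens for the resolvent cocycle.

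For $(\phi^r_{m,t})$, the cancellation $B\phi^r_{m+2,t}+b\phi^r_{m,t}=0$ is \emph{exact} for every $m=\bullet,\dots,M-2$, with no $r$-shift and no holomorphic correction. The mechanism is: for $B$, apply cyclicity (Corollary \ref{permut}) and the $s$-trick (Lemma \ref{s-trick}) to remove the inserted identity; then write $[\D,a_0]=\D a_0-a_0\D$ and anticommute the extraneous $\D$ through the remaining commutators via $\D[\D,a_j]+[\D,a_j]\D=[\D^2,a_j]$ and Equation \eqref{traceofcommutator}. This produces exactly $-\eta_m\sum_{j}(-1)^j\int s^m\langle a_0,\dots,[\D^2,a_j],\dots\rangle_{m+1,r,s,t}$ (Equation \eqref{bigbee}), using the numerical identity $\eta_{m+2}(m+1)/2=\eta_m$. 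For $b$, the Leibniz rule plus the first identity of Lemma \ref{basicidentities} produces precisely the negative of this (Equation \eqref{littlebee}), and the two cancel on the nose. The ``modulo functions holomorphic in $\Re(r)>(1-p)/2-\delta$'' clause is accounted for entirely by the one piece with no partner: $b\phi^r_{M,t}$. Here the inserted $[\D^2,a_l]$ is an order-one operator, so the total order of the entries is $|k|=1$ rather than $0$, and Lemma \ref{lem:crucial} then gives finiteness and holomorphy for $\Re(r)>(1-p)/2+(p-M-1+2\eps)/2$; since $p-M-1<0$ one can choose $\eps$ so that $\delta=-(p-M-1+2\eps)\in(0,1)$. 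So the $\delta$ has a concrete origin in the order-counting of the top-degree Hochschild coboundary, not in Hölder wiggle-room or an $r\mapsto r+1$ shift. Finally, the reduction to the double spectral triple you propose as step (i) is not needed inside this proof: throughout subsections 4.3--4.6 the standing hypothesis is that $\D$ is invertible, and the passage back to the general case is a separate step (subsection \ref{invert}).
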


\begin{proof}
Since  
$(\phi_{m,t}^r)_{m=\bullet}^M$ is  a reduced cochain, 
the proof of the first claim will follow from the same algebraic 
arguments as in \cite[Proposition 7.10]{CPRS2} (odd case) 
and \cite[Proposition 6.2]{CPRS3} (even case). We reproduce the main elements of 
the proof for the odd case here.

We start with the computation of the coboundaries of the $\phi_{m,t}^r$. The
definition 
of the operator $B$ and $\phi^r_{m+2,t}$ gives
\begin{align*} 
(B\phi^r_{m+2,t})(a_0,\dots,a_{m+1})&=\sum_{j=0}^{m+1}
\phi^r_{m+2,t}(1,a_j,\dots,a_{m+1},a_0,\dots,a_{j-1})\\
&=\sum_{j=0}^{m+1}\eta_{m+2}\int_0^\infty s^{m+2}
\langle 1,[\D,a_j],\dots,[\D,a_{j-1}]\rangle_{m+2,r,s,t}ds.
\end{align*}
Using Lemma \ref{permut} and Lemma \ref{s-trick}, this is equal to
\begin{align*} &\sum_{j=0}^{m+1}\eta_{m+2}\int_0^\infty s^{m+2}
\langle [\D,a_0],\dots,[\D,a_{j-1}],1,[\D,a_j],\dots,[\D,a_{m+1}]\rangle_{m+2,r,s,t}ds\\
&\qquad\qquad\qquad\qquad\qquad\qquad=-\eta_{m+2}\frac{(m+1)}{2}
\int_0^\infty s^{m}\langle [\D,a_0],\dots,[\D,a_{m+1}]\rangle_{m+1,r,s,t}ds.
\end{align*}

We observe at this point that $\eta_{m+2}(m+1)/2=\eta_m$, 
using the functional equation for the Gamma function.

Next we write $[\D,a_0]=\D a_0-a_0\D$ and anticommute the second $\D$ through 
the remaining 
$[\D,a_j]$ using $\D[\D,a_j]+[\D,a_j]\D=[\D^2,a_j]$. This gives, after some algebra
and an application of Equation \eqref{traceofcommutator} from Lemma \ref{basicidentities}, 
\begin{align} 
&(B\phi^r_{m+2,t})(a_0,\dots,a_{m+1})\nonumber\\
&\qquad\quad=-\eta_m\int_0^\infty s^m \sum_{j=1}^{m+1}(-1)^{j}
\langle a_0,[\D,a_1],\dots,[\D^2,a_j],\dots,[\D,a_{m+1}]\rangle_{m+1,s,r,t}ds.
\label{bigbee}
\end{align}
Observe that for $\phi_{1,t}^r$ we have
$$ 
(B\phi_{1,t}^r)(a_0)=\frac{\eta_1}{2\pi i}\int_0^\infty s
\tau\left( \int_\ell\lambda^{-p/2-r}R_{s,t}(\lambda)[\D,a_0]R_{s,t}(\lambda)
d\lambda\right)ds=0,
$$
by a variant of  Lemma \ref{basicidentities}.
We now compute the Hochschild coboundary of $\phi_{m,t}^r$. From the 
definitions we have
\begin{align*} 
(b\phi_{m,t}^r)(a_0,\dots,a_{m+1})=\phi_{m,t}^r(a_0a_1,a_2,\dots,a_{m+1})
&+\sum_{i=1}^m(-1)^i\phi^r_{m,t}(a_0,\dots,a_ia_{i+1},\dots,a_{m+1})\\
&+\phi^r_{m,t}(a_{m+1}a_0,a_1,\dots,a_m),
\end{align*}
but this is equal to
\begin{align*}
& \eta_m\int_0^\infty s^m\Big(\langle a_0a_1,[\D,a_2],\dots,[\D,a_{m+1}]
\rangle_{m,r,s,t}+\langle a_{m+1}a_0,[\D,a_1],\dots,[\D,a_m]\rangle_{m,r,s,t}\\
&\qquad\qquad+\sum_{i=1}^m(-1)^i\langle a_0,[\D,a_1],\dots,a_i[\D,a_{i+1}]+[\D,a_i]a_{i+1},
\dots,[\D,a_{m+1}]
\rangle_{m,r,s,t}\Big)ds.
\end{align*}
We now reorganise the terms so that we can employ the first identity of Lemma 
\ref{basicidentities}. So 
\begin{align}
\label{littlebee} 
&(b\phi_{m,t}^r)(a_0,\dots,a_{m+1})\nonumber\\
&\qquad\quad=\sum_{j=1}^{m+1}(-1)^{j}
\eta_m\int_0^\infty s^m\langle a_0,[\D,a_1],\dots,[\D^2,a_j],\dots,[\D,a_{m+1}]
\rangle_{m+1,r,s,t}ds.
\end{align}

For $m=1,3,5,\dots,M+\bullet-3$ comparing Equations \eqref{littlebee} and \eqref{bigbee} 
now shows that 
\begin{equation*} 
(B\phi_{m+2,t}^r+b\phi_{m,t}^r)(a_0,\dots,a_{m+1})=0.
\end{equation*}
So we just need to check the claim that $b\phi_{M+\bullet-1}^r$  is holomorphic for 
$\Re(r)>-p/2+\delta$ for some suitable $\delta$. 
From the computation given  above, we have (up to a constant)
$$
b\phi^r_{M,t}(a_0,\dots,a_{M+1})=C(M)\sum_{l=1}^{M+1}(-1)^l\int_0^\infty 
s^M\langle a_0,da_1,\dots,[\D^2,a_l],\dots,da_{M+1}\rangle_{M+1,r,s,t}\,ds,
$$
Now, since  the total order $|k|$ of the pseudodifferential operator entries of the expectation  is 
equal to one, 
we obtain by Lemma \ref{lem:crucial} that 
$b\phi^r_{M,t}(a_0,\dots,a_{M+1})$ is finite for ($\eps>0$ is arbitrary)
$$
\Re(r)>-M-1+({1+M+1})/2+\eps=({1-p})/2+({p-M-1+2\eps})/2.
$$
Since $p-M-1<0$, one can always find $\eps>0$ such that 
$-\delta:=p-M-1+2\eps\in(-1,0)$. The holomorphy follows from Lemma  \ref{hoo}.
\end{proof}

We can now relate the resolvent and residue cocycles.

\begin{prop}
\label{hee}
Assume that our smoothly summable  spectral triple $(\A,\H,\D)$ has isolated spectral dimension. 
Then for $m=\bullet,\bullet+2,\dots,M$,  $a_0,a_1\dots,a_m\in\A$, the map 
$\big[r\mapsto\phi_m^r(a_0,\dots,a_m)]\in \mathcal O_m$, analytically continues to a 
deleted neighbourhood  of the critical point $r=(1-p)/2$. Keeping the 
same notation for this continuation, we have
$$
{\rm res}_{r=(1-p)/2}\,\,\phi_m^r(a_0,\dots,a_m)=\phi_m(a_0,\dots,a_m),
\quad m=\bullet,\bullet+2,\dots,M.
$$
\end{prop}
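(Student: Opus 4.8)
The strategy is the one used in \cite{CPRS2,CPRS3}: reduce the resolvent cocycle $\phi_m^r$, via the algebraic identities of subsections \ref{subsec:properties} and the continuation/holomorphy results of subsection \ref{subsec:reg}, to a finite sum of expectations whose integrands are trace class, then evaluate the resulting $\lambda$-integrals by the Cauchy integral formula to express everything in terms of zeta functions $\zeta_b(z)$, and finally read off the residue at $r=(1-p)/2$ using the isolated spectral dimension hypothesis. First I would apply the $s$-trick (Lemma \ref{s-trick}) and $\lambda$-trick (Lemma \ref{lambda-trick}) $(M-m)/2$ times to $\phi_m^r(a_0,\dots,a_m)$, exactly as in Equation \eqref{haa} in the proof of Proposition \ref{hii}, obtaining an expression as a sum over multi-indices $|k|=M-m$ of expectations $\langle a_0,1^{k_0},da_1,1^{k_1},\dots,da_m,1^{k_m}\rangle_{M,r-(M-m)/2,s,t}$ (with $t=1$, so $R_s(\lambda)=R_{s,1,0}(\lambda)$) times explicit Gamma-factor prefactors. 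The poles of the prefactors lie outside a neighbourhood of the critical point since $M\leq p+1$, so they contribute only the combinatorial constants $\sigma_{h,l}$ in the residue formula; the genuine analytic structure near $r=(1-p)/2$ comes from the $s$-integrals of these top-level expectations.

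Next I would perform the Cauchy integrals. Using the expansion of resolvent products into iterated commutators $A_0 R^{n_0} A_1 R^{n_1}\cdots A_m R^{n_m}$ as in Lemma \ref{firstexpan}/\cite[Lemma 6.11]{CPRS2} together with the identity $\D^2 R_s(\lambda)=-1+(\lambda-(1+s^2))R_s(\lambda)$ and Lemma \ref{differentfort}, the expectation is rewritten so that each $\lambda$-integrand is $R_s(\lambda)^{m+|k|+|n|+1}$ acting from the right of a product $a_0\,da_1^{(n_1)}\cdots da_m^{(n_m)}$ of tame pseudodifferential operators; these are trace class by Lemma \ref{lem:crucial} and Corollary \ref{permut}. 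Carrying out $\frac1{2\pi i}\int_\ell \lambda^{-p/2-r}(\lambda-(1+s^2)-\D^2)^{-N}\,d\lambda$ by residues (the standard Cauchy computation used in \cite[\S 7]{CPRS2} and in the proof of Lemma \ref{hoo}) yields $\frac{\Gamma(p/2+r+N-1)}{\Gamma(p/2+r)}(1+s^2+\D^2)^{-p/2-r-N+1}$ up to lower-order corrections, and the $s$-integral $\int_0^\infty s^M (1+s^2+\D^2)^{-p/2-r-N+1}\,ds$ is evaluated by the substitution $s\mapsto s^2$ and the Beta integral, producing a further Gamma prefactor and leaving $(1+\D^2)^{-p/2-r-N+1+(M+1)/2}$. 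Choosing $N$ and collecting the combinatorial bookkeeping (the factors $\alpha(k)$ of Equation \eqref{alpha(k)} and $\sigma_{h,l}$ of Equation \eqref{sigmanl} arise precisely here, as in \cite{CPRS2,CPRS3}), one finds that $\phi_m^r(a_0,\dots,a_m)$ equals a finite sum, over $|k|\leq M-m$ and $l$, of $\alpha(k)$, $\sigma_{h,l}$, $(-1)^{|k|}$, explicit $r$-dependent Gamma quotients, times $\zeta_{b}\!\big((p-1)/2+r\big)$-type functions for $b=\gamma a_0\,da_1^{(k_1)}\cdots da_m^{(k_m)}(1+\D^2)^{-|k|-m/2}$.

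Then the residue at the critical point is extracted: since the spectral dimension is isolated, each $\zeta_b(z)=\tau(b(1+\D^2)^{-z})$ continues to a deleted neighbourhood of $z=0$, i.e. $\big[r\mapsto\zeta_b((p-1)/2+r)\big]$ continues to a deleted neighbourhood of $r=(1-p)/2$; hence so does $\phi_m^r(a_0,\dots,a_m)$, justifying the first assertion. Writing the Laurent expansion $z^j\zeta_b(z)=\sum_l \tau_l(b) z^{l}$ near $z=0$ and the Taylor expansions of the Gamma prefactors at $z=0$ (which is where $\prod_{l}(z+l+\tfrac12)=\sum_l z^l\sigma_{h,l}$ and its even analogue come from), one computes $\operatorname{res}_{r=(1-p)/2}\phi_m^r$ and matches it term by term with Definition \ref{def:res-cocycle} of $\phi_m$; for $m=0$ this is the trivial check $\operatorname{res}_{r=(1-p)/2}\phi_0^r(a_0)=\tau_{-1}(a_0)=\phi_0(a_0)$. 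I expect the only real obstacle to be purely bookkeeping: tracking the constants $(\sqrt{2i\pi})^\bullet$, $\eta_m$, $\alpha(k)$, $\sigma_{h,l}$ and the Gamma-function duplication identities through the repeated $s$- and $\lambda$-tricks and the Cauchy/Beta integrals so that the normalisations agree exactly with \cite{CPRS2} (odd) and \cite{CPRS3} (even). All the analytic inputs — trace-class estimates, continuity in the $\delta$-$\vf$-topology, holomorphy in $r$, and the legitimacy of interchanging $\int_0^\infty ds$, $\frac1{2\pi i}\int_\ell d\lambda$ and $\tau$ — are already in place from Lemmas \ref{lem:crucial}, \ref{hoo}, Corollary \ref{permut} and Proposition \ref{hii}, so no new analysis beyond the nonunital versions already established is required; the argument is then essentially the one of \cite[\S 7]{CPRS2} and \cite[\S 6]{CPRS3} transported verbatim into the tame pseudodifferential framework.
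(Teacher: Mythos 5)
Your overall strategy is the right one, but two aspects differ from the paper's proof and one of them is a genuine gap. The paper does \emph{not} begin by applying the $s$- and $\lambda$-tricks: that preliminary reduction (Equation \eqref{haa}) was needed in Proposition \ref{hii} to obtain continuity in the $\delta$-$\vf$-topology via H\"older interpolation, but it is superfluous for the residue computation. Instead the paper goes straight to the pseudodifferential expansion of Lemma \ref{firstexpan} with $L=M-m$, writing
$a_0\,R\,da_1\,R\cdots R\,da_m\,R
=\sum_{|n|=0}^{M-m}C(n)\, a_0\,da_1^{(n_1)}\cdots da_m^{(n_m)}\,R^{m+|n|+1}+a_0\,P_{M-m,m}$,
then carries out the Cauchy and $s$-integrals to land directly on Equation \eqref{cdots-ha-ha}. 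Your detour through Equation \eqref{haa} followed by a second expansion is not wrong, but it doubles the combinatorial bookkeeping and forces you to match the constants after an unnecessary shift $r\mapsto r-(M-m)/2$; the direct expansion is cleaner and is what lets the paper read off $\alpha(n)$ and $\sigma_{h,l}$ against Definition \ref{def:res-cocycle} with no extra manipulations. (Incidentally, you do not need the identity $\D^2 R_s(\lambda)=-1+(\lambda-(1+s^2))R_s(\lambda)$ or Lemma \ref{differentfort} here; the Cauchy integral of $\lambda^{-p/2-r}R_s(\lambda)^N$ is evaluated by residues directly.)

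The genuine gap is that your proposal never addresses the remainder term $a_0\,P_{M-m,m}$ from Lemma \ref{firstexpan}. This is the one piece of analysis that the isolated spectral dimension hypothesis does not handle: the isolated-dimension assumption only gives meromorphy of the zeta functions $\zeta_b$ attached to the finitely many top-order terms, so you must separately check that the contribution of the remainder is holomorphic on an open half-plane containing $r=(1-p)/2$ before you may identify the residue of the full sum with the residue of the truncation. The paper does this explicitly by invoking the trace-norm estimate
$\|a_0\,P_{M-m,m}\|_1\leq C\,((s^2+a)^2+v^2)^{-m/2-L/4-3/4+(p+\eps)/4}$
proved inside Lemma \ref{lem:crucial} (subsection \ref{lem:crucial-app}), together with Lemma \ref{intest}, which show the corresponding $(s,\lambda)$-integral converges absolutely and holomorphically for $\Re(r)>(1-p)/2-\delta$ for some $\delta>0$ once $L=M-m$. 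Citing Lemma \ref{lem:crucial} in a blanket phrase (``all the analytic inputs are in place'') is not enough: you need to state that this specific estimate applies to $P_{M-m,m}\in{\rm OP}^{-m-M-3}$ and kills the remainder, otherwise the claimed analytic continuation of $\phi^r_m$ to a punctured neighbourhood of $r=(1-p)/2$ is not established.

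You also should separate the case $m=0$ at the very beginning, as the paper does: there the expectation is a single Cauchy integral $\frac{1}{r-(1-p)/2}\tau(\gamma a_0(1+\D^2)^{-(r-(1-p)/2)})$ modulo holomorphic functions, and no pseudodifferential expansion is needed. Your remark treats $m=0$ as a ``trivial check'' after the general computation, but the general formula \eqref{cdots-ha-ha} in the paper is stated only for $m>0$; a separate argument is required for $m=0$.
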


\begin{proof}
For the even case and $m=0$, we can explicitly compute 
$$
\phi^r_0(a_0)=\frac{1}{r-(1-p)/2}\tau(\gamma a_0(1+\D^2)^{-(r-(1-p)/2)}),
$$
modulo a function of $r$ holomorphic at $r=(1-p)/2$. So we need only consider the case $m\geq 1$.

We start with the expansion, described in detail in the Appendix, Lemma \ref{firstexpan}, 
with $L=M-m$ and $R:=R_s(\lambda)$
$$
a_0\,R\,da_1\,R\cdots R\,da_m\,R
=\sum_{|n|=0}^{M-m}C(n) a_0\,da_1^{(n_1)}\cdots da_m^{(n_m)}\,R^{m+|n|+1}+a_0\,P_{M-m,m}.
$$
Ignoring for a moment the remainder term $P_{M-m,m}$, performing the Cauchy integrals gives
\begin{align*}
\phi_m^r(a_0,\dots,a_m)
=\sum_{|n|=0}^{M-m}C'(n,m,r)\int_0^\infty s^{m} 
\tau\Big(\gamma a_0\,da_1^{(n_1)}\cdots da_m^{(n_m)}\,(1+s^2+\D^2)^{-m-|n|-p/2-r}\Big)\,ds.
\end{align*}
Setting
$h=|n|+(m-\bullet)/2$,
and for $\Re(r)>(1-m)/2$, one can  perform the $s$-integral to 
obtain (after some manipulation of the constants as in \cite[Theorem 6.4]{CPRS3}) for $m>0$ 
\begin{align}
\label{cdots-ha-ha}
\phi_m^r(a_0,\dots,a_m)=(\sqrt{2i\pi})^\bullet\sum_{|n|=0}^{M-m}
(-1)^{|n|}&\alpha(n)
\sum_{l=1-\bullet}^{h}\!\sigma_{h,l}\,
\big(r-(1-p)/2\big)^{l-1+\bullet}\nonumber\\
 &\quad \times\tau\Big(\gamma a_0\,da_1^{(n_1)}\cdots da_m^{(n_m)}
(1+\D^2)^{-|n|-m/2-r+1/2-p/2}\Big).
\end{align}
From this the result will be clear  if  the remainder term is holomorphic for $\Re(r)>(1-p)/2$,
since under the isolated spectral dimension
assumption the residues of the right hand side of the previous expression 
are individually well defined.  
This can be shown using the estimate of the remainder term given in the proof of 
Lemma \ref{lem:crucial} presented in \ref{lem:crucial-app}.
\end{proof}

\subsection{The homotopy to the Chern character}
\label{subsec:nasty-homotopy}

We explain here the sequence of results that leads to the fact that the 
Chern character in degree $M$
is cohomologous to the residue cocycle.

\begin{lemma}
\label{hyy}
Let $t\in[0,1]$, $\Re(r)>1/2$ and $m\equiv \bullet\,\,{\rm mod}\,2$. Then we have
$$
B\Phi_{m+1,t}^r+b\Phi_{m-1,t}^r
=\Big(\frac{p-1}2+r\Big)\phi^r_{m,t}-t\,\frac{p+2r}2\phi^{r+1}_{m,t}.
$$
\end{lemma}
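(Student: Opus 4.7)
The strategy mirrors the unital argument of \cite{CPRS4}, but every algebraic move must now be justified using the tools of Section~\ref{sec:psido-and-sum}: the $s$-trick (Lemma \ref{s-trick}), the $\lambda$-trick (Lemma \ref{lambda-trick}), the permutation/cyclicity statement (Corollary \ref{permut}), and the graded-Leibniz identities of Lemma \ref{lala-la-identity}. Throughout, I would fix $\Re(r)>1/2$ (which forces $\Re(r)>(1-k)/2$ for all the relevant $k$), so that Lemma \ref{lem:crucial} and Corollary \ref{permut} apply to every expectation that appears.

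First I would compute $B\Phi_{m+1,t}^{r}$. By the definition of $B$ on reduced cochains together with the definition of $\Phi_{m+1,t}^{r}$, one has
\begin{align*}
(B\Phi_{m+1,t}^{r})(a_0,\dots,a_m) = \eta_{m+2}\sum_{j=0}^{m}\int_0^\infty s^{m+2}\langle\langle 1,da_j,\dots,da_{j-1}\rangle\rangle_{m+1,r,s,t}\,ds.
\end{align*}
Using the cyclicity part of Lemma \ref{lala-la-identity} (the total degree being $1-\bullet$ here because of the extra $\D$) to bring the ${\rm Id}_\cn$ into all interior slots, followed by the $s$-trick (Lemma \ref{s-trick}) applied with $\alpha=m+2$, this collapses to
\[
(B\Phi_{m+1,t}^{r})(a_0,\dots,a_m) = -\eta_m\,\tfrac{m+1}{2}\cdot\tfrac{2}{m+2}\int_0^\infty s^{m}\langle\langle da_0,da_1,\dots,da_m\rangle\rangle_{m,r,s,t}\,ds,
\]
after absorbing the ratio of $\eta$-constants via the functional equation for $\Gamma$.

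Next I would compute $b\Phi_{m-1,t}^{r}$. From the very definition of $b$ and $\Phi_{m-1,t}^r$, and using $[\D,a_ia_{i+1}] = a_i\,da_{i+1}+da_i\,a_{i+1}$, I would reorganise the sum exactly as in the computation leading to \eqref{littlebee}, using the Leibniz-type identity \eqref{useforb} of Lemma \ref{lala-la-identity} to convert each $a_ia_{i+1}$ product into a commutator $[\D^2,a_j]$ entry plus a $[\D,a_j]_\pm$ entry. The graded commutator terms combine with the $da_0$ term from the previous paragraph after applying the identity \eqref{addstozero} (which applies because the total degree is $1-\bullet$), leaving only terms with a $\D^2$ insertion:
\[
(B\Phi_{m+1,t}^{r}+b\Phi_{m-1,t}^{r})(a_0,\dots,a_m) = -2\eta_m\sum_{j=0}^{m}\int_0^\infty s^m\langle a_0,da_1,\dots,a_j,\D^2,a_{j+1},\dots,da_m\rangle_{m+1,r,s,t}\,ds,
\]
where we have also used the ``$\langle\langle\cdot\rangle\rangle$ into $\langle\cdot\rangle$ with $\D^2$'' conversion \eqref{nothertrick}.

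Finally, I would apply Lemma \ref{differentfort} with $\alpha=m$ to the right-hand side above. That lemma is engineered precisely so that the sum of $\D^2$-insertions equals an $r$-dependent multiple of $\phi^r_{m,t}$ minus a $t$-dependent correction involving an extra ${\rm Id}_\cn$-insertion; the latter is recognised, via the $\lambda$-trick (Lemma \ref{lambda-trick}), as a multiple of $\phi^{r+1}_{m,t}$. Collecting constants yields exactly
\[
B\Phi_{m+1,t}^r+b\Phi_{m-1,t}^r = \Bigl(\tfrac{p-1}{2}+r\Bigr)\phi^r_{m,t}-t\,\tfrac{p+2r}{2}\phi^{r+1}_{m,t}.
\]

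The genuinely new part compared with \cite{CPRS4} is not algebraic — it is verifying that each of the formal manipulations (cyclicity, the $s$- and $\lambda$-tricks, the reorganisation of $[\D^2,\cdot]$ terms) makes sense when none of the entries is a unit. The hypothesis $a_i\in \A\subset \B_1^{\infty}(\D,p)={\rm OP}_0^0$, together with $[\D,a_i]\in{\rm OP}_0^0$, ensures that at every stage at least one entry is tame; this is exactly what Corollary~\ref{permut} and Lemma~\ref{lem:crucial} require, so the cyclic rearrangements and the integration-by-parts in $s$ and $\lambda$ are all legitimate. The main obstacle to executing the plan is therefore bookkeeping: keeping track of signs coming from $\gamma$ and graded commutators, and checking in each individual step that the constraint on $\Re(r)$ needed to apply Lemma~\ref{lem:crucial} is met, so that no manipulation produces a divergent integral. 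Once this bookkeeping is done, the identity drops out of the algebra in a manner formally identical to the unital proof.
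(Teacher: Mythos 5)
Your overall architecture is exactly the paper's: expand $B\Phi^r_{m+1,t}$ via the cyclicity of $\langle\langle\cdots\rangle\rangle$ and the $s$-trick, expand $b\Phi^r_{m-1,t}$ via the Leibniz identity \eqref{useforb}, convert the resulting graded commutators into $\D^2$-insertions via \eqref{nothertrick}, and finish with Lemma \ref{differentfort} and the $\lambda$-trick; your remarks on why each manipulation is legitimate in the nonunital setting (one entry always in ${\rm OP}^0_0$, so Lemma \ref{lem:crucial} and Corollary \ref{permut} apply) are also the right points to emphasise.

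There is, however, a genuine error in your third step, and it is not mere bookkeeping. The identity \eqref{useforb} converts each product entry into a $[\D^2,a_j]$ term \emph{plus} a single-bracket correction term, and summed over $j$ these corrections give $-\tfrac{\eta_m m}{2}\int_0^\infty s^m\langle a_0,da_1,\dots,da_m\rangle_{m,r,s,t}\,ds$, i.e.\ $-\tfrac{m}{2}\phi^r_{m,t}$ up to normalisation. You assert that these terms cancel against the $da_0$ term coming from $B\Phi^r_{m+1,t}$ by \eqref{addstozero}; but \eqref{addstozero} requires total degree $\equiv 1-\bullet$, whereas the entries $a_0,da_1,\dots,da_m$ have total degree $m\equiv\bullet$, so that identity is not applicable here. (The identity whose degree hypothesis \emph{is} satisfied is \eqref{nothertrick}, and it is what absorbs the $da_0=[\D,a_0]_\pm$ term together with the $[\D^2,a_j]=[\D,da_j]_\pm$ terms into the $\D^2$-insertion sum --- with coefficient $-\eta_m$, not $-2\eta_m$.) The single-bracket corrections do not vanish: they must be carried to the end, because Lemma \ref{differentfort} applied to the $\D^2$-insertion sum alone produces the coefficient $\eta_m\tfrac{m+p+2r-1}{2}$ on the $\phi^r_{m,t}$ integrand, and it is precisely the $-\tfrac{\eta_m m}{2}$ correction that reduces this to $\eta_m\bigl(\tfrac{p-1}{2}+r\bigr)$ as in the statement. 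Taking your intermediate formula at face value (coefficient $-2\eta_m$ and no correction term), the computation would output $(m+p+2r-1)\phi^r_{m,t}-t(p+2r)\phi^{r+1}_{m,t}$, which is not the claimed identity. A smaller slip of the same kind occurs in your first step: the functional equation gives $\eta_{m+2}=\tfrac{2}{m+1}\eta_m$, so the constant $-\eta_m\tfrac{m+1}{2}\cdot\tfrac{2}{m+2}$ you record is not what the $s$-trick plus the $\Gamma$-relation produce.
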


\begin{proof}
By Proposition \ref{hii}, we see that both sides are well 
defined as continuous multi-linear maps from $\A^{\otimes (m+1)}$ to 
the set of holomorphic functions on the half plane $\Re(r)>(m-1)/2$. 
We include the following argument from \cite[Proposition 5.14]{CPRS4} for completeness.

First,
using  the cyclic property of $\langle\langle\cdots\rangle\rangle$ of Lemma 
\ref{lala-la-identity} and the fact 
that $m\equiv \bullet\,(\bmod\ 2),$ we have 
\begin{align} 
B\Phi^r_{m+1,t}(a_0,\dots,a_m)&=\frac{\eta_{m+2}}{2}\sum_{j=0}^m\int_0^\infty 
s^{m+2}(-1)^{mj}\langle\langle 1,da_j,\dots,da_{j-1}\rangle\rangle_{m+1,r,s,t} ds\nonumber\\
&=\frac{\eta_{m+2}}{2}\sum_{j=0}^m\int_0^\infty s^{m+2}\langle\langle 
da_0,\dots,da_{j-1},1,da_j,\dots,da_m\rangle\rangle_{m+1,r,s,t} ds\nonumber\\
&=-\frac{\eta_{m+2}(m+1)}{4}\int_0^\infty s^{m}\langle\langle 
da_0,\dots,da_m\rangle\rangle_{m,r,s,t} ds\nonumber\\
&=-\frac{\eta_{m}}{2}\int_0^\infty s^{m}\langle\langle 
da_0,\dots,da_m\rangle\rangle_{m,r,s,t} ds,\label{formulaforBPhi}
\end{align}
using the $s$-trick (Lemma \ref{s-trick}) in the second last line.
The computation for $b\Phi_{m-1,t}^r$ is the same as for $b\phi^r_{m-1,t}$ in Equation \eqref{littlebee}, 
except we need to take account of the extra term in Equation (\ref{useforb}). This 
gives
\begin{align*} 
b\Phi_{m-1,t}^r(a_0,\dots,a_m)&=\frac{\eta_{m}}{2}\sum_{j=1}^m(-1)^j\int_0^\infty 
s^m\langle\langle a_0,da_1,\dots,[\D^2,a_j],\dots,da_m\rangle\rangle_{m,s,r,t} ds\\
&\qquad\qquad-\frac{\eta_{m}}{2}\sum_{j=1}^m\int_0^\infty s^m\langle 
a_0,da_1,\dots,da_j,\dots,da_m\rangle_{m,s,r,t} ds\\
&=\frac{\eta_{m}}{2}\sum_{j=1}^m(-1)^j\int_0^\infty s^m\langle\langle 
a_0,da_1\dots,[\D^2,a_j],\dots,da_m\rangle\rangle_{m,s,r,t} ds\\
&\qquad\qquad-\frac{\eta_{m}m}{2}\int_0^\infty s^m\langle 
a_0,da_1,\dots,da_m\rangle_{m,s,r,t} ds.
\end{align*}
Now put them together. First, using $\eta_{m+2}(m+1)/2=\eta_{m}$ we have
\begin{align*} 
&(B\Phi^r_{m+1,t}+b\Phi_{m-1,t}^r)(a_0,\dots,a_m)=-\frac{\eta_m}{2}\int_0^\infty s^{m}\langle\langle 
da_0,\dots,da_m\rangle\rangle_{m,s,r,t} ds\\
&\qquad\qquad+\frac{\eta_{m}}{2}\sum_{j=1}^m(-1)^j\int_0^\infty s^m\langle\langle 
a_0,da_1,\dots,[\D^2,a_j],\dots,da_m\rangle\rangle_{m,s,r,t} ds\\
&\qquad\qquad\qquad\qquad-\frac{\eta_{m}m}{2}\int_0^\infty s^m\langle 
a_0,da_1,\dots,da_m\rangle_{m,s,r,t} ds,
\end{align*}
and then applying $[\D^2,a_j]=[\D,[\D,a_j]]_{\pm}$ yields
\begin{align*}
&-\frac{\eta_m}{2}(-1)^{{\rm deg}(a_0)}\int_0^\infty s^{m}\langle\langle 
[\D,a_0]_\pm,da_1,\dots,da_m\rangle\rangle_{m,s,r,t} ds\\
&+\frac{-\eta_{m}}{2}\sum_{j=1}^m(-1)^{{\rm deg}(a_0)+{\rm deg}(da_1)+\cdots+{\rm deg}(da_{j-1})}
\int_0^\infty s^m\langle\langle a_0,da_1\dots,[\D,da_j]_\pm,\dots,da_m\rangle\rangle_{m,s,r,t} 
ds\\
&-\frac{\eta_{m}m}{2}\int_0^\infty s^m\langle 
a_0,da_1,\dots,da_m\rangle_{m,s,r,t} ds.
\end{align*}
Then identity (\ref{nothertrick}) of Lemma \ref{lala-la-identity} shows that this is equal to
\begin{align*}
\frac{-2\eta_{m}}{2}\int_0^\infty s^m\Big(\sum_{j=0}^m\langle 
a_0,\dots,da_j,\D^2,da_{j+1},\dots,da_m\rangle_{m+1,s,r,t} +\frac{m}{2}\langle 
a_0,da_1,\dots,da_m\rangle_{m,s,r,t}\Big) ds,
\end{align*}
then, applying Lemma \ref{differentfort} gives us finally
\begin{align} 
(B\Phi^r_{m+1,t}+b\Phi_{m-1,t}^r)(a_0,\dots,a_m)
&=\eta_{m}\frac{p+2r-1}{2}\int_0^\infty s^m\langle 
a_0,da_1,\dots,da_m\rangle_{m,s,r,t} ds\nonumber\\
&+t\,\eta_{m}\sum_{j=0}^m\int_0^\infty s^m\langle 
a_0,\dots,da_j,1,da_{j+1},\dots,da_m\rangle_{m+1,s,r,t}ds\nonumber\\
&=\frac{p+2r-1}{2}\phi^r_{m,t}(a_0,\dots,a_m)-t\frac{p+2r}{2}\phi_{m,t}^{r+1}(a_0,\dots,a_m),
\label{aaagh}
\end{align}
where we used the $\lambda$-trick (Lemma \ref{lambda-trick}) in the last line. 
\end{proof}

\begin{prop}
\label{rrrr}
Viewed as a cochain with non-trivial components for $m=M$ only,
$$
(r-(1-p)/2)^{-1}B\Phi^r_{M+1,0},
$$
is a $(b,B)$-cocycle modulo cochains with values in functions holomorphic at  
$r=(1-p)/2$ and is cohomologous to the resolvent cocycle $(\phi_{m,0}^r)_{m=\bullet}^M$.
\end{prop}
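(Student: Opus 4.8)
\textbf{Proof proposal for Proposition \ref{rrrr}.}

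The plan is to use the identity of Lemma \ref{hyy} at $t=0$ as the backbone. Setting $t=0$ in that lemma kills the $\phi^{r+1}_{m,0}$ term entirely, so we get the clean relation
\begin{equation*}
B\Phi^r_{m+1,0}+b\Phi^r_{m-1,0}=\Big(\tfrac{p-1}{2}+r\Big)\phi^r_{m,0},\qquad m\equiv\bullet\ (\bmod\ 2).
\end{equation*}
Dividing by $(r-(1-p)/2)=\tfrac{p-1}{2}+r$ (which is nonzero away from the critical point, so legitimate modulo functions holomorphic at $r=(1-p)/2$), this says precisely that for each relevant $m$,
\begin{equation*}
\phi^r_{m,0}=\frac{1}{r-(1-p)/2}\big(B\Phi^r_{m+1,0}+b\Phi^r_{m-1,0}\big).
\end{equation*}
So the resolvent cocycle $(\phi^r_{m,0})_{m=\bullet}^M$ is, up to the scalar factor $(r-(1-p)/2)^{-1}$, exactly $(b+B)$ applied to the transgression cochain $(\Phi^r_{m,0})_{m=1-\bullet}^{M+1}$, at least in the `interior' degrees. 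First I would record this and note that it already gives the cohomology statement in all degrees $m<M$: there $\phi^r_{m,0}$ differs from a coboundary only by the prefactor, and one absorbs the prefactor into the definition of the transgression cochain (it is a nonzero holomorphic scalar near the relevant region by Proposition \ref{hii}). The subtlety, and the reason the proposition is phrased the way it is, is the top degree $m=M$ and the degree $m=M+1$: the relation above for $m=M$ involves $B\Phi^r_{M+1,0}$, and $\Phi^r_{M+1,0}$ sits in degree $M+1$ which is one above the range of the resolvent cocycle, so one must check that the `missing' piece $b\Phi^r_{M+1,0}$ (which would be the $m=M+1$ component of $(b+B)\Phi$) is holomorphic at $r=(1-p)/2$ and hence contributes nothing to the residue/cohomology class — this is the same kind of tail estimate as in the proof of Proposition \ref{hjj}, where $b\phi^r_{M,t}$ was shown holomorphic for $\Re(r)>(1-p)/2-\delta$ using Lemma \ref{lem:crucial} power-counting ($|k|=1$ there). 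An entirely analogous count, using that $\Phi$ carries an extra factor of $\D\in{\rm OP}^1$ and an extra $s$, shows $b\Phi^r_{M+1,0}$ is finite and holomorphic for $\Re(r)>(1-p)/2-\delta'$ for suitable $\delta'\in(0,1)$.

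Concretely the steps are: (1) specialise Lemma \ref{hyy} to $t=0$ and rearrange to express each $\phi^r_{m,0}$, $m=\bullet,\dots,M$, in terms of $(b+B)$ of the transgression cochains; (2) verify, using Proposition \ref{hii}, Lemma \ref{hoo}, and the fact that all cochains involved are reduced (Lemma \ref{chien}), that this identity takes place within the reduced $(b,B)$-bicomplex for $\A^\sim$ and that $(r-(1-p)/2)^{-1}B\Phi^r_{M+1,0}$ is a well-defined reduced cochain in degree $M$ for $\Re(r)>(1-M)/2$; (3) compute $b$ and $B$ of the cochain $(r-(1-p)/2)^{-1}B\Phi^r_{M+1,0}$: $B$ applied to it vanishes since $B^2=0$, and $b$ applied to it is $(r-(1-p)/2)^{-1}bB\Phi^r_{M+1,0}=-(r-(1-p)/2)^{-1}Bb\Phi^r_{M+1,0}$, which by step (1)'s analogue is $B$ of something holomorphic at the critical point, hence is itself a cochain valued in functions holomorphic at $r=(1-p)/2$; this gives the `$(b,B)$-cocycle modulo holomorphic cochains' claim; (4) assemble the transgression cochains $(\Phi^r_{m,0})$ as the chain homotopy: $(b+B)\Phi^r_{\bullet,0}$ reproduces $(r-(1-p)/2)\cdot(\phi^r_{m,0})_{m=\bullet}^M$ in degrees $\le M$ and produces $b\Phi^r_{M+1,0}$ in degree $M+1$, and dividing through by the scalar and discarding the holomorphic-at-critical-point tail in degree $M+1$ yields that $(\phi^r_{m,0})_{m=\bullet}^M$ and $(r-(1-p)/2)^{-1}B\Phi^r_{M+1,0}$ (as the $m=M$ component) are cohomologous modulo cochains holomorphic at $r=(1-p)/2$.

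The main obstacle I expect is step (3)--(4), namely the bookkeeping at the top degree: one has to be careful that $\Phi^r_{M+1,0}$ is genuinely defined (power-counting with the extra $\D$ factor via the $\langle\langle\cdots\rangle\rangle$ expectation and Lemma \ref{lem:crucial}, noting $M+1\ge p$ so the degree count is tight) and that the pieces one throws away — precisely $b\Phi^r_{M+1,0}$ and, after applying $b$ to the candidate cocycle, $Bb\Phi^r_{M+1,0}$ — really are holomorphic at the critical point $r=(1-p)/2$ rather than merely on $\Re(r)>1/2$. This is the nonunital analogue of the $\delta$-argument at the end of the proof of Proposition \ref{hjj}, and it will require invoking Lemma \ref{lem:crucial} (and its extension Corollary \ref{permut}) with the sharp exponents, together with Lemma \ref{hoo} to upgrade finiteness to holomorphy; the invertibility of $\D$ (via the double construction of subsection \ref{subsec:reduced}) is used throughout since $\Phi^r_{m,0}$ and $B\Phi^r_{M+1,0}$ are only defined when $\D$ is invertible. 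All the algebraic identities needed (the $s$-trick, $\lambda$-trick, Lemmas \ref{basicidentities}, \ref{lala-la-identity}, \ref{differentfort}) are already in place, so beyond this tail estimate the proof is a rearrangement of Lemma \ref{hyy}.
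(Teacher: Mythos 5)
Your overall strategy matches the paper's: specialise Lemma \ref{hyy} to $t=0$, observe the $\phi^{r+1}_{m,0}$ term disappears, and then apply $(b+B)$ to the finitely supported cochain $\big((r-(1-p)/2)^{-1}\Phi^r_{1-\bullet,0},\dots,(r-(1-p)/2)^{-1}\Phi^r_{M-1,0},0,\dots\big)$ to realise the difference $(\phi^r_{m,0})_{m=\bullet}^M-(r-(1-p)/2)^{-1}B\Phi^r_{M+1,0}$ as a coboundary; your steps (1), (2), (4) are exactly this, and your remarks about the reduced bicomplex, holomorphy via Proposition \ref{hii}/Lemma \ref{hoo}, and invertibility of $\D$ via the double are all apposite.

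However, there is a genuine gap in your step (3), the verification that $(r-(1-p)/2)^{-1}B\Phi^r_{M+1,0}$ is a $(b,B)$-cocycle modulo cochains holomorphic at $r=(1-p)/2$. You compute $b$ of the candidate via $bB=-Bb$ and then assert that $-(r-(1-p)/2)^{-1}Bb\Phi^r_{M+1,0}$ is ``$B$ of something holomorphic at the critical point''; the ``something'' would be $-(r-(1-p)/2)^{-1}b\Phi^r_{M+1,0}$, and your paragraph on obstacles shows you only plan to establish that $b\Phi^r_{M+1,0}$ itself is holomorphic near $r=(1-p)/2$. That is not strong enough: dividing a function that is merely holomorphic at the critical point by $(r-(1-p)/2)$ reintroduces a simple pole. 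Indeed, pushing the Lemma \ref{hyy} identity one degree up (at $m=M+2$, still of parity $\bullet$) gives $b\Phi^r_{M+1,0}=(r-(1-p)/2)\phi^r_{M+2,0}-B\Phi^r_{M+3,0}$, so $(r-(1-p)/2)^{-1}b\Phi^r_{M+1,0}$ does have a pole from the $B\Phi^r_{M+3,0}$ term; it is only after applying $B$, which annihilates this term via $B^2=0$, that the pole cancels. Your intermediate assertion is therefore false as stated, and the argument as written does not close.

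The paper avoids this detour entirely and keeps everything in the range already controlled by Proposition \ref{hjj}: one applies $b$ directly to the $m=M$, $t=0$ instance of Lemma \ref{hyy},
\begin{equation*}
b\Phi^r_{M-1,0}+B\Phi^r_{M+1,0}=\big(r-(1-p)/2\big)\phi^r_{M,0},
\end{equation*}
and uses $b^2=0$ to obtain $bB\Phi^r_{M+1,0}=(r-(1-p)/2)\,b\phi^r_{M,0}$. The scalar now cancels exactly, giving $(r-(1-p)/2)^{-1}bB\Phi^r_{M+1,0}=b\phi^r_{M,0}$, and holomorphy of $b\phi^r_{M,0}$ at $r=(1-p)/2$ was already established in Proposition \ref{hjj}. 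The $B$-closedness is immediate from $B^2=0$, since the candidate cocycle lies in the image of $B$. You should replace your $bB=-Bb$ argument by this direct application of $b$; it sidesteps the cochains $\phi^r_{M+2,0}$, $\Phi^r_{M+3,0}$ (which are outside the range of Definitions \ref{resolvent} and \ref{transgression} and would need to be separately justified) and yields the conclusion without any pole bookkeeping.
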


\begin{proof}
 By Proposition \ref{hyy}, applying $(B,b)$ to the finitely supported cochain 
\begin{equation*} 
\Big(\frac{1}{(r-(1-p)/2)}\Phi_{1-\bullet,0}^r,\dots,\frac{1}{(r-(1-p)/2)}
\Phi^r_{M-1,0},
0,0,\dots\Big),
\end{equation*}
yields
\begin{align*} 
&\Big(\phi^r_{\bullet,0},\phi^r_{\bullet+2,0},\dots ,\phi^r_{M,0}-\frac{B\Phi^r_{M+1,0}}
{(r-(1-p)/2)},0,0,\dots\Big)=\Big((\phi^r_{m,0})_{m=\bullet}^{M}-
\frac{B\Phi^r_{M+1,0}}{(r-(1-p)/2)}\Big).
\end{align*}
That is, $(\phi^r_{m,0})_{m=\bullet}^M$ is cohomologous to 
$(r-(1-p)/2)^{-1}B\Phi^r_{M+1,0}$. Observe that 
 because it is in the image of $B$, 
$(r-(1-p)/2)^{-1}B\Phi^{r}_{M+1,0}$ is cyclic. It is also a $b$-cyclic cocycle 
modulo cochains with values in the functions holomorphic at $r=(1-p)/2$. This 
follows from 
\begin{equation*}
b\Phi^r_{M-1,0}+B\Phi^r_{M+1,0}=(r-(1-p)/2)\phi^r_{M,0},
\end{equation*}
by applying $b$ and recalling that $b\phi^r_{M,0}$ is holomorphic at 
$r=(1-p)/2$.
\end{proof}

Taking residues at $r=(1-p)/2$ and applying 
Proposition \ref{hee}, together with the two preceding results, 
leads directly to

\begin{corollary}
\label{hgg}
If  the spectral triple $(\A,\H,\D)$ has isolated dimension spectrum, then the residue cocycle 
$(\phi_{m,0})_{m=\bullet}^M$ is cohomologous to $B\Phi^{(1-p)/2}_{M+1,0}$ 
(viewed as a single term  cochain).
\end{corollary}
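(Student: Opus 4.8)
The statement is a direct consequence of the two preceding results — Proposition \ref{hee} (the residues of $\phi_m^r$ at the critical point $r=(1-p)/2$ are the residue cocycle components $\phi_m$) and Proposition \ref{rrrr} (the cochain $(r-(1-p)/2)^{-1}B\Phi^r_{M+1,0}$ is a $(b,B)$-cocycle modulo holomorphic cochains and is cohomologous to the resolvent cocycle $(\phi^r_{m,0})_{m=\bullet}^M$) — together with the observation that taking residues at $r=(1-p)/2$ is a chain map on cochains valued in meromorphic functions. So the plan is essentially to assemble these pieces carefully and track the residue operation through the cohomology.

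First I would recall that, by Proposition \ref{rrrr}, there is a finitely supported cochain $\Xi^r := \big((r-(1-p)/2)^{-1}\Phi^r_{m,0}\big)_{m=1-\bullet}^{M-1}$ (valued in functions holomorphic for $\Re(r)$ large) such that
$$
(\phi^r_{m,0})_{m=\bullet}^M - \big(0,\dots,0,(r-(1-p)/2)^{-1}B\Phi^r_{M+1,0}\big) = (b+B)\Xi^r
$$
modulo cochains valued in functions holomorphic at $r=(1-p)/2$. Since $b$ and $B$ are purely algebraic (they only permute arguments and multiply), they commute with the operation $\psi^r \mapsto \operatorname{res}_{r=(1-p)/2} \psi^r$ of taking the residue of a meromorphic-function-valued cochain. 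Moreover, by Proposition \ref{hii}, each $\Phi^r_{m,0}$ extends to a cochain valued in $\mathcal O_m$, and the extra prefactor $(r-(1-p)/2)^{-1}$ produces at worst a simple pole at the critical point; hence $\operatorname{res}_{r=(1-p)/2}\Xi^r$ is a well-defined reduced $(b,B)$-cochain for $\A^\sim$. Applying $\operatorname{res}_{r=(1-p)/2}$ to the displayed identity and using Proposition \ref{hee} on the left-hand side to identify $\operatorname{res}_{r=(1-p)/2}\phi^r_{m,0} = \phi_m$ for $m=\bullet,\bullet+2,\dots,M$, we obtain
$$
(\phi_{m,0})_{m=\bullet}^M - \big(0,\dots,0,B\Phi^{(1-p)/2}_{M+1,0}\big) = (b+B)\big(\operatorname{res}_{r=(1-p)/2}\Xi^r\big),
$$
where I should note that $B\Phi^{(1-p)/2}_{M+1,0} = \operatorname{res}_{r=(1-p)/2}\big((r-(1-p)/2)^{-1}B\Phi^r_{M+1,0}\big)$ precisely because $B\Phi^r_{M+1,0}$ itself is holomorphic at the critical point (its pole is manufactured only by the scalar prefactor), so the residue just strips off that prefactor and evaluates. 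This is the asserted cohomology.

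The two small points that need care, and which I expect to be the only real work beyond citing, are: (i) checking that every cochain appearing — the $\phi^r_{m,0}$, the $\Phi^r_{m,0}$, the prefactored versions, and $B\Phi^r_{M+1,0}$ — is holomorphic at $r=(1-p)/2$ except for explicitly controlled simple poles, so that residues make sense termwise; this is supplied by Propositions \ref{hii} and \ref{hee} and Lemma \ref{hoo} (together with the isolated spectral dimension hypothesis, which is exactly what guarantees the individual zeta functions continue past the critical point); and (ii) checking that the "modulo cochains holomorphic at $r=(1-p)/2$" error terms in Propositions \ref{rrrr} and \ref{hee} genuinely have vanishing residue at $r=(1-p)/2$ — immediate from holomorphy — so they disappear upon taking residues. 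The main conceptual obstacle is simply to be scrupulous that $B$ (and $b$) commute with the residue map on the reduced bicomplex, including the $m=0$ slot where the reduced-cochain normalisation $\Phi^r_{\mu,0,t}(1_{\A^\sim})=0$ and $\phi_{\mu,0}(1_{\A^\sim})=0$ from Lemma \ref{chien} must be invoked so that the residue cochain is still reduced. Once those bookkeeping points are settled, the corollary follows by reading off the residue of the identity in Proposition \ref{rrrr}.
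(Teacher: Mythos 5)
Your proposal is correct and follows exactly the route the paper intends: the paper's ``proof'' of this corollary is the single sentence preceding it (``Taking residues at $r=(1-p)/2$ and applying Proposition \ref{hee}, together with the two preceding results, leads directly to''), and your write-up is a careful unpacking of that bookkeeping — commuting $\operatorname{res}_{r=(1-p)/2}$ past $b$ and $B$, using Proposition \ref{hee} to identify the residues of $\phi^r_{m,0}$ with $\phi_m$, using the holomorphy of $B\Phi^r_{M+1,0}$ near the critical point to strip the prefactor, and discarding the ``modulo holomorphic'' errors.

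One small imprecision worth noting: you justify ``$\operatorname{res}_{r=(1-p)/2}\Xi^r$ is well-defined'' by citing Proposition \ref{hii} to say each $\Phi^r_{m,0}$ lies in $\mathcal O_m$ and the only singularity is manufactured by the prefactor. But $\mathcal O_m$ consists of functions holomorphic on $\{\Re(z)>(1-m)/2\}$, and for $m\leq p$ that half-plane does not reach $r=(1-p)/2$; so Proposition \ref{hii} alone does not give holomorphy of $\Phi^r_{m,0}$ at the critical point for the smaller $m$ appearing in $\Xi^r$. What saves the argument is the isolated dimension spectrum hypothesis: either one runs the recursion from Lemma \ref{hyy} at $t=0$, $B\Phi^r_{m+1,0}+b\Phi^r_{m-1,0}=(r-(1-p)/2)\phi^r_{m,0}$, downward from $m=M$ (where $B\Phi^r_{M+1,0}$ \emph{is} holomorphic near the critical point by Lemma \ref{lem:crucial}) together with the meromorphic continuation of $\phi^r_{m,0}$ from Proposition \ref{hee}; or one observes that the coboundary $(b+B)\Xi^r$ equals a sum of terms already known to continue meromorphically, so the residue of the right-hand side is intrinsically defined. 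The paper glosses over this as well, so this is a refinement rather than a flaw, but it is the one spot where the citation you give does not quite close the gap.
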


\begin{prop}
\label{prop:t1-t2}
Let $R,\,T\in[0,1]$. Then,
modulo coboundaries and cochains yielding holomorphic functions at the 
critical point $r=(1-p)/2$,
we have
$(\phi_{m,R}^r)_{m=\bullet}^M=(\phi_{m,T}^r)_{m=\bullet}^M$.
\end{prop}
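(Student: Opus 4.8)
\textbf{Proof plan for Proposition \ref{prop:t1-t2}.}

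The plan is to show that the derivative in $t$ of the resolvent cocycle $(\phi_{m,t}^r)_{m=\bullet}^M$ is itself a coboundary, plus terms that are holomorphic at $r=(1-p)/2$, and then integrate from $t=R$ to $t=T$. Concretely, Proposition \ref{generalt} gives $\frac{d}{dt}\phi^r_{m,t}=-(p/2+r)\,\phi^{r+1}_{m,t}$, while Lemma \ref{hyy} gives the relation
\begin{equation*}
B\Phi_{m+1,t}^r+b\Phi_{m-1,t}^r=\Big(\tfrac{p-1}2+r\Big)\phi^r_{m,t}-t\,\tfrac{p+2r}2\phi^{r+1}_{m,t}.
\end{equation*}
Differentiating this last identity in $t$ and combining with Proposition \ref{generalt} should let me express $-(p/2+r)\phi^{r+1}_{m,t}$, i.e. $\frac{d}{dt}\phi^r_{m,t}$, in terms of $\frac{d}{dt}(B\Phi^r_{m+1,t}+b\Phi^r_{m-1,t})$ and lower-order pieces; equivalently I would directly rearrange Lemma \ref{hyy} to solve for $\phi^r_{m,t}$ and then differentiate. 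The upshot should be a formula of the shape
\begin{equation*}
\frac{d}{dt}\phi^r_{m,t}=(b+B)\big(\text{something in }\Phi,\tfrac{d}{dt}\Phi\big)+(\text{holomorphic at }r=(1-p)/2),
\end{equation*}
valid for $\Re(r)>1/2$, with all cochains reduced by Lemma \ref{chien}.

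Next I would integrate this identity over $t\in[R,T]$. Since $t\mapsto\phi^r_{m,t}$ is continuously differentiable with values in $\mathrm{Hom}(\A^{\otimes m+1},\mathcal O_m)$ (Proposition \ref{generalt}), and the differentiability/continuity in $t$ of the transgression cochains $\Phi^r_{m,t}$ follows from the same estimates as in Proposition \ref{hii} together with the $t$-differentiability of $AR_{s,t}^n$ in Schatten norm established inside the proof of Proposition \ref{generalt}, the fundamental theorem of calculus applies to give
\begin{equation*}
\phi^r_{m,T}-\phi^r_{m,R}=(b+B)\Big(\int_R^T(\cdots)\,dt\Big)+(\text{holomorphic at }r=(1-p)/2).
\end{equation*}
Because $b$ and $B$ commute with the $t$-integral (they are continuous linear operators on the relevant spaces of cochains), the integrated term is again a $(b,B)$-coboundary in the reduced complex. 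This is exactly the assertion: $(\phi^r_{m,R})_{m=\bullet}^M$ and $(\phi^r_{m,T})_{m=\bullet}^M$ differ by a coboundary plus cochains holomorphic at the critical point. (This is the nonunital analogue of the argument in \cite{CPRS4}; the only new point over that reference is the need to keep everything in the reduced bicomplex for $\A^\sim$, which Lemma \ref{chien} guarantees, and the nonunital continuity estimates, which are Propositions \ref{hii} and \ref{generalt}.)

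I expect the main obstacle to be bookkeeping rather than anything deep: one must carefully track which terms produced along the way are genuinely coboundaries and which are merely holomorphic at $r=(1-p)/2$, and verify that the $t$-derivative of $\Phi^r_{m,t}$ used in the fundamental-theorem-of-calculus step exists as a continuous $\mathcal O_m$-valued cochain on $\A^\sim\otimes\A^{\otimes m}$ — this requires rerunning the $s$- and $\lambda$-tricks (Lemmas \ref{s-trick}, \ref{lambda-trick}) and the Hölder/Schatten estimates of Lemmas \ref{lem:crucial}, \ref{lem:was-5.3}, \ref{interpolation} uniformly in $t\in[R,T]$, just as in the proof of Proposition \ref{generalt}. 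The secondary subtlety is the constant factor $(p/2+r)$ appearing when converting between $\phi^r$ and $\phi^{r+1}$: one must ensure its zero at $r=-p/2$ lies outside the half-plane $\Re(r)>(1-p)/2$ (it does, since $-p/2<(1-p)/2$), so that dividing by it does not introduce a spurious pole at the critical point. Once these continuity points are in place, the algebraic identity from differentiating Lemma \ref{hyy} and integrating finishes the proof.
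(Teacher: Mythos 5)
Your approach is essentially the paper's: integrate the $t$-derivative $\frac{d}{dt}\phi^r_{m,t}=-(p/2+r)\phi^{r+1}_{m,t}$ from Proposition~\ref{generalt} using Lemma~\ref{hyy} to absorb what you can into a $(b,B)$-coboundary. But the proposal stops one step short, and that one step is the crux. After you rearrange Lemma~\ref{hyy} to solve for $\phi^r_{m,t}$ and then differentiate (or substitute under the $t$-integral), you are left with a remainder proportional to $\phi^{r+1}_{m,t}$ (or $\phi^{r+2}_{m,t}$), and this is \emph{not} in general holomorphic at $r=(1-p)/2$. The function $r\mapsto\phi^{r+j}_{m,t}$ is holomorphic only for $\Re(r)>(1-m)/2-j$, which contains the critical point $(1-p)/2$ precisely when $j>(p-m)/2$. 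For the smallest degree $m=\bullet$ this forces $j$ to be roughly $(M-\bullet)/2+1$ when $p$ is large. So a single application of Lemma~\ref{hyy} leaves a genuinely pole-bearing remainder, and the claimed decomposition ``$\frac{d}{dt}\phi^r_{m,t}=(b+B)(\cdots)+\text{holomorphic}$'' does not hold after one substitution. The paper's proof iterates the substitution exactly $L=(M-\bullet)/2$ times (with the identity applied at shifted parameters $r+1,\dots,r+L$), which is the least $L$ making $\phi^{r+L+1}_{m,t}$ holomorphic at $(1-p)/2$ for every $m$ in the range $\bullet,\bullet+2,\dots,M$; this is the nontrivial content that your outline omits. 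You do flag the rational prefactor $(p/2+r)/(r+1+(p-1)/2)$ and check its zero is harmless, but the more serious obstruction is the location of the domain of holomorphy of the shifted cocycle, not the prefactor.

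A secondary point: when you assemble the coboundary of the truncated cochain $(\Phi^{r+j}_{1-\bullet,t},\dots,\Phi^{r+j}_{M-1,t},0,\dots)$, Lemma~\ref{hyy} produces a $B\Phi^{r+j}_{M+1,t}$ term that spills over the top degree $M$ and is \emph{not} part of the coboundary. Your phrase ``the integrated term is again a $(b,B)$-coboundary'' glosses over this. The paper must (and does) dispose of this leftover by a separate observation: because $j\geq 1$, the cochain $\int_R^T\Phi^{r+j}_{M+1,t}t^{j-1}\,dt$ is itself holomorphic at $r=(1-p)/2$ by Lemma~\ref{lem:crucial}, so it can be discarded modulo holomorphic cochains rather than absorbed into a coboundary. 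Once you incorporate the iteration and this final holomorphy check, your outline closes up to match the paper's argument.
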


\begin{proof}
Replacing  $r$ by $r+k$ in Proposition \ref{hyy} yields the formula
\begin{equation}
\phi^{r+k}_{m,t}
=\frac{1}{r+k+(p-1)/2}\left(B\Phi^{r+k}_{m+1,t} + b\Phi^{r+k}_{m-1,t} 
+\left(\frac{p}{2} +r +k\right) t\phi^{r+k+1}_{m,t}\right).
\label{eq:iterate}
\end{equation}
Recall from Proposition \ref{generalt} that for $\D$ invertible,
$\phi^r_{m,t}$ is defined and holomorphic for $\Re(r)>(1-m)/2$ for all
$t\in[0,1]$. As $[0,1]$ is compact, the integral
$$
\int_0^1\phi^r_{m,t}(a_0,\dots,a_m)dt,
$$
is holomorphic for $\Re(r)>(1-m)/2$ and any $a^0,\dots,a^m\in\A$. 
Now we make some simple observations, omitting the
variables $a_0,\dots,a_m$ to lighten the notation. For $T,\,R\in[0,1]$ we
have
\begin{equation}
\phi^r_{m,T}-\phi^r_{m,R}=\int_R^T \frac{d}{dt}\phi^r_{m,t}dt
=-(p/2+r)\int_R^T\phi^{r+1}_{m,t}dt.
\label{dopeytrick}
\end{equation}
Now apply the formula of Equation \eqref{eq:iterate} iteratively. At the first step we have
$$
\phi^r_{m,T}-\phi^r_{m,R}=\frac{-(p/2+r)}{r+1+(p-1)/2}
\int_R^T\left(B\Phi^{r+1}_{m+1,t} + b\Phi^{r+1}_{m-1,t} 
+\left(\frac{p}{2} +r +1\right) t\phi^{r+2}_{m,t}\right)dt.
$$
Observe that the numerical factors are holomorphic at
$r=(1-p)/2$. 
Iterating this procedure $L$ times gives us
\begin{align*}\phi^r_{m,T}-\phi^r_{m,R}&=\frac{-(p/2+r)\cdots(p/2+r+L)}
{(r+1+(p-1)/2)\cdots(r+L+(p-1)/2)}
\int_R^T t^L\phi^{r+L+1}_{m,t}dt\\
&+\sum_{j=1}^L\frac{-(p/2+r)\cdots(p/2+r+j-1)}{(r+1+(p-1)/2)\cdots
(r+j+(p-1)/2)}\int_R^T\left(B\Phi^{r+j}_{m+1,t} + b\Phi^{r+j}_{m-1,t} 
\right)t^{j-1}dt.
\end{align*}
In fact the smallest $L$ guaranteeing that $\phi^{r+L+1}_{m,t}$ is holomorphic at
$r=(1-p)/2$ for all $m$ is $(M-\bullet)/2$. See \cite[Lemma 5.20]{CPRS4} for a proof.
With this choice of $L=(M-\bullet)/2$, we have modulo cochains yielding functions 
holomorphic in a
half plane containing $(1-p)/2$,
$$
\phi^r_{m,T}-\phi^r_{m,R}=\sum_{j=1}^L\frac{-(p/2+r)\cdots(p/2+r+j-1)}
{(r+1+(p-1)/2)\cdots(r+j+(p-1)/2)}\int_R^T\left(B\Phi^{r+j}_{m+1,t}
  + b\Phi^{r+j}_{m-1,t} \right)t^{j-1}dt.$$
Thus a simple rearrangement yields the cohomology, valid for $\Re(r)>(1-\bullet)/2$,
\begin{align*}
&(\phi^r_{m,T}-\phi^r_{m,R})_{m=\bullet}^M-B\sum_{j=1}^L\frac{-(p/2+r)\cdots
(p/2+r+j-1)}{(r+1+(p-1)/2)\cdots(r+j+(p-1)/2)}
\int_R^T\Phi^{r+j}_{M+1,t}t^{j-1}dt\\
&=(B+b)\left(\sum_{j=1}^L\frac{-(p/2+r)\cdots(p/2+r+j-1)}{(r+1+(p-1)/2)
\cdots(r+j+(p-1)/2)}\int_R^T\Phi^{r+j}_{m,t}t^{j-1}dt\right)_{m=1-\bullet}^{M-1}.
\end{align*}
Hence modulo coboundaries and cochains yielding functions holomorphic at
$r=(1-p)/2$, we have the equality
$$
(\phi^r_{m,T}-\phi^r_{m,R})_{m=\bullet}^M=B\sum_{j=1}^L\frac{-(p/2+r)
\cdots(p/2+r+j-1)}{(r+1+(p-1)/2)\cdots(r+j+(p-1)/2)}\int_R^T
\Phi^{r+j}_{M+1,t}t^{j-1}dt.
$$
However, an application of Lemma \ref{lem:crucial}
now shows that the right hand side is holomorphic at $r=(1-p)/2$,
since $j\geq 1$ in all cases. Hence,
modulo coboundaries and cochains yielding functions holomorphic at
$r=(1-p)/2$, we have 
$$
(\phi^r_{m,T})_{m=\bullet}^M=(\phi^r_{m,R})_{m=\bullet}^M,
$$
which is the equality we were looking for.
\end{proof}

\begin{corollary}
\label{wotwewanted} 
Modulo coboundaries and cochains 
yielding functions holomorphic in a half plane containing
$r=(1-p)/2$, we have the equality
$$
(\phi^r_m)_{m=\bullet}^M:=(\phi^r_{m,1})_{m=\bullet}^M=B\Phi^r_{M+1,0}.
$$
\end{corollary}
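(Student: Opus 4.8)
\textbf{Proof proposal for Corollary \ref{wotwewanted}.}

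The plan is to combine three facts established just above: Proposition \ref{prop:t1-t2} (homotopy invariance in $t$ up to coboundaries and holomorphic corrections), Proposition \ref{rrrr} (the $t=0$ resolvent cocycle is cohomologous to $(r-(1-p)/2)^{-1}B\Phi^r_{M+1,0}$), and the relation $b\Phi^r_{M-1,0}+B\Phi^r_{M+1,0}=(r-(1-p)/2)\phi^r_{M,0}$ coming from Lemma \ref{hyy} at $t=0$. The strategy is purely formal bookkeeping in the reduced $(b,B)$-bicomplex, working throughout modulo two kinds of negligible terms: $(b+B)$-coboundaries, and cochains whose components are functions of $r$ holomorphic in a half-plane containing the critical point $r=(1-p)/2$.

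First I would invoke Proposition \ref{prop:t1-t2} with $T=1$ and $R=0$ to conclude that, modulo coboundaries and cochains holomorphic at $r=(1-p)/2$, one has the equality of cochains
$$
(\phi^r_m)_{m=\bullet}^M=(\phi^r_{m,1})_{m=\bullet}^M=(\phi^r_{m,0})_{m=\bullet}^M.
$$
Next I would apply Proposition \ref{rrrr}, which states that $(\phi^r_{m,0})_{m=\bullet}^M$ is cohomologous (again modulo cochains holomorphic at the critical point) to the single-term cochain $(r-(1-p)/2)^{-1}B\Phi^r_{M+1,0}$. Chaining these two equivalences gives
$$
(\phi^r_m)_{m=\bullet}^M = (r-(1-p)/2)^{-1}B\Phi^r_{M+1,0}
$$
modulo coboundaries and holomorphic cochains. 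The only remaining point is to absorb the scalar factor $(r-(1-p)/2)^{-1}$: since $B\Phi^r_{M+1,0}$ is, by Lemma \ref{hyy} (or the displayed relation at the end of the proof of Proposition \ref{rrrr}), equal to $(r-(1-p)/2)\phi^r_{M,0}-b\Phi^r_{M-1,0}$, and since $b\Phi^r_{M-1,0}$ is holomorphic at $r=(1-p)/2$ (as noted in Proposition \ref{rrrr}, because the total pseudodifferential order of its entries is one, so Lemma \ref{lem:crucial} gives holomorphy for $\Re(r)>(1-p)/2-\delta$), one sees that $(r-(1-p)/2)^{-1}B\Phi^r_{M+1,0}$ and $B\Phi^r_{M+1,0}$ differ only by the cochain $\phi^r_{M,0}$-type terms times a unit, hence agree modulo the allowed negligible terms after clearing the scalar. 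Carefully: one writes $(r-(1-p)/2)^{-1}B\Phi^r_{M+1,0}=\phi^r_{M,0}-(r-(1-p)/2)^{-1}b\Phi^r_{M-1,0}$, and the last term, being a $b$-coboundary applied to a holomorphic-at-critical-point cochain divided by a scalar with a simple zero, is itself a coboundary modulo holomorphic cochains, so it is negligible; meanwhile $\phi^r_{M,0}$ differs from $B\Phi^r_{M+1,0}$ by $b\Phi^r_{M-1,0}$, again negligible. Thus $(\phi^r_m)_{m=\bullet}^M=B\Phi^r_{M+1,0}$ modulo coboundaries and cochains holomorphic at $r=(1-p)/2$, which is the claim.

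The main obstacle, and the one requiring genuine care rather than symbol-pushing, is the manipulation of the scalar factor $(r-(1-p)/2)^{-1}$: one must verify that dividing a coboundary $b\Phi^r_{M-1,0}$ (resp.\ the whole relation) by a scalar vanishing simply at the critical point does not introduce a genuine pole, i.e.\ that the apparent singularity is cancelled by the known holomorphy of $b\Phi^r_{M-1,0}$ at $r=(1-p)/2$ established via Lemma \ref{lem:crucial}. Equivalently, one must be careful that ``cohomologous modulo holomorphic cochains'' is preserved under multiplication by the meromorphic unit $(r-(1-p)/2)^{\pm1}$ in the relevant half-plane; this is where the precise bookkeeping of which cochains are holomorphic where (Proposition \ref{hjj}, Proposition \ref{hii}, Lemma \ref{hoo}) must be marshalled. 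Everything else is a direct concatenation of the preceding propositions.
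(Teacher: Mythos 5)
Your first two steps — applying Proposition \ref{prop:t1-t2} with $T=1$, $R=0$ to pass from $t=1$ to $t=0$ modulo coboundaries and holomorphics, and then Proposition \ref{rrrr} to identify $(\phi^r_{m,0})_{m=\bullet}^M$ with $\frac{1}{r-(1-p)/2}B\Phi^r_{M+1,0}$ modulo the same — are exactly the intended argument, and they already finish the proof. The displayed equation in the corollary is missing the scalar $\frac{1}{r-(1-p)/2}$ on the right-hand side: the sentence immediately after the corollary says ``the resolvent cocycle is $(b,B)$-cohomologous to the cocycle $(r-(1-p)/2)^{-1}B\Phi^r_{M+1,0}$'', Corollary \ref{hgg} states the residue cocycle is cohomologous to $B\Phi^{(1-p)/2}_{M+1,0}$ (which is the residue of $\frac{1}{r-(1-p)/2}B\Phi^r_{M+1,0}$, not of $B\Phi^r_{M+1,0}$), and the proof of Theorem \ref{thm:final-cohoms} compares the resolvent cocycle to $\frac{1}{r-(1-p)/2}B\Phi^{r}_{M+1,0,u}$ throughout. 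So the scalar should be retained, and you should stop after step 2.

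Your step 3, which attempts to absorb the scalar and establish the statement as literally printed, therefore tries to prove something false, and the argument contains an error. From Lemma \ref{hyy} at $t=0$ one has $B\Phi^r_{M+1,0}+b\Phi^r_{M-1,0}=(r-(1-p)/2)\,\phi^r_{M,0}$, so $\phi^r_{M,0}$ differs from $B\Phi^r_{M+1,0}$ not by the coboundary term $b\Phi^r_{M-1,0}$ alone, as you assert, but by $b\Phi^r_{M-1,0}+\big((r-(1-p)/2)-1\big)\phi^r_{M,0}$; the second summand is neither a coboundary nor holomorphic at $r=(1-p)/2$, so it cannot be discarded. A pole count confirms the unscaled equality is genuinely false: integrating Corollary \ref{hff} in $u$ shows $B\Phi^r_{M+1,0}$ is cohomologous modulo negligibles to $B\Phi^r_{M+1,0,1}$, and the explicit formula in the proof of Theorem \ref{thm:final-cohoms} exhibits the latter, in the even case $\bullet=0$, as holomorphic at $r=(1-p)/2$ (every summand there carries at least one factor $(r+(p-1)/2)$ against the simple pole of $C_{p/2+r}$). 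If $(\phi^r_m)_{m=\bullet}^M$ were cohomologous to $B\Phi^r_{M+1,0}$ modulo negligibles, the even resolvent cocycle would itself be negligible, contradicting the fact that it has a simple pole at the critical point whose residue is the (generally nonzero, index-computing) residue cocycle.
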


Thus at this point we have shown that the resolvent cocycle is $(b,B)$-cohomologous to the cocycle
$(r-(1-p)/2)^{-1}B\Phi^r_{M+1,0}$ (modulo functions holomorphic at $r=(1-p)/2$),
while the residue cocycle is $(b,B)$-cohomologous to
$B\Phi^{(1-p)/2}_{M+1,0}$. We remark that $B\Phi^{(1-p)/2}_{M+1,0}$ is well-defined (i.e. finite)
by an application of Lemma \ref{lem:crucial}.

Our aim now is to use the map $[0,1]\ni u\to \D|\D|^{-u}$ to obtain a homotopy from 
$B\Phi^{(1-p)/2}_{M+1,0}$ to the Chern character. This is the most technically difficult 
part of the proof, and we defer the proof of the next lemma to the 
Appendix, Lemma \ref{lem:diff1-app}.
This lemma proves a trace class differentiability result.

\begin{lemma}
\label{diff1}
For  $a_0,\dots,a_M\in\A$ and $l=0,\dots,M$, we let
$$
 T_{s,\lambda,l}(u):=d_u(a_0)\,R_{s,u}(\lambda)\cdots d_u(a_l)\,R_{s,u}(\lambda)\,\D_u\,
R_{s,u}(\lambda)\,d_u(a_{l+1})\,R_{s,u}(\lambda)\cdots d_u(a_M)\,R_{s,u}(\lambda).
$$
Then  the map
$\big[u\mapsto T_{s,\lambda,l}(u)\big]$ is continuously differentiable for the 
trace norm topology.
Moreover, with  $R_u:=R_{s,u}(\lambda)$ 
and $\dot{\D_u}=-\D_u\log|\D|$, we obtain
\begin{align}
\label{norm-derivative}
\frac{dT_{s,\lambda,l}}{du}(u)\nonumber
&=\sum_{k=0}^M d_u(a_0)\,R_u\cdots R_u\,d_u(a_k)
\,(2R_u \,\D_u\,\dot{\D_u}\,R_u)\,d_u(a_{k+1})\,R_u\cdots d_u(a_M)\,R_u\\\nonumber
&\quad+ d_u(a_0)\,R_u\cdots R_u\,d_u(a_l)\,R_u\,\D_u\,(2R_u \,\D_u\,\dot{\D_u}\,R_u)\,d_u(a_{l+1})
\cdots R_u\,d_u(a_M)\,R_u\\\nonumber
&\quad+ \sum_{k=0}^M d_u(a_0)\,R_u\,d_u(a_1)\,R_u\cdots R_u
[\dot{\D_u},a_k]\,R_u\cdots R_u\,d_u(a_M)\,R_u\\
&\quad+ d_u(a_0)\,R_u\,d_u(a_1)\,R_u\cdots R_u\,
d_u(a_l)\,R_u\,\dot{\D_u}\,R_u\,d_u(a_{l+1})\cdots R_u\,d_u(a_M)\,R_u.
\end{align}
\end{lemma}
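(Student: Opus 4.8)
\textbf{Proof plan for Lemma \ref{diff1}.}

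The plan is to reduce the statement to a single analytic building block: that for a fixed operator $A \in {\rm OP}^0_0$ (or more generally $A$ of the form $d_u(a)$ or $\D_u$, or products thereof sandwiched by resolvents), the map $u \mapsto A\, R_{s,u}(\lambda)$ is continuously differentiable in the relevant Schatten norm, and that the derivative satisfies a Leibniz-type formula. Once this building block is established, the formula \eqref{norm-derivative} follows by the product rule applied to the finite product $T_{s,\lambda,l}(u)$, collecting the three sources of $u$-dependence: (i) the resolvents $R_u := R_{s,u}(\lambda) = (\lambda - (s^2 + \D_u^2))^{-1}$, whose derivative in $u$ produces, by the resolvent identity, a factor $2 R_u\, \D_u\, \dot{\D}_u\, R_u$ at each resolvent slot (this is the first and second lines of the claimed formula, the extra line coming from the explicit $\D_u$ factor in the middle of $T_{s,\lambda,l}$); (ii) the differentials $d_u(a_k) = [\D_u, a_k]$, whose $u$-derivative is $[\dot{\D}_u, a_k] = [\,\dot{\D}_u, a_k\,]$ (third line); and (iii) the explicit middle $\D_u$, whose derivative is $\dot{\D}_u$ (fourth line). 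Here $\dot{\D}_u = -\D_u \log|\D|$, which I would justify by differentiating $\D_u = \D|\D|^{-u}$ using the functional calculus on the spectral subspace $\D^2 \geq \mu^2$, so that $\log|\D|$ is bounded below and the exponent $|\D|^{-u}$ is manipulated as $e^{-u\log|\D|}$.

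The technical content is all in step (i), where one must control the Schatten norms of the operators appearing. The key inputs are Lemma \ref{interpolation}, Lemma \ref{truc} and Lemma \ref{lem:was-5.3}: writing $d_u(a) = [\D_u, a] = F[|\D|^{1-u}, a] + (da - F\delta(a))|\D|^{-u}$ as in the paragraph preceding Definition \ref{tauxiliaries}, Lemma \ref{truc} gives $d_u(a) \in \cl^q(\cn,\tau)$ for $q > p/u$, and combining with $R_{s,u}(\lambda) = |\D|^{-2(1-u)} B(u)$ with $B(u)$ uniformly bounded (again from that paragraph) one sees that appropriately many resolvent factors bring the total order low enough that each summand in $T_{s,\lambda,l}(u)$ and in \eqref{norm-derivative} is trace class, uniformly in $s$ on compacta and with the $\lambda$-decay needed for the Cauchy and $s$-integrals to converge. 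For the continuity of the derivative, I would argue exactly as in the proof of Proposition \ref{generalt}: use the resolvent identity to write
\[
A\bigl(\varepsilon^{-1}(R_{s,u+\varepsilon}^n - R_{s,u}^n) + (\text{main term})\bigr) = (\text{bounded, Schatten-small factor}),
\]
where the bracketed remainder tends to zero in operator norm because the $\D_u$'s and their logarithms vary norm-continuously on the spectral subspace $\D^2 \geq \mu^2$, and then conclude by Hölder's inequality. The case $p \geq 2$ is cleanest; for $1 \leq p < 2$ fewer resolvents suffice to gain integrability but the bookkeeping of how to distribute the exponents $|\D|^{-2(1-u)}$ among the $d_u(a_k)$ factors is more delicate, and this is where I expect the main obstacle to lie — one must be careful that the exponent $u$ does not degenerate at the endpoint $u = 0$, which is precisely why the double construction (so that $\D_\mu$ is invertible) and the restriction to the homotopy parameter $u \in [0,1]$ with the critical residue taken at the end are needed.

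Since the bulk of this estimation overlaps heavily with the arguments already appearing in the proofs of Lemma \ref{lem:crucial}, Proposition \ref{hii} and Proposition \ref{generalt}, the honest plan is to set up the Leibniz expansion carefully, record the trace-class bounds for each of the four families of terms in \eqref{norm-derivative}, and then refer to the Appendix (Lemma \ref{lem:diff1-app}) for the full verification of the $\varepsilon \to 0$ limits, exactly as the excerpt indicates is done. The one genuinely new ingredient beyond the earlier lemmas is handling the unbounded factor $\log|\D|$ inside $\dot{\D}_u$; this is absorbed using the second part of Lemma \ref{truc} (which explicitly allows an extra $\log(1+\D^2)$ factor in the commutator and, via $|\D| - (1+\D^2)^{1/2}$ bounded, an extra $\log|\D|$), so no new estimate is really required.
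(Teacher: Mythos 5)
Your proposal is correct in outline and follows essentially the same approach as the paper's Appendix proof: expand $T_{s,\lambda,l}(u)$ by the Leibniz rule, recognize the three sources of $u$-dependence (resolvents, commutators $d_u(a_k)$, and the explicit middle $\D_u$), use the factorization $d_u(a)=F[|\D|^{1-u},a]+(da-F\delta(a))|\D|^{-u}$ together with $R_{s,u}(\lambda)=|\D|^{-2(1-u)}B(u)$ and Lemmas \ref{interpolation} and \ref{truc} to land in $\cl^1$ via H\"older, and handle the $\log|\D|$ factor in $\dot{\D}_u$ via the second part of Lemma \ref{truc}. The one thing you underestimate is the difference-quotient step: it is genuinely harder than the $t$-differentiability in Proposition \ref{generalt}, since the increment $\D_{u+\eps}^2-\D_u^2=|\D|^{2(1-u-\eps)}-|\D|^{2(1-u)}$ is unbounded and the commutator increment $[\D_{u+\eps},a]-[\D_u,a]$ must be controlled; the paper does this with an integral representation of fractional powers and explicit bounds on operators of the form $(\eps^{-1}(|\D|^{-\eps}-1)+\log|\D|)|\D|^{-\rho}$, rather than the plain resolvent identity sufficient in the $t$-case — but you do correctly defer to the Appendix for exactly these estimates.
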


\begin{lemma}
\label{aagghhh} 
For  $a_0,\dots,a_M\in\A$  and for $r>(1-M)/2$, we have
\begin{align*}&(bB\Psi^r_{M,u})(a_0,\dots,a_M)
=\frac{d}{du}(B\Phi^r_{M+1,0,u})(a_0,\dots,a_M)\\
&\qquad-\eta_M(r+(p-1)/2)\sum_{i=0}^M(-1)^i\int_0^\infty s^M\langle
[\D_u,a_0],\dots,[\D_u,a_i],\dot{\D}_u,\dots,[\D_u,a_M]\rangle_{M+1,r,s,0} \,ds,
\end{align*}
where the expectation uses the resolvent for $\D_u$, that is $R_{s,0,u}(\lambda)$.
Moreover, 
$$
r\mapsto -\eta_M\sum_{i=0}^M(-1)^i\int_0^\infty s^M\langle
[\D_u,a_0],\dots,[\D_u,a_i],\dot{\D}_u,\dots,[\D_u,a_M]\rangle_{M+1,r,s,0} \,ds,
$$
is a holomorphic function 
of $r$ in a right half plane containing the critical point $r=(1-p)/2$.
\end{lemma}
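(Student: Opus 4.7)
The plan is to establish this identity by a direct algebraic computation that parallels the proof of Lemma \ref{hyy}, combining the $u$-derivative formula from Lemma \ref{diff1} with the $s$- and $\lambda$-tricks of subsection \ref{subsec:properties}. First I would rewrite $B\Phi^r_{M+1,0,u}$ in the explicit form \eqref{formulaforBPhi} adapted to the parameter $u$, namely
$$
B\Phi^r_{M+1,0,u}(a_0,\dots,a_M)=-\frac{\eta_M}{2}\int_0^\infty s^M\langle\langle d_u(a_0),\dots,d_u(a_M)\rangle\rangle_{M,r,s,0}\,ds,
$$
and then invoke Lemma \ref{diff1}, which guarantees trace-norm differentiability of the integrand, to differentiate under the integral sign. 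The four families of terms in Equation \eqref{norm-derivative} then produce three kinds of $\dot\D_u$-insertions: commutators $[\dot\D_u,a_k]$ arising from the $u$-derivative of $d_u(a_k)$; free $\dot\D_u$ insertions coming from differentiating the explicit $\D_u$ sitting inside the $\langle\langle\cdot\rangle\rangle$ expectation; and pairs $2R_u\D_u\dot\D_u R_u$ from the $u$-derivative of each resolvent.

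In parallel, I would expand $bB\Psi^r_{M,u}$ using the definition of $B$ and the formula \eqref{useforb} of Lemma \ref{lala-la-identity}. Since $\Psi^r_{M,u}$ involves $\langle\langle a_0\dot\D_u,d_u(a_1),\dots,d_u(a_M)\rangle\rangle$, applying $B$ (using cyclicity, which holds in the required parity by Lemma \ref{lala-la-identity}) places $\dot\D_u$ as a free insertion; applying $b$ then produces $[\D_u^2,a_j]$-terms which, by \eqref{useforb}, equal $[\D_u,d_u(a_j)]_\pm$ up to $d_u(a_j)$ insertions. The strategy is then to match, term by term, the expansion of $(d/du)B\Phi^r_{M+1,0,u}$ against that of $bB\Psi^r_{M,u}$: the $[\dot\D_u,a_k]$-terms and the single-$\dot\D_u$ insertion terms cancel pairwise, while the residual $R_u\D_u\dot\D_u R_u$ contributions, after one application of the identity $\D_u^2=(\lambda-s^2)-R_u^{-1}$ followed by the $\lambda$-trick (Lemma \ref{lambda-trick}), collapse to exactly
$$
\eta_M\bigl(r+(p-1)/2\bigr)\sum_{i=0}^M(-1)^i\int_0^\infty s^M\langle [\D_u,a_0],\dots,[\D_u,a_i],\dot\D_u,\dots,[\D_u,a_M]\rangle_{M+1,r,s,0}\,ds.
$$
The factor $(r+(p-1)/2)$ emerges from the $\lambda$-trick in precisely the way it did in Lemma \ref{hyy}, and the sign $(-1)^i$ records the position at which $\dot\D_u$ is reinserted after the cancellations.

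For the holomorphy claim, I would apply power-counting as in Lemma \ref{lem:crucial} and Proposition \ref{hii}. The integrand of the extra term contains $M+1$ entries $[\D_u,a_j]$, each a tame pseudodifferential operator of order zero by the analysis following Definition \ref{transgression}, together with one entry $\dot\D_u=-\D_u\log|\D|$ whose $\log|\D|$-factor is handled by Lemma \ref{truc}. With $M+2$ resolvent factors $R_{s,u}(\lambda)$ and total inserted order $1-u+\varepsilon$ for any $\varepsilon>0$, the trace-norm and Cauchy-integral estimates of Lemma \ref{lem:crucial} yield absolute convergence of the iterated integral for $\Re(r)>(1-M-1)/2+(1-u+\varepsilon)/2$, which, since $M\geq p$, is strictly to the left of $r=(1-p)/2$ for $u>0$, and also at $u=0$ once the logarithmic factor is absorbed.

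The hard part will be the bookkeeping in the matching step: there are several families of terms of comparable size coming from both sides of the identity, and organising them so that the non-matching residues cleanly assemble into the stated expression requires carefully tracking graded signs under each cyclic rearrangement, each application of Lemma \ref{lala-la-identity}, and each $\lambda$-trick. This is a direct but intricate generalisation of the combinatorics in the proof of Lemma \ref{hyy}, complicated by the additional $\dot\D_u$-insertions and the $u$-dependence of the resolvent.
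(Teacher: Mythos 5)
Your overall plan — differentiate $B\Phi^r_{M+1,0,u}$ under the integral via Lemma \ref{diff1}, expand $bB\Psi^r_{M,u}$ using the $\D_u$-versions of Lemmas \ref{lala-la-identity} and \ref{differentfort}, then compare term by term — is the same route the paper takes (the paper computes $Bb\Psi^r_{M,u}$ and passes to $bB$ via $bB=-Bb$, a cosmetic difference). However, your description of which families of terms cancel and which survive is backwards, and this matters because it determines the shape of the residual.

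The $u$-derivative of $B\Phi^r_{M+1,0,u}$ produces three families: $(i)$ terms with $2R_u\D_u\dot\D_u R_u$ insertions from differentiating resolvents, $(ii)$ terms with $[\dot\D_u,a_i]$ from differentiating $d_u(a_i)$, and $(iii)$ \emph{single-angle} expectations $(-1)^i\langle[\D_u,a_0],\dots,\dot\D_u,\dots,[\D_u,a_M]\rangle_{M+1,r,s,0}$, with coefficient $-\eta_M/2$, coming from differentiating the explicit $\D_u$ that the double-bracket $\langle\langle\cdot\rangle\rangle$ inserts. The computation of $bB\Psi^r_{M,u}$ yields families $(i)$ and $(ii)$ \emph{identically}, plus single-angle terms as in $(iii)$ but with coefficient $-(p+2r)\eta_M/2$. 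The families $(i)$ and $(ii)$ match and subtract away; the single-angle terms are what survive, with combined coefficient $-\eta_M/2 \cdot \big((p+2r)-1\big) = -\eta_M\big(r+(p-1)/2\big)$. You claim instead that $(ii)$ and $(iii)$ cancel and that the $R_u\D_u\dot\D_u R_u$ contributions "collapse" to the residual after applying $\D_u^2 R_u = -1+(\lambda-s^2)R_u$ and the $\lambda$-trick; this is not how the terms reorganise. The $(p+2r)$ factor in $bB\Psi$ arises because $b\Psi^r_{M,u}$ produces $[\D_u^2,a_j]$ commutators that, via Equation \eqref{nothertrick} and then the $\D_u$-version of Lemma \ref{differentfort}, introduce the coefficient $(p+2r)$ in front of an already-present single-angle $\dot\D_u$ insertion — it does not arise from the resolvent-derivative terms. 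Were you to carry out your proposed manipulation on the $R_u\D_u\dot\D_u R_u$ family, you would be trying to cancel something that is already cancelled and manufacture a residual that does not come from there.

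On the holomorphy claim, your power-counting sketch is reasonable and arguably more explicit than the paper's (which leaves it implicit after "This proves the result"); the point that $[\D_u,a_j]\in {\rm OP}^0_0$-like classes and that the $\log|\D|$ factor in $\dot\D_u$ can be absorbed at the cost of an arbitrary $\varepsilon>0$ via Lemma \ref{truc} and Proposition \ref{RTY} is the right idea. Just be careful at $u=0$: there the operators $[\D_u,a_j]$ and $\dot\D_u$ revert to $[\D,a_j]$ and $-\D\log|\D|$, and the extra order $1-\varepsilon$ from $\D\log|\D|$ is what you need to trade against the $M+2$ resolvents; since $M>p-1$ this still leaves room to the left of $r=(1-p)/2$.
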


\begin{proof}
Lemma \ref{diff1}, and together with arguments
of a similar nature, show that $\Psi^r_{M,u}$
and $\frac{d}{du}\Phi^r_{M+1,0,u}$ are well-defined and are
continuous. The proof of Lemma \ref{diff1} also shows that 
the formal differentiations given below are in fact justified.

First of all, using the $\D_u$ version of
Equation \ref{formulaforBPhi} of Lemma \ref{hyy} and the $R_u$ version of 
Definition \ref{expectation} to expand $(B\Phi^r_{M+1,0,u})(a_0,\dots,a_M)$,
we see that it is the sum of the $T_{s,\lambda,j}(u)$ and so its derivative is the
sum over $j$ of the derivatives in Lemma \ref{diff1}. Using the $R_u$
version of
Definition \ref{expectation} again to rewrite this in terms of 
$\langle\langle\cdots\rangle\rangle$ where possible, shows that
\begin{align*} &\frac{d}{du}(B\Phi^r_{M+1,0,u})(a_0,\dots,a_M)\\
=&-\frac{\eta_M}{2}\int_0^\infty s^M\sum_{i=0}^M\left(\langle\langle 
[\D_u,a_0],\dots,[\D_u,a_i],2\D_u\dot{\D}_u,\dots,[\D_u,a_M]\rangle\rangle_{M+1,s,r,0}\right.\\
&\qquad\qquad\qquad+\left.\langle\langle 
[\D_u,a_0],\dots,[\dot{\D}_u,a_i],\dots,[\D_u,a_M]\rangle\rangle_{M,s,r,0}\right) ds\\
&-\frac{\eta_M}{2}\int_0^\infty s^M\sum_{i=0}^M(-1)^i\langle 
[\D_u,a_0],\dots,[\D_u,a_i],\dot{\D}_u,\dots,[\D_u,a_M]\rangle_{M+1,s,r,0} ds.
\end{align*}
For the next step we compute $Bb\Psi^r_{M,u}$, and then use $bB=-Bb$. First we
apply $b$
\begin{align*} &(b\Psi^r_{M,u})(a_0,\dots,a_{M+1})=-\frac{\eta_M}{2}\int_0^\infty s^M\langle\langle 
a_0a_1\dot{\D}_u,[\D_u,a_2],\dots,[\D_u,a_{M+1}]\rangle\rangle_{M,s,r,0} ds\\
&-\frac{\eta_M}{2}\sum_{j=1}^M(-1)^j\int_0^\infty s^M\langle\langle 
a_0\dot{\D}_u,\dots,[\D_u,a_ja_{j+1}],\dots,[\D_u,a_{M+1}]\rangle\rangle_{M,s,r,0} ds\\
&-(-1)^{M+1}\frac{\eta_M}{2}\int_0^\infty s^M\langle\langle 
a_{M+1}a_0\dot{\D}_u,[\D_u,a_1],\dots,[\D_u,a_M]\rangle\rangle_{M,s,r,0} ds\\
&=-\frac{\eta_M}{2}\int_0^\infty s^M\sum_{j=1}^{M+1}(-1)^j\langle\langle 
a_0\dot{\D}_u,[\D_u,a_1],\dots,[\D^2_u,a_j],\dots,[\D_u,a_{M+1}]\rangle\rangle_{M+1,s,r,0} ds\\
&-\frac{\eta_M}{2}\int_0^\infty 
s^M\sum_{j=1}^{M+1}(-1)^j(-1)^{{\rm deg}(a_0\dot{\D}_u)+\cdots+{\rm deg}([\D_u,a_{j-1}])}\langle 
a_0\dot{\D}_u,[\D_u,a_1],\dots,[\D_u,a_{M+1}]\rangle_{M+1,s,r,0} ds\\
&+\frac{\eta_M}{2}\int_0^\infty s^M\langle\langle 
a_0[\dot{\D}_u,a_1],\dots,[\D_u,a_{M+1}]\rangle\rangle_{M,s,r,0} ds.
\end{align*}
The last equality follows from the 
$R_u$ version of Lemma \ref{lala-la-identity}. In the above, we note that
${\rm deg}(a_0\dot{\D_u})=1= {\rm deg}([\D_u,a_k])$ for all $k$ so that 
${\rm deg}(a_0\dot{\D_u})+\cdots+{\rm deg}([\D_u,a_{j-1}])=j$ and 
${\rm deg}(a_0\dot{\D_u})+\cdots+{\rm deg}([\D_u,a_{M+1}])=M+2\equiv \bullet(mod\ 2).$
We also note the commutator identity $[\D_u^2,a_j]=\{\D_u,[\D_u,a_j]\}=[\D_u,[\D_u,a_j]]_{\pm}$
so in order to apply the $\D_u$ version of Equation \eqref{nothertrick} of 
Lemma \ref{lala-la-identity} we first add and substract
\begin{equation*} 
-\frac{\eta_M}{2}\int_0^\infty s^M \langle\langle 
\{\D_u,a_0\dot{\D}_u\},[\D_u,a_1],\dots,[\D_u,a_{M+1}]\rangle\rangle_{M+1,s,r,0} ds,
\end{equation*}
and then an application of  Equation \eqref{nothertrick} yields
\begin{align*} 
&-2\frac{\eta_M}{2}\int_0^\infty s^M \sum_{j=0}^{M+1}\langle 
a_0\dot{\D}_u,\dots,[\D_u,a_j],\D^2_u,\dots,[\D_u,a_{M+1}]\rangle_{M+2,s,r,0} ds\\
&+\frac{\eta_M}{2}\int_0^\infty s^M\langle\langle 
a_0\{\D_u,\dot{\D}_u\}+[\D_u,a_0]\dot{\D}_u,[\D_u,a_1],\dots,[\D_u,a_{M+1}]\rangle\rangle_{M+1,s,r,0} ds\\
&-\frac{\eta_M}{2}(M+1)\int_0^\infty s^M \langle 
a_0\dot{\D}_u,[\D_u,a_1],\dots,[\D_u,a_{M+1}]\rangle_{M+1,s,r,0} ds\\
&+\frac{\eta_M}{2}\int_0^\infty s^M \langle\langle 
a_0[\dot{\D}_u,a_1],\dots,[\D_u,a_{M+1}]\rangle\rangle_{M,s,r,0} ds.
\end{align*}
Then we apply the $\D_u$ version of Lemma \ref{differentfort} to obtain
\begin{align*}
&\frac{\eta_M}{2}(p +2r)\int_0^\infty s^M\langle 
a_0\dot{\D}_u,[\D_u,a_1],\dots,[\D_u,a_{M+1}]\rangle_{M+1,s,r,0} ds\\
&+\frac{\eta_M}{2}\int_0^\infty s^M\langle\langle 
a_0\{\D_u,\dot{\D}_u\}+[\D_u,a_0]\dot{\D}_u,[\D_u,a_1],\dots,[\D_u,a_{M+1}]\rangle\rangle_{M+1,s,r,0} ds\\
&+\frac{\eta_M}{2}\int_0^\infty s^M \langle\langle 
a_0[\dot{\D}_u,a_1],\dots,[\D_u,a_{M+1}]\rangle\rangle_{M,s,r,0} ds.
\end{align*}
The next step is to apply $B$ to these three terms, producing
\begin{align*} 
&(Bb\Psi^r_{M,u})(a_0,\dots,a_M)\\
&=(p+2r)\frac{\eta_M}{2}\sum_{j=0}^M(-1)^{(M+1)j}\int_0^\infty s^M\langle 
\dot{\D}_u,[\D_u,a_j],\dots,[\D_u,a_{j-1}]\rangle_{M+1,s,r,0} ds\\
&+\frac{\eta_M}{2}\sum_{j=0}^M(-1)^{(M+1)j}\int_0^\infty s^M \langle\langle 
\{\D_u,\dot{\D}_u\},[\D_u,a_j],\dots,[\D_u,a_{j-1}]\rangle\rangle_{M+1,s,r,0} ds\\
&+\frac{\eta_M}{2}\sum_{j=0}^M(-1)^{(M+1)j}\int_0^\infty s^M\langle\langle 
[\dot{\D}_u,a_j],\dots,[\D_u,a_{j-1}]\rangle\rangle_{M,s,r,0} ds,
\end{align*}
which is identical to
\begin{align*}
&\frac{(p+2r)\eta_M}{2}\sum_{j=0}^M(-1)^{(M+1)j+(1-\bullet)j}\!\!\int_0^\infty s^M\langle 
[\D_u,a_0],\dots,[\D_u,a_{j-1}],\dot{\D}_u,\dots,[\D_u,a_M]\rangle_{M+1,s,r,0}ds\\
&+\frac{\eta_M}{2}\sum_{j=0}^M(-1)^{(M+1)j+(2-\bullet)j}\!\!\int_0^\infty s^M 
\langle\langle[\D_u,a_0],\dots, 
\{\D_u,\dot{\D}_u\},[\D_u,a_j],\dots,[\D_u,a_M]\rangle\rangle_{M+1,s,r,0} ds\\
&+\frac{\eta_M}{2}\sum_{j=0}^M(-1)^{(M+1)j+(2-\bullet)j}\!\!\int_0^\infty 
s^M\langle\langle[\D_u,a_0],\dots,[\D_u,a_{j-1}], 
[\dot{\D}_u,a_j],\dots,[\D_u,a_M]\rangle\rangle_{M,s,r,0} ds.
\end{align*}
This last expression equals 
\begin{align*}
&(p+2r)\frac{\eta_M}{2}\sum_{j=0}^M(-1)^j\int_0^\infty s^M\langle 
[\D_u,a_0],\dots,[\D_u,a_{j-1}],\dot{\D}_u,\dots,[\D_u,a_M]\rangle_{M+1,s,r,0} ds\\
&+\frac{\eta_M}{2}\sum_{j=0}^M\int_0^\infty s^M \langle\langle[\D_u,a_0],\dots, 
2\D_u\dot{\D}_u,[\D_u,a_j],\dots,[\D_u,a_M]\rangle\rangle_{M+1,s,r,0} ds\\
&+\frac{\eta_M}{2}\sum_{j=0}^M\int_0^\infty 
s^M\langle\langle[\D_u,a_0],\dots,[\D_u,a_{j-1}], 
[\dot{\D}_u,a_j],\dots,[\D_u,a_M]\rangle\rangle_{M,s,r,0} ds.
\end{align*}
Using $bB=-Bb$, and our formula for 
$\frac{d}{du}(B\Phi_{M+1,0,u}^r)(a_0,\dots,a_M)$ gives
\begin{align*} 
&(bB\Psi^r_{M,u})(a_0,\dots,a_M)\\
&=-(p+2r)\frac{\eta_M}{2}\sum_{j=0}^M(-1)^j\int_0^\infty s^M\langle 
[\D_u,a_0],\dots,[\D_u,a_{j-1}],\dot{\D}_u,\dots,[\D_u,a_M]\rangle_{M+1,s,r,0} ds\\
&+\frac{\eta_M}{2}\sum_{i=0}^M(-1)^i\int_0^\infty s^M \langle 
[\D_u,a_0],\dots,[\D_u,a_i],\dot{\D}_u,\dots,[\D_u,a_M]\rangle_{M+1,s,r,0} ds\\
&+\frac{d}{du}(B\Phi_{M+1,0,u}^r)(a_0,\dots,a_M).
\end{align*}
This proves the result.
\end{proof}
Thus we have proven the following key statement.

\begin{corollary}
\label{hff}
We have
\ben
\frac{1}{(r+(p-1)/2)}(bB\Psi^r_{u,M})(a_0,\dots,a_M)=\frac{1}{(r+(p-1)/2)}\frac{d}
{du}(B\Phi^{r}_{M+1,0,u})(a_0,\dots,a_M)+holo(r),
\een
where $holo$ is analytic for $\Re(r)>-M/2$, and by taking residues
$$
(bB\Psi^{(1-p)/2}_{M,u})(a_0,\dots,a_M)=\frac{d}{du}(B\Phi^{(1-p)/2}_{M+1,0,u})(a_0,\dots,a_M).
$$
\end{corollary}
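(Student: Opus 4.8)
The statement is an immediate consequence of Lemma \ref{aagghhh} once one divides by the numerical factor $(r+(p-1)/2)$ and examines the order of vanishing of the extra term. So the plan is essentially to assemble two ingredients that are already in hand: the algebraic identity of Lemma \ref{aagghhh}, and a holomorphy (hence finite-residue) statement for the extra term.

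First I would take the identity proved in Lemma \ref{aagghhh},
\begin{align*}
(bB\Psi^r_{M,u})(a_0,\dots,a_M)
&=\frac{d}{du}(B\Phi^r_{M+1,0,u})(a_0,\dots,a_M)\\
&\quad-\eta_M(r+(p-1)/2)\sum_{i=0}^M(-1)^i\int_0^\infty s^M\langle
[\D_u,a_0],\dots,[\D_u,a_i],\dot{\D}_u,\dots,[\D_u,a_M]\rangle_{M+1,r,s,0}\,ds,
\end{align*}
and divide through by $(r+(p-1)/2)$, which is legitimate for $\Re(r)>(1-p)/2$. This gives
\begin{align*}
\frac{1}{(r+(p-1)/2)}(bB\Psi^r_{M,u})(a_0,\dots,a_M)
=\frac{1}{(r+(p-1)/2)}\frac{d}{du}(B\Phi^r_{M+1,0,u})(a_0,\dots,a_M)-E^r_u(a_0,\dots,a_M),
\end{align*}
where $E^r_u(a_0,\dots,a_M):=\eta_M\sum_{i=0}^M(-1)^i\int_0^\infty s^M\langle[\D_u,a_0],\dots,[\D_u,a_i],\dot{\D}_u,\dots,[\D_u,a_M]\rangle_{M+1,r,s,0}\,ds$. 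The second sentence of Lemma \ref{aagghhh} asserts precisely that $r\mapsto E^r_u(a_0,\dots,a_M)$ is holomorphic in a right half-plane containing $r=(1-p)/2$; for the present corollary I would just quote this, noting that the underlying reason is a power-counting argument via Lemma \ref{lem:crucial} — the expectation involves $M+1$ resolvents against operators of total order $M+1$ (the $M$ commutators $[\D_u,a_i]$ together with $\dot{\D}_u$, which is in $\mathrm{OP}^1$ up to a $\log|\D|$ factor controlled by Lemma \ref{truc}), so the $s$-integral and $\lambda$-integral converge for $\Re(r)>(1-p)/2-\delta$ for some $\delta>0$. I would remark that all entries lie in the appropriate $\mathrm{OP}$ classes ($[\D_u,a_i]\in\mathrm{OP}^0_0$ by the discussion after Definition \ref{tauxiliaries}, with at least one tame entry so Corollary \ref{permut} applies), so the integrand is genuinely trace class in the relevant region. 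This is the statement labelled $holo(r)$, analytic for $\Re(r)>-M/2$ (the cruder bound quoted suffices since $-M/2<(1-p)/2$ whenever $M\geq p-1$, which holds by definition of $M$).

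Finally I would take residues at $r=(1-p)/2$. Since $E^r_u$ is holomorphic there, $\mathrm{res}_{r=(1-p)/2}E^r_u=0$; and since $\frac{1}{(r+(p-1)/2)}=\frac{1}{r-(1-p)/2}$ has a simple pole of residue $1$ at the critical point, taking residues on both sides of the displayed identity gives
$$
(bB\Psi^{(1-p)/2}_{M,u})(a_0,\dots,a_M)=\frac{d}{du}(B\Phi^{(1-p)/2}_{M+1,0,u})(a_0,\dots,a_M),
$$
using that $(bB\Psi^r_{M,u})$ and $\frac{d}{du}(B\Phi^r_{M+1,0,u})$ are themselves holomorphic in a punctured neighbourhood of $r=(1-p)/2$ (from Lemma \ref{aagghhh} and the continuity/differentiability assertions there, which rest on Lemma \ref{diff1}), so that their residues are well-defined and the residue of a simple pole times a function holomorphic at the point just picks out the value of that function. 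The one genuine subtlety — and the only place any work is needed — is to be sure that dividing by $(r+(p-1)/2)$ does not hide a cancellation, i.e. that $bB\Psi^r_{M,u}$ and $\frac{d}{du}B\Phi^r_{M+1,0,u}$ may each individually have only a simple pole at the critical value; but this is automatic here because $E^r_u$ is holomorphic, so the pole structure of $\frac{1}{(r+(p-1)/2)}bB\Psi^r_{M,u}$ and of $\frac{1}{(r+(p-1)/2)}\frac{d}{du}B\Phi^r_{M+1,0,u}$ coincide and the residue identity follows formally. I expect no further obstacle: the hard analysis was already done in Lemmas \ref{aagghhh} and \ref{diff1}, and this corollary is the bookkeeping step that converts the $(r+(p-1)/2)$-weighted identity into a clean statement at the critical point.
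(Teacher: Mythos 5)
Your proposal is correct and matches the paper's (implicit) reasoning: the paper offers no separate proof of Corollary~\ref{hff}, introducing it with ``Thus we have proven the following key statement,'' so the corollary is indeed read off from Lemma~\ref{aagghhh} exactly as you do — divide by $(r+(p-1)/2)$, identify the resulting extra term as holomorphic near $r=(1-p)/2$ by the second assertion of that lemma, and take residues. One small side remark is off: you justify the stated half-plane $\Re(r)>-M/2$ by observing $-M/2<(1-p)/2$, but that inequality runs the wrong way (analyticity on the larger half-plane $\{\Re(r)>-M/2\}$ is the \emph{stronger} assertion); the precise half-plane actually comes from the power-counting of Lemma~\ref{lem:crucial} applied to $E^r_u$ (with $m=M+1$, $|k|=1$, $\alpha=M$, giving convergence exactly for $\Re(r)>-M/2$), though for the residue identity only holomorphy near $(1-p)/2$ is needed, which Lemma~\ref{aagghhh} supplies directly.
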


We now have the promised cohomologies.
\begin{theorem}
\label{thm:final-cohoms}
Let $(\A,\HH,\D)$ be a smoothly summable 
 spectral triple relative to $(\cn,\tau)$ and of spectral dimension
$p\geq 1$, parity $\bullet\in\{0,1\}$,  with $\D$ invertible and $ \A$ separable. Then\\
(1) In the $(b,B)$-bicomplex with coefficients in the set of holomorphic functions on  the
right half plane $\Re(r)>1/2$, the resolvent cocycle $(\phi_m^r)_{m=\bullet}^M$  is 
cohomologous to the  single term cocycle 
$$(r-(1-p)/2)^{-1}{\rm Ch}^M_{F},
$$
modulo cochains with values in 
the set of holomorphic functions on a right half plane containing the critical point 
$r=(1-p)/2$. Here $F=\D\,|\D|^{-1}$.\\
(2) If moreover, the spectral triple $(\A,\H,\D)$ has isolated spectral dimension, 
then the residue cocycle $(\phi_m)_{m=\bullet}^M$  is cohomologous to the 
Chern character ${\rm Ch}^M_{F}$.
\end{theorem}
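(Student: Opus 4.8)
The plan is to assemble Theorem \ref{thm:final-cohoms} purely as a bookkeeping exercise from the chain of cohomologies already established in subsections \ref{subsec:cocycles_reso} and \ref{subsec:nasty-homotopy}, together with the homotopy in the parameter $u\in[0,1]$ running from $\D_0=\D$ to $\D_1=F$. Everything hard has been done; what remains is to track which cochains yield holomorphic functions at the critical point $r=(1-p)/2$ and to perform one integration of the $u$-derivative identity of Corollary \ref{hff}.

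For part (1): by Corollary \ref{wotwewanted}, modulo coboundaries and cochains holomorphic at $r=(1-p)/2$, the resolvent cocycle $(\phi_m^r)_{m=\bullet}^M$ equals the single-term cochain $B\Phi^r_{M+1,0}$. By Proposition \ref{hyy} at $t=0$ (equivalently the identity $b\Phi^r_{M-1,0}+B\Phi^r_{M+1,0}=(r-(1-p)/2)\phi^r_{M,0}$ used in Proposition \ref{rrrr}), this is, up to the scalar factor $r-(1-p)/2$, a $(b,B)$-cocycle. Now I would integrate Corollary \ref{hff} over $u\in[0,1]$: since $bB\Psi^r_{M,u}$ is a coboundary (it lies in the image of $b$ applied to $B\Psi^r_{M,u}$) and, after dividing by $r+(p-1)/2$, contributes only a coboundary plus a cochain holomorphic at $r=(1-p)/2$, we get that $B\Phi^r_{M+1,0,0}=B\Phi^r_{M+1,0}$ is cohomologous to $B\Phi^r_{M+1,0,1}$, modulo the usual holomorphic junk. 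But at $u=1$ we have $\D_1=F$, $\dot\D_1=-F\log|\D|$, and $F^2=\mathrm{Id}$, so the resolvent $R_{s,1,1}(\lambda)=(\lambda-(s^2+1))^{-1}$ is scalar; the Cauchy integrals and $s$-integral can be carried out explicitly exactly as in the analogous unital computation of \cite[Section 5]{CPRS4}, reducing $B\Phi^r_{M+1,0,1}$ to a scalar multiple of the Chern character cocycle $\mathrm{Ch}^M_F$. Combining these steps gives $(\phi^r_m)_{m=\bullet}^M$ cohomologous to $(r-(1-p)/2)^{-1}\mathrm{Ch}^M_F$ modulo cochains holomorphic in a right half plane containing $r=(1-p)/2$, which is assertion (1).

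For part (2): if the spectral triple has isolated spectral dimension, then by Proposition \ref{hee} the maps $r\mapsto\phi^r_m(a_0,\dots,a_m)$ continue to a deleted neighbourhood of $r=(1-p)/2$ with $\mathrm{res}_{r=(1-p)/2}\phi^r_m=\phi_m$, and by Corollary \ref{hgg} the residue cocycle $(\phi_m)_{m=\bullet}^M$ is cohomologous to the single-term cochain $B\Phi^{(1-p)/2}_{M+1,0}$ (finite by Lemma \ref{lem:crucial}). Taking residues at $r=(1-p)/2$ in the $u=0$ to $u=1$ homotopy above — using the residue form of Corollary \ref{hff}, namely $(bB\Psi^{(1-p)/2}_{M,u})(a_0,\dots,a_M)=\frac{d}{du}(B\Phi^{(1-p)/2}_{M+1,0,u})(a_0,\dots,a_M)$, and integrating over $u\in[0,1]$ — shows that $B\Phi^{(1-p)/2}_{M+1,0}$ is cohomologous (modulo the coboundary $b\int_0^1 B\Psi^{(1-p)/2}_{M,u}\,du$) to $B\Phi^{(1-p)/2}_{M+1,0,1}$. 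The latter, evaluated at $u=1$ where everything is scalar, is precisely $\mathrm{Ch}^M_F$ up to the normalisation constants built into $\eta_M$ and the definition of $\mathrm{Ch}^M_F$; I would check this constant once by the explicit Cauchy/$s$-integral computation (the duplication formula for $\Gamma$ enters exactly as in the proof of Proposition \ref{index-explicit} in subsection \ref{subsec:put-it-together}). Hence $(\phi_m)_{m=\bullet}^M$ is cohomologous to $\mathrm{Ch}^M_F$, proving (2).

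The main obstacle is not any single new estimate but rather verifying that the $u$-homotopy identity of Corollary \ref{hff} can genuinely be integrated over the full interval $[0,1]$: one must know that $u\mapsto B\Phi^r_{M+1,0,u}$ and $u\mapsto B\Psi^r_{M,u}$ are continuously differentiable in trace norm uniformly enough that the fundamental theorem of calculus applies termwise inside the trace and the Cauchy integral. This is precisely what Lemma \ref{diff1} (proved in the Appendix) and the well-definedness discussion of $\Psi^r_{M,u}$, $B\Phi^r_{M+1,0,u}$ for $u\neq0$ (the $p\geq2$ and $2>p\geq1$ arguments sketched after Definition \ref{tauxiliaries}) are designed to supply, so the remaining work is to cite those inputs correctly and confirm the boundary term at $u=1$ reduces to $\mathrm{Ch}^M_F$ with the stated constant; the constant-tracking is the only genuinely computational piece and it parallels \cite{CPRS4} line for line, with the sign caveat noted in the remark following Proposition \ref{index-explicit}.
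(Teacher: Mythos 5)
Your proposal follows the same route as the paper: pass from the resolvent cocycle to the single-term cochain $B\Phi^r_{M+1,0}$ via Propositions \ref{rrrr} and \ref{prop:t1-t2}, integrate the $u$-homotopy identity of Lemma \ref{aagghhh}/Corollary \ref{hff} over $[0,1]$ using the trace-norm differentiability supplied by Lemma \ref{diff1}, and then compute $B\Phi^r_{M+1,0,u}|_{u=1}$ explicitly as the Chern character; part (2) is obtained by taking residues via Proposition \ref{hee}. So structurally your argument is the paper's argument.

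There is, however, one point you should tighten, concerning the scalar factor $(r-(1-p)/2)^{-1}$. You invoke Corollary \ref{wotwewanted} in the form $(\phi^r_m)_{m=\bullet}^M = B\Phi^r_{M+1,0}$ (modulo coboundaries and holomorphics). That is how the corollary is printed, but it is inconsistent with Proposition \ref{rrrr} (which says $(\phi^r_{m,0})_{m=\bullet}^M \sim (r-(1-p)/2)^{-1}B\Phi^r_{M+1,0}$), with Proposition \ref{prop:t1-t2}, and with the sentence immediately following the corollary, all of which carry the extra $(r-(1-p)/2)^{-1}$. If you take the printed form of Corollary \ref{wotwewanted} literally, your chain $\phi^r_m \sim B\Phi^r_{M+1,0} \sim B\Phi^r_{M+1,0,1}$ would imply, in the even case $\bullet=0$, that $\phi^r_m$ is cohomologous to something holomorphic at $r=(1-p)/2$ (since the explicit $u=1$ computation gives $B\Phi^r_{M+1,0,1}$ a pre-factor $C_{p/2+r}\sum_{j\geq 1}(r-(1-p)/2)^j\sigma_{M/2,j}$ in which the simple pole of $C_{p/2+r}$ is cancelled by the vanishing of the polynomial at the critical point), i.e.\ that the resolvent cocycle has no pole there -- which contradicts Proposition \ref{hee} and would trivialise the index pairing. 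The correct chain is
\[
(\phi^r_m)_{m=\bullet}^M \sim \frac{1}{r-(1-p)/2}B\Phi^r_{M+1,0} \sim \frac{1}{r-(1-p)/2}B\Phi^r_{M+1,0,1} = \frac{1}{r-(1-p)/2}\,\mathrm{Ch}^M_F + \text{holo},
\]
where the middle cohomology is obtained by multiplying your integrated Corollary \ref{hff} identity by $(r-(1-p)/2)^{-1}$ throughout, exactly as in the displayed equations in the paper's proof. Your final stated conclusion is correct, so you clearly intend the right thing, but the intermediate bookkeeping should keep the factor visible at every step. A very minor point of notation: the resolvent appearing at $u=1$ in $B\Phi^r_{M+1,0,u}$ is $R_{s,0,1}(\lambda)=(\lambda - (s^2+1))^{-1}$ (the $t$-parameter of the transgression cochain is $0$, and $\D_1^2 = F^2 = 1$), not $R_{s,1,1}(\lambda)$; your formula for the resolvent is right, only the subscript is off.
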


\begin{proof}
Up to cochains holomorphic at the critical point (the integral on a 
compact domain doesn't modify the holomorphy property), Lemma \ref{aagghhh} gives
$$
\frac{1}{r-(1-p)/2}\int_0^1(bB\Psi^r_{M,u})(a_0,\dots,a_M)\,du
=\frac{1}{r-(1-p)/2}\int_0^1\frac{d}{du}(B\Phi^{r}_{M+1,0,u})(a_0,\dots,a_M)\,du.
$$
Since $\frac{1}{r-(1-p)/2}\int_0^1bB\Psi^r_{M,u}$ is a coboundary, we obtain
the following equality in cyclic cohomology (up to coboundaries and a  
cochain holomorphic at the critical point)
$$
\frac{1}{r-(1-p)/2}(B\Phi^{r}_{M+1,0,1})=\frac{1}{r-(1-p)/2}(B\Phi^{r}_{M+1,0,0}).
$$
One can now compute directly to see that the left hand side is 
$(r-(1-p)/2)^{-1}{\rm Ch}^M_{F}$ as follows.

Recalling that $F^2=1$ and 
using our previous formula for $B\Phi_{M+1,0,u}^r$ (the $\D_u$ version of
Proposition \ref{hyy} with $u=1$) we have
\begin{align*} 
& (B\Phi^{r}_{M+1,0,u})(a_0,\dots,a_M)|_{u=1}\\
&=-\frac{\eta_{M}}{2}\sum_{j=0}^{M}(-1)^{j+1}\int_0^\infty s^{M}\langle 
[F,a_0],\dots,[F,a_j],F,[F,a_{j+1}],\dots,[F,a_M]\rangle_{M+1,s,r,0} ds\\
&=-\frac{\eta_{M}}{2}\sum_{j=0}^M\int_0^\infty s^{M}\frac{1}{2\pi i}
\tau\left(\gamma\int_\ell\lambda^{-p/2-r}F[F,a_0]\cdots[F,a_M](\lambda-(s^2+1))^{-M-2}d\lambda\right)ds\\
&=\frac{\eta_{M}}{2}\frac{(-1)^{M}}{M!}\frac{\Gamma(M+1+p/2+r)}{\Gamma(p/2+r)}\
\int_0^\infty s^{M}\tau\big(\gamma F[F,a_0]\cdots[F,a_M](s^2+1)^{-M-1-p/2-r}\big)ds.
\end{align*}
In the second equality we anticommuted $F$ past the commutators, and pulled all 
the resolvents to the right (they commute with everything, since they involve 
only scalars). In the last equality we used the Cauchy integral formula to do 
the contour integral, and performed the sum. 

Now we pull out $(s^2+1)^{-M-1-p/2-r}$ from the trace, leaving the identity 
behind. The $s$-integral is given by
\begin{align*} 
&\int_0^\infty s^M(s^2+1)^{-M-1-p/2-r}ds
=\frac{\Gamma((M+1)/2)\Gamma(p/2+r+M/2+1/2)}{2\Gamma(M+1+p/2+r)}.
\end{align*}
Putting the pieces together gives
\begin{align*} &(B\Phi^{r}_{M+1,0,u})(a_0,\dots,a_M)|_{u=1}\\
&\qquad=\frac{\eta_M}{2}(-1)^M\frac{\Gamma((M+1)/2)}{\Gamma(p/2+r)}
\frac{\Gamma(((p-1)/2+r)+M/2+1)}{2M!}\tau(\gamma F[F,a_0]\cdots[F,a_M]).
\end{align*}
Now  $\eta_M=\sqrt{2i}^\bullet(-1)^M2^{M+1}\Gamma(M/2+1)/\Gamma(M+1)$, and the 
duplication formula for the Gamma function tells us that 
$\Gamma((M+1)/2)\Gamma(M/2+1)2^M=\sqrt{\pi}\Gamma(M+1)$. Hence 
\begin{align*} 
&(B\Phi^{r}_{M+1,0,u})(a_0,\dots,a_M)|_{u=1}\\
&\qquad\qquad\qquad\qquad=\frac{\sqrt{\pi}\sqrt{2i}^\bullet
\Gamma(((p-1)/2+r)+M/2+1)}{\Gamma(p/2+r)2\cdot M!}\tau(\gamma 
F[F,a_0][F,a_1]\cdots[F,a_M]).
\end{align*}
Now we use the functional equation for the Gamma function
\begin{align*} 
&\Gamma(((p-1)/2+r)+M/2+1)=\Gamma((p-1)/2+r)\prod_{j=0}^{(M-\bullet)/2} ((p-1)/2+r+j+\bullet/2),
\end{align*}
to write this as
\begin{align*} 
&(B\Phi^{r}_{M+1,0,u})(a_0,\dots,a_M)|_{u=1}\\
&\qquad\qquad\qquad=\frac{C_{p/2+r}\sqrt{2i}^\bullet}{2\cdot M!}
\sum_{j=1-\bullet}^{(M-\bullet)/2+1}(r+(p-1)/2)^j\s_{(M-\bullet)/2,j}\tau(\gamma F[F,a_0][F,a_1]\cdots[F,a_M]),
\end{align*}
where the $\s_{(M-\bullet)/2,j}$ are elementary symmetric functions of the integers 
$1,2,\dots,M/2$ (even case) or of the half integers $1/2,3/2,\dots,M/2$ 
(odd case). The `constant' 
$$
C_{p/2+r}:=\frac{\sqrt{\pi}\Gamma((p-1)/2+r)}{\Gamma(p/2+r)},
$$ 
has a simple pole at 
$r=(1-p)/2$ with residue equal to $1$, and $\s_{M/2,1-\bullet}=\Gamma(M/2+1)$ in 
both even and odd cases, and recalling Definition \ref{conditional} of
$\tau^\prime$ we see that
\begin{align*} 
&\frac{1}{(r-(1-p)/2)}(B\Phi^{r}_{M+1,0,u})(a_0,\dots,a_M)|_{u=1}
=\frac{1}{(r-(1-p)/2)}{\rm Ch}_F(a_0,a_1,\dots,a_M)+holo(r),
\end{align*}
where $holo$ is a function holomorphic at $r=(1-p)/2$, and on the right 
hand side the Chern character appears with its $(b,B)$ normalisation.

As the left hand side is cohomologous to 
the resolvent cocycle by Proposition \ref{rrrr}, the first part is proven. The proof of 
the second part is now a consequence of Proposition \ref{hee}.
\end{proof}

\subsection{Removing the invertibility of $\D$}\label{invert}

We can now apply Theorem \ref{thm:final-cohoms} to the double of a 
smoothly summable spectral triple of spectral dimension $p\geq 1$. In this case, 
the resolvent and residue cocycles extend to the reduced $(b,B)$-bicomplex for $\A^\sim$, and it is 
simple to check that they are still cocycles there. Moreover, as noted in Lemma \ref{chien},
all of our cohomologies can be considered to take place in the reduced complex for $\A^\sim$.

Thus under the isolated spectral dimension assumption,
the residue cocycle for $(\A,\HH\oplus\HH,\D_\mu,\hat\gamma)$ 
is cohomologous to the Chern character
${\rm Ch}_{F_\mu}^M$, and similarly for the resolvent cocycle. We now show how to obtain a 
residue and resolvent formula for the index in terms of the original spectral triple.

In the following we  write
$\{\phi^r_{\mu,m}\}_{m=\bullet,\bullet+2,\dots,M}$ for the resolvent cocycle
for $\A$ defined  using the double spectral triple and 
$\{\phi^r_{m}\}_{m=\bullet,\bullet+2,\dots,M}$ for the resolvent cocycle
for $\A$ defined by using original spectral triple, according to the notations introduced in subsection
\ref{subsec:reduced}.

The formula for ${\rm Ch}_{F_\mu}^M$ is scale invariant, \index{scale invariance}
in that it remains unchanged if we replace
$\D_\mu$ by $\lambda\,\D_\mu$ for any $\lambda>0$. This 
scale invariance is the main tool we employ.

In the double up procedure we will start with $0< \mu< 1$.
We are interested in the relationship between 
$(1+\D^2)\otimes{\rm {\rm Id}_2}$  and $1+\D_\mu^2$, given 
by
\ben 
1+\D_\mu^2=\bma 1+\mu^2+\D^2 & 0\\ 0 & 1+\mu^2+\D^2\ema.
\een

If we perform the scaling  $\D_\mu\mapsto (1-\mu^2)^{-1/2}\D_\mu$ then 
\ben 
(1+\D_\mu^2)^{-s}\mapsto (1-\mu^2)^{s}(1+\D^2)^{-s}\otimes{\rm Id_2}.
\een

This algebraic simplification is not yet enough. We need to scale every appearance of 
$\D$ in the formula for the resolvent cocycle. Now
Proposition \ref{hee} provides the following formula for the resolvent cocycle in terms of
zeta functions, modulo functions holomorphic at $r=(1-p)/2$:

\begin{align}
\phi_{\mu,m}^r(a_0,\dots,a_m)&=(\sqrt{2i\pi})^\bullet\sum_{|k|=0}^{M-m}
(-1)^{|n|}\alpha(n)
\sum_{l=1-\bullet}^{(M-\bullet)/2+|k|}\!\sigma_{h,l}\,
\big(r-(1-p)/2\big)^{l-1+\bullet}\nonumber\\
 &\qquad \times\tau\otimes{\rm tr}_2\Big(\gamma a_0\,[\D_\mu,a_1]^{(k_1)}\cdots [\D_\mu,a_m]^{(k_m)}
(1+\D_\mu^2)^{-|k|-m/2-r+1/2-p/2}\Big).
\label{eq:resolvent-zeta}
\end{align}

So we require the scaling properties of the 
coefficient operators
$$
\omega_{\mu,m,k}=[\D_\mu,a_1]^{(k_1)}\cdots[\D_\mu,a_m]^{(k_m)},
$$ 
that appear in this
zeta function representation of the resolvent cocycle. 
In order to study these coefficient operators,
it is useful to introduce the following operations (arising from the
periodicity operator in cyclic cohomology, see \cite{Co4,CPRS1}).

We 
define $\hat{S}:\A^{\otimes m}\to {\rm OP_0^0}$, for any $m\geq 0$ by
\ben 
\hat{S}(a_1)=0,\ \ \hat{S}(a_1,\dots,a_m)=
\sum_{i=1}^{m-1}da_1\cdots (da_{i-1})a_ia_{i+1} 
da_{i+2}\cdots da_m,
\een
and extend it by linearity to the tensor product $\A^{\otimes m}$.
As usual,  we write $da=[\D,a]$.
To define `powers' of $\hat{S}$, we 
recursively set
\begin{align*} 
\hat{S}^k(a_1,\dots,a_m)&=\sum_{l=0}^{k-1}\binom{k-1}{l}\sum_{i=1}^{m-1}\hat{S}^l(a_1,\dots,a_{i-1})
\hat{S}^{k-l-1}(a_ia_{i+1},\dots,a_m).
\end{align*}

The following lemma is proven in \cite[Appendix]{CPRS1}.

\begin{lemma} 
\label{lem:recursion}
The maps $\hat{S}^l$ satisfy the following relations:
\be 
\hat{S}(a_1,\dots,a_{m-1})da_m=
\hat{S}(a_1,\dots,a_m)-da_1\cdots (da_{m-2})
a_{m-1}a_m,\label{one}
\ee
and for $l>1$
\begin{align*}
\hat{S}^l(a_1,\dots,a_{m-1})da_m&=\hat{S}^l(a_1,\dots,a_m)-l\,\hat{S}^{l-1}
(a_1,\dots,a_{m-2})a_{m-1}a_m,\\
l\,\hat{S}^{l-1}(a_1,\dots,a_{2l-2})a_{2l-1}a_{2l}&=
\hat{S}^l(a_1,\dots,a_{2l}),\qquad
\hat{S}^l(a_1,\dots,a_{2l-1})=0.
\end{align*}
\end{lemma}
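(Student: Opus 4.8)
\textbf{Proof plan for Lemma \ref{lem:recursion}.}

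The plan is to verify each of the stated identities by direct combinatorial manipulation, working purely algebraically inside the unital algebra $\bigcap_n{\rm dom}\,\delta^n$, using only the Leibniz rule $d(ab)=(da)b+a(db)$ and the definitions of $\hat S$ and $\hat S^k$. None of the analytic machinery of the preceding sections enters; the point is bookkeeping with indices. First I would establish Equation \eqref{one} for $\hat S^1=\hat S$: by definition $\hat S(a_1,\dots,a_m)$ is a sum over $i=1,\dots,m-1$ of terms $da_1\cdots da_{i-1}\,a_ia_{i+1}\,da_{i+2}\cdots da_m$, while $\hat S(a_1,\dots,a_{m-1})da_m$ is the sum over $i=1,\dots,m-2$ of $da_1\cdots da_{i-1}\,a_ia_{i+1}\,da_{i+2}\cdots da_{m-1}da_m$. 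Comparing, the right-hand side of \eqref{one} is exactly the terms of $\hat S(a_1,\dots,a_m)$ with $i\le m-2$, and the missing term, the one with $i=m-1$, is precisely $da_1\cdots da_{m-2}\,a_{m-1}a_m$; this gives \eqref{one} immediately.

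Next I would handle the recursion-stabilising identities $l\,\hat S^{l-1}(a_1,\dots,a_{2l-2})a_{2l-1}a_{2l}=\hat S^l(a_1,\dots,a_{2l})$ and the vanishing $\hat S^l(a_1,\dots,a_{2l-1})=0$. These I expect to prove together by induction on $l$, since the defining formula
$$
\hat S^k(a_1,\dots,a_m)=\sum_{l=0}^{k-1}\binom{k-1}{l}\sum_{i=1}^{m-1}\hat S^l(a_1,\dots,a_{i-1})\,\hat S^{k-l-1}(a_ia_{i+1},\dots,a_m)
$$
reduces the arity of each factor. The key observation is a degree/arity count: $\hat S^l$ applied to a string of length $r$ produces a (sum of) products of $r-2l$ copies of $d(\cdot)$ times $l$ "collapsed" pairs, so $\hat S^l$ vanishes identically when $r<2l$ and when $r=2l$ the only surviving term has collapsed everything. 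Feeding this into the double sum and matching binomial coefficients with the Vandermonde-type identity $\sum_l\binom{k-1}{l}\binom{\cdot}{\cdot}=\cdots$ gives the stated closed form; I would carry out the induction step by splitting the outer sum over $i$ according to whether the left factor $\hat S^l(a_1,\dots,a_{i-1})$ is forced to vanish.

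Finally, for general $l>1$ the identity $\hat S^l(a_1,\dots,a_{m-1})da_m=\hat S^l(a_1,\dots,a_m)-l\,\hat S^{l-1}(a_1,\dots,a_{m-2})a_{m-1}a_m$ I would prove by induction on $l$, using \eqref{one} as the base case and the recursive definition of $\hat S^l$ to reduce to the case $l-1$: multiplying the defining sum on the right by $da_m$, the last factor $\hat S^{k-l-1}(a_ia_{i+1},\dots,a_m)\,da_m$ can be rewritten by the inductive hypothesis, and the extra terms reassemble (after re-indexing and collecting binomial coefficients via Pascal's rule) into $\hat S^l(a_1,\dots,a_m)$ minus the collapse term. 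The main obstacle, such as it is, will be purely organisational: keeping the triple bookkeeping — the power $l$, the arity $m$, and the inner summation index $i$ — straight while applying Pascal's identity at the right place, and correctly tracking the boundary terms that appear when a sub-string becomes too short for $\hat S^{l'}$ to be nonzero. Since the authors attribute these identities to \cite[Appendix]{CPRS1}, I would if necessary simply cite that computation rather than reproduce every line, noting that the proof there transfers verbatim to the present (semifinite, nonunital but here we work in the unitisation) setting because it uses only the Leibniz rule.
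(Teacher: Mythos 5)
The paper does not actually give a proof of Lemma~\ref{lem:recursion}; it states that ``The following lemma is proven in \cite[Appendix]{CPRS1}'' and leaves it there. So the comparison here is between your attempted direct proof and a bare citation. Your verification of Equation~\eqref{one} is correct and complete: writing out $\hat S(a_1,\dots,a_{m-1})\,da_m$ as the sum over $i=1,\dots,m-2$ and observing that $\hat S(a_1,\dots,a_m)$ is the same sum extended to $i=m-1$, with the extra $i=m-1$ term being exactly $da_1\cdots(da_{m-2})a_{m-1}a_m$, is precisely the right bookkeeping. The remaining three identities you only sketch, and while the arity-count heuristic (each application of $\hat S$ collapses one adjacent pair, so $\hat S^l$ needs at least $2l$ entries to survive, and saturates when there are exactly $2l$) is the correct guiding picture, you have not carried out the induction, so as written the proposal is a plan rather than a proof for those parts.

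One snag you should anticipate before committing to the direct route: the paper's recursive definition of $\hat S^k$ uses $\hat S^0$ (for instance at $k=1$, where both factors are $\hat S^0$), but $\hat S^0$ is never defined in this paper, and the na\"ive candidates $\hat S^0(b_1,\dots,b_r)=db_1\cdots db_r$ or $b_1\,db_2\cdots db_r$ do not both reproduce $\hat S^1=\hat S$ from the recursion in the obvious way. The convention is made precise only in \cite[Appendix]{CPRS1}, so if you genuinely want a self-contained proof here you would first need to pin down that base case (and, relatedly, whether the first slot of the right-hand factor is exempted from $d$). Given that, your fallback of simply invoking \cite[Appendix]{CPRS1} is not merely acceptable, it is exactly what the authors do, and is arguably the safer choice; the proof there transfers verbatim since it uses only the Leibniz rule and index manipulation, none of which depends on the (semifinite, nonunital) analytic setting of the present paper.
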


As a last generalisation, we note that if $k$ is now a multi-index then we can define analogues 
of the operations $\hat{S}^l$ by 
\ben 
\hat{S_k}(a_1):=0,\ \ \hat{S_k}(a_1,\dots,a_m):=
\sum_{l=1}^{n-1}(da_1)^{(k_1)}\cdots( da_{l-1})^{(k_{l-1})}a_l^{(k_l)}a_{l+1}^{(k_{l+1})} 
(da_{l+2})^{(k_{l+2})}\cdots (da_m)^{(k_m)}.
\een

With these operations in hand we can state the result.

\begin{lemma}
\label{lem:psycho-induction} With $\D$ and $\D_\mu$ as above, and 
for $m>1$, the operator $[\D_\mu,a_1]^{(k_1)}\cdots[\D_\mu,a_m]^{(k_m)}$ is given by
$$
\begin{pmatrix} 
\begin{array}{c}\omega_{m,k} +\sum_{i=1}^{\lfloor m/2\rfloor}
c_i\,\hat{S}^i(a_1,\dots,a_m)\end{array} &
\begin{array}{c}-\mu\omega_{m-1,k}a_m^{(k_m)}\\
-\mu\sum_{i=1}^{\lfloor(m-1)/2\rfloor}c_i\,\hat{S}^i(a_1,\dots,a_{m-1})a_m^{(k_m)}\end{array}\\
 & \\
\begin{array}{c}\mu a_1^{(k_1)}\tilde{\omega}_{m-1,k}\\
+\mu\sum_{m=1}^{\lfloor(m-1)/2\rfloor}c_i\,a_1^{(k_1)}\hat{S}^i(a_2,\dots,a_m)\end{array} &
\begin{array}{c}-\mu^2a_1^{(k_1)}\,\widehat{\omega}_{m-2,k}a_m^{(k_m)}\\
-\mu^2\sum_{i=1}^{\lfloor m/2\rfloor-1}c_i\,a_1^{(k_1)}\hat{S}^i(a_2,\dots,a_{m-1})a_m^{(k_m)}\end{array}
\end{pmatrix}.
$$
In this expression 
$$
\omega_{m,k}=(da_1)^{(k_1)}\cdots(da_m)^{(k_m)},\qquad 
\omega_{m-1,k}=(da_1)^{(k_1)}\cdots(da_{m-1})^{(k_{m-1})},
$$
$$
\tilde{\omega}_{m-1,k}=(da_2)^{(k_2)}\cdots(da_m)^{(k_m)}, \qquad
\widehat{\omega}_{m-2,k}=(da_2)^{(k_2)}\cdots(da_{m-1})^{(k_{m-1})},
$$ 
the superscript $(k_l)$'s
refer to commutators with $\D^2$ (Definition \ref{parup}), and
$c_i=(-1)^i\mu^{2i}/i!$. 
\end{lemma}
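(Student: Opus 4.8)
\textbf{Proof strategy for Lemma \ref{lem:psycho-induction}.}

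The plan is to prove the formula by induction on $m$, computing the iterated commutators with $\D_\mu$ directly in the $2\times2$ matrix picture and tracking how each new factor $[\D_\mu,a_j]^{(k_j)}$ acts. The base case will be $m=2$: one writes $\D_\mu=\bigl(\begin{smallmatrix} \D & \mu\\ \mu & -\D\end{smallmatrix}\bigr)$ and $\hat a_j=\bigl(\begin{smallmatrix} a_j & 0\\ 0 & 0\end{smallmatrix}\bigr)$, computes $[\D_\mu,\hat a_j]=\bigl(\begin{smallmatrix} da_j & \mu a_j\\ -\mu a_j & 0\end{smallmatrix}\bigr)$, then takes the iterated commutator with $\D_\mu^2=\bigl(\begin{smallmatrix} \mu^2+\D^2 & 0\\ 0 & \mu^2+\D^2\end{smallmatrix}\bigr)$ to produce $[\D_\mu,\hat a_j]^{(k_j)}=\bigl(\begin{smallmatrix} (da_j)^{(k_j)} & \mu a_j^{(k_j)}\\ -\mu a_j^{(k_j)} & 0\end{smallmatrix}\bigr)$ (here using that $\D_\mu^2$ is scalar-diagonal, so commutators with it act entrywise as commutators with $\mu^2+\D^2$, which in turn agree with the $(k_j)$ superscript applied to the scalar entries). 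Multiplying two such $2\times2$ matrices and grouping terms according to whether they carry a $\mu^0$, $\mu^1$ or $\mu^2$ prefactor gives the $m=2$ case, with the single $\hat S$-type correction $c_1\,\hat S(a_1,a_2)=-\mu^2 a_1 a_2$ appearing in the top-left entry from the product of the two off-diagonal blocks.

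For the inductive step, I would assume the formula for $m-1$ factors and multiply on the right by $[\D_\mu,\hat a_m]^{(k_m)}=\bigl(\begin{smallmatrix} (da_m)^{(k_m)} & \mu a_m^{(k_m)}\\ -\mu a_m^{(k_m)} & 0\end{smallmatrix}\bigr)$. Carrying out the $2\times2$ matrix multiplication, each of the four target entries receives contributions from two products, and one must collect the powers of $\mu$. The crucial combinatorial input is Lemma \ref{lem:recursion}: the recursion relations for $\hat S^l$, in particular Equation \eqref{one} and its higher analogues $\hat S^l(a_1,\dots,a_{m-1})da_m=\hat S^l(a_1,\dots,a_m)-l\,\hat S^{l-1}(a_1,\dots,a_{m-2})a_{m-1}a_m$, are exactly what is needed to absorb the ``new'' $a_m$-factor coming from the diagonal block product into the $\hat S$-sums, while the off-diagonal contribution supplies the extra $a_{m-1}a_m$ terms with the correct coefficient $l\,c_l = l(-1)^l\mu^{2l}/l! = (-1)^l\mu^{2l}/(l-1)! = \mu^2 c_{l-1}$. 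Matching this $\mu^2 c_{l-1}$ against the $\mu^2$-weighted term produced by the off-diagonal product is the heart of the bookkeeping. One also needs the multi-index version $\hat S_k$ of these identities, obtained by the same manipulations with each $a_j$ replaced by $a_j^{(k_j)}$ and each $da_j$ by $(da_j)^{(k_j)}$, since commutators with $\D_\mu^2$ act entrywise; I would state this as an immediate consequence of Lemma \ref{lem:recursion} applied to the shifted arguments, as no new idea is involved.

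The main obstacle will be the sign and coefficient accounting in the inductive step: ensuring that the $c_i=(-1)^i\mu^{2i}/i!$ coefficients are reproduced exactly, that the floor functions $\lfloor m/2\rfloor$, $\lfloor (m-1)/2\rfloor$, $\lfloor m/2\rfloor-1$ in the four entries come out right as the summation ranges shift when one passes from $m-1$ to $m$ (including the fact that $\hat S^l$ of an odd number of arguments vanishes, which truncates the sums correctly), and that the off-diagonal $-\mu$ versus $+\mu$ prefactors in the $(1,2)$ and $(2,1)$ entries are consistent with the $m=2$ base case. I do not anticipate any analytic difficulty here — everything takes place among bounded operators in ${\rm OP}^0_0$ by Lemma \ref{lem:derivs} — so the proof is purely algebraic, and I would present it as an induction with the matrix multiplication written out schematically, invoking Lemma \ref{lem:recursion} at each place where an $\hat S$-sum needs to be reorganised, and leaving the reader to verify the coefficient matching as a routine check.
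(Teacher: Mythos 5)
Your overall strategy is exactly the paper's: the proof is an induction on $m$ using $2\times2$ matrix multiplication, with the observation that $\D_\mu^2$ is scalar-diagonal (so the $(k_j)$ superscripts act entrywise) and with the recursion relations of Lemma \ref{lem:recursion} doing the combinatorial bookkeeping to reorganise the $\hat S$-sums. The paper even cites the same identities and the same idea of matching $l\,c_l = \mu^2 c_{l-1}$.

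However, there is a concrete sign error that would make your computation produce the wrong off-diagonal entries. You wrote
$$
[\D_\mu,\hat a_j]=\begin{pmatrix} da_j & \mu a_j\\ -\mu a_j & 0\end{pmatrix},
$$
but a direct calculation with $\D_\mu=\bigl(\begin{smallmatrix}\D&\mu\\ \mu&-\D\end{smallmatrix}\bigr)$ and $\hat a_j=\bigl(\begin{smallmatrix}a_j&0\\0&0\end{smallmatrix}\bigr)$ gives
$$
\D_\mu\hat a_j-\hat a_j\D_\mu
=\begin{pmatrix}\D a_j&0\\ \mu a_j&0\end{pmatrix}
-\begin{pmatrix}a_j\D&\mu a_j\\0&0\end{pmatrix}
=\begin{pmatrix} da_j & -\mu a_j\\ \mu a_j & 0\end{pmatrix},
$$
i.e.\ the signs on the off-diagonal entries are swapped relative to what you wrote, and the same swap persists after applying $(\cdot)^{(k_j)}$. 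This error is not harmless: in the base case $m=2$ it already yields $(1,2)$-entry $+\mu(da_1)a_2$ and $(2,1)$-entry $-\mu a_1(da_2)$, which is the negative of the $(1,2)$ and $(2,1)$ entries claimed in the lemma (the diagonal entries come out right because they involve a product of two off-diagonal blocks, where the two sign flips cancel). You flagged "checking the $\pm\mu$ prefactors against the $m=2$ base case" as a verification step, which is the right instinct and would have caught this, but as written the seed formula is wrong and the induction would propagate the error through every odd power of $\mu$. Note the paper's proof even warns that a sign error in the cited reference \cite[Appendix]{CPRS1} is being corrected in exactly this lemma, so signs here deserve unusual care. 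Fix the seed matrix and the rest of your plan is sound.
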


\begin{proof} This is proved by induction using 
$$
[\D_\mu,a_{n+1}]^{(k_{n+1})}=
[\D_\mu,a_{n+1}^{(k_{n+1})}]=\begin{pmatrix} da_{n+1}^{(k_{n+1})} & -\mu \,a_{n+1}^{(k_{n+1})}\\ 
\mu\, a_{n+1}^{(k_{n+1})} & 0\end{pmatrix}.
$$
It is important to note that the formulae for the $\hat{S}$ operation are unaffected by the commutators
with $\D_\mu^2$, since $\D_\mu^2$ is diagonal.
A similar calculation in \cite[Appendix]{CPRS1}, where there is a sign error corrected here, indicates
how the proof proceeds.
\end{proof}

Multiplying the operator in Lemma \ref{lem:psycho-induction} by 
$\hat a_0=\begin{pmatrix} a_0 & 0\\ 0 & 0\end{pmatrix}$ gives us $a_0\omega_{m,\mu,k}$.
Having identified the $\mu$ dependence of $\omega_{m,\mu,k}(1+\D_\mu^2)^{-|k|-m/2-r-(p-1)/2}$
arising from the 
coefficient operators $\omega_{m,\mu,k}$, we
now identify the remaining $\mu$ dependence in 
$a_0\omega_{m,\mu,k}(1+\D_\mu^2)^{-|k|-m/2-r-(p-1)/2}$
coming from $(1+\D_\mu^2)^{-|k|-m/2-r-(p-1)/2}$.
So replacing $\D_\mu$ by $(1-\mu^2)^{-1/2}\D_\mu$,
our calculations give for $m>0$

\begin{align*}
&a_0\omega_{m,\mu,k}(1+\D_\mu^2)^{-|k|-m/2-r-(p-1)/2}\longmapsto\\ 
&\qquad\qquad\qquad\qquad(1-\mu^2)^{-r-(p-1)/2}a_0\omega_{m,k}(1+\D^2)^{-|k|-m/2-r-(p-1)/2}\otimes\begin{pmatrix} 1 & 0\\ 0 & 0\end{pmatrix} +O(\mu),
\end{align*}

where the $O(\mu)$ terms, are those arising from Lemma \ref{lem:psycho-induction}. 
Of course at $r=(1-p)/2$ the 
numerical factor $(1-\mu^2)^{-r-(p-1)/2}$ is equal to one, and contributes nothing when we take residues.
For $m=0$ there are no additional $O(\mu)$ terms.

Ignoring the factor of $(1-\mu^2)^{-r-(p-1)/2}$, we collect all terms 
in $\{\phi^r_{\mu,m}\}_{m=\bullet,\bullet+2,\dots,M}$
with the same power of $\mu$, arising from
the expansion of $a_0\omega_{m,k,\mu}$.  This gives us  a finite family of 
$(b,B)$-cochains of different lengths but the same parity, 
one for each power of $\mu$ in the expansion of 
$a_0\omega_{m,k,\mu}$. Denote these new cochains by $\psi_i^r=(\psi_{i,m}^r)_{m=\bullet,\bullet+2,\dots}$, 
where $\psi_i^r$ is assembled as the coefficient cochain for $\mu^i$. To simplify the
notation, we will consider the cochains $\psi^r_i$ as functionals on 
suitable elements in ${\rm OP}^*$. With these conventions, and
modulo functions holomorphic at $r=(1-p)/2$, we have
$$
\phi^r_{\mu,m}(a_0,\dots,a_m)
=(1-\mu^2)^{-r+(1-p)/2}\Big(\sum_{i=0}^{2\lfloor\frac{m}{2}\rfloor+1}\psi_{i,m}^r(a_0\omega_{m,k,i})\,\mu^i\Big),
$$
where $\omega_{m,k}^i$ are some 
coefficient operators depending on $a_1,\dots,a_m$, but not on $\mu$, and 
$\omega_{m,k,0}=\omega_{m,k}$, as defined in Lemma \ref{lem:psycho-induction}.

Let $\alpha=(\alpha_m)_{m=\bullet,\bullet+2,\dots}$ be a 
$(b,B)$-boundary in the reduced complex for $\A^\sim$. 
Then as ${\rm Ch}_{F_\mu}^M$ is a $(b,B)$-cocycle, we find by 
performing the pseudodifferential expansion that there are reduced $(b,B)$-cochains 
$C_0,\dots,C_{2\lfloor M/2\rfloor+\bullet}$ such  that 
\begin{align*}
0={\rm Ch}_{F_\mu}^M(\alpha_M)
&={\rm res}_{r=(1-p)/2}\sum_{m=\bullet}^M\phi^r_{\mu,m}(\alpha_m)
=C_0(\alpha)+C_1(\alpha)\,\mu+\cdots+C_{2\lfloor M/2\rfloor+\bullet}(\alpha)\,\mu^{2\lfloor M/2\rfloor+\bullet}.
\end{align*}
The class of ${\rm Ch}_{F_\mu}^M$ is independent of $\mu>0$, and as we can vary $\mu\in (0,1)$, 
we see that each of the coefficients $C_i(\alpha)=0$. As the $C_i(\alpha)$
arise as the result of pairing a $(b,B)$-cochain with the $(b,B)$-boundary $\alpha$, and $\alpha$
is an arbitrary boundary, we see that all the $\psi_i^r$ are (reduced) cocycles modulo functions
holomorphic at $r=(1-p)/2$.

Now let $\beta$ be a $(b,B)$-cycle. Then by performing the pseudodifferential expansion
we find that 
\begin{align*}
{\rm Ch}_{F_\mu}^M(\beta_M)&={\rm res}_{r=(1-p)/2}\sum_{m=\bullet}^M\phi^r_{\mu,m}(\beta_m)
=C_0(\beta)+C_1(\beta)\,\mu+\cdots+C_{2\lfloor M/2\rfloor+\bullet}(\beta)\,\mu^{2\lfloor M/2\rfloor+\bullet}.
\end{align*}
The left hand side is independent of $\mu$, and so taking the derivative with respect to $\mu$ yields
$$
0=C_1(\beta)+\cdots+(2\lfloor M/2\rfloor+\bullet)C_{2\lfloor M/2\rfloor+\bullet}(\beta)\,\mu^{2\lfloor M/2\rfloor+\bullet-1}.
$$
Again, by varying $\mu$ we see that each coefficient $C_i(\beta)$, $i>0$, must vanish. As $\beta$ is an 
arbitrary $(b,B)$-cycle, for $i\neq 0$,
$\psi_i^r$ is a coboundary modulo functions holomorphic at $r=(1-p)/2$. 
The conclusion is that 
${\rm res}\,\psi^r_0$ represents the Chern character. We now turn to
making this representative explicit.

The cocycle $\psi^r_0$ is given, in terms of the original spectral triple $(\A,\HH,\D)$, 
in all degrees except zero, by $\{\phi^r_{m}\}_{m=\bullet,\bullet+2,\dots,M}$, that is the formula for the 
resolvent cocycle presented in 
Definition \ref{resolvent} with $\D$ in place of $\D_\mu$. In degree zero we need some care, and after a computation we find that
for $b\in\A^\sim$ and $\mu\in(0,1)$, $\phi^r_{\mu,0}(b)$ is given by

\begin{align*}
&\phi^r_{\mu,0}(b)=\lim_{\lambda\to\infty}\frac{\Gamma(r-(1-p)/2)\sqrt{\pi}(1-\mu^2)^{-(r-(1-p)/2)}}{\Gamma(p/2+r)}\\
&\times\tau\otimes {\tr}_2\begin{pmatrix} \gamma (b-{\bf 1}_b)(1+\D^2)^{-z}+
\gamma \tilde{\psi}_\lambda {\bf 1}_b(1 +\D^2)^{-(r-(1-p)/2)} & 0\\ 
0 & -\gamma \tilde{\psi}_\lambda {\bf 1}_b(1+\D^2)^{-(r-(1-p)/2)} \end{pmatrix},
\end{align*}
 where ${\bf 1}_b$ is defined after Equation \eqref{ext-A-sim}. 
Canceling the ${\bf 1}_b$ terms  and taking the limit shows that $\phi^r_{\mu,0}(b)$ is given by
\begin{align*}
{\Gamma(r-(1-p)/2)\sqrt{\pi}(1-\mu^2)^{-(r-(1-p)/2)}}({\Gamma(p/2+r)})^{-1}
\,\tau\left(\gamma (b-{\bf 1}_b) (1+\D^2)^{-(r-(1-p)/2)}\right).
\end{align*}
The function of $r$ outside the trace has a simple 
pole at $r=(1-p)/2$ with residue equal to $1$,
and can be replaced by any other such function, 
such as $({r-(1-p)/2})^{-1}$. Thus modulo  functions holomorphic at
the critical point, we have
$$
\phi^r_{\mu,0}(b)=\phi^r_{0}(b-{\bf 1}_b).
$$
Thus we have proved the following proposition.

\begin{prop}
\label{mu-gives-cobound}
Let $(\A,\H,\D)$ be a  smoothly summable spectral triple of spectral dimension $p\geq 1$
and of parity $\bullet\in\{0,1\}$.
Let also $a_0\otimes a_1\otimes\dots\otimes a_m\in\A^\sim\otimes \A^{\otimes m}$. 
Let   $\{\phi^r_{\mu,m}\}_{m=\bullet,\bullet+2,\dots,M}$ and 
$\{\phi^r_{m}\}_{m=\bullet,\bullet+2,\dots,M}$ be the resolvent cocycles 
defined respectively  by the double and the original spectral triple. Then 
$\{\phi^r_{m}-\phi^r_{\mu,m}\}_{m=\bullet,\bullet+2,\dots,M}$ is a reduced $(b,B)$-coboundary 
modulo functions holomorphic at $r=(1-p)/2$.

If moreover the spectral dimension of $(\A,\H,\D)$ is isolated, for each $m>0$ we have
$$
{\rm res}_{r=(1-p)/2}\,\phi^r_{\mu,m}(a_0,\dots,a_m)={\rm res}_{r=(1-p)/2}\,\phi^r_{m}(a_0,\dots,a_m),
$$
and for $m=0$
$$
{\rm res}_{r=(1-p)/2}\,\phi^r_{\mu,0}(a_0)={\rm res}_{r=(1-p)/2}\,\phi^r_{0}(a_0-{\bf 1}_{a_0}).
$$
\end{prop}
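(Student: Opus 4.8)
The plan is to prove Proposition~\ref{mu-gives-cobound} by unwinding the scaling argument already developed in this subsection. Recall that we have the zeta-function representation \eqref{eq:resolvent-zeta} for the resolvent cocycle $\phi^r_{\mu,m}$ attached to the double spectral triple, with coefficient operators $\omega_{\mu,m,k}$ built from the commutators $[\D_\mu,a_i]^{(k_i)}$. The key computational input is Lemma~\ref{lem:psycho-induction}, which expands $[\D_\mu,a_1]^{(k_1)}\cdots[\D_\mu,a_m]^{(k_m)}$ as the ``diagonal'' term $\omega_{m,k}$ plus strictly positive powers of $\mu$ (the $\hat S^i$ corrections), together with the observation that rescaling $\D_\mu\mapsto(1-\mu^2)^{-1/2}\D_\mu$ turns $(1+\D_\mu^2)^{-s}$ into $(1-\mu^2)^s(1+\D^2)^{-s}\otimes{\rm Id}_2$.

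First I would organize the $\mu$-expansion: combining the expansion of the coefficient operators from Lemma~\ref{lem:psycho-induction} (after multiplying by $\hat a_0$) with the scaling of the resolvent power, one gets, modulo functions holomorphic at the critical point,
\begin{equation}
\phi^r_{\mu,m}(a_0,\dots,a_m)=(1-\mu^2)^{-r+(1-p)/2}\sum_{i=0}^{2\lfloor m/2\rfloor+1}\psi^r_{i,m}(a_0\,\omega_{m,k,i})\,\mu^i,
\label{eq:mu-expand-plan}
\end{equation}
where $\psi^r_0$ collects the $i=0$ terms and hence agrees, in degrees $m>0$, with the resolvent cocycle $\{\phi^r_m\}$ of the original triple (the $\D_\mu$ in \eqref{eq:resolvent-zeta} being replaced by $\D$). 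Then I would invoke scale invariance of the Chern character: ${\rm Ch}^M_{F_\mu}$ is independent of $\mu\in(0,1)$, so pairing $\sum_m\phi^r_{\mu,m}$ against an arbitrary reduced $(b,B)$-boundary $\alpha$ and extracting the coefficient of each power of $\mu$ forces every $\psi^r_i$ to be a cocycle modulo holomorphic functions; pairing against an arbitrary reduced $(b,B)$-cycle $\beta$ and differentiating in $\mu$ forces $\psi^r_i$ for $i\geq1$ to be a coboundary modulo holomorphic functions. This is precisely the argument already sketched just before the statement, and I would write it out carefully, being explicit that all of this takes place in the reduced complex for $\A^\sim$ as guaranteed by Lemma~\ref{chien}. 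Consequently $\{\phi^r_m-\phi^r_{\mu,m}\}=-\sum_{i\geq1}\mu^i\psi^r_i$ (modulo holomorphic functions and the degree-zero adjustment) is a reduced $(b,B)$-coboundary modulo functions holomorphic at $r=(1-p)/2$.

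The degree-zero term requires separate care, exactly because when $m=\bullet=0$ the extension to $\A^\sim$ is not automatic. Here I would carry out the explicit Cauchy-integral computation of $\phi^r_{\mu,0}(b)$ for $b\in\A^\sim$, using the approximate-unit regularization $\tilde\psi_\lambda$ introduced in subsection~\ref{subsec:reduced}: writing $b=(b-{\bf 1}_b)+{\bf 1}_b$, one finds $\phi^r_{\mu,0}(b)$ equals a scalar factor with a simple pole and residue $1$ at $r=(1-p)/2$ times $\tau(\gamma(b-{\bf 1}_b)(1+\D^2)^{-(r-(1-p)/2)})$, the $\tilde\psi_\lambda{\bf 1}_b$ pieces cancelling between the two diagonal blocks of the doubled trace and the limit $\lambda\to\infty$ being harmless on the genuinely $\tau$-integrable piece $b-{\bf 1}_b$. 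Replacing the scalar prefactor by $(r-(1-p)/2)^{-1}$ (legitimate modulo holomorphic functions) gives $\phi^r_{\mu,0}(b)=\phi^r_0(b-{\bf 1}_b)$ modulo holomorphic functions, which is the asserted degree-zero formula. Finally, under the isolated-spectral-dimension hypothesis each $\phi^r_m$ and $\phi^r_{\mu,m}$ continues meromorphically to a deleted neighbourhood of the critical point (Proposition~\ref{hee}), so taking residues in \eqref{eq:mu-expand-plan} and using that the holomorphic-at-$r=(1-p)/2$ error terms and the $(1-\mu^2)^{-r+(1-p)/2}$ prefactor contribute nothing to the residue yields ${\rm res}_{r=(1-p)/2}\phi^r_{\mu,m}={\rm res}_{r=(1-p)/2}\phi^r_m$ for $m>0$ and the stated $b-{\bf 1}_b$ correction for $m=0$.

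I expect the main obstacle to be the bookkeeping in the degree-zero computation: one must track the doubled trace $\tau\otimes{\rm tr}_2$ against the block form of $\hat b$ and the operator $\tilde\psi_\lambda$, verify that the non-integrable ${\bf 1}_b$-contributions genuinely cancel between the $(1,1)$ and $(2,2)$ blocks (the grading $\hat\gamma=\gamma\oplus-\gamma$ supplying the crucial sign), and confirm that the interchange of $\lim_{\lambda\to\infty}$ with the residue is justified — this last point relying on the fact that $(b-{\bf 1}_b)(1+\D^2)^{-s}$ is trace class for $\Re(s)>p$ so that the $\lambda$-limit is already attained. The higher-degree part is comparatively routine once \eqref{eq:mu-expand-plan} is in hand, being a formal consequence of the polynomial-in-$\mu$ structure together with the $\mu$-independence of ${\rm Ch}^M_{F_\mu}$.
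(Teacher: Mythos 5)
Your proposal is correct and reproduces the paper's own argument: the same zeta-function representation of $\phi^r_{\mu,m}$, the same expansion in powers of $\mu$ via Lemma~\ref{lem:psycho-induction} and the rescaling $\D_\mu\mapsto(1-\mu^2)^{-1/2}\D_\mu$, the same pairing of $\psi^r_i$ against arbitrary $(b,B)$-boundaries and cycles followed by polynomial identification in $\mu$, and the same approximate-unit computation in degree zero. The only difference is that you flag the interchange of $\lim_{\lambda\to\infty}$ with the residue as a point needing verification, which the paper passes over silently but which is indeed harmless for the reason you give.
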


\subsection{The local index formula}
Let $u\in M_n(\A^\sim)$ be a unitary and let $e\in M_n(\A^\sim)$ be a projection.
Set ${\bf 1}_e=\pi^n(e)\in M_n(\C)$ as in Equation \eqref{eq:wots-1-e}. We also observe that inflating
a smoothly summable spectral triple $(\A,\HH,\D)$ to 
$(M_n(\A),\H\otimes \C^n,\D\otimes{\rm Id}_n)$ yields a smoothly summable spectral triple
for $M_n(\A)$, with the same spectral dimension. Then
we can summarise the results of Sections 3 and 4 as follows.

\begin{theorem}
\label{localindex}\index{local index formula}
Let $(\A,\HH,\D)$ be a  semifinite spectral triple of 
parity $\bullet\in\{0,1\}$,
which is smoothly summable with spectral dimension $p\geq1$ and with $ \A$ separable. 
Let also  $M=2\lfloor(p+\bullet+1)/2\rfloor-\bullet$
be the largest integer of parity  $\bullet$ less than or equal to $p+1$.
Let $\D_{\mu,n}$ denote the operator coming from the 
double of the inflation $(M_n(\A),\H\otimes\C^n,\D\otimes {\rm Id}_n)$
of $(\A,\HH,\D)$, with phase $F_\mu\otimes{\rm Id}_n$ and $\D_{n}$ be the operator coming from the inflation
of $(\A,\HH,\D)$.
Then with the notations introduced above:\\
(1) The Chern character in cyclic homology computes the numerical index pairing, so
\begin{align*}
\langle [u], [(\A,\HH,\D)]\rangle 
&=\frac{-1}{\sqrt{2\pi i}}{\rm Ch}_{F_\mu\otimes{\rm Id}_n}^M\big({\rm Ch}^M(\hat u)\big),
\quad\mbox{(odd case)},\\
\langle [e]-[{\bf 1}_e], [(\A,\HH,\D)]\rangle 
&={\rm Ch}_{F_\mu\otimes{\rm Id}_n}^M\big({\rm Ch}^M(\hat e)\big),
\quad\mbox{(even case)}.
\end{align*}

(2) The numerical index pairing can also be computed with the resolvent cocycle of $\D_{n}$ via
\begin{align*}
\langle [u], [(\A,\HH,\D)]\rangle &=\frac{-1}{\sqrt{2\pi i}}{\rm res}_{r=(1-p)/2}
\sum_{m=1,\,odd}^M\phi^r_{m}\big({\rm Ch}^m(u)\big),
\quad\mbox{(odd case)},\\
\langle [e]-[{\bf 1}_e], [(\A,\HH,\D)]\rangle &={\rm res}_{r=(1-p)/2}
\sum_{m=0,\,even}^M\phi^r_{m}\big({\rm Ch}^m(e)-{\rm Ch}^m({\bf 1}_e)\big),
\quad\mbox{(even case)},
\end{align*}
and in particular for $x=u$ or $x=e$, depending on the parity,
$\sum_{m=\bullet}^M\phi_{m}^r({\rm Ch}_m(x))$ analytically 
continues to a deleted neighborhood of the critical point $r=(1-p)/2$ with at 
worst a simple pole at that point.\\
(3) If moreover the triple $(\A,\H,\D)$ has isolated spectral dimension, then the 
numerical index can also be computed with the residue cocycle for $\D_n$, via
\begin{align*}
\langle [u], [(\A,\HH,\D)]\rangle 
&=\frac{-1}{\sqrt{2\pi i}}\sum_{m=1,\,odd}^M\phi_{m}\big({\rm Ch}^m(u)\big),
\quad\mbox{(odd case)},\\
\langle [e]-[{\bf 1}_e], [(\A,\HH,\D)]\rangle 
&=\sum_{m=0,\,even}^M\phi_{m}\big({\rm Ch}^m(e)-{\rm Ch}^m({\bf 1}_e)\big),
\quad\mbox{(even case)}.
\end{align*}
\end{theorem}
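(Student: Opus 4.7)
The plan is as follows. First I would reduce to the case $n=1$: the inflation $(M_n(\A), \HH \otimes \C^n, \D \otimes {\rm Id}_n)$ preserves the smoothly summable property with the same spectral dimension $p$, and the Chern characters ${\rm Ch}^m(e)$, ${\rm Ch}^m({\bf 1}_e)$, ${\rm Ch}^m(u)$ are built from matrix-valued elements in the obvious way. Part (1) then follows directly from the proposition in subsection \ref{subsec:put-it-together} identifying $\langle \cdot, (\A,\HH,\D)\rangle$ with ${\rm Ch}^{\lfloor p\rfloor}_{F_\mu}({\rm Ch}_{\lfloor p\rfloor}(\cdot))$ when $\lfloor p\rfloor$ has the parity of the spectral triple, together with the remark immediately following that proposition, which handles the opposite-parity case by applying the same argument at $\lfloor p\rfloor + 1$ and so gives the formula with $M$ in place of $\lfloor p\rfloor$.

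Next, for Part (2), I pass to the double spectral triple $(\A, \HH^2, \D_\mu)$, in which $\D_\mu$ is invertible with $\D_\mu^2 \geq \mu^2 > 0$. Theorem \ref{thm:final-cohoms}(1) then applies and produces a cohomology in the reduced $(b,B)$-bicomplex for $\A^\sim$ between the resolvent cocycle $(\phi^r_{\mu,m})_{m=\bullet}^M$ of the double and the single-term cocycle $(r-(1-p)/2)^{-1}{\rm Ch}^M_{F_\mu}$, modulo cochains whose values are holomorphic at $r=(1-p)/2$. Pairing both sides with the reduced $(b,B)$-cycle ${\rm Ch}(e) - {\rm Ch}({\bf 1}_e)$ (or ${\rm Ch}(u)$ in the odd case) and taking the residue at the critical point yields the formula of Part (2), but with $\phi^r_{\mu,m}$ in place of the desired $\phi^r_m$. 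To make this replacement I invoke Proposition \ref{mu-gives-cobound}: the difference $(\phi^r_m - \phi^r_{\mu,m})$ is a reduced $(b,B)$-coboundary modulo functions holomorphic at $r=(1-p)/2$, so the pairing with any cycle is unaffected after taking residues. The only genuine subtlety is the degree-zero shift in that proposition, $\phi^r_{\mu,0}(a_0) = \phi^r_0(a_0 - {\bf 1}_{a_0})$ modulo holomorphic remainders: this is exactly the mechanism which turns the abstract class $[e]$ from Part (1) into the K-theoretic difference $[e]-[{\bf 1}_e]$ appearing on the right-hand side of Part (2).

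Part (3) is then almost immediate: under the additional isolated-spectral-dimension hypothesis, Proposition \ref{hee} identifies ${\rm res}_{r=(1-p)/2}\phi^r_m$ with the residue cocycle component $\phi_m$ for each admissible $m$, so substituting into the formula from Part (2) produces the residue cocycle formula in the statement.

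The hardest aspect of the argument is really bookkeeping within the reduced $(b,B)$-bicomplex for $\A^\sim$: one must verify that ${\rm Ch}(e)-{\rm Ch}({\bf 1}_e)$ and ${\rm Ch}(u)$ define reduced $(b,B)$-cycles (so that pairing against reduced coboundaries annihilates them and the cohomology arguments from Theorem \ref{thm:final-cohoms} are applicable), and that the explicit normalisation constants arising in the computation of ${\rm Ch}^M_{F_\mu}$ at the end of the proof of Theorem \ref{thm:final-cohoms} match those appearing in Part (1). All of the framework required for this has been erected in subsections \ref{subsec:reduced} and \ref{subsec:put-it-together}, together with Lemma \ref{chien}, so no new technical input is needed beyond careful tracking of conventions.
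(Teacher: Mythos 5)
Your proposal is correct and follows exactly the route the paper intends: Theorem \ref{localindex} is presented there as a summary of Sections 3 and 4, and your assembly — Part (1) from the proposition of subsection \ref{subsec:put-it-together} and its remark, Part (2) from Theorem \ref{thm:final-cohoms}(1) applied to the double followed by Proposition \ref{mu-gives-cobound} (with the degree-zero shift producing the ${\rm Ch}^0(e)-{\rm Ch}^0({\bf 1}_e)$ term), and Part (3) from Proposition \ref{hee} — is precisely the intended chain of results. No gaps.
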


\subsection{A nonunital McKean-Singer formula}
To illustrate this theorem, we prove a nonunital version of the McKean-Singer formula. 
To the best knowledge of the authors, there is no other version of McKean-Singer which
is valid without  the assumption that $f(\D^2)$ is trace class for some function $f$. 
Our assumptions are quite different from the usual 
McKean-Singer formula.

Let $(\A,\HH,\D)$ be an even semifinite smoothly summable spectral triple relative to 
$(\cn,\tau)$ with spectral dimension $p\geq 1$. Also, let $e\in M_n(\A^\sim)$ be a projection
with $\pi^n(e)={\bf 1}_e\in M_n(\C)\subset M_n(\cn)$. Then using the well known homotopy (with
$\D_n=\D\otimes{\rm Id}_n$)
\begin{align}
\label{eq:std-homotopy}
&\D_n=e\D_n e+(1-e)\D_n(1-e) 
+ t\big(e\D_n(1-e)+(1-e)\D_n e\big)\\
&\qquad =e\D_n e+(1-e)\D_n(1-e)
+t\big((1-e)[\D_n,e]-e[\D_n,e]\big)=:\D_{e}-t(2e-1)[\D_n,e],\nonumber
\end{align}
we see that we have an equality of the $KK$-classes associated to the spectral triples 
$$
[(M_n(\A),\HH\otimes\C^n,\D_n)]=[(M_n(\A),\HH\otimes\C^{ n},\D_e)]\in KK^0(\A,C),
$$
where $C$ is the (separable) $C^*$-algebra generated by the $\tau$-compact
operators listed in Definition \ref{defn:not-kn}.
However the property of smooth summability may not be preserved by this homotopy. 
The next lemma shows that the summability part is preserved.

\begin{lemma} 
\label{lem:stability}
Let $(\A,\H,\D)$ be a smoothly summable  semifinite spectral triple  relative
to $(\cn,\tau)$
with spectral dimension $p\geq 1$. Let $A\in {\rm OP}^0_0$
be a self-adjoint element. Then 
$$
\B_2(\D+A,p)= \B_2(\D,p)\quad{\rm and}\quad\B_1(\D+A,p)= \B_1(\D,p).
$$
\end{lemma}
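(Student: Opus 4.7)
Both equalities reduce to a single operator estimate: for every $s>0$, the operators $G^{s/4}H^{-s/4}$ and $H^{s/4}G^{-s/4}$ are bounded and lie in $\cn$, where $G:=1+\D^2$ and $H:=1+(\D+A)^2$. Granted this, the factorisation $TH^{-s/4}=(TG^{-s/4})(G^{s/4}H^{-s/4})$ for $T\in\cn$ and $s>p$ shows that $TH^{-s/4}$ is $\tau$-Hilbert-Schmidt iff $TG^{-s/4}$ is; applying this to $T$ and $T^*$ gives $\B_2(\D+A,p)=\B_2(\D,p)$ together with uniform equivalence of the $\Q_n$ seminorms, and the equality $\B_1(\D+A,p)=\B_1(\D,p)$ follows from the description of $\B_1$ as the image of $\B_2\otimes_\pi\B_2$ under multiplication, since equivalent $\Q_n$ produce equivalent $\PP_n$.

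To prove the operator estimate, a Cauchy--Schwarz computation on $\langle\xi,H\xi\rangle=\|\D\xi\|^2+2\Re\langle\D\xi,A\xi\rangle+\|A\xi\|^2+\|\xi\|^2$ yields the quadratic-form bounds $\tfrac12 G-\|A\|^2\leq H\leq 2G+2\|A\|^2$, and the analogous inequalities with $G$ and $H$ swapped (using $\D=(\D+A)-A$). Setting $C:=2\|A\|^2$, $\tilde G:=G+C$, $\tilde H:=H+C$, these combine into the clean two-sided operator inequality $\tfrac12\tilde G\leq\tilde H\leq 2\tilde G$. Since $G$ and $\tilde G$ commute (being functions of $\D^2$), the elementary scalar estimate $(1+C)^{-s/4}x^{-s/4}\leq(x+C)^{-s/4}\leq x^{-s/4}$ for $x\geq 1$ reduces the problem to bounding $\tilde H^{s/4}\tilde G^{-s/4}$ and its reverse. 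For $s\in[0,2]$, L\"owner--Heinz operator monotonicity of $x\mapsto x^{s/4}$ applied to $\tfrac12\tilde G\leq\tilde H\leq 2\tilde G$ yields $\|\tilde H^{s/4}\tilde G^{-s/4}\|\leq 2^{s/4}$ directly.

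The main obstacle is extending the estimate to $s>2$, where $x\mapsto x^{s/4}$ is no longer operator monotone. My plan is to prove inductively that $\tilde H^n\tilde G^{-n}\in\cn$ for every $n\in\mathbb N$, and then apply complex interpolation. For $n=1$, $\tilde H\tilde G^{-1}=1+B\tilde G^{-1}$ with $B=A\D+\D A+A^2$, and each summand combined with $\tilde G^{-1}$ is bounded using $A\in\cn$, $\D\tilde G^{-1}\in\cn$, and the boundedness of $\delta(A)=[|\D|,A]$ guaranteed by $A\in{\rm OP}^0_0=\B_1^\infty(\D,p)$. The inductive step decomposes $\tilde H^{n+1}\tilde G^{-(n+1)}$ via $\tilde H=\tilde G+B$ and uses the commutator identity $\tilde G\tilde H^n\tilde G^{-(n+1)}=\tilde H^n\tilde G^{-n}+[\D^2,\tilde H^n]\tilde G^{-(n+1)}$; the expansion $[\D^2,\tilde H^n]=\sum_{k=0}^{n-1}\tilde H^k[\D^2,B]\tilde H^{n-1-k}$ combined with $[\D^2,B]\in{\rm OP}^2_0$ (which follows from Lemma \ref{lem:derivs} and the higher smoothness of $A$) gives each summand total pseudodifferential order $\leq 0$, so that the full expression lies in $\cn$ by the inductive hypothesis. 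Finally, the holomorphic operator-valued function $z\mapsto\tilde H^z\tilde G^{-z}$ has norm $\leq 1$ on the imaginary axis (as a product of unitaries) and is bounded at each positive integer $\Re z=n$ by the induction; Hadamard's three-line theorem applied on each strip $[n,n+1]$ then yields the required bound $\|\tilde H^{s/4}\tilde G^{-s/4}\|\leq C_s$ for all $s>0$, completing the argument.
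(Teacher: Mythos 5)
Your strategy is genuinely different from the paper's, and its overall architecture is sound. The paper does not attempt to compare the two weights uniformly; instead it expands $(1+(\D+A)^2)^{-s/2}-(1+\D^2)^{-s/2}$ via the Cauchy formula and a resolvent expansion in powers of $\{\D,A\}+A^2\in{\rm OP}^1_0$, and uses Lemma \ref{lem:crucial} and Lemma \ref{lem:was-5.3} to show that this \emph{difference} is itself trace class for $s>p-1$. Hence for any bounded $T$ the quantities $\tau(|T|(1+(\D+A)^2)^{-s/2}|T|)$ and $\tau(|T|(1+\D^2)^{-s/2}|T|)$ differ by a finite constant, which gives $\B_2(\D+A,p)=\B_2(\D,p)$ at once and sidesteps the large-$s$ comparison problem entirely. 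Your route establishes the stronger statement that the seminorms $\Q_n$ built from $\D$ and from $\D+A$ are uniformly equivalent, and for the $\B_2$ part it only uses $A\in{\rm OP}^0$ (smoothness) rather than the integrability encoded in ${\rm OP}^0_0$; the price is that all the work migrates into controlling $\widetilde{H}^{s/4}\widetilde{G}^{-s/4}$ for every $s>p$, not just $s\le 2$.

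The soft spot is the inductive step. The identity $[\D^2,\widetilde{H}^n]=\sum_k\widetilde{H}^k[\D^2,B]\widetilde{H}^{n-1-k}$ and the membership $[\D^2,B]\in{\rm OP}^2_0$ are fine, but ``total pseudodifferential order'' has no meaning for the mixed products $\widetilde{H}^k[\D^2,B]\widetilde{H}^{n-1-k}\widetilde{G}^{-(n+1)}$, since $\widetilde{H}^k$ is not an operator in the $\D$-calculus. To bound such a summand you must conjugate, writing it as $(\widetilde{H}^k\widetilde{G}^{-k})\cdot\widetilde{G}^{k}[\D^2,B]\widetilde{G}^{-k-2}\cdot\bigl(\widetilde{G}^{k+2}\widetilde{H}^{n-1-k}\widetilde{G}^{-(n+1)}\bigr)$; the last factor is the conjugate by $\widetilde{G}^{k+2}$ of the bounded operator $\widetilde{H}^{n-1-k}\widetilde{G}^{-(n-1-k)}$, and conjugation by a power of $\widetilde{G}$ does \emph{not} preserve boundedness of an arbitrary bounded operator. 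So the hypothesis ``$\widetilde{H}^m\widetilde{G}^{-m}\in\cn$ for $m\le n$'' is insufficient as stated: either strengthen the induction to ``$\widetilde{G}^{a}\widetilde{H}^{b}\widetilde{G}^{-a-b}\in\cn$ for all $a,b$'', or, more simply, avoid the commutator altogether via $\widetilde{H}^{n+1}\widetilde{G}^{-(n+1)}=\widetilde{H}^{n}\widetilde{G}^{-n}\bigl(1+\widetilde{G}^{n}(B\widetilde{G}^{-1})\widetilde{G}^{-n}\bigr)$, where $B\widetilde{G}^{-1}\in{\rm OP}^{-1}_0$ and Proposition \ref{cont-op} (the $\sigma$-invariance of ${\rm OP}^0$, up to the harmless constant shift in $\widetilde{G}$) makes the conjugated factor bounded. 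With that repair, and with the three-lines argument run on matrix coefficients $\langle\widetilde{H}^{n\bar z}\eta,\widetilde{G}^{-nz}\xi\rangle$ so as not to presuppose boundedness of $\widetilde{H}^{z}\widetilde{G}^{-z}$ in the interior of the strip, your argument closes.
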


\begin{proof}
For $K\in\N$ arbitrary, Cauchy's formula and the resolvent expansion gives 
\begin{align*}
(1+(\D+A)^2)^{-s/2}-(1+\D^2)^{-s/2}
&=\sum_{m=1}^K\frac{1}{2\pi i}\int_\ell\lambda^{-s/2}
\left(R(\lambda)(\{\D,A\}+A^2)\right)^mR(\lambda)d\lambda\\
&+\frac{1}{2\pi i}\int_\ell\lambda^{-s/2}\left(R(\lambda)(\{\D,A\}+A^2)\right)^{K+1}R_A(\lambda)d\lambda,
\end{align*}
where $R(\lambda)=(\lambda-(1+\D^2))^{-1}$,
$R_A(\lambda)=(\lambda-(1+(\D+A)^2))^{-1}$ and $\{\cdot,\cdot\}$ denotes 
the anticommutator.
Now since $\{\D,A\}+A^2$ is in ${\rm OP}^1_0$, Lemma \ref{lem:crucial} can be 
applied to all terms except the last, to see that each is  trace-class
for $s>p-m$. 
Using Lemma \ref{lem:was-5.3}, the H\"older inequality and estimating $R_A(\lambda)$ in norm,
we see that the integrand of the remainder term has trace norm
$$
\Vert \big(R(\lambda)(\{\D,A\}+A^2)\big)^{K+1}R_A(\lambda)\Vert_1\leq C_\eps(a^2+v^2)^{-(K+1)/4+(K+1)p/4q +(K+1)\eps-1/2},
$$
where $q> p$ and $\eps>0$. Choosing $q=p+\delta$ for some $\delta>0$, we may choose $K$ large enough so that
the integral over $v=\Re(\lambda)$ converges absolutely whenever 
$s>p-1$. Hence we can suppose that
the remainder term is trace-class for $s>p-1$.

Now let $T\in \B_2(\D,p)$ and use the tracial property to see that
\begin{align*}
\tau((1+(\D+A)^2)^{-s/4}T^*T(1+(\D+A)^2)^{-s/4})
&=\tau(|T|(1+(\D+A)^2)^{-s/2}|T|)\\
&=\tau(|T|(1+\D^2)^{-s/2}|T|)+C_s\\
&=\tau((1+\D^2)^{-s/4}T^*T(1+\D^2)^{-s/4})+C_s,
\end{align*}
where $C_s=\tau(|T|\left((1+(\D+A)^2)^{-s/2}-(1+\D^2)^{-s/2}\right)|T|)$ is finite for $s>p-1$
by the previous considerations.
By repeating the argument for $T^*$ we have $T\in \B_2(\D+A,p)$. As $\D=(\D+A)-A$, the argument is
symmetric, and we see that $\B_2(\D,p)=\B_2(\D+A,p)$. Now by definition $\B_1(\D,p)=\B_1(\D+A,p)$.
\end{proof}

Unfortunately, there is no reason to suppose that the smoothness properties of the spectral triple
$(M_n(\A),\H^n,\D_n)$ are preserved by the homotopy from $\D_n$ to $\D_e$. 
Instead, consider $(\A_e,\H^{ n},\D_e)$, where $\A_e$ is 
the algebra of polynomials in $e-{\bf 1}_e\in M_n(\A)$.
Then by Lemma \ref{lem:stability} and $[\D_e,e-{\bf 1}_e]=[\D_e,e]=0$
(which implies since $\D_e$ is self-adjoint that $[|\D_e|,e-{\bf 1}_e]=[|\D_e|,e]=0$ too) and
we easily check that $(\A_e,\H^n,\D_e)$
is a smoothly summable  spectral triple. 
Now employing the resolvent cocycle of $(\A_e,\H^{ n},\D_e)$ yields
\begin{align*}
\mbox{Index}_{\tau\otimes \tr_{2n}}\big(\hat e(F_{\mu,+}\otimes {\rm Id}_n)\hat e\big)&=
\mbox{res}_{r=(1-p)/2}\Big(\sum_{m=2,even}^M\phi^r_{\mu,m}
\big({\rm Ch}_m(\hat e)\big)\\
&\qquad\quad+\ \frac{1}{(r-(1-p)/2)}\tau\otimes\tr_n
\big(\gamma(e-{\bf 1}_e)(1+\D_{e}^2)^{-(r-(1-p)/2)}\big)\Big).
\end{align*} 
This equality follows from Proposition \ref{mu-gives-cobound} and the explicit computation of
the zero degree term. Now since $[\D_{e},e]=0$, $\phi^r_m({\rm Ch}_m(e))=0$ for all
$m\geq 2$. This proves the following nonunital McKean-Singer formula.

\begin{theorem}
\label{prop:McK}\index{McKean-Singer}
Let $(\A,\HH,\D)$ be an even semifinite smoothly summable  spectral triple relative to 
$(\cn,\tau)$ with spectral dimension $p\geq 1$  and with $\A$ separable.
Also, let $e\in M_n(\A^\sim)$ be a projection. Then
\begin{align*}
\langle [e]-[{\bf 1}_e], [(\A,\HH,\D)]\rangle &=
\langle [e]-[{\bf 1}_e], [(\A_e,\HH,\D)]\rangle\\
&={\rm res}_{r=(1-p)/2}\frac{1}{(r-(1-p)/2)}\tau\otimes\tr_n
\big(\gamma(e-{\bf 1}_e)(1+\D_{e}^2)^{-(r-(1-p)/2)}\big).
\end{align*}
\end{theorem}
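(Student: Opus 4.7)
The plan is to follow the outline sketched in the discussion preceding the theorem, making each step precise. First I would use the standard linear homotopy \eqref{eq:std-homotopy} connecting $\D_n = \D\otimes{\rm Id}_n$ to $\D_e := e\D_n e + (1-e)\D_n(1-e)$; this homotopy takes place inside the bounded Kasparov picture (after passing to the bounded transform $F_t$) and so yields an equality of $KK$-classes $[(M_n(\A),\H^n,\D_n)] = [(M_n(\A),\H^n,\D_e)]$ in $KK^0(A,C)$. Consequently, by Proposition \ref{index-explicit} and Corollary \ref{cor:double-index}, the numerical index $\langle [e]-[{\bf 1}_e],[(\A,\HH,\D)]\rangle$ equals the index pairing computed using $\D_e$.

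The next, and central, step is to observe that although smoothness of the original algebra $\A$ need not survive the deformation to $\D_e$, one can instead pair $\D_e$ with the much smaller algebra $\A_e$ consisting of polynomials in $e-{\bf 1}_e$. The point is that $[\D_e,e]=0$ by construction, hence $[|\D_e|,e]=0$ as well, so all iterated commutators of $e-{\bf 1}_e$ with $\D_e$ or $|\D_e|$ vanish trivially. Combined with Lemma \ref{lem:stability} (applied with $A=\D_e-\D_n \in {\rm OP}^0_0$, which requires only checking that the off-diagonal correction term lies in ${\rm OP}^0_0$), this shows that $(\A_e,\H^n,\D_e)$ is a smoothly summable semifinite spectral triple of the same spectral dimension $p$.

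The remaining step is to write down Theorem \ref{localindex}(2), the resolvent cocycle formula, for $(\A_e,\H^n,\D_e)$, translated via Proposition \ref{mu-gives-cobound} to allow us to work with the \emph{un-doubled} operator $\D_e$ (at the expense of subtracting the scalar part ${\bf 1}_e$ in the degree-zero slot). For the doubled spectral triple we have
\begin{align*}
\langle [e]-[{\bf 1}_e],[(\A,\HH,\D)]\rangle
&= \res_{r=(1-p)/2} \sum_{m=0,\,\mathrm{even}}^{M}\phi^r_{\mu,m}\big({\rm Ch}_m(\hat e)\big).
\end{align*}
Applying Proposition \ref{mu-gives-cobound} to express this in terms of the resolvent cocycle $\phi^r_m$ of $(\A_e,\H^n,\D_e)$ gives the sum of $\phi^r_m({\rm Ch}_m(e))$ for $m\geq 2$ plus the explicit degree-zero correction $(r-(1-p)/2)^{-1}\,\tau\otimes\tr_n(\gamma(e-{\bf 1}_e)(1+\D_e^2)^{-(r-(1-p)/2)})$.

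Finally, the key simplification: since $[\D_e,e]=0$, every factor $[\D_e,e]^{(k_j)}$ appearing in ${\rm Ch}_m(e)$ for $m\geq 2$ is zero, so $\phi^r_m({\rm Ch}_m(e)) = 0$ identically for all $m\geq 2$. Only the degree-zero term survives, yielding the stated residue formula. I expect the main obstacle to be the verification that the homotopy $\D_n \to \D_e$ genuinely preserves the $KK$-class in our nonunital setting (one must check that the straight-line family of bounded transforms gives a continuous family of Kasparov modules with compact commutators for $a\in\A$, uniformly in $t$), together with the careful application of Lemma \ref{lem:stability} to ensure $(\A_e,\H^n,\D_e)$ is genuinely smoothly summable with the same spectral dimension; everything else is then a bookkeeping exercise reducing to the degree-zero term.
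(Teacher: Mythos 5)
Your proposal is correct and follows essentially the same route as the paper: the linear homotopy \eqref{eq:std-homotopy} to $\D_e$ preserving the $KK$-class, Lemma \ref{lem:stability} to preserve summability, the replacement of $\A$ by the algebra $\A_e$ of polynomials in $e-{\bf 1}_e$ (for which smoothness is automatic since $[\D_e,e]=0$), Proposition \ref{mu-gives-cobound} to reduce to the un-doubled resolvent cocycle plus the explicit degree-zero term, and the vanishing of all components $\phi^r_m({\rm Ch}_m(e))$ for $m\geq 2$ because $[\D_e,e]=0$. The two points you flag as the main obstacles (continuity of the bounded-transform homotopy in the nonunital Kasparov picture, and checking $(2e-1)[\D_n,e]\in M_n({\rm OP}^0_0)$ so that Lemma \ref{lem:stability} applies) are exactly the points the paper treats briefly, so no gap.
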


This gives a nonunital analogue of the McKean-Singer
formula. Observe that the formula has $\D_{e}$ {\em not} 
$\D_n$.

{\bf Remark.} We have also proved a nonunital version of the Carey-Phillips spectral flow
formula for paths $(\D_t)_{t\in[0,1]}$ with unitarily equivalent   endpoints   and with $\dot{\D_t}$ satisfying 
suitable summability constraints. The
proof is quite lengthy, and so we will present this elsewhere.

\subsection{A classical example with weaker integrability properties}
\label{bott}

Perhaps surprisingly, given the difficulty of the nonunital case, we  gain
a little more freedom in choosing representatives of $K$-theory classes than we might have expected.
We do not formulate a general statement, but instead illustrate with an example.
This example involves a projection which does not live in 
a matrix algebra over (the unitisation of) 
our `integrable algebra' $\B_1(\D,p)$, but we may still use the local index formula 
to compute index pairings.

We will employ the uniform Sobolev algebra $W^{\infty,1}(\R^2)$, i.e. the Fr\'echet 
completion of $C^\infty_c(\R^2)$ for 
the seminorms 
$\mathfrak q_{n}(f):=\max_{n_1+n_2\leq n}\|\partial_1^{n_1}\partial_2^{n_2} f\|_1$. 
By the Sobolev Lemma, $W^{\infty,1}(\R^2)$ is continuously embedded
in $L^\infty(\R^2)$, and is  separable for the uniform topology as 
it contains $C^\infty_c(\R^2)$
as a dense subalgebra, and $C^\infty_c(\R^2)$ is 
separable for the uniform norm topology.

The spin Dirac operator on $\R^2\simeq\C$ is 
$
\dslash:=\begin{pmatrix} 0  & \partial_1+i\partial_2\\ -\partial_1+i\partial_2 & 0\end{pmatrix}
$,
with grading $\gamma:=\begin{pmatrix}1&0\\0&-1\end{pmatrix}$.  
Identifying a function with the operator of pointwise 
multiplication by it, an element $f\in W^{\infty,1}(\R^2)$ 
is represented as $f\otimes{\rm Id}_2$ on $L^2(\R^2,\C^2)$.

Anticipating the results of the next Section, we know by Proposition \ref{main-stuff} that
the triple
$\big(W^{\infty,1}(\R^2),L^2(\R^2,\C^2),\dslash\big)$ is 
smoothly summable,  relative to 
$\big(\B(L^2(\R^2,\C^2)),\mbox{Tr}\big)$ whose spectral dimension is $2$ and is isolated.
Thus, we can employ the residue cocycle to compute indices. 

Let $p_B\in M_2(C_0(\C)^\sim)$ be the Bott projector\index{Bott projector}
\begin{equation}
p_B(z):=\frac{1}{1+|z|^2}\begin{pmatrix} 1 & \bar{z}\\ z & |z|^2\end{pmatrix},\quad 
{\bf 1}_{p_B}=\begin{pmatrix} 0 & 0 \\ 0 & 1\end{pmatrix}.
\end{equation}
It is important to observe that
$p_B-{\bf 1}_{p_B}$ is {\em not} in $\B_1(\dslash,2)$  
since the off-diagonal terms are
not even  $L^2$-functions.

Since the fibre trace of $p_B-{\bf 1}_{p_B}$ is identically zero,
the zero degree term of the local index formula does not contribute to the index pairing. This
observation holds in general for commutative algebras since elements of $K_0$ then correspond 
to virtual bundles of virtual rank zero.

Thus there is only one term to consider in the local index formula, in degree $2$. More
generally, for even 
dimensional manifolds we will only ever need to consider the terms in the local index formula with
$m\geq 2$. 

This means that all we really require is that 
$[\dslash\otimes {\rm Id}_2,p_B][\dslash\otimes{\rm Id}_2,p_B]$ lies in $M_2(W^{\infty,1}(\R^2))$,
and this is straightforward to check.
Indeed, 
the routine computation
$$
(p_B-1/2)[\dslash\otimes {{\rm Id}_2},p_B][\dslash\otimes {{\rm Id}_2},p_B]
=\frac{-4}{(1+|z|^2)^3}\begin{pmatrix} 1/2 & \bar{z}/2 & 0 & 0\\
z/2 & |z|^2/2 & 0 & 0\\ 0 & 0 & -|z|^2/2 & \bar{z}/2\\
0& 0 & z/2 & -1/2\end{pmatrix},
$$
 shows that $(p_B-1/2)[\dslash\otimes   {\rm Id}_2,p_B][\dslash\otimes   {\rm Id}_2,p_B]$ is
a matrix over $W^{\infty,1}(\R^2)$. The fibrewise trace gives 
$$
\tr_2\big((p_B-1/2)[\dslash\otimes {\rm Id}_2,p_B][\dslash\otimes {\rm Id}_2,p_B]\big)
=\frac{-2}{(1+|z|^2)^2}\begin{pmatrix} 1 & 0\\ 0 &-1\end{pmatrix}.
$$
Applying \cite[Corollary 14]{Re2}, we find (the prefactor of $1/2$ comes from the coefficients in the local
index formula)
\begin{align*}
\frac{1}{2}\mbox{Tr}\otimes\tr_2
\big(\gamma(p_B-1/2)[\dslash\otimes {\rm Id}_2,p_B][\dslash\otimes {\rm Id}_2,p_B](1+\D^2)^{-1-\xi}\big)
&=-\frac{\Gamma(\xi)}{\Gamma(1+\xi)}\int_0^\infty \frac{r}{(1+r^2)^2}dr\\
&=-\frac{1}{2\xi}.
\end{align*}

Recalling that  the second component of the Chern character of $p_B$ introduces a factor of $-2$, 
we arrive at the numerical index
$$
\langle [p_B]-[{\bf 1}_{p_B}], \big[\big(W^{\infty,1}(\R^2),L^2(\R^2,\C^2),\dslash\big)\big]\rangle=1,
$$
as expected. This indicates that the resolvent cocycle extends by continuity to a larger 
complex, defined using norms of iterated projective tensor product type  associated to the norms $\PP_n$.
We leave a more thorough discussion of this to another place.




\section{Applications to index theorems on open manifolds}
\label{Mfd}

This section contains a discussion of some of what the 
noncommutative residue formula implies
for the classical situation of a noncompact manifold. The main contribution 
of the noncommutative approach
that we have endeavoured to explain here, is the extent to which compact 
support assumptions such as those in 
\cite{GL} may be avoided. However we do not exhaust all 
of the applications of the residue formula
in the classical case in this memoir. 

Our aim is to write an account of our results in a relatively complete fashion. 
We recall the basic definitions of spin geometry, \cite{LM}, and heat kernel estimates for manifolds
of bounded geometry. Using this data we construct a smoothly summable spectral triple for
manifolds of bounded geometry. Having done this, we use results of Ponge and Greiner to 
obtain an Atiyah-Singer formula for the index pairing on manifolds of bounded geometry.
Then we utilise the semifinite framework to obtain an $L^2$-index theorem for covers of manifolds
of bounded geometry.

\subsection{A smoothly summable spectral triple for manifolds of bounded geometry}

\subsubsection{Dirac-type operators and Dirac bundles}
\label{preliminaries}

Let $(M,g)$ be a (finite dimensional, paracompact, second countable) geodesically complete
Riemannian manifold. We let $n\in\N$ be the dimension of $M$ and $\mu_g$ be the Riemannian 
volume form. Unless otherwise specified, the measure involved in the definition of the Lebesgue function
spaces  $L^q(M)$, $1\leq q\leq\infty$, is the one associated with $\mu_g$.

We let $\D_S$ be a {\it Dirac-type operator} in the sense of \cite{GL, LM}. \index{Dirac-type operator}
Such operators are of the following form. Let $S\to M$, be a vector bundle, 
complex for simplicity, of rank $m\in\N$
and $(\cdot|\cdot)$,  a   fiber-wise Hermitian form. We suppose that $S$ is a bundle of left modules
over the Clifford bundle algebra ${\rm Cliff}(M):={\rm Cliff}(T^*M,g)$ \index{Clifford bundle}
which is such that for each unit vector $e_x$ of $T_x^*M$, 
the Clifford module multiplication $c(e_x):S_x\to S_x$ is a (smoothly varying) isometry. 
It is further  equipped with a metric compatible connection $\nabla^S$,  
such that for any smooth sections 
$\sigma \in\Gamma^\infty(S)$ and $\vf\in\Gamma^\infty({\rm Cliff}(M))$, it satisfies
\begin{equation}
\label{Liebniz-type}
\nabla^S(c(\vf)\sigma)=c(\nabla \vf)\sigma+c(\vf)\nabla^S( \sigma).
\end{equation} 
Here, $\nabla$ is the Levi-Civita connection naturally extended to
a (metric compatible) connection on ${\rm Cliff}(M)$ which satisfies, for $\vf,\,\psi\in{\rm Cliff}(M)$,
$\nabla(\vf\cdot\psi)=\nabla(\vf)\cdot\psi+\vf\cdot\nabla(\psi)$ 
(the dot here is the Clifford multiplication). 
We call such a bundle a {\it  Dirac bundle}, \cite[Definition 5.2]{LM}. \index{Dirac bundle}
Then, $\D_S$ is defined as the composition
$$
\Gamma^\infty(S)\to\Gamma^\infty(T^*M\otimes S)\to\Gamma^\infty(S),
$$
where the first arrow is given by $\nabla^S$ and the second by the Clifford multiplication.

For any orthonormal basis $\{e^\mu\}_{\mu=1,\dots,n}$ of $T_x^*M$, 
at each point $x\in M$ and   $\{e_\mu\}_{\mu=1,\dots,n}$ 
the dual basis of $T_xM$, with Einstein summation convention understood, we therefore have
$$
\D_S= c(e^\mu)\,\nabla^S_{e_\mu}.
$$
Let $\langle \sigma_1,\sigma_2\rangle_S=\int_M(\sigma_1|\sigma_2)(x)\,\mu_g(x)$ 
be the $L^2$-inner product on $\Gamma^\infty_c(S)$,
with $(\cdot | \cdot)$ the Hermitian form on $S$. 
As usual $L^2(M,S)$ is the associated  Hilbert space  completion
of $\Gamma^\infty_c(S)$. Recall that under the assumption of geodesic completeness, 
$\D_S$ is essentially self-adjoint and $\Gamma^\infty_c(S)$ is a core for $\D_S$, \cite[Corollary 10.2.6]{HR}
and \cite[Theorem 1.17]{GL}.
Moreover, if the Dirac bundle $S\to M$ is a $\mathbb Z_2$-graded ${\rm Cliff}(M)$-module, 
then $\D_S$ is odd, and in the usual 
matrix decomposition, it reads
$$
\D_S=\begin{pmatrix}0&\D_S^+\\ \D_S^-&0\end{pmatrix},\quad\mbox{with}\quad  (\D_S^\pm)^*=\D_S^\mp.
$$
We identify $L^\infty(M)$ with a subalgebra of the
bounded Borel sections of ${\rm Cliff}(M)$ in the usual way. 
We thus have a left action  $L^\infty(M)\times L^2(M,S)\to L^2(M,S)$ given by 
$(f,\sigma)\mapsto c(f)\sigma$. In a local trivialization of $S$, 
this action is given by the diagonal point-wise multiplication. It moreover satisfies $\|c(f)\|=\|f\|_\infty$. 

We recall now the important Bochner-Weitzenb\"ock-Lichnerowicz 
formula for the square of a Dirac-type operator:
\begin{equation}
\label{lich}
\D^2_S=\Delta_S+\tfrac12 \mathcal R,\qquad\mathcal R:=c(e^\mu)\,c( e^\nu)\,\,
F(e_\mu,e_\nu),
\end{equation}
where $\Delta_S:=(\nabla^S)^*\,\nabla^S$ is the Laplacian on $S$ and
 $ F:\Lambda^2T^*M\to {\rm End}(S)$ is the curvature tensor of $\nabla^S$. 

{\bf Remark.} Using the formula \eqref{lich}, 
Gromov and Lawson \cite[Theorem 3.2]{GL} have proven that if there exists 
a compact set $K\subset M$ such that
$$
\inf_{x\in M\setminus K}\,\sup\{\kappa\in\R:\mathcal R(x)\geq\kappa\,{\rm Id}_{S_x}\}> 0,
$$
then $\D_S$ (and thus $\D_S^\pm$ in the graded case) is Fredholm in the ordinary sense.

Note that the Leibniz-type relation \eqref{Liebniz-type} shows that  
for any $f\in C_c^\infty(M)$, the commutator $[\D_S,c(f)]$ extends to a bounded operator 
since an explicit computation gives
\begin{align}
\label{clascom}
[\D_S,c(f)]=c(df).
\end{align}

\subsubsection{The case of a manifold with bounded geometry}

Recall that  the injectivity radius  $r_{\rm inj}\in[0,\infty)$, is defined as\index{injectivity radius}
$$
r_{\rm inj}:=\inf_{x\in M}\sup\{r_x>0\},
$$
where $r_x\in(0,\infty)$ is such that the exponential map  $\exp_x$ is a 
diffeomorphism from  $B(0,r_x)\subset T_xM$ to
$U_{r,x}$, an open neighborhood of $x\in M$. 
We call {\it canonical coordinates} the coordinates given by 
 $\exp_x^{-1}: U_{r,x}\to B(0,r)\subset T_xM\simeq\R^n$.
Note that  $r_{\rm inj}>0$ implies that $(M,g)$ 
is geodesically complete. 

With these preliminaries, we recall the  definition of bounded geometry.

\begin{definition}
\label{BG}
 A  Riemannian manifold  $(M,g)$ is said to  
have bounded geometry  if it has strictly positive injectivity radius and all  the covariant derivatives  
of the curvature tensor are bounded on $M$. 
A Dirac bundle on $M$ is said to have bounded geometry if in addition all the  covariant derivatives  
of $F$, the curvature tensor of the connection $\nabla^S$, are bounded on $M$. 
For brevity, we simply say that $(M,g,S)$ has bounded geometry.
\end{definition}
\index{bounded geometry}

We summarise some facts about manifolds of bounded geometry.
Bounded geometry allows the 
construction of  canonical coordinates which are such that the 
transition functions have bounded derivatives of all orders, uniformly 
on $M$, \cite[Proposition 2.10]{Ro}. Moreover, for all $\eps\in(0,r_{\rm inj}/3)$, there exist
countably many points $x_i\in M$, such that $M=\cup B(x_i,\eps)$ and 
such that the covering of $M$ by the balls $B(x_i,2\eps)$ has finite order. (Recall that the order
of a covering of a topological space, is the least integer $k$, such that such the intersection of any
$k+1$ open sets of this covering, is empty.)
Subordinate to the covering by the balls $B(x_i,2\eps)$, there exists
a partition of unity, $\sum_i \vf_i=1$, with ${\rm supp} \,\vf_i\in B(x_i,2\eps)$ 
and such that their  derivatives of all orders and in normal coordinates, are  
bounded, uniformly in the covering index $i$.  See \cite[Lemmas 1.2, 1.3, Appendix 1]{Shu4} for details and proofs
of all these assertions. Also, a differential operator is said to have 
{\it uniform $C^\infty$-bounded coefficients}, if for any atlas consisting of charts of 
normal coordinates, the derivatives of all order of the coefficients are bounded on the 
chart domain and the bounds are uniform on the atlas.

The next proposition follows from results of Kordyukov \cite{Kordyukov} and Greiner \cite{Gr}, and records 
everything that we need
to know about the heat semi-group with generator $\D_S^2$.

\begin{prop} Let $(M,g)$ be a Riemannian manifold of dimension $n$ with bounded geometry.
Let $\D_S$ be a Dirac type operator acting on the sections of a Dirac bundle $S$
of bounded geometry and $P$ a differential operator on $\Gamma^\infty_c(S)$ of order 
$\alpha\in\mathbb N$, with uniform $C^\infty$-bounded coefficients. Let then  
$K_{t,P}^S(x,y)\in{\rm Hom}(S_x,S_y)$ be the operator kernel of $P\,e^{-t\D_s^2}$.
Then:\\
i) We have the global  off-diagonal gaussian upper bound
$$
\big|K_{t,P}^S(x,y)\big|_\infty\leq C\,t^{-(n+\alpha)/2}\, \exp\Big(-\frac{d_g^2(x,y)}{4(1+c)t}\Big),\quad t>0,
$$
where $|\cdot|_\infty$ denotes the operator norm on ${\rm Hom}(S_x,S_y)$ 
and $d_g$ the geodesic distance function.\\
ii) We have the  short-time asymptotic expansion
\begin{equation*}
\tr \big(K_{t,P}^S(x,x)\big) \sim_{t\to 0^+} t^{-\lfloor \alpha/2\rfloor-n/2}\sum_{i\geq 0} t^ib_{P,i}(x),\quad\mbox{for all }\,x\in M,
\end{equation*}
where the functions $b_{P,i}(x)$ are determined by a finite number of jets of the 
principal symbol of $P(\partial_t +\D_S^2)^{-1}$.\\
iii) Moreover, this local asymptotic expansion carries through to give a global one: 
For any $f\in L^1(M)$, we have
$$
\int_Mf(x)\, \tr \big(K_{t,P}^S(x,x)\big)\,d\mu_g(x) 
\sim_{t\to 0^+} t^{-\lfloor \alpha/2\rfloor-n/2}\sum_{i\geq 0} t^i \int_Mf(x)\, b_{P,i}(x)\,d\mu_g(x) .
$$
\label{rrr}
\end{prop}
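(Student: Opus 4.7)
The plan is to deduce all three claims from the parametrix construction for the heat semigroup, refined so as to exploit the uniformity afforded by bounded geometry, and to invoke Kordyukov's and Greiner's estimates where they apply directly.

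For (i), I would first apply the Bochner-Weitzenb\"ock-Lichnerowicz identity \eqref{lich} to write $\D_S^2 = \Delta_S + \tfrac12 \mathcal{R}$, exhibiting $\D_S^2$ as a generalised Laplacian. Under the bounded geometry hypothesis, $\mathcal{R}$ and all its covariant derivatives are bounded on $M$, so Kordyukov's off-diagonal Gaussian estimates apply to $e^{-t\D_S^2}$ itself, yielding the bound with $\alpha=0$. For general $\alpha$, I would use the semigroup decomposition
$$
P\,e^{-t\D_S^2} = \bigl(P\,e^{-(t/2)\D_S^2}\bigr)\cdot e^{-(t/2)\D_S^2},
$$
and estimate the first factor via standard uniform parabolic regularity in canonical coordinates: since $P$ has order $\alpha$ with uniform $C^\infty$-bounded coefficients, one gets a pointwise bound of order $t^{-\alpha/2}$ for the first factor's kernel, which convolves with the Gaussian bound of the second to produce the claimed estimate (after a small worsening of the exponent constant, absorbed into the factor $1+c$).

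For (ii), I would invoke Greiner's local parametrix construction for the heat kernel of a second-order elliptic operator of Laplace type: working in canonical normal coordinates centred at $x$, one builds a formal approximation $q_N(t,x,y) = t^{-n/2}e^{-d_g^2(x,y)/4t}\sum_{i=0}^N t^i u_i(x,y)$ to the kernel of $e^{-t\D_S^2}$, with transport-equation coefficients $u_i$. Applying $P$ and restricting to the diagonal $y=x$ yields the stated expansion, with $b_{P,i}(x)$ determined by a finite jet of the symbol of $P$ and of $\D_S^2$ at $x$; Duhamel's principle combined with the Gaussian estimate of (i) controls the error.

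For (iii), which is the main point requiring a genuine argument beyond citation, I would establish that the expansion in (ii) is \emph{uniform in} $x\in M$. This is precisely where bounded geometry enters: the coefficients $b_{P,i}$ are universal polynomials in the jets of the curvature tensor, of the connection on $S$ and their derivatives, together with the coefficients of $P$, all of which are uniformly bounded on $M$ by hypothesis; similarly, the remainder in the parametrix approximation can be estimated in the canonical-coordinate atlas of Shubin, with constants independent of the chart because of the uniform bounds on coordinate transitions. This yields, for every $N$, a constant $C_N>0$ with
$$
\Bigl|\tr K_{t,P}^S(x,x) - t^{-\lfloor\alpha/2\rfloor-n/2}\!\!\sum_{i=0}^N t^i b_{P,i}(x)\Bigr| \leq C_N\, t^{N+1-\lfloor\alpha/2\rfloor-n/2},\quad x\in M,\ t\in(0,1].
$$
Once uniformity is in hand, multiplying by $f\in L^1(M)$ and integrating gives the global expansion, since the remainder integrates to $C_N\|f\|_1 t^{N+1-\lfloor\alpha/2\rfloor-n/2}$.

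The main obstacle is this last uniformity step. The standard references treat the compact case where uniformity is automatic, and the nonuniform local expansion does not integrate. The substantive work is to run the Greiner-Gilkey parametrix construction in Shubin's uniform atlas and to check that each stage -- the approximate kernel, the Duhamel iterates, and the error estimates coming from (i) -- preserves bounds that depend only on the geometric data controlled by the bounded geometry hypothesis.
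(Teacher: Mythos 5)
Your proposal is correct and follows essentially the same route as the paper: Greiner's Levi-method parametrix for (i) and (ii), Kordyukov's uniform Gaussian bounds under bounded geometry (extended from the scalar to the bundle case), and for (iii) the key uniform-in-$x$ remainder estimate $\bigl|\tr K_{t,P}^S(x,x)-t^{-\lfloor\alpha/2\rfloor-n/2}\sum_{i=0}^N t^i b_{P,i}(x)\bigr|\leq C_N\,t^{N+1-\lfloor\alpha/2\rfloor-n/2}$ with $C_N$ independent of $x$, which is exactly the paper's concluding display before integrating against $f\in L^1(M)$.
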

\index{bounded geometry!heat kernel expansion}

\begin{proof} 
When $M$ is compact, the first two results can be found in  \cite[Chapter I]{Gr}. 
When $M$ is noncompact but has bounded geometry, Kordyukov
 has proven in  \cite[Section 5.2]{Kordyukov} that all the relevant gaussian bounds used in \cite{Gr} 
 to construct a fundamental solution, via the Levi method, of a parabolic equation associated 
 with an elliptic differential operator, remains valid for any uniformly elliptic differential operator with
 $C^\infty$-bounded coefficients, which is the case for $\D_S^2$. 
 The only restriction for us is that Kordyukov treats the scalar
 case only. However, a careful inspection of his arguments shows that the same bounds
 still hold for a uniformly elliptic differential operator acting on the smooth sections of a vector bundle of
 bounded geometry, as far as the operators under consideration have $C^\infty$-bounded coefficients. 
 With these gaussian bounds at hand (for the approximating solution and for the remainder term), one 
 can then repeat word for word the arguments of Greiner to conclude for i) and ii). 
 For  iii) one uses Kordyukov's bounds extended to the vector 
 bundle case, \cite[Proposition 5.4]{Kordyukov}, to see that for all $k\in\N_0$, one has
 $$
 \Big|\tr \big(K_{t,P}^S(x,x)\big) -t^{-\lfloor \alpha/2\rfloor-n/2}\sum_{i= 0}^k t^i b_{P,i}(x)\Big|
 \leq C\,t^{-\lfloor \alpha/2\rfloor-n/2+k+1},
 $$
for a constant $C>0$, independent of $x\in M$.
This is enough to conclude.
\end{proof}

 Given  $\omega$, 
a weight function (positive and nowhere vanishing) on $M$, 
we denote by $W^{k,l}(M,\omega)$, $1\leq k\leq\infty$, $0\leq l<\infty$,  
the weighted uniform Sobolev space. That is to say, the  completion of $C^\infty_c(M)$ for
the topology associated to the norm
$$
\|f\|_{k,l,\omega}:=\Big(\int_M|\Delta^{l/2}f|^k\,\omega\,d\mu_g\Big)^{1/k},
$$
where,  $\Delta$ denotes the scalar Laplacian on $M$. 
For $\omega=1$ we simply denote this space by  $W^{k,l}(M)$ 
and the associated norm by $\|\cdot\|_{k,l}$. We also write
$W^{k,\infty}(M,\omega):=\bigcap_{l\geq 0}W^{k,l}(M,\omega)$ 
endowed with the projective limit topology.

 When $M$ has strictly positive injectivity radius (thus in particular 
 for manifolds of bounded geometry), the standard Sobolev 
 embedding  
 $$
 W^{k,l}(M)\subset L^\infty(M),
 $$
holds  for any $1\leq k\leq\infty$ 
 and $l>n/k$ (see \cite[Chapter 2]{Aubin82}). 
 In particular, if $\eps>0$ then $W^{k,n/k+\eps}(M)$ 
 is not only a Fr\'echet space but a Fr\'echet algebra.
 Moreover, $W^{k,l}(M)\subset C_0(M)$ for $1\leq k\leq\infty$
 and $0\leq l\leq\infty$, so that it is separable for the uniform topology
 as $M$ is metrisable.
  The next lemma gives equivalent
 norms for the weighted Sobolev spaces $W^{k,l}(M,\omega)$.
 
 \begin{lemma}
 \label{sobo-local}
 Let $\sum\vf_i=1$ be a partition of unity subordinate to a covering of 
 $M$ by balls of radius $\eps\in(0,r_{\rm inj}/3)$.
 Then the norm $\|\cdot\|_{k,l,\omega}$ on $W^{k,l}(M,\omega)$, 
 $1\leq k\leq\infty$, $l\in\N_0$, is equivalent to
 $$
f\mapsto \sum_{i=1}^\infty \|\vf_i f\|_{k,l,\omega}.
$$
 \end{lemma}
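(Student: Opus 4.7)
The plan is to exploit two consequences of bounded geometry that are recalled in the paragraphs preceding the lemma: (i) the cover $\{B(x_i,2\varepsilon)\}$ has finite multiplicity, say $N$, so that $\sum_i \chi_{B(x_i,2\varepsilon)}\leq N$ pointwise, and (ii) the functions $\varphi_i$, expressed in normal coordinates around $x_i$, have all derivatives bounded uniformly in $i$. I will also use the standard fact that on a manifold of bounded geometry the norm $\|\Delta^{l/2}\cdot\|_{L^k(\omega\,d\mu_g)}$ is equivalent to $\sum_{j=0}^{l}\|\nabla^j\cdot\|_{L^k(\omega\,d\mu_g)}$ (the statement being on $C^{\infty}_c(M)$ first, and then passed to the completion). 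Throughout, it is enough to check the equivalence of norms on $C^{\infty}_c(M)$, so the sum $\sum_i\|\varphi_i f\|_{k,l,\omega}$ will have only finitely many nonzero terms in the estimates below.

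For the inequality $\|f\|_{k,l,\omega}\leq \sum_i \|\varphi_i f\|_{k,l,\omega}$, I would use $\sum_i\varphi_i=1$ to write $f=\sum_i\varphi_i f$. Applying $\Delta^{l/2}$ termwise (which is legitimate on compactly supported functions) and then Minkowski's inequality in $L^{k}(M,\omega\,d\mu_g)$ gives the bound with constant $1$. This direction uses only linearity and the triangle inequality; no bounded-geometry input is required.

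The reverse bound $\sum_i\|\varphi_i f\|_{k,l,\omega}\leq C\|f\|_{k,l,\omega}$ is the main obstacle. I would first replace $\Delta^{l/2}$ by the equivalent expression $\sum_{j\leq l}|\nabla^j\cdot|$, so that the standard Leibniz rule applies. In normal coordinates near $x_i$,
\[
|\nabla^j(\varphi_i f)|\leq \sum_{j_1+j_2=j}|\nabla^{j_1}\varphi_i|\,|\nabla^{j_2}f|\leq C_l\,\chi_{B(x_i,2\varepsilon)}\sum_{j_2\leq l}|\nabla^{j_2}f|,
\]
where $C_l$ is independent of $i$ by bounded geometry of $(M,g)$. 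Raising to the $k$-th power, integrating against $\omega\,d\mu_g$, and summing in $i$ yields
\[
\sum_i \|\varphi_i f\|_{k,l,\omega}^{k}\leq C_l^{k}\!\int_M\Bigl(\sum_i\chi_{B(x_i,2\varepsilon)}\Bigr)\Bigl(\sum_{j\leq l}|\nabla^j f|\Bigr)^{k}\omega\,d\mu_g\leq N\,C_l^{k}\,C'\,\|f\|_{k,l,\omega}^{k},
\]
using the finite multiplicity $N$ in the second step and the $\nabla^j$/$\Delta^{l/2}$ equivalence in the last.

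The delicate point is passing from the $\ell^k$ estimate above to the $\ell^1$ sum appearing in the statement. On $C^{\infty}_c(M)$, only finitely many indices contribute, so one may use the elementary inequality $\sum_{i\in I}a_i\leq |I|^{1-1/k}\bigl(\sum_{i\in I}a_i^k\bigr)^{1/k}$ with $|I|$ controlled by $N$ times the number of balls meeting $\mathrm{supp}\,f$, absorbing the dependence on $f$ into the local estimate via finite multiplicity; alternatively one simply interprets the equivalence as an equivalence of the induced topologies on $C^{\infty}_c(M)$, which is what is actually needed for the subsequent use of this lemma. Extending by continuity to $W^{k,l}(M,\omega)$ completes the proof.
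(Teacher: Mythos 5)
Your argument is the one the paper intends (the paper simply cites the discussion following Lemma 1.3 in the appendix of \cite{Shu4}): the easy direction is the triangle inequality, and the hard direction rests on the Leibniz rule, the uniform bounds on the normal-coordinate derivatives of the $\varphi_i$, and the finite order of the covering. Up to the point where you obtain $\sum_i\|\varphi_i f\|_{k,l,\omega}^k\leq C\|f\|_{k,l,\omega}^k$ the proof is sound, modulo the equivalence of $\|\Delta^{l/2}\cdot\|_{L^k(\omega\,d\mu_g)}$ with the covariant-derivative norms that you invoke as standard; for a general weight $\omega$, and for the endpoint cases $k=1,\infty$, this equivalence is itself not automatic, though the paper is equally silent on it.

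The step you flag as delicate is, however, a genuine defect of the \emph{statement} rather than of your proof, and neither of your two patches repairs it. For $k>1$ the $\ell^1$ sum $\sum_i\|\varphi_i f\|_{k,l,\omega}$ is not equivalent to $\|f\|_{k,l,\omega}$: on $M=\R$ with $\omega=1$, $l=0$, $k=2$, a smoothed version of $f=N^{-1/2}\chi_{[0,N]}$ has $\|f\|_{2,0}\asymp 1$ while $\sum_i\|\varphi_i f\|_{2,0}\asymp N^{1/2}$. Your first patch fails because $|I|$ grows with $\mathrm{supp}\,f$, so the resulting constant is not uniform; your second (retreating to an ``equivalence of topologies on $C^\infty_c(M)$'') is neither what the lemma asserts nor what is used later, since the proof of Proposition \ref{main-stuff} genuinely requires finiteness of an $\ell^1$ sum. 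The correct general statement --- and the one Shubin's argument yields --- is the $\ell^k$ version $\bigl(\sum_i\|\varphi_i f\|_{k,l,\omega}^k\bigr)^{1/k}\asymp\|f\|_{k,l,\omega}$, which is exactly what your estimate proves. The lemma as written is true, and is invoked in the paper, only for $k=1$, where the $\ell^1$ and $\ell^k$ sums coincide and your proof closes with no further work.
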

 
 \begin{proof}
 This is the weighted version of the discussion which follows   \cite[Lemma 1.3, Appendix 1]{Shu4}, which is a 
 consequence of the fact that the normal derivatives of $\vf_i$ are bounded uniformly in 
 the covering index and because this covering has finite order.
 \end{proof}

In the following lemma, we examine  first  the question of (ordinary) smoothness before 
turning to smooth summability.

\begin{lemma}
\label{dfg}
Let $(M,g,S)$ have bounded geometry. For $T$ an operator on $L^2(M,S)$ 
preserving the domain of $\D_S$, define
$\delta(T)=[|\D_S|,T]$. Then for any  $f\in W^{\infty,\infty}(M)$, 
the  operators $c(f)$ and $c(df)$ on $L^2(M,S)$   belong to $\bigcap_{l=0}^\infty{\rm dom}\,\delta^l$.
\end{lemma}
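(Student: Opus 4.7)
The plan is to use the equivalence $\bigcap_{l\geq 0}{\rm dom}\,\delta^l = \bigcap_{l\geq 0}{\rm dom}\,L^l$ (a consequence of \eqref{eq:LR}, \eqref{eq:delta}, and the remark immediately following them) to reduce the claim to boundedness of $L^k(c(f))$ and $L^k(c(df))$ for every $k\geq 0$, where $L(T)=(1+\D_S^2)^{-1/2}[\D_S^2,T]$ is as in \eqref{LR} with $\D_S$ in the role of $\D$. Since $\D_S^2$ commutes with $(1+\D_S^2)^{-1/2}$, a direct induction gives the closed form
$$L^k(T)=(1+\D_S^2)^{-k/2}\,T^{(k)}, \qquad T^{(k)}=[\D_S^2,T^{(k-1)}],\quad T^{(0)}=T,$$
in the notation of Definition \ref{parup}. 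Thus it suffices to show that $(1+\D_S^2)^{-k/2}c(f)^{(k)}$ and $(1+\D_S^2)^{-k/2}c(df)^{(k)}$ extend to bounded operators on $L^2(M,S)$ for each $k\geq 0$.

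First I would analyse the structure of the iterated commutators. In any canonical coordinate chart, $\D_S$ is a first-order differential operator whose local coefficients are polynomials in the Christoffel symbols of $g$ and the connection coefficients of $\nabla^S$; both families are uniformly $C^\infty$-bounded across $M$ by the bounded-geometry hypothesis on $(M,g,S)$. A straightforward induction on $k$ then shows that $c(f)^{(k)}$ is a linear differential operator on $\Gamma^\infty(S)$ of order at most $k$, whose local coefficients are polynomials in the partial derivatives of $f$ up to order $2k$, in the partial derivatives of the connection data up to order $2k$, and in the curvature tensors together with their covariant derivatives. The hypothesis $f\in W^{\infty,\infty}(M)$ makes every partial derivative of $f$ uniformly bounded on $M$, and bounded geometry does the same for the connection and curvature data; hence $c(f)^{(k)}$ is a differential operator of order $\leq k$ with uniformly $C^\infty$-bounded coefficients. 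The same argument, applied with the components of $df$ (which remain in $W^{\infty,\infty}(M)$) in place of $f$, gives the analogous statement for $c(df)^{(k)}$.

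The remaining step is a uniform elliptic regularity estimate: for any linear differential operator $P$ of order $\leq k$ on $S$ with uniformly $C^\infty$-bounded coefficients, the product $(1+\D_S^2)^{-k/2}P$ extends to a bounded operator on $L^2(M,S)$. Taking adjoints (the formal adjoint $P^*$ shares the same boundedness of coefficients), this is equivalent to boundedness of $P^*(1+\D_S^2)^{-k/2}$, i.e.\ to the fact that $(1+\D_S^2)^{-k/2}$ maps $L^2(M,S)$ continuously into the $\D_S$-Sobolev space of order $k$ which in turn injects continuously, via $P^*$, into $L^2(M,S)$. This is a standard uniform ellipticity statement for Dirac-type operators on manifolds of bounded geometry; it is derived from the Bochner--Weitzenb\"ock formula \eqref{lich} together with the partition-of-unity technique of Lemma \ref{sobo-local} and the global Gaussian heat-kernel bounds of Proposition \ref{rrr}.

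The main technical hurdle is precisely this uniform elliptic regularity: on a noncompact manifold one must upgrade the local elliptic estimate to a global one with position-independent constants. This is exactly where the bounded-geometry hypothesis enters decisively, both in the inductive description of $c(f)^{(k)}$ (to keep covariant derivatives of the connection data bounded) and in globalising the elliptic regularity (via the uniformly controlled coverings and partition of unity underlying Lemma \ref{sobo-local}, combined with the uniform heat-kernel estimates of Proposition \ref{rrr}).
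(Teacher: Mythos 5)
Your proposal is correct and follows essentially the same route as the paper: reduce via the equality of smooth domains to boundedness of the iterated maps $L^k$ (the paper uses $R^k$, with the resolvent on the right, so that $R^k(c(f))=B_k(1+\D_S^2)^{-k/2}$), observe that the $k$-fold commutator is a differential operator of order $\leq k$ with uniformly $C^\infty$-bounded coefficients thanks to bounded geometry, and then invoke a uniform mapping property of $(1+\D_S^2)^{-k/2}$. The one place where you diverge is the justification of that final mapping property: the paper does \emph{not} use elliptic regularity via Bochner--Weitzenb\"ock, partitions of unity, or the Gaussian heat-kernel bounds of Proposition \ref{rrr}; it instead invokes Kordyukov's uniform pseudodifferential calculus on manifolds of bounded geometry, writing $B_k$ as a properly supported $\Psi$DO with bounded symbol of order $k$ and $(1+\D_S^2)^{-k/2}$ as a properly supported $\Psi$DO of order $-k$ modulo smoothing, so that the composition is an order-zero operator, bounded on $L^2$ by \cite[Proposition 2.9]{Kordyukov}. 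Be aware that the heat-kernel route you gesture at does not straightforwardly close the argument on its own: $P(1+\D_S^2)^{-k/2}$ for $P$ of order $k$ is an order-zero operator with a Calder\'on--Zygmund-type singular kernel ($\sim d_g(x,y)^{-n}$ near the diagonal), so pointwise Gaussian bounds combined with a Schur test do not yield $L^2$-boundedness; one genuinely needs the uniform $\Psi$DO calculus (or an equivalent globalised elliptic estimate with position-independent constants), which is exactly what the bounded-geometry hypothesis and the citation to Kordyukov provide.
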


\begin{proof}
By the discussion  following Definition \ref{parup}, it suffices to show that for  
$f\in W^{\infty,\infty}(M)$, $c(f)$ belongs to $\bigcap_{l=0}^\infty{\rm dom}\,R^l$, 
with $R(T)=[\D_S^2,T](1+\D_S^2)^{-1/2}$.
Next observe that since $[c(f),\mathcal R]=0$, with $\mathcal R$ the zero-th order operator
appearing in \eqref{lich}, we have
\begin{align*}
R^k\big(c(f)\big)&=[\D_S^2,[\dots,[\D^2_S,[\D_S^2,c(f)]]\dots]](1+\D_S^2)^{-k/2}\nonumber\\
&=
[\Delta_S+\tfrac12\mathcal R,[\dots,[\Delta_S+\tfrac12\mathcal R,[\Delta_S,c(f)]]\dots]](1+\D_S^2)^{-k/2},
\label{end}
\end{align*}
with $k$ commutators. Define
$$
B_k:=[\Delta_S+\tfrac12\mathcal R,[\dots,[\Delta_S+\tfrac12\mathcal R,[\Delta_S,c(f)]]\dots]],
$$
so that $R^k\big(c(f)\big)=B_k\,(1+\D_S^2)^{-k/2}$.
Since the principal symbol of $\Delta_S$ is $|\xi|^2{\rm Id}_{S_x}$,
a local computation shows that  $B_k$ is a differential operator of order $k$. 
With the bounded geometry assumption, we see moreover that $B_k$ has 
uniform $C^\infty$-bounded coefficients. (This follows because the 
covariant derivatives of $\mathcal R$ will appear in the expression of the 
coefficients of $B_k$ and since 
$\mathcal R(x)=c(e^\mu_x)\,c(e^\nu_x)\, F(e_{\mu,x},e_{\nu,x})\in{\rm End}(S_x)$.) 
In particular, $B_k$ is a properly supported pseudodifferential operator with bounded 
symbol (in the sense of \cite[Definition 2.1]{Kordyukov}) of order $k$. 
While $(1+\D_S^2)^{-k/2}$ is not a properly supported pseudodifferential operator, 
it can be written as the sum of a properly supported pseudodifferential operator of order $-k$ and
an infinitely smoothing operator; see \cite[Theorem 3.3]{Kordyukov} for more information.
Hence by 
\cite[Proposition 2.7]{Kordyukov}, $R^k\big(c(f)\big)$  is  properly supported  with bounded symbol of zeroth order.
Then one concludes using \cite[Proposition 2.9]{Kordyukov}, where one needs \cite[Theorem 3.6, Appendix]{Shu4}
instead of \cite[Lemma 2.2]{Kordyukov} used in that proof, to extend the result to the case of a 
vector bundle of bounded geometry. The proof for $c(df)$ is entirely similar.
\end{proof}

As before, we let $K_t^S$, $t>0$, be the Schwartz kernel of the 
heat semigroup with generator $\D_S^2$. When it exists, we let 
$k_s$, $s>0$, be the restriction to the 
diagonal of the fibre-wise trace of the distributional kernel of 
$(1+\D_S^2)^{-s/2}$. That is for $s>0$ and $x\in M$, we set
$$
k_s(x)=\tr\big([(1+\D_S^2)^{-s/2}]_{x,x}\big),
$$
where the trace $\tr$ is the matrix trace on ${\rm End}(S_x)$ 
and for $A$ a bounded operator on $L^2(M,S)$ we denote
by $[A]_{x,y}$ its distributional kernel.

Now assuming the geodesic completeness of $M$, the heat kernel 
$K_t^S$, $t>0$, is a 
smooth section of the endomorphism bundle of $S$. Combining this with
the Laplace transform representation
$$
k_s(x)=\frac1{\Gamma(s/2)}\int_0^\infty t^{s/2-1}\,e^{-t}\,\tr\big(K_t^S(x,x)\big)\,dt,\quad \mbox{for all } \,x\in M,
$$
we see that  the question 
of existence of $k_s$   is uniquely determined by the integrability 
of the on-diagonal fibre-wise trace of the Dirac heat kernel with 
respect to the parameter $t$. More precisely, Proposition
\ref{rrr} i) gives

\begin{lemma}
Let $\D_S$ be a Dirac type operator operating on the sections of a Dirac bundle $S$
of bounded geometry. Then, for $s>n$, the function $k_s$ is uniformly bounded on $M$.
\label{reff}
\end{lemma}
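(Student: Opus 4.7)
The plan is to apply the on-diagonal specialization of the Gaussian heat-kernel bound from Proposition \ref{rrr}~i) directly to the Laplace transform representation of $k_s$ given just above the lemma statement. Since $(M,g,S)$ has bounded geometry and $\D_S$ is of Dirac type, that proposition applies with $P=\mathrm{Id}_S$ (so $\alpha=0$), and on the diagonal $y=x$ the exponential factor equals $1$. This gives
$$
|K_t^S(x,x)|_\infty \leq C\,t^{-n/2}, \quad \text{for all } t>0 \text{ and all } x\in M,
$$
with $C$ independent of $x$ and $t$. Taking the fibrewise trace, whose dimension equals the (constant) rank $m$ of $S$, yields $|\tr K_t^S(x,x)|\leq mC\,t^{-n/2}$, still uniformly in $x$.

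Next I would substitute this pointwise bound into the Laplace transform identity
$$
k_s(x)=\frac{1}{\Gamma(s/2)}\int_0^\infty t^{s/2-1}\,e^{-t}\,\tr\bigl(K_t^S(x,x)\bigr)\,dt,
$$
and pull the absolute value inside the integral to get
$$
|k_s(x)|\leq \frac{mC}{\Gamma(s/2)}\int_0^\infty t^{(s-n)/2-1}\,e^{-t}\,dt
=\frac{mC\,\Gamma\bigl((s-n)/2\bigr)}{\Gamma(s/2)}.
$$
The integral converges at $t=0$ precisely when $(s-n)/2>0$, i.e.\ $s>n$, and convergence at infinity is ensured by the factor $e^{-t}$. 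The resulting bound is manifestly independent of $x$, which is the desired uniform boundedness.

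There is no serious obstacle: the whole argument reduces to noting that the constant $C$ in the Gaussian bound of Proposition \ref{rrr}~i) is uniform over $M$, a feature whose proof (sketched in the discussion preceding Proposition \ref{rrr} and relying on Kordyukov's extension of Greiner's parabolic estimates) is exactly where the bounded geometry of $(M,g,S)$ is used. Once that uniform Gaussian bound is in hand, the passage to uniform boundedness of $k_s$ for $s>n$ is a routine Gamma-function computation.
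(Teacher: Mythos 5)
Your proof is correct and is exactly the argument the paper intends: the paper's own "proof" is the one-line remark that Proposition \ref{rrr}~i) gives the result via the Laplace transform representation displayed just before the lemma, and you have simply filled in the routine details (on-diagonal specialization of the Gaussian bound, fibrewise trace, Gamma-function integral converging precisely for $s>n$).
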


As a corollary of the lemma above, we see that $W^{r,t}(M)\subset W^{r,t}(M,k_{s})$ with
$\|\cdot\|_{r,t,k_{s}}\leq C(s)\|\cdot\|_{r,t}\,$, for some  constant $C(s)$  independent of $r\in[1,\infty]$
and of $t\in\R$. 
\begin{lemma}
\label{HS}
Let $\D_S$ be a Dirac type operator operating on the sections of a Dirac bundle $S$
of bounded geometry. Then provided $f\in W^{2,0}(M,k_{s})$ and $s>n$, 
the operator $c(f) (1+\D_S^2)^{-s/4}$ is Hilbert-Schmidt on $L^2(M,S)$, with
$$
\|c(f) (1+\D_S^2)^{-s/4}\|_2=\Big(\int_M |f|^2(x)\,k_{s}(x)\,d\mu_g(x)\Big)^{1/2}=\|f\|_{2,0,k_{s}}.
$$
\end{lemma}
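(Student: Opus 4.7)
The plan is to compute $\|c(f)(1+\D_S^2)^{-s/4}\|_2^2$ directly from the Schwartz kernel of the resolvent power. First I would establish that, for $s>n$, the operator $(1+\D_S^2)^{-s/4}$ admits a jointly continuous fibre-valued Schwartz kernel $[(1+\D_S^2)^{-s/4}]_{x,y}\in\Hom(S_y,S_x)$, and moreover that the product
$(1+\D_S^2)^{-s/4}(1+\D_S^2)^{-s/4}=(1+\D_S^2)^{-s/2}$
has kernel whose fibre-wise trace on the diagonal is $k_s(x)$. This follows from the Laplace transform
\[
(1+\D_S^2)^{-s/2}=\frac{1}{\Gamma(s/2)}\int_0^\infty t^{s/2-1}e^{-t}e^{-t\D_S^2}\,dt,
\]
together with the Gaussian off-diagonal bound and short-time on-diagonal expansion for the heat kernel $K_t^S$ from Proposition \ref{rrr}(i)--(ii), which guarantee that the integral converges in fibrewise-trace norm, uniformly in $x$, whenever $s>n$ (by Lemma \ref{reff}). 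The same Gaussian bound applied to $s/2$ in place of $s$ shows that $[(1+\D_S^2)^{-s/4}]_{x,y}$ is square-integrable in $y$ for each fixed $x$.

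Next, since $c(f)$ acts fibrewise by the scalar $f(x)\,\mathrm{Id}_{S_x}$, the product $c(f)(1+\D_S^2)^{-s/4}$ has Schwartz kernel $f(x)\,[(1+\D_S^2)^{-s/4}]_{x,y}$ as an element of $\Hom(S_y,S_x)$. For $f\in C_c^\infty(M)$, the kernel is then visibly square-integrable on $M\times M$, so $c(f)(1+\D_S^2)^{-s/4}\in\L^2(\B(L^2(M,S)),\Tr)$, and the Hilbert--Schmidt identity in terms of kernels gives
\[
\|c(f)(1+\D_S^2)^{-s/4}\|_2^2
=\int_M\!\int_M |f(x)|^2\,\bigl\|[(1+\D_S^2)^{-s/4}]_{x,y}\bigr\|_{HS}^2\,d\mu_g(y)\,d\mu_g(x).
\]
Applying Fubini and using self-adjointness of $(1+\D_S^2)^{-s/4}$ to evaluate the inner integral as
\[
\int_M \bigl\|[(1+\D_S^2)^{-s/4}]_{x,y}\bigr\|_{HS}^2\,d\mu_g(y)
=\tr\bigl([(1+\D_S^2)^{-s/2}]_{x,x}\bigr)=k_s(x),
\]
yields the stated identity for $f\in C_c^\infty(M)$.

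Finally, I would extend from $C_c^\infty(M)$ to the whole weighted Sobolev space $W^{2,0}(M,k_s)$ by density: the computation just performed shows that the (continuous, linear) map $f\mapsto c(f)(1+\D_S^2)^{-s/4}$ from $C_c^\infty(M)$ into $\L^2(\B(L^2(M,S)),\Tr)$ is an isometry when $C_c^\infty(M)$ is equipped with the $\|\cdot\|_{2,0,k_s}$ norm, so it extends uniquely and isometrically to $W^{2,0}(M,k_s)$. The main technical obstacle is the first step, namely justifying that $(1+\D_S^2)^{-s/4}$ has a bona fide Schwartz kernel and that the pointwise identity $\int_M \|[(1+\D_S^2)^{-s/4}]_{x,y}\|_{HS}^2\,d\mu_g(y)=k_s(x)$ holds; this reduces, via Laplace transform, to the Gaussian heat-kernel estimates of Proposition \ref{rrr} which require bounded geometry in an essential way. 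Everything else is a routine Fubini/density argument.
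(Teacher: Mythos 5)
Your proof is correct and follows essentially the same route as the paper's: both reduce the Hilbert--Schmidt norm to an integral of $|f|^2$ against $\tr\big([(1+\D_S^2)^{-s/2}]_{x,x}\big)=k_s(x)$ via the Schwartz-kernel formula for the Hilbert--Schmidt norm and the operator-kernel composition rule. The only differences are cosmetic: the paper places $c(f)$ on the right of $A=(1+\D_S^2)^{-s/4}$ (equivalent since $\|T\|_2=\|T^*\|_2$ and $A=A^*$), and it leaves implicit the Laplace-transform justification of the kernel and the density extension from $C_c^\infty(M)$ to $W^{2,0}(M,k_s)$ that you spell out.
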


\begin{proof}
From Lemma \ref{reff}, the function $k_{s}$ is well defined and uniformly bounded on $M$.
Now let $A$ be a bounded operator acting on $L^2(M,S)$, with 
distributional kernel $[A]_{x,y}$. Then for $f\in L^\infty(M)$, 
a calculation shows that $A\,c(f) $ has distributional kernel $f(y)[A]_{x,y}$. 
We then have  the following expression for the Hilbert-Schmidt norm of $A\,c(f)$:
\begin{align*}
\|Ac(f)\|_2^2&=\int_{M\times M}\tr\big(|[A\,c(f)]_{x,y}|^2\big)\,d\mu_g(x)\,d\mu_g(y)
=\int_{M\times M}|f(y)|^2\tr\big(|[A]_{x,y}|^2\big)\,d\mu_g(x)\,d\mu_g(y)\\
&=\int_{M\times M}|f(y)|^2\tr\big([A^*]_{y,x}[A]_{x,y}\big)\,d\mu_g(x)\,d\mu_g(y)
=\int_{M}|f(y)|^2\tr\big([A^*A]_{y,y}\big)\,d\mu_g(y),
\end{align*} 
where in the last equality we used the operator-kernel product rule. Then, the proof follows by setting 
$A=(1+\D_S^2)^{-s/4}$.
\end{proof}
As explained above, we identify the von Neumann algebra 
generated by $\{c(f), \,\,f\in C^\infty_c(M)\}$ acting on $L^2(M,S)$ with $L^\infty(M)$. 
Then, from the previous Hilbert-Schmidt norm computation, we can determine 
the weights $\vf_s$ of Definition \ref{def:d-does-int}, constructed  with $\D_S$.

\begin{corollary} 
Let $\D_S$ be a Dirac type operator operating on the sections of a Dirac bundle $S$
of bounded geometry. 
For $s>n$, let $\vf_s$ be the faithful normal semifinite weight of Definition \ref{def:d-does-int},  
on the type I von Neumann
algebra  $\B(L^2(M,S))$ with operator trace.  When  restricted to $L^\infty(M)$, $\vf_s$ 
coincides with the integral on $M$ with respect to the
Borel measure $k_s\,d\mu_g$.
\end{corollary}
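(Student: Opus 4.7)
The plan is to show that for every $f\in L^\infty(M)_+$ one has
$\vf_s(c(f))=\int_M f(x)\,k_s(x)\,d\mu_g(x)$,
with both sides allowed to be $+\infty$, and then note that this pointwise equality
of two normal semifinite weights on the commutative von Neumann algebra
$L^\infty(M)\subset\B(L^2(M,S))$ already gives the corollary. The key reduction is
to replace the operator $c(f)$ by the square of $c(\sqrt f)$: since $f\geq 0$ and
bounded, $g:=\sqrt f\in L^\infty(M)_+$ and $c(f)=c(g)^2$.

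Setting $T:=c(g)(1+\D_S^2)^{-s/4}$, so that $T^*T=(1+\D_S^2)^{-s/4}c(g)^2(1+\D_S^2)^{-s/4}$,
the standard identity $\tau(T^*T)=\|T\|_2^2$ for the type~I trace yields
$$
\vf_s(c(f))
=\tau\big((1+\D_S^2)^{-s/4}c(f)(1+\D_S^2)^{-s/4}\big)
=\|c(g)(1+\D_S^2)^{-s/4}\|_2^2,
$$
where the identity holds in $[0,+\infty]$ whether or not $T$ is Hilbert--Schmidt. This step
is justified by the cyclicity results invoked in the memoir
(Propositions \ref{prop:everybody-knows} and \ref{prop:bikky}); alternatively, it is just the
elementary fact that $\|T\|_2^2=\|T^*\|_2^2$ for any closable operator, applied to
$T$ and $T^*$.

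Next I would invoke the Hilbert--Schmidt computation carried out in the proof of
Lemma \ref{HS}: writing $A=(1+\D_S^2)^{-s/4}$, the kernel of $Ac(g)$ is $[A]_{x,y}g(y)$
in every local trivialization, so
$$
\|c(g)(1+\D_S^2)^{-s/4}\|_2^2
=\|A c(g)\|_2^2
=\int_M |g(y)|^2\,\tr\big([A^*A]_{y,y}\big)\,d\mu_g(y)
=\int_M f(y)\,k_s(y)\,d\mu_g(y).
$$
The argument in the proof of Lemma \ref{HS} only uses that $g$ is bounded
measurable (not that $g\in W^{2,0}(M,k_s)$): the Fubini-type rearrangement of
$\iint \tr(|[A c(g)]_{x,y}|^2)d\mu_g(x)d\mu_g(y)$ and the operator-kernel product
rule are valid for any $g\in L^\infty(M)$, yielding an equality in $[0,+\infty]$.
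Concatenating the two displayed equalities gives $\vf_s(c(f))=\int f\,k_s\,d\mu_g$
for every $f\in L^\infty(M)_+$, which is the claim.

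The only mild subtlety, and the point I would be most careful about, is the
extension to possibly infinite values: one must justify interpreting both sides
of the equalities above in $[0,+\infty]$ without assuming a priori that the relevant
operators are trace class or Hilbert--Schmidt. This is handled by a monotone
approximation: let $(K_j)_{j\geq 1}$ be an exhaustion of $M$ by compact sets and set
$f_j:=f\wedge j\cdot\chi_{K_j}$. Then $c(f_j)\nearrow c(f)$ in the strong operator topology,
so by normality of $\vf_s$ one has $\vf_s(c(f_j))\nearrow \vf_s(c(f))$; by monotone
convergence one has $\int f_j k_s\,d\mu_g\nearrow\int f k_s\,d\mu_g$; and for each
$j$, the finiteness of $\int f_j k_s\,d\mu_g$ (using that $k_s$ is bounded by
Lemma \ref{reff} and $f_j$ has compact support) places us in the situation
covered by Lemma \ref{HS} directly, so that the equality $\vf_s(c(f_j))=\int f_j k_s\,d\mu_g$
holds unconditionally. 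Passing to the limit completes the proof.
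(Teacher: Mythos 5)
Your proof is correct and takes essentially the same route as the paper, which derives the corollary directly from the Hilbert--Schmidt computation of Lemma \ref{HS} applied to $c(\sqrt{f})(1+\D_S^2)^{-s/4}$. The monotone-approximation step extending the identity to all of $L^\infty(M)_+$ with values in $[0,+\infty]$ is a detail the paper leaves implicit, and your handling of it via normality of $\vf_s$ is the right one.
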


We turn now to the question of which functions on the manifold
are in  $\B_1^\infty(\D_S,n)$. Combining Proposition \ref{tracial-case} with Lemma \ref{HS}   
allows us to determine the norms $\PP_m$ restricted to $L^\infty(M)$.

\begin{corollary}
\label{aB1}
Let $\D_S$ be a Dirac type operator operating on the sections of a Dirac bundle $S$
of bounded geometry. Then 
$$
\B_1(\D_S,n)\bigcap L^\infty(M)= L^\infty(M)\bigcap_{m\in\N}L^1(M,k_{s+1/m}d\mu_g).
$$
Moreover we have the    equality 
\begin{align*}
\PP_m\big(c(f)\big)= \|f\|_\infty+2\|f\|_{1,k_{n+1/m}}\,,\quad m\in\N.
\end{align*}
\end{corollary}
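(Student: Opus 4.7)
The plan is to deduce this corollary directly from Proposition \ref{tracial-case}, applied to the situation $\cn = \B(L^2(M,S))$ with the operator trace $\tau = \mathrm{Tr}$, and the von Neumann subalgebra $\cm = L^\infty(M)$ acting on $L^2(M,S)$ by Clifford multiplication. The role of the real parameter $p$ in Proposition \ref{tracial-case} is played by $n$, the dimension of $M$.

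First I would verify the key hypothesis of Proposition \ref{tracial-case}: that for each $m\in\N$, the restriction $\tau_m := \vf_{n+1/m}|_{L^\infty(M)}$ is a faithful, normal, semifinite trace on $L^\infty(M)$. The preceding corollary identifies $\vf_s|_{L^\infty(M)}$ (for $s > n$) with integration against the positive Borel measure $k_s\,d\mu_g$ on $M$, so $\tau_m(f) = \int_M f(x)\,k_{n+1/m}(x)\,d\mu_g(x)$ for $f \in L^\infty(M)_+$. Because $L^\infty(M)$ is abelian, this is automatically tracial; faithfulness, normality and semifiniteness follow from the strict positivity, the $\sigma$-additivity and the local finiteness (Lemma \ref{reff}) of the measure $k_{n+1/m}\,d\mu_g$.

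Then I would apply Proposition \ref{tracial-case} to conclude
$$
\B_1(\D_S,n)\cap L^\infty(M) = \bigcap_{m\geq 1}\L^1(L^\infty(M), \tau_m),
$$
together with the norm identity $\PP_m(\cdot) = \|\cdot\| + 2\|\cdot\|_{\tau_m}$. Since $\tau_m$ is integration against $k_{n+1/m}\,d\mu_g$, the trace ideal $\L^1(L^\infty(M), \tau_m)$ coincides with $L^\infty(M)\cap L^1(M, k_{n+1/m}\,d\mu_g)$, and the trace norm $\|f\|_{\tau_m}$ equals $\|f\|_{1, k_{n+1/m}}$ in the notation of the weighted Lebesgue spaces introduced earlier. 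Combining these rewrites yields both the set equality and the norm formula of the corollary.

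There is no serious obstacle here; the corollary is essentially a bookkeeping step, translating the abstract result of Proposition \ref{tracial-case} into the concrete language of integration on $M$. The only mildly delicate point is ensuring that $L^\infty(M)$ genuinely sits inside $\B(L^2(M,S))$ as a von Neumann subalgebra (rather than just as a $*$-algebra), which is clear from the standard identification of $L^\infty(M)$ with the algebra of bounded diagonal Clifford multiplications on $L^2(M,S)$; and checking the semifiniteness of $\tau_m$, which reduces to noting that $C_c(M) \subset L^\infty(M)$ provides an abundant supply of elements of finite $\tau_m$-measure since $k_{n+1/m}$ is uniformly bounded by Lemma \ref{reff}.
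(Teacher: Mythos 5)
Your proposal is correct and follows essentially the same route as the paper: the text introduces this corollary precisely as the combination of Proposition \ref{tracial-case} with Lemma \ref{HS} (via the preceding corollary identifying $\vf_s|_{L^\infty(M)}$ with integration against $k_s\,d\mu_g$). The only cosmetic difference is that faithfulness of $\tau_m = \vf_{n+1/m}|_{L^\infty(M)}$ is automatic from the already-proved faithfulness of $\vf_s$ on all of $\cn$, so your appeal to strict positivity of the density $k_{n+1/m}$ is not needed.
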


By Lemma \ref{reff}, we see that $\bigcap_{m\in\N}L^1(M,k_{s+1/m}d\mu_g)$ contains $L^1(M)$. 
Note also that if a uniform  on-diagonal lower bound for the Dirac heat kernel of the form
$$
\big|K_t^S(x,x)\big|_\infty\geq c t^{-n/2},
$$ 
holds (with $|\cdot|_\infty$ the operator norm on ${\rm End }(S_x)$), then  
$\bigcap_{m\in\N}L^1(M,k_{s+1/m}d\mu_g)=L^1(M)$. Such an estimate holds 
for the spin Dirac operator on Euclidean spaces, for example, and for the 
scalar heat kernel for any manifold of bounded geometry.

We now arrive at the main statement of this Section.
\begin{prop}
\label{main-stuff}
Let $\D_S$ be a Dirac type operator operating on the sections of a Dirac bundle $S$
of bounded geometry on a manifold of bounded $M$ of dimension $n$. 
Relative to the $I_\infty$ factor  $\B(L^2(M,S))$ with operator trace, the spectral  triple $\big(W^{\infty,1}(M),L^2(M,S),\D_S\big)$  is smoothly summable and 
of spectral dimension $n$. Moreover, the spectral dimension is isolated in the sense of Definition \ref{dim-spec}.
\end{prop}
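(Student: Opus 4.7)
The plan is to verify the three claims in sequence: (a) the data form a semifinite spectral triple relative to $(\B(L^2(M,S)),\mathrm{Tr})$; (b) it is smoothly summable with spectral dimension exactly $n$; and (c) the spectral dimension is isolated. For (a), the Sobolev embedding $W^{\infty,1}(M)\subset L^\infty(M)$ gives a bounded representation of $W^{\infty,1}(M)$ on $L^2(M,S)$, preserves $\Gamma^\infty_c(S)\subset \mathrm{dom}\,\D_S$, and makes $[\D_S,c(f)]=c(df)$ bounded via Equation \eqref{clascom}, since the components of $df$ again lie in $W^{\infty,1}(M)$. Combining Lemma \ref{reff} (which says $k_s$ is uniformly bounded for $s>n$) with Corollary \ref{aB1}, every $f\in W^{\infty,1}(M)\subset L^1(M)\cap L^\infty(M)$ satisfies $\int_M |f|\,k_{n+1/m}\,d\mu_g\leq \|k_{n+1/m}\|_\infty\|f\|_1<\infty$ for every $m\in\N$, hence $c(f)\in\B_1(\D_S,n)$. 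In particular $c(f)(1+\D_S^2)^{-1/2}$ is compact, completing the spectral triple axioms. That the spectral dimension is exactly $n$ (and not smaller) will follow from the leading $t^{-n/2}$ behavior of the heat kernel trace in Proposition \ref{rrr} ii).

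For (b), the efficient route is to invoke Proposition \ref{smooth-sum-sufficient}, which reduces the task to showing that for every $k\in\N_0$ and $s>n$ and every $T\in W^{\infty,1}(M)\cup [\D_S,W^{\infty,1}(M)]$,
$$(1+\D_S^2)^{-s/4}\,L^k(T)\,(1+\D_S^2)^{-s/4}\in \L^1(\B(L^2(M,S)),\mathrm{Tr}).$$
Following the argument of Lemma \ref{dfg} (adapted from $R$ to $L$), each iterated commutator $L^k(c(f))$ expands as a finite sum of terms of the form $(1+\D_S^2)^{-k/2}D_\alpha$, where $D_\alpha$ is a properly supported differential operator of order $k$ whose coefficients are polynomial in covariant derivatives of $f$ of order $\leq k$ (all of which lie in $W^{\infty,1}(M)$) and in bounded covariant derivatives of the curvature tensor. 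Conjugating by the outer resolvent powers yields a properly supported pseudodifferential operator of order $-s$ with $L^\infty$ principal symbol, left-multiplied by a function in $W^{\infty,1}(M)$. The trace class property then follows from Kordyukov's Schatten-class estimates on manifolds of bounded geometry, or equivalently by combining the pointwise diagonal heat-kernel bound of Proposition \ref{rrr} i) with Lemma \ref{HS} and Bikchentaev's identity (Proposition \ref{prop:bikky}) applied to the positive parts obtained by decomposing $L^k(T)$.

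For (c), any operator $b$ of the form in Definition \ref{dim-spec} can be written as $b=F\,(1+\D_S^2)^{-N}$ with $N=|k|+m/2$, where $F$ is a properly supported differential operator of order at most $2|k|+m$ whose leftmost coefficient is in $W^{\infty,1}(M)$ (coming from $a_0$) and whose remaining coefficients are uniformly $C^\infty$-bounded polynomial expressions in derivatives of $a_1,\ldots,a_m$ and in the geometry of $S$. Writing $(1+\D_S^2)^{-z-N}$ as a Laplace transform of the heat semigroup yields
$$\zeta_b(z)=\frac{1}{\Gamma(z+N)}\int_0^\infty t^{z+N-1}e^{-t}\,\mathrm{Tr}(F\,e^{-t\D_S^2})\,dt,$$
so the singularities of $\zeta_b$ near $z=0$ are controlled by the short-time asymptotic of $\mathrm{Tr}(F\,e^{-t\D_S^2})$. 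The global expansion of Proposition \ref{rrr} iii) provides this trace in the form $\sum_{i\geq 0} t^{i-\lfloor\alpha/2\rfloor-n/2}\int_M b_{F,i}(x)\,d\mu_g(x)$, whose Mellin transform is meromorphic on all of $\mathbb{C}$ with at worst simple poles at the half-integers, giving far more than the deleted-neighbourhood analyticity required. This also verifies that $n$ is the actual spectral dimension, closing the gap in step (a).

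The main obstacle is step (b): the structural expansion of $L^k(c(f))$ and the simultaneous control of its growing differential order and the $L^1$-integrability of coefficients inherited from $f\in W^{\infty,1}(M)$. In the compact case one could appeal to global Sobolev embeddings, but in our nonunital, noncompact setting one must instead use the bounded-geometry pseudodifferential calculus of Kordyukov, through the uniform gaussian kernel bounds of Proposition \ref{rrr} i), to land squarely in the trace ideal rather than a higher Schatten class. Once this analytic step is secured, the remaining verifications are essentially formal consequences of the tools developed in Sections 2 and 3 together with the heat-kernel asymptotics recalled in this section.
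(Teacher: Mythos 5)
Your overall architecture coincides with the paper's: reduce smooth summability to the trace-class condition of Proposition \ref{smooth-sum-sufficient}, expand the iterated commutators as differential operators with $W^{\infty,1}(M)$-coefficients following Lemma \ref{dfg}, and obtain both the value of the spectral dimension and its isolation from the heat-kernel expansion of Proposition \ref{rrr} iii) via the Mellin/Laplace transform. Parts (a) and (c) of your argument are essentially the paper's, and your pole-location computation for $\zeta_b$ is correct.

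The gap is precisely where you flag "the main obstacle": you never actually prove that $(1+\D_S^2)^{-s/4}B_k(1+\D_S^2)^{-s/4-k/2}$ is trace class when $B_k$ is a differential operator of order $k$ whose coefficients are merely in $W^{\infty,1}(M)$. The two routes you offer do not close it. Kordyukov's results for the uniform calculus on bounded geometry give $L^p$-boundedness and Gaussian kernel bounds, not membership in $\L^1$ for operators with $L^1$ (rather than uniformly bounded) coefficients; "order $<-n$" alone only yields compactness or a weak Schatten estimate without integrability of the coefficients. Your alternative ("Lemma \ref{HS} plus Bikchentaev applied to positive parts of $L^k(T)$") also does not work as stated: Lemma \ref{HS} handles $c(f)(1+\D_S^2)^{-s/4}$ with no derivatives present, and decomposing the operator $L^k(T)$ into positive parts does not address the monomials $f_\alpha\partial^\alpha$. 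What is needed — and what the paper supplies — is a localization by a uniformly finite partition of unity ($B_k=\sum_i\psi_iB_k\vf_i$) followed by a polar-decomposition splitting of each coefficient, $\psi_i f_\alpha\partial^\beta(\vf_i)=u\,|g|^{1/2}\cdot|g|^{1/2}$, so that each term factors as a product of two Hilbert--Schmidt operators: one controlled by the diagonal bound on $k_s$ (Lemma \ref{HS}), the other by the Gaussian bound on $\partial_x^\alpha\partial_y^\alpha K_t^S(x,y)$ from Proposition \ref{rrr} i), with summability over the cover coming from Lemma \ref{sobo-local}. Without this square-root distribution of the $L^1$-integrability between the two resolvent factors, the Cauchy--Schwarz/H\"older step that lands you in $\L^1$ rather than $\L^2$ is not available, so the central estimate of the proposition remains unproved.
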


\begin{proof}
We  first show that for any $f\in W^{\infty,1}(M)$, the operators $\delta^k(c(f))$ and $\delta^k(c(df))$, $k\in\N_0$, 
all belong to $\B_1(\D_S,n)$. That $c(f)\in \B_1(\D_S,n)$ for $f\in W^{\infty,1}(M)$ has already been proven in 
Corollary \ref{aB1} since $\bigcap_{m}W^{\infty,1}(M,k_{n+1/m})\supset W^{\infty,1}(M)$. 
For the rest, we know by Proposition \ref{smooth-sum-sufficient} that it is sufficient to prove that
$$
(1+\D_S^2)^{-s/4}R^k(c(f))(1+\D_S^2)^{-s/4}\in\L^1\big(L^2(M,S)\big),\,\,\mbox{for all } k\in\N_0,\,\,\mbox{for all } s>n,
$$
and similarly for $c(df)$.

From the proof of Lemma \ref{dfg}, we also know that for 
$f\in W^{\infty,1}(M)\subset W^{\infty,\infty}(M)$, the operators
$R^k(c(f))$ and $R^k(c(df))$ are of the form $B_k(1+\D_S^2)^{-k/2}$, 
where $B_k$ is a differential operator of order
$k$, with uniform $W^{\infty,1}(M)$-coefficients. This means  
that for any covering of $M=\cup B(x_i,\eps)$ of balls 
of radius $\eps\in(0,r_{\rm inj}/3)$ and partition of unity $\sum\vf_i=1$ subordinate to the covering, 
there exist  elements $f_\alpha\in{\rm End}(S_x)$ with
$B_k|_{B(x_i,\eps)}=\sum_{|\alpha|\leq k}f_{\alpha}\partial^\alpha$ in normal coordinates.  
Moreover, $\sum_{i=0}^\infty\|\vf_i |f_{\alpha}|_\infty\|_1<\infty$, where $|\cdot|_\infty$ 
is the operator norm on ${\rm End}(S_x)$, each $\vf_i$ has 
bounded derivatives of all order, uniformly in the covering index $i$. 
Now take $\sum\psi_i=1$ a second partition of unity subordinate to the covering 
 $M=\cup B(x_i,2\eps)$ (recall that the latter has finite order), with $\psi_i(x)=1$ in a 
 neighbourhood of ${\rm supp}(\vf_i)$. We then have
 $$
 B_k=\sum_{i=0}^\infty \psi_i B_k \vf_i
 =\sum_{i=0}^\infty  \sum_{|\alpha|\leq k} \psi_if_{\alpha}\partial^\alpha \vf_i
 =\sum_{i=0}^\infty  \sum_{|\alpha|,|\beta|\leq k} \psi_if_{\alpha}\partial^\beta(\vf_i)\partial^\alpha .
 $$
Let $\psi_if_{\alpha}\partial^\beta(\vf_i)=u_{i,\alpha,\beta}|\psi_if_{\alpha}\partial^\beta(\vf_i)|$ 
be the polar decomposition. Define 
 $$
 C_{i,\alpha,\beta}:=u_{i,\alpha,\beta}|\psi_if_{\alpha}\partial^\beta(\vf_i)|^{1/2},\quad
 D_{i,\alpha,\beta}:=|\psi_if_{\alpha}\partial^\beta(\vf_i)|^{1/2}\partial^\alpha,
 $$
 so that 
 $$
  (1+\D_S^2)^{-s/4}B_k(1+\D_S^2)^{-s/4}=\sum_{i=0}^\infty  \sum_{|\alpha|,|\beta|\leq k} 
   (1+\D_S^2)^{-s/4}C_{i,\alpha,\beta}\, D_{i,\alpha,\beta}(1+\D_S^2)^{-(s+2k)/4}.
 $$
 The fibre-wise trace of the on-diagonal operator  kernel of  
 $ C_{i,\alpha,\beta}^*(1+\D_S^2)^{-s/2}C_{i,\alpha,\beta}$ being given by
 $|\psi_i(x)f_{\alpha}(x)\partial^\beta(\vf_i)(x)|_1k_s(x)$ (with $|\cdot|_1$  
 the trace-norm on ${\rm End}(S_x)$), we have
  for $s>n$
 $$
{\rm Tr}\big(C_{i,\alpha,\beta}^*(1+\D_S^2)^{-s/2}C_{i,\alpha,\beta}\big)
= \int_{  B(x_i,2\eps)}|\psi_i(x)f_{\alpha}(x)\partial^\beta(\vf_i)(x)|_1k_s(x)d\mu_g(x),
$$ 
 so that 
 $$
 \| (1+\D_S^2)^{-s/4}C_{i,\alpha,\beta}\|_2
 =\|\psi_i|f_{\alpha}|_1\partial^\beta(\vf_i)\|_{1,0,k_s}^{1/2}
 \leq C_{\alpha,\beta}\|\psi_i|f_{\alpha}|_\infty\|_1^{1/2}.
 $$
 For $D_{i,\alpha,\beta}$, note that the off-diagonal  kernel of 
 $D_{i,\alpha,\beta}(1+\D_S^2)^{-(s+2k)/2}D_{i,\alpha,\beta}^*$
 reads up to a $\Gamma$-function factor
 $$
i^{|\alpha|} |\psi_if_{\alpha}\partial^\beta(\vf_i)|(x)^{1/2}\,\int_0^\infty t^{(s+2k)/2-1}\,e^{-t}\,\partial^\alpha_x\partial^\alpha_yK_t^S(x,y)\,dt \,\,|\psi_if_{\alpha}\partial^\beta(\vf_i)|(y)^{1/2}.
 $$
 But Proposition \ref{rrr} i) gives
 $$
 |\partial^\alpha_x\partial^\beta_yK_t^S(x,y)|_\infty\leq C'(\alpha,\beta)
 t^{-(n+|\alpha|+|\beta|)/2} \exp\Big(-\frac{d_g^2(x,y)}{4(1+c)t}\Big),\quad t>0.
 $$
 Since $|\alpha|,\,|\beta|\leq k$, we finally obtain the inequality
 \begin{align*}
 \|D_{i,\alpha,\beta}(1+\D_S^2)^{-(s+2k)/4}\|_2^2
 &\leq C'(\alpha)\int_{  B(x_i,2\eps)} |\psi_if_{\alpha}\partial^\beta(\vf_i)|_\infty(x)d\mu_g(x)\\
 &\leq
 C''(\alpha,\beta)\int_{  B(x_i,2\eps)} |\psi_i|\,|f_{\alpha}|_\infty(x)d\mu_g(x)
 = C''(\alpha,\beta)\|\psi_i|f_{\alpha}|_\infty\|_1.
 \end{align*}
 Thus, 
  \begin{align*}
  \|(1+\D_S^2)^{-s/4}B_k(1+\D_S^2)^{-s/4}\|_1&\leq\sum_{i=0}^\infty  \sum_{|\alpha|,|\beta|\leq k}\| 
   (1+\D_S^2)^{-s/4}C_{i,\alpha,\beta}\|_2\,\| D_{i,\alpha,\beta}(1+\D_S^2)^{-(s+2k)/4}\|_2\\
   &\leq C\sum_{i=0}^\infty  \sum_{|\alpha|\leq k}\|\psi_i|f_{\alpha}|_\infty\|_1,
   \end{align*}
 which is finite by Lemma \ref{sobo-local}.
    This proves that for all $k\in\N_0$,  $\delta^k(c(f))$ and  $\delta^k(c(df))$ are in $\B_1(\D_S,n)$.
   We also have proven that the triple $\big(W^{\infty,1}(M),L^2(M,S),\D_S\big)$ is finitely summable.
   
   That $n$ is the smallest number such that $c(f)(1+\D_S^2)^{-s/2}$ is trace class for all $s>n$ follows from Proposition \ref{rrr} iii), 
   since 
   $$
  {\rm Tr}\big( c(f)(1+\D_S^2)^{-s/2}\big)=
  \frac1{\Gamma(s/2)}\int_0^\infty t^{s/2-1}\,e^{-t}\,\int_M f(x)\,\tr\big(K_t^S(x,x)\big)\,d\mu_g(x)\,dt  ,
  $$ 
  and
  $$
  \tr\big(K_t^S(x,x)\big)\sim_{t\to 0}t^{-n/2}\sum_{i\geq 0} t^i\,b_i(x).
  $$
  Thus, the spectral dimension is  $n$.
  
  Last, that the spectral dimension is isolated follows from the fact that it has discrete dimension spectrum, which follows from Proposition \ref{rrr} iii) 
  and the trace computation above, since for any
  $f_0,f_1,\dots,f_m\in W^{\infty,1}(M)$, the operator 
  $$
  c(f_0)c(df_1)^{(k_1)}\cdots c(df_m)^{(k_m)},
  $$
  is a differential operator of order $|k|=k_1+\dots+k_m$ with uniform $C^\infty$-bounded coefficients.
 \end{proof}

\subsection{An index formula for manifolds of bounded geometry}

\subsubsection{Extension of the Ponge approach.}
We still consider  $(M,g)$,  a complete Riemannian manifold   of 
dimension $n$, but now suppose that $(M,g)$ is spin. We fix
 $S$  to be the spinor bundle endowed with a 
connection $\nabla^S$ which is the usual lift of the Levi-Civita connection. We
 let $\D_S$ be the associated Dirac operator. We still assume that $(M,g,S)$ has bounded 
 geometry, in the sense of Definition \ref{BG}. 

Now we need to explain how to use the asympotic expansions of Proposition \ref{rrr} iii), to deduce
the Atiyah-Singer local index formula from the residue cocycle formula for 
the index. (Recall that by Proposition \ref{main-stuff}, 
the spectral triple $\big(W^{\infty,1}(M),L^2(M,S),\D_S\big)$
has isolated spectral dimension, so that we can use the last version of Theorem \ref{localindex} to compute the index.)
 The key tool is Ponge's adaptation of Getzler's arguments, \cite{Ponge}.

As Ponge and Roe explain, \cite{Ponge,Ro},
the arguments that Gilkey uses to prove that the coefficients in the 
asymptotic expansion of the Dirac Laplacian are universal polynomials 
carries over to the noncompact situation and produces universal polynomials 
identical to those of the compact case. 
Moreover Ponge's argument is purely 
local; that is, it proceeds by choosing a single point in $M$ and checking what the 
asymptotic expansion gives for the terms in the residue cocycle formula at that point. 
As such there is no change needed in Ponge's argument
to handle complete manifolds of bounded geometry.

Thus both the following results are proven just as in Ponge, and the only work is in checking
that the constants are consistent with our conventions.

\subsubsection{The odd case} We treat the odd case first, which is not affected by our `doubling up' construction.
\begin{theorem}
Let $(M,g,S)$ be a Riemannian spin  manifold with bounded geometry  
and of odd dimension $n=1,3,5,\dots$.
Let $\big(W^{\infty,1}(M),L^2(M,S),\D_S\big)$ be the smoothly summable 
spectral triple of spectral dimension $n$ described in the last section. 
The components of the odd residue cocycle are given by
$$
\phi_{2m+1}(f^0, f^1,\ldots, f^{2m+1})=
 \frac{(-1)^m\sqrt{2\pi i}}{(2\pi i)^{ \frac{n+1}{2} }(2m+1)!\,m!}
\int_M f^0df^1\wedge\cdots\wedge df^{2m+1}\wedge\hat{A}(R)^{(n-2m-1)},$$
for $f^0,f^1,\ldots ,f^{2m+1}\in W^{\infty,1}(M)$,  $m\geq 0$, $R$ being the  curvature tensor of $M$.
\end{theorem}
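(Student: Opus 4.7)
The strategy is to reduce the computation of each component $\phi_{2m+1}$ of the residue cocycle to a local heat kernel expansion and then invoke the Getzler-type rescaling argument as reformulated by Ponge. The starting point is Definition \ref{def:res-cocycle}: writing $h = |k| + (2m+1-1)/2 = |k| + m$,
\begin{equation*}
\phi_{2m+1}(f^0,\ldots,f^{2m+1}) = \sqrt{2i\pi}\!\!\sum_{|k|=0}^{M-(2m+1)}\!(-1)^{|k|}\alpha(k)\!\!\sum_{l=0}^{h}\sigma_{h,l}\,\tau_l\big(c(f^0)\,c(df^1)^{(k_1)}\cdots c(df^{2m+1})^{(k_{2m+1})}(1+\D_S^2)^{-|k|-(2m+1)/2}\big).
\end{equation*}
The functionals $\tau_l$ are defined as residues at $z=0$ of $z^l\zeta_b(z)$ with $b$ the operator in the argument. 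First I would convert each such zeta function into a heat kernel integral via the Mellin transform, so that its residues at $z=0$ are governed entirely by the coefficients $b_{P,i}(x)$ of the short-time asymptotic expansion of $P e^{-t\D_S^2}$, where $P$ is the differential operator
$P = c(f^0)\,c(df^1)^{(k_1)}\cdots c(df^{2m+1})^{(k_{2m+1})}$, of order $|k|$ and with uniform $C^\infty$-bounded coefficients (bounded geometry plus $f^j\in W^{\infty,1}(M)$).

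Next I would apply Proposition \ref{rrr}(iii), which upgrades the pointwise asymptotic expansion of $\tr K_{t,P}^S(x,x)$ to a global one integrated against any $L^1$ density. Since $f^0 \in W^{\infty,1}(M) \subset L^1(M)$, this justifies interchanging the trace/integral with the short-time expansion, and allows me to identify $\tau_l\big(P(1+\D_S^2)^{-|k|-(2m+1)/2}\big)$ with an integral over $M$ of a universal polynomial in the jets of the metric and of the $f^j$'s at each point, contracted against $f^0\,d\mu_g$. The assembly of this into $\phi_{2m+1}$ is then purely local and identical pointwise to the assembly that appears in Ponge's treatment of the compact case.

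At this stage I would invoke the Getzler rescaling argument as executed in \cite{Ponge}: for the spin Dirac operator, after rescaling the Clifford generators and the coordinates, the symbol of $\D_S^2$ degenerates into a harmonic oscillator whose Mehler-formula heat kernel produces exactly the $\hat{A}$-form of the curvature $R$, while the commutators $c(df^j)$ contribute the exterior product $df^1\wedge\cdots\wedge df^{2m+1}$ and the prefactor $c(f^0)$ contributes $f^0$. The higher multi-indices $k\neq 0$ drop out in the rescaling limit (they are subleading) and the sum over $l$, together with the combinatorial factors $\alpha(k)$ and $\sigma_{h,l}$, collapses to the single universal constant required. Tracking these constants against the conventions fixed in Definitions \ref{dim-spec} and \ref{def:res-cocycle} yields the stated normalisation $\frac{(-1)^m\sqrt{2\pi i}}{(2\pi i)^{(n+1)/2}(2m+1)!\,m!}$, and picks out the degree $(n-2m-1)$ component of $\hat{A}(R)$ by dimensional matching.

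The main obstacle is not the Getzler rescaling itself, which is local and therefore transfers verbatim from the compact setting, but the justification that passing from the pointwise to the global expansion is legitimate in the nonunital, bounded geometry framework with test functions only in $W^{\infty,1}(M)$. This is precisely what Proposition \ref{rrr}(iii) is designed to handle, together with the uniform $C^\infty$-boundedness of the coefficients of $P$, which ensures that Kordyukov's gaussian heat kernel bounds remain uniform in $x\in M$. The constant bookkeeping, while tedious, follows the same steps as in the compact case once the universal polynomial identification is in place, so I would refer to \cite{Ponge} for the explicit computation of the symbolic residue rather than reproducing it.
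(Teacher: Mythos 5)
Your proposal is correct and follows essentially the same route as the paper, which itself simply observes that the heat coefficients are the same universal polynomials as in the compact case, that Ponge's Getzler-rescaling computation is purely local and hence transfers verbatim to manifolds of bounded geometry, and that the only remaining work is matching constants to the conventions of Definitions \ref{dim-spec} and \ref{def:res-cocycle}. If anything, you supply slightly more justification than the paper does for the pointwise-to-global passage via Proposition \ref{rrr}(iii) and the $L^1$ hypothesis on $f^0$, which is exactly the point where the nonunital setting requires care.
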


{\bf Remark.} The $A$-roof genus, $\hat{A}(R)$, is computed here with no normalisation
of the Pontryagin classes by factors of $2\pi i$. To obtain the index formula in the next result, one
should use the $(b,B)$-Chern character of a unitary $u\in M_N\big(W^{\infty,1}(M)^\sim\big)$, antisymmetrising after taking
the matrix trace.

\begin{corollary}
For any unitary $u\in M_N\big(W^{\infty,1}(M)^\sim\big)$ 
 and with $2P_\mu-1$ 
being the phase of $\D_{S,\mu}\otimes {\rm Id}_N$ and $P=\chi_{[0,\infty)}( {\D_S})\otimes {\rm Id}_N$, we have
the odd index pairing given by
$$
{\rm Ind}(PuP)={\rm Ind}(P_\mu\hat uP_\mu)= -\frac{1}{(2\pi i)^{\frac{n+1}{2}}}\sum_{m=0}^{\frac{n-1}2}
\frac{(-1)^m}{(2m+1)!\,m!}\int_M  {\rm Ch}_{2m+1}(u)\wedge \hat A(R)^{(n-2m-1)}.
$$
\end{corollary}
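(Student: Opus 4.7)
The plan is to chain together three results established earlier: the final corollary of subsection \ref{subsec:odd-index} (which identifies $\mathrm{Ind}(PuP)$ with the numerical index pairing $\langle[u],(\A,\HH,\D)\rangle$), Proposition \ref{index-explicit} (which identifies the same pairing with $\mathrm{Ind}_{\tau\otimes\tr_{2n}}(P_\mu\hat uP_\mu)$), and part (3) of Theorem \ref{localindex} (the residue cocycle formula for the odd numerical index pairing). Once these three identifications are in hand, the corollary reduces to substituting the explicit formula for the components $\phi_{2m+1}$ from the preceding theorem and doing the combinatorial bookkeeping on the Chern character normalisations.

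First I would note that by Proposition \ref{main-stuff}, $\big(W^{\infty,1}(M),L^2(M,S),\D_S\big)$ is a smoothly summable odd spectral triple of spectral dimension $n$ with isolated spectral dimension, so its inflation to $\big(M_N(W^{\infty,1}(M)),L^2(M,S)\otimes\C^N,\D_S\otimes{\rm Id}_N\big)$ is also smoothly summable of spectral dimension $n$. Hence all three results above apply to $u\in M_N(W^{\infty,1}(M)^\sim)$. In particular Theorem \ref{localindex}(3) in odd form yields, for $M=n$,
\begin{equation*}
\langle[u],(\A,\HH,\D)\rangle=\frac{-1}{\sqrt{2\pi i}}\sum_{m=0}^{(n-1)/2}\phi_{2m+1}\big({\rm Ch}_{2m+1}(u)\big),
\end{equation*}
while the identifications above give $\mathrm{Ind}(PuP)=\mathrm{Ind}_{\tau\otimes\tr_{2N}}(P_\mu\hat uP_\mu)=\langle[u],(\A,\HH,\D)\rangle$.

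Next I would substitute. By definition ${\rm Ch}_{2m+1}(u)=(-1)^m m!\,u^*\otimes u\otimes\cdots\otimes u^*\otimes u$, and the preceding theorem applied entry-wise (with the fibre-wise matrix trace $\tr_N$ absorbed into the inflation) yields
\begin{equation*}
\phi_{2m+1}(u^*,u,u^*,\dots,u^*,u)=\frac{(-1)^m\sqrt{2\pi i}}{(2\pi i)^{(n+1)/2}(2m+1)!\,m!}\int_M\tr_N\!\big(u^*du\wedge du^*\wedge\cdots\wedge du\big)\wedge\hat A(R)^{(n-2m-1)}.
\end{equation*}
Combining the two factors of $(-1)^m m!$ and the $1/\sqrt{2\pi i}$ with the prefactor in the residue formula, and recognising the resulting differential form as (up to the sign $(-1)^m$ carried by the $m!$ in ${\rm Ch}_{2m+1}(u)$) a scalar multiple of the normalised odd Chern class ${\rm Ch}_{2m+1}(u)$ appearing on the right-hand side of the claim, produces the stated formula after collecting constants.

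The main technical point to check carefully is the combinatorial/antisymmetrisation step: the residue cocycle formula for $\phi_{2m+1}$ is already totally antisymmetric in its last $2m+1$ entries after integration against the differential form, and the $(b,B)$-Chern character ${\rm Ch}_{2m+1}(u)$ in its cyclic form already performs this antisymmetrisation up to the combinatorial factor $(-1)^m m!$; the bookkeeping of this factor against the $1/(2m+1)!\,m!$ in $\phi_{2m+1}$ is the only spot where a sign or normalisation slip could occur. Apart from this, the proof is a direct application of previously established results; no new analytic work is required, since all the analysis (smooth summability, isolation of spectral dimension, heat-kernel asymptotics, and Ponge's Getzler-type calculus) has already been established in the preceding subsections.
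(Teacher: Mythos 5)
Your proposal is correct and follows the same route the paper intends: the paper presents this corollary as an immediate consequence of the preceding theorem for $\phi_{2m+1}$ combined with Theorem \ref{localindex}(3), Proposition \ref{index-explicit} and the final corollary of subsection \ref{subsec:odd-index}, leaving only the normalisation bookkeeping (which the paper itself flags in the remark about antisymmetrising the $(b,B)$-Chern character after taking the matrix trace). Your identification of the antisymmetrisation/constant-matching step as the sole delicate point is exactly where the paper locates the remaining work.
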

\index{index theorem!Atiyah-Singer formula}

\subsubsection{The even case}

Now  as the rank of a projection $f\in M_N\big(W^{\infty,1}(M)^\sim\big)$ is constant 
on connected components and equal to the rank of ${\bf 1}_f$,
the contribution of the zeroth term to the local index formula is zero.
It remains therefore to compute $\phi_{2m}$ for $m\geq 1$ 
evaluated on the Chern character of a projection $f$.

\begin{theorem}
Let $(M,g,S)$ be a Riemannian spin manifold with bounded geometry  
and of even dimension $n=2,4,6,\dots$.
Let $\big(W^{\infty,1}(M),L^2(M,S),\D_S\big)$ be the smoothly summable 
spectral triple of spectral dimension $n$ described in the last section. 
The non-zero components of the even residue cocycle  are given by
$$
\phi_{2m}(f^0, f^1,\ldots, f^{2m})= \frac{(-1)^m}{(2\pi i)^{n/2}(2m)!}
\int_M f^0df^1\wedge  \cdots\wedge df^{2m} \wedge \hat{A}(R)^{(n-2m)},\quad m\geq 1,
$$
for $f^0,f^1,\ldots, f^{2m}\in W^{\infty,1}(M)$, $R$ being the  curvature tensor of $M$.
\end{theorem}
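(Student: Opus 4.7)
The plan is to follow the odd-case template: start from the residue cocycle of Definition \ref{def:res-cocycle}, reduce the residues to coefficients in the small-time heat kernel expansion via Proposition \ref{rrr}~iii), and then identify those coefficients using the Getzler/Ponge rescaling argument. Throughout I can assume that $\big(W^{\infty,1}(M),L^2(M,S),\D_S\big)$ is a smoothly summable spectral triple of isolated spectral dimension $n$ (Proposition \ref{main-stuff}), so Theorem \ref{localindex}~(3) applies and $\phi_{2m}$ is well defined with $\bullet=0$.

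First I would unfold $\phi_{2m}$ on $(f^0,\dots,f^{2m})\in W^{\infty,1}(M)^{\otimes (2m+1)}$. By \eqref{clascom} each $df^i$ contributes $c(df^i)$, so for a multi-index $k$ the operator
\[
B_k \;:=\; \gamma\, f^0\, c(df^1)^{(k_1)}\cdots c(df^{2m})^{(k_{2m})}
\]
is a differential operator of order $|k|$ on sections of $S$ with uniform $C^\infty$-bounded coefficients, and the zeta function $\zeta_{B_k(1+\D_S^2)^{-|k|-m}}(z)$ has, by Proposition \ref{rrr}~iii), meromorphic continuation with residues computed from the diagonal heat kernel expansion for $B_k e^{-t\D_S^2}$. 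Using the Laplace transform identity
\[
(1+\D_S^2)^{-s} \;=\; \tfrac{1}{\Gamma(s)}\int_0^\infty t^{s-1} e^{-t}e^{-t\D_S^2}\,dt,
\]
the residues $\tau_{l-1}$ appearing in Definition \ref{def:res-cocycle} reduce to explicit integrals of the Seeley--DeWitt coefficients $b_{B_k,i}(x)$ against $\mu_g$. So $\phi_{2m}(f^0,\dots,f^{2m})$ becomes a finite linear combination of such heat-coefficient integrals, with the combinatorial weights $\alpha(k)\sigma_{h,l}$.

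Next I would invoke the Ponge argument, which is purely local: after Getzler rescaling in synchronous normal coordinates at a point $x\in M$, the symbol of $\D_S^2$ converges to the harmonic oscillator Hamiltonian of the curvature two-form, and Mehler's formula identifies the relevant on-diagonal heat coefficient with the component of $\hat A(R)\wedge f^0 df^1\wedge\cdots\wedge df^{2m}$ in top degree. Since the bounded geometry hypothesis is exactly what is required for Proposition \ref{rrr} to hold globally, and since the Getzler computation at each $x\in M$ is identical to the compact case, the same universal polynomial identity that Ponge uses in \cite{Ponge} gives
\[
\sum_{|k|=0}^{n-2m}(-1)^{|k|}\alpha(k)\sum_l \sigma_{h,l}\, b_{k,l-1}(x) \;=\; \frac{(-1)^m}{(2\pi i)^{n/2}(2m)!}\,\bigl[df^1\wedge\cdots\wedge df^{2m}\wedge \hat A(R)\bigr]_{\rm top}(x),
\]
up to the normalisation constants arising from $\eta_m$ and the duplication formula. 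Integrating over $M$ (legitimate because $f^0\in W^{\infty,1}(M)\subset L^1(M)$ and the $df^i$ and $\hat A(R)^{(j)}$ are uniformly bounded by bounded geometry) yields the claimed formula.

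The main obstacle is bookkeeping: one must verify that the combinatorial factors $\alpha(k)$, $\sigma_{h,l}$, $\eta_m$ in Definition \ref{def:res-cocycle} combine, after the Getzler rescaling, into precisely the constant $(-1)^m/((2\pi i)^{n/2}(2m)!)$. Ponge's computation in the compact case handles this exact bookkeeping, and by Proposition \ref{rrr}~iii) the globalisation over $M$ is harmless: the sum over a bounded-geometry partition of unity $\sum\varphi_i=1$ (Lemma \ref{sobo-local}) converges absolutely because $f^0\in L^1(M)$ and the local Getzler coefficients are uniformly bounded. The contribution $\phi_0$ is zero for virtual-rank-zero classes, exactly as in the commutative example of subsection \ref{bott}, so combining with the Chern character of a projection in $M_N(W^{\infty,1}(M)^\sim)$ recovers the expected Atiyah--Singer integrand.
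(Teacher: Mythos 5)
Your proposal is correct and follows essentially the same route as the paper: the paper's own proof consists precisely of the observation that Ponge's Getzler-rescaling computation is purely local, that Proposition \ref{rrr} supplies the heat-kernel expansion globally under bounded geometry, and that only the bookkeeping of constants needs checking. Your write-up simply spells out in more detail the steps the paper delegates to \cite{Ponge}.
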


Again the $A$-roof genus is defined without $2\pi i$ normalisations, 
and in the following result one uses the $(b,B)$-Chern character of $f\in M_N\big(W^{\infty,1}(M)^\sim\big)$, 
antisymmetrising after taking the trace.

\begin{corollary}
For any projector $f\in M_N\big(W^{\infty,1}(M)^\sim\big)$
 and with $F_\mu$ 
being the phase of $\D_{S,\mu}\otimes {\rm Id}_N$, we have
$$
{\rm Ind}(\hat f\, F_{\mu,+}\,\hat f\big) =(2\pi i)^{-n/2}\sum_{m=1}^{\frac n2}\frac{(-1)^m}{(2m)!}
\int_M {\rm Ch}_{2m} (f)\wedge\hat{A}(R)^{(n-2m)}  .
$$
\end{corollary}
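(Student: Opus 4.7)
The plan is to assemble the statement directly from Theorem~\ref{localindex}(3), the explicit computation of the residue cocycle components $\phi_{2m}$ carried out in the preceding theorem, and the definition of the $(b,B)$-Chern character of a projection. First I would invoke Proposition~\ref{index-explicit} to identify the numerical index pairing with the Breuer--Fredholm index of $\hat f\, F_{\mu,+}\,\hat f$ in $M_{2N}(\cn)$, so that it suffices to compute
\[
\langle [f]-[{\bf 1}_f], [(W^{\infty,1}(M),L^2(M,S),\D_S)]\rangle
=\sum_{m=0}^{n/2}\phi_{2m}\bigl({\rm Ch}_{2m}(f)-{\rm Ch}_{2m}({\bf 1}_f)\bigr),
\]
which is the content of Theorem~\ref{localindex}(3) applied to the smoothly summable spectral triple of Proposition~\ref{main-stuff} (inflated to $M_N$), whose spectral dimension $n$ is isolated.

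Next I would dispose of the terms that should not contribute. The $m=0$ term reads $\phi_0(f-{\bf 1}_f)$; by the residue formula for $\phi_0$ and the remark immediately preceding the corollary, this vanishes because the fibrewise trace of $f-{\bf 1}_f$ vanishes identically on each connected component of $M$ (the virtual bundle $[f]-[{\bf 1}_f]$ has virtual rank zero). For $m\geq 1$ the cochain ${\rm Ch}_{2m}({\bf 1}_f)=(-1)^m\frac{(2m)!}{m!}({\bf 1}_f-\tfrac12)\otimes{\bf 1}_f\otimes\cdots\otimes{\bf 1}_f$ involves only the constant matrix ${\bf 1}_f\in M_N(\C)$, so that $d({\bf 1}_f)=0$ and hence the local residue expression for $\phi_{2m}$ from the previous theorem evaluates to zero on it.

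The remaining step is pure substitution. Using the explicit form ${\rm Ch}_{2m}(f)=(-1)^m\frac{(2m)!}{m!}(f-\tfrac12)\otimes f\otimes\cdots\otimes f$ together with the formula
\[
\phi_{2m}(f^0,f^1,\dots,f^{2m})
=\frac{(-1)^m}{(2\pi i)^{n/2}(2m)!}\int_M f^0\,df^1\wedge\cdots\wedge df^{2m}\wedge \hat A(R)^{(n-2m)},
\]
(extended $M_N$-valued by the matrix trace and antisymmetrisation) yields
\[
\phi_{2m}\bigl({\rm Ch}_{2m}(f)\bigr)
=\frac{(-1)^m}{(2\pi i)^{n/2}(2m)!}\int_M {\rm Ch}_{2m}(f)\wedge \hat A(R)^{(n-2m)},
\]
with the usual convention that ${\rm Ch}_{2m}(f)\wedge\hat A(R)^{(n-2m)}$ on the right denotes the Chern--Weil differential form obtained from the $(b,B)$-chain after taking matrix trace and antisymmetrising. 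Summing over $m=1,\dots,n/2$ and inserting into the identity from the first paragraph gives the stated formula.

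I do not expect any substantial obstacles: the residue formula already encodes all the analysis (smooth summability and isolated spectral dimension from Proposition~\ref{main-stuff}, Ponge-type heat-kernel asymptotics in the preceding theorem), and what remains is bookkeeping of the constants $(-1)^m$, $(2m)!$, and $(2\pi i)^{n/2}$, together with the two cancellation arguments above. The one point requiring a little care is the identification of the cyclic cochain ${\rm Ch}_{2m}(f)$ with the differential form ${\rm Ch}_{2m}(f)$ appearing in the integrand; this is a standard matching of conventions, and is consistent across the $m\geq 1$ residues because in each case the relevant ${\bf 1}_f$-contribution already vanishes pointwise thanks to $d({\bf 1}_f)=0$, so no boundary terms at infinity need to be controlled.
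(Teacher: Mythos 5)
Your proposal is correct and follows exactly the route the paper intends: Theorem \ref{localindex}(3) applied to the smoothly summable, isolated-spectral-dimension triple of Proposition \ref{main-stuff}, the vanishing of the degree-zero term because the virtual bundle $[f]-[{\bf 1}_f]$ has virtual rank zero (noted in the paper just before the even-case theorem), and substitution of the explicit residue cocycle components $\phi_{2m}$ and the $(b,B)$-Chern character of $f$, with the trace-then-antisymmetrise convention matching the differential-form Chern character. Nothing further is needed.
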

\index{index theorem!Atiyah-Singer formula}




\subsection{An $L^2$-index theorem  for coverings of manifolds of bounded geometry}
\index{index theorem!$L^2$-index theorem}
\index{covering space}

In this section we show how a version of the relative $L^2$-index (see  
\cite{Vaillant} for another version)
which generalises that in \cite{A}, can be obtained from our residue formula.

As above, we fix $(\tM,\tg)$, a  Riemannian 
manifold of dimension $n$ and of bounded geometry. 
Let also $G$ be a countable discrete group
acting freely and properly on $\tM$ by (smooth) isometries. 
Note that {\it we do not assume } $\tM$ to be $G$-compact and
we let $M:=G\setminus \tM$ be the possibly noncompact manifold 
(by properness) of right cosets. It is then natural to think of $\tM$ as the total 
space of a principal $G$-bundle with noncompact base $M$. 
We denote by $q:\tM\to M$ the projection map.
Note that the metric $\tg$ on $\tM$ then naturally yields a metric $g$ on 
$M$ given by $g_{x}(v_1,v_2)=\tg_{\tx}(\tilde v_1,\tilde v_2)$, if 
$x=q(\tx) \in M$ and $v_i=q(\tilde v_i) \in T_x M$  where we have 
identified $T_x M\simeq G.(T_{\tx}\tM)$, since 
the action of $G$ naturally extends to $T\tM$. In particular, $(M,g)$ 
also has bounded geometry.

An important class of examples is given by universal coverings. 
In this case, $G$ is the fundamental group of a manifold of bounded 
geometry $M$ and $\tM$ is its  universal cover. Also, in this case 
 $q:\tM\to M$ is the covering map and $\tg$ is
the lifted metric   on $\tM$   by $\tg_{\tx}=g_{q(\tx)}$. 

Let now $\D_S$ be a Dirac type operator acting on the sections of a 
Dirac bundle $S$ of bounded geometry on $M$.
To simplify the notations, we denote by 
$(\A,\H,\D_S):=\big(W^{\infty,1}(M) ,L^2(M,S),\D_S\big)$ the smoothly summable spectral triple 
constructed  in Section \ref{preliminaries}.
If the triple is either even or odd,
then we have various formulae for 
$$
\mbox{Index}(\hat eF_{\mu,+}\hat e)\quad \mbox{even case},\quad 
\mbox{Index}(P_\mu \hat u P_\mu)\quad \mbox{odd case},
$$
where $F_\mu$ is the phase of $\D_{S,\mu}$, is the double of $\D_S$ (see Definition \ref{def:double}), 
and $P_\mu=(F_\mu+1)/2$. 

We
lift the bundle $S$ to a bundle $\tS$ on $\tM$ (pullback by $q$) and we  also lift the operator
$\D_S$ to an equivariant operator $\tD_S$ on sections of $\tS$. This requires that the 
action of $G$ on $\tM$ lifts to an action on $\tS$, and we assume that this is the case.
We also denote by $\tilde c$ the Clifford action of ${\rm Cliff}(\tilde M)$ on $\tS$.
We let $\tH=L^2(\tM,\tS)$, and
observe that $\A$ acts on $\tH$ by 
$(\tilde c(f)\xi)(\tx)=c(f(x))\xi(\tx)$, for $f\in\A$, $\xi\in\tH$, and $\tx\in\tM$ with $x=q(\tx)$.

We now briefly review the setting for $L^2$-index theory referring for 
example to the review \cite{Ros} for some details and
references to the original literature.
Since the action of $G$ on $\tM$ is free and proper, 
we have an isometric identification $L^2(\tM,\tS)\cong L^2(M,S)\otimes \ell^2(G)$.
This allows us to define the
von Neumann algebra $\cn_G=G'\cong \B(\H)\otimes R(G)''$, 
where $R(G)$ is the group algebra 
consisting of the span of the unitaries giving the 
right action of $G$ on $\ell^2(G)$.  There is a canonical semifinite faithful normal  trace $\tau_G$    
defined on elementary tensors  
$T\otimes U\in\B(\H)\otimes R(G)''$ by
$$
\tau_G(T\otimes U)={\rm Tr}_\H(T)\,\tau_e(U),
$$
where ${\rm Tr}_\H$ is the operator trace on $\H$ and $\tau_e$ is the usual 
finite faithful normal  trace on $R(G)''$ given by evaluation at the neutral element.
Let now $\tilde{T}$ be a  pseudo-differential operator on $\tH$ with smooth 
kernel $[\tilde T]\in \Gamma^\infty(\tS\boxtimes\tS)$.  
Then, $\tilde T$ is $G$-equivariant if and only if 
$$
[\tilde T](h\cdot\tilde x,h\cdot\tilde y)=e_{ \tilde x}(h)\,[T](\tilde x,\tilde y)\,
e_{\tilde y}(h)^{-1},\quad \mbox{for all }\, (h,\tilde x,\tilde y)\in G\times\tM^2,
$$ 
 where $ e_{\tilde x}:G\to {\rm Aut} (\tS_{\tilde x})$ 
is the fibre-wise lift of the action of $G$ to $\tilde S$. 
For such $G$-equivariant pseudo-differential operators on $\tH$
which belongs to $\cl^1(\cn_G,\tau_G)$, we have
\begin{equation}
\label{traceG}
\tau_G(\tilde{T})=\int_{F}\tr\big([T](\tx,\tx)\big)\,d\mu_{\tg}(\tx),
\end{equation}
where $F$ is a fundamental domain in $\tM$ and $\tr$ is the fibre-wise trace on ${\rm End}(
\tS_{\tilde x})$. 
This latter formulation is the natural one, and was  initially defined by Atiyah \cite{A}.
It is clear from its definition that $\tau_G$ is faithful so that the algebra $\cn_G$
 is semi-finite. It need not be a factor because (as is well known) 
 the algebra $R(G)''$ has a non-trivial centre precisely when the group 
 $G$ has finite conjugacy classes
\cite{Ros}.

We note that when $T$ is a pseudo-differential 
operator  of trace class on $L^2(M,S)$ with Schwartz kernel 
$[T]$ (and thus order less than $-n$ and with $L^1$-coefficients), and $U\in R(G)''$, we have,
using the identification above,
$$
\tau_G(T\otimes U):=\int_{M} \tr\big([T](x,x)\big)\,\mu_g(x)\,\times\,\tau_e(U).
$$

When the original triple $(\A,\H,\D_S)$ on $M$ is even with grading $\gamma$, 
we denote by $\tilde \gamma:=\gamma\otimes{\rm Id}_{\ell^2(G)}$ the grading lifted to $\tH$.

{\bf Remark.} The ideal of $\tau_G$-compact operators $\K_{\cn_G}=\K(\cn_G,\tau_G)$ is 
given by the norm closure of the 
$G$-equivariant $\Psi DO$'s of strictly negative order and with integral kernel 
vanishing at infinity inside a fundamental domain.

\begin{lemma}
Let $(\tM,\tg)$ be a  Riemannian manifold of bounded 
geometry endowed with a free and proper
action of a countable group $G$. Let also $P$ be a differential 
operator of order $\alpha\in\N_0$ and of uniform 
$C^\infty$-bounded coefficients, acting on the sections of 
$S$ and let $\tilde P$ be its lift as a $G$-equivariant operator 
on $\tS$ (which has also uniform $C^\infty$-bounded coefficients). Assume further that
$$
\kappa:=\inf\big\{d_{\tg}(\tx,h\cdot\tx)\,:\,\tx\in\tM,\,\,h\in G\setminus\{e\}\big\}>0.
$$ 
Then there exist two constants $C>$ and $c>0$, such that for any
 $(\tx,x)\in \tM\times M$, with $x=q(\tx)$ we have
 $$
 \big|[\tilde Pe^{-t\tD_S^2}](\tx,\tx)-[P e^{-t\D_S^2}](x,x)\big|_\infty\leq C\,t^{ -(n+\alpha)/2}e^{-c/t},
 $$
 where $|\cdot|_\infty$ is the operator norm on ${\rm End}(\tS_x)$.
 \label{ccc}
\end{lemma}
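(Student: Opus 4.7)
The plan is to exploit the classical covering-space heat-kernel identity, which in our setting expresses the downstairs kernel as an orbit sum of the lifted one. Concretely, because $\tilde P$ is the $G$-equivariant lift of $P$, the expression $\sum_{h\in G}[\tilde Pe^{-t\tilde D_S^2}](\tilde x,h\cdot\tilde x)$ (read fibrewise through the natural identifications of $\tilde S=q^*S$ along an orbit) defines a $G$-invariant object on $\tilde M\times\tilde M$ that descends to $M\times M$ and coincides with $[Pe^{-tD_S^2}](x,x)$ at $x=q(\tilde x)$; uniqueness of the heat kernel forces equality. Isolating the $h=e$ summand shows that the difference in the statement equals, up to fibrewise isometries $e_{\tilde x}(h)^{-1}$, the tail $\sum_{h\neq e}[\tilde Pe^{-t\tilde D_S^2}](\tilde x,h\cdot\tilde x)$, so the triangle inequality reduces the problem to bounding
$$
\sum_{h\neq e}\bigl|[\tilde Pe^{-t\tilde D_S^2}](\tilde x,h\cdot\tilde x)\bigr|_\infty
$$
by $C\,t^{-(n+\alpha)/2}e^{-c/t}$, with constants independent of $\tilde x$.

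I would then invoke the global Gaussian estimate of Proposition \ref{rrr}(i), which applies to $(\tilde M,\tilde g,\tilde S)$ since the bounded geometry of $(M,g,S)$ lifts to $\tilde M$ and $\tilde P$ inherits uniform $C^\infty$-bounded coefficients from $P$. Writing its constants as $C_0,c_0$, each summand is controlled by $C_0\,t^{-(n+\alpha)/2}\exp\bigl(-d_{\tilde g}(\tilde x,h\cdot\tilde x)^2/(4(1+c_0)t)\bigr)$. The hypothesis $\kappa>0$ gives $d_{\tilde g}(\tilde x,h\cdot\tilde x)\geq\kappa$ for every $h\neq e$, so splitting the exponent as
$$
\frac{d_{\tilde g}(\tilde x,h\cdot\tilde x)^2}{4(1+c_0)t}\geq\frac{\kappa^2}{8(1+c_0)t}+\frac{d_{\tilde g}(\tilde x,h\cdot\tilde x)^2}{8(1+c_0)t}
$$
extracts the desired $\exp(-\kappa^2/(8(1+c_0)t))$ prefactor while leaving a Gaussian tail to be summed.

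The main obstacle, as expected, is to sum this residual tail uniformly in $\tilde x$. The orbit $G\cdot\tilde x$ is $\kappa$-separated in $\tilde M$, and bounded geometry forces the Ricci curvature of $\tilde M$ to be bounded below, so geodesic-ball volumes grow at most exponentially; a standard ball-packing argument then yields constants $A,B>0$, independent of $\tilde x$, with $\#\{h\in G:d_{\tilde g}(\tilde x,h\cdot\tilde x)\leq R\}\leq Ae^{BR}$. Writing the residual sum as a Stieltjes integral against this counting function and completing the square in $r\mapsto -r^2/(8(1+c_0)t)+Br$ bounds it by a constant multiple of $e^{Kt}$ for some explicit $K>0$, uniformly in $\tilde x$. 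For $t\in(0,1]$ this delivers the claim with $c=\kappa^2/(8(1+c_0))$, up to enlarging $C$ so as to absorb $e^{K}$; for $t\geq1$ the inequality is automatic, since both on-diagonal values in the statement are then uniformly bounded by another application of Proposition \ref{rrr}(i) and the prefactor $t^{-(n+\alpha)/2}e^{-c/t}$ is bounded below on $[1,\infty)$. Assembling the three ingredients completes the proof.
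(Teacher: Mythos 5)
Your argument is correct and follows essentially the same route as the paper: the orbit-sum heat-kernel identity from uniqueness, the Gaussian bound of Proposition \ref{rrr}(i) applied on $\tM$, the uniform exponential growth of the orbit counting function (the paper cites Milnor via a lower sectional curvature bound where you use a ball-packing/Bishop--Gromov argument, but these are interchangeable here), and then summing the tail against that counting function. One small wording slip: for $t\geq 1$ the prefactor $t^{-(n+\alpha)/2}e^{-c/t}$ is \emph{not} bounded below (it tends to $0$); what you need, and what your argument actually uses, is that $e^{-c/t}\geq e^{-c}$ there, so the target bound is comparable to the trivial on-diagonal bound $t^{-(n+\alpha)/2}$ up to enlarging $C$.
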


\begin{proof}
Note first that for any  $(\tx,x),(\ty,y)\in \tM\times M$, with $x=q(\tx)$, $y=q(\ty)$, we have
$$
[Pe^{-t\D_S^2}](x,y)=\sum_{h\in G}\, [\tilde Pe^{-t\tD_S^2}](\tx,h\cdot \ty),
$$ 
which is proven using the uniqueness  of solutions of the heat equation on $\tM$ and on $M$.
Thus
$$
[P e^{-t\D_S^2}](x,x)-[\tilde Pe^{-t\tD_S^2}](\tx,\tx)
=\sum_{h\in G,\,h\ne e}\, [\tilde Pe^{-t\tD_S^2}](\tx,h\cdot\tx).
$$
From Proposition \ref{rrr}, we immediately deduce
$$
 \big|[\tilde Pe^{-t\tD_S^2}](\tx,\tx)-[Pe^{-t\D_S^2}](x,x)\big|_\infty
 \leq C\,t^{-(n+\alpha)/2}\, \sum_{h\in G,\,h\ne e}\,e^{-{d_g^2(\tx,h\cdot\tx)}/{4(1+c)t}}.
$$
Since $(\tM,\tg)$ has bounded geometry, the sectional curvature is bounded below,  by say 
$-K^2$ with $K>0$.
From \cite{M}, we have for any $\rho>0$ the existence of a uniform (over $\tM$)  constant $C'>0$ such that
$$
N_{\tx}(\rho):={\rm Card}\big\{h\in G\,:\, d_{\tg}(\tx,h\cdot\tx)\leq\rho\big\}\leq C' e^{(n-1)K\rho}.
$$
Then the assumption that 
$\kappa:=\inf\big\{d_{\tg}(\tx,h\cdot\tx)\,:\,\tx\in\tM,h\in G\setminus\{e\}\big\}>0$,
yields the inequality
$$
 \big|[\tilde Pe^{-t\tD_S^2}](\tx,\tx)-[Pe^{-t\D_S^2}](x,x)\big|_\infty\leq C''\,t^{-(n+\alpha)/2}\,\int_\kappa^\infty\,e^{-{\rho^2}/{4(1+c)t}}dN_{\tx}(\rho),
$$
which after an integration by parts, gives the proof.
\end{proof}

\begin{lemma} 
\label{lem:holo-laplace}
Under the hypotheses of Lemma \ref{ccc} and
for $f\in \A$ and $P$ a differential operator on $S$ with 
uniform $C^\infty$-bounded coefficients 
(and $\tilde P$ its lift on $\tilde S$ as a $G$-equivariant operator), the functions
$$
\C\ni z\mapsto \tau_G\Big(\tilde c(f) \tilde P\int_1^\infty t^{z}e^{-t(1+\tD_S^2)}
 dt\Big),\quad
\C\ni z\mapsto {\rm Tr}\Big(c(f) P\int_1^\infty t^{z}e^{-t(1+\D_S^2)} dt\Big),
$$
are entire. 
\end{lemma}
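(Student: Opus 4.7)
The plan is to exploit the fact that the integration begins at $t=1$, so the singularity of $t^{-(n+\alpha)/2}$ in the short-time heat kernel asymptotics of Proposition \ref{rrr} is irrelevant; and the shift in $e^{-t(1+\D_S^2)}$ contributes a factor $e^{-t}$ that dominates $|t^z| = t^{\Re z}$ uniformly on compact subsets of $\C$. So it is enough to establish trace-norm estimates of the form $\|c(f)Pe^{-t(1+\D_S^2)}\|_1 \leq C\,e^{-t/2}$ and $\|\tilde c(f)\tilde P e^{-t(1+\tD_S^2)}\|_{1,\tau_G} \leq C\,e^{-t/2}$ for all $t\geq 1$, with a constant $C$ depending only on $f$, $P$ and $n$. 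Given such estimates, the Bochner integrals converge absolutely in the respective trace-ideals uniformly on compact subsets of $\C$, and holomorphy in $z$ follows from Morera's theorem applied to $\oint_{\partial\Delta}(\cdots)\,dz$, using Fubini and the entirety of $z\mapsto t^z=e^{z\log t}$ for each fixed $t\geq 1$.

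For the first (trace-norm) step in the noncovering case, my plan is to factor
\[
c(f)Pe^{-t(1+\D_S^2)} \;=\; \bigl[c(f)P(1+\D_S^2)^{-s/2}\bigr]\cdot\bigl[(1+\D_S^2)^{s/2}e^{-t(1+\D_S^2)}\bigr],
\]
for $s>n+\alpha$. The second factor is bounded by $C_s e^{-t/2}$ in operator norm by the functional calculus (since $\sup_{\lambda\geq 1}\lambda^{s/2}e^{-t\lambda/2}$ is finite and decays exponentially for $t\geq 1$). The first factor is trace class with norm controlled by $\|f\|_{\infty,1}$: this is an instance of the Schatten estimates proven in subsection \ref{subsec:trace-ests} together with the fact (from the proof of Proposition \ref{main-stuff}) that $c(f)P\in\mathrm{OP}^\alpha_0$ when $P$ has uniform $C^\infty$-bounded coefficients of order $\alpha$, so that the product with $(1+\D_S^2)^{-s/2}$ for $s>n+\alpha$ lies in $\cl^1(\B(\H),\mathrm{Tr})$ by Corollary \ref{consistency}.

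For the covering case the same factorisation strategy applies, but one must replace $\mathrm{Tr}$ by $\tau_G$ and verify the corresponding $\tau_G$-Schatten estimate on $\tilde c(f)\tilde P(1+\tD_S^2)^{-s/2}$. This is where I expect the only real work to be. My plan is to apply Lemma \ref{ccc} (adapted to include the factor $(1+\tD_S^2)^{-s/2}$ via a Laplace transform in $t$) to the $G$-equivariant kernel of $\tilde P(1+\tD_S^2)^{-s/2}$ on the diagonal of $\tM$, bounding its pointwise operator norm by the corresponding quantity on $M$ plus an exponentially small correction. Integrating over a fundamental domain $F\subset\tM$ via Equation \eqref{traceG} then gives
\[
\tau_G\bigl(\tilde c(f)\tilde P(1+\tD_S^2)^{-s/2}\bigr)
\;=\;\int_F f(q(\tx))\,\tr\bigl([\tilde P(1+\tD_S^2)^{-s/2}](\tx,\tx)\bigr)\,d\mu_{\tg}(\tx),
\]
which, by the above pointwise estimate and the identification of $F$ with $M$, is bounded by a constant times $\|f\|_1\leq\|f\|_{\infty,1}$. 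A standard polarisation/Cauchy--Schwarz argument then upgrades this to the required $\tau_G$-trace-norm bound $C\,e^{-t/2}$ for $\tilde c(f)\tilde P e^{-t(1+\tD_S^2)}$, and Morera completes the proof of holomorphy in both cases.
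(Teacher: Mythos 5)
Your proposal is correct in essence and rests on the same two observations as the paper's proof: because the integration starts at $t=1$, the bounds of Proposition \ref{rrr} together with the factor $e^{-t}$ coming from $e^{-t(1+\D_S^2)}$ dominate $|t^{z}|$ uniformly on compact subsets of $\C$, and holomorphy then follows by differentiating under the integral sign. The paper's proof is two sentences long: it works directly with the scalar integrand $t^{z}e^{-t}\int f\,\tr\big(K^S_{t,P}(x,x)\big)\,d\mu$, using Proposition \ref{rrr} (i), (iii) and Equation \eqref{traceG}, and never passes through Schatten norms. Your version is more ambitious in that it establishes trace-norm bounds $\|c(f)Pe^{-t(1+\D_S^2)}\|_1\leq C\,e^{-t/2}$, which in addition justifies interchanging the trace with the $t$-integral, a point the paper elides; the factorisation through $c(f)P(1+\D_S^2)^{-s/2}$ and Corollary \ref{consistency} is fine for the base manifold. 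The one step you should not wave through is the final ``polarisation/Cauchy--Schwarz'' upgrade in the covering case: the obvious split $\tilde c(f)\tilde P e^{-t(1+\tD_S^2)}=\bigl[\tilde c(f)\tilde P e^{-\frac{t}{2}(1+\tD_S^2)}\bigr]\bigl[e^{-\frac{t}{2}(1+\tD_S^2)}\bigr]$ fails because $e^{-\frac{t}{2}(1+\tD_S^2)}$ is not $\tau_G$-Hilbert--Schmidt when $M$ has infinite volume, so one must distribute the integrable function $f$ across both Hilbert--Schmidt factors, exactly as is done with the partition of unity and the operators $C_{i,\alpha,\beta}$, $D_{i,\alpha,\beta}$ in the proof of Proposition \ref{main-stuff}, transported to $\tM$ using the Gaussian bound of Proposition \ref{rrr} (i) and the equivariance of the kernel. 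Alternatively, you may drop the trace-norm step altogether and argue, as the paper does, at the level of the scalar integrand.
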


\begin{proof} From Proposition \ref{rrr} and Equation \eqref{traceG}, 
we see that the integral is absolutely convergent.
We thus may differentiate under the integral sign with respect to $z$ 
and since the resulting integral is again 
absolutely convergent,  we are done. 
\end{proof}

\begin{prop}
Under the hypotheses of Lemma \ref{ccc},
for $f\in \A$, $P$ a differential operator of uniform $C^\infty$-bounded
coefficients and $\Re(z)>n$, there is an equality
$$
\tau_G\big(\tilde\gamma\tilde c(f)\tilde P(1+\tD_S^2)^{-z/2}\big)
=
{\rm Tr}\big(\gamma c(f) P(1+\D_S^2)^{-z/2}\big),
$$
modulo an entire function of $z$.
\label{error}
\end{prop}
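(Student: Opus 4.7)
The plan is to represent $(1+\D^2)^{-z/2}$ and $(1+\tD_S^2)^{-z/2}$ via the Mellin/Laplace formula
$$
(1+\D^2)^{-z/2} = \frac{1}{\Gamma(z/2)}\int_0^\infty t^{z/2-1} e^{-t(1+\D^2)}\,dt,
$$
valid for $\Re(z)>n$ after inserting $c(f)P$, and similarly on $\tM$. I would split the integral as $\int_0^1 + \int_1^\infty$. The tail $\int_1^\infty$ already gives an entire contribution to each of the two sides by Lemma \ref{lem:holo-laplace}, so, since $1/\Gamma(z/2)$ is itself entire, it suffices to analyse the difference coming from $\int_0^1$.

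For the small-$t$ part I would use the trace-kernel formulas
$$
{\rm Tr}\big(c(f)Pe^{-t(1+\D_S^2)}\big) = e^{-t}\int_M f(x)\,\tr\big([Pe^{-t\D_S^2}](x,x)\big)\,d\mu_g(x),
$$
and, via \eqref{traceG} applied to the $G$-equivariant operator $\tilde c(f)\tilde P e^{-t\tD_S^2}$,
$$
\tau_G\big(\tilde c(f)\tilde P e^{-t(1+\tD_S^2)}\big) = e^{-t}\int_F f(q(\tx))\,\tr\big([\tilde Pe^{-t\tD_S^2}](\tx,\tx)\big)\,d\mu_{\tg}(\tx),
$$
where $F\subset\tM$ is a fundamental domain. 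Because $q:F\to M$ is measure-preserving and $f\circ q$ equals $f$ on $F$, the difference of the two integrals is precisely
$$
e^{-t}\int_F f(q(\tx))\Big(\tr\big([\tilde P e^{-t\tD_S^2}](\tx,\tx)\big)-\tr\big([P e^{-t\D_S^2}](q(\tx),q(\tx))\big)\Big)\,d\mu_{\tg}(\tx).
$$
The grading $\gamma$ and its lift $\tilde\gamma$ act fibrewise in identical fashion, so they do not affect this comparison.

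The key input is now Lemma \ref{ccc}, which controls the pointwise difference on the diagonal by $C\,t^{-(n+\alpha)/2}e^{-c/t}$ uniformly in $\tx\in F$. Since $f\in\A=W^{\infty,1}(M)\subset L^1(M)$, multiplying by $f$ and integrating over $F$ gives the uniform bound
$$
\big|\tau_G\big(\tilde c(f)\tilde P e^{-t(1+\tD_S^2)}\big)-{\rm Tr}\big(c(f)Pe^{-t(1+\D_S^2)}\big)\big|\leq C'\|f\|_1\,t^{-(n+\alpha)/2}\,e^{-c/t}\,e^{-t}.
$$
Inserting this into $\tfrac{1}{\Gamma(z/2)}\int_0^1 t^{z/2-1}(\,\cdot\,)\,dt$, the factor $e^{-c/t}$ dominates every polynomial singularity at $t=0$, so the resulting integral is absolutely convergent for \emph{every} $z\in\C$ and defines an entire function of $z$ (differentiation in $z$ under the integral sign is justified by the same exponential decay).

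Combining the two pieces, the entire $\int_1^\infty$ contributions and the now-entire difference of $\int_0^1$ contributions, we obtain the claimed equality up to an entire function of $z$. The only delicate step is the uniform-in-$\tx$ off-diagonal heat kernel estimate underpinning Lemma \ref{ccc}, but that has already been established using the positivity of $\kappa=\inf_{h\ne e,\tx}d_{\tg}(\tx,h\cdot\tx)$ together with Milnor's exponential bound on the orbit counting function; no further geometric input is required here.
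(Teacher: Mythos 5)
Your proposal is correct and follows essentially the same route as the paper, whose proof is precisely the combination of Lemma \ref{ccc}, Lemma \ref{lem:holo-laplace} and the Laplace transform representation that you spell out. The only thing you have done beyond the paper is to write out the splitting at $t=1$ and the $e^{-c/t}$ domination explicitly, which is a faithful expansion of the intended argument rather than a different one.
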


\begin{proof}
This is a combinations of Lemmas \ref{ccc}  and \ref{lem:holo-laplace} together
with the usual   Laplace transform representation for the operators concerned. 
\end{proof}

The following result, whose proof follows from the previous 
discussion and the same arguments as in Section \ref{preliminaries},  is key.

\begin{corollary}
The triple $(\A,\tH,\tD)$ is a smoothly summable   semifinite
spectral triple with respect to $(\cn_G,\tau_G)$, of isolated spectral dimension
$n$.
\end{corollary}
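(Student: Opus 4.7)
The plan is to reduce the three claims---spectral triple axioms, smooth summability, and isolated spectral dimension $n$---to the corresponding results already established for the downstairs triple $(\A,\H,\D_S)$ in Proposition \ref{main-stuff}. The bridge is provided by the two principal tools already in hand: the $G$-equivariant trace formula \eqref{traceG}, and the off-diagonal gaussian estimate of Lemma \ref{ccc} which controls the difference between upstairs and downstairs heat kernels on the diagonal of a fundamental domain. In particular, Proposition \ref{error} will immediately handle isolated spectral dimension, while the fundamental-domain formula combined with Lemma \ref{ccc} will yield the trace-norm estimates needed for smooth summability via Proposition \ref{smooth-sum-sufficient}.

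For the spectral triple axioms: since $\tD_S$ is $G$-equivariant and the action of $\A$ via $\tilde c$ is by lifting scalar functions, both $\tD_S$ and $\tilde c(f)$ lie in $\cn_G = G'$, and each $\tilde c(f)$ preserves $\mathrm{dom}\,\tD_S$. For $\tau_G$-compactness of $\tilde c(f)(1+\tD_S^2)^{-1/2}$, one uses the remark that $\K_{\cn_G}$ is the norm closure of $G$-equivariant pseudodifferential operators of strictly negative order with kernel vanishing at infinity in a fundamental domain; the operator in question satisfies this since $f \in W^{\infty,1}(M)$ vanishes at infinity.

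For smooth summability, I would apply Proposition \ref{smooth-sum-sufficient}, which reduces matters to checking that $(1+\tD_S^2)^{-s/4} L^k(T) (1+\tD_S^2)^{-s/4} \in \L^1(\cn_G, \tau_G)$ for all $T \in \{\tilde c(f), \tilde c(df)\}$, $f \in W^{\infty,1}(M)$, $k \in \N_0$, $s > n$. Using the Bochner--Lichnerowicz identity \eqref{lich} and the fact that the Clifford bundle and connection lift compatibly, $L^k(T) = \tilde B_k (1+\tD_S^2)^{-k/2}$ where $\tilde B_k$ is the $G$-equivariant lift of the same differential operator $B_k$ appearing in the proof of Proposition \ref{main-stuff}; in particular $B_k$ has uniform $C^\infty$-bounded coefficients living in $W^{\infty,1}(M)$. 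One then repeats the partition-of-unity decomposition $B_k = \sum_i \psi_i B_k \varphi_i$ and the polar-factorisation argument from that proof. The single change is that the Hilbert--Schmidt norms computed downstairs via $k_s(x)\,d\mu_g(x)$ must be replaced upstairs by $\tau_G$ applied to the corresponding $G$-equivariant operators; by the fundamental-domain formula \eqref{traceG}, together with Lemma \ref{ccc} applied to the building blocks of the form $C_{i,\alpha,\beta}^*(1+\tD_S^2)^{-s/2}C_{i,\alpha,\beta}$ (supported in a single ball, hence living in a fundamental domain up to exponentially small errors), the upstairs Hilbert--Schmidt norms are bounded by the downstairs ones plus an explicit exponentially decaying correction. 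The finiteness conclusion then follows verbatim from the downstairs computation.

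Isolated spectral dimension is the easiest part: each relevant operator
$$b = \tilde c(a_0)\, d\tilde c(a_1)^{(k_1)} \cdots d\tilde c(a_m)^{(k_m)} (1+\tD_S^2)^{-|k|-m/2}$$
is of the form $\tilde c(a_0) \tilde P (1+\tD_S^2)^{-|k|-m/2}$ with $\tilde P$ the lift of a differential operator $P$ on $M$ of uniform $C^\infty$-bounded coefficients. Proposition \ref{error} then gives
$$\zeta_b(z) = \tau_G\big(b(1+\tD_S^2)^{-z}\big) = \mathrm{Tr}\big(c(a_0)\,P(1+\D_S^2)^{-|k|-m/2-z}\big) + E(z),$$
with $E$ entire. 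The downstairs zeta function admits an analytic continuation to a deleted neighbourhood of $z=0$ because $(\A,\H,\D_S)$ has isolated spectral dimension $n$ by Proposition \ref{main-stuff}, hence so does $\zeta_b$, and the critical exponent is unchanged. The main obstacle in the whole argument is the smooth summability step: Proposition \ref{error} alone is not enough because it controls zeta functions only modulo entire functions, whereas trace-class membership is a strictly stronger property; the actual work sits in verifying that Lemma \ref{ccc} together with \eqref{traceG} propagates the localized Hilbert--Schmidt estimates from $M$ to $\tM$ uniformly in the partition-of-unity index.
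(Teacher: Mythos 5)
Your proposal is correct and follows essentially the same route as the paper, whose proof simply cites Proposition \ref{error} together with Proposition \ref{main-stuff} and states that ``similar arguments'' establish the membership of $\delta^k(\tilde c(f))$ and $\delta^k(\tilde c(df))$ in $\B_1(\tD_S,n)$ --- which is precisely the partition-of-unity lifting you carry out via Lemma \ref{ccc} and the fundamental-domain trace formula \eqref{traceG}. Your closing remark, that Proposition \ref{error} alone controls zeta functions only modulo entire functions and so cannot by itself deliver the trace-class estimates needed for smooth summability, correctly identifies why the paper's appeal to the ``similar arguments'' of Proposition \ref{main-stuff} is genuinely needed.
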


\begin{proof}
This follows from Proposition \ref{error} combined  with Proposition \ref{main-stuff} together with similar arguments
as those of Proposition \ref{main-stuff} to prove that the operators $\delta^k(\tilde c(f))$ and $\delta^k(\tilde c(df))$, $k\in\N_0$, all belong to $\B_1(\D_S,n)$ for $f\in\A$.
\end{proof}

We arrive at the main result of this section.

\begin{theorem} The numerical pairing of $(\A,\HH,\D)$ with $K_*(\A)$ coincides with the 
numerical pairing of $(\A,\tH,\tD)$ with $K_*(\A)$ (which is thus  integer-valued).
\end{theorem}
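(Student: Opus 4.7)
The plan is to reduce both pairings to the residue cocycle via Theorem \ref{localindex}(3), and then show that the two resulting cochains coincide by exploiting Proposition \ref{error}. Both spectral triples $(\A,\HH,\D_S)$ and $(\A,\tH,\tD_S)$ have already been shown to be smoothly summable with isolated spectral dimension $n$, so the local index formula applies to each. Thus for $x\in K_*(\A)$ of the appropriate parity, the two numerical pairings are finite sums (over $m$ and multi-indices $k$) of residues at $z=0$ of zeta functions of the form
\begin{equation*}
z\mapsto \operatorname{Tr}\bigl(\gamma\,c(f_0)\,B_{k,m}\,(1+\D_S^2)^{-|k|-m/2-z-(1-n)/2}\bigr),
\end{equation*}
respectively with $(\operatorname{Tr},\D_S,c,\gamma)$ replaced by $(\tau_G,\tD_S,\tilde c,\tilde\gamma)$, where $B_{k,m}=c(df_1)^{(k_1)}\cdots c(df_m)^{(k_m)}$ and $f_0,\dots,f_m\in \A=W^{\infty,1}(M)$.

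The key observation, and the main computation, is that for each such term the difference of the two zeta functions extends to an \emph{entire} function of $z$, and hence the residues at the critical point coincide. To see this, first verify that $B_{k,m}$ is a differential operator of order $|k|$ whose coefficients have uniform $C^\infty$-bounded (indeed $W^{\infty,1}$) coefficients; this is exactly the fact used inside the proof of Proposition \ref{main-stuff} to establish smooth summability, and it carries over verbatim to its $G$-equivariant lift $\tilde B_{k,m}$. Proposition \ref{error}, applied with $f=f_0$ and $P=B_{k,m}$, then asserts precisely that
\begin{equation*}
\tau_G\bigl(\tilde\gamma\,\tilde c(f_0)\,\tilde B_{k,m}\,(1+\tD_S^2)^{-w/2}\bigr)-\operatorname{Tr}\bigl(\gamma\,c(f_0)\,B_{k,m}\,(1+\D_S^2)^{-w/2}\bigr),
\end{equation*}
analytically continues to an entire function of $w$. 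Taking $w=2|k|+m+2z+(1-n)$ transfers this to our zeta functions, and their residues at $z=0$ therefore agree. Summing over $m$ and $k$ with the combinatorial coefficients of Definition \ref{def:res-cocycle} gives equality of the two residue cocycles as reduced $(b,B)$-cochains on $\A^\sim\otimes\A^{\otimes m}$.

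For the pairing itself one inflates to $M_N(\A)$ and applies the doubling construction of Definition \ref{def:double}; the passage to matrix coefficients is harmless because Proposition \ref{error} extends diagonally, and the contribution of ${\bf 1}_e$ (or ${\bf 1}_u$) to the degree-zero term drops out from both sides since the differences ${\rm Ch}^m(\hat e)-{\rm Ch}^m(\widehat{{\bf 1}_e})$ are of the same form in either picture. Consequently the two residue cocycles pair identically with ${\rm Ch}(x)$, proving equality of the numerical indices. The main obstacle is precisely the heat-kernel comparison underlying Proposition \ref{error}: the exponential decay $e^{-c/t}$ of the off-diagonal contribution on the cover must be strong enough to absorb the polynomial growth coming from the differential operators $B_{k,m}$, and this is guaranteed by the $G$-separation hypothesis $\kappa>0$ together with the bounded-geometry volume estimate $N_{\tx}(\rho)\leq C'e^{(n-1)K\rho}$. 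Finally, since the pairing on the base lives in $K_0$ of the compact operators on $\H$ and hence is integer-valued, so too is the semifinite $L^2$-pairing on the cover.
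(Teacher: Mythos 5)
Your proposal is correct and follows essentially the same route as the paper: both spectral triples have isolated spectral dimension, so Theorem \ref{localindex}(3) reduces each pairing to its residue cocycle, and Proposition \ref{error} (applied with $P$ the differential operators $c(df_1)^{(k_1)}\cdots c(df_m)^{(k_m)}$, which have uniform $C^\infty$-bounded coefficients) shows the corresponding zeta functions differ by entire functions, hence have equal residues. The paper states this in two sentences; you have merely filled in the details it leaves implicit.
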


\begin{proof}
Since both spectral triples  $(\A,\HH,\D)$ and $(\A,\tH,\tD)$  have isolated spectral dimension,
one can use the last version of Theorem \ref{localindex} to compute the index pairing, i.e.
we can use the residue cocycle. Then the result follows from Proposition \ref{error}.
\end{proof}
\index{index theorem!$L^2$-index theorem}




\section{Noncommutative examples}
In this section, we apply our results to purely noncommutative examples. 
The first source of examples
comes from torus actions on $C^*$-algebras and the construction follows \cite{PRe} and \cite{PReS}
where
explicit special cases for graph and $k$-graph algebras were studied. The second describes the
Moyal plane and uses the results of \cite{GGISV}. 

\subsection{Torus actions on $C^*$-algebras}
\newcommand{\ZZZ}{\mathbb{Z}}
\index{torus actions}

We are interested here in spectral triples arising from an action 
of a compact abelian Lie group 
$\T^p=(\R/2\pi\R)^p$ on 
a separable 
$C^*$-algebra $A$, which we denote by $\sigma_\cdot:\T^p\to {\rm Aut}(A)$. 
We suppose that $A$ possesses a $\T^p$-invariant norm lower-semicontinuous
faithful semifinite trace, $\tau$. Recall that $\tau$ is norm lower-semicontinuous if whenever
we have a norm convergent sequence of positive elements, $A\ni a_j\to a\in A$, then
$\tau(a)\leq \lim\inf\tau(a_j)$, and the tracial property says that $\tau(a^*a)=\tau(aa^*)$ for all $a\in A$.

We  show  that with this data
we obtain a smoothly summable  spectral triple,
even if we dispense with the assumption that the algebra has local units employed in
\cite{PRe,PReS,Wa}. 

We begin by setting $\HH_1=L^2(A,\tau)$, the GNS space for $A$ constructed using $\tau$. 
The action of  $\T^p$ on   our algebra $A$ gives  a $\ZZZ^p$-grading on $A$ by the spectral
subspaces 
$$
A=\overline{\bigoplus_{m\in\ZZZ^p}A_m},\quad A_m=\{a\in A:\,\sigma_z(a)=z^ma=z_1^{m_1}\cdots z_p^{m_p}a\}.
$$
So for all $a\in A$ we can write $a$ as a sum of elements $a_m$ homogenous for 
the action of $\T^p$
$$
a=\sum_{m\in\ZZZ^p}a_m, \qquad t\cdot a_m=e^{i\langle m,t\rangle}a_m,\quad t=(t_1,\dots,t_p)\in\T^p.
$$

The invariance of the trace $\tau$ implies that 
the $\T^p$ action extends to a unitary action $U$ on  $\HH_1$ which implements the action on $A$.
As a consequence 
there exist pairwise orthogonal projections $\Phi_m\in \B(\HH_1)$, $m\in\ZZZ^p$, 
such that $\sum_{m\in\ZZZ^p}\Phi_m={\rm Id}_{\HH_1}$ (strongly) 
and $a_m\Phi_k=\Phi_{m+k}a_m$ for a 
homogenous algebra 
element $a_m\in A_m$. 
Moreover, we say that $A$ has full spectral subspaces if for all $m\in \mathbb{Z}^p$ 
we have $\overline{A_mA_m^*}=A_0$.  Observe that $A_0$ coincides with  $A^{\T^p}$, 
 the fixed point algebra of $A$ for the action of $\T^p$.

Let
$\HH:=\HH_1\otimes_\C\H_{f}$, where $\H_{f}:=\C^{2^{\lfloor p/2\rfloor}}$.
We define our operator $\D$ as the operator affiliated to $\B(\H)$, given by the `push-forward' of the flat 
Dirac operator on $\T^p$ to 
the Hilbert space $\HH$. More precisely we first define the domain ${\rm dom}(\D)$ by
$$
{\rm dom}(\D):=\H_1^\infty\otimes\H_f,\quad \H_1^\infty
:=\big\{\psi\in \H_1\,:\,[t\mapsto t\cdot\psi]\in C^\infty(\T^p,\H_1)\big\}.
$$
Then we define $\D$ on ${\rm dom}(\D)$ by
$$
\D=\sum_{n\in\ZZZ^p}\Phi_n\otimes \gamma(in),
$$
where
$\gamma(in)=i\sum_{j=1}^p\gamma_jn_j$, $n=(n_1,\dots,n_p)$, and the $\gamma_j$ are Clifford matrices 
acting on $\H_f$ with
$$
\gamma_j\gamma_l+\gamma_l\gamma_j=-2\,\delta_{jl}\,{\rm Id}_{\H_f}.
$$ 

In future we will abuse notation by
letting $\Phi_n$ denote the projections acting on $\H_1$, on $A$, 
and also the projections $\Phi_n\otimes {\rm Id}_{\H_{f}}$ acting on $\H$. Similarly we will 
speak of $A$ and $A_0$ acting on $\H$, by tensoring the GNS representation on $\H_1$
by ${\rm Id}_{\H_f}$. To simplify the notations, we just identify $A$ with its 
image in the GNS representation.

We let $\cn\subset \B(\HH)$ be the commutant of the {\em right} 
multiplication action of the fixed point algebra $A_0$ on
$\HH$. Then it can be checked that the left multiplication representation of $A$ is in $\cn$ and
$\D$ is affiliated to $\cn$.   

To obtain a faithful normal semifinite trace, which we call ${\rm Tr}_\tau$, 
on $\cn$, we have two possible routes, which both lead
to the same trace, and which yield different and complementary information about the trace.

The first approach is to 
let ${\rm Tr}_\tau$ be the dual trace on $\cn=(A_0)'$.
The dual trace is defined using spatial derivatives, and is a faithful normal semifinite
trace on $\cn$. A detailed discussion of this construction, and its equivalence with our next
construction, is to be found in \cite[pp 471-478]{LN}. The discussion referred to in \cite{LN} is
in the context of KMS weights, but by specialising to the case of invariant traces, the particular case
of $\beta$-KMS weights with $\beta=0$, 
we obtain the description we want. (Alternatively, the reader may examine 
\cite[Theorem 1.1]{LN} for a trace specific description of our next construction).

In fact, the article \cite{LN} is, in part, concerned with inducing traces from the coefficient
algebra of a $C^*$-module to traces on the algebra of compact endomorphisms on that module.
To make contact with \cite{LN}, 
we make $A\otimes\H_f$ a right inner product module over $A_0$
via the inner product 
$$
(a\otimes\xi|b\otimes\eta):=\Phi_0(a^*b)\langle\xi,\eta\rangle_{\H_f},\quad a,\,b\in A,\ \ \xi,\,\eta\in \H_f.
$$
Calling the 
completed right $A_0$-$C^*$-module $X$, it can be shown, see \cite{LN}, that ${\rm End}_{A_0}(X)$ acts
on $\H$ and that $\cn={\rm End}_{A_0}(X)''$. We introduce this additional structure because
we can compute ${\rm Tr}_\tau$ on all rank one endomorphisms on $X$. Given $x,\,y,\,z\in X$, 
the rank one endomorphism $\Theta_{x,y}$ acts on 
$z$ by $\Theta_{x,y}z:=x(y|z)$. 

Then by \cite[Lemma 3.1 \& Theorem 3.2]{LN} specialised to invariant traces, 
see also \cite[Theorem 1.1]{LN}, we have
\begin{equation}
\label{eq:tr-tau}
{\rm Tr}_\tau(\Theta_{x,y})=\tau((y|x)):=\sum_{i=1}^{2^{\lfloor p/2\rfloor }}\tau((y_i|x_i)),
\end{equation}
where $x=\sum_i x_i\otimes e_i$, the $e_i$ are the standard basis vectors of $\H_f$,
and similarly $y=\sum_i y_i\otimes e_i$. 
Moreover, ${\rm Tr}_\tau$ 
restricted to the compact endomorphisms
of $X$ is an ${\rm Ad}\,U(\T^P)$-invariant norm lower-semicontinuous trace, \cite[Theorem 3.2]{LN},
where $U$ is the action of $\T^p$ on $\H$.

\begin{lemma}
\label{mon-beau}
Let $0\leq a\in {\rm dom}\,\tau\subset A\subset\cn$. 
Then for $m\in\ZZZ^p$ we have 
\begin{equation}
0\leq {\rm Tr}_\tau\big(a\,\Phi_m\big)\leq 2^{\lfloor p/2\rfloor}\,\tau(a).
\label{eq:spec-subs}
\end{equation}
Moreover, we have equality in Equation \eqref{eq:spec-subs} if $A$ has full spectral subspaces
and
$$
{\rm Tr}_\tau\big(a\,\Phi_0\big)= 2^{\lfloor p/2\rfloor}\,\tau(a),
$$
in all cases.
\end{lemma}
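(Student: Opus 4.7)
The strategy is to exploit the spectral decomposition of $a$ under the $\T^p$-action to reduce to the case $a\in A_0$, and then to realise the resulting operator $a\Phi_m$ explicitly as a sum of rank-one endomorphisms of the $C^*$-module $X$, for which the trace formula \eqref{eq:tr-tau} applies directly.

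First I would reduce to $a\in A_0$. Writing $a=\sum_{k\in\mathbb{Z}^p}a_k$ with $a_k\in A_k$, the operator $a_k\Phi_m$ has range contained in the $(k+m)$-spectral subspace $\Phi_{k+m}\H$; consequently $\Phi_m a_k\Phi_m$ vanishes unless $k=0$, while $\Phi_m^\perp a\Phi_m$ has trace zero by the tracial property (as $\Phi_m^\perp\Phi_m=0$). Using ${\rm Tr}_\tau(a\Phi_m)={\rm Tr}_\tau(\Phi_m a\Phi_m)$, these observations yield
\begin{equation*}
{\rm Tr}_\tau(a\Phi_m)={\rm Tr}_\tau(a_0\Phi_m)={\rm Tr}_\tau(\Phi_0(a)\Phi_m),
\end{equation*}
where $\Phi_0:A\to A_0$ is the $\T^p$-averaging conditional expectation, and $\Phi_0(a)\geq 0$ with $\tau(\Phi_0(a))=\tau(a)<\infty$ by positivity of $\Phi_0$ and $\T^p$-invariance of $\tau$. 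This reduces the lemma to the case $a\in A_0$ with $a\geq 0$ and $\tau(a)<\infty$.

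For such $a$, the element $a^{1/2}$ lies in $A_0$, and for an orthonormal basis $\{e_i\}_{i=1}^{2^{\lfloor p/2\rfloor}}$ of $\H_f$, the elements $x_i:=a^{1/2}\otimes e_i\in A\otimes\H_f\subset X$ satisfy $(x_i|x_j)=\Phi_0(a)\delta_{ij}=a\,\delta_{ij}$. A direct calculation using that $\Phi_0$ is an $A_0$-bimodule map shows that $\sum_i\Theta_{x_i,x_i}$ induces on $\H$ the operator $a\Phi_0$; indeed, the action on a generic $b\otimes f\in A\otimes\H_f$ produces $a^{1/2}\Phi_0(a^{1/2}b)\otimes f=a\,\Phi_0(b)\otimes f$, which coincides with $(a\Phi_0)(b\otimes f)$. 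Applying formula \eqref{eq:tr-tau} then yields
\begin{equation*}
{\rm Tr}_\tau(a\Phi_0)=\sum_{i=1}^{2^{\lfloor p/2\rfloor}}\tau\bigl((x_i|x_i)\bigr)=2^{\lfloor p/2\rfloor}\tau(a),
\end{equation*}
proving the last displayed equality of the lemma in all cases. The same computation, with $b\in A_m$ in place of $a^{1/2}$, shows that $\sum_i\Theta_{b\otimes e_i,b\otimes e_i}$ induces $bb^*\Phi_m$ on $\H$ with trace $2^{\lfloor p/2\rfloor}\tau(b^*b)=2^{\lfloor p/2\rfloor}\tau(bb^*)$.

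When $A$ has full spectral subspaces, $\overline{A_mA_m^*}=A_0$, and a Cohen--Hewitt-type factorization combined with an approximate-unit argument for the hereditary subalgebra $A_mA_m^*$ allows one to write the given $a\in A_0^+$ as $a=\sum_\alpha b_\alpha b_\alpha^*$ in the strong operator topology of $\cn$, with $b_\alpha\in A_m$ and $\sum_\alpha\tau(b_\alpha^*b_\alpha)=\tau(a)$; normality of ${\rm Tr}_\tau$ then gives the equality ${\rm Tr}_\tau(a\Phi_m)=2^{\lfloor p/2\rfloor}\tau(a)$ in this case. In the general setting, the same argument applied to $p_m a p_m$, where $p_m\in A_0''$ is the central support projection of the closed ideal $\overline{A_mA_m^*}\subset A_0$, together with $\Phi_m=p_m\Phi_m$ and $\tau(p_m a p_m)\leq\tau(a)$, produces ${\rm Tr}_\tau(a\Phi_m)=2^{\lfloor p/2\rfloor}\tau(p_m a p_m)\leq 2^{\lfloor p/2\rfloor}\tau(a)$, which is the upper bound of the lemma. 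The main obstacle is the rigorous execution of the factorization $a=\sum_\alpha b_\alpha b_\alpha^*$ with summability $\sum_\alpha\tau(b_\alpha^*b_\alpha)=\tau(a)$ in the semifinite, possibly non-unital setting; this combines a Kasparov-stabilization-type argument for $A_m$ viewed as a right Hilbert $A_0$-module under $(b|c):=b^*c$ with a truncation to remain within ${\rm dom}(\tau)$ while controlling the passage to strong-operator limits.
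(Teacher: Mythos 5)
Your reduction to $a\in A_0$ via the averaging $\Phi_0(a)$, and your identification of $a\Phi_0$ (resp. $bb^*\Phi_m$ for $b\in A_m$) with $\sum_i\Theta_{a^{1/2}\otimes e_i,\,a^{1/2}\otimes e_i}$ (resp. $\sum_i\Theta_{b\otimes e_i,\,b\otimes e_i}$) followed by the trace formula \eqref{eq:tr-tau}, is exactly the mechanism the paper uses; your treatment of the $m=0$ equality is correct and, if anything, slightly more direct. The gap is in the last paragraph, and you have located it yourself: both the general upper bound and the full-spectral-subspace equality are made to rest on a Cohen--Hewitt-type factorization $a=\sum_\alpha b_\alpha b_\alpha^*$ with $b_\alpha\in A_m$ and $\sum_\alpha\tau(b_\alpha^*b_\alpha)=\tau(a)$, which you do not establish. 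This is a genuine obstruction: producing such a decomposition inside ${\rm dom}\,\tau$ with exact trace control in the non-unital, semifinite setting is delicate, and your fallback for the general inequality additionally requires extending $\tau$ to elements like $p_map_m$ lying only in the von Neumann closure $A_0''$, which is not justified by the hypotheses (the trace is only given on $A$, norm lower-semicontinuous).

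The paper avoids the factorization entirely. The rank-one computation gives the \emph{exact} identity ${\rm Tr}_\tau(c\Phi_m)=2^{\lfloor p/2\rfloor}\tau(c)$ for $c$ in the algebraic span $A_mA_m^*$ (by polarization from $c=bb^*$). One then picks a positive approximate unit $\{\psi_n\}\subset A_mA_m^*$ for the $\sigma$-unital ideal $\overline{A_mA_m^*}\subset A_0$; since $\psi_na\psi_n\in A_mA_m^*$ and $\psi_na\psi_n$ converges strictly to the action of $a$ on $X_m$, the \emph{strict lower semicontinuity} of ${\rm Tr}_\tau$ (from \cite[Theorem 3.2]{LN}) yields
$$
{\rm Tr}_\tau(a\Phi_m)\;\le\;\liminf_n{\rm Tr}_\tau(\psi_na\psi_n\Phi_m)\;=\;\liminf_n2^{\lfloor p/2\rfloor}\tau\big(a^{1/2}\psi_n^2a^{1/2}\big)\;\le\;2^{\lfloor p/2\rfloor}\tau(a),
$$
using only $\psi_n^2\le1$. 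This gives the inequality with no decomposition of $a$, and when $A$ has full spectral subspaces the approximate unit is one for all of $A_0$, whence the equality. You should replace your factorization step by this lower-semicontinuity argument (or supply the factorization in full, which is considerably harder than the lemma warrants).
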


\begin{proof}
We prove the statement for $a\in A_0$, and then proceed to general elements of $A$.

We begin with the case of full spectral subspaces.
Consider first $a=bb^*$ for $b\in A_k\cap {\rm dom}^{1/2}\,\tau$ homogenous of degree $k$,
so that $a\in A_0\cap {\rm dom}\,\tau$ (since $\tau$ is a trace). 
Then a short calculation shows that
$\Phi_k a\Phi_k=a\Phi_k=\sum_{i=1}^{2^{\lfloor p/2\rfloor}}\Theta_{b\otimes e_i,b\otimes e_i}$ 
where the $e_i$ are the standard basis vectors in $\H_{f}$.
Hence
$$
{\rm Tr}_\tau(a\Phi_k)=\sum_{i=1}^{2^{\lfloor p/2\rfloor}}\tau(b^*b)
=\sum_{i=1}^{2^{\lfloor p/2\rfloor}}\tau(bb^*)=2^{\lfloor p/2\rfloor}\tau(a).
$$
Therefore ${\rm Tr}_\tau(a\Phi_k)=2^{\lfloor p/2\rfloor}\tau(a)$ if $a$ is a finite sum
of elements of the form $bb^*$, $b\in A_k$. 
Thus if
$\overline{A_kA_k^*}=A_0$ for all $k\in\ZZZ^p$ we get equality for all 
${\rm dom}\,\tau\cap  A_0^+\ni a$ and $k\in \ZZZ^p$.
In particular, we always have ${\rm Tr}_\tau(a\Phi_0)=2^{\lfloor p/2\rfloor}\tau(a)$.

In the more general situation consider the closed ideal
$\overline{A_kA_k^*}$ in $A_0$, which is $\sigma$-unital
by  the separability of $A$, and of $\overline{A_kA_k^*}$
. Choose a  positive approximate unit
$\{\psi_n\}_{n\geq 1}\subset A_kA_k^*$ for $\overline{A_kA_k^*}$. 
Since $A_kA_k^*A_k$ is dense in
$A_k$, we have $\psi_n x\to x$ for any $x\in X_k=A_k\otimes\H_f$. Hence
$\psi_n a\psi_n\in A_kA_k^*$ converges strongly to the action of~$a$ on
$X_k$ for any $a\in A_0$. Since ${\rm Tr}_\tau$ is strictly lower
semicontinuous, \cite[Theorem 3.2]{LN}, for $A_0\cap {\rm dom}\,\tau \ni a\ge0$ we therefore get
\begin{align*}
{\rm Tr}_\tau(a\Phi_k)\le\liminf_n {\rm Tr}_\phi(\psi_n a\psi_n\Phi_k)
&=\liminf_n 2^{\lfloor p/2\rfloor}\tau(\psi_n a\psi_n)\\&=
\liminf_n 2^{\lfloor p/2\rfloor}\tau(a^{1/2}\psi_n^2 a^{1/2})
\le 2^{\lfloor p/2\rfloor}\tau(a).
\end{align*}
This proves the Lemma for $a\in A_0\cap {\rm dom}\,\tau$.

Now for general $0\leq a\in {\rm dom}\,\tau$, we may use the ${\rm Ad}U$-invariance of ${\rm Tr}_\tau$
to see that
$$
{\rm Tr}_\tau(a\Phi_k)={\rm Tr}_\tau(\Phi_0(a)\Phi_k)\leq 2^{\lfloor p/2\rfloor}\tau(\Phi_0(a)),
$$
with equality for $k=0$ or for all $k\in\ZZZ^p$ 
if $A$ has full spectral subspaces. Thus if we write $a=\sum_{m\in\ZZZ^p}a_m$
as a sum of homogenous components, 
$$
{\rm Tr}_\tau(a\Phi_k)={\rm Tr}_\tau(a_0\Phi_k)\leq 2^{\lfloor p/2\rfloor}\tau(a_0)=2^{\lfloor p/2\rfloor}\tau(a),
$$
with equality if $k=0$ or for all $k\in\ZZZ^p$  if $A$ has full spectral subspaces.
\end{proof}

\begin{corollary}
\label{la-belle}
Let $A,\H,\D,\cn,{\rm Tr}_\tau$ be as above. 
Use $\D$ and ${\rm Tr}_\tau$ to construct the weights $\vf_s$, $s>p$, on $\cn$ via  Definition
\ref{def:d-does-int}.
Consider the restrictions $\psi_s$ of the weights $\vf_s$
 to the domain of $\tau$ in $A$. 
Then
$$
\psi_s(a)\leq 2^{\lfloor p/2\rfloor}\Big(\sum_{m\in\ZZZ^p}(1+|m|^2)^{-s/2}\Big)\,\tau(a)\,,
\quad a\in A_+\cap {\rm dom}\,\tau\,,\;s>p,
$$
with equality if $A$ has full spectral subspaces.
\end{corollary}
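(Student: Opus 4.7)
The plan is to compute $(1+\D^2)^{-s/2}$ explicitly in terms of the spectral projections $\Phi_n$, reduce $\vf_s(a)$ to a sum over $n\in\ZZZ^p$ of terms of the form ${\rm Tr}_\tau(a\Phi_n)$, and then apply Lemma \ref{mon-beau} termwise.

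First I would verify that $\D^2 = \sum_{n\in\ZZZ^p}|n|^2\,\Phi_n\otimes{\rm Id}_{\H_f}$. Indeed, using the Clifford relations $\gamma_j\gamma_k+\gamma_k\gamma_j=-2\delta_{jk}\,{\rm Id}_{\H_f}$, one computes
$$
\gamma(in)^2=-\Bigl(\sum_j\gamma_j n_j\Bigr)^2=-\tfrac12\sum_{j,k}(\gamma_j\gamma_k+\gamma_k\gamma_j)n_jn_k=|n|^2\,{\rm Id}_{\H_f},
$$
so the spectral theorem yields $(1+\D^2)^{-s/2}=\sum_n(1+|n|^2)^{-s/2}\,\Phi_n$, where the sum converges strongly, and each $\Phi_n$ is a bounded positive element of $\cn$.

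Next, for $0\le a\in{\rm dom}\,\tau$, I would rewrite
$$
\vf_s(a)={\rm Tr}_\tau\bigl((1+\D^2)^{-s/4}\,a\,(1+\D^2)^{-s/4}\bigr)={\rm Tr}_\tau\bigl(a^{1/2}(1+\D^2)^{-s/2}a^{1/2}\bigr)={\rm Tr}_\tau\bigl((1+\D^2)^{-s/2}a\bigr),
$$
where the middle equality uses Proposition \ref{prop:bikky} and the last uses the trace property. Since $\sum_{|n|\le N}(1+|n|^2)^{-s/2}\Phi_n\nearrow (1+\D^2)^{-s/2}$ strongly with positive increments, the normality of ${\rm Tr}_\tau$ allows me to interchange sum and trace:
$$
\vf_s(a)=\sum_{n\in\ZZZ^p}(1+|n|^2)^{-s/2}\,{\rm Tr}_\tau\bigl(a^{1/2}\Phi_n a^{1/2}\bigr)=\sum_{n\in\ZZZ^p}(1+|n|^2)^{-s/2}\,{\rm Tr}_\tau(a\Phi_n),
$$
the last equality again via the trace property.

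Now Lemma \ref{mon-beau} gives $0\le{\rm Tr}_\tau(a\Phi_n)\le 2^{\lfloor p/2\rfloor}\tau(a)$, with equality for every $n$ in the full spectral subspace case. Summing yields the claimed bound,
$$
\psi_s(a)\le 2^{\lfloor p/2\rfloor}\Bigl(\sum_{n\in\ZZZ^p}(1+|n|^2)^{-s/2}\Bigr)\tau(a),
$$
with equality when $A$ has full spectral subspaces. The series converges precisely for $s>p$ by comparison with $\int_{\R^p}(1+|x|^2)^{-s/2}dx$, which is the reason for the summability hypothesis.

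The only delicate point, and what I expect to be the main obstacle, is the justification of the rearrangement $\vf_s(a)=\sum_n(1+|n|^2)^{-s/2}{\rm Tr}_\tau(a\Phi_n)$. Moving $a^{1/2}$ past $\Phi_n$ requires knowing that $a^{1/2}\Phi_n a^{1/2}$ is in the trace ideal and that each term is finite; this is supplied by Lemma \ref{mon-beau} together with the fact that $\tau(a)<\infty$ bounds every partial sum. Once normality of ${\rm Tr}_\tau$ is invoked on the monotone net of finite partial sums of $(1+\D^2)^{-s/2}$, the interchange is legitimate and everything else is a clean application of Lemma \ref{mon-beau}.
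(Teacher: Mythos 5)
Your proposal is correct and follows essentially the same route as the paper: diagonalise $(1+\D^2)^{-s/2}$ over the spectral projections $\Phi_n$, use traciality (via Proposition \ref{prop:bikky}) to pass to $\sqrt{a}(1+\D^2)^{-s/2}\sqrt{a}$, invoke normality of ${\rm Tr}_\tau$ to interchange the sum and the trace, and apply Lemma \ref{mon-beau} termwise, with equality in the full spectral subspace case. The delicate point you flag — justifying the interchange — is precisely the one the paper handles by the same monotone-convergence/normality argument.
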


\begin{proof}
Note first that
$$
(1+\D^2)^{-s/2}=\sum_{m\in\ZZZ^p}(1+|m|^2)^{-s/2}\Phi_m,
$$
so that for $a\in A_+$ and $s>p$, we have by definition of the weights $\vf_s$ that
$$
\vf_s(a)={\rm Tr}_\tau\big((1+\D^2)^{-s/4}a(1+\D^2)^{-s/4}\big),
$$
which by traciality of ${\rm Tr}_\tau$ implies
\begin{align*}
\vf_s(a)&={\rm Tr}_\tau\big(\sqrt{a}(1+\D^2)^{-s/2}\sqrt{a}\big)
={\rm Tr}_\tau\Big(\sum_{m\in\ZZZ^p}(1+|m|^2)^{-s/2}\sqrt{a}\Phi_m\sqrt{a}\Big).
\end{align*}
The normality of ${\rm Tr}_\tau$ allows us to permute the sum and the trace
\begin{align}
\vf_s(a)&=\sum_{m\in\ZZZ^p}(1+|m|^2)^{-s/2}{\rm Tr}_\tau\big(\sqrt{a}\,\Phi_m\,\sqrt{a}\big)
=\sum_{m\in\ZZZ^p}(1+|m|^2)^{-s/2}{\rm Tr}_\tau\big(\Phi_m\,a\,\Phi_m\big)\nonumber\\
&=\sum_{m\in\ZZZ^p}(1+|m|^2)^{-s/2}{\rm Tr}_\tau\big(\Phi_0(a)\,\Phi_m\big)
\leq 2^{\lfloor p/2\rfloor}\Big(\sum_{m\in\ZZZ^p}(1+|m|^2)^{-s/2}\Big)\,\tau(a),
\label{eq:makes-easy}
\end{align}
the last inequality following from Lemma \ref{mon-beau}, and it is an equality if $A$
has full spectral subspaces.
\end{proof}

Let $\A\subset A$ be the algebra of smooth vectors for the action of $\T^p$
\begin{align*}
\A&:=\big\{a\in A\,:\,[t\mapsto t\cdot a]\in C^\infty(\T^p,A)\big\}\\
&=\Big\{a=\sum_{m\in\ZZZ^p}a_m\in\bigoplus_{m\in\ZZZ^p}A_m\,:
\,\sum_{m\in\ZZZ^p}|m|^k\|a_m\|<\infty\ {\rm for\ all\ }k\in\N_0\Big\}.
\end{align*}
Then, as expected, $\A$ is contained in ${\rm OP}^0$. We let $\delta(T)=[|\D|,T]$ for $T\in\cn$
preserving $\H_\infty$.

\begin{lemma}
\label{BN}
The subalgebra $\A$ of smooth vectors in $A$ for the action of $\T^p$ is contained 
in $\bigcap_k {\rm dom}(\delta^k)$. 
More explicitly,  for $a=\sum_{m\in\ZZZ^p}a_m\in\bigoplus _{m\in\ZZZ^p}A_m$ we have the bound
$$
\|\delta^k(a)\|\leq C_k\sum _{m\in\ZZZ^p} |m|^{2k}\,\|a_m\|.
$$
\end{lemma}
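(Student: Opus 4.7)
\textbf{Proof plan for Lemma \ref{BN}.} The strategy is to exploit the fact that $\D$ is diagonalised by the spectral projections $\Phi_n$ of the $\T^p$-action, so that $\delta$ acts in a very concrete way on homogenous elements. The Clifford relations $\gamma_j\gamma_l + \gamma_l\gamma_j = -2\delta_{jl}\,\mathrm{Id}_{\H_f}$ give $\gamma(in)^2 = |n|^2\,\mathrm{Id}_{\H_f}$, hence
$$\D^2 = \sum_{n\in\ZZZ^p}\Phi_n\otimes|n|^2\,\mathrm{Id}_{\H_f},\qquad |\D| = \sum_{n\in\ZZZ^p} |n|\,\Phi_n\otimes\mathrm{Id}_{\H_f},$$
where the series converge in the strong operator sense on $\H_\infty$. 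In particular $|\D|$ preserves $\H_\infty$.

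The key observation is that for a homogenous element $a_m\in A_m$, the intertwining rule $a_m\Phi_k = \Phi_{m+k}\,a_m$ makes $\delta(a_m)$ a scalar-weighted version of $a_m$ on each spectral subspace. More precisely, computing the action of $[|\D|, a_m]$ on $\Phi_k\H$ from both sides yields
$$\delta(a_m)\Phi_k = \bigl(|m+k| - |k|\bigr)\,a_m\Phi_k,$$
and an immediate induction (using the same intertwining rule to commute successive $|\D|$'s past $a_m$) gives
$$\delta^j(a_m)\Phi_k = \bigl(|m+k|-|k|\bigr)^j\,a_m\Phi_k\qquad\text{for all } j\in\N_0.$$
By the reverse triangle inequality, $\bigl||m+k|-|k|\bigr|\le |m|$ uniformly in $k$, and since $\{\Phi_k\}_{k\in\ZZZ^p}$ is a pairwise orthogonal family summing strongly to the identity, we deduce
$$\|\delta^j(a_m)\| \le |m|^j\,\|a_m\|.$$

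For a general smooth vector $a=\sum_{m\in\ZZZ^p}a_m\in\A$, the defining summability $\sum_m|m|^N\|a_m\|<\infty$ for all $N\in\N_0$ guarantees norm convergence of the series $\sum_m \delta^j(a_m)$ in $\cn$ for every $j$. By continuity of $\delta$ on its domain (it has closed graph as a map $\mathrm{dom}\,\delta\to\cn$) one concludes $\delta^j(a) = \sum_m \delta^j(a_m)$ and
$$\|\delta^k(a)\| \le \sum_{m\in\ZZZ^p}|m|^k\,\|a_m\| \le C_k\sum_{m\in\ZZZ^p}|m|^{2k}\,\|a_m\|,$$
the last estimate being the (deliberately generous) bound claimed in the statement; the factor $|m|^{2k}$ ensures a uniform bound that will match later estimates arising from $\delta'=[(1+\D^2)^{1/2},\cdot]$ and its compatibility with the $\vf_s$.

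The proof is essentially a short diagonal computation; the only mild subtlety is handling the passage from homogenous pieces to the full smooth vector, which is where the strong-operator convergence of $\sum_m\Phi_m = \mathrm{Id}$ and the closedness of $\delta$ must be invoked carefully. No new estimates on the trace $\tau$ or the weights $\vf_s$ are required here — the lemma is purely about the $\T^p$-covariant structure.
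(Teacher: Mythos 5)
Your proof is correct, and it takes a genuinely different route from the paper's. The paper does not work with $\delta=[|\D|,\cdot]$ directly: it first invokes the equivalence of smooth domains $\bigcap_k{\rm dom}\,\delta^k=\bigcap_k{\rm dom}\,R^k$ (quoted after Definition \ref{parup} from \cite{CPRS2}), then writes $\D^2=-(\sum_j\partial_j^2)\otimes{\rm Id}_{\H_f}$ and expands
$$
R^k(a)=\sum_{|\alpha|\leq 2k,\,|\beta|\leq k}C_{\alpha,\beta}\,\partial^\alpha(a)\,\partial^\beta\otimes{\rm Id}_{\H_f}\,(1+\D^2)^{-k/2},
$$
concluding from $\partial^\alpha(a_m)=i^{|\alpha|}m^\alpha a_m$ and the boundedness of $\partial^\beta(1+\D^2)^{-k/2}$; the exponent $2k$ in the stated bound is precisely the $|\alpha|\leq 2k$ appearing there. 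You instead diagonalise $|\D|$ over the spectral projections and prove $\delta^j(a_m)\Phi_k=(|m+k|-|k|)^j a_m\Phi_k$, which with $\bigl||m+k|-|k|\bigr|\leq|m|$ and the orthogonality of the ranges $\Phi_{m+k}\H$ gives the sharper estimate $\|\delta^k(a)\|\leq\sum_m|m|^k\|a_m\|$, of which the stated $|m|^{2k}$ bound is a deliberate weakening. Your identity $\delta(a_m)\Phi_k=(|m+k|-|k|)a_m\Phi_k$ is in fact exactly what the paper uses later (in the proof of the main proposition of that subsection), so your argument is fully consistent with the rest of the text. What each approach buys: yours is more elementary and sharper, avoiding the nontrivial domain-equivalence machinery; the paper's passes through $R^k$ because control of $R^k(a)$ (commutators with $\D^2$) is what is repeatedly needed elsewhere in the pseudodifferential calculus. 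The only point to be slightly careful about in your write-up is the interchange $\delta^j(\sum_m a_m)=\sum_m\delta^j(a_m)$, but your appeal to norm convergence of the series together with the closed-graph property of $\delta$ is the same device the paper itself uses in Proposition \ref{Frechet2}, so this is fine.
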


\begin{proof}
By the discussion following Definition \ref{parup}, the claim is equivalent to $\A\subset \cap_k {\rm dom}(R^k)$,
where $R(T)=[\D^2,T](1+\D^2)^{-1/2}$. Recall that for $a\in \A$ and $k=\in\N$, we have
$$
R^k(a)=[\D^2,\dots[\D^2,a]\dots](1+\D^2)^{-k/2}.
$$
For $j=1,\dots,p$, denote by $\partial_j$  the generators of the $\T^p$-action on 
both $\A$ and $\H_1$. For
$\alpha\in\mathbb N ^p$, let $\partial^\alpha:=\partial_1^{\alpha_1}\dots\partial_p^{\alpha_p}$.
Since $\D^2=-(\sum_{j=1}^p\partial_j^2)\otimes{\rm Id}_{\H_{\bf f}}$, 
an elementary computation shows that
$$
R^k(a)=\sum_{|\alpha|\leq 2k,|\beta|\leq k} C_{\alpha,\beta}\,\partial^\alpha(a)\,\partial^\beta\otimes{\rm Id}_{\H_{\bf f}}\, (1+\D^2)^{-k/2}.
$$
This is enough to conclude since $a\in\A$ implies 
that $\|\partial^\alpha(a)\|<\infty$, and elementary spectral theory
of $p$ pairwise commuting operators shows that for $|\beta|\leq k$,
$\partial^\beta\otimes{\rm Id}_{\H_{\bf f}}\, (1+\D^2)^{-k/2}$ is bounded too. 
The bound then follows  from
$$
\partial^\alpha(a_m)=i^{|\alpha|}m^\alpha\, a_m\,,\quad a_m\in A_m,
$$
which delivers the proof.
\end{proof}

Define the algebras $\B,{\mathcal C}\subset\A\subset A$ by

\begin{align*}
\B&=
\Big\{a=\sum_{m\in\ZZZ^p}a_m\in\A\,:\, \sum_{m\in\ZZZ^p}|m|^k\,\tau(a_m^*a_m)
<\infty\ {\rm for\ all\ }k\in\N_0\Big\},
\\
{\mathcal C}&=
\Big\{a=\sum_{m\in\ZZZ^p}a_m\in\A\,:\,\sum_{m\in\ZZZ^p}|m|^k\tau(|a_m|)<\infty\ {\rm for\ all\ }k\in\N_0\Big\}.
\end{align*}

The following is the main result of this subsection.
\begin{prop} Let $\T^p$  be a torus acting on
a $C^*$-algebra $A$
with a norm lower-semicontinuous
faithful $\T^p$-invariant trace $\tau$.
Then $({\mathcal C},\HH,\D)$ defined as above is a semifinite  
spectral triple relative to $(\cn,{\rm Tr}_\tau)$.
Moreover
$({\mathcal C},\HH,\D)$ is smoothly summable with spectral dimension $p$. The square integrable
and integrable elements of $A$ satisfy
$$
\B_2(\D,p)\bigcap A= ({\rm dom}(\tau))^{1/2},\quad \B_1(\D,p)\bigcap A= {\rm dom}(\tau),
$$
The  space of smooth square integrable and the space  of smooth integrable elements
of $A$  contain $\B$ and $\mathcal C$ respectively. More precisely,
$$
\B^\infty_2(\D,p)\supset\B\cup[\D,\B],
\quad \B_1^\infty(\D,p)\supset  {\mathcal C} \cup[\D, {\mathcal C} ].
$$
Furthermore, if  $0\leq a\in{\rm dom}(\tau)$ and $A$ has full spectral subspaces then
$$
{\rm res}_{z=0}{\rm Tr}_\tau(a(1+\D^2)^{-p/2-z})= 2^{\lfloor p/2\rfloor-1}\,{\rm Vol}(S^{p-1})\,\tau(a).
$$
\end{prop}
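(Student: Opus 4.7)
The plan is to reduce everything to the Epstein-type zeta function $\sum_{m\in\ZZZ^p}(1+|m|^2)^{-p/2-z}$. Writing $s = p+2z$ with $\Re(z)>0$ to ensure absolute convergence, Proposition \ref{prop:bikky} identifies $\mathrm{Tr}_\tau(a(1+\D^2)^{-p/2-z})$ with $\vf_{p+2z}(a)$ for $a\geq 0$, and the computation performed in \eqref{eq:makes-easy} during the proof of Corollary \ref{la-belle} expresses this as
$$\mathrm{Tr}_\tau\bigl(a(1+\D^2)^{-p/2-z}\bigr) = \sum_{m\in\ZZZ^p}(1+|m|^2)^{-p/2-z}\,\mathrm{Tr}_\tau\bigl(\Phi_0(a)\Phi_m\bigr).$$
Under the full spectral subspaces hypothesis Lemma \ref{mon-beau} sharpens \eqref{eq:spec-subs} to an equality: $\mathrm{Tr}_\tau(\Phi_0(a)\Phi_m) = 2^{\lfloor p/2\rfloor}\,\tau(\Phi_0(a))$ for every $m\in\ZZZ^p$. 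Because $\Phi_0$ is conditional expectation by averaging against normalised Haar measure on $\T^p$ and $\tau$ is $\T^p$-invariant, $\tau(\Phi_0(a)) = \tau(a)$, leaving
$$\mathrm{Tr}_\tau\bigl(a(1+\D^2)^{-p/2-z}\bigr) = 2^{\lfloor p/2\rfloor}\,\tau(a)\, Z_p(z),\qquad Z_p(z):= \sum_{m\in\ZZZ^p}(1+|m|^2)^{-p/2-z}.$$

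Next, I would compute $\res_{z=0} Z_p(z)$ by comparison with the Euclidean integral. Polar coordinates and the substitution $u = (1+r^2)^{-1}$ reduce the latter to a Beta function:
$$\int_{\R^p}(1+|x|^2)^{-p/2-z}\,dx = \mathrm{Vol}(S^{p-1})\int_0^\infty r^{p-1}(1+r^2)^{-p/2-z}\,dr = \frac{\mathrm{Vol}(S^{p-1})\,\Gamma(p/2)\,\Gamma(z)}{2\,\Gamma(p/2+z)},$$
exhibiting a simple pole at $z=0$ of residue $\mathrm{Vol}(S^{p-1})/2$. Once $\res_{z=0} Z_p$ is shown to coincide with this, multiplying by $2^{\lfloor p/2\rfloor}\,\tau(a)$ produces the announced value $2^{\lfloor p/2\rfloor-1}\,\mathrm{Vol}(S^{p-1})\,\tau(a)$.

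The only genuine obstacle is the analytic step: justifying that $Z_p$ continues meromorphically to a neighbourhood of $z=0$ with the same residue as the integral above. The cleanest route I would take is Poisson summation applied to $f_z(x) := (1+|x|^2)^{-p/2-z}$; the zero Fourier mode contributes precisely $\widehat{f_z}(0) = \int_{\R^p} f_z\,dx$, while the remaining modes assemble into an absolutely convergent series holomorphic in $z$ near $0$, since $\widehat{f_z}(\xi)$ (expressible through a Macdonald function) decays faster than any polynomial away from the origin, uniformly in $z$ on compacta. This supplies both the meromorphic continuation and the equality of residues, closing the argument.
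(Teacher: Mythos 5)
Your argument addresses only the last displayed formula of the proposition. The proposition also asserts that $(\mathcal C,\HH,\D)$ is a semifinite spectral triple relative to $(\cn,{\rm Tr}_\tau)$, that it is smoothly summable of spectral dimension $p$, that $\B_2(\D,p)\cap A=({\rm dom}\,\tau)^{1/2}$ and $\B_1(\D,p)\cap A={\rm dom}\,\tau$, and that $\B\cup[\D,\B]\subset\B_2^\infty(\D,p)$ and $\mathcal C\cup[\D,\mathcal C]\subset\B_1^\infty(\D,p)$. None of this is touched by your proposal, and it is where the real work lies: one needs the normality and lower semicontinuity of ${\rm Tr}_\tau$ together with Lemma \ref{mon-beau} to get both inclusions in $\B_2(\D,p)\cap A=({\rm dom}\,\tau)^{1/2}$ (the reverse inclusion uses $a\Phi_0 a^*\le a(1+\D^2)^{-s/2}a^*$ and ${\rm Tr}_\tau(\Phi_0 a^*a\Phi_0)=\tau(a^*a)$), then Lemma \ref{lem:b1-pos} and Proposition \ref{cor:polar-decomp} for the $\B_1$ statement; for the smooth containments one computes $\delta^\alpha(a_m)\Phi_k=(|m+k|-|k|)^\alpha a_m\Phi_k$ on homogeneous elements, derives the operator inequality $\delta^\alpha(a_m)^*\delta^\alpha(a_m)\le|m|^{2\alpha}a_m^*a_m$, and for $\mathcal C$ runs the polar decomposition $\delta^\alpha(a_m)=v_{\alpha,m}|\delta^\alpha(a_m)|^{1/2}\cdot|\delta^\alpha(a_m)|^{1/2}$ through the norms $\Q_n$, exploiting that $v_{\alpha,m}$ shifts the spectral projections by $m$. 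Without these steps the proposition is not proved, so as it stands your proposal has a substantial gap.

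For the part you do treat, the reduction ${\rm Tr}_\tau(a(1+\D^2)^{-p/2-z})=2^{\lfloor p/2\rfloor}\tau(a)\sum_{m\in\ZZZ^p}(1+|m|^2)^{-p/2-z}$ via Lemma \ref{mon-beau}, the full-spectral-subspace equality, and $\tau(\Phi_0(a))=\tau(a)$ is exactly the paper's reduction, and your Beta-function evaluation of the comparison integral gives the right residue ${\rm Vol}(S^{p-1})/2$, hence the stated constant. Where you differ is in justifying the meromorphic continuation and the residue of the lattice sum: the paper simply cites the Dixmier-trace computation for the flat torus in \cite{GVF} together with the residue/Dixmier-trace correspondence of \cite{CPS2}, whereas you propose a self-contained Poisson summation argument in which the zero Fourier mode carries the pole and the nonzero modes (Macdonald functions) form a series holomorphic near $z=0$. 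That route is correct and arguably cleaner, since it avoids importing the Dixmier trace machinery; it would serve as a legitimate replacement for the paper's final sentence, but not for the rest of the proof.
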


\begin{proof}
We begin by proving that $\B_2(\D,p)\bigcap A\supset ({\rm dom}(\tau))^{1/2}$. 
Lemma \ref{mon-beau}   shows 
that for all $a\in {\rm dom}(\tau)$ with $a\geq 0$ 
and all $m\in\ZZZ^p$ we have
\begin{equation}
{\rm Tr}_\tau(a\Phi_m)\leq 2^{\lfloor p/2\rfloor}\,\tau(a)\,,
\label{eq:spec-subs-again}
\end{equation}
and equality holds when we have full spectral subspaces or $m=0$. 

Thus for $a\in({\rm dom}(\tau))^{1/2}$ and  $\Re(s)>p$
we see that, using the normality of ${\rm Tr}_\tau$ and the same arguments as in 
Equation \eqref{eq:makes-easy}, 
\begin{align*}
{\rm Tr}_\tau((1+\D^2)^{-s/4}a^*a(1+\D^2)^{-s/4})
&=\sum_{n\in\ZZZ^p}(1+|n|^2)^{-s/2}\,{\rm Tr}_\tau(a^*a\Phi_n)\\
&\leq \tau(a^*a)\,2^{\lfloor p/2\rfloor}\,\sum_{n\in\ZZZ^p}(1+|n|^2)^{-s/2}<\infty.
\end{align*}
Hence
$({\rm dom}(\tau))^{1/2}\subset \B_2(\D,p)$. Conversely, if $a\in A$ lies in $\B_2(\D,p)$ we have 
$a(1+\D^2)^{-s/4}\in\L^2(\cn,{\rm Tr}_\tau)$ for all $s$ with $\Re(s)>p$. Then
$$
a\Phi_0a^*\leq a(1+\D^2)^{-s/2}a^*\in\L^1(\cn,{\rm Tr}_\tau),\quad \Re(s)>p,
$$
and so $a\Phi_0a^*\in \L^1(\cn,{\rm Tr}_\tau)$. Then 
$$
\infty>{\rm Tr}_\tau(a\Phi_0a^*)={\rm Tr}_\tau(\Phi_0a^*a\Phi_0)=\tau(a^*a).
$$
Thus $a^*a\in{\rm dom}(\tau)$, and so $a\in{\rm dom}(\tau)^{1/2}$. Since $\B_2(\D,p)$
is a $*$-algebra, $a^*(1+\D^2)^{-s/4}\in\L^2(\cn,{\rm Tr}_\tau)$ also, and so $a^*\in {\rm dom}(\tau)^{1/2}$
as expected.

Now for $0\leq a\in A$, Lemma \ref{lem:b1-pos} tells us that
$a\in\B_1(\D,p)$ if and only if $a^{1/2}\in \B_2(\D,p)$. So $a\in{\rm dom}(\tau)_+$ if and only if 
$a^{1/2}\in ({\rm dom}(\tau))^{1/2}_+= (\B_2(\D,p)\cap A)_+$, proving that 
${\rm dom}(\tau)_+= \B_1(\D,p)_+\bigcap A_+$. 

Since $\B_1(\D,p)$ 
is the span of its positive cone by Proposition \ref{cor:polar-decomp}, we have 
$$
\B_1(\D,p)\bigcap A={\rm span}(\B_1(\D,p)_+\bigcap A_+)={\rm span}({\rm dom}(\tau)_+)={\rm dom}(\tau).
$$

Now we turn to the smooth subalgebras. The definitions show that for $k\in\ZZZ^p$, 
and a  homogeneous element $a_m\in A_m$, we have
$$
\delta(a_m)\Phi_k=(|m+k|-|k|)a_m\Phi_k.
$$
Since $\delta(a_m)$ is also homogenous of degree $m$, which follows since $|\D|$ is invariant, 
we find that  for all $\alpha\in\N_0$ 
\begin{align*}
\delta^\alpha(a_m)\Phi_k
&=(|m+k|-|k|)^{\alpha}a_m\Phi_k.
\end{align*}
Hence  for $a=\sum_m a_m\in\B$ and $s>p$ we have
\begin{align}
\label{eq:tee-pee-invariant}
&{\rm Tr}_\tau\left((1+\D^2)^{-s/4}|\delta^\alpha(a)|^2(1+\D^2)^{-s/4}\right)
=
\sum_{m,n,k\in\ZZZ^p}(1+|k|^2)^{-s/2}
{\rm Tr}_\tau\left(\Phi_k\delta^\alpha(a_m)^*\delta^\alpha(a_n)\Phi_k\right)\\
&\hspace{2.9cm}=
\sum_{m,n,k\in\ZZZ^p}(|m+k|-|k|)^{\alpha}(|n+k|-|k|)^{\alpha}(1+|k|^2)^{-s/2}
{\rm Tr}_\tau\big(\Phi_ka_m^*a_n\Phi_k\big).\nonumber
\end{align}
Now, using $a_m\Phi_k=\Phi_{m+k}a_m$ for $a_m\in A_m$ we have
$$
\Phi_ka_m^*a_n\Phi_k=a_m^*a_n\Phi_{k-n+m}\Phi_k=\delta_{n,m}a_m^*a_n\Phi_k.
$$
Inserting this equality into the last line of Equation \eqref{eq:tee-pee-invariant} yields

\begin{align*}
&\sum_{m,k\in\ZZZ^p}\big||m+k|-|k|\big|^{2\alpha}(1+|k|^2)^{-s/2}
{\rm Tr}_\tau\left(a_m^*a_m\Phi_k\right)
\\
&\leq
\sum_{k\in\ZZZ^p}(1+|k|^2)^{-s/2}\sum_{m\in\ZZZ^p}|m|^{2\alpha}
{\rm Tr}_\tau\left(a_m^*a_m\Phi_k\right)
\leq 2^{\lfloor p/2\rfloor}
\sum_{k\in\ZZZ^p}(1+|k|^2)^{-s/2}\sum_{m\in\ZZZ^p}|m|^{2\alpha}\tau\left(a_m^*a_m\right),
\end{align*}
where we used Lemma \ref{mon-beau} in the last step and the latter is finite  by definition of $\B$. 
Since
\begin{align*} \Q_n(\delta^\alpha(a))^2=\Vert\delta^\alpha(a)\Vert^2 
&+{\rm Tr}_\tau\left((1+\D^2)^{-p/4-1/n}|\delta^\alpha(a)|^2(1+\D^2)^{-p/4-1/n}\right)\\
&+{\rm Tr}_\tau\left((1+\D^2)^{-p/4-1/n}|\delta^\alpha(a)^*|^2(1+\D^2)^{-p/4-1/n}\right),
\end{align*}
we deduce that $\B\subset \B_2^\infty(\D,p)$.
Finally, for $m\in\ZZZ^p$ and $a_m\in\B$ homogenous of degree $m$, we have
$$
[\D,a_m]=a_m\, {\rm Id}_{\H_1}\otimes \gamma(im).
$$
Then by the same arguments as above, we deduce that $[\D,a_m]\in \B_2(\D,p)$, 
and thus  $[\D,\B]\subset \B_2(\D,p)$.
By combining the estimates for $[\D,a]$ and $\delta^\alpha(a)$,
we see that $\B\cup[\D,\B]\subset \B_2^\infty(\D,p)$.

Now let $a=\sum_ma_m\in {\mathcal C}$, so that in particular $|a_m|,\,|a_m^*|\in {\rm dom}(\tau)$. Then 
$v_m|a_m|^{1/2},\,|a_m|^{1/2}\in ({\rm dom}(\tau))^{1/2}\subset \B_2(\D,p)$ where $a_m=v_m|a_m|$ is the polar decomposition  in $\cn$. 

To deal with smooth summability, we need 
another operator inequality. For $a_m\in A_m$, $k\in\ZZZ^p$ we have the simple computation
\begin{align*} 
\delta^\alpha(a_m)^*\delta^\alpha(a_m)\Phi_k&=(-1)^\alpha \delta^\alpha(a_m^*)\delta^\alpha(a_m)\Phi_k\\
&=(-1)^\alpha(|k|-|m+k|)^\alpha(|m+k|-|k|)^\alpha a_m^*a_m\Phi_k
=(|m+k|-|k|)^{2\alpha} a_m^*a_m\Phi_k.
\end{align*}
Since $0\leq(|m+k|-|k|)^{2\alpha}\leq |m|^{2\alpha}$ for all $k\in\ZZZ^p$, we deduce that
$$
0\leq\delta^\alpha(a_m)^*\delta^\alpha(a_m)\leq |m|^{2\alpha}a_m^*a_m.
$$

With this inequality in hand, and using $a\in {\mathcal C}$, we 
use the polar decomposition as above to see that for all $\alpha\in\N_0$, the decomposition
$$
\delta^\alpha(a)=\sum_m\delta^\alpha(a_m)
=\sum_mv_{\alpha,m}|\delta^\alpha(a_m)|^{1/2}\,|\delta^\alpha(a_m)|^{1/2}\in \B_1(\D,p),
$$
gives a representation of $\delta^{\alpha}(a_m)$ as an element of $\B_1(\D,p)$. To see this we first
check that $|\delta^\alpha(a_m)|^{1/2}\in \B_2(\D,p)$, which follows from

\begin{align}
&{\rm Tr}_\tau\left((1+\D^2)^{-p/4-1/n}|\delta^\alpha(a_m)|(1+\D^2)^{-p/4-1/n}\right)\nonumber\\
&\qquad=\sum_{k\in\ZZZ^p}(1+k^2)^{-p/2-1/2n}\,{\rm Tr}_\tau(\Phi_k\sqrt{\delta^\alpha(a_m)^*\delta^\alpha(a_m)}\Phi_k)\nonumber\\
&\qquad\leq \sum_{k\in\ZZZ^p}(1+k^2)^{-p/2-1/2n}|m|^\alpha\tau(\sqrt{a_m^*a_m})
=|m|^\alpha\,\tau(|a_m|)\,\sum_{k\in\ZZZ^p}(1+k^2)^{-p/2-1/2n}.
\label{eq:earlier}
\end{align}

Since
$$
\big(v_{\alpha,m}|\delta^\alpha(a_m)|^{1/2}\big)^*v_{\alpha,m}|\delta^\alpha(a_m)|^{1/2}=|\delta^\alpha(a_m)|,
$$
the corresponding term is handled in the same way. Finally we have
\begin{align}
&{\rm Tr}_\tau\left((1+\D^2)^{-p/4-1/n}v_{\alpha,m}|\delta^\alpha(a_m)|v_{\alpha,m}^*(1+\D^2)^{-p/4-1/n}\right)\nonumber\\
&\qquad=\sum_{k\in\ZZZ^p}(1+k^2)^{-p/2-1/2n}\,{\rm Tr}_\tau(\Phi_kv_{\alpha,m}|\delta^\alpha(a_m)|v_{\alpha,m}^*\Phi_k)\nonumber\\
&\qquad= \sum_{k\in\ZZZ^p}(1+k^2)^{-p/2-1/2n}
{\rm Tr}_\tau(|\delta^\alpha(a_m)|^{1/2}v_{\alpha,m}^*\Phi_kv_{\alpha,m}|\delta^\alpha(a_m)|^{1/2})\nonumber\\
&\qquad= \sum_{k\in\ZZZ^p}(1+k^2)^{-p/2-1/2n}
{\rm Tr}_\tau(|\delta^\alpha(a_m)|^{1/2}\Phi_{k-m}v_{\alpha,m}^*v_{\alpha,m}|\delta^\alpha(a_m)|^{1/2})\label{star-one} \\
&\qquad= \sum_{k\in\ZZZ^p}(1+k^2)^{-p/2-1/2n}
{\rm Tr}_\tau(|\delta^\alpha(a_m)|^{1/2}\Phi_{k-m}v_{\alpha,m}^*v_{\alpha,m}\Phi_{k-m}|\delta^\alpha(a_m)|^{1/2})\label{star-two}\\
&\qquad\leq \sum_{k\in\ZZZ^p}(1+k^2)^{-p/2-1/2n}
{\rm Tr}_\tau(|\delta^\alpha(a_m)|^{1/2}\Phi_{k-m}|\delta^\alpha(a_m)|^{1/2})\nonumber\\
&\qquad=\sum_{k\in\ZZZ^p}(1+k^2)^{-p/2-1/2n}\,{\rm Tr}_\tau(\Phi_{k-m}|\delta^\alpha(a_m)|\Phi_{k-m})\nonumber\\
&\qquad\leq\sum_{k\in\ZZZ^p}(1+k^2)^{-p/2-1/2n}|m|^\alpha
\,{\rm Tr}_\tau(\Phi_{k-m}|a_m|\Phi_{k-m})\label{star-three}\\
&\qquad\leq |m|^\alpha\,\tau(|a_m|)\,\sum_{k\in\ZZZ^p}(1+k^2)^{-p/2-1/2n}\nonumber.
\end{align}
In line \eqref{star-one} we again used $v^*_{\alpha,m}\Phi_k=\Phi_{k-m}v_{\alpha,m}^*$, which is true since 
$\delta^\alpha(a_m)$ is homogenous of degree $m$ and $|\delta^\alpha(a_m)|$ is homogenous of
degree zero.
In line \eqref{star-two} we used this again for
both $v_{\alpha,m}$ and $v_{\alpha,m}^*$. In \eqref{star-three} we again used this trick, and the fact 
that $|\delta^\alpha(a_m)|$ is homogenous of degree zero.
The last two inequalities follow just as in Equation \eqref{eq:earlier}.
So 
\begin{align*}
\Q_n(|\delta^\alpha(a_m)|^{1/2})
&\leq |m|^{\alpha/2}(\Vert a_m\Vert +\tau(|a_m|)+\tau(|a_m^*|))^{1/2}\Big(\sum_{k\in\ZZZ^p}
(1+k^2)^{-p/2-1/2n}\Big)^{1/2}\\
&=|m|^{\alpha/2}(\Vert a_m\Vert +2\tau(|a_m|))^{1/2}\Big(\sum_{k\in\ZZZ^p}(1+k^2)^{-p/2-1/2n}
\Big)^{1/2},
\end{align*}
and similarly for $v_{\alpha,m}|\delta^\alpha(a_m)|^{1/2}$. Hence
\begin{align*}
\PP_{n,\beta}(a)
&\leq \sum_{\alpha=0}^\beta\sum_m\Q_n(v_{\alpha,m}|\delta^\alpha(a_m)|^{1/2})\,\Q_n(|\delta^\alpha(a_m)|^{1/2})\\
&\leq \sum_{k\in\ZZZ^p}(1+k^2)^{-p/2-1/2n}\sum_{\alpha=0}^\beta\sum_m |m|^\alpha (\Vert a_m\Vert +2\tau(|a_m|)),
\end{align*}
which is enough to show that $\delta^\alpha(a)\in \B_1(\D,p)$. 
Since similar arguments show that $\delta^\alpha([\D,a])\in \B_1(\D,p)$,
we see that ${\mathcal C}\cup [\D,{\mathcal C}]\subset \B_1^\infty(\D,p)$.

The computation of the zeta function is straightforward, using Lemma \ref{mon-beau}, once one realises
that $\sum_{k\in\ZZZ^p}(1+k^2)^{-p/2-z}$ is just $(2\pi)^p$ times the trace of 
the Laplacian on a flat torus.  This precise value of the residue can be deduced from the Dixmier trace 
calculation for the torus in \cite[Example 7.1, p291]{GVF}, and the 
relationship between residues of zeta functions
and Dixmier traces in \cite[Lemma 5.1]{CPS2}. 
This also 
proves that the spectral dimension is $p$. 
\end{proof}

Semifinite spectral triples for more general compact group actions on $C^*$-algebras have been constructed in
\cite{Wa}. These spectral triples are shown to satisfy some summability conditions, but it
is not immediately clear that they satisfy our definition of smooth summability. We leave this 
investigation to another place.

For torus actions we can give a simple description of the index formula. 
First we observe that elementary Clifford algebra considerations, \cite[Appendix]{BCPRSW} and
\cite{PRe,PReS}, reduce the 
resolvent cocycle to a single term in degree $p$. This means that we automatically obtain the 
analytic continuation of the single zeta function which arises, and so the spectral dimension is
isolated, and there is at worst a simple pole at $r=(1-p)/2$. Hence the residue cocycle is given by
the single functional, defined on $a_0,\dots,a_p\in{\mathcal C}$ by

$$
\phi_p(a_0,\dots,a_p)=\left\{\begin{array}{ll}\sqrt{2i\pi}
\frac{1}{p!}\,{\rm res}_{s=0}{\rm Tr}_\tau\Big(
a_0\,[\D,a_1]\cdots [\D,a_p](1+\D^2)^{-p/2-s}\Big) & p\ \mbox{odd},\\
 & \\
\frac{1}{p!}\,{\rm res}_{s=0}{\rm Tr}_\tau\Big(\gamma
a_0\,[\D,a_1]\cdots [\D,a_p](1+\D^2)^{-p/2-s}\Big) & p\ \mbox{even}.
\end{array}\right.
$$

Applications of this formula to graph and $k$-graph algebras appear in \cite{PRe,PReS}. 
Both these papers show that the index  is sensitive to the group action, by presenting an 
algebra with two different actions of the same group which yield different indices.

\subsection{Moyal plane}
\label{MP}
\index{Moyal plane}
\subsubsection{Definition of the Moyal product}
Recall that the Moyal product of a pair of functions (or distributions) $f,\,g$ on $\real^{2d}$, is given  by
\begin{equation}
f \star_\theta g(x) := (\pi\theta)^{-2d} \iint e^{\frac{2i}{\theta}\omega_0(x-y,x-z)}  f(y) g(z)\,
\,dy \,dz.
\label{mop}
\end{equation}
The parameter $\theta$ lies in $\real\setminus\{0\}$  and plays 
the role of the Planck constant.  The quadratic form $\omega_0$ is the 
canonical symplectic form of $\real^{2d}\simeq T^*\R^d$. 
With basic Fourier analysis one  shows that the Schwartz space, 
$\mathcal S(\R^{2d})$, endowed with this product is a 
(separable) Fr\'echet $*$-algebra  with jointly continuous
product (the involution being given by the complex conjugation). 
For instance, 
when $f,g\in\mathcal S(\R^{2d})$, we have the relations
\begin{equation}
\label{ptr}
\int f \star_\theta g(x)\,dx=\int f(x)\, g(x)\,dx,\quad 
\partial_j(f\star_\theta g)=\partial_j(f)\star_\theta g+f\star_\theta \partial_j(g),\quad 
\overline{f\star_\theta g}=\overline g\star_\theta \overline f.
\end{equation}

This noncommutative product is nothing but  the composition law of symbols, 
in the framework of  the Weyl pseudo-differential calculus on $\real^{d}$. Indeed, let\index{Weyl pseudo-differential calculus}
${\rm Op}_W$ be  the 
Weyl quantization map:
\begin{align*}
&{\rm Op}_W:T\in S'(\R^{2d})\mapsto\\
&\qquad\Big[\vf\in S(\R^{d})\mapsto\big[q_0\in\R^d\mapsto(2\pi)^{-d}\int_{\R^{2d}}T\big((q_0+q)/2,p\big)
\vf(q_0)e^{i(q_0-q)p} \,d^dq\,d^dp\big]\in S'(\R^{d})\Big].
\end{align*}
Again, Fourier analysis shows that ${\rm Op}_W$ restricts to
  a unitary operator from the Hilbert space $L^2(\R^{2d})$ 
(the $L^2$-symbols) to the Hilbert space of Hilbert-Schmidt operators acting on 
$L^2(\R^d)$, with
\begin{equation}
\label{rtv}
\| {\rm Op}_W(f)\|_2=(2\pi)^{-d/2}\|f\|_2\,,
\end{equation}
where the first $2$-norm is the Hilbert-Schmidt norm on $L^2(\R^d)$ while the second is the Lebesgue $2$-norm
on $L^2(\R^{2d})$.
Thus, the algebra $(L^2(\R^{2d}),\star_\theta)$ turns out to be a full Hilbert-algebra. It is then 
natural to use the GNS construction (associated with the operator trace on $L^2(\R^d)$ 
in the operator picture, or with the Lebesgue integral in the symbolic picture) to represent 
this algebra. To keep track of the dependence on the deformation parameter $\theta$, the 
left regular  representation is denoted by $L^\theta$.
With this notation we have (see \cite[Lemma 2.12]{GGISV})
\begin{equation}
\label{EQQQ}
L^\theta(f) g:=f\star_\theta g,\quad \|L^\theta(f)\|\leq  (2\pi\theta)^{-d/2}\|f\|_2,\qquad f,g\in L^2(\R^{2d}).
\end{equation}
Note the singular nature of this estimate in the commutative $\theta\to 0$ limit.
Since the operator norm of a bounded operator on a Hilbert space $\H$ coincides 
(via the left regular representation) with
the operator norm of the same bounded operator acting by left multiplication on 
the Hilbert space $\L^2(\B(\H))$ of Hilbert-Schmidt operators, we have
\begin{equation}
\label{BB}
\|L^\theta(f)\|=(2\pi)^{d/2}\| {\rm Op}_W(f)\|,
\end{equation}
where the first norm is the operator norm on $L^2(\R^{2d})$ and the second is the operator norm
on $L^2(\R^{d})$. In particular, the Weyl quantization gives the identification of von Neumann algebras:
\begin{equation}
\label{identification}
\B\big(L^2(\R^{2d})\big)\supset\big\{ L^\theta(f),\,f\in L^2(\R^{2d})\big\}''\simeq \B\big(L^2(\R^{2})\big).
\end{equation}
The following Hilbert-Schmidt norm equality on $L^2(\R^{2d})$,
is proven in \cite[Lemma 4.3]{GGISV} (this is the analogue of Lemma \ref{HS} in this context):
\begin{align}
\label{HS2}
 \|L^\theta(f)g(\nabla)\|_2=(2\pi)^{-d}\|g\|_2\|f\|_2.
\end{align}
Note the independence of $\theta$ on the right hand side.

\subsubsection{ A smoothly summable spectral triple for Moyal plane}

In this paragraph, we generalize the result of \cite{GGISV}. For simplicity, we 
restrict ourself to the simplest $d=2$ case,  despite the fact that our analysis can 
be carried out in any even dimension. Here we let $\H:=L^2(\R^2)\otimes \C^2$ 
the Hilbert space of square integrable sections of the trivial spinor bundle on $\R^2$. 
In Cartesian coordinates,  the flat Dirac operator reads 
$$
\D:=\begin{pmatrix} 0& i\partial_1-\partial_2\\ i\partial_1+\partial_2&0\end{pmatrix}.
$$
Elements of the algebra $(\mathcal S(\R^2),\star_\theta)$ are represented on 
$\H$ via $L^\theta\otimes {\rm Id}_2$, the diagonal left regular representation. 
In \cite{GGISV}, it is proven that $\big((\mathcal S(\R^2),\star_\theta),\H,\D\big)$ 
is an even $QC^\infty$ finitely summable  spectral triple with spectral 
dimension 2 and  with grading
$$
\gamma=\begin{pmatrix} 1& 0\\ 0&-1\end{pmatrix}.
$$
In particular, 
the Leibniz rule in the first display of Equation  \eqref{ptr} gives
\begin{equation}
\label{bound-com}
[\D,L^{\theta}(f)\otimes {\rm Id}_2]=\begin{pmatrix} 0& iL^\theta(\partial_1f)-L^\theta(\partial_2f)\\ 
iL^\theta(\partial_1f)+L^\theta(\partial_2f)&0\end{pmatrix},
\end{equation}
which together with \eqref{EQQQ}, shows that for $f$ a Schwartz function, the commutator 
$[\D,L^{\theta}(f)\otimes {\rm Id}_2]$ extends to a bounded operator.

Then, from the Hilbert-Schmidt norm computation of Equation \eqref{HS2}, we can determine 
the weights $\vf_s$ of Definition \ref{def:d-does-int}, constructed  with the flat Dirac operator on $\R^2$.

\begin{lemma} 
\label{FFF}
For $s>2$, let $\vf_s$ be the faithful normal semifinite 
weight of Definition \ref{def:d-does-int} determined by $\D$  on the type I von Neumann
algebra  $\B(\H)$ with operator trace.  When  restricted to the 
von Neumann subalgebra of $\B(\H)$ generated by $L^\theta(f)\otimes{\rm Id}_2$, $\vf_s$ is a tracial weight and
for $f\in L^2(\R^2)$ we have
\begin{align*}
\vf_s\big(L^\theta(f)^*L^\theta(f)\otimes{\rm Id}_2\big)
= ({\pi(s-2)})^{-1}\int \bar f(x)\star_\theta f(x) dx= 2({s-2})^{-1}\|{\rm Op}_W(f)\|_2^2.
\end{align*}

\end{lemma}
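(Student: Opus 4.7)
The strategy is to reduce the weight computation to a scalar Hilbert-Schmidt estimate on $L^2(\R^2)$, apply the key identity \eqref{HS2}, and then perform an elementary radial integral. First I would observe that $\D^2=-\Delta\otimes{\rm Id}_2$, where $\Delta=\partial_1^2+\partial_2^2$ is the scalar Laplacian on $\R^2$, so that $(1+\D^2)^{-s/4}=(1-\Delta)^{-s/4}\otimes{\rm Id}_2$. Since this operator has the form $A\otimes{\rm Id}_2$ and our element is also of product form, the operator trace on $\H=L^2(\R^2)\otimes\C^2$ factorises and gives
\[
\vf_s\big(L^\theta(f)^*L^\theta(f)\otimes{\rm Id}_2\big)
=2\,\tau_{L^2(\R^2)}\!\big((1-\Delta)^{-s/4}L^\theta(f)^*L^\theta(f)(1-\Delta)^{-s/4}\big)
=2\,\|L^\theta(f)(1-\Delta)^{-s/4}\|_2^2,
\]
where the last equality uses the isometry $\|B\|_2=\|B^*\|_2$ of the Hilbert-Schmidt norm.

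Next I would apply the identity \eqref{HS2} to $g(\xi)=(1+|\xi|^2)^{-s/4}$, with $d=1$ since the Moyal plane is $\R^{2}$, yielding
\[
\|L^\theta(f)(1-\Delta)^{-s/4}\|_2^2=(2\pi)^{-2}\|g\|_2^2\,\|f\|_2^2.
\]
A polar-coordinate computation then gives $\|g\|_2^2=\int_{\R^2}(1+|\xi|^2)^{-s/2}d\xi=2\pi/(s-2)$ for $s>2$. Combining these produces $\vf_s(L^\theta(f)^*L^\theta(f)\otimes{\rm Id}_2)=\|f\|_2^2/(\pi(s-2))$. The first claimed equality now follows from the Moyal trace identity in \eqref{ptr}, $\int \bar f\star_\theta f\,dx=\int|f|^2dx=\|f\|_2^2$, and the second from the Plancherel-Weyl relation \eqref{rtv}, which reads $\|{\rm Op}_W(f)\|_2^2=(2\pi)^{-1}\|f\|_2^2$.

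For the tracial property, the computation just performed already gives, for any $f\in L^2(\R^2)$ and with $T=L^\theta(f)\otimes{\rm Id}_2$,
\[
\vf_s(T^*T)=\frac{1}{\pi(s-2)}\int \bar f\star_\theta f\,dx,\qquad
\vf_s(TT^*)=\frac{1}{\pi(s-2)}\int f\star_\theta \bar f\,dx,
\]
and these coincide by the cyclicity of integration against the Moyal product, $\int a\star_\theta b\,dx=\int b\star_\theta a\,dx$, itself a consequence of \eqref{ptr}. Combined with the identification \eqref{identification} and the Plancherel-Weyl isometry, this shows that on the $\ast$-subalgebra $\{L^\theta(f)\otimes{\rm Id}_2:f\in L^2(\R^2)\}$ the weight $\vf_s$ coincides with $\frac{2}{s-2}$ times the canonical operator trace transported via ${\rm Op}_W$. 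Both are normal and this $\ast$-subalgebra is strongly dense in the von Neumann subalgebra it generates, so the traciality extends to the whole of the latter.

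The main technical point to handle carefully will be justifying the passage from the tracial identity on the concrete rank-one-type elements $L^\theta(f)\otimes{\rm Id}_2$ to traciality of $\vf_s$ as a weight on the full von Neumann subalgebra; this step rests on the normality of $\vf_s$ together with the fact that every positive element in the norm-closed ideal generated by $\{L^\theta(\bar f\star_\theta f)\otimes{\rm Id}_2\}$ arises as a monotone limit of finite sums of such squares, so normality propagates the identity $\vf_s(T^*T)=\vf_s(TT^*)$ to all $T$ in the domain of the weight.
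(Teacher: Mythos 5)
Your proposal is correct and follows essentially the same route as the paper's (very terse) proof: reduce to a Hilbert--Schmidt norm via $\D^2=\Delta\otimes{\rm Id}_2$, apply the identity \eqref{HS2} with $d=1$ together with \eqref{ptr} and \eqref{rtv}, and evaluate the radial integral $\int_{\R^2}(1+|\xi|^2)^{-s/2}d\xi=2\pi/(s-2)$. Your extra care in deducing traciality from the cyclicity of the Moyal integral and normality is a welcome elaboration of a point the paper leaves implicit.
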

\begin{proof}
Since $\D^2=\Delta\otimes{\rm Id}_2$, with $0\leq \Delta$ the usual Laplacian on $\R^2$, we have  
\begin{align*}
\vf_s\big(L^\theta(f)^*L^\theta(f)\otimes{\rm Id}_2\big)
&= 2 {\rm Tr}_{L^2(\R^2)}\big((1+\Delta)^{-s/4}L^\theta(f)^*L^\theta(f)(1+\Delta)^{-s/4}\big).
\end{align*}
Thus the result follows from Equations \eqref{ptr}, \eqref{rtv} and \eqref{HS2}. 
\end{proof}

We turn now to the question of which  elements of the von Neumann 
algebra  generated by $L^\theta(f)\otimes{\rm Id}_2$ are in  $\B_1^\infty(\D,2)$. 
The next result follows by combining Proposition \ref{tracial-case} with Lemma \ref{FFF}.  

\begin{corollary}
\label{aB2}
Identifying the von Neumann subalgebra of $\B(L^2(\R^2))$   
generated by $L^\theta(f)\otimes{\rm Id}_2$, $f\in L^2(\R^2)$,   with $\B(L^2(\R))$ as in Equation \eqref{identification}
yields the identifications
$$
\B_1(\D,2)\bigcap \B(L^2(\R))\simeq L^2(\R^2)\star_\theta L^2(\R^2)\simeq \cl^1\big(L^2(\R)\big).
$$
Moreover, for all $m\in\N$, the norms on $L^2(\R^2)\star_\theta L^2(\R^2)$
$$
f\mapsto \PP_m\big(L^\theta(f)\otimes{\rm Id}_2\big),
$$ 
are equivalent to the single norm
$$
f\mapsto\|{\rm Op}_W(f)\|_1.
$$
\end{corollary}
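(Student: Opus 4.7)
The plan is to invoke Proposition \ref{tracial-case} with $\cm$ taken to be the von Neumann subalgebra of $\B(\H)$ generated by $\{L^\theta(f)\otimes{\rm Id}_2 : f\in L^2(\R^2)\}$, which by Equation \eqref{identification} is isomorphic to $\B(L^2(\R))$. Lemma \ref{FFF} establishes the hypothesis of that proposition, namely that the restriction of each $\vf_s$ to $\cm$ is a (faithful normal semifinite) trace. Specialising Lemma \ref{FFF} to $s=2+1/n$ and using $p=2$, I obtain
$$
\tau_n\big(L^\theta(f)^*L^\theta(f)\otimes{\rm Id}_2\big)=\vf_{2+1/n}|_\cm\big(L^\theta(f)^*L^\theta(f)\otimes{\rm Id}_2\big)=2n\,\|{\rm Op}_W(f)\|_2^2,
$$
for every $n\in\N$ and every $f\in L^2(\R^2)$.

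Next, since $\mathrm{Op}_W$ identifies $L^2(\R^2)$ isometrically with $\cl^2(L^2(\R))$ up to the constant in Equation \eqref{rtv}, the displayed formula shows that under the identification \eqref{identification}, $\tau_n$ coincides with $2n$ times the usual operator trace on $\cl^1(L^2(\R))$. Consequently the trace ideals $\cl^1(\cm,\tau_n)$ all coincide as sets (and up to multiplicative constants on norms) with the standard trace class $\cl^1(L^2(\R))$, so Proposition \ref{tracial-case} gives
$$
\B_1(\D,2)\cap\cm=\bigcap_{n\geq 1}\cl^1(\cm,\tau_n)=\cl^1\big(L^2(\R)\big).
$$
To match this with $L^2(\R^2)\star_\theta L^2(\R^2)$, I will use that $\mathrm{Op}_W(f\star_\theta g)=(2\pi)^{d/2}\mathrm{Op}_W(f)\,\mathrm{Op}_W(g)$ combined with the fact that $\cl^1(L^2(\R))=\cl^2(L^2(\R))\cdot\cl^2(L^2(\R))$; thus every trace-class operator on $L^2(\R)$ is the Weyl quantization of some Moyal product $f\star_\theta g$ with $f,g\in L^2(\R^2)$, and conversely every such Moyal product quantises to a trace-class operator. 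This establishes the chain of identifications claimed in the statement.

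For the norm equivalence, Proposition \ref{tracial-case} also gives $\PP_n(L^\theta(f)\otimes{\rm Id}_2)=\|L^\theta(f)\|+2\|L^\theta(f)\|_{\tau_n}$. By Equation \eqref{BB} the operator norm equals $(2\pi)^{d/2}\|\mathrm{Op}_W(f)\|_\infty$, while by the previous paragraph $\|L^\theta(f)\|_{\tau_n}$ is a positive $n$-dependent scalar multiple of $\|\mathrm{Op}_W(f)\|_1$. Since $\|\cdot\|_\infty\leq\|\cdot\|_1$ on trace-class operators, each $\PP_n$ is sandwiched between positive multiples of $\|\mathrm{Op}_W(f)\|_1$, proving that all the norms $\PP_n$ on $L^2(\R^2)\star_\theta L^2(\R^2)$ are equivalent to the single norm $f\mapsto\|\mathrm{Op}_W(f)\|_1$. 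The only mildly delicate point is bookkeeping of the scaling constants from the Weyl calculus (those appearing in \eqref{rtv}, \eqref{BB} and \eqref{HS2}), but since these are finite and positive they do not affect any of the equivalences; the genuine analytic input has already been packaged into Proposition \ref{tracial-case} and Lemma \ref{FFF}, so nothing further is required.
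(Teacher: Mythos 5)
Your proposal is correct and follows essentially the same route as the paper's own proof: both invoke Proposition \ref{tracial-case} via the traciality established in Lemma \ref{FFF}, identify each $\tau_n$ as a positive multiple of the operator trace under the Weyl map, use $\cl^2\cdot\cl^2=\cl^1$ to match $L^2\star_\theta L^2$ with the trace class, and deduce the norm equivalence from $\PP_n(\cdot)=\Vert\cdot\Vert+2\Vert\cdot\Vert_{\tau_n}$ together with the domination of the operator norm by the trace norm. (Your factor $(2\pi)^{d/2}$ in the intertwining relation for ${\rm Op}_W$ of a Moyal product is a harmless normalisation slip, as you yourself note.)
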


\begin{proof}
The identification $L^2(\R^2)\star_\theta L^2(\R^2)\simeq \cl^1\big(L^2(\R)\big)$ follows from the identification
$L^2(\R^2)\simeq \cl^2\big(L^2(\R)\big)$ given by the unitarity of the Weyl quantization map, and the equality
$$
 \cl^2\big(L^2(\R)\big)\cdot \cl^2\big(L^2(\R)\big)= \cl^1\big(L^2(\R)\big).
 $$
By  Proposition \ref{tracial-case} we know that $\B_1(\D,2)\bigcap \B(L^2(\R))$ is identified with 
$$
\bigcap_{n\geq 1}\cl^1\big(  \B(L^2(\R)),\vf_{2+1/n}\big).
$$
Lemma \ref{FFF}  says that restricted to $ \B(L^2(\R))$, all the weights $\vf_{2+1/n}$ are 
proportional to the operator trace of $ \B(L^2(\R))$, giving the final identification. 
Moreover,   Proposition \ref{tracial-case} also gives the equality
 $$
 \PP_n(.)=  2\|\cdot\|_{\tau_n}+\|\cdot\|,
 $$
where $\|\cdot\|_{\tau_n}$  is the trace norm associated to the tracial weight  
$\vf_{2+1/n}$ restricted to $ \B(L^2(\R))$. As the latter is proportional to the operator 
trace on  $ \B(L^2(\R))$, which dominates the operator norm   since we are in the $I_\infty$ factor case, we get
 the equivalence of the norms
 $$
 f\mapsto \PP_n\big(L^\star(f)\otimes{\rm Id}_2\big),\quad\mbox{and}\quad\|{\rm Op_W}(f)\|_1\quad n\in\N,
 $$
 and we are done.
\end{proof}

On the basis of the previous result, we construct a 
Fr\'echet algebra yielding a smoothly summable spectral triple
of spectral dimension $2$, for the Moyal product.

\begin{lemma}
Endowed with the set of seminorms
$$
f\mapsto \|f\|_{1,\alpha}:=\|{\rm Op}_W(\partial^\alpha f)\|_1,\quad \alpha\in\N_0^2,
$$
the set
$$
\A:=\big\{f\in C^\infty(\R^2)\,\,:\,\,\mbox{for all } n\in\N_0^2,
\quad\exists f_1,f_2 \in L^2(\R^2),\quad \partial_1^{n_1}\partial_2^{n_2}f=f_1\star_\theta f_2\big\},
$$
is a  Fr\'echet algebra for  the Moyal product.
\end{lemma}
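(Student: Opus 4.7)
The plan is to prove four things in sequence: (i) the seminorms $\|\cdot\|_{1,\alpha}$ are well-defined and finite on $\A$; (ii) $\A$ is closed under the Moyal product; (iii) multiplication is jointly continuous via Leibniz-type submultiplicativity; and (iv) the space is complete. Throughout, the main tool is the fundamental identification from Corollary \ref{aB2}: Weyl quantization gives a bijection $L^2(\R^2) \star_\theta L^2(\R^2) \simeq \cl^1(L^2(\R))$, together with the product law $\mathrm{Op}_W(f \star_\theta g) = \mathrm{Op}_W(f)\,\mathrm{Op}_W(g)$ and the Hilbert-Schmidt identity of Equation \eqref{rtv}.

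For (i), by definition every $\partial^\alpha f$ with $f \in \A$ lies in $L^2\star_\theta L^2$, so $\mathrm{Op}_W(\partial^\alpha f) \in \cl^1$ and $\|f\|_{1,\alpha} < \infty$. For (ii), given $f,g \in \A$ apply the Leibniz rule of Equation \eqref{ptr} inductively to obtain $\partial^\alpha(f\star_\theta g) = \sum_{\beta \le \alpha}\binom{\alpha}{\beta}\,\partial^\beta f \star_\theta \partial^{\alpha-\beta} g$. Writing each factor $\partial^\beta f = f_1 \star_\theta f_2$ and $\partial^{\alpha-\beta} g = g_1 \star_\theta g_2$ with $f_i,g_j \in L^2$, associativity and $\cl^1 \cdot \cl^2 \subset \cl^2$ (i.e.\ $(L^2\star_\theta L^2)\star_\theta L^2 \subset L^2$) show that each summand lies in $L^2 \star_\theta L^2$. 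Involution is handled by the third identity in Equation \eqref{ptr}.

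For (iii), the same Leibniz expansion, combined with $\mathrm{Op}_W(f\star_\theta g)=\mathrm{Op}_W(f)\mathrm{Op}_W(g)$ and the H\"older-type estimate $\|AB\|_1 \le \|A\|_1\,\|B\|$ together with the trivial bound $\|B\| \le \|B\|_1$, yields the submultiplicative inequality
\[
\|f \star_\theta g\|_{1,\alpha}\;\le\;\sum_{\beta \le \alpha}\binom{\alpha}{\beta}\,\|f\|_{1,\beta}\,\|g\|_{1,\alpha-\beta},
\]
which gives joint continuity of the Moyal product in the topology generated by $\{\|\cdot\|_{1,\alpha}\}_{\alpha\in \N_0^2}$. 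Hausdorffness is automatic since $\|\cdot\|_{1,0}$ is already a norm on $\A$ (being the pullback of the trace-norm under the injective map $\mathrm{Op}_W$).

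The main obstacle is (iv), completeness, which requires a compatibility argument to recover a single smooth limit from limits of all the derivatives. Given a Cauchy sequence $(f_n) \subset \A$, each sequence $(\mathrm{Op}_W(\partial^\alpha f_n))$ is Cauchy in $\cl^1(L^2(\R))$ and so converges in trace-norm to some $T_\alpha \in \cl^1$. Set $g_\alpha := \mathrm{Op}_W^{-1}(T_\alpha) \in L^2 \star_\theta L^2$. The compatibility is then obtained as follows: the convergence $f_n \to g_0$ in $L^2(\R^2)$ (which follows from $\|\cdot\|_{1,0}$ being stronger than the $L^2$-norm via Equation \eqref{rtv}) implies $\partial^\alpha f_n \to \partial^\alpha g_0$ in the distributional sense, while $L^2$-convergence of $(\partial^\alpha f_n)$ to $g_\alpha$ gives the same limit distributionally; hence $g_\alpha = \partial^\alpha g_0$ for every $\alpha$. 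Since every distributional derivative of $g_0$ lies in $L^2$, Sobolev embedding forces $g_0 \in C^\infty(\R^2)$, and each derivative lies in $L^2\star_\theta L^2$ by construction, so $g_0 \in \A$ and $f_n \to g_0$ in the Fr\'echet topology. This completes the proof that $(\A, \{\|\cdot\|_{1,\alpha}\}_\alpha)$ is a Fr\'echet algebra.
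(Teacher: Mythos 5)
Your proof is correct and follows essentially the same route as the paper: closure under the product via the Leibniz rule and $L^2\star_\theta L^2\subset L^2$, and completeness by converting trace-norm Cauchyness of $\mathrm{Op}_W(\partial^\alpha f_n)$ into $L^2$-limits $g_\alpha$ and identifying $g_\alpha=\partial^\alpha g_0$ distributionally (the paper writes this identification out with explicit duality-bracket estimates and integration by parts, but it is the same argument). You additionally make explicit two points the paper leaves implicit — the submultiplicative Leibniz estimate giving joint continuity of the product, and the Sobolev-embedding step showing the limit $g_0$ is actually smooth — both of which are welcome.
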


\begin{proof}
From the Leibniz rule for the Moyal product (see Equation \eqref{ptr} second display) 
and the fact that $L^2(\R^2)\star_\theta L^2(\R^2)\subset L^2(\R^2)$,   
the set $\A$ is an algebra for the Moyal product. 
Since $L^2(\R^2)\star_\theta L^2(\R^2)\simeq \cl^1\big(L^2(\R)\big)$, the seminorms
$\|\cdot\|_{1,\alpha}$, $\alpha\in\N_0^2$, take finite values on $\A$. It remains to show that 
$\A$ is complete for the topology induced by
these seminorms. So let $(f_k)_{k\in\N}$ be a Cauchy sequence on $\A$, i.e. 
Cauchy for each seminorm $\|\cdot\|_{1,\alpha}$. Since $\cl^1(L^2(\R))$ is complete, 
for each $\alpha\in\N_0^2$, $\big({\rm Op}_W(\partial^\alpha f_k)\big)_{k\in\N}$ 
converges to $A_\alpha$, a trace-class operator on $L^2(\R)$. But 
since $\cl^1(L^2(\R))\simeq L^2(\R^2)\star_\theta L^2(\R^2)$, via the 
Weyl map, $A_\alpha={\rm Op}_W(f_\alpha)$ for some element $f_\alpha\in
L^2(\R^2)\star_\theta L^2(\R^2)$. In particular for $\alpha=(0,0)$, the 
sequence $(f_k)_{k\in\N}$ converges to an element 
$f\in L^2(\R^2)\star_\theta L^2(\R^2)$. But we need to show that $f\in\A$, that is, 
we need to show
that $\|{\rm Op}_W(\partial^\alpha f)\|_1<\infty$ for all $\alpha\in\N_0^2$. 
This will be the case if $\partial^\alpha f=f_\alpha$. Note that 
$f\in L^2(\R^2)\star_\theta L^2(\R^2)\subset L^2(\R^2)\subset \cS'(\R^2)$, so that
$\partial^\alpha f\in  \cS'(\R^2)$ too.
With $\langle\cdot|\cdot\rangle$ denoting the duality bracket 
$\cS'(\R^2)\times \cS(\R^2)\to\C$, we have 
for any $k\in\N$ and any $\psi\in\cS(\R^2)$
\begin{align*}
\big|\langle(\partial^\alpha f-f_\alpha)|\psi\rangle\big|
&=\big|\langle(\partial^\alpha f-\partial^\alpha f_k)|\psi\rangle-
\langle(f_\alpha-\partial^\alpha f_k)|\psi\rangle\big|\\
&=\big|(-1)^{|k|}\langle( f- f_k)|\partial^\alpha\psi\rangle-
\langle(f_\alpha-\partial^\alpha f_k)|\psi\rangle\big|\\
&\leq\| f- f_k\|_2\,\|\partial^\alpha\psi\|_2+\|f_\alpha-\partial^\alpha f_k\|_2\,\|\psi\|_2\\
&\quad=(2\pi)^{1/2} \|\partial^\alpha\psi\|_2\,\|{\rm Op}_W( f- f_k)\|_2+(2\pi)^{1/2}\|\psi\|_2\,
\|{\rm Op}_W(f_\alpha-\partial^\alpha f_k)\|_2,
\end{align*}
where we have used  Equation \eqref{HS2}. Now, since the  the trace-norm dominates
the Hilbert-Schmidt norm, we find
$$
\big|\langle(\partial^\alpha f-f_\alpha)|\psi\rangle\big|\leq C(\psi)\big(\| {\rm Op}_W(f)- {\rm Op}_W(f_k)\|_1+
\|{\rm Op}_W(f_\alpha)-{\rm Op}_W(\partial^\alpha f_k)\|_1\big).
$$
But since ${\rm Op}_W(\partial^\alpha f_k)\to {\rm Op}_W(f_\alpha)$ in trace-norm for all $\alpha\in\N_0^2$, 
we see that $\big|\langle(\partial^\alpha f-f_\alpha)|\psi\rangle\big|\leq \eps$ for all $\eps>0$ and thus
$\langle(\partial^\alpha f-f_\alpha)|\psi\rangle=0$ for all $\psi\in\cS(\R^2)$. Hence $\partial^\alpha f=f_\alpha$
in $\cS'(\R^2)$, but since $f_\alpha\in  L^2(\R^2)\star_\theta L^2(\R^2)$, $\partial^\alpha f\in L^2(\R^2)\star_\theta L^2(\R^2)$ too.
This completes the proof.
\end{proof}

{\bf Remark.} Note that the $C^*$-completion of $(\A,\star_\theta)$, is 
isomorphic to the $C^*$-algebra of compact operators acting
on $L^2(\R)$ and that $\A$ contains $\mathcal S(\R^2)$. 

Combining all these preliminary statements, we now improve  the results of \cite{GGISV}.

\begin{prop}
The data $(\A,\H,\D,\gamma)$ defines an even 
smoothly summable  spectral triple with spectral dimension 2. 
\end{prop}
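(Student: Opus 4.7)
The plan is to apply Proposition \ref{smooth-sum-sufficient}, which reduces smooth summability to a trace class condition on $(1+\D^2)^{-s/4}L^k(T)(1+\D^2)^{-s/4}$ for $T\in\A\cup[\D,\A]$, $k\in\N_0$, $s>2$. Finite summability with spectral dimension $2$ is essentially given for free: by the $\alpha=(0,0)$ clause of the definition, $\A\subset L^2(\R^2)\star_\theta L^2(\R^2)$, so Corollary \ref{aB2} places $L^\theta(f)\otimes{\rm Id}_2$ in $\B_1(\D,2)$ for every $f\in\A$, while Lemma \ref{FFF} shows that $\vf_s\big(L^\theta(f)^*L^\theta(f)\otimes{\rm Id}_2\big)$ diverges as $s\to 2^+$ for nontrivial $f$, pinning the dimension at exactly $2$. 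The spectral triple axioms themselves are verified as in \cite{GGISV}: bounded commutators follow from Equation \eqref{bound-com} combined with Equation \eqref{EQQQ} (since $\partial_j f\in L^2\star_\theta L^2\subset L^2$), and compactness of $L^\theta(f)(1+\D^2)^{-1/2}\otimes{\rm Id}_2$ follows by writing $(1+\D^2)^{-1/2}=(1+\D^2)^{-s/4}\cdot(1+\D^2)^{(s-2)/4}$ for some $s>2$ and applying Lemma \ref{FFF} to the first factor.

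The main step is the following structural claim, proved by induction on $k$: for every $f\in\A$ and $k\in\N_0$,
\[
L^k\big(L^\theta(f)\otimes{\rm Id}_2\big)\;=\;\sum_\alpha c_\alpha\, B_\alpha\, L^\theta(\partial^{\gamma_\alpha}f)\otimes{\rm Id}_2
\]
for a finite collection of multi-indices $\gamma_\alpha$ and scalars $c_\alpha$, where each $B_\alpha$ is a bounded operator built from $(1+\Delta)^{-1/2}$ and the Riesz-type transforms $R_j:=(1+\Delta)^{-1/2}\partial_j$; in particular each $B_\alpha$ is a bounded function of $-i\nabla$ and hence commutes with $\D^2=\Delta\otimes{\rm Id}_2$. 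The base case $k=1$ is obtained by using the Leibniz identity $[\partial_j,L^\theta(f)]=L^\theta(\partial_j f)$ to compute $[\Delta,L^\theta(f)]=L^\theta(\Delta f)-2\sum_j L^\theta(\partial_j f)\partial_j$, then commuting each $\partial_j$ across $L^\theta(\partial_j f)$ and pairing it with $(1+\Delta)^{-1/2}$ to form a Riesz transform. The induction step is immediate because, for any $B_\alpha$ commuting with $\D^2$, one has $L(B_\alpha L^\theta(g))=B_\alpha\cdot L(L^\theta(g))$, reducing to the base case applied to $g=\partial^{\gamma_\alpha}f\in\A$.

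Given this decomposition, the trace class bound is proved by a Hilbert--Schmidt/Hilbert--Schmidt factorization. For each term, use the defining property of $\A$ to write $\partial^{\gamma_\alpha}f=h_1\star_\theta h_2$ with $h_1,h_2\in L^2(\R^2)$, so that $L^\theta(\partial^{\gamma_\alpha}f)=L^\theta(h_1)L^\theta(h_2)$. Then, since $B_\alpha$ commutes with $(1+\D^2)^{-s/4}$,
\[
(1+\D^2)^{-s/4}B_\alpha L^\theta(\partial^{\gamma_\alpha}f)(1+\D^2)^{-s/4}\otimes{\rm Id}_2=\big(B_\alpha\otimes{\rm Id}_2\big)\,X_1\,X_2,
\]
with $X_1=(1+\D^2)^{-s/4}L^\theta(h_1)\otimes{\rm Id}_2$ and $X_2=L^\theta(h_2)(1+\D^2)^{-s/4}\otimes{\rm Id}_2$. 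Lemma \ref{FFF} (equivalently, the Hilbert--Schmidt identity in Equation \eqref{HS2}) shows that both $X_1$ and $X_2$ are Hilbert--Schmidt for $s>2$, with norms controlled by $\|h_1\|_2$ and $\|h_2\|_2$ respectively, so the full product is trace class. The case $T=[\D,L^\theta(f)\otimes{\rm Id}_2]$ is treated identically because, by Equation \eqref{bound-com}, $T$ is a $2\times 2$ matrix whose entries are scalar multiples of $L^\theta(\partial_j f)$ with $\partial_j f\in\A$, and $\D^2$ acts diagonally. Invoking Proposition \ref{smooth-sum-sufficient} now delivers smooth summability.

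The main obstacle is the algebraic bookkeeping in the structural claim: iterated commutators with $\D^2$ produce factors of $\partial_j$ interleaved between Moyal multiplication operators in many configurations, and one must systematically convert each into the canonical form $B\cdot L^\theta(g)$ with $B$ a bounded function of $\nabla$. Once this normal form is achieved, the rest is driven by the Hilbert--Schmidt identity of Equation \eqref{HS2}, which is the genuinely Moyal-specific input: although $(1+\D^2)^{-s/4}$ is never Hilbert--Schmidt by itself on $L^2(\R^2)$, its product with any Moyal multiplier $L^\theta(h)$ is Hilbert--Schmidt precisely when $s>2$, and this is exactly what makes the spectral dimension $p=2$ work uniformly at every order $k$ of the iterated $L$.
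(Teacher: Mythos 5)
Your proof follows essentially the same route as the paper's: reduce smooth summability to the trace class condition of Proposition \ref{smooth-sum-sufficient}, establish a normal form for the iterated commutators as a finite sum of terms ``(bounded Fourier multiplier)\,$\times$\,$L^\theta(\partial^\gamma f)$'', and close the argument with the Hilbert--Schmidt identity \eqref{HS2} after splitting $\partial^\gamma f=h_1\star_\theta h_2$. The paper works with $R^k=[\D^2,\cdot]^k(1+\D^2)^{-k/2}$ and places the bounded operator $\partial^\beta(1+\Delta)^{-k/2}$ on the right; you work with $L^k$ and place the bounded piece on the left. These are interchangeable (Lemma \ref{delta-LR}, Equation \eqref{eq:LR}), and your version is in fact closer to the literal statement of Proposition \ref{smooth-sum-sufficient}, which is phrased with $L^k$.

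Two small points warrant correction. First, your argument for $\tau$-compactness of $L^\theta(f)(1+\D^2)^{-1/2}\otimes{\rm Id}_2$ does not work as written: in the factorization $(1+\D^2)^{-1/2}=(1+\D^2)^{-s/4}(1+\D^2)^{(s-2)/4}$ the second factor is \emph{unbounded} when $s>2$. A correct route is to first prove $\A\cup[\D,\A]\subset\B_1^\infty(\D,2)={\rm OP}^0_0$ (your main step) and then invoke Lemma \ref{interpolation} with $\alpha=1$, $\beta=0$, which gives $L^\theta(f)(1+\D^2)^{-1/2}\otimes{\rm Id}_2\in\L^q$ for all $q>2$, in particular compact; this is also consistent with how the paper handles it (it defers the basic spectral triple axioms to \cite{GGISV}, citing the earlier result for $\cS(\R^2)$). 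Second, the ``$\vf_s$ diverges as $s\to2^+$'' argument is slightly off the mark: Lemma \ref{FFF} computes $\vf_s$ on $|L^\theta(f)|^2$, whereas the spectral dimension is defined via $\tau(|a|(1+\D^2)^{-s/2})$. The fix is either to use $|f|=f^{1/2}\star_\theta f^{1/2}$ for $f\ge0$ and then apply Lemma \ref{FFF}, or simply to cite the explicit pole formula ${\rm Tr}(L^\theta(f)\otimes{\rm Id}_2\,(1+\D^2)^{-s/2})=\tfrac{1}{\pi(s-2)}\int f$ as the paper does. (As a side remark, your sign in $[\Delta,L^\theta(f)]=L^\theta(\Delta f)-2\sum_j L^\theta(\partial_j f)\partial_j$ is correct for $\Delta=-\partial_1^2-\partial_2^2$; in any case the sign is irrelevant to the estimate.)
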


\begin{proof}
We first need to prove that $(\A,\H,\D,\gamma)$ (which is  even) is finitely summable,
that is, we need to show that
$$
\delta^k\big(L^\theta(f)\otimes{\rm Id}_2\big)\,(1+\D^2)^{-s/2}\in\cl^1(\H),\quad\mbox{for all } f\in\A,\quad\mbox{for all } s>2,\quad \mbox{for all } k\in\N_0.
$$
But from the proof of Proposition \ref{smooth-sum-sufficient},
this will follow if 
$$
(1+\D^2)^{-s/4}R^k\big(L^\theta(f)\otimes{\rm Id}_2\big)(1+\D^2)^{-s/4}\in\L^1(\cn,\tau),
$$
for all $ f\in\A$, for all $s>2$ and  for all $ k\in\N_0$.
Now, by the Leibniz rule (Equation \ref{ptr} first display), we have with $\Delta=-\partial_1^2-\partial_2^2$,
$$
[\Delta,L^\theta(f)]=L^\theta(\Delta f)+2L^\theta(\partial_1f)\partial_1+2L^\theta(\partial_2f)\partial_2,
$$
so that since $\D^2=\Delta\otimes{\rm Id}_2$, we have for all $k\in\N_0$
$$
R^k\big(L^\theta(f)\otimes{\rm Id}_2\big)=\sum_{|\alpha|,|\beta|\leq k}
C_{\alpha,\beta}L^\theta(\partial^\alpha f)\partial^\beta (1+\Delta)^{-k/2}  \otimes{\rm Id}_2,
$$
and thus
\begin{align*}
&(1+\D^2)^{-s/4}R^k\big(L^\theta(f)\otimes{\rm Id}_2\big)(1+\D^2)^{-s/4}\\
&\qquad\qquad\qquad\qquad\qquad=
\sum_{|\alpha|,|\beta|\leq k}
C_{\alpha,\beta} (1+\Delta)^{-s/4}L^\theta(\partial^\alpha f)(1+\Delta)^{-s/4}\partial^\beta (1+\Delta)^{-k/2}  \otimes{\rm Id}_2,
\end{align*}
which is trace class because $\partial^\beta (1+\Delta)^{-k/2}$ is bounded and by definition of $\A$,
$\partial^\alpha f=f_1\star_\theta f_2$ with $f_1,f_2\in L^2(\R^2)$, so that this operator appears as the product of two
Hilbert-Schmidt by Equation \eqref{HS2}. Thus, the spectral triple is finitely summable, 
and the spectral dimension is $2$ by\cite[Lemma 4.14]{GGISV}, which gives for any $f\in\A$
$$
{\rm Tr}\big(L^\theta(f)\otimes{\rm Id}_2(1+\D^2)^{-s/2}\big)= \frac{1}{\pi(s-2)}\int_{\R^2} f(x)\,dx.
$$
From Proposition \ref{smooth-sum-sufficient}, we also have verified one of the condition ensuring
that $\A\cup[\D,\A]\subset \B_1^\infty((\D,2)$. The second is to verify that
$$
 (1+\D^2)^{-s/4}R^k\big([\D,L^\theta(f)\otimes{\rm Id}_2]\big)(1+\D^2)^{-s/4}\in\L^1(\cn,\tau),\,\,\mbox{for all } k\in\N_0,\,\,\mbox{for all } s>p.
$$
This can be done as for $R^k\big(L^\theta(f)\otimes{\rm Id}_2\big)$ by noticing that 
\begin{align*}
&R^k\big([\D,L^\theta(f)\otimes{\rm Id}_2]\big)=\sum_{|\alpha|\leq k}\sum_{|\beta_1|,|\beta_2|\leq k+1}
C_{\alpha,\beta_1,\beta_2}\begin{pmatrix} 0& L^\theta(\partial^{\beta_1}f)\\
L^\theta(\partial^{\beta_2}f)&0\end{pmatrix}\partial^\alpha (1+\Delta)^{-k/2}\otimes{\rm Id}_2,
\end{align*}
and the proof is complete.
\end{proof}

\subsubsection{An index formula for the Moyal plane} 
In order to obtain an explicit 
index formula out of the spectral triple
previously constructed, we need to introduce a suitable family of projectors.

Let $H:=\tfrac12(x_1^2+x_2^2)$ be the (classical) Hamiltonian of the one-dimensional 
harmonic oscillator. Let also
$a:=2^{-1/2}(x_1+ix_2)$, $\bar a:=2^{-1/2}(x_1-ix_2)$ be the annihilation and creation functions. 
Define next
$$
f_{m,n}:=
\frac{1}{\sqrt{\theta^{n+m}n!m!}}\,\bar a^{\star_\theta m}\star_\theta f_{0,0}\star_\theta a^{\star_\theta n}
\quad \mbox{where}\quad f_{0,0}:=2 e^{-\tfrac2\theta H},\qquad m,n\in\N_0.
$$
The family $\{f_{m,n}\}_{m,n\in\N_0}$ forms an orthogonal basis of $L^2(\R^2)$, 
consisting of Schwartz functions.
They constitute an important tool in the analysis of \cite{GGISV}, since they allow to construct 
local units. In fact, they 
are the Weyl symbols of the rank
one  operators $\vf\mapsto \langle\vf_m|\vf\rangle\,\vf_n$, with $\{\vf_{n}\}_{n\in\N_0}$ 
the basis of $L^2(\R)$ consisting of eigenvectors
for the one-dimensional quantum harmonic oscillator. 
The proof of the next lemma can be found in \cite[subsection 2.3 and Appendix]{GGISV}.

\begin{lemma}
The following relations hold true.
$$
\overline{f_{m,n}}=f_{n,m},\qquad f_{m,n}\star_\theta f_{k,l}
=\delta_{n,k}\,f_{m,l},\qquad \int f_{m,n}(x) \,dx=2\pi\theta \,\delta_{m,n},
$$
so in particular $\{f_{n,n}\}_{n\in\N_0}$, is a family of pairwise orthogonal projectors. 
Moreover we have:
\begin{align*}
&\big[\D,L^{\theta}(f_{m,n})\otimes {\rm Id}_2\big]=\\
&\qquad-i\sqrt{\frac 2\theta}\begin{pmatrix} 0& \sqrt m\,L^\theta(f_{m-1,n})-\sqrt{n+1}\,L^\theta(f_{m,n+1})\\ 
\sqrt n\, L^\theta(f_{m,n-1})-\sqrt{m+1}\,L^\theta(f_{m+1,n})&0\end{pmatrix},
\end{align*}
\label{someprops}
with the convention
that $f_{m,n}\equiv 0$ whenever $n<0$ or $m<0$.
\end{lemma}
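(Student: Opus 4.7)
The plan is to transport the entire statement to the operator side via the Weyl quantization map ${\rm Op}_W$, which by \eqref{rtv} is a unitary from $L^2(\R^2)$ onto the Hilbert--Schmidt ideal $\cl^2(L^2(\R))$, intertwines the Moyal product $\star_\theta$ with operator composition and complex conjugation with Hermitian adjoint, and converts $\int_{\R^2} f\,dx$ into $2\pi\,{\rm Tr}\big({\rm Op}_W(f)\big)$ (by Fubini applied to the Weyl kernel). The classical Wigner-function computation identifies ${\rm Op}_W(f_{0,0})$ with the rank-one projection onto the harmonic oscillator ground state $\vf_0\in L^2(\R)$, while ${\rm Op}_W(a)$ and ${\rm Op}_W(\bar a)$ are $\sqrt\theta$ times the (convention-dependent) creation and annihilation operators on $L^2(\R)$. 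An induction on $m+n$ applied to the defining formula for $f_{m,n}$ then identifies ${\rm Op}_W(f_{m,n})$ with the rank-one operator $|\vf_m\rangle\langle\vf_n|$ (up to a possible swap of the indices).

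With this identification in place, the first three identities are matrix-unit relations transported back to the symbol picture. The adjoint relation $(|\vf_m\rangle\langle\vf_n|)^* = |\vf_n\rangle\langle\vf_m|$ gives $\overline{f_{m,n}} = f_{n,m}$; composition of rank-one operators, $|\vf_m\rangle\langle\vf_n|\cdot|\vf_k\rangle\langle\vf_l| = \delta_{n,k}|\vf_m\rangle\langle\vf_l|$, gives $f_{m,n}\star_\theta f_{k,l} = \delta_{n,k}f_{m,l}$; and ${\rm Tr}(|\vf_m\rangle\langle\vf_n|) = \delta_{m,n}$, together with the Weyl trace formula, yields $\int f_{m,n}\,dx = 2\pi\theta\,\delta_{m,n}$. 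The overall factor of $\theta$ is tracked through the normalization $(\theta^{m+n}m!n!)^{-1/2}$ appearing in the definition of $f_{m,n}$, aligned with ${\rm Op}_W(a) = \sqrt\theta\,\hat a^{(\dagger)}$.

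For the commutator identity, \eqref{bound-com} reduces the task to computing $L^\theta\big((i\partial_1\mp\partial_2)f_{m,n}\big)$. Passing to the Wirtinger derivatives $\partial_a = (\partial_1-i\partial_2)/\sqrt 2$ and $\partial_{\bar a}= (\partial_1+i\partial_2)/\sqrt 2$, the Moyal canonical commutation $[a,\bar a]_{\star_\theta} = \pm\theta$ together with the Leibniz rule realizes these derivatives as inner derivations of $(\cS(\R^2),\star_\theta)$, of the form $\theta\partial_a f = \pm[f,\bar a]_{\star_\theta}$ and $\theta\partial_{\bar a}f = \pm[a,f]_{\star_\theta}$. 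A direct algebraic manipulation of the defining formula for $f_{m,n}$, using the vanishing $a\star_\theta f_{0,0} = 0$ (equivalently $f_{0,0}\star_\theta\bar a = 0$) and the commutation $[a,\bar a]_{\star_\theta}$, produces the ladder relations
\begin{align*}
a\star_\theta f_{m,n} &= \sqrt{m\theta}\,f_{m-1,n}, &
f_{m,n}\star_\theta a &= \sqrt{(n+1)\theta}\,f_{m,n+1},\\
\bar a\star_\theta f_{m,n} &= \sqrt{(m+1)\theta}\,f_{m+1,n}, &
f_{m,n}\star_\theta\bar a &= \sqrt{n\theta}\,f_{m,n-1}.
\end{align*}
Substituting these into the expressions for the off-diagonal blocks of $[\D,L^\theta(f_{m,n})\otimes{\rm Id}_2]$ yields the claimed formula. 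No analytic difficulty arises; the only obstacle is careful bookkeeping of signs and normalization constants, since the identification of $a$ with a raising or lowering operator depends on the sign convention for $\omega_0$ in \eqref{mop}, which must be read off \cite{GGISV} and threaded consistently through the whole calculation.
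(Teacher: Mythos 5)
The paper does not actually prove this lemma: it defers entirely to \cite[subsection 2.3 and Appendix]{GGISV}, so there is no in-text argument to compare against. Your sketch is a correct reconstruction of the standard argument from that reference: the identification ${\rm Op}_W(f_{m,n})$ with the matrix units $|\vf_m\rangle\langle\vf_n|$ gives the first three relations, and the ladder relations $a\star_\theta f_{m,n}=\sqrt{m\theta}\,f_{m-1,n}$, $f_{m,n}\star_\theta a=\sqrt{(n+1)\theta}\,f_{m,n+1}$, etc., combined with the realisation of $\partial_a,\partial_{\bar a}$ as inner derivations, give the commutator formula via \eqref{bound-com}; all of these intermediate identities check out against the definition of $f_{m,n}$ and the relation $[a,\bar a]_{\star_\theta}=\pm\theta$.

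One bookkeeping point is slightly misattributed. The map ${\rm Op}_W$ as literally defined in the paper is $\theta$-independent and intertwines composition with $\star_1$, not $\star_\theta$; the quantization that intertwines $\star_\theta$ with operator composition is the $\theta$-scaled one, whose trace identity reads $\int f\,dx=(2\pi\theta)^{d}\,{\rm Tr}$ (here $d=1$). The normalisation $(\theta^{m+n}m!n!)^{-1/2}$ in the definition of $f_{m,n}$ is exactly what makes its quantization a matrix unit of trace $\delta_{m,n}$, so the factor $2\pi\theta$ in $\int f_{m,n}\,dx=2\pi\theta\,\delta_{m,n}$ comes entirely from the trace formula, not from that normalisation. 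This does not affect the validity of the argument, only where the constant is accounted for.
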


We are in the situation
where the projectors $f_{n,n}$  belong to the algebra (not its unitization, nor a 
matrix algebra over it).
Thus if we set $F=\D(1+\D^2)^{-1/2}$ then  
$L^\theta(f_{n,n})F_\pm L^\theta(f_{n,n})$ is a Fredholm operator 
from $L^2(\R^2)$ to itself, according to the discussion at the beginning of the subsection \ref{num-ind-pair}.
Thus, we don't need the `double picture' here. In particular, 
$[f_{n,n}]\in K_0(\A)$. The next result
computes the numerical  index pairing between $(\A,L^2(\R^2,\C^2),\D)$ and $K_0(\A)$.

\begin{prop}
For $J$ a finite subset of $\N_0$, let $p_J:=\sum_{n\in J} L^\theta(f_{n,n})$.
Setting $F=\D(1+\D^2)^{-1/2}$, we have the integer-valued index paring
$$
{\rm Index}\big(p_J F_+ p_J\big)=\big\langle [p_J], [(\A,L^2(\R^2,\C^2),\D)]\big\rangle={\rm Card}(J).
$$
In particular, the index map gives an explicit isomorphism between 
$K_0\big(\mathcal K(L^2(\R))\big)$ and $\mathbb Z$.
\end{prop}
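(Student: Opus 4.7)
The plan is to apply the local index formula, Theorem~\ref{localindex}(3), to the smoothly summable even spectral triple $(\A,\H,\D)$ of spectral dimension $2$ constructed above. Since $p_J\in\A$ itself is a projection (so ${\bf 1}_{p_J}=0$), there is no need to invoke the doubling construction and one may work directly with $F=\D(1+\D^2)^{-1/2}$. With $M=2$, the local index formula reduces to
\begin{equation*}
\langle[p_J],[(\A,\H,\D)]\rangle=\phi_0(p_J)+\phi_2\bigl({\rm Ch}_2(p_J)\bigr),
\end{equation*}
where ${\rm Ch}_2(p_J)=-2(p_J-\tfrac12)\otimes p_J\otimes p_J$. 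I will show that the first term vanishes and the second equals $|J|$.

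First I would argue that $\phi_0(p_J)=0$. Identifying $\phi_0$ via the proof of Proposition~\ref{hee} as the residue at $w=0$ of $w^{-1}\tau(\gamma p_J(1+\D^2)^{-w})$, the graded trace inside vanishes identically: the grading $\gamma=\mathrm{diag}(1,-1)$ together with the block-diagonal forms $p_J=L^\theta(p_J)\otimes\mathrm{Id}_2$ and $(1+\D^2)^{-w}=(1+\Delta)^{-w}\otimes\mathrm{Id}_2$ produce exactly opposite contributions from the two spinor components. Consequently only the degree-two term contributes.

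The heart of the calculation is therefore $\phi_2$. Lemma~\ref{someprops} yields $dp_J=-i\sqrt{2/\theta}$ times an off-diagonal matrix of left Moyal multiplications by certain finite combinations $\alpha_{J,\pm}$ of the $f_{n\pm 1,n}$ and $f_{n,n\pm 1}$ for $n\in J$. A short calculation using the block structure of $\gamma dp_J^{\,2}$ together with the trace identity $\mathrm{Tr}_{L^2(\R^2)}(L^\theta(g)(1+\Delta)^{-1-z})=(4\pi z)^{-1}\int g\,dx$ established in the preceding proposition produces
\begin{equation*}
\mathrm{Tr}\bigl(\gamma p_J\,dp_J^{\,2}\,(1+\D^2)^{-1-z}\bigr)=-\frac{1}{2\pi\theta\,z}\int p_J\star_\theta[\alpha_{J,-},\alpha_{J,+}]_\star\,dx.
\end{equation*}
The main combinatorial step, and the one I expect to require the most care, is the evaluation of this integral via the orthogonality relations $f_{m,n}\star_\theta f_{k,l}=\delta_{nk}f_{m,l}$. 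A direct expansion gives
\begin{equation*}
p_J\star_\theta[\alpha_{J,-},\alpha_{J,+}]_\star=\sum_{n\in J}\bigl[(n+1)\mathbf{1}_{n+1\notin J}-n\,\mathbf{1}_{n-1\notin J}\bigr]f_{n,n},
\end{equation*}
whose coefficient sum telescopes over the maximal consecutive runs of $J$ to $|J|$. Since $\int f_{n,n}\,dx=2\pi\theta$, the integral evaluates to $2\pi\theta\,|J|$, giving $\phi_2(p_J,p_J,p_J)=-|J|/2$. Combined with $\phi_2(1_{\A^\sim},p_J,p_J)=0$ (which follows from $\int[\alpha_{J,-},\alpha_{J,+}]_\star\,dx=0$ by traciality), one obtains $\phi_2({\rm Ch}_2(p_J))=|J|$, so the total pairing is $|J|$.

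For the final statement, the $C^*$-completion of $(\A,\star_\theta)$ is isomorphic to $\K(L^2(\R))$ via Weyl quantization, as noted earlier in the section, and the holomorphic stability of $\A$ gives $K_0(\A)\cong K_0(\K(L^2(\R)))$. The classes $[p_{\{n\}}]$ correspond under this identification to the rank-one projectors $|\varphi_n\rangle\langle\varphi_n|$, which generate $K_0(\K(L^2(\R)))\cong\mathbb{Z}$; the index pairing just computed sends each such generator to $1$, providing the claimed explicit isomorphism.
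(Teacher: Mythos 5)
Your proof is correct, and the core computation coincides with the paper's: the degree-zero term is killed by the grading exactly as you say, and the degree-two term is evaluated via Lemma~\ref{someprops} together with the trace identity $\mathrm{Tr}\big(L^\theta(g)\otimes{\rm Id}_2\,(1+\D^2)^{-1-z}\big)=(2\pi z)^{-1}\int g\,dx$ and $\int f_{n,n}=2\pi\theta$. Where you diverge is in the treatment of a general finite $J$: the paper only ever computes the case $J=\{n\}$ explicitly (obtaining index $1$) and then disposes of general $J$ in one line by observing that the $f_{n,n}$, $n\in J$, are mutually orthogonal projections, so $[p_J]=\sum_{n\in J}[f_{n,n}]$ in $K_0(\A)$ and the pairing is additive. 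You instead compute $\phi_2({\rm Ch}_2(p_J))$ directly for arbitrary $J$, which forces you through the cross-terms $f_{n\pm1,n}\star_\theta f_{m,m\pm1}$ for adjacent $n,m\in J$; your identity $p_J\star_\theta(\alpha_{J,+}\star_\theta\alpha_{J,-}-\alpha_{J,-}\star_\theta\alpha_{J,+})=\sum_{n\in J}\big[(n+1)\mathbf{1}_{n+1\notin J}-n\,\mathbf{1}_{n-1\notin J}\big]f_{n,n}$ is correct, and the telescoping over maximal runs does give $|J|$, consistent with your stated values $\phi_2(p_J,p_J,p_J)=-|J|/2$ and $\phi_2(1,p_J,p_J)=0$. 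The paper's route buys brevity and makes the additivity of the pairing transparent; yours is more self-contained and exhibits concretely how the adjacent-index cancellations conspire to produce the cardinality. The only blemish is notational: as written, $[\alpha_{J,-},\alpha_{J,+}]_\star$ with the usual commutator convention is the negative of the quantity your expansion describes, so you should either fix the order or define $\alpha_{J,\pm}$ and the bracket explicitly so that the signs in the two displayed formulas are compatible.
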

 
\begin{proof}
Assume first that $J=\{n\}$, $n\in\N_0$.
The degree zero term is not zero in this case as the projection lies in our algebra.
Hence, including all the constants from the local index formula and the Chern character of $f_{n,n}$ gives
\begin{align*}
&{\rm Index}\big(L^\theta(f_{n,n})F_+ L^\theta(f_{n,n})\big)
={\rm res}_{z=0}\frac{1}{z}{\rm Tr}\big(\gamma L^\theta(f_{n,n})(1+\D^2)^{-z}\big)\\
&\qquad-\,{\rm res}_{z=0}
\mbox{Tr}\Big(\gamma\big( L^\theta(f_{n,n})\otimes {\rm Id}_2-1/2\big)[\D, L^\theta(f_{n,n})
\otimes {\rm Id}_2][\D, L^\theta(f_{n,n})\otimes {\rm Id}_2](1+\D^2)^{-1-z}\Big).
\end{align*}
The second term is computed with the help of  Lemma \ref{someprops}. First we have
\begin{align*}
&\gamma\big( L^\theta(f_{n,n})\otimes {\rm Id}_2-1/2\big)[\D, L^\theta(f_{n,n})
\otimes {\rm Id}_2][\D, L^\theta(f_{n,n})\otimes {\rm Id}_2]\\
&\qquad\qquad\qquad=\frac 1{\theta}
\begin{pmatrix}  
 n\,L^\theta(f_{n-1,n-1})-(n+1)\,L^\theta(f_{n,n})&0\\ 
0&-(n+1)\, L^\theta(f_{n+1,n+1})+n\,L^\theta(f_{n,n})
\end{pmatrix}.
\end{align*}
Since $\D^2=\Delta\otimes {\rm Id}_2$, with here $\Delta=-\partial_1^2-\partial_2^2$, we find that
\begin{align*}
&\mbox{Tr}\big(\gamma( L^\theta(f_{n,n})\otimes {\rm Id}_2-1/2)[\D, L^\theta(f_{n,n})]\otimes {\rm Id}_2[\D, L^\theta(f_{n,n})\otimes {\rm Id}_2](1+\D^2)^{-1-z}\big)\\
&\qquad=\frac1{\theta}\mbox{Tr}\Big(\big(-L^\theta(f_{n,n})-(n+1)L^\theta(f_{n+1,n+1})+nL^\theta(f_{n-1,n-1})\big)(1+\Delta)^{-1-z}\Big)\\
&\qquad=\frac1{\theta}\frac1{(2\pi)^2}\,\int \big(-f_{n,n}(x)-(n+1)f_{n+1,n+1}(x)+nf_{n-1,n-1}(x)\big)dx\,
\int (1+|\xi|^2)^{-1-z} \,d\xi\\
&\qquad=\frac1{\theta}\,\frac1{(2\pi)^2}\, \big(-1-(n+1)+n\big)\,(2\pi\theta)\, \frac{2\pi}{2z}=-\frac{1}{z}.
\end{align*}
In the second equality we have used \cite[Lemma 4.14]{GGISV}--the factor $(2\pi)^{-2}$ can also be deduced from
\eqref{HS2}--and we have used Lemma \ref{someprops} to obtain the last 
line--this is where the factor $2\pi\theta$
comes from. Thus the residue from the second term gives us $1$. 
For the first term we compute
\begin{align*}
{\rm res}_{z=0}\,\frac{1}{z}{\rm Tr}\big(\gamma L^\theta(f_{n,n})\otimes {\rm Id}_2\,(1+\D^2)^{-z}\big)=0,
\end{align*}
because  the grading $\gamma$ cancels the traces on each half of the spinor space. 
This gives the result in this elementary case, 
 ${\rm Index}\big(L^\theta(f_{n,n})F_+ L^\theta(f_{n,n})\big)=1$.
 For the general case, note that since for $n\neq m$, $f_{m,m}$ and $f_{n,n}$ are 
 orthogonal projectors, we have $[f_{m,m}+f_{n,n}]=[f_{m,m}]+[f_{n,n}]\in K_0(\A)$ and the final 
 result follows immediately.
\end{proof}




\appendix
\section{Estimates and technical lemmas}
\label{sec:technical}
\subsection{Background material on the pseudodiferential expansion}
To aid the reader, this Appendix recalls five Lemmas from \cite{CPRS2} which are 
used repeatedly in Section \ref{sec:psido-and-sum}
and in Section \ref{sec:LIT}.
All were proved in the unital setting, however all norm estimates remain unchanged, and 
in the pseudodifferential expansion in Lemmas  \ref{higsexpan}, \ref{firstexpan}, 
if the operators $A_i$ lie in 
${\rm OP}^{*}_0$, then so does the remainder, by the invariance of 
${\rm OP}_0^*$ under
the one parameter group $\sigma$ (see Proposition \ref{cont-op}). 
The integral estimate in Lemma \ref{intest} is unaffected
by any changes. 

We begin by giving the algebraic version of the pseudodifferential expansion developed by Higson.
This expansion gives simple formulae, and sharp estimates on remainders. In the statement
$Q=t+s^2+\D^2$, $t\in[0,1]$, $s\in[0,\infty)$. 

\begin{lemma} \label{higsexpan}(see \cite[Lemma 6.9]{CPRS2}) 
Let $m,n,k$ be non-negative integers and 
$T\in {\rm OP}^m_0$ (resp. $T\in {\rm OP}^m$). Then for $\lambda$ in the resolvent set of $Q$
\bean 
(\lambda-Q)^{-n}T= \sum_{l=0}^{k}\binom{n+l-1}{l}T^{(l)}
(\lambda-Q)^{-n-l} + P(\lambda),
\eean
where the remainder $P(\lambda)$ belongs to $ {\rm OP}^{-(2n+k-m+1)}_0$ 
(resp. $ {\rm OP}^{-(2n+k-m+1)}$)  and is given by
\ben 
P(\lambda)=\sum_{l=1}^n\binom{l+k-1}{k}(\lambda-Q)^{l-n-1}T^{(k+1)}(\lambda-Q)^{-l-k}.
\een
\end{lemma}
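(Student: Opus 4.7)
My plan is to establish this as an iterated application of the basic commutator identity, with careful bookkeeping of binomial coefficients via the hockey stick identity. Throughout, write $R = R(\lambda) := (\lambda - Q)^{-1}$, and note that since $[Q,T] = [\D^2, T] = T^{(1)}$, the standard resolvent identity gives the base commutator formula
\begin{equation*}
[R, T] = R\,[T,\lambda-Q]\,R = R\,T^{(1)}\,R, \qquad \text{i.e.} \qquad RT = TR + R\,T^{(1)}\,R.
\end{equation*}
The iterated commutators $T^{(j)} = [\D^2, T^{(j-1)}]$ lie in $\mathrm{OP}^{m+j}_0$ (respectively $\mathrm{OP}^{m+j}$) for all $j \geq 0$ by Lemma \ref{lem:derivs} (whose statement and proof carry through for $\mathrm{OP}^*$ as well), and $R \in \mathrm{OP}^{-2}$ since $R(1+\D^2)$ and $(1+\D^2)R$ are bounded by elementary spectral theory for $\Re(\lambda) < \mu^2/2 \leq t+s^2+\D^2$.

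First I would handle the case $k = 0$, namely
\begin{equation*}
R^n T \;=\; T R^n \;+\; \sum_{l=1}^{n} R^{\,n+1-l}\,T^{(1)}\,R^{\,l},
\end{equation*}
by induction on $n \geq 1$. The base case $n=1$ is the commutator identity above. For the inductive step, write $R^{n+1} T = R\cdot R^n T$ and apply the induction hypothesis followed by $RT^{(1)} = T^{(1)}R + R T^{(2)}R$ only on the leading term $R\cdot TR^n$, then relabel; alternatively, one can insert $RT = TR + RT^{(1)}R$ into $R^n T$ and iterate, both routes yielding the claimed sum after a one-line index shift.

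Next I would prove the general formula by induction on $k \geq 0$, the base case being what we just established. Assume the expansion holds at stage $k$ with remainder $P_k(\lambda) = \sum_{l=1}^n \binom{l+k-1}{k} R^{\,n+1-l} T^{(k+1)} R^{\,l+k}$. Apply the $k=0$ identity (with $n$ replaced by $n+1-l$ and $T$ replaced by $T^{(k+1)}$) to each summand, obtaining
\begin{equation*}
R^{\,n+1-l} T^{(k+1)} \;=\; T^{(k+1)} R^{\,n+1-l} \;+\; \sum_{j=1}^{\,n+1-l} R^{\,n+2-l-j}\,T^{(k+2)}\,R^{\,j}.
\end{equation*}
Substituting this back and collecting the $T^{(k+1)}$ contributions, the coefficient becomes $\sum_{l=1}^{n}\binom{l+k-1}{k}$, which by the hockey-stick identity (after the substitution $m = l-1$) equals $\binom{n+k}{k+1} = \binom{n+(k+1)-1}{k+1}$, matching the expected coefficient of $T^{(k+1)}R^{n+k+1}$ in the main term at stage $k+1$. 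For the $T^{(k+2)}$ contributions, I would reindex by $l' := l+j-1$ so that $R^{n+2-l-j}T^{(k+2)}R^{l+k+j} = R^{n+1-l'}T^{(k+2)}R^{l'+k+1}$; for fixed $l' \in \{1,\dots,n\}$, the summed coefficient is $\sum_{l=1}^{l'}\binom{l+k-1}{k} = \binom{l'+k}{k+1}$, again by hockey-stick, which is precisely the coefficient $\binom{l'+(k+1)-1}{k+1}$ required for $P_{k+1}(\lambda)$. This closes the induction.

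Finally I would verify the order of the remainder: each term of $P(\lambda)$ is of the form $R^{n+1-l} T^{(k+1)} R^{l+k}$ where $T^{(k+1)} \in \mathrm{OP}^{m+k+1}_0$ (resp.\ $\mathrm{OP}^{m+k+1}$), so by Lemma \ref{usefull} its order is $-2(n+1-l)+(m+k+1)-2(l+k) = -(2n+k-m+1)$, independently of $l$, and the preservation of the ``tame'' subscript $0$ follows from the same lemma since $\mathrm{OP}^*$ absorbs $\mathrm{OP}^*_0$ under composition on either side. There is no genuine obstacle here — the only mildly delicate point is the double-counting bookkeeping in the inductive step, and the main temptation to resist is to try to prove the general $k$ case directly by a single induction rather than reducing it to repeated application of the $k=0$ formula, which would obscure the role of the hockey-stick identity.
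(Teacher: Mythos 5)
Your argument is correct: the base commutator identity $RT=TR+RT^{(1)}R$, the $k=0$ expansion by induction on $n$, the induction on $k$ with the hockey-stick identity producing both the coefficient $\binom{n+k}{k+1}$ of the new main term and the coefficients $\binom{l'+k}{k+1}$ of $P_{k+1}(\lambda)$, and the order count $-2(n+1-l)+(m+k+1)-2(l+k)=-(2n+k-m+1)$ all check out, as does the appeal to Lemma \ref{usefull} (resp.\ its regular analogue) for the tame/regular classification of the remainder. The paper itself does not prove this lemma but only cites \cite[Lemma 6.9]{CPRS2}, remarking that the nonunital refinement follows from the invariance of ${\rm OP}^*_0$; your proof is essentially the same iterated-commutator induction as in that reference, so nothing further is needed.
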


In the following  lemmas, we let $R_s(\lambda)=(\lambda-(1+\D^2+s^2))^{-1}$.

\begin{lemma}\label{normtrick}(see \cite[Lemma 6.10]{CPRS2}) 
Let $k,n$ be non-negative integers, $s\geq 0$, 
and suppose
$\lambda\in{\C}$, $0<\Re(\lambda)<1/2$. Then for $A \in {\rm OP}^k$, we have
$$ \Vert R_s(\lambda)^{n/2+k/2}AR_s(\lambda)^{-n/2}\Vert\leq C_{n,k}
\;\;and\;\;\Vert R_s(\lambda)^{-n/2}AR_s(\lambda)^{n/2+k/2}\Vert\leq C_{n,k},$$
where $C_{n,k}$ is constant independent of $s$ and $\lambda$ (square roots use 
the principal branch of $\log$.)
\end{lemma}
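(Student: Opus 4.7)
The proof combines two ingredients: elementary spectral bounds on powers of $R_s(\lambda)$ and the pseudodifferential expansion of Lemma~\ref{higsexpan}. The basic spectral input is that since $0<\Re(\lambda)<1/2$, one has $|\lambda-(1+\D^2+s^2)|\geq 1/2+\D^2+s^2$, and therefore
\[
\|R_s(\lambda)^{\alpha}\|\leq 2^{\alpha},\qquad \|R_s(\lambda)^{\alpha}(1+\D^2)^{\alpha}\|\leq 2^{\alpha},
\]
uniformly in $s\geq 0$, in $\lambda$ in the strip, and in $\alpha\geq 0$. Combined with the defining decomposition $A=(1+\D^2)^{k/2}B$ for $B\in\cn$ bounded, these yield the base case $n=0$: $\|R_s(\lambda)^{k/2}A\|\leq 2^{k/2}\|B\|$. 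More generally they give the auxiliary bound $\|R_s(\lambda)^{\alpha}C\|\leq 2^{\alpha}\|B_C\|$ whenever $C=(1+\D^2)^{k'/2}B_C\in {\rm OP}^{k'}$ and $\alpha\geq k'/2$, and by adjunction the mirror statement for $C\,R_s(\lambda)^{\alpha}$.

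For $n\geq 1$, I would use the commutator identity
\[
R_s(\lambda)^{n/2+k/2}AR_s(\lambda)^{-n/2}=R_s(\lambda)^{k/2}A+R_s(\lambda)^{k/2}[R_s(\lambda)^{n/2},A]R_s(\lambda)^{-n/2},
\]
the first piece being controlled by the base case. To estimate the commutator I would apply Lemma~\ref{higsexpan} (after splitting, if $n$ is odd, $R_s(\lambda)^{n/2}=R_s(\lambda)^{(n-1)/2}R_s(\lambda)^{1/2}$ and absorbing the $1/2$-power into a uniformly bounded factor via the spectral estimate), obtaining an expansion
\[
[R_s(\lambda)^{n/2},A]=\sum_{l=1}^{K}c_l\,A^{(l)}\,R_s(\lambda)^{n/2+l}+P_K(\lambda),
\]
with $A^{(l)}\in {\rm OP}^{k+l}$ and an explicit remainder $P_K(\lambda)$ involving $A^{(K+1)}\in {\rm OP}^{k+K+1}$ sandwiched between positive resolvent powers. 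After multiplying by $R_s(\lambda)^{k/2}$ on the left and $R_s(\lambda)^{-n/2}$ on the right, each main term becomes $c_l R_s(\lambda)^{k/2}A^{(l)}R_s(\lambda)^{l}$. Applying the dual Higson expansion (the adjoint version of Lemma~\ref{higsexpan}) to $A^{(l)}R_s(\lambda)^{l}$ rewrites this as $\sum_{j}R_s(\lambda)^{k/2+l+j}(A^{(l)})^{(j)}$ plus further remainders, and now the leading resolvent exponent $k/2+l+j$ dominates half the operator order $(k+l+j)/2$ (their difference is $(l+j)/2\geq 0$), so each term is controlled by the auxiliary bound. Choosing $K\geq k+n-1$ in the initial expansion guarantees that the residual remainder $R_s(\lambda)^{k/2}P_K(\lambda)R_s(\lambda)^{-n/2}$ likewise reduces, via the explicit sandwich form of $P_K$, to a finite sum of products of the type already handled.

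The main obstacle is the careful bookkeeping required to keep every intermediate term in a form where the resolvent power dominates half the operator order; this is what dictates both the initial truncation $K\geq k+n-1$ and the use of the dual expansion on the main-sum terms. The half-integer issue is cosmetic and dissolves by peeling off a bounded fractional factor via spectral calculus of $\D^2$. The second inequality of the lemma follows from the first by adjunction: $R_s(\lambda)^{*}=R_s(\bar\lambda)$ remains in the strip $0<\Re(\bar\lambda)<1/2$, ${\rm OP}^{k}$ is $*$-stable so $A^{*}\in {\rm OP}^{k}$, and the final bound depends only on $n$ and $k$.
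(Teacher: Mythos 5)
You should first be aware that this paper contains no proof of Lemma \ref{normtrick} to compare against: it is imported verbatim from \cite[Lemma 6.10]{CPRS2}, and the only justification offered is the blanket remark at the head of the Appendix that the norm estimates of \cite{CPRS2} carry over unchanged to the semifinite, nonunital setting. So you have supplied an argument where the paper supplies a citation; the relevant comparison is with the machinery of \cite[Section 6]{CPRS2}, which your proposal does follow in spirit.

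The integer-power core of your argument is sound. The spectral bounds $\Vert R_s(\lambda)^\alpha\Vert\leq 2^\alpha$ and $\Vert R_s(\lambda)^\alpha(1+\D^2)^\alpha\Vert\leq 2^\alpha$ are correct consequences of $|\lambda-q|\geq q-1/2$ for $q\geq 1$; the base case via $A=(1+\D^2)^{k/2}B$ is right; the commutator identity, the expansion of $[R_s(\lambda)^{N},A]$ for integer $N$ via Lemma \ref{higsexpan}, and the observation that every resulting term carries a total resolvent power exceeding half the order of the accompanying operator, all work (the $*$-stability of ${\rm OP}^k$ needed for the adjunction step follows from the $\sigma$-invariance of ${\rm OP}^0$, Proposition \ref{cont-op}). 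For the main-sum terms $R^{k/2}A^{(l)}R^{l}$ you could avoid the second (dual) expansion entirely by inserting matched factors $(1+\D^2)^{\pm}$ on each side and invoking $\sigma$-invariance; this shortens the bookkeeping you rightly identify as the main burden.

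The genuine gap is the case of odd $n$, which the lemma explicitly covers (and which the paper needs, e.g. in Lemma \ref{lem:was-5.3} where $N_1,N_2\in\tfrac12\N$). Lemma \ref{higsexpan} expands only integer powers of the resolvent, so $[R_s(\lambda)^{n/2},A]$ with $n$ odd is not reached by it, and your proposed remedy of ``absorbing the $1/2$-power into a uniformly bounded factor'' does not close the argument: the Leibniz splitting $[R^{(n-1)/2}R^{1/2},A]=[R^{(n-1)/2},A]R^{1/2}+R^{(n-1)/2}[R^{1/2},A]$ leaves the term $R^{k/2+(n-1)/2}[R^{1/2},A]R^{-n/2}$, and since $[R^{1/2},A]R^{-1/2}=R^{1/2}AR^{-1/2}-A$, this term is exactly (a piece of) the quantity being estimated — the reduction is circular. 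Equally, writing $R^{n/2+k/2}AR^{-n/2}=R^{1/2}\bigl(R^{(n-1)/2+k/2}AR^{-(n-1)/2}\bigr)R^{-1/2}$ leaves an unmatched unbounded $R^{-1/2}$ on the right. The half-power sits \emph{inside} a commutator and must be expanded, not peeled off. The standard repair is the integral formula for fractional powers, $(\lambda-Q)^{-1/2}=\pi^{-1}\int_0^\infty\mu^{-1/2}(\mu-\lambda+Q)^{-1}\,d\mu$ (suitably branched), which turns $[R^{1/2},A]$ into $\pi^{-1}\int_0^\infty\mu^{-1/2}(\mu-\lambda+Q)^{-1}A^{(1)}(\mu-\lambda+Q)^{-1}\,d\mu$ with $A^{(1)}\in{\rm OP}^{k+1}$, after which your order-counting applies term by term under the integral; this is the same device the paper uses in Lemma \ref{truc} and Proposition \ref{pr:can-do}. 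Until that step is supplied, your proof only establishes the lemma for even $n$.
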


\begin{lemma}\label{firstexpan}(see \cite[Lemma 6.11]{CPRS2}) 
Let $A_i\in {\rm OP}^{n_i}_0$ (resp. $A_i\in {\rm OP}^{n_i}$) for $i=1,\dots,m$ and let
$0<\Re(\lambda)<1/2$ as above. We consider the operator 
$$
R_s(\lambda)A_1R_s(\lambda)A_2R_s(\lambda)\cdots R_s(\lambda)A_m R_s(\lambda),
$$
Then for all $M\geq 0$
\ben 
R_s(\lambda)A_1R_s(\lambda)A_2\cdots 
A_m R_s(\lambda)=\sum_{|k|=0}^MC(k)A_1^{(k_1)}\cdots 
A_m^{(k_m)}R_s(\lambda)^{m+|k|+1}+P_{M,m},
\een
where $P_{M,m}\in {\rm OP}^{|n|-2m-M-3}_0$ (resp. $P_{M,m}\in {\rm OP}^{|n|-2m-M-3}$), 
and  $k$ and $n$
are multi-indices with $|k|=k_1+\cdots+k_m$ and $|n|= n_1+\cdots+n_m$.
The constant $C(k)$ is given by
\ben 
C(k)=\frac{(|k|+m)!}{k_1!k_2!\cdots k_m!(k_1+1)(k_1+k_2+2)\cdots(|k|+m)}.
\een
\end{lemma}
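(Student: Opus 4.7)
The plan is to prove the expansion by iteratively applying Higson's expansion (Lemma \ref{higsexpan}) to each $A_j$ in turn, pushing it from left to right past the resolvents that precede it. First apply Lemma \ref{higsexpan} to $R_s(\lambda)A_1$ with $n=1$, writing it modulo a remainder as $\sum_{k_1} A_1^{(k_1)} R_s(\lambda)^{1+k_1}$; for $n=1$ the binomial coefficient $\binom{k_1}{k_1}$ equals $1$. The resulting main terms contain $R_s(\lambda)^{1+k_1}\cdot R_s(\lambda)A_2 = R_s(\lambda)^{2+k_1}A_2$, to which Higson's expansion is applied again, now with $n=2+k_1$, producing the factor $\binom{1+k_1+k_2}{k_2}$ on $A_2^{(k_2)}R_s(\lambda)^{2+k_1+k_2}$. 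Iterating through $A_3,\ldots,A_m$, the $j$-th step multiplies the coefficient by $\binom{(j-1)+k_1+\cdots+k_j}{k_j}$, and after the $m$-th step the main terms read $A_1^{(k_1)}\cdots A_m^{(k_m)}R_s(\lambda)^{m+|k|+1}$ with combinatorial weight $\prod_{j=1}^m\binom{(j-1)+k_1+\cdots+k_j}{k_j}$.

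A short telescoping computation, writing each binomial as a quotient of factorials and cancelling successively, yields
$$
\prod_{j=1}^m\binom{(j-1)+k_1+\cdots+k_j}{k_j}
=\frac{(|k|+m)!}{k_1!\cdots k_m!\,(k_1+1)(k_1+k_2+2)\cdots(|k|+m)}
=C(k),
$$
so the main terms assemble exactly as stated.

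The remainder $P_{M,m}$ is a sum of two kinds of contributions: the terms with $|k|>M$ arising from the truncated Taylor-type sums, and the genuine Higson remainders produced at each of the $m$ steps, each subsequently multiplied on the right by the residual chain $R_s(\lambda)A_{j+1}R_s(\lambda)\cdots A_m R_s(\lambda)$. Each Higson remainder carries one extra commutator $A_j^{(k_j+1)}$ sandwiched between higher powers of $R_s(\lambda)$, and a careful count using Lemma \ref{normtrick}, together with the fact that differentiation raises order by one (Lemma \ref{lem:derivs}) and that $R_s(\lambda)\in{\rm OP}^{-2}$, shows that with suitable truncation choices at each step all such contributions lie in ${\rm OP}^{|n|-2m-M-3}$. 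Preservation of the tame subclass ${\rm OP}^*_0$ when some $A_i\in{\rm OP}^{n_i}_0$ follows because ${\rm OP}^*_0$ is a two-sided ideal in ${\rm OP}^*$ (Lemma \ref{usefull}) and is invariant under the one-parameter group $\sigma$ (Proposition \ref{cont-op}), which is what lets one commute resolvents past a tame entry without leaving the tame class.

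The main obstacle will be the combinatorial bookkeeping of the truncation levels: one must choose the cutoff orders at each Higson step consistently so that every monomial with $|k|\leq M$ lands in the displayed main sum while every remainder term, whether from $|k|>M$ or from Higson's own remainder, is collected into $P_{M,m}$ with exactly the advertised order $|n|-2m-M-3$, no worse. Once that pattern is pinned down, the order estimate reduces to elementary arithmetic, and the closure properties of ${\rm OP}^{*}_0$ recalled above handle the tameness statement uniformly.
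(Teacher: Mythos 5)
Your proposal is correct and follows essentially the same route as the proof the paper relies on for this quoted lemma, namely the iterated application of Lemma \ref{higsexpan} from left to right, with the telescoping product of binomials $\prod_{j=1}^m\binom{(j-1)+k_1+\cdots+k_j}{k_j}$ indeed reducing to $C(k)$, and with tameness of the remainder handled exactly as the paper indicates (ideal property of ${\rm OP}^*_0$ plus $\sigma$-invariance). The truncation bookkeeping you flag as the remaining obstacle resolves by taking the cutoff at the $j$-th step to be $M-(k_1+\cdots+k_{j-1})$: the corresponding Higson remainder, bracketed by $A_1^{(k_1)}\cdots A_{j-1}^{(k_{j-1})}$ on the left and the residual chain on the right, then has order $|n|-(k_1+\cdots+k_{j-1})-K_j-2m-3\leq |n|-2m-M-3$, and the discarded main-shaped terms with $|k|\geq M+1$ have order $|n|-|k|-2m-2\leq |n|-2m-M-3$ as well.
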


\begin{lemma}\label{uni}(see \cite[Lemma 6.12]{CPRS2})  
With the assumptions and notation of the last Lemma
including the assumption that $A_i\in {\rm OP}^{n_i}$ for each $i$, 
there is a positive constant $C$ such that
\ben 
\Vert (\lambda-(1+\D^2+s^2))^{m+M/2+3/2-|n|/2}P_{M,m}\Vert\leq C,
\een
independent of $s$ and $\lambda$ (though it depends on $M$ and $m$ and 
the $A_i$). 
\end{lemma}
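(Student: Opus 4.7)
The plan is to obtain an explicit formula for the remainder $P_{M,m}$ by iterating Lemma \ref{higsexpan} to push each $A_i$ to the right of the resolvents, and then to estimate each resulting term by shuffling resolvents past the (iterated commutators of the) $A_i$ via Lemma \ref{normtrick}. Throughout, write $Q=1+\D^2+s^2$ so that $R_s(\lambda)=(\lambda-Q)^{-1}$.

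First, I would prove by induction on $m$ (or by a single combinatorial expansion, tracking indices) that $P_{M,m}$ is a finite sum of terms of the schematic form
$$T=R_s(\lambda)^{\alpha_0}\,B_1\,R_s(\lambda)^{\alpha_1}\,B_2\,\cdots\,B_m\,R_s(\lambda)^{\alpha_m},$$
where each $B_j$ is an iterated commutator $A_j^{(l_j)}\in{\rm OP}^{n_j+l_j}$, the exponents $\alpha_j$ are positive integers, and the total ``resolvent weight minus operator weight'' satisfies
$$\sum_{j=0}^{m}\alpha_j-\tfrac12\sum_{j=1}^{m}(n_j+l_j)\;\geq\; m+\tfrac{M}{2}+\tfrac{3}{2}-\tfrac{|n|}{2}.$$
This inequality is exactly what is needed to extract the prefactor $(\lambda-Q)^{m+M/2+3/2-|n|/2}$ and still have enough resolvents left over to absorb all of the $B_j$. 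The bookkeeping mimics the proof of Lemma \ref{higsexpan}: each time one ``stops'' the expansion at order $M+1$, the remainder contributes one factor $A_j^{(M+1)}$ (or an appropriate iterated commutator), which raises the order of the operator factor but is compensated by an extra resolvent.

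Next, for a single term $T$ I would peel off the desired power by writing $R_s(\lambda)^{\alpha_0}=R_s(\lambda)^{\alpha_0'}\cdot R_s(\lambda)^{m+M/2+3/2-|n|/2}$, where $\alpha_0'=\alpha_0-(m+M/2+3/2-|n|/2)$. After this step, the factor to estimate in operator norm is
$$R_s(\lambda)^{\alpha_0'}\,B_1\,R_s(\lambda)^{\alpha_1}\,\cdots\,B_m\,R_s(\lambda)^{\alpha_m},$$
and the weight inequality above guarantees that the remaining resolvent exponents are large enough. Lemma \ref{normtrick} then lets me move each $B_j\in{\rm OP}^{n_j+l_j}$ through the neighbouring resolvents, paying only a constant factor $C_{n_j+l_j,\alpha_j}$ independent of $s$ and $\lambda$. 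Iterating this move, all $B_j$'s are absorbed into the resolvents and one is left with $R_s(\lambda)^\gamma$ for some $\gamma\leq 0$, whose norm is at most one for $0<\Re(\lambda)<1/2$ by the spectral theorem applied to $Q\geq 1+s^2$. Summing the resulting bounds over the finitely many terms produced in the first step yields the claimed constant $C$, depending on $M$, $m$ and the $A_i$ but not on $s$ or $\lambda$.

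The main obstacle is the combinatorial bookkeeping in the first step: one must carefully identify the structure of every remainder term produced by iterating Lemma \ref{higsexpan} and check that the resolvent weights always exceed the operator weights by the precise amount $m+\tfrac{M}{2}+\tfrac{3}{2}-\tfrac{|n|}{2}$. Once that structural statement is in place, the norm estimate itself is an entirely routine application of Lemma \ref{normtrick} and elementary spectral calculus for $Q$.
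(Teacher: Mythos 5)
Your argument is correct and is essentially the proof of the cited source \cite[Lemma 6.12]{CPRS2}: the paper itself does not reprove this lemma but defers to that reference, which likewise writes $P_{M,m}$ as a finite sum of words in resolvents and iterated commutators $A_j^{(l_j)}$ whose total resolvent weight exceeds the operator half-weight by exactly $m+M/2+3/2-|n|/2$, and then absorbs the operator factors into neighbouring resolvents via Lemma \ref{normtrick}. The only slip is at the very end: after the absorption one is left with $R_s(\lambda)^{\gamma}$ for some $\gamma\geq 0$ (not $\gamma\leq 0$), whose norm is bounded by $2^{\gamma}$ rather than by $1$, since ${\rm dist}(\lambda,{\rm spec}(1+\D^2+s^2))\geq 1/2$ for $0<\Re(\lambda)<1/2$; this does not affect the conclusion.
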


\begin{lemma}\label{intest}(see \cite[Lemma 5.4]{CPRS2}) 
Let $0<a<1/2$ and $0\leq c\leq\sqrt{2}$ 
and $j=0$ or $1$. Let $J$,$K$, and $M$ be nonegative constants. 
Then the integral
\be 
\int_0^\infty \int_{-\infty}^\infty s^J\sqrt{a^2+v^2}^{-M}
\sqrt{(s^2+1/2-a)^2+v^2}^{-K}\sqrt{(s^2+1-a-sc)^2+v^2}^{-j}dvds,
\label{integral}
\ee
converges provided $J-2K-2j<-1$ and $J-2K-2j+1-2M<-2$.
\end{lemma}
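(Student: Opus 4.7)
The plan is to reduce the double integral to a product of two one-dimensional integrals whose convergence conditions match the two hypotheses. First I would observe that under the assumptions $0<a<1/2$ and $0\le c\le\sqrt{2}$, both of the quadratic polynomials appearing in the integrand are comparable to $1+s^2$ on $[0,\infty)$. Indeed, $s^2+1/2-a\ge\min\{1/2-a,1\}(1+s^2)/2$ is immediate, and since the discriminant $c^2-4(1-a)$ is strictly negative under our hypotheses, the quadratic $s^2-sc+1-a$ attains its positive minimum $1-a-c^2/4>0$ at $s=c/2$, whence $s^2+1-a-sc\ge C_0(1+s^2)$ for some constant $C_0=C_0(a,c)>0$. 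This replaces the two last factors in the integrand by a single factor of the form $((1+s^2)^2+v^2)^{-(K+j)/2}$, up to a multiplicative constant.

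Next I would carry out the $v$-integration. Setting $A=a$, $B=1+s^2$, $\alpha=M$, $\beta=K+j$ (so $A\le B$), the task reduces to bounding
\[
I(s):=\int_{-\infty}^{\infty}(A^2+v^2)^{-\alpha/2}(B^2+v^2)^{-\beta/2}\,dv.
\]
I would split at $|v|=B$. On $|v|\ge B$, use $(A^2+v^2)^{-\alpha/2}(B^2+v^2)^{-\beta/2}\le |v|^{-(\alpha+\beta)}$, which contributes $C\,B^{1-\alpha-\beta}$ provided $\alpha+\beta>1$. On $|v|\le B$, use $(B^2+v^2)^{-\beta/2}\le B^{-\beta}$ and compute $\int_{-B}^{B}(A^2+v^2)^{-\alpha/2}dv$ directly: the result is bounded by $C\,A^{1-\alpha}$ if $\alpha>1$ and by $C\,B^{1-\alpha}$ if $\alpha\le 1$ (logarithmic if $\alpha=1$, harmlessly absorbed into either bound). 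Combining, for all admissible $\alpha\ge 0$,
\[
I(s)\le C\bigl(A^{1-\alpha}\,B^{-\beta}\,\mathbf{1}_{\alpha>1}+B^{1-\alpha-\beta}\bigr)
\le C\bigl(a^{1-M}(1+s^2)^{-(K+j)}+(1+s^2)^{1-M-K-j}\bigr).
\]

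Finally, multiplying by $s^J$ and integrating over $s\in[0,\infty)$, the first summand contributes
\[
\int_0^\infty s^J(1+s^2)^{-(K+j)}\,ds,
\]
which is finite iff $J-2(K+j)<-1$, i.e.\ $J-2K-2j<-1$; the second contributes
\[
\int_0^\infty s^J(1+s^2)^{1-M-K-j}\,ds,
\]
which is finite iff $J+2(1-M-K-j)<-1$, i.e.\ $J-2K-2j+1-2M<-2$. Both are precisely the hypotheses of the Lemma, and each handles a different regime (the first dominates when $M>1$, where the $v$-singularity of $(a^2+v^2)^{-M/2}$ near $v=0$ is integrable at scale $a$, and the second is forced when $M$ is small, where the $v$-integral instead gains from the decay of the second factor).

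The main obstacle is ensuring the split-integration estimate for $I(s)$ is uniform across the parameter ranges permitted by the statement. In particular, when $M\le 1$ the factor $a^{1-M}$ from the ``$|v|\le B$'' piece is spurious, so one must verify that the bound $I(s)\le C\,B^{1-M-K-j}$ still holds in that regime (with the constant independent of $a$, since $a$ has been eliminated); the cases $M=1$ and the borderline $\alpha+\beta=1+\epsilon$ require a small, standard refinement but no new idea.
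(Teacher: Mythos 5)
Your argument is correct. Note first that the paper does not actually prove this lemma: it is quoted verbatim from \cite[Lemma 5.4]{CPRS2} as background material in the Appendix, so there is no in-paper proof to compare against; your proposal therefore stands as a self-contained derivation, and it is a sound one. The two reductions you make are both valid: the discriminant computation $c^2-4(1-a)<2-2=0$ does force $s^2-sc+1-a\ge C_0(1+s^2)$ uniformly, and the split of the $v$-integral at $|v|=B$ with the resulting bound
$$I(s)\le C\bigl(a^{1-M}(1+s^2)^{-(K+j)}\mathbf{1}_{M>1}+(1+s^2)^{1-M-K-j}\bigr)$$
is exactly what is needed, since the $s$-integrals of the two summands converge under the first and second hypotheses respectively, and both hypotheses are assumed.

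Two small points are worth making explicit rather than leaving implicit. First, the condition $\alpha+\beta=M+K+j>1$ that you need for the region $|v|\ge B$ is not a separate assumption: it follows from the second hypothesis, since $J\ge 0$ gives $2(M+K+j)>J+3\ge 3$. Second, in the borderline case $M=1$ the logarithm $\log\bigl((1+s^2)/a\bigr)$ cannot literally be absorbed into a constant, but it is still harmless because $\log(1+s^2)=O(s^{\epsilon})$ and the convergence conditions are strict inequalities; moreover at $M=1$ the two hypotheses coincide, so no extra margin is consumed. With those remarks added, the proof is complete.
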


\subsection{Estimates for Section \ref{sec:LIT}}
In this subsection, we collect the proofs of the
key lemmas in our homotopy arguments which are essentially nonunital variations of
proofs  appearing in \cite{CPRS4}.

The first result we prove is the analogue of \cite[Lemma 7.2]{CPRS2}, needed to prove that
the expectations used to define our various cochains are well-defined and holomorphic.

\subsubsection{Proof of Lemma \ref{lem:crucial}}
\label{lem:crucial-app}

Most of the proof relies on the same algebraic arguments  and norm estimates  
as in \cite[Lemma 7.2]{CPRS2}. We just need to adapt the arguments which use some trace 
norm estimates.
To simplify the notations for $0\leq t\leq 1$, we use the shorthand 
$$
R:=R_{s,t}(\lambda)=(\lambda-(t+s^2+\D^2))^{-1},
$$
 as in Equation \eqref{resol}.
We first remark that we can always assume $A_0\in{\rm OP}^0_0$, 
at the price that $A_1$ will be in ${\rm OP}^{k_0+k_1}$, so that the global 
degree $|k|$ remains unchanged. Indeed, we can write
$$
A_0\,R\,A_1\,R\cdots R\,A_m\,R=
A_0(1+\D^2)^{-k_0/2}\,R\,(1+\D^2)^{k_0/2}A_1\,R\cdots R\,A_m\,R,
$$
and this remark follows from the change
$$
A_0\in{\rm OP}^{k_0}_0\mapsto 
A_0(1+\D^2)^{-k_0/2}\in{\rm OP}^{0}_0,\quad A_1\in{\rm OP}^{k_1}\mapsto 
(1+\D^2)^{k_0/2}A_1\in{\rm OP}^{k_0+k_1}.
$$

From  Lemma \ref{firstexpan}, we know 
that for any $L\in\N$, there exists a regular pseudodifferential operator $P_{L,m}$ 
of order (at most) $|k|-2m-L-3$ (i.e. $P_{L,m}\in {\rm OP}^{|k|-2m-L-3}$), such that
\begin{align}
\label{expansion}
A_0\,R\,A_1\,R\cdots R\,A_m\,R
=\sum_{|n|=0}^LC(n) A_0\,A_1^{(n_1)}\cdots A_m^{(n_m)}\,R^{m+|n|+1}+A_0\,P_{L,m}.
\end{align}
Regarding the remainder term $P_{L,m}$, 
  by Lemma \ref{uni} we know that it satisfies the norm inequality
$$
\|R_{s,t}(\lambda)^{-m-L/2-3/2+|k|/2}\, P_{L,m}\| \leq C,
$$
where the constant $C$ is uniform in $s$ and $\lambda$. 
(Here the complex square root function is defined with its principal branch.) 
Using Lemma \ref{lem:was-5.3} and  $A_0\in{\rm OP}^0_0$, we obtain the 
trace norm bound
$$
\|A_0\,P_{L,m}\|_1\leq C\|A_0 R_{s,t}(\lambda)^{m+L/2+3/2-|k|/2}\|_1
\leq C' ((s^2+a)^2+v^2)^{-m/2-L/4-3/4+|k|/4+(p+\eps)/4}.
$$
Thus, the corresponding $s$-integral of the  trace-norm of $B_{r,t}(s)$ is bounded by
\begin{align*}
\int_0^\infty s^\alpha\Big\|\int_\ell \lambda^{-p/2-r}\,A_0\,P_{L,m}d\lambda\Big\|_1ds&
\leq\int_0^\infty s^\alpha \int_\ell |\lambda|^{-p/2-r}\|A_0\,P_{L,m}\|_1|d\lambda|ds\\
&\hspace{-5cm}\leq C
\int_0^\infty s^\alpha\int_{-\infty}^\infty 
(a^2+v^2)^{-p/4-\Re(r)/2} ((s^2+a)^2+v^2)^{-m/2-L/4-3/4+|k|/4+(p+\eps)/4}dvds,
\end{align*}
where $\ell$ is the vertical line $\ell=\{a+iv:v\in\R\}$ with $a\in(0,\mu^2/2)$.
By Lemma \ref{intest}, the latter integral is finite for 
$L>|k|+\alpha+p+\eps-2-2m$, which can always be arranged.
 To perform the Cauchy integrals
$$
\frac{1}{2\pi i}\int_\ell \lambda^{-p/2-r} A_0A_1^{(n_1)}\cdots A_m^{(n_m)}R^{m+1+|n|}
d\lambda,
$$
we refer to  \cite[Lemma 7.2]{CPRS2} for the 
precise justifications. This gives a multiple of
$$
A_0A_1^{(n_1)}\cdots A_m^{(n_m)}(t+s^2+\D^2)^{-p/2-r-m-|n|}.
$$
By Lemmas \ref{usefull} and \ref{lem:derivs}, 
we see that  $A_0A_1^{(n_1)}\cdots A_m^{(n_m)}\in{\rm OP}^{|k|+|n|}_0$,  so that
$$
B:=A_0A_1^{(n_1)}\cdots A_m^{(n_m)}|\D|^{-|n|-|k|}\in {\rm OP}^0_0.
$$
(Remember that in this setting we assume $\D^2\geq\mu^2$).
Thus for $\varepsilon>0$, Equation \eqref{eq:q-trace-est} gives
\begin{align*}
&\big\| A_0A_1^{(n_1)}\cdots A_m^{(n_m)}(t+s^2+\D^2)^{-p/2-r-m-|n|}\big\|_1
=\big\|B|\D|^{|n|+|k|}(t+s^2+\D^2)^{-p/2-r-m-|n|}\big\|_1\\
&\qquad\leq
\big\|B(t+s^2+\D^2)^{-p/2-r-m-|n|/2+|k|/2}\big\|_1 \big\|{|\D|^{|n|+|k|}}{(t+s^2+\D^2)^{-|n|/2-|k|/2}}\big\| \\
&\qquad\leq C (\mu/2+s^2)^{-\Re(r)-m-|n|/2+|k|/2+\eps/2}.
\end{align*}
In particular,  
the constant $C$ is uniform in $s$.
The worst term being  that with $|n|=0$, we obtain that the 
corresponding $s$-integral is convergent for $\Re(r)>-m+(|k|+\alpha+1)/2+\eps$. $\square$

\subsubsection{Proof of Lemma \ref{s-trick}}
\label{lem:s-trick-app}

We give the proof for the expectation $\langle A_0,\dots,A_m\rangle_{m,r,s,t}$.
The proof for  $\langle\langle A_0,\dots,A_m\rangle\rangle_{m,r,s,t}$ is similar 
with suitable modification of the domain of the parameters.
From Lemma \ref{lem:crucial}, we first see that each term of the equality is well defined, 
provided  $2\Re(r)>1+\alpha+|k|-2m$, and since $2m+2>\alpha>0$, Lemma \ref{lem:crucial}
also shows that $\langle\langle A_0,\dots,A_m\rangle\rangle_{m,r,s,t}$ vanishes at $s=0$ and $s=\infty$. 
All we have to do is to show that the map 
$[s\mapsto \langle A_0,\dots,A_m\rangle_{m,r,s,t}]$ is differentiable, 
with derivative given by 
$$
2s\sum_{l=0}^m\langle A_0,\dots,A_l,1,A_{l+1},\dots,A_m\rangle_{m+1,r,s,t},
$$ 
since then the result will follow by integrating  between $0$ and $+\infty$ the following total  derivative
\begin{align*}
&\frac d{ds} s^\alpha \langle A_0,\dots,A_m\rangle_{m,r,s,t}\\
&\hspace{1,5cm}=\alpha \,s^{\alpha-1}\langle A_0,\dots,A_m\rangle_{m,r,s,t}
+2\sum_{l=0}^m
s^{\alpha+1}\langle A_0,\dots,A_l,1,A_{l+1},\dots,A_m\rangle_{m+1,r,s,t}.
\end{align*}
As
$
\frac{1}\eps\big(R_{s+\eps,t}(\lambda)-R_{s,t}(\lambda)\big)=
-R_{s+\eps,t}(\lambda)(2s+\eps)R_{s,t}(\lambda),
$
we see that the resolvent is continuously norm-differentiable in the 
$s$-parameter, with norm derivative given by $2sR_{s,t}(\lambda)^2$. We then write
\begin{align*}
&2\pi i\tfrac1{\eps}
\big(\langle A_0,\dots,A_m\rangle_{m,r,s+\eps,t}-\langle A_0,\dots,A_m\rangle_{m,r,s,t}\big)\\
&=\sum_{l=0}^m\tau\Big(\!\gamma\!\!\int_\ell \lambda^{-p/2-r}A_0\,R_{s+\eps,t}(\lambda)\dots
 A_l\,R_{s+\eps,t}(\lambda)(2s+\eps)
R_{s,t}(\lambda)\,A_{l+1}\dots R_{s,t}(\lambda)\,A_m\,R_{s,t}(\lambda)\,d\lambda\!\Big),
\end{align*}
where $\ell$ is the vertical line $\ell=\{a+iv:v\in\R\}$ with $a\in(0,\mu^2/2)$.
This leads to

\begin{align*}
&\tfrac1{\eps}\big(\langle A_0,\dots,A_m\rangle_{m,r,s+\eps,t}-
\langle A_0,\dots,A_m\rangle_{m,r,s,t,0}\big) -2s\sum_{l=0}^m
\langle A_0,\dots,A_l,1,A_{l+1},\dots,A_m\rangle_{m+1,r,s,t}\\
&=\frac{\eps}{2\pi i}\sum_{l=0}^m
\tau\Big(\gamma\int_\ell \lambda^{-p/2-r}A_0\,R_{s+\eps,t}(\lambda)
\cdots A_l\,R_{s+\eps,t}(\lambda)^2\,A_{l+1}
\cdots R_{s,t}(\lambda)\,A_m\,R_{s,t}(\lambda)\,d\lambda\Big)\nonumber\\
&\quad+
\frac{2s\eps}{2\pi i}\sum_{k\leq l=0}^m
\tau\Big(\gamma\int_\ell \lambda^{-p/2-r}A_0\,R_{s+\eps,t}(\lambda)
\cdots A_k\,R_{s+\eps,t}(\lambda)(2s+\eps,0)R_{s,t}(\lambda)\,A_{l+k}\cdots \nonumber\\
&\hspace{8cm}\times A_l\,R_{s,t}(\lambda)^2A_{l+1}
\cdots R_{s,t}(\lambda)\,A_m\,R_{s,t}(\lambda)\,d\lambda\Big).\nonumber
\end{align*}
We now proceed as in Lemma \ref{lem:crucial}. We write each 
integrand (of the first or second type)  as
\begin{align}
\label{gtr}
A_0\,R\,A_1\,R\cdots R\,A_{m+j}\,R
=\sum_{|n|=0}^MC(k) A_0\,A_1^{(n_1)}\cdots 
A_{m+j}^{(n_{m+j})}\,R^{m+j+|n|+1}+A_0\,P_{M,m+j},
\end{align}
where $j\in\{1,2\}$ depending the type of term we are looking at, the 
$A_l$'s have been redefined and now $R$ stands for $R_{s,t}(\lambda)$ 
or $R_{s+\eps,t}(\lambda)$. To treat the non-remainder terms, before 
applying the Cauchy formula, one needs to perform a resolvent expansion
$$
R_{s+\eps,t}(\lambda)
=\sum_{l=0}^M(-\eps(2s+\eps))^{l-1} R_{s,t}(\lambda)^l
+(-\eps(2s+\eps))^{M} R_{s,t}(\lambda)^MR_{s+\eps,t}(\lambda).
$$
We can always choose  $M$ big enough so that the integrand associated 
with the remainder term in the resolvent expansion 
is integrable in  trace norm, by Lemma \ref{lem:crucial}. 
Provided  $\Re(r)+m-|k|/2>0$, one sees with the same estimates as in 
Lemma \ref{lem:crucial}, that the corresponding term in the 
difference-quotient goes to zero with $\eps$. For the non-remainder 
terms of the resolvent expansion, we can use the Cauchy formula as in 
Lemma \ref{lem:crucial}, and obtain the same conclusion. All that is left is
to treat the remainder term in \eqref{gtr}. The main difference with the 
corresponding term  in Lemma \ref{lem:crucial} is that $P_{M,m+j}$ 
is now $\eps$-dependent.  But the $\eps$-dependence only occurs
in $R_{s+\eps,t}(\lambda)$ and since the estimate of Lemma \ref{normtrick}
is uniform in $s$, we still have
$$
\|R_{s,t}(\lambda)^{-m-M/2-3/2+|k|/2}\, P_{M,m+j}\| \leq C,
$$
where the constant is uniform in $s$, $\lambda$ and $\eps$. 

This is enough (see again the proof of Lemma \ref{lem:crucial}) to 
show that the corresponding term in the  difference-quotient goes to 
zero with $\eps$, provided  $\Re(r)+m-|k|/2>0$.
Thus $\langle A_0,\dots,A_m\rangle_{m,r,s,t}$ is 
differentiable in  $s$, concluding the proof.
\hfill $\square$

\subsubsection{Proof of Lemma \ref{lambda-trick}}
 \label{lem:lambda-trick-app}
According to our assumptions, one first notes  from Lemma \ref{lem:crucial}, 
that all the terms involved in the equalities above are well defined.
From
$$
\tfrac1\eps\big(R_{s,t}(\lambda+\eps)-R_{s,t}(\lambda)\big)+R_{s,t}(\lambda)^2
=\eps R_{s,t}(\lambda+\eps)R_{s,t}(\lambda)^2,
$$
we readily conclude that the map $\lambda\mapsto R_{s,t}(\lambda)$ is 
norm-continuously differentiable, with norm derivatives given by $-R_{s,t}(\lambda)^2$.
We deduce
that for $A_l\in {\rm OP}^{k_l}$, the map $\lambda\mapsto A_lR_{s,t}(\lambda)$ is continuously 
differentiable for the topology of ${\rm OP}^{k_l-2}$, with derivative given by 
$-A_lR_{s,t}(\lambda)^2$. Thus
$A_0R\cdots A_mR$  is continuously differentiable for the topology of 
${\rm OP}^{|k|-2m}_0$, with derivative given by
$$
-\sum_{l=0}^mA_0\,R_{s,t}(\lambda)\cdots A_l\,R_{s,t}(\lambda)^2\,A_{l+1}\cdots A_m\,R_{s,t}(\lambda).
$$
We thus arrive at the identity in ${\rm OP}^{|k|-2m}_0$:
\begin{align*}
\frac{d}{d\lambda}\Big(\lambda^{-q/2-r}A_0\,R_{s,t}(\lambda)\cdots A_m\,R_{s,t}(\lambda)\Big)
&=-(p/2-r)
\lambda^{-q/2-r-1}A_0\,R_{s,t}(\lambda)\cdots A_m\,R_{s,t}(\lambda)\\
&\hspace{-2cm}-
\sum_{l=0}^m\lambda^{-q/2-r}A_0\,R_{s,t}(\lambda)\cdots A_l\,R_{s,t}(\lambda)^2\,A_{l+1}\cdots A_m\,R_{s,t}(\lambda)\,\\
&=-(p/2-r)
\lambda^{-q/2-r-1}A_0\,R_{s,t}(\lambda)\cdots A_m\,R_{s,t}(\lambda)\\
&\hspace{-2cm}-
\sum_{l=0}^m\lambda^{-q/2-r}A_0\,R_{s,t}(\lambda)\cdots 
A_l\,R_{s,t}(\lambda)\,1\,R_{s,t}(\lambda)\,A_{l+1}\cdots A_m\,R_{s,t}(\lambda).
\end{align*}
By Lemma \ref{lem:crucial}, the $\lambda$-integral of the right hand side of the former 
equality is well defined as a trace class operator for $2\Re(r)>|k|-2m$. Performing the 
integration gives the result, since
$\langle\langle A_0,\dots,A_m\rangle\rangle_{m,r+1,s,t}$ vanishes
at the endpoints of the integration domain.
\hfill $\square$

\bigskip

We now present the proof of the trace norm differentiability result, Lemma \ref{diff1}, 
needed to
complete the homotopy 
to the Chern character.

\subsubsection{Proof of Lemma \ref{diff1}}
\label{lem:diff1-app}

Recall that our assumptions are that $a_0,\dots,a_M\in\A^\sim$ so that
$da_i,\,\delta(a_i)\in {\rm OP}^0_0$ for $i=0,\dots,M$. This means we can use
the result of Lemma \ref{truc}.
We first assume  $p\geq 2$. 
We start from the identity,
$$
d_u(a)=[\D_u,a]=[F|\D|^{1-u},a]=F[|\D|^{1-u},a]+\big(da-F\delta(a)\big)|\D|^{-u},
$$
and we note  that $da-F\delta(a)\in{\rm OP}^0_0$. 
Applying  the second part of
Lemma \ref{truc} and Lemma \ref{interpolation} 
now shows that $d_u(a)\in\cl^q(\cn,\tau)$ for all $q>p/u$. Next,  we find that
$$
R_{s,u}(\lambda)
=({\lambda-s^2-\D_u^2})^{-1}=|\D|^{-2(1-u)}{\D_u^2}{(\lambda-s^2-\D_u^2)^{-1}}=:
|\D|^{-2(1-u)}B(u),
$$
where $B(u)$ is uniformly bounded.  Then Lemma \ref{interpolation} 
and the H\"older inequality show that 
$$
d_u(a_i)\,R_{s,u}(\lambda)\in\cl^q(\cn,\tau),\quad\mbox{ for all}\quad q>p/(2-u)\geq p/2\geq 1\quad\mbox{ and}\quad
i=0,\dots,l,l+2,\dots,M,
$$
while  

$$
R_{s,u}(\lambda)^{1/2}\,d_u(a_{l+1})R_{s,u}(\lambda)\in\cl^q(\cn,\tau)\quad\mbox{ for all}\quad q\geq 2\quad\mbox{ with}\quad
(3-2u)q>p.
$$
 The worst case is $u=1$ for which we find $q\geq p\geq 2$, allowing
us to use the first and simplest case of Lemma \ref{interpolation}. 
Since $T_{s,\lambda,l}(u)$ contains $M$ terms
$d_u(a_i)\,R_{s,u}(\lambda)$ and contains  one term 
$R_{s,u}(\lambda)^{1/2}\,d_u(a_{l+1})\,R_{s,u}(\lambda)$ and one  
bounded term $\D_u R_{s,u}(\lambda)^{1/2}$, the H\"{o}lder inequality gives
$$
T_{s,\lambda,l}(u)\in\cl^q(\cn,\tau), \quad\mbox{for all}
\quad  q>p/(M(2-u)+(3-2u))=p/(2M+3-u(M+2)).
$$
Since $u\in[0,1]$ and $M>p-1$, we obtain
$$
p/(2M+3-u(M+2))<p/(M+1)<1,
$$
that is $T_{s,\lambda,l}(u)\in\cl^1(\cn,\tau)$.
The proof then proceeds by showing that
$$
\big[u\mapsto d_u(a_i)R_{s,u}(\lambda)\big]\in C^1\big([0,1],\cl^q(\cn,\tau)\big),
\quad q>p/(2-u),\quad i=0,\dots,l,l+2,\dots,M,
$$
and
$$
\big[u\mapsto \D_u R_{s,u}(\lambda)\,d_u(a_{l+1})R_{s,u}(\lambda)\big]
\in C^1\big([0,1],\cl^q(\cn,\tau)\big),\quad q>p/(3-2u),
$$
with derivatives given respectively by
$$
[\dot{\D_u},a_i]R_{s,u}(\lambda)+2d_u(a_i)R_{s,u}(\lambda)\dot{\D_u}\D_uR_{s,u}(\lambda),
$$
and
\begin{align*}
&\dot{\D_u} R_{s,u}(\lambda)\,d_u(a_{l+1})R_{s,u}(\lambda)
+2\D_u R_{s,u}(\lambda)\dot{\D_u}\D_uR_{s,u}(\lambda)\,d_u(a_{l+1})R_{s,u}(\lambda)\\&
+\D_u R_{s,u}(\lambda)\,[\dot{\D_u},a_{l+1}]R_{s,u}(\lambda)
+2\D_u R_{s,u}(\lambda)\,d_u(a_{l+1})R_{s,u}(\lambda)\dot{\D_u}\D_uR_{s,u}(\lambda).
\end{align*}
This will eventually  imply the statement of the lemma.

We only treat the first term, the arguments for the second term being similar but algebraically 
more involved.
We write,
\begin{equation}
\label{eq:vaca}
\eps^{-1}({d_{u+\eps}(a_i)R_{s,u+\eps}(\lambda)-d_u(a_i)R_{s,u}(\lambda)})-[\dot{\D_u},a_i]R_{s,u}(\lambda)-2d_u(a_i)R_{s,u}(\lambda)\dot{\D_u}\D_uR_{s,u}(\lambda)
\end{equation}
$$=\Big(\eps^{-1}({d_{u+\eps}(a_i)-d_u(a_i)})-[\dot{\D_u},a_i]\Big)R_{s,u}(\lambda)
+\big(d_{u+\eps}(a_i)-d_u(a_i)\big)\eps^{-1}({R_{s,u+\eps}(\lambda)-R_{s,u}(\lambda)})
$$
\begin{align*} 
\hspace{3cm} +d_u(a_i)\Big(\eps^{-1}({R_{s,u+\eps}(\lambda)-R_{s,u}(\lambda)})-2R_{s,u}(\lambda)\dot{\D_u}\D_uR_{s,u}(\lambda)\Big).
\end{align*}
The first term of Equation \eqref{eq:vaca} is the most involved. We start by writing
\begin{align*}
&\eps^{-1}({d_{u+\eps}(a_i)-d_u(a_i)})-[\dot{\D_u},a_i]=\Big[\eps^{-1}({\D_{u+\eps}-\D_u})+\D_u\log|\D|,a_i\Big]\\&=\Big[F|\D|^{1-u}\Big(\eps^{-1}({|\D|^{-\eps}-1})+\log|\D|\Big),a_i\Big]\\
&=F\Big[|\D|^{1-u}\Big(\eps^{-1}({|\D|^{-\eps}-1})+\log|\D|\Big),a_i\Big]+\big(da_i-F\delta(a_i)\big)|\D|^{-u}\Big(\eps^{-1}({|\D|^{-\eps}-1})+\log|\D|\Big).
\end{align*}

We are seeking convergence  for the Schatten norm   $\Vert\cdot\Vert_q$ 
with $q>p/(2-u)$.
So, let $\rho>0$, be such that for 
$A\in{\rm OP}_0^0$, $A|\D|^{-2+u+\rho}\in\cl^q(\cn,\tau)$. 
Thus, the last term of the previous expression, multiplied by $R_{s,u}(\lambda)$ 
can be estimated in $q$-norm by:
\begin{align*}
&\Big\|\big(da_i-F\delta(a_i)\big)|\D|^{-u}\Big(\eps^{-1}({|\D|^{-\eps}-1})
+\log|\D|\Big)R_{s,u}(\lambda)\Big\|_q\\
&\qquad\leq
\big\|\big(da_i-F\delta(a_i)\big)|\D|^{-2+u+\rho}\big\|_q\,\big\||\D|^{-2(1-u)}R_{s,u}(\lambda)\big\|\,
\Big\|\Big(\eps^{-1}({|\D|^{-\eps}-1})+\log|\D|\Big)\D^{-\rho}\Big\|,
\end{align*}
which treats this term since the last operator norm goes to zero with $\eps$.
We now show that
\begin{equation}
\label{qwerty}
\Big[|\D|^{1-u}\Big(\eps^{-1}({|\D|^{-\eps}-1})+\log|\D|\Big),a_i\Big],
\end{equation}
converges to zero in $q$-norm (for the same values of $q$ as before). 
We first remark that we can assume $u>0$. Indeed, when $u=0$, 
we can use (as before) the little room left between $q$ and $p/2$, 
find $\rho>0$ such that  $a|\D|^{-2+\rho}\in\cl^q(\cn,\tau)$ and write
\begin{align*}
&\Big[|\D|\Big(\eps^{-1}({|\D|^{-\eps}-1})+\log|\D|\Big),a_i\Big]|\D|^{-\rho}\\
&\quad=\Big[|\D|^{1-\rho}\Big(\eps^{-1}({|\D|^{-\eps}-1})+\log|\D|\Big),a_i\Big]-|\D|^{1-\rho}\Big(\eps^{-1}{|\D|^{-\eps}-1})+\log|\D|\Big)\big[|\D|^{\rho},a_i\big]|\D|^{-\rho},
\end{align*}
and use an estimate of the previous type plus the content of Lemma \ref{truc}.

To take care of the term \eqref{qwerty} (for $u>0$), we use the integral formula for fractional powers.
After some rearrangements, this gives the following expression for \eqref{qwerty}:
\begin{align*}
&\int_0^\infty \lambda^{u-1}(\pi\eps)^{-1}\Big\{(\sin\pi(1-u-\eps)-\sin\pi(1-u))(\lambda^\eps-1)
+\sin\pi(1-u)(\lambda^\eps-1-\eps\log\lambda)\\
&+\Big((\pi\eps)^{-1}({\sin\pi(1-u-\eps)-
\sin\pi(1-u)})+\cos\pi(1-u)\Big)\Big\}({1+\lambda|\D|})^{-1}\delta(a_i)({1+\lambda|\D|})^{-1}\,d\lambda.
\end{align*}
The last term can be recombined as
$$
\Big((\pi\eps)^{-1}({\sin\pi(1-u-\eps)-\sin\pi(1-u)})+
\cos\pi(1-u)\Big)\pi{(\sin\pi(1-u))^{-1}}\big[|\D|^{1-u},a_i\big],
$$
and one concludes (for this term) using Lemma \ref{truc} 
together with an (ordinary) Taylor expansion for the pre-factor.

Since $\D^2\geq\mu^2>0$, the first term (multiplied by $R_{s,u}(\lambda)$) 
is estimated (up to a constant) in $q$-norm by
$$
\big|\sin\pi(1-u-\eps)-\sin\pi(1-u)\big|\big\|\delta(a_i)R_{s,u}(\lambda)\big\|_q\,
\int_0^\infty \lambda^{u-1}\eps^{-1}({\lambda^\eps-1}){(1+\lambda\,\mu^{1/2})^{-2}}\,d\lambda,
$$
which goes to zero with $\eps$, as seen by a Taylor expansion of the 
prefactor and since $(\lambda^\eps-1)/\eps$ is uniformly bounded in 
$\eps$ for $\lambda\in[0,1]$, while between $1$ in $\infty$, we use
\begin{align*}
\int_1^\infty \lambda^{u-1}\eps^{-1}({\lambda^\eps-1}){(1+\lambda\,\mu^{1/2})^{-2}} \,d\lambda
&\leq(\mu\,\eps)^{-1}\int_1^\infty \big(\lambda^{u-3+\eps}-\lambda^{u-3}\big)  \,d\lambda\\
&\qquad=({\mu(2-u-\eps)})^{-1}\leq  ({\mu(1-u)})^{-1}.
\end{align*}
For the middle term, we obtain instead the bound (up to a constant depending only on $u$)
$$
\big\|\delta(a_i)R_{s,u}(\lambda)\big\|_q\,\int_0^\infty 
\lambda^{u-1}\eps^{-1}({\lambda^\eps-1-\eps\log(\lambda)}) {(1+\lambda\,\mu^{1/2})^{-2}}\,d\lambda,
$$
and one concludes using the same kind of arguments as employed previously.

Similar (and easier) arguments show that the two other terms in \eqref{eq:vaca} 
converge to zero in $q$-norm. That the derivative of $T_{s,\lambda,l}(u)$ is 
continuous for the trace norm topology follows from analogous arguments.  

Now we consider the case $1\leq p<2$. In this case $M=1$ in the odd case and $M=2$ in the
even case. For the odd case we have two terms to consider,
$$
T_{s,\lambda,0}(u)=d_u(a_0)R_{s,u}(\lambda)\D_u R_{s,u}(\lambda) d_u(a_1)R_{s,u}(\lambda),
$$
and 
$$
T_{s,\lambda,1}(u)=d_u(a_0)R_{s,u}(\lambda) d_u(a_1)R_{s,u}(\lambda)\D_u R_{s,u}(\lambda).
$$
We write $T_{s,\lambda,0}(u)$ as
$$
\underbrace{d_u(a_0)|\D|^{-\frac{5}{2}(1-u)}}_A\ 
\underbrace{R_{s,u}(\lambda)\D_uR_{s,u}(\lambda)|\D|^{3(1-u)}}_B\ 
\underbrace{|\D|^{-\frac{1}{2}(1-u)}d_u(a_1)R_{s,u}(\lambda)}_C.
$$
Now the operator $B$ is uniformly bounded  in $u\in[0,1]$, while Lemma \ref{interpolation} shows that both
$A$ and $C$ lie in $\L^q(\cn,\tau)$ for all $q\geq p$. Since $1>p/2$, 
the H\"{o}lder inequality now shows that $T_{s,\lambda,0}(u)$ lies
in $\L^1(\cn,\tau)$ for each $u\in [0,1]$. Now the strict inequality $1>p/2$ allows us to handle the 
difference quotients as in the $p\geq 2$ case above to obtain the trace norm 
differentiability of $T_{s,\lambda,0}(u)$.

For $T_{s,\lambda,1}(u)$ we write
$$
\underbrace{d_u(a_0)R_{s,u}(\lambda)|\D|^{-2(1-u)}}_A\ 
\underbrace{d_u(\sigma^{(1-u)/2}(a_1))|\D|^{-2(1-u)}R_{s,u}(\lambda)\D_uR_{s,u}(\lambda)}_B.
$$
Applying Lemma \ref{interpolation} and the H\"{o}lder inequality again shows that 
$T_{s,\lambda,1}(u)\in\L^1(\cn,\tau)$. The strict inequality $1>p/2$ again allows us to prove
trace norm differentiability.

For the even case where $M=2$ we have more terms to consider, but the pattern is now clear.
We break up $T_{s,\lambda,j}(u)$ into a product of terms whose Schatten norms we can 
control, and obtain a strict inequality allowing us to control the logarithms arising in the formal
derivative. This completes the proof.
\hfill $\square$

\printindex


\end{document}